\newcommand{\R}{\mathbb R}
\newcommand{\C}{\mathbb C}
\newcommand{\Z}{\mathbb Z}
\newcommand{\N}{\mathbb N}
\definecolor{gray1}{rgb}{0.7,0.7,0.7}
\definecolor{green1}{rgb}{0.0,0.5,0.1}
\numberwithin{equation}{section}
\newcommand{\beq}{\begin{eqnarray*}}
\newcommand{\eeq}{\end{eqnarray*}}
\newcommand{\tr}{\mathrm{tr}}
\newcommand{\lev}{\mathscr{L}}
\newcommand{\coll}{\mathcal{C}}
\theoremstyle{definition}
\newtheorem{definition}{Definition}[section]
\newtheorem*{notation}{Notation}
\newtheorem*{notations}{Notations}
\newtheorem{example}{Example}[section]
\newtheorem{remark}{Remark}[section]
\theoremstyle{plain}
\newtheorem{theorem}{Theorem}[section]
\newtheorem{corollary}[theorem]{Corollary}
\newtheorem{proposition}[theorem]{Proposition}
\newtheorem{lemma}[theorem]{Lemma}
\begin{document}

\title[Finiteness of central configurations for the six-body problem]{
Toward finiteness of central configurations for the planar six-body problem by symbolic computations}

\author{Ke-Ming Chang}
\address{Department of Mathematics, National Tsing Hua University, Hsinchu 30013, Taiwan}
\email{sc5764801@gmail.com}

\author{Kuo-Chang Chen}
\address{Department of Mathematics, National Tsing Hua University, Hsinchu 30013, Taiwan}
\email{kchen@math.nthu.edu.tw}
\thanks{This work is supported in parts by the National Science Council in Taiwan}  
%(Grant NSC ....)}

\subjclass[2010]{Primary 70F10, 70F15; Secondary 68W30, 03B35}  
% 70F10 - nbd, 70F15 - cel.mech. 68W30 - symb.comp. 68V15-auto theorem proving, 03B35-mechanization of pf 

%\date{\today}
\date{March 1, 2023}

\keywords{$n$-body problem, central configuration, symbolic computation}  %, automatic deduction}

\begin{abstract}
In this paper we develop symbolic computation algorithms to investigate finiteness of central configurations for the planar 
$n$-body problem. Our approach  is based on Albouy-Kaloshin's work on finiteness of central configurations for the 
5-body problems. % ({\it Ann. Math.} 2012). 
In their paper, bicolored graphs called $zw$-diagrams were introduced for possible scenarios when the finiteness conjecture fails, 
and proving finiteness amounts to exclusions of central configurations associated to these diagrams. 
Following their method, the amount of computations becomes enormous when there are more than five bodies.
Here we introduce matrix algebra for determination of both diagrams and asymptotic orders, 
devise several criteria to  reduce computational complexity, and verify finiteness mostly through automated deductions. 
For the planar six-body problem, our first algorithm effectively narrows the proof for finiteness  down to  117 $zw$-diagrams,  the
second algorithm eliminates 31 of them, the last algorithm eliminates 62 other diagrams 
except for masses in some  co-dimension 2 variety in the mass space, and leaving 24 cases unsolved. 
\end{abstract}

\maketitle

\vspace{-4mm}
\setcounter{tocdepth}{1}
\renewcommand*\contentsname{Table of Contents}
\tableofcontents
%\listoffigures

\section{Introduction}

The Newtonian $n$-body problem concerns $n$ mass points $m_1, \cdots, m_n$ moving in space in accordance with 
Newton's law of universal gravitation:
\begin{eqnarray}\label{eqn:NBD}
\ddot{q}_k &=& \sum_{l\neq k}\frac{m_l (q_l-q_k)}{|q_l-q_k|^3},
\quad k=1,2,\cdots,n, 
\end{eqnarray}
where $q_k\in\R^d$ ($1\leq d\leq3$)  is the position of mass $m_k$.
The position vector $q=(q_1,\cdots,q_n)\in(\R^d)^n$ is called  the {\em configuration} of the system. 
The system (\ref{eqn:NBD}) is smooth except on the {\em collision set}   $\Delta$ defined by
\begin{eqnarray*}
\Delta \;=\;\{q\in (\R^d)^n: q_i=q_j\; \text{for some}\; i\neq j\}.
\end{eqnarray*}
Configurations in $\Delta$ are called {\em collision configurations}. 

Central configurations are non-collision configurations with 
the property that gravitational acceleration $\ddot{q}_k$ of each mass point $m_k$ is a 
constant multiple of the vector from $q_k$ to the mass center of the system.
More precisely, let 
$$q_c=\frac{1}{m_1+\cdots+m_n}(m_1q_1+\cdots+m_nq_n)$$
be the mass center of the system. We say 
$q=(q_1,\cdots,q_n)\in(\R^d)^n\setminus\Delta$ is a {\em central configuration} for the system
of masses $(m_1,\cdots,m_n)$ if there exists a positive
constant $\lambda$, called the {\em multiplier}, such that
\begin{eqnarray}\label{eqn:cc}
-\lambda (q_k-q_c) = 
\sum_{l\neq k} \frac{m_l(q_l-q_k)}{|q_l-q_k|^3}, \quad k=1,2,\cdots,n.
\end{eqnarray}
The set of central configurations is clearly invariant under similarity transformations, so 
two central configurations are considered equivalent if they are in the same similarity class.

Central configurations are of special importance in celestial mechanics because of their pivotal roles
in self-similar solutions, singularity analysis, parabolic solutions, and bifurcations of integral manifolds.  
There are also practical applications such as space mission designs, asteroids' motions, modeling planetary rings, et cetera. 
From purely mathematical perspective, it is a profound and fascinating subject with many fundamental questions been left open. 
We refer readers to  \cite{A03,AK,Moeckel90,MoeckelLN} for introductions and bibliographies, 
to \cite{ACS,MoeckelLN} for many open questions.
A fundamental question concerns the finiteness of central configurations: \\

\noindent{\bf Finiteness Conjecture}: 
{\em For any choice of $n$ positive masses, there are only finitely many similarity classes of central configurations.} \\ 

The finiteness conjecture, also known as the Chazy-Wintner conjecture, 
has its prototype in Chazy's 1918 paper \cite{Chazy} which postulated non-degeneracy of central configurations, 
and phrased  in present form in Wintner's classic treatise \cite{Wintner}. 
Moulton~\cite{Moulton} proved in 1910 that the collinear case ($d=1$) has exactly $n!/2$ similarity classes of central configurations, 
so the finiteness conjecture is only for $d\ge 2$. 
The planar version ($d=2$) of the finiteness conjecture was  included in Smale's list of mathematical problems for the 
21st century~\cite{Smale}, thereby substantially increases  the attention and popularity of the topic within the mathematical community. 
See \cite[\S1]{AK} for a comprehensive  introduction to the history of the finiteness conjecture. 
In below we shall only briefly describe its current state, and progresses that are most relevant to our present work.

We consider the planar problem throughout this paper. 
It is well-known that the three-body problem has 5 similarity classes of central configurations~\cite{Euler,Lagrange}, 
but counting central configurations for 
cases with more than three bodies are generally challenging tasks which involve counting real roots of sophisticated polynomial systems.  
Such polynomial systems vary according to variables selected, and were deduced from algebraic manipulations of (\ref{eqn:cc}), 
in a manner  that similarity classes of central configurations are embedded into the zero locus of the polynomial system. 

Hampton and Moeckel~\cite{HM06} settled the finiteness conjecture in 2006  for the four-body problem,
improving an earlier work for generic mass \cite{Moeckel01}, 
by using the Bernstein-Khovanskii-Kushnirenko (BKK) theory in computational algebraic geometry.  
Their polynomial equations are primarily Albouy-Chenciner equations~\cite{AC} and Dziobek equations~\cite{Dziobek}. 
The proof involves computations of Newton polytopes for some complicated polynomials, the Minkowski sum and mixed volume, 
and eliminations of variables for reduced polynomial systems corresponding to faces of the Minkowski sum polytope.  
The number of faces is excessive, many of them correspond polynomial systems that are virtually 
impossible to handle manually, so the assistance of computer algebra becomes inevitable. 
Although computations are massive, there is no round-off error since all computations are exact and involve only integers. 
With similar settings and by adopting tools from tropical geometry, finiteness results were obtained for some classes of 
planar and spatial five-body problems by Hampton~\cite{Hampton09}, Hampton-Jensen~\cite{HJ11},  
Jensen-Leykin~\cite{JL}. 

In 2012, Albouy and Kaloshin~\cite{AK} settled the finiteness conjecture for the planar five-body problem except for masses in 
some  co-dimension 2 variety in the mass space. 
They selected another set of variables for central configurations, yielding a different polynomial system, and 
devised a graph-based apparatus  to tackle finiteness of real roots. 
When applied to the four-body problem,  their approach significantly reduces the amount of computations needed for the BKK approach, 
and can be carried out without  the assistance of computer. 
The case with five bodies requires much more computations, and 
symbolic computations were used in due course  to factorize some ``huge'' polynomials. 

The finiteness conjecture cannot be extended to arbitrary nonzero masses as a famous counterexample by Roberts~\cite{Roberts} 
shows a continuum of central configurations for $n=5$  if negative masses are allowed. 
An interesting recent work of Yu-Zhu~\cite{YuZhu} based on Albouy-Kaloshin's approach proves finiteness for $n=4$ in the presence 
of negative masses. 

Recent advancements for the finiteness conjectures were 
based on  insightful observations for central configurations, modern tools from 
computational algebraic geometry and computer algebra. 
If one follows previous settings and approaches directly, the case $n=6$ with general masses still seems  far beyond reach due to the 
enormous amount of  computations, and astounding complexity of polynomials involved.  
Finiteness for $n=6$ has been proved for only a few exceptional cases \cite{DP20,Leandro,MZ19,MZ20,Xia91},
for some of which the exact number of central configurations were obtained.

As a step toward the finiteness conjecture for $n=6$, in this paper we follow Albouy-Kaloshin's approach, 
introduce two families of matrices to facilitate algebraic manipulations, and 
design three algorithms of symbolic computations to carry out large part of computations. 
It is our goal to develop efficient tools to tackle the finiteness conjecture through automated deductions, 
save as much computing time as possible, and to leave as few unsolved cases as possible, 
so that further investigations can  confidently depart from our remaining cases. 
Another aim of our work is to provide a convenient platform for this research direction via symbolic computations -- 
by exploring more and more criteria or rules for central configurations, one can simply transform them to  
new subroutines and add them to our algorithms. For this purpose we open our source codes with Mathematica in Appendices, 
where we also include clear instructions on correspondence between subroutines and criteria been applied.

In \cite{AK}  bicolored graphs called {\em $zw$-diagrams} were introduced for possible scenarios when the finiteness conjecture fails, 
and proving finiteness amounts to exclusions of central configurations associated to these diagrams. 
In their paper, proving finiteness for the case $n=4$ was narrowed down to 5 $zw$-diagrams, the case $n=5$ to 16 $zw$-diagrams.  
Comparing with other works about finiteness, their settings appear to be most ideal for the case $n=6$, 
although we have not resolve all cases.  
Here we introduce matrix algebra for determination of both $zw$-diagrams and asymptotic orders of primary variables, 
devise several criteria to  reduce computational complexity, and use our algorithms to verify finiteness. 
Our first algorithm narrows down the case $n=6$ to 117 $zw$-diagrams, the second algorithm eliminates 31 of them, 
and the third algorithm finds mass relations for 62 diagrams, one of which is impossible for positive masses. 
This leaves  finiteness of 24  cases unsolved. 

The paper is organized as follows. 
We will begin with a brief introduction of Albouy-Kaloshin's approach in section~\ref{sec:ColoringRules}. 
In particular, we recall their concepts  of singular sequences, $zw$-diagrams,  coloring rules, and some estimates 
that are critical to our analysis and algorithms. 
In Section~\ref{sec:MatrixRules} we associate $zw$-diagrams to their adjacency matrices, and establish several matrix rules. 
%based on their coloring rules and estimates. 
Some matrix rules are direct consequence of coloring rules, some are more sophisticated. 
In section~\ref{sec:OrderingRules}, we associate $zw$-diagrams another set of matrices, 
called {\em order matrices}, which tracks possible orders of primary variables. 
Several criteria for order matrices will be proved. 
Section~\ref{sec:poly} shows the procedure of collecting polynomial equations. 
Then, in section~\ref{sec:algorithm I}, we provide an effective algorithm to generate all possible $zw$-diagrams.  
The algorithm for generating order matrices will be given in section~\ref{sec:algorithm II}. 
Based on these order matrices, we either eliminate the $zw$-diagram directly or collect polynomial equations for the diagram.  
Section~\ref{sec:algorithm III}  shows an algorithm for eliminations and generating mass relations. 
Sections~\ref{sec:4bd},~\ref{sec:5bd}  are respectively applications of our algorithms to the four- and five-body problems. 
Section~\ref{sec:6bd} contains the major progress of our work, namely the application to the six-body problem. 
Appendices contain source codes of our Mathematica programs and details of some frequently appeared mass relations.

% ----------------------------------------------------------------- %
\section{Singular sequences, $zw$-diagrams, and coloring rules} \label{sec:ColoringRules}

This section is a brief introduction of Albouy and Kaloshin's method \cite{AK} for planar central configurations. 
We introduce their formulations for normalized central configurations, the concept of singular sequences, 
$zw$-diagrams, and then summarize most of  their estimates and coloring rules. 
These concepts and rules are necessary preliminaries for our arguments and algorithms in subsequent sections. 

\subsection{Normalized central configurations}

Let $q_k=(x_k,y_k)\in\R^2$ be the position of mass $m_k>0$, $q_{lk}=q_k-q_l$, $q_{lk}=(x_{lk},y_{lk})$, and 
let $r_{lk}=\|q_l-q_k\|$ be the mutual distance between $q_l$ and $q_k$. 
Since the set of central configurations is invariant under similarity transformations, 
we may quotient out translations by setting the mass center at the origin, quotient out scalings by setting the multiplier $\lambda=1$, 
and quotient out rotations by setting $y_{12}=0$. Then (\ref{eqn:cc}) becomes 
\begin{eqnarray} 
\notag  x_k&=&\sum_{l\neq k}m_lr_{lk}^{-3}x_{lk}, \quad  k=1,\ldots, n, \\  
 y_k&=&\sum_{l\neq k}m_lr_{lk}^{-3}y_{lk}, \quad   k=1,\ldots, n,     \label{eqn:cc2}  \\
\notag y_{12}&=&0.  
\end{eqnarray}

Consider complex $x_k$, $y_k$, $m_k$, and add new variables $\delta_{kl}\in \C$, $1\leq k\neq l\leq n$, for  $r_{kl}^{-1}$. 
Then (\ref{eqn:cc2})  can be rewritten as the following polynomial system in $\C^{2n}\times \C^{\frac{n(n-1)}{2}}$. 
\begin{eqnarray}
\notag  x_k&=&\sum_{l\neq k}m_l\delta_{lk}^3x_{lk}, \quad  k=1,\ldots,n, \\
 y_k&=&\sum_{l\neq k}m_l\delta_{lk}^3y_{lk}, \quad  k=1,\ldots,n,     \label{eqn:cc3}\\
\notag 1&=&\delta_{kl}^2(x_{kl}^2+y_{kl}^2), \quad  1\leq k<l\leq n, \\
\notag y_{12}&=&0.
\end{eqnarray}

With complexified $m_k$, it is possible that the total mass of a subsystem equals zero. 
In order that  ``center of mass of a subsystem'' still makes sense, a weak hypothesis on masses were imposed:
\beq
   \sum_{k\in I} m_k \;\ne \; 0\quad \text{for any nonempty $I\subset\{1,\cdots,n\}$. }
\eeq

\begin{definition}
A vector $(x_1,\ldots,x_n,y_1,\ldots,y_n,\delta_{12},\ldots,\delta_{(n-1)n})\in \C^{2n}\times \C^{\frac{n(n-1)}{2}}$ is 
said to be a \textit{normalized central configuration} if it is a solution of the system  (\ref{eqn:cc3}). 
A  \textit{real normalized central configuration}  is a normalized central configuration with real $x_k$'s and $y_k$'s, 
and a \textit{positive normalized central configuration} is a real normalized central configuration with positive $\delta_{kl}$'s.
\end{definition}

\begin{lemma}
Let $\mathcal{A}\subset \C^{2n}\times \C^{\frac{n(n-1)}{2}}$ be the set of normalized central configurations,   and let 
$$
  U=\sum_{k<l}\frac{m_km_l}{r_{kl}}=\sum_{k<l}m_km_l\delta_{kl}
$$
be the potential. Then  $U(\mathcal{A})$  is finite. 
\end{lemma}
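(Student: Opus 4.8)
The plan is to view $\mathcal{A}$ as an affine algebraic variety---it is the common zero locus of the polynomials in (\ref{eqn:cc3})---and to exploit the fact that an affine variety has only finitely many irreducible components. Since $U$ is a regular (indeed polynomial) function on $\mathcal{A}$, it suffices to show that $U$ is constant on each irreducible component; equivalently, that the differential $dU$ vanishes on the Zariski tangent space at every smooth point, so that $U$ is locally constant on the connected, dense smooth locus of each component. The strategy is therefore to produce two identities tying $U$ to the moment-type quantity $I:=\sum_{k}m_k(x_k^2+y_k^2)$, and then to combine them to force $dU|_{T\mathcal{A}}=0$.

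First I would establish the virial identity $U=I$ on $\mathcal{A}$. Multiplying the $x$-equation of (\ref{eqn:cc3}) by $m_kx_k$ and the $y$-equation by $m_ky_k$, summing over $k$, and symmetrizing over each unordered pair $\{k,l\}$ collapses the resulting double sum to $\sum_{k<l}m_km_l\delta_{kl}^3 r_{kl}^2$. The distance equations $\delta_{kl}^2(x_{kl}^2+y_{kl}^2)=1$ then give $\delta_{kl}^3 r_{kl}^2=\delta_{kl}$, so that the right-hand side is exactly $U$ while the left-hand side is $I$.

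Next I would compute $dU$ along $\mathcal{A}$. Differentiating the distance equations yields $d\delta_{kl}=-\delta_{kl}^3(x_{kl}\,dx_{kl}+y_{kl}\,dy_{kl})$, and substituting this into $dU=\sum_{k<l}m_km_l\,d\delta_{kl}$ reduces the whole form to the one-form $\sum_{k\neq l}m_km_l\delta_{kl}^3 x_{kl}\,dx_{kl}$ together with its $y$-analogue. Collecting the coefficient of $dx_j$ and invoking the force equation $x_j=\sum_{l\neq j}m_l\delta_{lj}^3(x_j-x_l)$ shows that this one-form equals $2\sum_j m_jx_j\,dx_j$, and similarly for $y$. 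The upshot is the differential identity $dU=-\tfrac12\,dI$ on the tangent space $T\mathcal{A}$.

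The two identities then close the argument. From $U=I$ on $\mathcal{A}$ we obtain $dU=dI$ on $T\mathcal{A}$, while the computation gives $dU=-\tfrac12\,dI$ on $T\mathcal{A}$; together these force $dU|_{T\mathcal{A}}=0$. Hence $U$ is locally constant on the smooth locus of $\mathcal{A}$, therefore constant on each of its finitely many irreducible components, and so $U(\mathcal{A})$ is finite. I expect the main obstacle to be the bookkeeping in the differential computation---correctly symmetrizing the ordered double sums and tracking signs under the convention $x_{kl}=x_l-x_k$---together with the standard but necessary remark that, in characteristic zero, a regular function whose differential vanishes on an irreducible variety is constant on it.
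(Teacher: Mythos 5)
Your proof is correct. The paper states this lemma without proof (it is recalled from Albouy--Kaloshin \cite{AK}), and your argument is essentially the standard one behind it: $\mathcal{A}$ is an affine algebraic variety with finitely many irreducible components, and $U$ is constant on each component because every normalized central configuration is a critical point of $U+\tfrac12 I$ (your identity $dU=-\tfrac12\,dI$ on $T\mathcal{A}$) while homogeneity gives the virial identity $U=I$, forcing $dU|_{T\mathcal{A}}=0$. The sign conventions ($x_{kl}=x_l-x_k$) and the factor-of-two bookkeeping between your ordered and unordered sums both check out, so nothing further is needed.
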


\subsection{Singular sequences.}

We set $z_k=x_k+iy_k$, $w_k=x_k-iy_k$, $z_{kl}=z_k-z_l$, and $w_{kl}=w_k-w_l$. Then $z_{kl}w_{kl}=r_{kl}^2$, 
$$
\begin{pmatrix}z_k\\w_k\end{pmatrix}=\sum_{l\neq k}m_lr_{lk}^{-3}\begin{pmatrix}z_{lk}\\w_{lk}\end{pmatrix}
=\sum_{l\neq k}m_l
\begin{pmatrix}{
z_{lk}^{-1/2}w_{lk}^{-3/2}}\\{z_{lk}^{-3/2}w_{lk}^{-1/2}}
\end{pmatrix}, \quad k=1, \cdots, n, 
$$ and the condition $y_{12}=0$ in system (\ref{eqn:cc3}) is equivalent to $z_{12}=w_{12}$. 
Terms  $r_{kl}$ are called ``distances'', terms  $z_{kl}$ and $w_{kl}$ are called   ``$z$-separation'' and ``$w$-separation'' respectively.

Let $Z_{kl}=z_{kl}^{-1/2}w_{kl}^{-3/2}$ and $W_{kl}=z_{kl}^{-3/2}w_{kl}^{-1/2}$. The system (\ref{eqn:cc3}) becomes
\begin{eqnarray}
\notag z_k&=&\sum_{l\neq k}m_lZ_{lk}, \quad  k=1,\ldots,n, \\
 w_k&=&\sum_{l\neq k}m_lW_{lk}, \quad  k=1,\ldots,n,    \label{eqn:cc4} \\
\notag 1&=&\delta_{kl}^2z_{kl}w_{kl}, \quad  1\leq k<l\leq n, \\
\notag z_{12}&=&w_{12}.
\end{eqnarray}

With these new variables, a normalized central configuration takes the form
$$\mathcal{Q}=(z_1,\ldots,z_n,w_1,\ldots,w_n,\delta_{12},\ldots,\delta_{(n-1)n}),$$ 
to which we associate two vectors $\mathcal{Z}$ and $\mathcal{W}$ in $\C^\mathcal{N}$, where $\mathcal{N}=\frac{n(n+1)}{2}$,
given by 
\begin{eqnarray*}
\mathcal{Z} &=& (\mathcal{Z}_1,\ldots,\mathcal{Z}_{\mathcal{N}})\;\; \;= \; (z_1,\ldots,z_n,Z_{12},\ldots,Z_{(n-1)n}), \\
\mathcal{W} &=& (\mathcal{W}_1,\ldots,\mathcal{W}_{\mathcal{N}})\; =\; (w_1,\ldots,w_n,W_{12},\ldots,W_{(n-1)n}).  
\end{eqnarray*}

Let $\|\mathcal{Z}\|=\max_{1\leq k\leq \mathcal{N}}|\mathcal{Z}_k|$ and $\|\mathcal{W}\|=\max_{1\leq k\leq \mathcal{N}}|\mathcal{W}_k|$. 
For a sequence $\mathcal{Q}^{(m)}$ of normalized central configurations, we choose a subsequence 
$\mathcal{Q}^{(m_j)}$ such that $\|\mathcal{Z}^{(m_j)}\|=|\mathcal{Z}_{p}^{(m_j)}|$ for some 
$1\leq p\leq \mathcal{N}$ and $\|W^{(m_j)}\|=|\mathcal{W}_{q}^{(m_j)}|$ for some $1\leq q\leq \mathcal{N}$. 
We extract a subsequence again such that both $\|\mathcal{Z}\|^{-1}\mathcal{Z}$ and $\|\mathcal{W}\|^{-1}\mathcal{W}$ converge.

If $\mathcal{Z}$ is bounded, since $z_{12}=w_{12}$, the term $Z_{12}$ equals $z_{12}^{-2}$, 
so it turns out that $z_1$, $z_2$, and $Z_{12}$ can not go to zero simultaneously. 
Therefore $\|\mathcal{Z}\|$ of the extracted subsequence is bounded away from zero. 
Likewise, if $\mathcal{W}$ is bounded, then $\|\mathcal{W}\|$ of the extracted subsequence is bounded away from zero. 

There are two kinds of extracted subsequences: 
\begin{enumerate}
\item[(1)] both $\mathcal{Z}$ and $\mathcal{W}$ are bounded,
\item[(2)] at least one of $\mathcal{Z}$ and $\mathcal{W}$ is unbounded.
\end{enumerate}
In either case, $\|\mathcal{Z}\|$ and $\|\mathcal{W}\|$ are bounded away from zero. 
One can show that the first kind of extracted subsequence converges to a normalized central configuration.

\begin{definition}
Consider a sequence of normalized central configurations. 
A subsequence extracted by the above process is said to be a \textit{singular sequence} if it is of the second kind.
\end{definition}

\subsection{Colored diagrams for singular sequences.} \label{subsec:ColoredDiagram}

The next step is to classify singular sequences. 
For two sequences $a,b$ of non-zero numbers, we use $a\sim b$, $a\preceq b$, $a\prec b$, and $a\approx b$ to 
represent ``$a/b\rightarrow 1$'', ``$a/b$ is bounded'',  ``$a/b\rightarrow 0$'', and ``$a\preceq b$ and $b\preceq a$'' respectively.

From now on, a new normalization will be used. 
We multiply $z_k$'s by a positive number $a$ and multiply $w_k$'s by $a^{-1}$ such that $\|\mathcal{Z}\|=\|\mathcal{W}\|$. 
In this normalization, we don't have the relation $z_{12}=w_{12}$.

Since $\mathcal{Z}$ and $\mathcal{W}$ are bounded away from zero, for a singular sequence, 
$\|\mathcal{Z}\|\,\|\mathcal{W}\|$ is always unbounded. 
We set $\|\mathcal{Z}\|=\|\mathcal{W}\|=\epsilon^{-2}$, where $\epsilon$ is a sequence converging  to 0.

For a singular sequence, we write down indices of bodies, draw a circle around index $k$ if $z_k$ has the 
maximum order (i.e. $z_k\approx \epsilon^{-2}$), and draw a stroke between $k$ and $l$ if $Z_{kl}\approx\epsilon^{-2}$. 
We call this diagram the {\em $z$-diagram}. 
For $w_k$'s and $W_{kl}$'s, we use another color to do the same things, and draw a {\em $w$-diagram}. 
Combining $z$-diagram and $w$-diagram, we obtain a bicolored diagram, 
called the \textit{$zw$-diagram}, or simply {\em diagram}, associated to the singular sequence.
Indices of bodies are  {\em vertices} of the diagram. 

The term  ``edge'' is different from ``stroke''.  
There is an edge between vertex $k$ and vertex $l$ means there is a $z$-stroke or $w$-stroke between vertex $k$ and vertex $l$. 
There are three types of edges: $z$-edge, $w$-edge, $zw$-edge. 
The first means  there is only $z$-stroke between these two vertices, the second means there is only one $w$-stroke 
between these two vertices, the third means there are both $z$-stroke and $w$-stroke between these two vertices.

\subsection{Closeness and clustering scheme.}

In the $z$-diagram,
\begin{enumerate}
\item[(1)] we say two vertices $k$ and $l$ are \textit{close in $z$-diagram} or \textit{$z$-close} if $z_{kl}\prec \epsilon^{-2}$.
\item[(2)] a nonempty subset $V$ of $\{1,2,\ldots, n\}$ is said to be an \textit{isolated component} if there is no $z$-stroke from 
one vertex in $V$ to another vertex outside $V$.
\item[(3)] a $z$-stroke between $k$ and $l$ is said to be \textit{maximal} if $z_{kl}\approx \epsilon^{-2}$.
\end{enumerate}

Concepts of \textit{$w$-close}, isolated component, maximal $w$-stroke in the $w$-diagram are defined in the same way.

As we pass to the limit of a singular sequence, the $z_k$'s and $w_k$'s may form clusters. 
For example, if $z_{12}\prec z_{13}$, we say 1 clusters with 2 in $z$-coordinate relatively to a subset of bodies $\{1,2,3\}$. 
When we consider more bodies, they may form sub-clusters. 
To represent the \textit{cluster scheme}, we write $z:24.1...3$ for the situation $z_{24}\prec z_{12}\prec z_{13}$. 
Two indices without dot means their $z$-separation is smallest (among all separations between these bodies); 
one dot means the separation is of medium order; three dots means the separation is largest. 
Three precise different orders of separations in terms of power functions of $\epsilon$ will be considered: 
$\epsilon^2$, $\epsilon$, $\epsilon^{-2}$. 

\subsection{Order estimates and coloring rules}  \label{subsec:estmates,coloringrules}
Here we summarize some key estimates and coloring rules in \cite{AK}. 
\begin{theorem}[Estimate 1]  \label{thm:est1}
For any two vertices $k$ and $l$, we have 
\begin{enumerate}
\item[(1)] $\epsilon\preceq r_{kl}\preceq \epsilon^{-2}$,
\item[(2)] $\epsilon^2\preceq z_{kl},w_{kl}\preceq \epsilon^{-2}$,
\item[(3)] there is a $zw$-edge between $k$ and $l$ if and only if $r_{kl}\approx \epsilon$,
\item[(4)] there is a maximal $z$-edge  between $k$ and $l$ if and only if $w_{kl}\approx \epsilon^2$.  
\end{enumerate}
\end{theorem}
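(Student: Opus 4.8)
The plan is to derive all four statements from three algebraic identities together with the single normalization $\|\mathcal{Z}\|=\|\mathcal{W}\|=\epsilon^{-2}$. The identities are $z_{kl}w_{kl}=r_{kl}^2$, the product relation $Z_{kl}W_{kl}=(z_{kl}w_{kl})^{-2}=r_{kl}^{-4}$, and the defining formulas $Z_{kl}=z_{kl}^{-1/2}w_{kl}^{-3/2}$, $W_{kl}=z_{kl}^{-3/2}w_{kl}^{-1/2}$. The normalization says every entry of $\mathcal{Z}$ and $\mathcal{W}$ satisfies $\preceq\epsilon^{-2}$; in particular $z_k,w_k,Z_{kl},W_{kl}\preceq\epsilon^{-2}$ for all indices, and since $z_{kl}=z_k-z_l$ and $w_{kl}=w_k-w_l$ we also obtain the upper bounds $z_{kl},w_{kl}\preceq\epsilon^{-2}$. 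These uniform upper bounds are the only hypotheses available, so the whole proof consists in extracting sharp inequalities from them via the right identity.

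I would prove (2) first, since (1), (3), (4) build on it. The upper bounds $z_{kl},w_{kl}\preceq\epsilon^{-2}$ are the observation just made. For the lower bound $z_{kl}\succeq\epsilon^2$, note that $r_{kl}^2=z_{kl}w_{kl}$ alone is too weak, so instead I solve the defining relation for $z_{kl}$: from $W_{kl}=z_{kl}^{-3/2}w_{kl}^{-1/2}$ we get $|z_{kl}|^{3/2}=|w_{kl}|^{-1/2}|W_{kl}|^{-1}$, and feeding in $|w_{kl}|\preceq\epsilon^{-2}$ and $|W_{kl}|\preceq\epsilon^{-2}$ gives $|z_{kl}|^{3/2}\succeq\epsilon\cdot\epsilon^{2}=\epsilon^{3}$, hence $z_{kl}\succeq\epsilon^{2}$; the bound for $w_{kl}$ follows by the symmetric computation using $Z_{kl}$.

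For (1), the upper bound $r_{kl}\preceq\epsilon^{-2}$ is immediate from $r_{kl}^2=z_{kl}w_{kl}$ and (2). For the lower bound the naive estimate from $r_{kl}^2=z_{kl}w_{kl}$ only yields $r_{kl}\succeq\epsilon^{2}$, which is weaker than claimed, so I instead use the product relation: $|r_{kl}|^{-4}=|Z_{kl}|\,|W_{kl}|\preceq\epsilon^{-2}\epsilon^{-2}=\epsilon^{-4}$, whence $r_{kl}\succeq\epsilon$. Statement (3) is then read off the same relation. A $zw$-edge means $Z_{kl}\approx\epsilon^{-2}\approx W_{kl}$, so $r_{kl}^{-4}=Z_{kl}W_{kl}\approx\epsilon^{-4}$ and $r_{kl}\approx\epsilon$; conversely, if $r_{kl}\approx\epsilon$ then $Z_{kl}W_{kl}\approx\epsilon^{-4}$, and dividing by the upper bound $W_{kl}\preceq\epsilon^{-2}$ forces $Z_{kl}\succeq\epsilon^{-2}$, which together with $Z_{kl}\preceq\epsilon^{-2}$ gives $Z_{kl}\approx\epsilon^{-2}$, and symmetrically $W_{kl}\approx\epsilon^{-2}$, i.e.\ a $zw$-edge.

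For (4) I again solve the defining relation. A maximal $z$-stroke means $z_{kl}\approx\epsilon^{-2}$ together with $Z_{kl}\approx\epsilon^{-2}$; substituting $z_{kl}^{-1/2}\approx\epsilon$ into $Z_{kl}=z_{kl}^{-1/2}w_{kl}^{-3/2}\approx\epsilon^{-2}$ gives $w_{kl}^{-3/2}\approx\epsilon^{-3}$, i.e.\ $w_{kl}\approx\epsilon^2$. Conversely, if $w_{kl}\approx\epsilon^2$ then $w_{kl}^{-3/2}\approx\epsilon^{-3}$, and the uniform bound $Z_{kl}\preceq\epsilon^{-2}$ applied to $Z_{kl}=z_{kl}^{-1/2}w_{kl}^{-3/2}$ forces $z_{kl}^{-1/2}\preceq\epsilon$, i.e.\ $z_{kl}\succeq\epsilon^{-2}$; with the upper bound from (2) this yields $z_{kl}\approx\epsilon^{-2}$ (maximal) and then $Z_{kl}\approx\epsilon^{-2}$ (a genuine stroke). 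I would also remark that a maximal $z$-stroke automatically has $W_{kl}=z_{kl}^{-3/2}w_{kl}^{-1/2}\approx\epsilon^{2}\prec\epsilon^{-2}$, so it carries no $w$-stroke and is indeed a $z$-edge, reconciling the two terms. I expect the only real subtlety, the \emph{main obstacle}, to be the bookkeeping of which of the four quantities $z_{kl},w_{kl},Z_{kl},W_{kl}$ to invoke for each inequality: since the only inputs are the uniform bounds $\preceq\epsilon^{-2}$, every sharp estimate must be produced by rewriting the target as a ratio of already-controlled quantities through the identity that exposes it, rather than through the obvious but lossy relation $r_{kl}^2=z_{kl}w_{kl}$.
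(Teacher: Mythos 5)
Your proof is correct: every step---the lower bound on separations via the identity $W_{kl}^{-1}w_{kl}^{-1/2}=z_{kl}^{3/2}$, the bound $r_{kl}\succeq\epsilon$ via $Z_{kl}W_{kl}=r_{kl}^{-4}$, and both directions of (3) and (4), including the observation that a maximal $z$-stroke carries no $w$-stroke---follows from the normalization $\|\mathcal{Z}\|=\|\mathcal{W}\|=\epsilon^{-2}$ exactly as you argue. The paper itself states Estimate~1 without proof, quoting it from Albouy--Kaloshin \cite{AK}, and your argument is essentially the standard one given there, so there is nothing to reconcile.
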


\begin{theorem}[Estimate 2]  \label{thm:est2}
Suppose there is a $z$-stroke between $k$ and $l$, then
\begin{enumerate}
\item[(1)] $\epsilon\preceq r_{kl}\preceq 1$,
\item[(2)] $\epsilon\preceq z_{kl}\preceq \epsilon^{-2}$,
\item[(3)] $\epsilon^2\preceq w_{kl}\preceq \epsilon$,
\item[(4)] it is a $zw$-edge\;\; $\Leftrightarrow \;\; r_{kl}\approx \epsilon\;\; \Leftrightarrow\;\; z_{kl}\approx \epsilon\;\; \Leftrightarrow
\;\; w_{kl}\approx \epsilon$,
\item[(5)] it is a maximal $z$-stroke \;\;  $\Leftrightarrow \;\; r_{kl}\approx 1 \;\; \Leftrightarrow \;\; z_{kl}\approx \epsilon^{-2}
\;\; \Leftrightarrow \;\; w_{kl}\approx \epsilon^2$.
\end{enumerate}
\end{theorem}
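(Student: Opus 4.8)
The plan is to reduce all five assertions to tracking the single order $r_{kl}$, by first expressing the orders of the two separations $z_{kl}$ and $w_{kl}$ as explicit powers of $r_{kl}$ and $\epsilon$. The only structural inputs I need are the exact identity $z_{kl}w_{kl}=r_{kl}^{2}$, the definition $Z_{kl}=z_{kl}^{-1/2}w_{kl}^{-3/2}$, the hypothesis that a $z$-stroke joins $k$ and $l$ (i.e.\ $Z_{kl}\approx\epsilon^{-2}$), and the already-established bounds of Estimate~1.

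First I would extract two clean order relations from the hypothesis. Since $Z_{kl}\approx\epsilon^{-2}$, taking reciprocals and squaring gives $z_{kl}w_{kl}^{3}\approx\epsilon^{4}$. Rewriting $z_{kl}w_{kl}^{3}=(z_{kl}w_{kl})\,w_{kl}^{2}=(r_{kl}w_{kl})^{2}$ by means of $z_{kl}w_{kl}=r_{kl}^{2}$, and extracting a square root at the level of orders, I obtain $r_{kl}w_{kl}\approx\epsilon^{2}$, hence
\[
w_{kl}\approx \epsilon^{2}r_{kl}^{-1}, \qquad z_{kl}=\frac{r_{kl}^{2}}{w_{kl}}\approx \epsilon^{-2}r_{kl}^{3}.
\]
These two formulas are the engine of the whole argument: every claim becomes a statement about the order of $r_{kl}$ alone.

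Next I would pin down the range of $r_{kl}$. From Estimate~1(2) we have $\epsilon^{2}\preceq w_{kl}$, which combined with $w_{kl}\approx\epsilon^{2}r_{kl}^{-1}$ forces $r_{kl}\preceq 1$; together with $\epsilon\preceq r_{kl}$ from Estimate~1(1) this yields claim (1). Substituting $\epsilon\preceq r_{kl}\preceq 1$ back into the two relations then gives $\epsilon\preceq z_{kl}\preceq\epsilon^{-2}$ and $\epsilon^{2}\preceq w_{kl}\preceq\epsilon$, which are exactly claims (2) and (3).

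For the equivalences I would simply read off the two extreme values of $r_{kl}$. At the lower end $r_{kl}\approx\epsilon$ the relations force $z_{kl}\approx\epsilon$ and $w_{kl}\approx\epsilon$ simultaneously, and conversely either of these forces $r_{kl}\approx\epsilon$; since Estimate~1(3) identifies $r_{kl}\approx\epsilon$ with the presence of a $zw$-edge, this closes the chain in claim (4). At the upper end $r_{kl}\approx 1$ the relations force $z_{kl}\approx\epsilon^{-2}$ and $w_{kl}\approx\epsilon^{2}$ simultaneously, each implication being reversible; recalling that a maximal $z$-stroke is by definition $z_{kl}\approx\epsilon^{-2}$, and consistent with Estimate~1(4) which ties $w_{kl}\approx\epsilon^{2}$ to a maximal $z$-edge, this gives claim (5). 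Once the two order relations are in hand the rest is bookkeeping; the only point demanding care is the passage from $(r_{kl}w_{kl})^{2}\approx\epsilon^{4}$ to $r_{kl}w_{kl}\approx\epsilon^{2}$, an extraction of a square root of an $\approx$-relation among complex quantities, which is legitimate because $\approx$ compares orders of magnitude and $r_{kl}$ carries the order of $\sqrt{z_{kl}w_{kl}}$. I expect no genuine obstacle here, which is why the statement is recorded as an auxiliary estimate feeding the later coloring rules rather than as a substantive theorem.
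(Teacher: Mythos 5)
Your proposal is correct. Note that the paper itself offers no proof of this statement: Section~2 explicitly only \emph{summarizes} the estimates and coloring rules of Albouy--Kaloshin, so Estimate~2 is imported from \cite{AK} without argument, and there is nothing in the text to compare against line by line. That said, your derivation is essentially the standard one underlying the cited result: the hypothesis $Z_{kl}\approx\epsilon^{-2}$ plus the exact identity $z_{kl}w_{kl}=r_{kl}^{2}$ pin down $w_{kl}\approx\epsilon^{2}r_{kl}^{-1}$ and $z_{kl}\approx\epsilon^{-2}r_{kl}^{3}$, after which (1)--(5) follow by substituting the range $\epsilon\preceq r_{kl}\preceq 1$ and invoking Estimate~1(3) and the definition of a maximal $z$-stroke. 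The only stylistic remark: your detour through $z_{kl}w_{kl}^{3}=(r_{kl}w_{kl})^{2}\approx\epsilon^{4}$ and a square-root extraction (which is legitimate, since $\approx$ compares moduli of sequences) can be avoided entirely by using the identities $Z_{kl}=z_{kl}r_{kl}^{-3}=w_{kl}^{-1}r_{kl}^{-1}$, which the paper itself records in Section~4 in the order-comparing process; these give the two key order relations in one step each.
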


Theorems~\ref{thm:est1},~\ref{thm:est2} are valid if  ``$z$'' and ``$w$'' were switched.

\begin{theorem}[One Color Rules]
In the $z$-diagram of a singular sequence, we have the following properties.
\begin{enumerate}
\item[]\hskip-13mm {\bf (Rule 1a)} 
These exists at least one $z$-stroke. 
At each end of a $z$-stroke, there is either another $z$-stroke emanating from it or a $z$-circle around it.
At each $z$-circle, there is a $z$-stroke emanating from it. 
\item[]\hskip-13mm {\bf (Rule 1b)} If bodies $k$ and $l$ are $z$-close, then they are both $z$-circled or both not $z$-circled.
\item[]\hskip-13mm {\bf (Rule 1c)} In an isolated component,  
the center of mass of bodies in this component is $z$-close to the origin.
\item[]\hskip-13mm {\bf (Rule 1d)} 
In an isolated component of the $z$-diagram, 
if there is a $z$-circle, there is another one. Moreover, these $z$-circles can't be all $z$-close together.
\item[]\hskip-13mm {\bf (Rule 1e)} In an isolated component, there is a maximal $z$-stroke if and only if there is a $z$-circle.
\end{enumerate}
Rules 1a-1e hold if ``$z$-'' are replaced by ``$w$-''.
\end{theorem}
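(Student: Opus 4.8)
The plan is to reduce all five rules to elementary linear algebra among the leading coefficients of the rescaled sequence. After passing to the subsequence along which $\|\mathcal{Z}\|^{-1}\mathcal{Z}=\epsilon^2\mathcal{Z}$ converges, set $c_k=\lim \epsilon^2 z_k$ and $C_{kl}=\lim \epsilon^2 Z_{kl}$. Since $\|\epsilon^2\mathcal{Z}\|=1$ along the sequence, all $|c_k|,|C_{kl}|\le 1$ and at least one of them equals $1$. By definition a $z$-circle sits at $k$ exactly when $z_k\approx\epsilon^{-2}$, i.e. $c_k\neq 0$, and a $z$-stroke joins $k$ and $l$ exactly when $Z_{kl}\approx\epsilon^{-2}$, i.e. $C_{kl}\neq 0$. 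The directed form $Z_{lk}=z_{lk}\,r_{lk}^{-3}$, in which the factor $r_{lk}^{-3}=(z_{lk}w_{lk})^{-3/2}$ is symmetric in $k,l$, gives the antisymmetry $Z_{kl}=-Z_{lk}$, hence $C_{kl}=-C_{lk}$ and $|C_{kl}|=|C_{lk}|$ (so strokes are unoriented). Multiplying the first block of (\ref{eqn:cc4}) by $\epsilon^2$ and passing to the limit in each finite sum yields the fundamental relation $c_k=\sum_{l\neq k} m_l C_{lk}$, on which everything rests.

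For Rule 1a, some entry of $\epsilon^2\mathcal{Z}$ attains the value $1$ in the limit: if it is a stroke coefficient we already have a $z$-stroke, and if it is a circle coefficient $c_k\neq 0$ then the fundamental relation forces some $C_{lk}\neq 0$, again a stroke. Applying the relation at any $z$-circle ($c_k\neq 0$) likewise produces a stroke emanating from it. For the endpoint statement, suppose $C_{lk}\neq 0$ and read off $c_k=\sum_{j\neq k} m_j C_{jk}$: if $c_k=0$ the nonzero summand $m_l C_{lk}$ must be cancelled by a second nonzero $C_{jk}$ with $j\neq l$, i.e. a second stroke at $k$; otherwise $c_k\neq0$ is a circle at $k$. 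The same dichotomy holds at $l$. Rule 1b is immediate: $z$-closeness $z_{kl}\prec\epsilon^{-2}$ means $\epsilon^2 z_{kl}\to 0$, so $c_k=c_l$, and the two coefficients vanish together.

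Rules 1c and 1d exploit the fundamental relation summed over a component. Multiplying it by $m_k$ and summing over an isolated component $V$ gives $\sum_{k\in V} m_k c_k=\sum_{k\in V}\sum_{l\neq k} m_k m_l C_{lk}$; the pairs with $l\in V$ cancel by the antisymmetry $C_{lk}=-C_{kl}$, while the pairs with $l\notin V$ vanish because isolation forbids any stroke leaving $V$, i.e. $C_{lk}=0$. Hence $\sum_{k\in V} m_k c_k=0$, which (after dividing by the nonzero $\sum_{k\in V} m_k$) is precisely Rule 1c. For Rule 1d, if some circled $k\in V$ had $c_k\neq 0$, the vanishing sum cannot consist of a single nonzero term, so a second circle appears; and if all circled vertices of $V$ were mutually $z$-close, then $c_k=c_l$ for $z$-close pairs would make their coefficients equal to a common $c\neq 0$, whence $0=\sum_{k\in V} m_k c_k=c\sum_{k\text{ circled}} m_k$ forces $\sum_{\text{circled}} m_k=0$, contradicting the standing hypothesis that no nonempty subset of masses sums to zero.

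Finally, Rule 1e combines Rule 1d with a transitivity remark. The forward direction is Theorem~\ref{thm:est2}(5): a maximal $z$-stroke has $z_{kl}\approx\epsilon^{-2}$, which forces $\max(|c_k|,|c_l|)\neq 0$ and hence a $z$-circle. For the converse, restrict to the stroke-connected component $V'$ of a given $z$-circle; $V'$ is itself isolated. The key point is that $z$-closeness is transitive along strokes, since $z_{kl}\prec\epsilon^{-2}$ and $z_{lm}\prec\epsilon^{-2}$ give $z_{km}=z_{kl}+z_{lm}\prec\epsilon^{-2}$. Thus if $V'$ contained no maximal stroke, every stroke would join $z$-close vertices and all of $V'$ would be mutually $z$-close, contradicting the second assertion of Rule 1d; so a maximal stroke must exist. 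All of the above is symmetric under interchanging $z$ with $w$, since (\ref{eqn:cc4}) and Theorems~\ref{thm:est1}--\ref{thm:est2} are, so the $w$-versions follow verbatim. I expect the main obstacle to be the careful justification of the limiting relation $c_k=\sum_{l\neq k} m_l C_{lk}$ together with the antisymmetry $C_{kl}=-C_{lk}$ via the correct branch of $z_{lk}^{-1/2}w_{lk}^{-3/2}$, since every rule depends on it; the transitivity-and-isolation step in Rule 1e is the only other place demanding care.
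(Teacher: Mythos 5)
The paper never proves this theorem: it is quoted from Albouy--Kaloshin \cite{AK} as part of the background summary in Section~\ref{sec:ColoringRules}, so there is no in-paper argument to compare yours against. Judged on its own, your proof is correct, and it reconstructs the mechanism that makes all of these rules work: after the extraction and the renormalization $\|\mathcal{Z}\|=\|\mathcal{W}\|=\epsilon^{-2}$, the vector $\epsilon^{2}\mathcal{Z}$ converges; circles and strokes are exactly the coordinates with nonvanishing limits $c_k$, $C_{kl}$; and the five rules follow from the limiting equation $c_k=\sum_{l\neq k}m_lC_{lk}$, the antisymmetry $C_{kl}=-C_{lk}$, and the standing hypothesis that no nonempty subset of the (nonzero, possibly complex) masses sums to zero. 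Three points that your write-up leans on deserve to be made explicit. First, the antisymmetry is unambiguous not because of a choice of square-root branch but because in the polynomial system (\ref{eqn:cc3})--(\ref{eqn:cc4}) one has $Z_{kl}=\delta_{kl}^{3}z_{kl}$ with $\delta_{kl}=\delta_{lk}$ a single variable, so $Z_{kl}=-Z_{lk}$ is an identity of the system; this disposes of the obstacle you flag at the end. Second, the renormalization $z_k\mapsto az_k$, $w_k\mapsto a^{-1}w_k$ multiplies the whole vector $\mathcal{Z}$ by $a$, hence the convergence of $\|\mathcal{Z}\|^{-1}\mathcal{Z}$ arranged before the renormalization survives it and the equations (\ref{eqn:cc4}) are preserved, so your limits $c_k$, $C_{kl}$ do exist. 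Third, and this is the only step a reader could call a gap: in Rule~1e you use the dichotomy ``a stroke is either maximal or joins $z$-close vertices.'' For an arbitrary sequence, the failure of $z_{kl}\approx\epsilon^{-2}$ together with $z_{kl}\preceq\epsilon^{-2}$ does \emph{not} imply $z_{kl}\prec\epsilon^{-2}$ (the ratio could oscillate); it is precisely the convergence $\epsilon^{2}z_{kl}\to c_k-c_l$ along the extracted subsequence that forces the alternative $c_k-c_l\neq 0$ (maximal) or $c_k-c_l=0$ ($z$-close). Since your entire argument is phrased in terms of these limits the step is sound, but it should be stated in one sentence rather than left implicit, as every appeal to ``non-maximal $\Rightarrow$ close'' (and likewise the identification of Rule~1c with $\sum_{k\in V}m_kz_k\prec\epsilon^{-2}$, which is how the paper later uses it) rests on it.
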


\begin{theorem}[Two Colors Rules]
In the $zw$-diagram of a singular sequence, we have the following properties.
\begin{enumerate}
\item[]\hskip-13mm {\bf (Circling Method)} If there is a $w$-stroke (resp. $z$-stroke) between $k$ and $l$, 
then $k$ and $l$ are both $z$-circled (resp. $w$-circled) or both not $z$-circled (resp. $w$-circled).
\item[]\hskip-13mm {\bf (Rule 2a)} For each $zw$-edge in the diagram, there is another $z$-stroke and $w$-stroke emanating from it.
\item[]\hskip-13mm {\bf (Rule 2b)} If there are two consecutive $zw$-edges, 
there is a third $zw$-edge such that these three edges form a triangle.
\item[]\hskip-13mm {\bf (Rule 2c)} Consider two consecutive edges which are not part of a triangle. 
Suppose they are an edge between 1,2 and an edge between 2,3. 
Then the cluster schemes are $z:1.2...3$, $w:1...2.3$ or $z:1...2.3$, $w:1.2...3$. In this situation, we say there is \textit{skew clustering}.
\item[]\hskip-13mm {\bf (Cor. of Rule 2c)} If there are two consecutive $z$-edges ($w$-edges) which are 
not part of a triangle in the $zw$-diagram, they can not be both maximal.
\item[]\hskip-13mm {\bf (Rule 2d)} Consider a cycle of edges,  the list of $z$-separations in this cycle, 
and the maximal order within the list. At least two of these $z$-separations are of this order, 
and the corresponding edges are of the same type. Moreover, if there are exactly two $z$-separations of this order, 
then the two $z$-separations are equivalent (i.e. $z_{k_1l_1}\sim z_{k_2l_2}$ if these edges are between $k_1,l_1$ and 
between $k_2,l_2$). Same rule holds if ``$z$-'' are replaced by ``$w$-''.
\item[]\hskip-13mm {\bf (Rule 2e)} If  three edges form a triangle, 
then they are of the same type, the corresponding $z$-separations are of the same order, 
and the corresponding $w$-separations are of the same order.
\item[]\hskip-13mm {\bf (Cor. of Rule 2e)} Consider any three vertices, the number of strokes joining them is $0,1,2,3$, or $6$.
\item[]\hskip-13mm {\bf (Rule 2f)} Consider a triangle with vertices $k_1,k_2,k_3$ formed by edges in the diagram. 
Suppose there exists $p>3$ and other vertices $k_4,k_5,\ldots,k_p$ such that for each $4\leq t\leq p$, 
there are at least two edges connecting $k_t$ with vertices in $k_1,\ldots, k_{t-1}$. 
Then there is an edge between any two of $k_1,\ldots k_p$, these edges are of the same type, 
their corresponding $z$-separations are of the same order,  
their corresponding $w$-separations are of the same order. 
\item[]\hskip-13mm {\bf (Rule 2g)} If there are four edges forming a quadrilateral, then the opposite edges have the same type.
\item[]\hskip-13mm {\bf (Rule 2h)} 
If there are bodies $k_0,l_0$ such that 
$r_{k_0l_0}\preceq r_{kl}$ for all $k,l$ and $r_{k_0l_0}\rightarrow 0$, then there is another pair of bodies $k_1, l_1$ 
such that $r_{k_1l_1}\approx r_{k_0l_0}$.
\item[]\hskip-13mm {\bf (Cor. of Rule 2h)} If there is a $zw$-edge in the diagram, then there is another one.
\end{enumerate}
\end{theorem}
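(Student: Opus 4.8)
The plan is to derive this corollary as an immediate consequence of Rule 2h together with the characterization of $zw$-edges in Estimate~1 (Theorem~\ref{thm:est1}). The bridge between the two is the observation that a $zw$-edge corresponds precisely to a distance of the \emph{smallest} possible order $\epsilon$, so that the endpoints of any $zw$-edge automatically realize the minimal mutual distance required by the hypothesis of Rule 2h.

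First I would invoke Estimate~1(3): a $zw$-edge between two vertices $k_0$ and $l_0$ is equivalent to $r_{k_0l_0}\approx\epsilon$. Next I would combine this with the universal lower bound in Estimate~1(1), namely $\epsilon\preceq r_{kl}$ for every pair $k,l$. Together these give $r_{k_0l_0}\approx\epsilon\preceq r_{kl}$ for all $k,l$, so the pair $k_0,l_0$ attains the minimal order among all mutual distances. Since $\epsilon\to0$, we also have $r_{k_0l_0}\to0$, and thus both conditions required to apply Rule 2h are met.

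I would then apply Rule 2h to produce another pair of bodies $k_1,l_1$ with $r_{k_1l_1}\approx r_{k_0l_0}\approx\epsilon$. A final application of Estimate~1(3), now in the reverse direction, converts $r_{k_1l_1}\approx\epsilon$ back into the assertion that there is a $zw$-edge between $k_1$ and $l_1$. Because Rule 2h guarantees $\{k_1,l_1\}$ is distinct from $\{k_0,l_0\}$, this second $zw$-edge is genuinely new, which completes the argument.

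There is no serious obstacle here: the corollary is essentially a two-line deduction once Rule 2h is in hand, and the only point demanding attention is verifying that the given $zw$-edge achieves the globally minimal distance order, so that the hypothesis of Rule 2h is legitimately satisfied. This verification is exactly what Estimate~1(1) and Estimate~1(3) provide, and all the substantive work is carried by Rule 2h itself, which I am free to assume.
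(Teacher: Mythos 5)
Your deduction of the Corollary of Rule~2h is correct: by Estimate~1(3) a $zw$-edge between $k_0$ and $l_0$ is equivalent to $r_{k_0l_0}\approx\epsilon$; by Estimate~1(1) every distance satisfies $\epsilon\preceq r_{kl}$, so the pair $k_0,l_0$ realizes the minimal order among all distances and $r_{k_0l_0}\to 0$; Rule~2h then produces a distinct pair $k_1,l_1$ with $r_{k_1l_1}\approx r_{k_0l_0}\approx\epsilon$, and the ``if and only if'' in Estimate~1(3) converts this back into a second $zw$-edge. This is the natural two-step argument, and since the paper states the Two Colors Rules without proof (they are summarized from Albouy--Kaloshin), there is no finer in-paper proof to compare against for this item.

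The problem is one of scope. The statement you were given is the \emph{entire} Two Colors Rules theorem, of which the Corollary of Rule~2h is only the last of twelve items. Your proposal says nothing about the Circling Method, Rules~2a--2g, the Corollaries of Rules~2c and~2e, or Rule~2h itself; indeed you explicitly assume Rule~2h, which is part of the statement, so read as a proof of the full theorem the argument is circular. Those rules are not formal consequences of the order estimates alone: they require working with the main equations of the singular sequence (the expressions of $z_k$, $w_k$ as sums of $m_lZ_{lk}$, $m_lW_{lk}$), the normalization $\|\mathcal{Z}\|=\|\mathcal{W}\|=\epsilon^{-2}$, and the clustering analysis, all of which is carried out in Albouy--Kaloshin's paper and is absent here. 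If the intended target was only the final corollary, your proof is complete and matches the intended route; if it was the whole theorem, essentially all of the substance is missing.
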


Some other propositions from \cite[Prop. 1,2,4]{AK} for singular sequences are also helpful for our purpose: 

\begin{proposition}  \label{prop:AK1}
If $z_{1n}\prec z_{kn}$ and $w_{1n}\approx w_{kn}$ for every $k$ different from $1$ and $n$, 
then $z_1$, $z_n$ and the origin form a cluster of size $\approx z_{1n}$; i.e. $z_1, z_n\preceq z_{1n}$.
\end{proposition}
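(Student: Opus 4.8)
The plan is to read off the dominant terms of the central configuration equations (\ref{eqn:cc4}) for body $n$, and then play the resulting asymptotics against the fact that the center of mass sits at the origin.

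First I would examine $z_n=\sum_{l\neq n}m_l Z_{ln}$ and $w_n=\sum_{l\neq n}m_l W_{ln}$. Comparing the $l$-th summand with the $l=1$ summand gives
\[
\frac{Z_{ln}}{Z_{1n}}=\Big(\frac{z_{1n}}{z_{ln}}\Big)^{1/2}\Big(\frac{w_{1n}}{w_{ln}}\Big)^{3/2},
\qquad
\frac{W_{ln}}{W_{1n}}=\Big(\frac{z_{1n}}{z_{ln}}\Big)^{3/2}\Big(\frac{w_{1n}}{w_{ln}}\Big)^{1/2}.
\]
By hypothesis $z_{1n}/z_{ln}\to 0$ while $w_{1n}/w_{ln}$ stays bounded and bounded away from $0$, so both ratios tend to $0$ for every $l\neq 1,n$. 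Since $m_1\neq 0$ by the weak mass hypothesis, the single term $l=1$ dominates each sum without cancellation, and I conclude $z_n\approx Z_{1n}$ and $w_n\approx W_{1n}$ as genuine two-sided equivalences. Using $Z_{1n}/W_{1n}=z_{1n}/w_{1n}$ to eliminate the separations, these two equivalences combine into the proportionality $z_n/z_{1n}\approx w_n/w_{1n}$; hence it will be enough to prove $w_n\preceq w_{1n}$.

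The crucial step is to extract $w_n\preceq w_{1n}$ from the center of mass. Since the mass center is at the origin we have $\sum_k m_k w_k=0$; writing $w_k=w_n+w_{kn}$ and summing yields
\[
\Big(\sum_k m_k\Big)\,w_n=-\sum_{k\neq n}m_k\,w_{kn}.
\]
Every $w_{kn}$ with $k\neq n$ satisfies $w_{kn}\approx w_{1n}$ (trivially for $k=1$, and by hypothesis otherwise), so the right-hand side is $\preceq w_{1n}$; since $\sum_k m_k\neq 0$, this forces $w_n\preceq w_{1n}$. The proportionality then gives $z_n\preceq z_{1n}$, and finally $z_1=z_n+z_{1n}\preceq z_{1n}$, so $z_1,z_n$ and the origin form a cluster of size $\approx z_{1n}$.

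I expect the first paragraph to demand the most care: one must check that exactly one summand dominates each of the two sums and that $m_1\neq 0$ rules out cancellation in the leading contribution, so that $z_n\approx Z_{1n}$ and $w_n\approx W_{1n}$ hold as equivalences rather than mere upper bounds — this is precisely the two-sided control that the proportionality step consumes. Once it is secured, the center-of-mass identity in the $w$-coordinate does the real work, and it neatly sidesteps the delicate, indeterminate comparisons between $w_{k1}$ and $w_{1n}$ that a direct assault through the equation for $z_1$ would entail.
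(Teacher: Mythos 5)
Your proof is correct, but a caveat about the comparison: this paper never proves Proposition~\ref{prop:AK1} at all --- it is imported verbatim from \cite[Prop.~1]{AK} --- so there is no in-paper argument to measure yours against, and I assess it on its own merits. All three pillars of your argument hold up. First, the dominance step: from $Z_{ln}/Z_{1n}=(z_{1n}/z_{ln})^{1/2}(w_{1n}/w_{ln})^{3/2}$ and $W_{ln}/W_{1n}=(z_{1n}/z_{ln})^{3/2}(w_{1n}/w_{ln})^{1/2}$ the hypotheses do force both ratios to $0$, and since the sums in (\ref{eqn:cc4}) are finite and $m_1\neq 0$ (the weak mass hypothesis with $I=\{1\}$), you legitimately get the two-sided equivalences $z_n\sim m_1Z_{1n}$ and $w_n\sim m_1W_{1n}$. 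Second, $Z_{1n}/W_{1n}=z_{1n}/w_{1n}$ is the right identity, and dividing the two equivalences indeed yields $z_n/z_{1n}\approx w_n/w_{1n}$. Third, the center-of-mass identity $\sum_k m_kw_k=0$ is available in this setting (the paper itself invokes it, e.g.\ in the proofs of Proposition~\ref{prop:rom4}(b) and Proposition~\ref{prop:romc4}) and survives the renormalization $z_k\mapsto az_k$, $w_k\mapsto a^{-1}w_k$ because it is homogeneous; since every $w_{kn}$ with $k\neq n$ is $\preceq w_{1n}$ by hypothesis and $\sum_k m_k\neq 0$, you correctly conclude $w_n\preceq w_{1n}$, hence $z_n\preceq z_{1n}$ and $z_1=z_n+z_{1n}\preceq z_{1n}$. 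Two of your side remarks deserve emphasis because they are exactly where a naive attempt would fail: the detour through the $w$-coordinate is genuinely necessary, since the analogous $z$-identity only bounds $z_n$ by $\max_{k}z_{kn}$, which is useless here; and the equation for $z_1$ is uncontrollable because $w_{k1}=w_{kn}-w_{1n}$ may be arbitrarily small by cancellation, so $Z_{k1}$ need not be dominated by $Z_{n1}$. Your proof identifies and avoids both traps, and the mechanism (dominant term in the equation for the clustered body, transferred by the mass-center first integral) is the natural one for this statement.
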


\begin{proposition}  \label{prop:AK2}
If there is some $k_0$ such that $r_{k_0l}\succ 1$ for all $l\neq k_0$, 
then there are some $k_1,k_2$ such that $z_{k_0}\prec z_{k_1}$ and $w_{k_0}\prec w_{k_2}$.
\end{proposition}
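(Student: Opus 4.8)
The plan is to work directly from the central configuration equations $z_{k_0}=\sum_{l\neq k_0}m_lZ_{lk_0}$ and $w_{k_0}=\sum_{l\neq k_0}m_lW_{lk_0}$ of the renormalized system (\ref{eqn:cc4}), and to show that the hypothesis $r_{k_0l}\succ1$ forces $z_{k_0}$ (resp. $w_{k_0}$) to be of strictly smaller order than the largest among all the $z_k$ (resp. $w_k$). The crucial preliminary observation is that the force term $Z_{lk_0}=z_{lk_0}^{-1/2}w_{lk_0}^{-3/2}$ simplifies once we use $z_{lk_0}w_{lk_0}=r_{k_0l}^2$: substituting $w_{lk_0}=r_{k_0l}^2/z_{lk_0}$ gives, in magnitude, $|Z_{lk_0}|=|z_{lk_0}|\,r_{k_0l}^{-3}$, and symmetrically $|W_{lk_0}|=|w_{lk_0}|\,r_{k_0l}^{-3}$. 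Thus each force term is the corresponding separation damped by the uniformly decaying factor $r_{k_0l}^{-3}$.

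First I would record that every identity used survives the renormalization $z_k\mapsto az_k$, $w_k\mapsto a^{-1}w_k$: indeed $Z_{lk}\mapsto aZ_{lk}$ and $W_{lk}\mapsto a^{-1}W_{lk}$, so the equations of (\ref{eqn:cc4}) and the relation $r_{kl}^2=z_{kl}w_{kl}$ are preserved. Next, writing $M_z=\max_k|z_k|$ and bounding the separation by the triangle inequality $|z_{lk_0}|=|z_{k_0}-z_l|\le 2M_z$, the equation for $z_{k_0}$ yields $|z_{k_0}|\le 2M_z\big(\sum_{l\neq k_0}|m_l|\big)\max_{l\neq k_0}r_{k_0l}^{-3}$. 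Since the masses are fixed and there are finitely many terms, while $r_{k_0l}\succ1$ gives $r_{k_0l}^{-3}\to 0$ for every $l\neq k_0$, this forces $|z_{k_0}|/M_z\to 0$.

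From here the conclusion is immediate. Passing to a subsequence on which the index realizing $M_z=\max_k|z_k|$ is a fixed $k_1$, the estimate $|z_{k_0}|/M_z\to 0$ rules out $k_1=k_0$, since otherwise the ratio would be identically $1$; hence $k_1\neq k_0$ and $z_{k_0}\prec z_{k_1}$. Running the same argument verbatim with $w$ and $W$ in place of $z$ and $Z$ produces an index $k_2$ with $w_{k_0}\prec w_{k_2}$, which completes the proof.

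I would expect the only delicate points to be bookkeeping rather than genuine obstacles: checking that the magnitude identity $|Z_{lk_0}|=|z_{lk_0}|\,r_{k_0l}^{-3}$ is independent of the branch chosen for the half-integer powers (it is, because we pass to absolute values and $z_{lk_0}w_{lk_0}=r_{k_0l}^2$ is a positive real), and confirming that $M_z\neq 0$ along the sequence so that the ratios are well defined, which holds since $z_k=x_k+iy_k$ cannot vanish for all $k$ without producing a total collision. The heart of the argument is the single reduction turning the force sum into a position sum weighted by the uniformly decaying factors $r_{k_0l}^{-3}$; once that is in hand, the proposition reduces to the elementary fact that a quantity strictly dominated by the maximum of a finite family cannot itself attain that maximum.
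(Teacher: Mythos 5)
The paper does not actually prove this proposition; it is imported verbatim from \cite[Prop.~2]{AK}, so there is no in-paper argument to compare against. Judged on its own merits, your proof is correct, and it is in essence the standard Albouy--Kaloshin argument: rewrite each force term via $Z_{lk_0}=z_{lk_0}r_{lk_0}^{-3}$ (valid up to sign, hence harmless for orders, since $Z_{lk_0}^2=z_{lk_0}^{-1}w_{lk_0}^{-3}=z_{lk_0}^2r_{lk_0}^{-6}$), substitute into $z_{k_0}=\sum_{l\neq k_0}m_lZ_{lk_0}$ from (\ref{eqn:cc4}), bound $|z_{lk_0}|\le 2\max_k|z_k|$ by the triangle inequality, and let $r_{k_0l}^{-3}\to 0$ force $z_{k_0}\prec\max_k|z_k|$, then argue symmetrically for $w$.

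Two small touch-ups, neither of which affects validity. First, in the complexified setting $z_{kl}w_{kl}=r_{kl}^2$ is a complex number, not a positive real (only for real normalized configurations is $w_k=\bar z_k$); your magnitude identity still holds for the reason just given, so the parenthetical justification should be adjusted rather than the conclusion. Second, $\max_k|z_k|\neq 0$ is cleanest to get from the constraint $\delta_{kl}^2z_{kl}w_{kl}=1$ in (\ref{eqn:cc4}), which forces $z_{kl}\neq 0$ and hence the $z_k$ cannot all vanish, rather than from a ``total collision'' argument, which is a real-variables intuition. Finally, fixing the maximizing index $k_1$ does require passing to a further subsequence, as you note; this is routine here, since singular sequences and their order levels are themselves defined through repeated extraction.
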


\begin{proposition}   \label{prop:AK4}
Suppose the diagram has exactly one maximal $z$-edge,  and it forms an isolated component,  
 say bodies 1 and 2 are endpoints of this edge. 
 If $m_1^3+m_2^3\neq 0$, then there is no $k$ with $3\leq k\leq n$ such that 
$w_{1k}\prec w_{1l}$ for all $l$ satisfying $l\neq k$ and $3\leq l\leq n$.
\end{proposition}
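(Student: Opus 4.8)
The plan is to argue by contradiction: first pin down the asymptotic orders forced by the isolated binary, then show that a unique nearest $w$-neighbor of body~$1$ would entail the relation $m_1^3+m_2^3=0$.

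First I would record the orders. Because the maximal $z$-edge between $1$ and $2$ forms an isolated component, Estimate~2 gives $z_{12}\approx\epsilon^{-2}$ and $w_{12}\approx\epsilon^2$, while Rules~1e and~1d force both $1$ and $2$ to be $z$-circled, so $z_1\approx z_2\approx\epsilon^{-2}$. Isolation means no $z$-stroke joins $\{1,2\}$ to any $l\geq3$, hence $Z_{l1},Z_{l2}\prec\epsilon^{-2}$ for all such $l$. Reading off the $z$-equations for bodies $1$ and $2$ and using $Z_{lk}=-Z_{kl}$, the leading balance is $z_1\sim m_2Z_{21}$ and $z_2\sim -m_1Z_{21}$, so $z_1/z_2\sim -m_2/m_1$; this matches Rule~1c, which says the center of mass $m_1z_1+m_2z_2$ is $z$-close to the origin, i.e.\ $m_1z_1+m_2z_2\prec\epsilon^{-2}$.

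The algebraic heart of the argument is the cancellation obtained by combining the two $z$-equations: the internal terms $\pm m_1m_2Z_{21}$ cancel, leaving the exact identity
\[
m_1z_1+m_2z_2=\sum_{l\geq3}m_l\bigl(m_1Z_{l1}+m_2Z_{l2}\bigr).
\]
I would then assume, for contradiction, that some $k$---say $k=3$---satisfies $w_{13}\prec w_{1l}$ for every $l\neq3$ with $3\leq l\leq n$. Since the $w$-separation enters $Z_{lk}=z_{lk}^{-1/2}w_{lk}^{-3/2}$ with the strong exponent $-3/2$, and since $w_{31}\sim w_{32}\sim w_{13}$ (these differ only by $w_{12}\approx\epsilon^2$), the term $l=3$ strictly dominates the sum, giving $m_1z_1+m_2z_2\sim m_3(m_1Z_{31}+m_2Z_{32})$.

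Finally I would extract the mass relation. In the generic range $z_3\prec\epsilon^{-2}$ one has $z_{31}\sim -z_1$ and $z_{32}\sim -z_2$; substituting $z_1\sim m_2Z_{21}$, $z_2\sim -m_1Z_{21}$ shows that $m_1Z_{31}+m_2Z_{32}$ equals $w_{13}^{-3/2}Z_{21}^{-1/2}$ times a combination of half-integer powers of $m_1$ and $m_2$ which, after squaring away the branch ambiguities, is a nonzero multiple of $m_1^3+m_2^3$. Hence if $m_1^3+m_2^3\neq0$ the right-hand side of the identity is genuinely nonzero and fixes the order of $m_1z_1+m_2z_2$ at $w_{13}^{-3/2}Z_{21}^{-1/2}$; I would derive the contradiction by confronting this with the order of the cluster's center of mass dictated independently by the subleading balance of the same two $z$-equations. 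The main obstacle will be twofold: proving rigorously that the $l=3$ term strictly dominates the far-body sum across all admissible order patterns---in particular handling the competition between $z$- and $w$-separation orders and the degenerate case $z_3\approx\epsilon^{-2}$---and controlling the branches of the fractional powers so that the surviving coefficient is unambiguously a nonzero multiple of $m_1^3+m_2^3$.
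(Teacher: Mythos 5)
A preliminary remark: the paper does not actually prove this statement — it is Proposition 2.5, imported verbatim from Albouy--Kaloshin \cite[Prop.~4]{AK} — so your attempt must be judged against what a complete argument requires rather than against an in-paper proof. Much of your setup is correct and is indeed the right mechanism: the order bookkeeping ($z_1\approx z_2\approx z_{12}\approx\epsilon^{-2}$, $w_{12}\approx\epsilon^2$, $r_{12}\approx 1$), the exact identity $m_1z_1+m_2z_2=\sum_{l\ge3}m_l(m_1Z_{l1}+m_2Z_{l2})$, and the observation that the per-body coefficient $m_1(-m_2)^{-1/2}+m_2m_1^{-1/2}$ vanishes precisely when $m_1^3+m_2^3=0$. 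Moreover, the first ``obstacle'' you flag is fillable with tools already in Section~2 of the paper: since the stroke between $1$ and $2$ is the \emph{only} maximal $z$-stroke, no body $l\ge3$ can be $z$-circled (a $z$-circle at $l$ would, by Rules 1a and 1e, force a second maximal $z$-stroke inside $l$'s isolated $z$-component), hence $z_l\prec\epsilon^{-2}$ and $z_{l1}\sim-z_1$, $z_{l2}\sim-z_2$ uniformly in $l$, while $w_{l1}\sim w_{l2}$ because $w_{12}\approx\epsilon^2\prec w_{1l}$. With these uniform equivalences every term of your sum has order $\epsilon\,w_{1l}^{-3/2}$ times one and the same nonzero constant, so the $k$-term genuinely dominates and no degenerate case survives.

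The fatal gap is the final step: you never actually produce a contradiction. Granting all of the above, your identity yields $m_1z_1+m_2z_2\approx\epsilon\,w_{1k}^{-3/2}$, a nonzero quantity — but nothing you invoke forbids this. Rule 1c only gives $m_1z_1+m_2z_2\prec\epsilon^{-2}$, which is automatic here since $w_{1k}\succ\epsilon^2$ implies $\epsilon\,w_{1k}^{-3/2}\prec\epsilon^{-2}$; and your proposed ``independent determination from the subleading balance of the same two $z$-equations'' is circular, because the identity you are exploiting \emph{is} that combination of those two equations. A second, genuinely independent relation is needed, and the standard one couples the $z$- and $w$-equations through $r_{12}$: subtracting the equations of bodies $1$ and $2$ and using $Z_{21}=-z_{12}r_{12}^{-3}$, $W_{21}=-w_{12}r_{12}^{-3}$ gives the exact relations $z_{12}\bigl(1+(m_1+m_2)r_{12}^{-3}\bigr)=\sum_{l\ge3}m_l(Z_{l1}-Z_{l2})$ and $w_{12}\bigl(1+(m_1+m_2)r_{12}^{-3}\bigr)=\sum_{l\ge3}m_l(W_{l1}-W_{l2})$, hence the exact proportion $w_{12}\sum_{l}m_l(Z_{l1}-Z_{l2})=z_{12}\sum_{l}m_l(W_{l1}-W_{l2})$. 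When $m_1^3+m_2^3\neq0$ both difference coefficients $(-m_2)^{-1/2}-m_1^{-1/2}$ and $(-m_2)^{-3/2}-m_1^{-3/2}$ are nonzero (the second vanishes exactly when $m_1^3+m_2^3=0$; the first when $m_1+m_2=0$, which also forces $m_1^3+m_2^3=0$), the $k$-terms dominate both sums, and the proportion reads $\epsilon^2\cdot\epsilon\,w_{1k}^{-3/2}\approx\epsilon^{-2}\cdot\epsilon^{3}\,w_{1k}^{-1/2}$, i.e.\ $w_{1k}\approx\epsilon^2$ — contradicting $w_{1k}\succ\epsilon^2$, which holds by Estimate 1(4) because $1$ and $k$ are not joined by a maximal $z$-edge. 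Without this (or an equivalent) coupling of the two sets of equations, your argument stops one step short of the proposition; as written, it establishes where the mass condition comes from but not why the hypothesized body $k$ cannot exist.
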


These propositions hold if  ``$z$'' and ``$w$'' were switched.

\subsection{Remarks on proof for finiteness}

In the case of 4 bodies, Albouy and Kaloshin first use coloring rules to construct all possible diagrams associated to  singular sequences. 
For some of those diagrams, they find an algebraic relation for masses that a singular sequence associated to the diagram 
necessarily satisfies. There are four diagrams with simple mass relations and a diagram that does not yield any mass relation. 

In the proof for finiteness, they often use the following arguments:

\begin{itemize}
\item If there are infinitely many normalized central configurations, then there is some polynomial function $p$ 
taking infinitely many values, and there is a singular sequence over which  $p$ tends to 0 or $\infty$.  \smallskip 
\item A singular sequence over which  $p_1\rightarrow 0$ (or $\infty$) is impossible in some diagrams, 
therefore another specified polynomial $p_2$ tends to 0 (or $\infty$) over this singular sequence. \smallskip 
\item Over a singular sequence, a polynomial $p\rightarrow 0$ (or $\infty$) implies that there is another singular sequence over which $p$ tends to $\infty$ (or $0$).  \smallskip 
\item A singular sequence over which  $p\rightarrow 0$ (or $\infty$) is impossible in some diagrams, 
therefore the masses must satisfy the mass relation of one of other diagrams. \smallskip 
\end{itemize}

Roughly speaking, the idea of proof for finiteness  is to take several singular sequences from the set of normalized central configurations, 
they are associated to different diagrams, but there are no masses satisfying  all of these corresponding mass relations.

% ----------------------------------------------------------------- %
\section{Matrix rules for singular sequences} \label{sec:MatrixRules}

In this section we define $z$-matrix, $w$-matrix, and $zw$-matrix associate to a singular sequence of central configurations, and
establish algebraic rules for these matrices. 
Most of these matrix rules are deduced from estimates, coloring rules, and propositions in the previous section. 
Some of them are immediate corollaries of coloring rules, but some others are more sophisticated. 
They provide effective tools for the implementation of our algorithms.

\begin{definition}
Given a singular sequence of central configurations 
for the $n$-body problem and consider the corresponding $zw$-diagram. 
The adjacency matrix for its $z$-diagram with $z$-circles regarded as self-loops is called 
the {\em $z$-matrix} associated to the singular sequence. More precisely, the $z$-matrix is the symmetric 
$n\times n$ binary matrix $A=(a_{ij})$ given by 
\beq
    a_{ij} &=&1 \;\;\text{if}\;\; i\neq j\;\; \text{and there is a $z$-stroke between bodies $m_i$ and $m_j$}; \\
             &=& 0 \;\;\text{if}\;\; i\neq j\;\; \text{and there is no $z$-stroke between bodies $m_i$ and $m_j$}; \\
    a_{ii} &=& 1 \;\;\text{if there is a $z$-circle at $m_i$}; \\
             &=& 0 \;\;\text{if there is no $z$-circle at $m_i$}.
\eeq
Similarly, the {\em $w$-matrix} associated to the singular sequence is the adjacency matrix for the $w$-diagram
with $w$-circles regarded as self-loops. It is the $n\times n$ binary matrix $B=(b_{ij})$ given by 
\beq
    b_{ij} &=&1 \;\;\text{if}\;\; i\neq j\;\; \text{and there is a $w$-stroke between bodies $m_i$ and $m_j$}; \\
             &=& 0 \;\;\text{if}\;\; i\neq j\;\; \text{and there is no $w$-stroke between bodies $m_i$ and $m_j$}; \\
    b_{ii} &=& 1 \;\;\text{if there is a $w$-circle at $m_i$}; \\
             &=& 0 \;\;\text{if there is no $w$-circle at $m_i$}.
\eeq
The {\em $zw$-matrix} of the singular sequence is the $n\times 2n$ binary matrix $(A|B)$. 
We say $A$ and $B$ are {\em companion matrices} of each other. 
\end{definition}

\begin{definition}
A  {\em connected component} of a $z$-matrix $A$\ (resp. $w$-matrix $B$)\ is a subset $I$ of $\{1,2,\ldots,n\}$ 
such that the principal submatrix obtained by removing all $j$-th columns and all $j$-th rows with $j\notin I$ 
from $A^{n+1}-I$\ (resp. $B^{n+1}-I$)\ is a maximal principal submatrix whose entries are all positive.
\end{definition}

\begin{definition}
A $z$-matrix $A$\ (resp. $w$-matrix $B$)\ is said to be \textit{fully connected} if $\{1,2,\ldots,n\}$ is a 
connected component of $A$\ (resp. $B$).
\end{definition}

Our matrix rules are necessary conditions for possible $z$-matrices and $w$-matrices.
They can be classified according to types of diagrams involved. 
If a rule concerns either the $z$-diagram or the $w$-diagram, but not both, we say it is a {\em monocolored matrix rule}.  
If the criterion concerns the $zw$-diagram, we call it  a  {\em bicolored matrix rule}. 
It is worth noting that monocolored matrix rules are not merely matrix formulations of one color rules,
neither are bicolored matrix rules simply corollaries of two color rules. 

In below, we will classify matrix rules according basic features of matrices, and the number of vertices involved. 
Table~\ref{table:ClassifyMxRules} shows our classification and their correspondence with coloring rules. 
One of them does not come from any coloring rule. 
Many of them require estimates and propositions in the previous section but we omit these dependence in the table.

{\small
\begin{tabular} {p{22mm}| p{56mm} | p{25.5mm} |p{12mm}|p{12mm} } 
\multirow{2}{*}{\bf Classification} 
                             & \multirow{2}{*}{\bf Matrix Rule} &  \multirow{2}{*}{\bf Coloring Rule} &{\bf  Mono-colored} & {\bf Bi-colored} \\
\hline \hline 
\multirow{2}{*}{Column Sum}
                              & Rule of Column Sums & 1a & $\quad \surd$ & \\  \cline{2-5}
                              & Rule of Two Column Sums & 2a & & $\quad \surd$ \\ 
\hline
\multirow{6}{*}{Trace}
                              & First Rule of Trace & 1d & $\quad \surd$ & \\  \cline{2-5}
                              & Second Rule of Trace      &  1e &  & $\quad \surd$ \\  \cline{2-5}
                              & First Rule of Trace-2 Matrices &  1a,1d,2c,2e & $\quad \surd$ & \\   \cline{2-5}
                              & Second Rule of Trace-2 Matrices & \scriptsize{1a,1d,1e,2a,2c,2e,cm} & &  $\quad \surd$ \\   \cline{2-5}
                              & Rule of Trace-3 Matrices &  1c,~2c,~2e & $\quad \surd$& \\     \cline{2-5}
                              & Rule of Trace-0 Principal Minors &  (none) & $\quad \surd$& \\ 
\hline                                                         
\multirow{1}{*}{Edge}
                             & \multirow{1}{*}{Rule of Circling}    & cm &  &  $\quad \surd$  \\  
\hline              
\multirow{4}{*}{Triangle}
                              & First Rule of Triangles &  2c & $\quad \surd$ & \\  \cline{2-5}  
                              & Second Rule of Triangles      &  2c &   $\quad \surd$  & \\  \cline{2-5}
                              & Third Rule of Triangles &  2e &  &$\quad \surd$  \\ \cline{2-5}
                              & Fourth Rule of Triangles &  2e &  &$\quad \surd$  \\
\hline
\multirow{2}{*}{Quadrilateral}
                              & First Rule of Quadrilaterals &  2f & $\quad \surd$ & \\  \cline{2-5}
                              & Second Rule of Quadrilaterals     &  2g & & $\quad \surd$  \\  
\hline
Pentagon              & Rule of Pentagons &  2d &  & $\quad \surd$  \\        
\hline
\multirow{2}{*}{Connectedness}
                             & Rule of Fully Connected Companions &  1e,2d &  & $\quad \surd$  \\  \cline{2-5}
                              & Rule of Connected Components     &  1c &  & $\quad \surd$   \\  
\end{tabular}     \smallskip
\captionof{table}{Classifications of matrix rules. Among coloring rules, ``cm'' is abbreviation for circling method.} \label{table:ClassifyMxRules}
}

Names of our matrix rules are mostly intuitive. 
For example, rules about trace are restrictions on traces of $z$-, $w$-, or $zw$-matrices, or restrictions to cases with a fixed trace; 
rules about triangles are restrictions on three vertices in the diagram; 
rules about quadrilaterals are restrictions on four vertices, and so on. 
An exception is the rule about a single edge, we name it the Rule of Circling because it is an immediate consequence of 
the Circling Method.

\smallskip
\noindent{\bf Assumptions \& Notations:} 
In what follows we assume symmetric $n\times n$ binary matrices $A=(a_{ij})$ and $B=(b_{ij})$
are respectively the $z$-matrix and $w$-matrix associated to a singular sequence of
central configurations. The associated $zw$-matrix is $(A|B)$. Let $A+B=C=(c_{ij})$.

\begin{theorem}  
\label{thm:CS}   Let $E=A$ or $B$.
\begin{enumerate}
\item[(a)]  {\bf (Rule of Column Sums)}  
No column sum of $E$ is equal to 1, and column sums of $E$ cannot be all equal to  0. 

\item[(b)]  {\bf (Rule of Two Column Sums)} 
If $c_{ij}=2$ for some $1\leq i< j\leq n$, then
$$
\left( \sum_{k\neq i,j} (a_{ki}+a_{kj})\right)\left(\sum_{k\neq i,j} (b_{ki}+b_{kj})\right) \neq 0. 
$$
\end{enumerate}
\end{theorem}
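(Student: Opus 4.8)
The plan is to translate each matrix inequality directly into the coloring language and then invoke the relevant \textbf{One Color Rule} (for part (a)) or \textbf{Two Colors Rule} (for part (b)); both parts are essentially faithful transcriptions of Rules 1a and 2a. The bridge in both cases is the observation that, for the symmetric binary matrix $E=A$, the $i$-th column sum splits as $\sum_{k} a_{ki} = a_{ii} + \sum_{k\neq i} a_{ki}$, where the diagonal term $a_{ii}\in\{0,1\}$ records whether there is a $z$-circle at vertex $i$ and the off-diagonal term $\sum_{k\neq i} a_{ki}$ counts the $z$-strokes incident to $i$ (its $z$-degree). The same decomposition holds verbatim for $E=B$ with ``$z$'' replaced by ``$w$''.

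For part (a), I would argue by contradiction using Rule 1a. Suppose some column sum equals $1$. Since both summands above are nonnegative integers, exactly one of two cases occurs: either $a_{ii}=1$ and $\sum_{k\neq i} a_{ki}=0$, or $a_{ii}=0$ and $\sum_{k\neq i} a_{ki}=1$. In the first case there is a $z$-circle at $i$ but no $z$-stroke emanating from $i$, contradicting the clause of Rule 1a that every $z$-circle has a $z$-stroke emanating from it. In the second case $i$ is an endpoint of exactly one $z$-stroke and carries no $z$-circle, contradicting the clause that at each end of a $z$-stroke there is either another $z$-stroke or a $z$-circle. For the second assertion, if all column sums vanished then $A$ would be the zero matrix, meaning there are no $z$-strokes at all, which contradicts the first clause of Rule 1a asserting the existence of at least one $z$-stroke. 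The case $E=B$ is identical.

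For part (b), the first step is to read off the hypothesis: since $a_{ij},b_{ij}\in\{0,1\}$ and $i\neq j$, the equation $c_{ij}=a_{ij}+b_{ij}=2$ forces $a_{ij}=b_{ij}=1$, i.e.\ there is a $zw$-edge between $i$ and $j$. Next I would identify the two factors: $\sum_{k\neq i,j}(a_{ki}+a_{kj})$ is exactly the total number of $z$-strokes running from $i$ or from $j$ to vertices outside $\{i,j\}$, and similarly for the $w$-factor. As these are nonnegative integers, the product is nonzero precisely when each factor is positive, so it suffices to show each is at least $1$. This is Rule 2a applied to the $zw$-edge $\{i,j\}$: it guarantees another $z$-stroke and another $w$-stroke emanating from the edge, and such strokes must terminate at a vertex outside $\{i,j\}$, since there is only the single (binary) edge joining $i$ and $j$. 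Hence both factors are at least $1$ and the product is nonzero. The one point requiring care, and the closest thing to an obstacle here, is the bookkeeping that the restriction $k\neq i,j$ correctly excludes the internal edge while still capturing every external stroke; once this encoding is checked, the argument is a direct reading of Rule 2a.
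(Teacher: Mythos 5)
Your proposal is correct and follows essentially the same route as the paper: part (a) is the case analysis implicit in the paper's appeal to Rule~1a (circle with no stroke, or stroke endpoint with neither another stroke nor a circle, or no strokes at all), and part (b) is exactly the paper's argument — $c_{ij}=2$ gives a $zw$-edge, Rule~2a supplies another $z$-stroke and $w$-stroke emanating from it, and since the graph is simple these must terminate outside $\{i,j\}$, making both factors positive. Your extra remark about the $k\neq i,j$ bookkeeping is a sound and worthwhile clarification of a point the paper leaves implicit.
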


\begin{proof}
Part (a) follows easily from Rule~1a. 

Note that $c_{ij}=2$ means that there is an $zw$-edge between  $i$ and $j$. By Rule~2a, 
there are another $z$-stroke emanating from this $zw$-edge. 
Therefore $\sum_{k\neq i,j}a_{ki}>0$ or $\sum_{k\neq i,j}a_{kj}>0$, it follows that $\sum_{k\neq i,j}(a_{ki}+a_{kj})> 0$. 
Similarly, $\sum_{k\neq i,j}(b_{ki}+b_{kj})> 0$. This proves (b).  
\end{proof}

\begin{theorem}  
{\bf (Rule of Circling)} For any $1\leq i<j\leq n$,  we have 
$$
   a_{ij}(b_{ii}+b_{jj}), \; b_{ij}(a_{ii}+a_{jj}) \neq 1. 
$$ 
\end{theorem}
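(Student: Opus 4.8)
The plan is to exploit the fact that every entry of $A$ and $B$ lies in $\{0,1\}$, so each of the two products can equal $1$ only in a single, very specific configuration of strokes and circles, and that configuration is exactly what the Circling Method forbids. First I would analyze the product $a_{ij}(b_{ii}+b_{jj})$. Since $a_{ij}\in\{0,1\}$ and $b_{ii}+b_{jj}\in\{0,1,2\}$, this product equals $1$ if and only if $a_{ij}=1$ and $b_{ii}+b_{jj}=1$. The first condition says there is a $z$-stroke between bodies $i$ and $j$, and the second says exactly one of the two vertices $i$, $j$ carries a $w$-circle.

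Next I would invoke the Circling Method directly. Applied to a $z$-stroke between $i$ and $j$, it asserts that $i$ and $j$ are either both $w$-circled or both not $w$-circled; in matrix terms, $a_{ij}=1$ forces $b_{ii}=b_{jj}$. Consequently $b_{ii}+b_{jj}\in\{0,2\}$ whenever $a_{ij}=1$, so the value $1$ is never attained in that case. If instead $a_{ij}=0$, the product vanishes. Either way $a_{ij}(b_{ii}+b_{jj})\neq 1$.

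The second inequality is handled by the symmetric half of the Circling Method: a $w$-stroke between $i$ and $j$ (that is, $b_{ij}=1$) forces $i$ and $j$ to be both $z$-circled or both not $z$-circled, so $a_{ii}=a_{jj}$ and hence $a_{ii}+a_{jj}\in\{0,2\}$; when $b_{ij}=0$ the product is again $0$. Thus $b_{ij}(a_{ii}+a_{jj})\neq 1$.

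I do not anticipate a genuine obstacle here: the content of the statement is precisely a binary-matrix reformulation of the Circling Method, and the only step of any substance is the elementary observation that a $\{0,1,2\}$-valued sum equals $1$ exactly when its two $\{0,1\}$ summands differ---which is the situation the Circling Method rules out. The result therefore follows immediately once the two cases (stroke present, stroke absent) are separated.
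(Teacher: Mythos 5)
Your proof is correct and is exactly the paper's argument: the paper disposes of this rule with the single line ``It follows trivially from the Circling Method,'' and your case analysis (product equals $1$ forces a stroke together with exactly one circle, which the Circling Method forbids) is just that triviality written out in full. Nothing further is needed.
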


\begin{proof}
It follows trivially from the Circling Method. 
\end{proof}

\begin{theorem}     
\label{thm:TR}  
\begin{enumerate}
\item[(a)]  {\bf (First Rule of Trace)}  
The trace $\tr(A)$ of $A$ cannot be 1. 

\item[(b)]  {\bf (Second Rule of Trace)} 
If $c_{ij}\neq 1$ for every $1\leq i<j\leq n$, then  $\tr(C)=0$.  

\item[(c)]  {\bf (First Rule of Trace-2 Matrices)} If $\tr(A)=2$ and $a_{ii}=a_{jj}=1$ for some $i\neq j$, then $a_{ij}=1$
and $a_{ik}=a_{jk}$ for any $k\ne i,j$. 

\item[(d)]  {\bf (Second Rule of Trace-2 Matrices)} 
Assume $\tr(A)= 2$ and $a_{ii} = a_{jj} =1$ for some $i\ne j$, $a_{kl}=0$ for any $k\ne l$, $\{k,l\}\ne \{i,j\}$. 
Then $b_{ik}=b_{jk}=0$ for any $k$. If $m_i^3 + m_j^3 \ne 0$, then $\tr(B)\ne n-3$. 

\item[(e)] {\bf (Rule of Trace-3 Matrices)} Assume $\tr(A)=3$ and $a_{ii}=a_{jj}=a_{kk}=1$ for distinct $i, j, k$.  
If $l\not\in\{i,j,k\}$ and $a_{ij} = a_{ik} = a_{il} =1$, then $a_{jl}+a_{kl}\ne 0$. 
\end{enumerate}
These rules hold if $A$ and $B$ were switched. 
\end{theorem}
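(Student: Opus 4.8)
I read each trace condition as a statement about the number and placement of $z$- and $w$-circles and strokes, and discharge the five parts using the estimates, the one-colour rules, the circling method, and Propositions~\ref{prop:AK1}--\ref{prop:AK4}. Parts (a) and (b) are immediate. For (a), Rule~1d says that an isolated component carrying a $z$-circle carries at least two; hence the number of $z$-circles, which is $\tr(A)$, is a sum of integers each equal to $0$ or $\ge 2$, and such a sum can never equal $1$. For (b), the hypothesis $c_{ij}\ne 1$ for all $i<j$ means every edge is a $zw$-edge (if $a_{kl}=1$ then $c_{kl}\ge1$ and $c_{kl}\ne1$, so $b_{kl}=1$); by Estimate~2 such an edge has $z_{kl}\approx\epsilon$ and is therefore never maximal, so Rule~1e forbids $z$-circles in every isolated component, giving $\tr(A)=0$, and symmetrically $\tr(B)=0$, whence $\tr(C)=0$.

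\textbf{A common preliminary and part (e).} By Rule~1b, two vertices of different circle-status are never $z$-close, so $z_{il}\approx\epsilon^{-2}$ whenever $i$ is $z$-circled and $l$ is uncircled; in particular every $z$-stroke joining a circle to an uncircled vertex is maximal. When $\tr(A)=2$, Rule~1d further forbids the two circles from being $z$-close, so $z_{ij}\approx\epsilon^{-2}$; when $\tr(A)=3$ it forbids only that all three circles be $z$-close together. This already yields (e): if $a_{il}=1$ with $l$ uncircled then the $z$-edge $il$ is maximal, so were $a_{jl}=a_{kl}=0$, the consecutive $z$-edges $li,ij$ would not lie in a triangle (Rule~2e forces any triangle on $l,i,j$ to be monochromatic, so its third side would be the missing $z$-stroke $jl$); the Corollary of Rule~2c would then make both $ij$ and $ik$ non-maximal, i.e.\ $i$ would be $z$-close to both $j$ and $k$, whence $j$ and $k$ are $z$-close and all three circles are $z$-close together, contradicting Rule~1d.

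\textbf{Part (c).} This splits into the symmetry $a_{ik}=a_{jk}$ and the adjacency $a_{ij}=1$. Granting $a_{ij}=1$, the symmetry is the same triangle argument: a $z$-stroke $ik$ is maximal, so the consecutive maximal $z$-edges $ki,ij$ must lie in a triangle (Corollary of Rule~2c), forcing $a_{jk}=1$ by Rule~2e, and symmetrically. The real work is $a_{ij}=1$. By Rule~1d the two circles lie in a single isolated $z$-component and so are joined by a $z$-path; a length-two path $i,v,j$ is excluded at once, since its two maximal edges $iv,vj$ would violate the Corollary of Rule~2c unless $a_{ij}=1$. \emph{The main obstacle is excluding joining paths of length $\ge 3$}: their interior edges run between uncircled vertices and are non-maximal, so the Corollary of Rule~2c alone is inconclusive. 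I would resolve this by feeding each interior junction into Rule~2c (skew clustering): because the terminal edges are maximal ($z\approx\epsilon^{-2}$, $w\approx\epsilon^{2}$), the two clustering alternatives are pinned down, which propagates $w_i\approx w_{v_1}$ and $w_j\approx w_{v_{m-1}}$ along the path; combined with the circling method (which leaves $i$ and $j$ with no $w$-strokes, hence $w$-isolated and not $w$-circled) and the non-degeneracy demanded of the $w$-diagram by Rule~1a, the Corollary of Rule~2h, and Rule~2a, this should force the path to close into the direct edge. Organising the $z$- and $w$-order bookkeeping so that this works for every path length is the step I expect to be hardest.

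\textbf{Part (d).} The first assertion is pure circling method: a $w$-stroke at $i$ would force its other endpoint to be $z$-circled, hence equal to $j$, but the $z$-edge $ij$ is maximal and so carries no $w$-stroke (a $zw$-edge has $z\approx\epsilon$, not $\epsilon^{-2}$); thus $b_{ik}=b_{jk}=0$ for all $k$, and by Rule~1a the $w$-isolated vertices $i,j$ are not $w$-circled. For the second assertion, the hypotheses make $ij$ the unique maximal $z$-edge and an isolated component, exactly the setting of Proposition~\ref{prop:AK4}. If $\tr(B)=n-3$ then, since $i,j$ are not $w$-circled, exactly one further vertex $k_0\notin\{i,j\}$ is $w$-uncircled. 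Classifying $w$-closeness by circle-status as above, $w_{il}\approx\epsilon^{-2}$ for every $w$-circled $l$ while $w_{ik_0}\prec\epsilon^{-2}$, so $k_0$ is the unique $w$-closest body to $i$ among $l\ne i,j$; since $m_i^3+m_j^3\ne 0$ this contradicts Proposition~\ref{prop:AK4}, giving $\tr(B)\ne n-3$. The $z\leftrightarrow w$ symmetric versions follow verbatim.
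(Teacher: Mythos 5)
Your parts (a) and (b) are correct and coincide with the paper's one-line arguments, your part (d) is correct and follows the paper's route (maximality of the unique $z$-stroke, the Circling Method, Rule~1a, then Proposition~\ref{prop:AK4} applied to the unique $w$-uncircled body $k_0$ outside $\{i,j\}$), and your part (e) is a correct \emph{alternative} to the paper's proof: the paper uses Rule~1c (mass center) to force one of $z_{ij},z_{ik}$ to be maximal and then closes a triangle, whereas you assume $a_{jl}=a_{kl}=0$, use Rule~2e and the Corollary of Rule~2c to make both $z_{ij},z_{ik}$ non-maximal, and contradict Rule~1d; your version avoids the mass-center condition entirely.

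The genuine gap is the first claim of part (c), namely $a_{ij}=1$, which you explicitly leave as a sketch. Excluding a length-two $z$-path is indeed immediate, but your plan for the general case --- propagating skew clustering (Rule~2c) along a $z$-path from $i$ to $j$ and combining it with Rule~1a, Rule~2a and the Corollary of Rule~2h --- is not carried out, and it is far from clear that it can be: the isolated component containing $i$ and $j$ need not be a path (it may branch), its interior edges join uncircled vertices so neither their $z$- nor their $w$-orders are pinned down, and Rule~2c only yields an either/or at each junction; the ``bookkeeping'' you defer is precisely the content of the proof. The paper takes a different and shorter route which needs no path analysis: assuming $a_{ij}=0$, it first shows (Corollary of Rule~2c together with Rule~2e) that the maximal $z$-stroke from $i$ to its neighbor $k$ is the \emph{only} $z$-stroke at $i$; consequently every $z$-separation $z_{ip}$, $p\ne i$, is $\approx\epsilon^{-2}$ (uncircled vertices by Rule~1b, and $z_{ij}\approx\epsilon^{-2}$ by Rule~1d), while $w_{ik}\approx\epsilon^{2}$ is strictly the smallest $w$-separation at $i$ (Estimates~1,~2). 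These are exactly the hypotheses of Proposition~\ref{prop:AK1} with $z$ and $w$ switched, which gives $w_i\preceq\epsilon^{2}$; symmetrically $w_j\preceq\epsilon^{2}$, hence $w_{ij}\preceq\epsilon^{2}$, and Estimate~1 then forces a maximal $z$-edge between $i$ and $j$ --- contradicting $a_{ij}=0$. So the missing ingredient in your proposal is Proposition~\ref{prop:AK1} (or an equivalent clustering statement); without it, part (c) is unproven, and part (d), which begins by invoking (c) to obtain the stroke between $i$ and $j$, inherits the gap.
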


\begin{proof}
Part (a) follows immediately from Rule~1d. 

If $c_{ij}\neq 1$ for any $1\leq i<j\leq n$, then all edges are $zw$-edges, none of which is maximal, according to Estimate~1. 
By Rule~1e, there is no circle, and so $c_{kk}=0$ for any $k$. This proves (b). 

Assume hypothesis of (c). Then there are $z$-circles around $i$ and $j$,  and no other $z$-circle. 
Suppose $a_{ij}\neq 1$, then there were no $z$-stroke between $i$ and $j$. 
By Rule~1a, there would be a $z$-strokes joining $i$ and $k$ for some $k\ne j$, and another $z$-stroke joining $j$ and $l$ for some $l\neq i$. 
These two $z$-strokes must be maximal since $k$ and $l$ were not $z$-circled.  
This implies $k\neq l$, for otherwise it would contradict  the Corollary of Rule~2c (if there is no edge between $i$ 
and $j$) or Rule~2e (if there is a $w$-edge between $i$ and $j$). 

Observe that the $z$-stroke joining $i$ and $k$ must be the only $z$-stroke emanating from $i$. 
To see this, suppose there were another $z$-stroke, say from $i$ to $p\; (\ne k, j)$, this $z$-stroke must be maximal, 
then by Corollary of Rule~2c and Rule~2e, $z_{pk}$ must be also maximal, which is impossible since 
$z_p, \, z_k \prec \epsilon^{-2}$  (i.e. neither $p$ nor $k$ were $z$-circled).   

Among $z$-separations, we have $z_{ip}\approx\epsilon^{-2}$ for all $p\neq i,j$. 
By Rule~1d, $i$ and $j$ belong to the same isolated component in the $z$-diagram, and $z_{ij}\approx\epsilon^{-2}$. 
Among $w$-separations, by Estimate~1, $w_{ik}\approx\epsilon^2$ and $w_{ip}\succ \epsilon^2$ for all $p\neq k$. 
According to Proposition~\ref{prop:AK1}, we obtain $w_i\preceq w_{ik}\approx \epsilon^2$. 
Similarly, $w_j\preceq w_{jl}\approx \epsilon^2$, and it follows that $w_{ij}\preceq \epsilon^2$, 
which contradicts  Estimate~1 and our assumption that there is no $z$-stroke between $i$ and $j$. 
This proves the first part of (c); i.e. $a_{ij}=1$. 

Suppose $a_{ik}=1$ for some $k\ne i,j$.  To complete the proof for (c), all we need to show is $a_{jk}=1$. 
The  $z$-stroke between $i$ and $k$ is maximal since $k$ is not $z$-circled. 
By Rule~1d, the $z$-stroke between $i$ and $j$ is also maximal. 
Then it follows from the Corollary of Rule~2c that $i$, $j$, $k$ form a triangle in the $zw$-diagram. 
By Rule~2e, the edge between $j$ and $k$ is of the same type as the other two edges, so $a_{jk}=1$.  This finishes the proof for (c).

Next, we assume the hypothesis of  (d). 
By (c)  we have  $a_{ij}=1$, so the $z$-diagram has exactly one $z$-stroke, and it is between $i$ and $j$.
Without loss of generality, we may assume $i=1$, $j=2$. 

By Rule~2a and the uniqueness of $z$-stroke, we have $b_{12}=0$. 
By the Circling Method, there is no $w$-stroke from vertices $1$, $2$ to other vertices, so $b_{1k}=b_{2k} =0$ for $k\ge 3$. 
Then by Rule~1a, there is no $w$-circle around $1$, $2$; i.e. $b_{11}=b_{22}=0$, and thus  $b_{1k}=b_{2k} =0$ for any $k$. 
Equivalently, it means that the two $z$-circles together with the $z$-stroke between them is 
an isolated component in the $zw$-diagram. By Rule~1e, the unique $z$-stroke is maximal. 

Assume $\tr(B)=n-3$; i.e. there are $n-3$ $w$-circles, from discussions above  these  
$w$-circles are disjoint from $z$-circles. 
Without loss of generality, assume they are around vertices  $3,\cdots, n-1$. Then for any $l\in \{3,\cdots, n-1\}$, 
$$
    w_{1n} \;=\; w_1-w_n \; \prec \; \epsilon^{-2}  \; \approx \; w_1-w_l \; =\;  w_{1l}
$$ 
since vertices $1$, $n$ are not $w$-circled but $l$ is. 
This contradicts Proposition~\ref{prop:AK4} provided $m_1^3+m_2^3\ne 0$. 
Therefore, we conclude that  $\tr(B)\ne n-3$ under this mass condition. The proves (d).

Finally, assume the hypothesis of (e). 
Then $z_p \prec z_i, z_j, z_k \approx \epsilon^{-2}$ for any $p \not\in\{i,j,k\}$, and by Rule~1c, 
$$
  (m_i+m_j+m_k) z_i - m_j z_{ij} - m_k z_{ik} \;=\;  m_i z_i + m_j z_j + m_k z_k \;\prec\; \epsilon^{-2}. 
$$
Therefore, at least one of the $z$-strokes $z_{ij}$, $z_{ik}$ is maximal.
Without loss of generality, assume $z_{ij}$ is maximal.   
The $z$-stroke $z_{il}$ is also maximal since $i$ is $z$-circled but $l$ is not. 
By Rule~2c, vertices $\{i,j,l\}$ form a triangle in the $zw$-diagram, 
and by Rule~2e the three edges of the triangle are of the same type. 
Then $a_{jl}+a_{kl} \ge a_{jl} =1$, thereby completing the proof of (e). 
\end{proof}

\begin{theorem}   
\label{thm:triangle} 
Consider $n\ge 3$ and given $1\leq i<j<k\leq n$. 
\begin{enumerate}
\item[(a)]  {\bf (First Rule of Triangles)}   
\beq
&&  \left(a_{ii}+a_{jj}+a_{kk}\right)  \left(a_{ii}+a_{ij}+a_{ik}\right)   \left(a_{ik}+a_{jk}+a_{kk}\right)  \left(a_{ij}+a_{jj}+a_{jk}\right) 
  \neq 16. 
\eeq

\item[(b)]  {\bf (Second Rule of Triangles)}  
\beq
&&  (a_{ii}+a_{ij}+a_{ik}+a_{jj}+a_{jk}+a_{kk})\, m_{i,j,k} \neq 9.
\eeq
where $m_{i,j,k}=\max\{a_{ii}+a_{ij}+a_{ik},\ a_{ji}+a_{jj}+a_{jk},\ a_{ki}+a_{kj}+a_{kk}\}$.

\item[(c)]  {\bf (Third Rule of Triangles)}  
\beq
 c_{ij} + c_{jk} + c_{ik}  \;\not\in\;  \{4, 5\}. 
\eeq

\item[(d)]  {\bf (Fourth Rule of Triangles)}  
\beq
  c_{ij} c_{jk} c_{ik} \max\left\{a_{ij}+a_{jk}+a_{ik}, b_{ij}+b_{jk}+b_{ik}\right\} \;\in\; \{0, 3, 24\}. 
\eeq
\end{enumerate}
Rules (a), (b) hold if $A$ is replaced by $B$. 
\end{theorem}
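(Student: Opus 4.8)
The plan is to exploit the fact that $A$ and $B$ are binary and $C=A+B$ has entries in $\{0,1,2\}$, so every factor appearing in (a)--(d) is a bounded nonnegative integer. For each part I would first solve the elementary arithmetic constraint imposed by the forbidden value, pin down the finitely many combinatorial configurations on the three vertices $\{i,j,k\}$ that realize it, and then rule out each such configuration by the coloring rules of Section~\ref{sec:ColoringRules}, chiefly Rule~2c, Rule~2e and their corollaries, together with Estimate~2.

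For part (a), each of the four factors is a sum of three $0/1$ entries, hence lies in $\{0,1,2,3\}$; since no factor of $3$ divides $16=2^4$, a product equal to $16$ forces every factor to lie in $\{1,2\}$ and then to equal $2$. I would then read off that exactly two of $i,j,k$ carry a $z$-circle, exactly two $z$-strokes are present, that these two strokes share the uncircled vertex, and that the two circled vertices are joined by no $z$-stroke. Part (b) is parallel: writing $S=a_{ii}+a_{ij}+a_{ik}+a_{jj}+a_{jk}+a_{kk}$ for the first factor, the constraint $S\cdot m_{i,j,k}=9$ with $S\le 6$ and $m_{i,j,k}\le 3$ forces $m_{i,j,k}=3$ and $S=3$, meaning one vertex is $z$-circled and $z$-joined to the other two, while those two carry neither a $z$-circle nor a $z$-stroke between them. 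In both cases the obstructing object is the same: two consecutive $z$-strokes sharing a vertex, with no closing $z$-stroke on the third pair. The key observation, already used in the proof of Theorem~\ref{thm:TR}(c) and justified by Estimate~2(5), is that a $z$-stroke joining a $z$-circled vertex to a non-$z$-circled one is maximal, since its $z$-separation is $\approx\epsilon^{-2}$; hence both of our shared $z$-strokes are maximal. I would finish with a dichotomy on the third pair: if it carries no stroke at all, the two consecutive maximal $z$-strokes are not part of a triangle, contradicting the Corollary of Rule~2c; if it carries a $w$-stroke, then $i,j,k$ form a triangle, and Rule~2e forces all three edges to share one type, hence a $z$-stroke on the third pair, contradicting its absence.

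Parts (c) and (d) follow more directly from Rule~2e. For (c), I would observe that $c_{ij}+c_{jk}+c_{ik}$ is exactly the total number of strokes joining the three vertices, so the Corollary of Rule~2e, which asserts this number lies in $\{0,1,2,3,6\}$, immediately excludes $4$ and $5$. For (d), the product $c_{ij}c_{jk}c_{ik}$ is nonzero precisely when all three pairs are edges, i.e. when $i,j,k$ form a triangle; if it vanishes the whole expression is $0$. When the triangle is present, Rule~2e says the three edges share one type, leaving only three possibilities: an all-$z$ or all-$w$ triangle, where $(c_{ij},c_{jk},c_{ik})=(1,1,1)$ and the larger of the two stroke counts is $3$, giving $1\cdot 3=3$; or an all-$zw$ triangle, where each $c$ equals $2$ and the maximum is again $3$, giving $8\cdot 3=24$. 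Thus the expression is always $0$, $3$, or $24$.

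I expect the only genuinely delicate point to be the dichotomy step in parts (a) and (b): one must check that the maximality of the two shared $z$-strokes is unconditional, so that the case of an empty third pair reduces cleanly to the Corollary of Rule~2c, while the alternative in which a $w$-stroke closes the triangle is killed by Rule~2e rather than by the Corollary of Rule~2c. The arithmetic reductions themselves are routine but must be argued as exhaustive; essentially all the conceptual content lies in translating \emph{two consecutive maximal $z$-strokes with no closing $z$-edge} into these two coloring-rule contradictions.
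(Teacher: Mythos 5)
Your proposal is correct and follows essentially the same route as the paper's proof: the same arithmetic reduction to the same finitely many configurations in (a) and (b), the same key observation (via Estimate~2) that a $z$-stroke joining a $z$-circled vertex to a non-$z$-circled one is maximal, and the same appeals to Rule~2c for (a),(b) and to Rule~2e and its Corollary for (c),(d). The one point where you are more careful than the paper is your explicit dichotomy on the third pair in (a) and (b): the paper's proof cites only Rule~2c, whose hypothesis requires the two consecutive edges not to be part of a triangle, so your Rule~2e branch handling a closing $w$-stroke genuinely tightens the argument (the paper carries out exactly this dichotomy in its proof of Theorem~\ref{thm:TR}(c), but suppresses it here).
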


\begin{proof}
Suppose the product in (a) equals 16. The only possibility is that each of the four factors equals 2. 
One can easily enumerate all possibilities:
\begin{itemize}
\item[(i)] $a_{ii}=0$ implies $a_{ij}=a_{ik}=a_{jj}=a_{kk}=1$ and $a_{jk}=0$. 
\item[(ii)] $a_{jj}=0$ implies $a_{ii}=a_{ij}=a_{jk}=a_{kk}=1$ and $a_{ik}=0$. 
\item[(iii)] $a_{kk}=0$ implies $a_{ii}=a_{ik}=a_{jj}=a_{jk}=1$ and $a_{ij}=0$. 
\end{itemize}
In case (i), the $z$-diagram contains a $z$-stroke from $j$ to $i$, a $z$-stroke from $i$ to $k$, no $z$-stroke from $j$ to $k$.
Both $j$ and $k$ are $z$-circled, and $i$ is not $z$-circled. This implies $z$-strokes from $j$ to $i$ and from $i$ to $k$ 
are both maximal. This contradicts Rule~2c. 

Similarly, case (ii) results in a $z$-diagram with two consecutive $z$-strokes, one from $i$ to $j$, the other from $j$ to $k$,
and no $z$-stroke from $i$ to $k$. Moreover, $i$ and $k$ are $z$-circled and $j$ is not. This again contradicts 
Rule~2c. The discussion for case (iii) is identical. This proves (a).

The proof for (b) is similar. Suppose the equation in (b) equals 9. 
One of the three terms in $\max\{...\}$ must be 3, and the first factor $(a_{ii}+a_{ij}+\cdots)$ must be also equal to 3,
so there are precisely three 1's and three 0's.  Without loss of generality, consider the case
$$
  a_{ii}=a_{ij}=a_{ik}=1, \;\; a_{jj}=a_{jk}=a_{kk}=0. 
$$
The $z$-diagram contains a $z$-stroke from $j$ to $i$, a $z$-stroke from $i$ to $k$, no $z$-stroke from $j$ to $k$.
Neither $j$ nor $k$ are $z$-circled, so these two vertices are not maximal. Vertex $i$ is maximal since it is $z$-circled,
so $z$-strokes from $j$ to $i$ and from $i$ to $k$  were both maximal. Again, this contradicts Rule~2c. 

Part (c) follows immediately from the Corollary of Rule~2e. 

To prove (d), we only need to consider $c_{ij}c_{jk}c_{ik}\neq 0$, in which case there is some edge between each pair of vertices $i,j,k$. 
The three edges form a triangle, by Rule~2e they are of the same type. 
If they are $zw$-edges, then the discriminant 
$$c_{ij} c_{jk} c_{ik} \max\left\{a_{ij}+a_{jk}+a_{ik}, b_{ij}+b_{jk}+b_{ik}\right\}$$ 
in (d)  is clearly $24$; if they are $z$-edges or $w$-edges, then the discriminant is clearly  $3$. 
\end{proof}

\begin{theorem}    
\label{thm:quadrilateral} 
Consider $n\ge 4$ and given $1\leq i<j<k\leq \ell \le n$. 
\begin{enumerate}
\item[(a)]  {\bf (First Rule of Quadrilaterals)}   
\beq
&& a_{ij}+a_{ik}+a_{i\ell}+a_{jk}+a_{j\ell}+a_{k\ell} \neq 5.
\eeq

\item[(b)]  {\bf (Second Rule of Quadrilaterals)}  
If $c_{ij} c_{jk} c_{k\ell} c_{\ell i} \neq 0$, then
\beq
  a_{ij}+a_{k\ell}, \;   a_{jk}+a_{\ell i}, \;   b_{ij}+b_{k\ell }, \;   b_{jk}+b_{\ell i} \; \neq \; 1.  
\eeq
\end{enumerate}
Rule (a) holds if $A$ is replaced by $B$. 
\end{theorem}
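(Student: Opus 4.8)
The plan is to derive both parts directly from the two colors rules, exactly as indicated in Table~\ref{table:ClassifyMxRules}: part (a) from Rule~2f and part (b) from Rule~2g. The only real work is to translate the arithmetic conditions on $A$, $B$, and $C=A+B$ into statements about the edges of the $zw$-diagram, after which the contradiction can be read off. Throughout I will use the dictionary that, for $p\ne q$, there is an edge between $p$ and $q$ exactly when $c_{pq}\ge 1$; that such an edge carries a $z$-stroke (i.e.\ it is a $z$-edge or a $zw$-edge) iff $a_{pq}=1$; and that it carries a $w$-stroke iff $b_{pq}=1$. Equivalently, the type of an edge is completely determined by the pair $(a_{pq},b_{pq})$.

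For part (a), I would first observe that the left-hand side simply counts the $z$-strokes among the four vertices $i,j,k,\ell$, so the forbidden value $5$ means the $z$-subgraph on these vertices is $K_4$ with exactly one edge deleted. Say the deleted $z$-stroke is between a pair $\{p,q\}$, and let $r,s$ be the other two vertices. Then the strokes on $\{r,s\}$, $\{r,p\}$, $\{s,p\}$ are all present, so $\{r,s,p\}$ is a triangle in the $zw$-diagram, while $q$ is joined to both $r$ and $s$ by $z$-strokes. This is precisely the configuration required to invoke Rule~2f (a triangle together with an extra vertex meeting it in at least two edges). Rule~2f then forces an edge between $p$ and $q$ and, moreover, declares all six edges among the four vertices to be of the same type. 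Since the triangle edges are $z$-strokes, that common type is either ``$z$-edge'' or ``$zw$-edge''; in both cases the forced edge between $p$ and $q$ carries a $z$-stroke, so $a_{pq}=1$, contradicting that this was the deleted stroke. Hence the sum cannot equal $5$, and interchanging the roles of $A$ and $B$ gives the $B$-version.

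For part (b), the hypothesis $c_{ij}c_{jk}c_{k\ell}c_{\ell i}\ne 0$ says that each of the pairs $\{i,j\}$, $\{j,k\}$, $\{k,\ell\}$, $\{\ell,i\}$ supports at least one stroke, so these four edges form a quadrilateral whose opposite pairs are $\{ij,k\ell\}$ and $\{jk,\ell i\}$. By Rule~2g each pair of opposite edges is of the same type. I would then invoke the dictionary above: on two edges of equal type the two $a$-values coincide and the two $b$-values coincide. Consequently $a_{ij}=a_{k\ell}$, $a_{jk}=a_{\ell i}$, $b_{ij}=b_{k\ell}$, and $b_{jk}=b_{\ell i}$, so each of the four sums in the statement is either $0$ or $2$ and can never equal $1$.

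The proofs are short once the translation is in place, so there is no genuine obstacle; the only points needing care are both in part (a). First, one must check that the required triangle and the two connecting strokes really exist no matter which of the six entries is the vanishing one — this is just the symmetry observation that deleting the stroke on $\{p,q\}$ always leaves the triangle $\{r,s,p\}$ intact and leaves $q$ adjacent to $r$ and $s$. Second, the ``same type'' conclusion of Rule~2f by itself permits either a $z$-edge or a $zw$-edge, so one must verify that both alternatives still force $a_{pq}=1$; this is the step I would write out most explicitly.
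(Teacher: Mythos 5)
Your proof is correct and takes exactly the paper's approach: the paper's entire proof of this theorem is the one-line observation that part (a) follows from Rule~2f and part (b) from Rule~2g, which is precisely the route you take. You have merely written out the routine details the paper leaves implicit — identifying the triangle-plus-attached-vertex configuration needed to invoke Rule~2f, checking that the forced common type must carry a $z$-stroke in either case ($z$-edge or $zw$-edge), and translating ``same type'' into equality of the $(a,b)$ entries for Rule~2g.
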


\begin{proof}
Part (a) follows easily from Rule~2f. 
Part (b) follows easily from Rule~2g. 
\end{proof}

\begin{theorem}  
{\bf (Rule of Pentagons)} 
Consider $n\ge 5$ and distinct $i_1,i_2,i_3,i_4,i_5\in\{1,\ldots,n\}$.  Let $i_0=i_5$.
Suppose  
$$
   \prod_{k=0}^4c_{i_{k}i_{k+1}} \; \ne \; 0. 
$$
Let 
\beq
  N_z &=& \# \{k\; : \; (a_{i_k i_{k+1}}, b_{i_k i_{k+1}}) =(1,0), \, k=0,1,2,3,4\} \\
  N_w &=& \# \{k\; : \; (a_{i_k i_{k+1}}, b_{i_k i_{k+1}}) =(0,1), \, k=0,1,2,3,4\} \\
  N_{zw} &=& \# \{k\; : \; (a_{i_k i_{k+1}}, b_{i_k i_{k+1}}) =(1,1), \, k=0,1,2,3,4\}. 
\eeq
Then $N_z\ne 1$, $N_w\ne 1$, $(N_z,N_w,N_{zw}) \ne (4,0,1)$, $(0,4,1)$. 
\end{theorem}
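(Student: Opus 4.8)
The plan is to derive all four conclusions from Rule~2d applied to the given pentagon, which is a genuine cycle of five edges precisely because $\prod_{k=0}^{4}c_{i_ki_{k+1}}\neq 0$. The bridge between the combinatorial data $(N_z,N_w,N_{zw})$ and Rule~2d is a single observation: the \emph{type} of an edge is detected by the order of its $z$-separation (equivalently its $w$-separation) relative to $\epsilon$. Concretely, I would first record, straight from Estimates~\ref{thm:est1} and~\ref{thm:est2}, that for an edge between $k$ and $l$ one has: a pure $z$-edge forces $z_{kl}\succ\epsilon$ and $w_{kl}\prec\epsilon$; a $zw$-edge forces $z_{kl}\approx\epsilon\approx w_{kl}$; and a pure $w$-edge forces $z_{kl}\prec\epsilon$ and $w_{kl}\succ\epsilon$. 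Indeed, a $z$-stroke gives $\epsilon\preceq z_{kl}\preceq\epsilon^{-2}$ and $\epsilon^2\preceq w_{kl}\preceq\epsilon$ by Estimate~\ref{thm:est2}(2)--(3), while being a $zw$-edge is exactly $z_{kl}\approx\epsilon\approx w_{kl}$ by Estimate~\ref{thm:est2}(4); so a \emph{pure} $z$-edge excludes these borderline orders and lands strictly on the stated side of $\epsilon$. The $w$-edge case is the $z\leftrightarrow w$ mirror.

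Second, I would prove $N_z\neq 1$ (and $N_w\neq 1$ by symmetry). Suppose $N_z\geq 1$. Then some edge of the pentagon is a pure $z$-edge, whose $z$-separation has order $\succ\epsilon$, so the maximal order of $z$-separations around the cycle is $\succ\epsilon$. By the dichotomy above, only pure $z$-edges can attain an order $\succ\epsilon$, since both $zw$-edges and $w$-edges have $z$-separation $\preceq\epsilon$. Rule~2d guarantees that at least two edges of the cycle realize this maximal $z$-order, and each such edge must therefore be a pure $z$-edge. Hence $N_z\geq 2$, so $N_z$ cannot equal $1$. Replacing $z$ by $w$ throughout yields $N_w\neq 1$.

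Third, I would rule out $(N_z,N_w,N_{zw})=(4,0,1)$ and $(0,4,1)$ by the same maximal-order count applied to the \emph{other} color. For $(4,0,1)$ I examine the $w$-separations: the four pure $z$-edges have $w$-separation $\prec\epsilon$, whereas the unique $zw$-edge has $w$-separation $\approx\epsilon$. Thus the maximal $w$-order along the cycle is $\approx\epsilon$ and is attained by exactly one edge, contradicting the ``at least two'' clause of Rule~2d in its $w$-form. The case $(0,4,1)$ is identical after switching $z$ and $w$: the unique $zw$-edge is then the sole edge attaining the maximal $z$-order $\approx\epsilon$, again contradicting Rule~2d.

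The routine part is the bookkeeping of orders; the only place demanding care, and the main obstacle, is establishing the order dichotomy cleanly, in particular keeping the inequalities \emph{strict} ($z_{kl}\succ\epsilon$ rather than $\succeq\epsilon$) for pure $z$- and $w$-edges. That strictness is exactly what makes a $zw$-edge the \emph{unique} carrier of order $\approx\epsilon$ in the $(4,0,1)$ and $(0,4,1)$ analyses, and it rests on reading Estimate~\ref{thm:est2}(4) as an exact characterization of $zw$-edges by $z_{kl}\approx\epsilon$. Once that dichotomy is in hand, each of the four conclusions reduces to a one-line maximal-order count from Rule~2d.
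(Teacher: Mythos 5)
Your proof is correct, and its skeleton --- Rule~2d combined with the strict reading of Estimate~2(4) --- is the same as the paper's; in particular your treatment of the excluded triples $(4,0,1)$ and $(0,4,1)$ coincides with the paper's argument essentially word for word. Where you genuinely differ is the step $N_z\neq 1$. The paper argues in two colors: it first invokes the \emph{same-type} clause of Rule~2d to conclude that the unique pure $z$-edge, say between $1$ and $2$, cannot carry the maximal $z$-order, so some edge $ij$ carrying a $w$-stroke satisfies $z_{ij}\succ z_{12}\succ\epsilon$; it then derives the contradiction $\epsilon^4 \prec z_{ij}^3 w_{ij} = W_{ij}^{-2}\approx \epsilon^4$ using $w_{ij}\succeq\epsilon$. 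You instead stay in the single color $z$: your dichotomy shows that every edge attaining the maximal $z$-order (which is $\succ\epsilon$ because a pure $z$-edge is present) must itself be a pure $z$-edge, so the \emph{at-least-two} clause of Rule~2d immediately gives $N_z\geq 2$. Your route is more uniform (one dichotomy plus counting serves all four conclusions), slightly stronger ($N_z\geq 1$ already forces $N_z\geq 2$), and avoids the $W_{ij}$ computation and the same-type clause altogether; the paper's route needs less scaffolding up front, since it never isolates the dichotomy as a standalone statement, at the price of that extra computation. Both proofs rest on the delicate point you flagged: Estimate~2(4) must be read as an exact characterization, so that pure $z$- and $w$-edges have separations strictly off the order $\epsilon$ --- the paper assumes exactly this when it writes $\epsilon\prec z_{12}$ and ``strictly greater.''
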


\begin{proof}
The assumption of the rule says that vertices $i_1,i_2,i_3,i_4,i_5$ form a pentagon in the $zw$-diagram. 
Sides of the pentagon form a cycle of edges between $i_k$ and $i_{k+1}$, $k=0,\cdots,4$.  
Without loss of generality, assume $i_k=k$ for each $k$. 

Note that $N_z$, $N_w$, $N_{zw}$ are respectively the number of $z$-edges, $w$-edges, $zw$-edges 
among side edges of the pentagon. 
Suppose $N_z=1$, we may assume the unique $z$-edge is between vertices 1, 2. Then by Rule~2d, 
$z_{12}$ cannot have the maximal order among $\{z_{12},z_{23}, z_{34}, z_{45}, z_{51}\}$. 
Let $z_{ij}$ be an edge among them with maximal order, then $ \epsilon \prec z_{12} \prec z_{ij}$  by Estimate~2. 
There is a $w$-stroke between $i$ and $j$, by Estimate~2 again we have $\epsilon \preceq w_{ij}$, so 
$$
  \epsilon^4 \; \prec  \;z_{ij}^3 w_{ij}\;  =\;  W_{ij}^{-2}  \;   \approx \; \epsilon^4,  
$$
a contradiction. This shows $N_z\ne 1$. Similarly,  $N_w\ne1$. 

Suppose $(N_z,N_w,N_{zw}) = (4,0,1)$;  i.e. there exists unique $zw$-edge, four $z$-edges, and no $w$-edge in this pentagon. 
By Estimate~2, the order of  the $w$-separation for this $zw$-edge is strictly greater than the order of  
$w$-separations for the four $z$-edges, but that contradicts Rule~2d. 
The proof  for $(N_z,N_w,N_{zw}) \ne (0,4,1)$ is similar. 
\end{proof}

\begin{theorem}   
\label{thm:conn} 
\begin{enumerate}
\item[(a)] {\bf (Rule of Fully Connected Companions)} 
If the $z$-matrix or $w$-matrix is fully connected, then its companion matrix has trace 0. 

\item[(b)]  {\bf (Rule of Connected Components)} 
If $I$ is a connected component of $A$ (resp. $B$), then
\begin{enumerate}
\item[(i)] $\sum_{i\in I}a_{ii}\neq 1$ (resp. $\sum_{i\in I}b_{ii}\neq 1$),
\item[(ii)] If $a_{ii}=1$ for all $i\in I$ (resp. $b_{ii}=1$), then there are distinct $i,j\in I$ such that $c_{ij}\neq 2$.
\end{enumerate}

\end{enumerate}
\end{theorem}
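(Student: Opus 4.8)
The plan is to handle the three assertions separately, reducing each to a one-color rule (Rule~1c or Rule~1d) combined with the relevant estimates and the Circling Method; I will argue the statements for $A$, the companion statements following from the $z\leftrightarrow w$ symmetry built into the estimates and coloring rules.

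For part (a), suppose the $z$-matrix is fully connected, so the $z$-diagram is connected through $z$-strokes. First I would apply the Circling Method across each $z$-stroke: if there is a $z$-stroke between $k$ and $l$, then $k,l$ are simultaneously $w$-circled or simultaneously not $w$-circled. Connectedness then propagates a single common $w$-circling status to all $n$ vertices, leaving only the two cases $\tr(B)=0$ and $\tr(B)=n$. To eliminate $\tr(B)=n$, I would invoke Estimate~2(3): across any $z$-stroke one has $w_{kl}\preceq\epsilon\prec\epsilon^{-2}$, so its endpoints are $w$-close; telescoping $w_{ab}=\sum w_{v_iv_{i+1}}$ along a path of $z$-strokes, and using that a finite sum of terms $\preceq\epsilon$ is again $\preceq\epsilon\prec\epsilon^{-2}$, I would conclude that \emph{all} vertices are $w$-close to one another. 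Since $\{1,\dots,n\}$ is trivially an isolated component of the $w$-diagram, having every vertex both $w$-circled and mutually $w$-close contradicts Rule~1d (with $z$ and $w$ interchanged). Hence $\tr(B)=0$.

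For part (b)(i), I would first note that a connected component $I$ of $A$ is in particular an isolated component of the $z$-diagram: maximality with respect to internal $z$-connectivity forbids any $z$-stroke leaving $I$, and the definition through $A^{n+1}-I$ moreover forces $|I|\ge 2$ with at least one genuine internal $z$-stroke. Rule~1d applied to this isolated component says the number of $z$-circles it contains is $0$ or at least $2$; that is exactly $\sum_{i\in I}a_{ii}\ne 1$. For part (b)(ii), I would argue by contradiction: assume every $i\in I$ is $z$-circled and every distinct pair $i,j\in I$ has $c_{ij}=2$. Then $a_{ij}=1$ for all pairs, so $I$ is a complete $z$-graph whose edges are all $zw$-edges, and Estimate~2(4) gives $z_{ij}\approx\epsilon$ for every pair, while $z_i\approx\epsilon^{-2}$ for each $i$ since all are $z$-circled. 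Fixing $i_0\in I$ and writing $z_i=z_{i_0}-z_{i_0i}$ with $z_{i_0i}\approx\epsilon$, the weak mass hypothesis $\sum_{i\in I}m_i\ne 0$ gives $\sum_{i\in I}m_iz_i=\bigl(\sum_{i\in I}m_i\bigr)z_{i_0}-\sum_{i\in I}m_iz_{i_0i}$, whose first term has order $\epsilon^{-2}$ and whose second term is $\preceq\epsilon$, so $\sum_{i\in I}m_iz_i\approx\epsilon^{-2}$. But $I$ is an isolated component, so Rule~1c forces its center of mass, and hence $\sum_{i\in I}m_iz_i$, to be $z$-close to the origin, i.e.\ $\prec\epsilon^{-2}$, a contradiction. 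Thus some distinct pair satisfies $c_{ij}\ne 2$.

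I expect the main obstacle to be the order bookkeeping in (b)(ii): one must confirm that the cluster leading term $\bigl(\sum_{i\in I}m_i\bigr)z_{i_0}$ genuinely survives at order $\epsilon^{-2}$ while the pairwise corrections remain $\preceq\epsilon$, which is precisely what converts the complete-$zw$-graph hypothesis into a violation of Rule~1c, and where the weak mass hypothesis is essential. The analogous subtlety in (a) is recognizing that $z$-connectivity alone, independently of the $w$-edge structure, already forces uniform $w$-closeness via Estimate~2(3); once that is established the clash with Rule~1d is immediate.
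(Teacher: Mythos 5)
Your proof is correct, but part (a) follows a genuinely different route from the paper's. The paper argues from a single $w$-circle: Rule~1e yields a maximal $w$-stroke between some $k_1,k_2$; Estimate~2 forces this to be a pure $w$-edge; full connectedness of $A$ gives a path of $z$-strokes from $k_1$ to $k_2$; and the resulting polygon has $w_{k_1k_2}\approx\epsilon^{-2}$ as its unique maximal $w$-separation, contradicting Rule~2d. You instead propagate $w$-circling along $z$-strokes via the Circling Method and kill the all-circled case by telescoping Estimate~2(3) ($w_{kl}\preceq\epsilon$ across each $z$-stroke, with paths of length at most $n-1$), so that all vertices become mutually $w$-close, contradicting Rule~1d applied to the full vertex set viewed as an isolated $w$-component. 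Both are sound; your version stays within one-color rules plus the Circling Method and avoids the cycle rule entirely --- in fact your telescoping step alone refutes the existence of any $w$-circle, so the dichotomy $\tr(B)\in\{0,n\}$ is dispensable --- whereas the paper's Rule~2d polygon argument is the pattern it reuses in other matrix rules. In (b)(i) you invoke Rule~1d directly (circles in an isolated component number $0$ or at least $2$), which is shorter than the paper's count via Rule~1c and the center-of-mass sum. In (b)(ii) your argument is the paper's read contrapositively: the paper picks $i\in I$ and uses Rule~1c with the weak mass hypothesis $\sum_{i\in I}m_i\ne 0$ to extract some $j$ with $z_{ji}\approx\epsilon^{-2}$, hence no $zw$-edge between $i$ and $j$; you assume every pair carries a $zw$-edge, get $z_{ij}\approx\epsilon$ from Estimate~2(4), and conclude $\sum_{i\in I}m_iz_i\approx\epsilon^{-2}$ against Rule~1c --- the same ingredients in reverse order, so there the difference is only organizational. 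One shared point of informality: both you and the paper assert that a connected component of $A$ is an isolated component of the $z$-diagram; your maximality justification is the intended one and is adequate here.
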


\begin{proof}
To prove (a), it is sufficient to 
assume $A$ is fully connected,  and show that $b_{kk}=0$ for every $k$.
Suppose the rule were false; i.e.  $b_{kk}=1$ for some $k$. 
By Rule~1e, there is a maximal $w$-stroke between some vertices $k_1$ and $k_2$. 
Estimate~2 implies that the edge between $k_1$ and $k_2$ is a $w$-edge. 
Since $A$ is fully connected, there is a path consisting of $z$-strokes from $k_1$ to $k_2$.  
This path together with the $w$-edge between $k_1$ and $k_2$ form a polygon. 
But $w_{k_1k_2}$ is the unique maximal $w$-separation among  sides of this polygon, 
which contradicts Rule~2d. Therefore $b_{kk}=0$ for every $k$.  
This proves (a). 

If $I$ is a connected component of $A$, then $I$ is an isolated component in the $z$-diagram. 
From Rule~1c, the center of mass of bodies in $I$ is $z$-close to the origin; that is, 
$\sum_{i\in I}m_iz_i\prec \epsilon^{-2}$. Therefore there can't be exactly one $z$-circled body in $I$, 
and thus $\sum_{i\in I}a_{ii}\neq 1$. Similarly, If $I$ is a connected component of $B$, 
then $\sum_{i\in I}b_{ii}\neq 1$. If $a_{ii}=1$ for all $i\in I$, then all bodies in $I$ are $z$-circled, 
and hence $z_{ii}\approx \epsilon^{-2}$ for all $i\in I$. Pick an $i\in I$. Since $z_j=z_{ji}+z_{i}$ for 
$j\neq i$ we have
$$\epsilon^{-2}\succ\sum_{j\in I}m_jz_{j}=\sum_{j\in I}m_jz_{i}+\sum_{j\in I,\ j\neq i}z_{ji},$$ 
and it follows that there is some $j\in I$ with $j\neq i$ such that $z_{ji}\approx \epsilon^{-2}$, 
and then there can't be a $zw$-edge between vertices $i$ and $j$. Therefore there exist $i,j\in I$ 
with $j\neq i$ such that $c_{ij}\neq 2$. This completes the  proof for (b). 
\end{proof}

\begin{theorem}    {\bf (Rule of Trace-0 Principal Minors)} 
Let $I\subset\{1,\ldots,n\}$. If $a_{ii}=0$ for all $i\in I$, then $\sum_{i\in I, j\notin I}a_{ij}\neq 1$. This rule holds if $A$ and $B$ were switched. 
\end{theorem}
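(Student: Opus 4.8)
The plan is to reach a contradiction from the assumption $\sum_{i\in I,\,j\notin I}a_{ij}=1$ by evaluating the partial mass center $\sum_{k\in I}m_kz_k$ in two different ways and comparing asymptotic orders. In the $z$-diagram the hypothesis $a_{ii}=0$ for all $i\in I$ says that \emph{no} body in $I$ is $z$-circled, while $\sum_{i\in I,\,j\notin I}a_{ij}=1$ counts the crossing $z$-strokes and so says there is \emph{exactly one} $z$-stroke joining $I$ to its complement; let its endpoints be $i_0\in I$ and $j_0\notin I$.

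First I would record an upper bound. Since no $k\in I$ is $z$-circled, each $z_k$ ($k\in I$) is a component of $\mathcal Z$ that fails to attain the maximal order $\|\mathcal Z\|=\epsilon^{-2}$; on the extracted (convergent) subsequence this means $z_k\prec\epsilon^{-2}$. As $I$ is finite and the $m_k$ are fixed nonzero constants, it follows that $\sum_{k\in I}m_kz_k\prec\epsilon^{-2}$.

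Next I would compute the same quantity from the force equations $z_k=\sum_{l\neq k}m_lZ_{lk}$ in system~\eqref{eqn:cc4}. Multiplying by $m_k$, summing over $k\in I$, and splitting the inner sum according to whether $l\in I$ or $l\notin I$ gives
$$
\sum_{k\in I}m_kz_k \;=\; \sum_{\substack{k,l\in I\\ k\neq l}}m_km_lZ_{lk} \;+\; \sum_{\substack{k\in I\\ l\notin I}}m_km_lZ_{lk}.
$$
The crucial point is that the internal double sum vanishes: from $Z_{lk}=z_{lk}^{-1/2}w_{lk}^{-3/2}$ together with $z_{lk}=-z_{kl}$ and $w_{lk}=-w_{kl}$ one has $Z_{lk}=-Z_{kl}$ (this is exactly the antisymmetry underlying $\sum_k m_kz_k=0$), so the terms indexed by $(k,l)$ and $(l,k)$ cancel in pairs. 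Hence $\sum_{k\in I}m_kz_k=\sum_{k\in I,\,l\notin I}m_km_lZ_{lk}$. In this crossing sum exactly one summand, $m_{i_0}m_{j_0}Z_{j_0i_0}$ coming from the unique crossing stroke, has maximal order $\epsilon^{-2}$, since every other crossing pair has $a_{kl}=0$ and therefore $Z_{lk}\prec\epsilon^{-2}$. Because the masses are nonzero, the dominant term survives and $\sum_{k\in I}m_kz_k\approx\epsilon^{-2}$, contradicting the bound of the previous paragraph. The case with $A$ and $B$ switched is identical, using $w_k=\sum_{l\neq k}m_lW_{lk}$ and $W_{lk}=-W_{kl}$.

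The main obstacle is the middle step. One must be certain that the internal terms cancel \emph{exactly}: this is precisely where the center-of-mass antisymmetry $Z_{lk}=-Z_{kl}$ is indispensable, for internal $z$-strokes would otherwise also contribute order $\epsilon^{-2}$ and might conspire to cancel the crossing contribution. One must likewise be sure that in the surviving crossing sum the single maximal-order term genuinely dominates rather than being annihilated by the subleading terms, which holds exactly because it is the unique summand of order $\epsilon^{-2}$ and carries a nonzero mass coefficient. Once these two order comparisons are pinned down, the contradiction between $\prec\epsilon^{-2}$ and $\approx\epsilon^{-2}$ closes the argument.
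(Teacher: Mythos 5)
Your proof is correct and is essentially the paper's own argument: both reduce to the identity $\sum_{i\in I}m_iz_i=\sum_{i\in I,\,j\notin I}m_im_jZ_{ji}$ (the internal terms cancelling by the antisymmetry $Z_{ij}=-Z_{ji}$) and then observe that the unique crossing $z$-stroke would make $m_{i_0}m_{j_0}Z_{j_0i_0}$ the single term of maximal order $\epsilon^{-2}$ in an equation all of whose other terms are $\prec\epsilon^{-2}$, which is impossible. Your write-up simply makes explicit two steps the paper leaves implicit, namely the pairwise cancellation of the internal sum and the split of the contradiction into the bound $\sum_{i\in I}m_iz_i\prec\epsilon^{-2}$ versus the dominance of the unique crossing term.
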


\begin{proof}
Suppose $\sum_{i\in I, j\notin I}a_{ij}=1$. Then there is a unique $(i_0,j_0)$ with $i_0\in I$ and $j_0\notin I$ such that $a_{i_0j_0}=1$. From system (\ref{eqn:cc4}), we have
$$
\sum_{i\in I}m_i z_i=\sum_{i\in I, j\notin I} m_i m_j Z_{ij}.
$$
Since $a_{ii}=0$ for all $i\in I$, $m_{i_0}m_{j_0}Z_{i_0j_0}$ is the unique term of maximal order in this equation, but this is impossible.
 Therefore $\sum_{i\in I, j\notin I}a_{ij}$ can't be 1.
\end{proof}

We remark that the Rule of Trace-0 Principal Minors can be skipped in the process of eliminating $zw$-matrices when $n\le 5$. 
For $n=6$ this rule becomes irreplaceable.

% ----------------------------------------------------------------- %
\section{Order matrices for singular sequences}   \label{sec:OrderingRules}

In \S\ref{subsec:ColoredDiagram}, singular sequences were normalized 
so that $\|\mathcal{Z}\|=\|\mathcal{W}\|=\epsilon^{-2}$, where $\epsilon$ is a sequence of positive numbers converging  to 0.
In this section we introduce order matrices to discuss orders for three sets of variables, also in terms of power functions of $\epsilon$. 
These three sets of variables are 
``{\em positions}''  $z_k$'s, $w_k$'s, ``{\em separations}'' $z_{kl}$'s, $w_{kl}$'s, and ``{\em distances}'' $r_{kl}$'s.

\subsection{Levels of orders and order matrices}

Recall that a sequence $a$ of non-zero complex numbers satisfies $a\approx \epsilon^\lambda$, $\lambda\in \R$, 
if both $a/\epsilon^\lambda$ and $\epsilon^\lambda/a$ are bounded. 
In this case we say $a$ has {\em order} $\epsilon^\lambda$. 
If $a/\epsilon^\lambda\to 0$, denoted $a \prec \epsilon^\lambda$, we say  $a$ has {\em order strictly less than} $\epsilon^\lambda$. 

Let us extend this concept a bit further.  Given a sequence $b$ of complex numbers, possibly with infinitely many zeros, we say 
$b$ has {\em order strictly less than} $\epsilon^\lambda$  if  $b/\epsilon^\lambda\to 0$. 
The same notation $b\prec \epsilon^\lambda$ will be used. 
Similarly, we say $b$ has {\em order strictly greater than} $\epsilon^\lambda$  if  $b/\epsilon^\lambda\to \infty$. 
The same notation $\epsilon^\lambda \prec b$ will be used.

While Estimate~1 provides bounds for orders of separations and distances, 
separations and distances may not have well-defined orders in terms of power functions of $\epsilon$, neither are 
position variables. 
But it makes sense to say a variable's order is {\em strictly between} some $\epsilon^{\lambda_1}$ and $\epsilon^{\lambda_2}$. 
We shall use this broader concept while referring to the {\em order} of a variable.

In order to simplify discussions and clear the way for symbolic computations, we discretize the problem by dividing 
possible orders into a few levels, and then introduce order matrices to record possible levels of these variables.

Based on Estimates~1 and~2, we divide orders of positions and separations into six levels:
\begin{enumerate}
\item[]\hskip-8mm {\sl Level 0:} the order is strictly less than $\epsilon^2$ 
\item[]\hskip-8mm {\sl Level 1:} the order is $\epsilon^2$
\item[]\hskip-8mm {\sl Level 2:} the order is strictly between $\epsilon^2$ and $\epsilon$ 
\item[]\hskip-8mm {\sl Level 3:} the order is $\epsilon$
\item[]\hskip-8mm {\sl Level 4:} the order is strictly between $\epsilon$ and $\epsilon^{-2}$ 
\item[]\hskip-8mm {\sl Level 5:} the order is $\epsilon^{-2}$
\end{enumerate}
Since  separations satisfy  $\epsilon^2\preceq z_{kl}, \, w_{kl}\preceq \epsilon^{-2}$, level 0 is 
only possible  for positions. 

Distances satisfy $\epsilon\preceq r_{kl}\preceq \epsilon^{-2}$, so we divide their orders differently:  
\begin{enumerate}
\item[]\hskip-8mm {\sl Level 1:}  the order is $\epsilon$
\item[]\hskip-8mm {\sl Level 2:}  the order is strictly between $\epsilon$ and $1$ 
\item[]\hskip-8mm {\sl Level 3:}  the order is $1$
\item[]\hskip-8mm {\sl Level 4:}  the order is strictly between $1$ and $\epsilon^{-2}$ 
\item[]\hskip-8mm {\sl Level 5:} the order is $\epsilon^{-2}$
\end{enumerate}

\begin{notation}
Let $a$ be either a position, separation, or distance variable. 
$$\lev(a)=k  \;\; \Longleftrightarrow \;\;  \text{$a$ has order in level $k$}.  $$ 
If $\lev(a) \in s$, we say the set $s$ {\em covers the order of $a$}. 
\end{notation}

A simple observation will be useful later: 

\begin{lemma} \label{lem:order6}
If there is an edge between bodies $i$ and $j$, then  $\lev(z_{ij})+\lev(w_{ij}) = 6$. 
If there is no edge between bodies $i$ and $j$, then  $\lev(z_{ij})+\lev(w_{ij}) \ge 6$. 
\end{lemma}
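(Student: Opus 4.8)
The plan is to prove the two assertions separately, reading off the levels of $z_{ij}$ and $w_{ij}$ from the sizes of the auxiliary quantities $Z_{ij}$ and $W_{ij}$. First I would record the algebraic identities $|Z_{ij}|^{-2}=|z_{ij}|\,|w_{ij}|^3$ and $|W_{ij}|^{-2}=|z_{ij}|^3\,|w_{ij}|$, which are immediate from $Z_{ij}=z_{ij}^{-1/2}w_{ij}^{-3/2}$ and $W_{ij}=z_{ij}^{-3/2}w_{ij}^{-1/2}$ and are unaffected by the current normalization $\|\mathcal{Z}\|=\|\mathcal{W}\|=\epsilon^{-2}$. Since $Z_{ij}$ and $W_{ij}$ are components of $\mathcal{Z}$ and $\mathcal{W}$, we always have $|Z_{ij}|,|W_{ij}|\preceq\epsilon^{-2}$, and by the definition of a stroke there is a $z$-stroke (resp.\ $w$-stroke) between $i$ and $j$ exactly when $|Z_{ij}|\approx\epsilon^{-2}$ (resp.\ $|W_{ij}|\approx\epsilon^{-2}$). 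Through the identities this translates into the universal bounds $\epsilon^4\preceq z_{ij}w_{ij}^3$ and $\epsilon^4\preceq z_{ij}^3 w_{ij}$, with $\approx$ holding in the first precisely when there is a $z$-stroke and in the second precisely when there is a $w$-stroke.

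For the edge case I would appeal to Estimate~2 rather than these identities. After switching $z$ and $w$ if necessary, assume there is a $z$-stroke between $i$ and $j$, so by Estimate~2(2),(3) we have $\lev(z_{ij})\in\{3,4,5\}$ and $\lev(w_{ij})\in\{1,2,3\}$. I would then split into the three cases separated by Estimate~2(4),(5). If the edge is a $zw$-edge, then $z_{ij}\approx w_{ij}\approx\epsilon$, giving levels $(3,3)$; if the $z$-stroke is maximal, then $z_{ij}\approx\epsilon^{-2}$ and $w_{ij}\approx\epsilon^2$, giving levels $(5,1)$; and if it is neither, then $z_{ij}\not\approx\epsilon,\epsilon^{-2}$ and $w_{ij}\not\approx\epsilon,\epsilon^2$ force $\epsilon\prec z_{ij}\prec\epsilon^{-2}$ and $\epsilon^2\prec w_{ij}\prec\epsilon$, i.e.\ levels $(4,2)$. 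In each case $\lev(z_{ij})+\lev(w_{ij})=6$, and the $z\leftrightarrow w$ symmetric form of Estimate~2 disposes of the case where the edge is a $w$-stroke.

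For the no-edge case I would argue by contradiction from the now strict inequalities $\epsilon^4\prec z_{ij}w_{ij}^3$ and $\epsilon^4\prec z_{ij}^3 w_{ij}$ of the first paragraph. Suppose $\lev(z_{ij})+\lev(w_{ij})\le 5$; as the two inequalities are interchanged by swapping $z\leftrightarrow w$, I may assume $\lev(z_{ij})\le\lev(w_{ij})$, so $\lev(z_{ij})\le 2$. If $\lev(z_{ij})=2$, then $z_{ij}\prec\epsilon$ while $\lev(w_{ij})\le 3$ gives $w_{ij}\preceq\epsilon$, whence $z_{ij}w_{ij}^3\prec\epsilon^4$, contradicting the first inequality. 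If $\lev(z_{ij})=1$, then $z_{ij}\approx\epsilon^2$; should $\lev(w_{ij})\le 3$ then $w_{ij}\preceq\epsilon$ and $z_{ij}w_{ij}^3\preceq\epsilon^5\prec\epsilon^4$, whereas if $\lev(w_{ij})=4$ then $w_{ij}\prec\epsilon^{-2}$ and $z_{ij}^3 w_{ij}\prec\epsilon^4$, each contradicting one of the two inequalities. This exhausts the possibilities and yields $\lev(z_{ij})+\lev(w_{ij})\ge 6$.

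The step I expect to be the main obstacle is the bookkeeping in the no-edge case: one must consistently distinguish $\prec$ from $\preceq$, and repeatedly invoke the universal bounds $\epsilon^2\preceq z_{ij},w_{ij}\preceq\epsilon^{-2}$ of Estimate~1 so that the open-interval levels $2$ and $4$ are treated correctly rather than as single exponents. By comparison the edge case is routine once the three sub-cases of Estimate~2 are matched to the levels, so the essential work lies in verifying that the two order inequalities on $z_{ij}w_{ij}^3$ and $z_{ij}^3 w_{ij}$ force the level sum to be at least $6$.
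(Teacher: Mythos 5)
Your proof is correct and takes essentially the same route as the paper: the paper's own proof merely records that an edge means $Z_{ij}\approx\epsilon^{-2}$ or $W_{ij}\approx\epsilon^{-2}$ (so $z_{ij}w_{ij}^3\approx\epsilon^4$ or $z_{ij}^3w_{ij}\approx\epsilon^4$), while no edge means both are $\prec\epsilon^{-2}$, and then states that the lemma ``follows easily from these formula and Estimates~1,~2.'' Your edge-case trichotomy via Estimate~2(4),(5) and your contradiction argument in the no-edge case from $\epsilon^4\prec z_{ij}w_{ij}^3$ and $\epsilon^4\prec z_{ij}^3w_{ij}$ are precisely the details the paper leaves to the reader.
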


\begin{proof}
If there is an edge between $i$ and $j$,  then 
\beq
\text{either}\quad  z_{ij}^{-1/2}w_{ij}^{-3/2}=Z_{ij}\approx \epsilon^{-2} 
\quad\text{or}\quad w_{ij}^{-1/2}z_{ij}^{-3/2}=W_{ij}\approx \epsilon^{-2}. 
\eeq
If there is no edge between $i$ and $j$, then
\beq
  z_{ij}^{-1/2}w_{ij}^{-3/2}=Z_{ij}\prec \epsilon^{-2} 
\quad\text{and}\quad w_{ij}^{-1/2}z_{ij}^{-3/2}=W_{ij}\prec \epsilon^{-2}. 
\eeq
The lemma follows easily from these formula and Estimates~1,~2. 
\end{proof}

\begin{definition}
A  {\em $zw$-order matrix} is a pair $(S,T)$ of symmetric $n\times n$ matrices, $S=(s_{ij})$, $T=(t_{ij})$, such that
\beq
 s_{ij}, \; t_{ij} \; \subset  \; \{1,2,3,4,5\}  & \text{for $i\neq j$},  \\
 s_{ii}, \; t_{ii} \; \subset \; \{0,1,2,3,4,5\} & \text{for any $i$}. 
\eeq
We say $(S,T)$ is {\em nonempty} if every $s_{ij}$ and $t_{ij}$ is nonempty. 
An {\em $r$-order matrix} is a symmetric $n\times n$ matrix $D=(d_{ij})$ such that  
\beq
   d_{ij} \subset \{1,2,3,4,5\}  & \text{for $i\neq j$},  \\
   d_{ii}  = \emptyset & \text{for any $i$}. 
\eeq
We say $D$ is nonempty if every $d_{ij}$ with $i\ne j$ is nonempty. 
\end{definition}

Be cautious that entries of order matrices are sets, not scalars. We shall apply notations and terminologies for sets to order matrices. 
For example, given $zw$-order matrices $(S,T)$, $(S',T')$, $(S'',T'')$,  
\beq
  (S',T')\subset (S,T)  &\Longleftrightarrow&  s'_{ij} \subset s_{ij}, \; t'_{ij} \subset t_{ij} \;\; \forall i,j; \\
  (S,T)\cup (S',T') = (S'',T'') &\Longleftrightarrow&  s''_{ij} = s_{ij}\cup s'_{ij}, \; t''_{ij} =t_{ij}\cup t'_{ij}\;\; \forall i,j; \\  
  (S,T)\cap (S',T') = (S'',T'') &\Longleftrightarrow& s''_{ij} = s_{ij}\cap s'_{ij}, \; t''_{ij} =t_{ij}\cap t'_{ij} \;\; \forall i,j.
\eeq

\subsection{Covering orders by order matrices}  \label{subsec:coverorders}

Given a $zw$-diagram associated to a singular sequence. 
We begin with one $zw$-order matrix which covers all possible levels for positions and separations, 
one $r$-order matrix which covers all possible levels for distances. 
For example, we may begin with largest order matrices; i.e. choose
$$s_{ii}=t_{ii}=\{0,1,2,3,4,5\} \;\;\forall i, \quad s_{ij}=t_{ij}=r_{ij}=\{1,2,3,4,5\} \;\; \forall i\ne j. $$  
By direct estimates and applying tools in section~\ref{sec:ColoringRules}, more and more restrictions on possible levels will be imposed.  
Eventually there are two possible outcomes: 
either we arrive at a contradiction (e.g. some $s_{ij}$ becomes empty), for which case we can exclude the $zw$-diagram outright; 
or we end up with possible ranges of levels for positions, separations, and distances. 
For the later case, we shall either utilize this $zw$-order matrix directly, or 
choose a finer collection of $zw$-order matrices for further investigations. 

\begin{definition}
We say a collection $\coll$ of nonempty $zw$-order matrices {\em covers $zw$-orders} 
of a singular sequence if for any subsequence of the singular sequence such that 
orders of positions and separations are in fixed levels,
there exists  $(S,T)\in \coll$, $S=(s_{ij})$, $T=(t_{ij})$, such that 
 \begin{eqnarray*}
\begin{array}{ll} 
\lev(z_k)\in s_{kk}, \;\; \lev(w_k) \in t_{kk} & \text{for every $k$},   \\
\lev(z_{kl}) \in s_{kl}, \;\; \lev(w_{kl}) \in t_{kl}  & \text{for every $k\ne l$}.  
\end{array} \label{eqn:coverorders}
\end{eqnarray*}

We say a collection $\mathcal{R}$ of nonempty $r$-order matrices {\em covers $r$-orders} 
of a singular sequence if for any subsequence of the singular sequence such that 
orders of distances are in fixed levels, 
there exists  $D\in \mathcal{R}$, $D=(d_{ij})$, such that 
 \begin{eqnarray*}
\begin{array}{ll} 
\lev(r_{kl}) \in d_{kl}   & \text{for every $k\ne l$}.  
\end{array} \label{eqn:coverorders-r}
\end{eqnarray*}
\end{definition}

\begin{notations}
We use the shorthand notation $$\coll  \Supset  (A|B)$$ when 
the collection $\coll$ of nonempty $zw$-order matrices covers $zw$-orders of any singular sequence associated with $zw$-matrix $(A|B)$. 
For brevity, we say $\coll$ {\em covers orders of } $(A|B)$. 
If the collection $\coll$ consists of only one $zw$-order matrix $(S,T)$, 
we shall also use the notation $$(S,T)  \Supset  (A|B).$$ 

Unless specified otherwise, when an upper case letter is used for a matrix, their corresponding lower case letters
indicate entries of the matrix. For example, by writing $(S,T)  \Supset  (A|B)$, we assume 
$A=(a_{ij})$, $B=(b_{ij})$, $S=(s_{ij})$, $T=(t_{ij})$, $1\le i, j\le n$. 
\end{notations}

\begin{remark}
The property ``$\Supset (A|B)$'' is closed under unions and intersections; i.e. 
\beq
  (S_1,T_1),\; (S_2,T_2)  \Supset  (A|B) &\Longrightarrow & (S_1,T_1)\cup (S_2,T_2),\; (S_1,T_1)\cap (S_2,T_2) \Supset (A|B), \\
  \coll_1,\; \coll_2 \Supset  (A|B) &\Longrightarrow &
    \coll_1\cup \coll_2,\; \coll_1\cap \coll_2\Supset (A|B). 
\eeq
\end{remark}

As soon as ranges of levels for positions and separations were determined, 
our next goal is to find a  ``suitable'' collection of $zw$-order matrices which covers $zw$-orders of the singular sequence.
They will automatically generate a collection of $r$-order matrices which covers $r$-orders of the singular sequence. 
Loosely speaking, a ``suitable'' collection of $zw$-order matrices are expected to meet two criteria:
\begin{enumerate}
\item[(i)] 
The collection is not too big for the purpose of symbolic computations.  
\item[(ii)] Members of the collection
yield polynomial equations which are useful in generating mass relations for the singular sequence. 
\end{enumerate}

These two criteria compete with each other. 
If orders of positions and separations are more precise, we are more likely to obtain useful polynomial equations from them, 
but this comes at the price of having larger collection of $zw$-order matrices. 
We will illustrate this point by showing a simple example. 
This example also provides some incentives for our methods of choosing the collection $\coll$. 

\begin{example}  \label{exam:chooseom}
Suppose we end up with 
$$\lev(z_{12}), \, \lev(z_{13}), \,  \lev(w_{12}), \, \lev(w_{13}) \in \{2,3,4\}, \; \lev(z_{23}), \, \lev(w_{23})  \in \{3\}. $$ 
We may use one $zw$-order matrix with 
$$s_{12}=s_{13}=t_{12}=t_{13}=\{2,3,4\},\;\; s_{23}=t_{23}=\{3\}$$ 
to cover orders of $z_{12}, \, z_{13},\, z_{23}, \, w_{12}, \, w_{13},\, w_{23}$. 
For a finer characterization of their orders, from the relations $z_{12}+z_{23}=z_{13}$,  $w_{12}+w_{23}=w_{13}$
we know it is impossible to have one $z$- (or $w$-)separation with strictly greater order than the other two, so  
we may replace the $zw$-order matrix by a collection of $zw$-order matrices, 
with  $(s_{12}, s_{13}, s_{23})$ and  $(t_{12}, t_{13}, t_{23})$
equal to one of the followings, 
$$
 (\{2\}, \{3\}, \{3\}),\; (\{3\}, \{2\}, \{3\}),\; (\{3\}, \{3\}, \{3\}), \; (\{4\}, \{4\}, \{3\}), 
$$
and leaving other $s_{ij}$'s, $t_{ij}$'s unchanged. 

There are 16 ways to match $(s_{12}, s_{13}, s_{23})$ and $(t_{12}, t_{13}, t_{23})$. 
Some of them can be excluded and the collection still covers $zw$-orders.  
For example, from $z_{12}^{-1} w_{12}^{-3}=Z_{12}^2 \preceq \epsilon^{-4}$ we know that
if one of $s_{12}$, $t_{12}$ is $\{2\}$, the other must be $\{4\}$. Same for $s_{13}$, $t_{13}$. 
Therefore, we may replace the original $zw$-order matrix by 8 $zw$-order matrices, with 
\beq
& \text{(i)}  &(s_{12}, s_{13}, s_{23}, t_{12}, t_{13}, t_{23}) = (\{2\}, \{3\}, \{3\}, \{4\}, \{4\}, \{3\}),  \\
& \text{(ii)} &(s_{12}, s_{13}, s_{23}, t_{12}, t_{13}, t_{23}) = (\{3\}, \{2\}, \{3\}, \{4\}, \{4\}, \{3\}),  \\
& \text{(iii)} &(s_{12}, s_{13}, s_{23}, t_{12}, t_{13}, t_{23}) = (\{3\}, \{3\}, \{3\}, \{3\}, \{3\}, \{3\}),  \\
& \text{(iv)} &(s_{12}, s_{13}, s_{23}, t_{12}, t_{13}, t_{23}) = (\{3\}, \{3\}, \{3\}, \{4\}, \{4\}, \{3\}),  \\
& \text{(v)}  &(s_{12}, s_{13}, s_{23}, t_{12}, t_{13}, t_{23}) =  (\{4\}, \{4\}, \{3\}, \{2\}, \{3\}, \{3\}), \\
& \text{(vi)}  &(s_{12}, s_{13}, s_{23}, t_{12}, t_{13}, t_{23}) = (\{4\}, \{4\}, \{3\}, \{3\}, \{2\}, \{3\}),   \\
& \text{(vii)} &(s_{12}, s_{13}, s_{23}, t_{12}, t_{13}, t_{23}) = (\{4\}, \{4\}, \{3\}, \{3\}, \{3\}, \{3\}),  \\
& \text{(viii)} &(s_{12}, s_{13}, s_{23}, t_{12}, t_{13}, t_{23}) = (\{4\}, \{4\}, \{3\}, \{4\}, \{4\}, \{3\}),  
\eeq
and leaving other $s_{ij}$'s, $t_{ij}$'s unchanged. 

By switching vertices 2 and 3, cases (i) and (ii) can be considered the same, cases (v) and (vi) can be also considered the same. 
If we know additionally that there is no edge between vertices 1 and 2 (or no edge between vertices 1 and 3), 
then from $z_{12}^{-1} w_{12}^{-3}=Z_{12}^2 \prec \epsilon^{-4}$ the case (iii) can be also removed. 

If we perform similar kind of splittings for orders of positions and other separations, and choose $S$ and $T$ so that each entry is
a singleton, then the number of $zw$-order matrices grows factorially. 
In practice, there are cases such that the huge collection goes beyond capacity of typical mathematical softwares when $n\ge 5$.   
Fortunately, for many cases with $n=6$ we are able to conclude finiteness with much smaller collections of $zw$-order matrices.  

This example appears in the analysis of several $zw$-diagrams which contain an isolated triangle formed by $zw$-edges. 
\end{example}

\subsection{Generate $r$-order matrices from $zw$-order matrices}  \label{subsec:generateom}
From the relation $r_{ij}^2 = z_{ij} w_{ij}$, any $zw$-order matrix generates an $r$-order matrix as follows. 

\begin{lemma}  \label{lem:genD}
Let $\phi=\{\}$ and  
$$
D_1=\begin{pmatrix}
\{\} & \{\} & \{\} & \{\} & \{3\}\\
\{\} & \{\} & \{\} & \{2\} & \{\}\\
\{\} & \{\} & \{1\} & \{\} & \{\}\\
\{\} & \{2\} & \{\} & \{\} & \{\}\\
\{3\} & \{\} & \{\} & \{\} & \{\}
\end{pmatrix},
$$
$$
D_2=\begin{pmatrix}
\{\} & \{\} & \{\} & \{\} & \{\}\\
\{\} & \{\} & \{\} & \{2,3,4\} & \{4\}\\
\{\} & \{\} & \{\} & \{2,3,4\} & \{4\}\\
\{\} & \{2,3,4\} & \{2,3,4\} & \{2,3,4\} & \{4\}\\
\{\} & \{4\} & \{4\} & \{4\} & \{5\}
\end{pmatrix}.
$$
Given $i\neq j$. If $\lev(z_{ij})=k$, $\lev(w_{ij})=l$, then $\lev(r_{ij})$ is in the $(k,l)$-entry of $D_1$ if the $zw$-matrix $(A|B)$ 
satisfies $a_{ij}+b_{ij}\geq 1$, otherwise it is in the  $(k,l)$-entry of $D_2$.
\end{lemma}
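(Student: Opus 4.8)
The plan is to read off everything from the single identity $r_{ij}^2=z_{ij}w_{ij}$ (so that the order of $r_{ij}$ is the square root of the product of the orders of $z_{ij}$ and $w_{ij}$), combined with the definitions $Z_{ij}=z_{ij}^{-1/2}w_{ij}^{-3/2}$, $W_{ij}=z_{ij}^{-3/2}w_{ij}^{-1/2}$ and Lemma~\ref{lem:order6}. Writing $k=\lev(z_{ij})$, $l=\lev(w_{ij})$, the first step is structural: Lemma~\ref{lem:order6} says an edge forces $k+l=6$ and the absence of an edge forces $k+l\ge 6$. This instantly accounts for all empty entries coming from $k+l<6$, and for $D_1$ it already confines every nonempty entry to the anti-diagonal $(1,5),(2,4),(3,3),(4,2),(5,1)$. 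It is also worth noting at the outset that $r_{ij}^2=z_{ij}w_{ij}$ is symmetric under $z\leftrightarrow w$, so both matrices are symmetric in $(k,l)\leftrightarrow(l,k)$ and only half the pairs need to be treated.

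For $D_1$ (the case $a_{ij}+b_{ij}\ge 1$) I would, for each anti-diagonal pair, first decide whether the stroke is a $z$-stroke ($Z_{ij}\approx\epsilon^{-2}$) or a $w$-stroke ($W_{ij}\approx\epsilon^{-2}$). Estimate~2 (Theorem~\ref{thm:est2}) forbids a $z$-stroke when $z_{ij}\prec\epsilon$, so the pairs $(1,5),(2,4)$ give $w$-strokes, the pairs $(4,2),(5,1)$ give $z$-strokes, and $(3,3)$ is a $zw$-edge. Solving $r_{ij}^2=z_{ij}w_{ij}$ together with the stroke condition gives the clean relations $r_{ij}\approx\epsilon^2 z_{ij}^{-1}$ for a $w$-stroke and $r_{ij}\approx\epsilon^2 w_{ij}^{-1}$ for a $z$-stroke; substituting the level of the remaining variable then pins $\lev(r_{ij})$ to a single value. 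For example, $(5,1)$ yields $r_{ij}\approx\epsilon^2 w_{ij}^{-1}\approx 1$ (level~$3$), $(3,3)$ yields $r_{ij}\approx\epsilon$ (level~$1$), and $(2,4)$ yields $\epsilon\prec r_{ij}\prec 1$ (level~$2$), matching $D_1$ in all five cases.

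For $D_2$ (no edge) the decisive new ingredient is that $Z_{ij},W_{ij}\prec\epsilon^{-2}$ multiply, via $Z_{ij}W_{ij}=r_{ij}^{-4}$, to give $r_{ij}^{-4}\prec\epsilon^{-4}$, hence $\epsilon\prec r_{ij}$; this removes level~$1$ from every entry. The rest is bracketing: the level bounds on $z_{ij}$ and $w_{ij}$ are multiplied through $r_{ij}^2=z_{ij}w_{ij}$ to produce matching lower and upper bounds on $r_{ij}$, which I then match against the distance levels. Thus $(4,5)$ gives $\epsilon^{-1}\prec r_{ij}^2\prec\epsilon^{-4}$, i.e. level~$4$; $(5,5)$ gives $r_{ij}\approx\epsilon^{-2}$, level~$5$; and $(2,4)$ gives, after using $\epsilon\prec r_{ij}$, exactly levels $\{2,3,4\}$. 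Finally the three pairs $(1,5),(5,1),(3,3)$, which satisfy $k+l=6$ yet carry exact power orders on both sides, must be shown empty by a separate observation: there one computes $Z_{ij}=\epsilon^{-2}$ or $W_{ij}=\epsilon^{-2}$ exactly, contradicting the no-edge hypothesis.

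The only genuinely delicate part is the handling of the two ``strictly between'' levels $2$ and $4$, where $z_{ij}$ and $w_{ij}$ are not clean powers of $\epsilon$ and all inequalities must be propagated through the relations $\prec,\preceq,\approx$ rather than through exponent arithmetic. In particular I expect the main obstacle to be recognizing that the exclusion of level~$1$ in $D_2$ cannot come from the naive product bound on $r_{ij}^2=z_{ij}w_{ij}$ alone — on an entry such as $(2,4)$ that bound only gives $\epsilon^{3/2}\prec r_{ij}$, which is too weak — and that one must instead invoke the combined no-edge inequality $Z_{ij}W_{ij}\prec\epsilon^{-4}$ to obtain the sharp $\epsilon\prec r_{ij}$.
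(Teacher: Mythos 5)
Your proposal is correct and follows essentially the same route as the paper's proof: Lemma~\ref{lem:order6} confines the edge case to the anti-diagonal $k+l=6$, Estimates~\ref{thm:est1},~\ref{thm:est2} identify the stroke type and pin down $\lev(r_{ij})$ there, and in the edgeless case the strict bound $\epsilon\prec r_{ij}$ combined with the product $r_{ij}^2=z_{ij}w_{ij}$ yields the bracketed entries. The only (harmless) deviations are that you derive $\epsilon\prec r_{ij}$ and the emptiness of the $(1,5)$, $(3,3)$, $(5,1)$ entries of $D_2$ directly from $Z_{ij}W_{ij}=r_{ij}^{-4}\prec\epsilon^{-4}$ and from computing $Z_{ij}$ or $W_{ij}\approx\epsilon^{-2}$, where the paper cites Estimate~\ref{thm:est1}(3) and leaves those cases to ``similar discussions''; both are valid, and you correctly flag that the naive product bound alone would not exclude level~$1$ at the $(2,4)$ entry.
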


\begin{proof}
When $a_{ij}+b_{ij}\geq 1$, there is an edge between $i$ and $j$, from Lemma~\ref{lem:order6} we have 
$k+l = \lev(z_{ij}) + \lev(w_{ij}) = 6$. If $(k,l)=(1,5)$ then it is a $w$-edge, 
by Estimates~1,~2 it is actually a maximal $w$-edge and $\lev(r_{ij}) =3$.

Assume $(k,l)=(2,4)$. If $a_{ij}+b_{ij}\geq 1$, then again we have a $w$-edge between $i$ and $j$.   
By Estimate~2 it is neither a maximal $w$-edge nor a $zw$-edge, so $\lev(r_{ij}) =2$.
If $a_{ij}+b_{ij} = 0$, there is no edge between $i$ and $j$, from Estimate~1 we know $\epsilon \prec r_{ij}$. Then from 
$ r_{ij}^2 = z_{ij} w_{ij} \prec \epsilon^{-1}$ we conclude  $\lev(r_{ij})\in \{2,3,4\}$. 

Discussions for other cases are similar. 
\end{proof}

\begin{corollary} \label{cor:produceD}
Suppose $(S,T)  \Supset  (A|B)$. 
Then the matrix $D=(d_{ij})$ defined below is an $r$-order matrix which covers $r$-orders of 
the singular sequence: 
\begin{align*}
 d_{ij}=\bigcup_{k\in s_{ij}, l\in t_{ij}} d^{(1)}_{kl} &\quad  \textrm{if}\ i\neq j\ \textrm{and}\ a_{ij}+b_{ij}\geq 1,&\\
 d_{ij}=\bigcup_{k\in s_{ij}, l\in t_{ij}} d^{(2)}_{kl} &\quad  \textrm{if}\ i\neq j\ \textrm{and}\ a_{ij}+b_{ij}= 0,&\\
 d_{ij}=\{\} &\quad  \textrm{if}\ i=j.
\end{align*} 
Here $d^{(1)}_{kl}$ and $d^{(2)}_{kl}$ are the $(k,l)$-entry of $D_1$ and $D_2$, respectively.
\end{corollary}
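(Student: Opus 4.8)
The plan is to verify that the matrix $D$ produced by the stated formula is genuinely an $r$-order matrix and that it covers the $r$-orders of the singular sequence, by reducing both claims to Lemma~\ref{lem:genD}. First I would check the formal requirements in the definition of an $r$-order matrix: that $d_{ij}=\{\}$ for $i=j$ (immediate from the third line of the definition) and that $d_{ij}\subset\{1,2,3,4,5\}$ for $i\ne j$. The latter holds because each set $d^{(1)}_{kl}$, $d^{(2)}_{kl}$ appearing as an entry of $D_1$ or $D_2$ is a subset of $\{1,2,3,4,5\}$, and a finite union of such subsets is again a subset of $\{1,2,3,4,5\}$. I should also confirm symmetry of $D$: since $S$, $T$ are symmetric and the edge condition $a_{ij}+b_{ij}\geq 1$ is symmetric in $i,j$ (as $A$, $B$ are symmetric), the index sets $s_{ij}$, $t_{ij}$ and the choice of $D_1$ versus $D_2$ are unchanged under swapping $i$ and $j$, so $d_{ij}=d_{ji}$.

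The substantive claim is the covering property. Here I would fix an arbitrary subsequence of the singular sequence along which the orders of all distances lie in fixed levels, and argue that $\lev(r_{ij})\in d_{ij}$ for every $i\ne j$. The hypothesis $(S,T)\Supset(A|B)$ means precisely that $\coll=\{(S,T)\}$ covers $zw$-orders, so along this subsequence (which in particular fixes the levels of the separations) we may further pass to a subsequence on which the levels of positions and separations are fixed, and then there is a pair, necessarily $(S,T)$ itself, with $\lev(z_{ij})\in s_{ij}$ and $\lev(w_{ij})\in t_{ij}$ for all $i\ne j$. Write $k=\lev(z_{ij})$ and $l=\lev(w_{ij})$, so $k\in s_{ij}$ and $l\in t_{ij}$.

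Now I split on the edge condition exactly as in the two cases of Lemma~\ref{lem:genD}. If $a_{ij}+b_{ij}\geq 1$, then that lemma tells us $\lev(r_{ij})$ lies in the $(k,l)$-entry $d^{(1)}_{kl}$ of $D_1$; since $k\in s_{ij}$ and $l\in t_{ij}$, the set $d^{(1)}_{kl}$ is one of the terms in the union defining $d_{ij}$, whence $\lev(r_{ij})\in d^{(1)}_{kl}\subset d_{ij}$. If instead $a_{ij}+b_{ij}=0$, the lemma places $\lev(r_{ij})$ in the $(k,l)$-entry $d^{(2)}_{kl}$ of $D_2$, which is likewise a term of the union defining $d_{ij}$ in this case, so again $\lev(r_{ij})\in d_{ij}$. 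This holds for every $i\ne j$, establishing the covering property. The main thing to be careful about — and the only genuine subtlety — is the logical order of the subsequence extractions: I must make sure that fixing distance levels first and then refining to fix position and separation levels does not lose the conclusion, which it does not, since passing to a further subsequence only shrinks the set of attained orders and the covering hypothesis is applied on the finer subsequence. Everything else is bookkeeping: the union over $k\in s_{ij}$, $l\in t_{ij}$ is designed exactly so that whichever fixed levels $(k,l)$ actually occur, the corresponding single-entry output of Lemma~\ref{lem:genD} is already contained in $d_{ij}$.
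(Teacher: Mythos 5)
Your proposal is correct and follows exactly the route the paper intends: the paper states this as an immediate consequence of Lemma~\ref{lem:genD}, applying it entrywise with $k=\lev(z_{ij})\in s_{ij}$, $l=\lev(w_{ij})\in t_{ij}$ and taking unions, which is precisely your argument. Your additional bookkeeping (symmetry, entries lying in $\{1,\dots,5\}$, and the observation that distance levels are preserved under the further subsequence extraction needed to fix position/separation levels) just makes explicit what the paper leaves implicit.
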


\begin{definition}  \label{lem:ordersum6}
The $r$-order matrix $D$ in Corollary~\ref{cor:produceD} is called the {\em $r$-order matrix produced by the 
$zw$-order matrix $(S,T)$}. 
\end{definition}

\subsection{Refinements for $zw$-order matrices}
Knowing how $zw$-order matrices naturally generate $r$-order matrices, we shall focus on 
refining a $zw$-order matrix $(S,T)$, 
assuming $(S,T)  \Supset  (A|B)$, to a smaller $zw$-order matrix $(S',T')$. 
This subsection contains seven principles that are to be used in our algorithm.

\begin{proposition}   \label{prop:rom1}
Suppose $(S,T)  \Supset  (A|B)$. Then the subset  $(S',T')$ of  $(S,T)$ given below also satisfies $(S',T')  \Supset  (A|B)$: 
\begin{enumerate}
\item[(a)] For any $i\in\{1,\cdots,n\}$, 
\beq
a_{ii}=1 &\Longrightarrow & s'_{ii}=s_{ii}\cap \{5\}, \\
b_{ii}=1 &\Longrightarrow & t'_{ii}=t_{ii}\cap \{5\}, \\
a_{ii}=0 &\Longrightarrow & s'_{ii}=s_{ii}\cap \{0,1,2,3,4\}, \\
b_{ii}=0 &\Longrightarrow & t'_{ii}=t_{ii}\cap \{0,1,2,3,4\}. 
\eeq
\item[(b)] For any $i, j\in\{1,\cdots,n\}$, $i\ne j$, 
\beq
 (a_{ij}, b_{ij})=(1,0) &\Longrightarrow & s'_{ij}=s_{ij}\cap \{4,5\}, t'_{ij}=t_{ij}\cap \{1,2\}, \\
 (a_{ij}, b_{ij})=(0,1) &\Longrightarrow & s'_{ij}=s_{ij}\cap \{1,2\}, t'_{ij}=t_{ij}\cap \{4,5\}, \\
 (a_{ij}, b_{ij})=(1,1) &\Longrightarrow & s'_{ij}=s_{ij}\cap \{3\}, t'_{ij}=t_{ij}\cap \{3\}, \\
 (a_{ij}, b_{ij})=(0,0) &\Longrightarrow & s'_{ij}=s_{ij}\cap \{2,3,4,5\}, t'_{ij}=t_{ij}\cap \{2,3,4,5\}.
 \eeq
\end{enumerate}
\end{proposition}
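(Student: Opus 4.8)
The plan is to argue that each refinement in (a) and (b) merely intersects an entry of $(S,T)$ with a set that \emph{intrinsically} contains the order level of the corresponding variable, for every singular sequence carrying the $zw$-matrix $(A|B)$. Since the hypothesis $(S,T)\Supset(A|B)$ already guarantees that each variable's level lands in the relevant entry of $(S,T)$, intersecting that entry with a set known to contain the level loses no coverage, and hence $(S',T')\Supset(A|B)$. Concretely, I would fix an arbitrary subsequence of the singular sequence along which all position and separation orders sit in definite levels, and then verify that $\lev$ of each variable belongs to the set attached to it in the proposition.

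First I would dispose of the diagonal refinements in (a), which follow directly from the definition of circles. A $z$-circle at $m_i$ (i.e. $a_{ii}=1$) means by construction $z_i\approx\epsilon^{-2}$, so $\lev(z_i)=5$; absence of a $z$-circle ($a_{ii}=0$) means $z_i\prec\epsilon^{-2}$, hence $\lev(z_i)\in\{0,1,2,3,4\}$. The two assertions for $b_{ii}$ are identical with $w$ in place of $z$. Intersecting $s_{ii}$ (resp.\ $t_{ii}$) with $\{5\}$ or $\{0,1,2,3,4\}$ accordingly therefore retains the true level.

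Next I would treat the off-diagonal refinements in (b) by a four-way case split on $(a_{ij},b_{ij})$, invoking Estimates~1 and~2 (Theorems~\ref{thm:est1},~\ref{thm:est2}) and their $z\leftrightarrow w$ swapped forms. For $(a_{ij},b_{ij})=(1,0)$ there is a $z$-stroke but no $w$-stroke, so by Estimate~2(2),(3) we have $\lev(z_{ij})\in\{3,4,5\}$ and $\lev(w_{ij})\in\{1,2,3\}$; since the edge is not a $zw$-edge, Estimate~2(4) rules out the $\approx\epsilon$ case, excluding level~$3$ for both and giving $\lev(z_{ij})\in\{4,5\}$, $\lev(w_{ij})\in\{1,2\}$. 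The case $(0,1)$ is the $z\leftrightarrow w$ mirror image. For $(1,1)$ the edge is a $zw$-edge, so Estimate~2(4) forces $z_{ij}\approx w_{ij}\approx\epsilon$, i.e.\ both levels equal $3$. For $(0,0)$ there is no edge; Estimate~1(2) gives $\epsilon^2\preceq z_{ij},w_{ij}\preceq\epsilon^{-2}$, hence levels in $\{1,2,3,4,5\}$, and Estimate~1(4) together with its swapped form excludes level~$1$: $w_{ij}\approx\epsilon^2$ would force a maximal $z$-edge, contradicting $a_{ij}=0$, and symmetrically $z_{ij}\approx\epsilon^2$ would force a maximal $w$-edge, contradicting $b_{ij}=0$. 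Thus both levels lie in $\{2,3,4,5\}$.

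In every case the computed range coincides exactly with the set the proposition intersects into the entry, so the true level stays inside $s'_{ij}$ and $t'_{ij}$, establishing $(S',T')\Supset(A|B)$. The argument is essentially a bookkeeping translation of the estimates into the language of levels, so I do not expect a genuine obstacle; the only point that demands care is recalling that a $z$-edge (resp.\ $w$-edge) is by definition \emph{not} a $zw$-edge, which is precisely what lets me discard level~$3$ in the $(1,0)$ and $(0,1)$ cases rather than merely bounding the levels, and dually what lets me discard level~$1$ in the $(0,0)$ case via the maximal-edge criterion of Estimate~1.
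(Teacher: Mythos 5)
Your proposal is correct and follows essentially the same route as the paper, which proves (a) directly from the definition of $z$- and $w$-circles and (b) by appealing to Estimates~1 and~2; you have simply spelled out the four-way case analysis (including the exclusion of level~3 via Estimate~2(4) for monochromatic edges and of level~1 via Estimate~1(4) when there is no edge) that the paper leaves implicit. No gaps.
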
 

\begin{proof}
It follows immediately from the definition of $z$-, $w$-circles  that $\lev(z_i)\in s'_{ii}$,  $\lev(w_i)\in t'_{ii}$. 
For $i\ne j$, it follows from Estimates~1,~2 that $\lev(z_{ij}) \in s'_{ij}$, $\lev(w_{ij}) \in t'_{ij}$. 
\end{proof}

\begin{remark} 
For some special cases of (b) we may choose smaller $(S',T')$ to cover $zw$-orders. 
If $(S,T)  \Supset  (A|B)$, 
then the $(S',T')$  given below is a subset of $(S,T)$ which also satisfies $(S',T')  \Supset  (A|B)$: 
for any $i, j\in\{1,\cdots,n\}$, $i\ne j$, 
 \beq
 a_{ii} + a_{jj} = 1 &\Longrightarrow & s'_{ij}= \{5\}, \\
 b_{ii} + b_{jj} = 1 &\Longrightarrow & t'_{ij}= \{5\}, 
\eeq
and let $(S', T')$ be the same as $(S,T)$ for other entries of $S'$, $T'$. 

To see this, note that the condition $a_{ii}+a_{jj}=1$ means one of $z_i$, $z_j$ has order $\approx \epsilon^{-2}$ and 
the other $\prec \epsilon^{-2}$, so $z_{ij} \approx \epsilon^{-2}$; i.e. $5=\lev(z_{ij}) \in s_{ij}$, so $s'_{ij}\subset s_{ij}$ 
and $s'_{ij}$ covers the order of $z_{ij}$. Similar for the case $b_{ii}+b_{jj}=1$. 

We do not apply this simple principle directly in our algorithm as it is covered by other principles. 
\end{remark}

\begin{proposition}   \label{prop:rom2} % triangle inequality
Suppose $(S,T)  \Supset  (A|B)$. Given $1\le i, j\le n$, $i\ne j$. Choose an arbitrary $k\in \{1,\cdots,n\}\setminus\{i,j\}$. Let 
\beq
  s'_{ij}\!\!&=&\!\! \left\{\sigma\in s_{ij}| \sigma\le \max (s_{ik}\!\cup s_{kj}), 
     \min s_{ik} \le \max(\{\sigma\}\! \cup s_{kj}),  \min s_{kj} \le \max( \{\sigma\}\! \cup s_{ik})\right\}\!, \\
 s'_{ji}\!\!&=&\!\! s'_{ij}, \\ 
  s'_{ii}\!\!&=&\!\! \left\{\sigma\in s_{ii}| \sigma\le \max (s'_{ij}\cup s_{jj}), 
    \min s'_{ij} \le \max(\{\sigma\}\! \cup s_{jj}),  \min s_{jj} \le \max( \{\sigma\}\! \cup s'_{ij}) \right\}\!,  \\ 
  s'_{jj}\!\!&=&\!\! \left\{\sigma\in s_{jj}| \sigma\le \max (s'_{ij}\cup s'_{ii}),\, 
    \min s'_{ij} \le \max(\{\sigma\}\! \cup s'_{ii}), \, \min s'_{ii} \le \max( \{\sigma\}\! \cup s'_{ij}) \right\}\!,  \\ 
  t'_{ij}\!\!&=&\!\! \left\{\tau\in t_{ij}| \tau\le \max (t_{ik}\cup t_{kj}),\; 
     \min t_{ik} \le \max(\{\tau\} \cup t_{kj}), \; \min t_{kj} \le \max( \{\tau\} \cup t_{ik})\right\}\!, \\
  t'_{ji}\!\!&=&\!\!t'_{ij}, \\   
  t'_{ii}\!\!&=&\!\! \left\{\tau\in t_{ii}| \tau\le \max (t'_{ij}\cup t_{jj}),\, 
    \min t'_{ij} \le \max(\{\tau\} \cup t_{jj}), \, \min t_{jj} \le \max( \{\tau\} \cup t'_{ij}) \right\}\!,  \\ 
  t'_{jj}\!\!&=&\!\! \left\{\tau\in t_{jj}| \tau\le \max (t'_{ij}\cup t'_{ii}),\, 
    \min t'_{ij} \le \max(\{\tau\} \cup t'_{ii}), \, \min t'_{ii} \le \max( \{\tau\} \cup t'_{ij}) \right\}\!, 
\eeq
and let $(S',T')$ be the same as $(S,T)$ for other entries of $S'$, $T'$. Then $(S',T')  \Supset  (A|B)$.  
\end{proposition}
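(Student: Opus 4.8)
The plan is to read each defining condition for the refined entries as an instance of a single elementary fact about orders under a three-term linear relation. The relevant relations are the additive identities among positions and separations: for any distinct $i,j,k$ one has $z_{ij}=z_{ik}+z_{kj}$ and $z_i=z_{ij}+z_j$ (and likewise $w_{ij}=w_{ik}+w_{kj}$, $w_i=w_{ij}+w_j$), all immediate from $z_{ab}=z_a-z_b$. Thus every entry we wish to shrink sits in a relation $p=q+r$ with two other variables whose orders are already constrained by $(S,T)$ (or by a previously refined entry).

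The first step is to record the order lemma: if three sequences satisfy $p=q+r$, then, because the six levels are totally ordered consistently with asymptotic magnitude and $\prec,\preceq$ are subadditive, we have $\lev(p)\le\max(\lev(q),\lev(r))$, and symmetrically (writing $q=p-r$ and $r=p-q$) $\lev(q)\le\max(\lev(p),\lev(r))$ and $\lev(r)\le\max(\lev(p),\lev(q))$. In words, none of the three orders can strictly dominate the other two. I would state and prove this once, taking care that the argument uses only the monotonicity of $\lev$ with respect to the magnitude ordering, since orders need not be exact powers of $\epsilon$ (levels $2$ and $4$ are open ranges).

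The second step is to apply this lemma term by term. For $s'_{ij}$ I take $p=z_{ij}$, $q=z_{ik}$, $r=z_{kj}$; if $\sigma=\lev(z_{ij})$ is the actual level, the three conclusions of the lemma read $\sigma\le\max(\lev(z_{ik}),\lev(z_{kj}))$, $\lev(z_{ik})\le\max(\sigma,\lev(z_{kj}))$, and $\lev(z_{kj})\le\max(\sigma,\lev(z_{ik}))$. Bounding the unknown true levels by the corresponding entries of $(S,T)$ — using $\lev(z_{ik})\le\max s_{ik}$, $\min s_{ik}\le\lev(z_{ik})$, and so on — turns these into exactly the three set-theoretic inequalities in the definition of $s'_{ij}$; since $(S,T)\Supset(A|B)$ guarantees $\sigma\in s_{ij}$, we conclude $\sigma\in s'_{ij}$. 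The entries $s'_{ii}$ and $s'_{jj}$ are handled the same way with the relation $z_i=z_{ij}+z_j$, now legitimately using the already-refined $s'_{ij}$ (and then $s'_{ii}$) in place of $s_{ij}$, because those refinements have just been shown to still contain the true levels. The $T$-entries are the verbatim $w$-analogue via the $w$-relations.

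The routine bookkeeping — matching each of the three inequalities in each entry to the correct rearrangement of the linear relation, and checking the replacement of true levels by the $\min$'s and $\max$'s of entries — is mechanical once the order lemma is in hand. The only genuinely delicate point, and the step I expect to be the main obstacle, is the order lemma itself: establishing $\lev(p)\le\max(\lev(q),\lev(r))$ rigorously when the orders of $p,q,r$ may fall in the open ranges (levels $2$ and $4$) rather than being clean powers $\epsilon^\lambda$, so that one cannot simply add exponents but must argue through the magnitude ordering and the subadditivity of $\preceq$. Once that is secured, refining $s'_{ij}$ before $s'_{ii}$ and $s'_{ii}$ before $s'_{jj}$, and leaving all other entries untouched, yields $(S',T')\subset(S,T)$ with $(S',T')\Supset(A|B)$.
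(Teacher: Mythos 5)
Your proposal is correct and follows essentially the same route as the paper: the paper's proof likewise starts from the identities $z_{ij}=z_{ik}+z_{kj}$ and $z_i=z_{ij}+z_j$, derives the three rearranged level inequalities $\lev(p)\le\max\{\lev(q),\lev(r)\}$ together with its two symmetric versions, converts them into the set conditions by bounding the unknown levels with the $\min$ and $\max$ of the corresponding entries, and refines $s'_{ij}$ before $s'_{ii}$ and $s'_{jj}$ exactly as you describe. The ``order lemma'' you single out as the delicate point is used without comment in the paper, and it is in fact immediate: any two distinct levels are separated by a fixed power of $\epsilon$, so a strictly smaller level forces $\prec$, and if both $q\prec p$ and $r\prec p$ then $(q+r)/p\to 0$, contradicting $p=q+r$.
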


\begin{proof}
From $z_{ij}=z_{ik}+z_{kj}$ we have  
\beq
&&  \lev(z_{ij}) \;\le\; \max \{\lev(z_{ik}), \lev(z_{kj})\} \;\in\; s_{ik}\cup s_{kj}, \\ 
&\Longrightarrow& \lev(z_{ij}) \;\in \;  \{\sigma\in s_{ij}| \sigma\leq \max (s_{ik}\cup s_{kj})\}; \\
&&  \min s_{ik} \;\le \; \lev(z_{ik}) \;\le\; \max \{\lev(z_{ij}), \lev(z_{kj})\}, \\ 
&\Longrightarrow& \lev(z_{ij}) \;\in \;  \{\sigma\in s_{ij}| \min s_{ik} \le \max( \{\sigma\} \cup s_{kj}); \\
&&  \min s_{kj} \;\le \; \lev(z_{kj}) \;\le\; \max \{\lev(z_{ij}), \lev(z_{ik})\}, \\ 
&\Longrightarrow& \lev(z_{ij}) \;\in \;  \{\sigma\in s_{ij}| \min s_{kj} \le \max( \{\sigma\} \cup s_{ik}). 
\eeq
Thus $\lev(z_{ij}) \in s'_{ij}$.  Similarly, from  $z_i=z_{ij}+z_j$ we find
\beq
 &&  \lev(z_i) \;\le\; \max \{\lev(z_{ij}), \lev(z_j)\} \;\in\;  s'_{ij}\cup s_{jj} \\
 &\Longrightarrow& \lev(z_i) \;\in \;  \{\sigma\in s_{ii}| \sigma\leq \max (s'_{ij}\cup s_{jj})\}; \\  
 &&  \min s'_{ij} \;\le \; \lev(z_{ij}) \;\le\; \max \{\lev(z_i), \lev(z_j)\}, \\ 
&\Longrightarrow& \lev(z_i) \;\in \;  \{\sigma\in s_{ii}| \min s'_{ij} \le \max( \{\sigma\} \cup s_{jj}); \\ 
 &&  \min s_{jj} \;\le \; \lev(z_j) \;\le\; \max \{\lev(z_i), \lev(z_{ij})\}, \\ 
&\Longrightarrow& \lev(z_i) \;\in \;  \{\sigma\in s_{ii}| \min s_{jj} \le \max( \{\sigma\} \cup s'_{ij}).
\eeq
Thus $\lev(z_i) \in s'_{ii}$. The proof for $\lev(z_j) \in s'_{jj}$ is the same.
Similarly, we also have
$$
  \lev(w_{ij}) \;\in \;  t'_{ij}, \quad \lev(w_i) \;\in \;   t'_{ii}, \quad \lev(w_j) \;\in \;  t'_{jj}.
$$
Therefore $(S',T')  \Supset  (A|B)$.
\end{proof}

\begin{proposition}    \label{prop:rom3} % Rule 2e
Suppose $(S,T)  \Supset  (A|B)$. Assume $1\le i< j < k \le n$ and 
$$(a_{ij}+b_{ij}) (a_{jk}+b_{jk}) (a_{ik}+b_{ik}) \ne 0.$$ Define $(S',T')$ by setting 
\begin{align*}
&&  s'_{ij} = s'_{ji} =& s'_{jk} = s'_{kj} = s'_{ik} = s'_{ki} = s_{ij} \cap s_{jk} \cap s_{ik},  \\
 && t'_{ij}  = t'_{ji} =& t'_{jk} = t'_{kj} = t'_{ik} = t'_{ki} = t_{ij} \cap t_{jk} \cap t_{ik}, 
\end{align*}
and let $(S',T')$ be the same as $(S,T)$ for other entries of $S'$, $T'$. Then $(S',T')  \Supset  (A|B)$.  
\end{proposition}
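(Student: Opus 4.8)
The plan is to exploit the fact that the hypothesis $(a_{ij}+b_{ij})(a_{jk}+b_{jk})(a_{ik}+b_{ik})\ne 0$ forces an edge between every pair among $i,j,k$, so that these three vertices span a triangle in the $zw$-diagram, and then to invoke Rule~2e, which asserts that the three edges of such a triangle are of the same type, with equal orders of $z$-separations and equal orders of $w$-separations. From equal orders I will read off equal \emph{levels}, and the intersection appearing in the statement is exactly what is needed to cover a common level.

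First I would fix an arbitrary singular sequence associated with $(A|B)$ whose positions and separations have orders in fixed levels. Since $(S,T)\Supset(A|B)$, every level $\lev(z_{pq})$, $\lev(w_{pq})$, $\lev(z_p)$, $\lev(w_p)$ is covered by the corresponding entry of $(S,T)$. Because $(S',T')$ agrees with $(S,T)$ except on the six off-diagonal triangle entries indexed by the pairs $\{i,j\}$, $\{j,k\}$, $\{i,k\}$, the covering property is automatic for every diagonal entry and for every off-diagonal entry outside those pairs. It therefore suffices to verify the six modified entries.

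Next, since each factor of the product is nonzero, there is an edge (a $z$-stroke, a $w$-stroke, or both) between each of the pairs $\{i,j\}$, $\{j,k\}$, $\{i,k\}$, so $i,j,k$ form a triangle and Rule~2e applies: $z_{ij}\approx z_{jk}\approx z_{ik}$ and $w_{ij}\approx w_{jk}\approx w_{ik}$. In particular the three $z$-separations share a common level $\ell_z$ and the three $w$-separations share a common level $\ell_w$. Because $(S,T)$ covers orders, $\ell_z=\lev(z_{ij})\in s_{ij}$, $\ell_z=\lev(z_{jk})\in s_{jk}$, and $\ell_z=\lev(z_{ik})\in s_{ik}$, whence $\ell_z\in s_{ij}\cap s_{jk}\cap s_{ik}=s'_{ij}=s'_{jk}=s'_{ik}$. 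Running the identical argument with $w$ in place of $z$ yields $\ell_w\in t_{ij}\cap t_{jk}\cap t_{ik}=t'_{ij}=t'_{jk}=t'_{ik}$. Hence each of the six modified entries still covers the order of its separation, and combining with the unchanged entries gives $(S',T')\Supset(A|B)$.

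The only genuine input is Rule~2e; once the triangle is recognized, the remainder is the elementary observation that an intersection of sets, each containing a common element, still contains that element. Consequently I do not anticipate a serious obstacle: the verification is essentially mechanical once the triangle structure is identified, and the argument reduces to a clean application of the Two Colors Rules together with the definition of the covering relation $\Supset$.
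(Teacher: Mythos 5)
Your proposal is correct and follows essentially the same route as the paper's proof: recognize that the nonvanishing product forces a triangle on $i,j,k$, invoke Rule~2e to get $z_{ij}\approx z_{jk}\approx z_{ik}$ and $w_{ij}\approx w_{jk}\approx w_{ik}$, and conclude that the triple intersection covers the common level. Your added bookkeeping (unchanged entries cover automatically; $\approx$ preserves levels) is implicit in the paper's one-line argument and is a correct, if slightly more explicit, rendering of it.
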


\begin{proof}
The assumption $(a_{ij}+b_{ij}) (a_{jk}+b_{jk}) (a_{ik}+b_{ik}) \ne 0$ means bodies $i$, $j$, $k$ form a triangle. 
By Rule~2e the three edges are of the same type, and $z_{ij} \approx z_{jk} \approx z_{ik}$.  
Therefore the intersection $s_{ij} \cap s_{jk} \cap s_{ik}$ covers orders of $z_{ij}$, $z_{jk}$, $z_{ik}$. Similar for $T'$. 
\end{proof}

\begin{proposition}  \label{prop:rom4}  % Estimates, and center of mass
Suppose $(S,T)  \Supset  (A|B)$.  Then the subset $(S',T')$ of $(S,T)$ given below also satisfies $(S',T')  \Supset  (A|B)$:  
\begin{enumerate}
\item[(a)]  For any $i, j\in\{1,\cdots,n\}$, $i\ne j$, 
\beq
 a_{ij}+b_{ij}\geq 1 &\Longrightarrow& s'_{ij}=s_{ij} \cap \{6-\tau| \tau\in t_{ij}\}, \\
&&  t'_{ij}=t_{ij} \cap \{6-\sigma| \sigma\in s'_{ij}\}; \\
 a_{ij}+b_{ij}=0  &\Longrightarrow&  
 s'_{ij}=\left\{ \sigma\in s_{ij}| \sigma \ge 6-2\,\lfloor \tau/2\rfloor \;\; \textrm{for every $\tau\in t_{ij}$} \right\},  \\
&& 
  t'_{ij}=\left\{ \tau\in t_{ij}|      \tau \ge 6-2\,\lfloor \sigma/2\rfloor \;\; \textrm{for every $\sigma\in s'_{ij}$} \right\}.
\eeq
\item[(b)] For any $i\in\{1,\cdots,n\}$, 
\beq
  s'_{ii}\; = \; \left\{\sigma\in s_{ii}| \sigma\leq \max \bigcup_{j \ne i} s'_{ij}\right\},   \quad 
t'_{ii}\;=\; \left\{\sigma\in t_{ii}| \sigma\leq \max \bigcup_{j \ne i} t'_{ij}\right\}.
\eeq
\end{enumerate}
\end{proposition}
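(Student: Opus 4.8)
The plan is to verify the single defining property of ``$\Supset$'': for the singular sequence behind $(A|B)$, each refined entry of $(S',T')$ still contains the level of the corresponding variable. Since $(S,T)\Supset(A|B)$, we already know $\lev(z_{ij})\in s_{ij}$, $\lev(w_{ij})\in t_{ij}$, $\lev(z_i)\in s_{ii}$, $\lev(w_i)\in t_{ii}$, so it suffices to show that the discarded levels are genuinely impossible. I would treat the three off-diagonal situations of part~(a) and the diagonal situation of part~(b) separately, establishing part~(a) first, since part~(b) refers to the already-refined entries $s'_{ij}$, $t'_{ij}$.

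For the edge case $a_{ij}+b_{ij}\ge 1$, everything rests on Lemma~\ref{lem:order6}, which gives the exact identity $\lev(z_{ij})+\lev(w_{ij})=6$. Writing $\tau=\lev(w_{ij})\in t_{ij}$, this forces $\lev(z_{ij})=6-\tau\in\{6-\tau:\tau\in t_{ij}\}$, and combined with $\lev(z_{ij})\in s_{ij}$ we land in $s'_{ij}$. Feeding this back, $\lev(w_{ij})=6-\lev(z_{ij})$ with $\lev(z_{ij})\in s'_{ij}$ puts $\lev(w_{ij})$ into $\{6-\sigma:\sigma\in s'_{ij}\}\cap t_{ij}=t'_{ij}$. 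No case analysis is needed here.

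The non-edge case $a_{ij}+b_{ij}=0$ is the technical heart. Lemma~\ref{lem:order6} only yields $\lev(z_{ij})+\lev(w_{ij})\ge 6$, which is too weak; the sharp information comes from the definition of a stroke. Absence of an edge means $Z_{ij}\prec\epsilon^{-2}$ and $W_{ij}\prec\epsilon^{-2}$, so squaring gives $z_{ij}^{-1}w_{ij}^{-3}\prec\epsilon^{-4}$ and $z_{ij}^{-3}w_{ij}^{-1}\prec\epsilon^{-4}$, i.e. $z_{ij}w_{ij}^{3}\succ\epsilon^{4}$ and $z_{ij}^{3}w_{ij}\succ\epsilon^{4}$. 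I would then convert these two order inequalities into a level constraint by running through the five admissible values of $\tau=\lev(w_{ij})$: using the exponent ranges attached to each level, a short finite check shows $\lev(z_{ij})\ge 6-2\lfloor\tau/2\rfloor$, the floor arising because levels $2,3$ share the bound $4$ and levels $4,5$ share the bound $2$, while $\tau=1$ yields the impossible bound $6$ (recording that an $\epsilon^{2}$-separation cannot sit opposite an edge-free pair). The delicate point, and what I expect to be the main obstacle, is the coverage direction: we know this inequality only for the \emph{actual} level $\tau_0\in t_{ij}$ of $w_{ij}$, so to be sure no valid $\sigma$ is discarded we must retain every $\sigma\in s_{ij}$ satisfying the bound for \emph{some} admissible $\tau\in t_{ij}$, equivalently for $\tau=\max t_{ij}$ since the bound decreases in $\tau$; the symmetric argument then defines $t'_{ij}$ from $s'_{ij}$. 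Checking that this discretized bound matches the stated rule and that the retained set still contains $\lev(z_{ij})$ is the careful bookkeeping step.

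Finally, part~(b) is cleanest via the center of mass. Because the configurations have their mass center at the origin, and this is preserved by the renormalization $z_k\mapsto a z_k$, $w_k\mapsto a^{-1}w_k$, every member of the singular sequence satisfies $\sum_k m_k z_k=0$. Substituting $z_k=z_i-z_{ik}$ and invoking the weak hypothesis $\sum_k m_k\ne 0$ yields $(\sum_k m_k)\,z_i=\sum_{k\ne i}m_k z_{ik}$, whence $\lev(z_i)\le\max_{k\ne i}\lev(z_{ik})$. Since part~(a) already guarantees $\lev(z_{ik})\in s'_{ik}$, we obtain $\lev(z_i)\le\max\bigcup_{j\ne i}s'_{ij}$, so $\lev(z_i)\in s'_{ii}$; the identity $\sum_k m_k w_k=0$ handles $t'_{ii}$ identically. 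The only thing to watch is that the weak mass hypothesis is used so that the leading term $(\sum_k m_k)z_i$ does not collapse.
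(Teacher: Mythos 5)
Your proposal follows the same route as the paper's own proof, in all three pieces: the edge case of (a) is exactly Lemma~\ref{lem:order6}; the non-edge case converts $Z_{ij}\prec\epsilon^{-2}$, $W_{ij}\prec\epsilon^{-2}$ into the level bound $\lev(z_{ij})\ge 6-2\lfloor \lev(w_{ij})/2\rfloor$, with the floor encoding the strict inequality forced at the precise levels $1,3,5$ (your finite check of the five values of $\tau$ is the same computation); and (b) is the identity $(m_1+\cdots+m_n)z_i=\sum_{j\ne i}m_jz_{ij}$ from the center of mass together with the hypothesis that the total mass is nonzero, done after (a) so that $\lev(z_{ij})\in s'_{ij}$ is available. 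So far this is the paper's argument.

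The point you flag as the main obstacle --- the quantifier in the non-edge rule --- is genuine, and your resolution is the correct one. As printed, ``$\sigma\ge 6-2\lfloor\tau/2\rfloor$ for every $\tau\in t_{ij}$'' imposes the bound at $\tau=\min t_{ij}$, and that refinement is \emph{not} coverage-preserving: with no edge and $s_{ij}=t_{ij}=\{2,3,4\}$ it shrinks $s'_{ij}$ to $\{4\}$, although the pair $(\lev(z_{ij}),\lev(w_{ij}))=(2,4)$ is admissible for a non-edge pair --- it is precisely one of the cases the paper itself retains in Example~\ref{exam:chooseom} and in the covering matrices of \S\ref{subsec:4-2}. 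Coverage only gives the inequality at the actual level $\tau_0\in t_{ij}$, so the sound rule keeps every $\sigma$ satisfying the bound for \emph{some} $\tau\in t_{ij}$, i.e.\ at $\tau=\max t_{ij}$, exactly as you say; this is also what the paper's implementation does (the subroutine for this proposition selects with \texttt{\# >= 6 - 2*Floor[Max[...]/2] \&}, i.e.\ uses the maximum of the companion entry). The paper's written proof glosses over this entirely: it derives the relation only between the actual pair of levels (with the inequality signs inadvertently reversed, $\le$ and $<$ where $\ge$ and $>$ are meant) and then simply asserts that the displayed $s'_{ij}$ covers the order. So your proof establishes the intended, corrected statement; the ``for every'' in the statement should be read as ``for some'' (equivalently, evaluated at $\max t_{ij}$), and you were right not to try to prove it literally.
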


\begin{proof}
Consider $i\ne j$ and $a_{ij}+b_{ij}\geq 1$.  By Lemma~\ref{lem:ordersum6}  we have
$$
 s_{ij} \;\ni\; \lev(z_{ij}) \;=\; 6 - \lev(w_{ij}) \;\in\; \{6-\tau| \tau\in t_{ij}\}. 
$$
Therefore the first $s'_{ij}$ defined in (a) covers order of $z_{ij}$. 

When $a_{ij}+b_{ij}=0$ we have 
$$z_{ij}^{-1/2}w_{ij}^{-3/2}=Z_{ij}\prec \epsilon^{-2} \quad \text{and}\quad w_{ij}^{-1/2}z_{ij}^{-3/2}=W_{ij}\prec \epsilon^{-2}.$$
Therefore,  if $\lev(z_{ij})=k$, then $\lev(w_{ij})\le 6-k$. 
Since each of the levels 1,3,5 contains only a precise order, for $k=1,3,5$ we actually have  $\lev(w_{ij}) < 6-k$, 
thus  $\lev(w_{ij}) \le 6-2\,\lfloor k/2\rfloor$ for every $k$.  
Therefore the second $s'_{ij}$ defined in (a) covers order of $z_{ij}$.  Discussions for $t'_{ij}$ are similar. 

Since the center of mass is at the origin, we have $m_1z_1+\cdots +m_nz_n=0$, which is equivalent to
$$
(m_1+m_2+\cdots+m_n)z_i=\sum_{j\neq i}m_jz_{ij}.
$$
Recall that the total mass cannot be zero. Therefore, for any $i$ there exists some $j\neq i$ such that  $z_i\preceq z_{ij}$, where hence
$$
  \lev(z_i) \; \le \; \max_{j\ne i} \lev(z_{ij})  \;\in\;   \bigcup_{j \ne i} s'_{ij}. 
$$ 
This implies $\lev(z_i) \in s'_{ii}$. Similarly, $\lev(w_i) \in t'_{ii}$ for each $i$.  This completes the proof. 
\end{proof}

\begin{proposition}   \label{prop:rom6}  % More about connected components.  (used in FindOrder immediately after Prop. 4.4)
Suppose $(S,T)  \Supset  (A|B)$. Given a nonempty $I\subset \{1,\cdots,n\}$.  
\begin{enumerate}
\item[(a)] If $I$ is a connected component of $A$,  there exists exactly one pair $i<j$ in $I$ such that $(a_{ij},b_{ij})=(1,0)$, 
and $\sum_{k\in I} a_{kk} \ne 0$, then let  $s'_{ij} = \{5\}$. 
Let $s'_{ij}=s_{ij}$ for other cases and $s'_{kl}=s_{kl}$ for other entries of $S'$. 
\item[(b)]  If $I$ is a connected component of $B$,  there exists exactly one pair $i<j$ in $I$ such that $(a_{ij},b_{ij})=(0,1)$, 
and $\sum_{k\in I} b_{kk} \ne 0$, then let  $t'_{ij} = \{5\}$. 
Let $t'_{ij}=t_{ij}$ for other cases and $t'_{kl}=t_{kl}$ for other entries of $T'$.  
\end{enumerate}
Then $(S',T')  \Supset  (A|B)$. 
\end{proposition}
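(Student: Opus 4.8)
The plan is to reduce part (a) to a single order statement: under the stated hypotheses, the $z$-separation of the unique pure $z$-edge $\{i,j\}$ always attains the maximal order, i.e. $\lev(z_{ij})=5$ for every singular sequence associated with $(A|B)$. Once this is established, part (a) is immediate: since we set $s'_{ij}=\{5\}$ and leave all other entries of $S'$, $T'$ equal to those of $S$, $T$, every other entry still covers its variable by hypothesis, while the order of $z_{ij}$ is covered by $\{5\}$. (Note $\{5\}$ is nonempty, and whenever the diagram is actually realized by a singular sequence one has $5\in s_{ij}$ because $(S,T)\Supset(A|B)$ must already cover the order $\lev(z_{ij})=5$; hence $(S',T')$ is then a genuine subset of $(S,T)$.)

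To prove $\lev(z_{ij})=5$, I would first recall that a connected component $I$ of $A$ is an isolated component of the $z$-diagram, so every $z$-stroke incident to a vertex of $I$ lies entirely inside $I$. The hypothesis $\sum_{k\in I}a_{kk}\neq 0$ says there is a $z$-circle in $I$, so by Rule~1e there is a maximal $z$-stroke inside $I$. The key step is then to localize this maximal stroke to $\{i,j\}$. I would classify the $z$-strokes inside $I$: since $(a_{kl},b_{kl})=(1,0)$ for exactly one pair, and every remaining $z$-stroke in $I$ satisfies $a_{kl}=1$, each $z$-stroke other than $\{i,j\}$ must be a $zw$-edge, i.e. $(a_{kl},b_{kl})=(1,1)$. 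By Estimate~2(4) a $zw$-edge has $z_{kl}\approx\epsilon$, which is not maximal, whereas by Estimate~2(5) a $z$-stroke is maximal precisely when $z_{kl}\approx\epsilon^{-2}$. Thus no $zw$-edge can be the maximal $z$-stroke guaranteed by Rule~1e, so that stroke must be the unique pure $z$-edge $\{i,j\}$. This forces $z_{ij}\approx\epsilon^{-2}$, i.e. $\lev(z_{ij})=5$, as desired.

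Part (b) follows verbatim after interchanging the roles of $z$ and $w$ (equivalently $A$ and $B$), using the $w$-version of Rule~1e and the symmetric clauses of Estimate~2.

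The main obstacle, and really the only delicate point, is the localization of the maximal stroke to the specific designated pair $\{i,j\}$; here the hypothesis that there is \emph{exactly one} pure $z$-edge in $I$ is essential. With two or more pure $z$-edges, Rule~1e would still guarantee a maximal stroke, but one could no longer determine which of them is maximal, and so one could not legitimately shrink a single prescribed entry $s_{ij}$ all the way down to $\{5\}$. The remaining ingredients — the isolated-component property of connected components, the circle/maximal-stroke equivalence of Rule~1e, and the order dichotomy between $zw$-edges and maximal $z$-edges supplied by Estimate~2 — are direct citations of results recalled in Section~\ref{sec:ColoringRules}, so no further computation is needed.
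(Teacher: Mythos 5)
Your proof is correct and takes essentially the same route as the paper: a connected component of $A$ is an isolated component of the $z$-diagram, the $z$-circle hypothesis plus Rule~1e yields a maximal $z$-stroke in $I$, and that stroke must be the unique pure $z$-edge $\{i,j\}$, forcing $\lev(z_{ij})=5$ so that $s'_{ij}=\{5\}$ covers it. The only difference is that you make explicit the localization step (every other $z$-stroke in $I$ is a $zw$-edge, hence non-maximal by Estimate~2), which the paper compresses into its one-line appeal to Rule~1e.
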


\begin{proof}
An isolated component in $A$ corresponds an isolated component in the $z$-diagram. 
Assumptions in (a) means the isolated component corresponding to $I$ contains some $z$-circle and precisely one $z$-edge, 
and the edge is between $i$ and $j$. It follows from Rule~1e that $\lev(z_{ij})=5$, so $s'_{ij}$ covers order of $z_{ij}$. 
Discussions for (b) are similar.  
\end{proof}

\begin{proposition}   \label{prop:rom7} % Prop. 1 in [AK]
Suppose $(S,T)  \Supset  (A|B)$. 
Given $1\le i, j \le n$, $i\ne j$. 
The subset $(S',T')$ of $(S,T)$ given below also satisfies $(S',T')  \Supset  (A|B)$:
 \beq
&& \bigcup_{k\neq i}t_{ik}= \{\ell\}, \; \ell\in \{1,3,5\},\; \text{and}\;\; \max s_{ij}< \min \bigcup_{k\neq i,j} s_{ik}  \\
 & \Longrightarrow & 
      s'_{ii} = \{ \sigma\in s_{ii}| \sigma \le \max s_{ij} \},  \quad  s'_{jj} = \{ \sigma\in s_{jj}| \sigma \le \max s_{ij} \},  \\
&& s'_{ij} = \{ \sigma\in s_{ij}| \sigma \ge \min(s'_{ii}\cup s'_{jj}) \};  \\ 
&& \bigcup_{k\neq i}s_{ik}= \{\ell\}, \; \ell\in \{1,3,5\},\; \text{and}\;\; \max t_{ij}< \min \bigcup_{k\neq i,j} t_{ik}  \\
 & \Longrightarrow & 
      t'_{ii} = \{ \tau\in t_{ii}| \tau \le \max t_{ij} \},  \quad  t'_{jj} = \{ \tau\in t_{jj}| \tau \le \max t_{ij} \},  \\
&& t'_{ij} = \{ \tau\in t_{ij}| \tau \ge \min(t'_{ii}\cup t'_{jj}) \}.
\eeq 
Let $(S',T')$ be the same as $(S,T)$ for other entries of $S'$, $T'$. 
\end{proposition}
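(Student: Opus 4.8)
The plan is to recognize the hypotheses of the first implication as precisely those of Proposition~\ref{prop:AK1}, apply that proposition to obtain a clustering estimate, and then transcribe the estimate back into the set-refinements defining $(S',T')$. By the symmetry ``$z\leftrightarrow w$'' recorded immediately after Proposition~\ref{prop:AK4}, it suffices to treat the implication that refines $S$; the implication refining $T$ is verbatim the same argument with the roles of $z$ and $w$ exchanged.

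First I would match indices with Proposition~\ref{prop:AK1} by letting vertex $i$ play the role of the hub index $n$ and vertex $j$ the role of index $1$, so that $z_{ij}$ corresponds to $z_{1n}$ and each $z_{ik}$ with $k\ne i,j$ to $z_{kn}$. The hypothesis $\max s_{ij}<\min\bigcup_{k\ne i,j}s_{ik}$ forces $\lev(z_{ij})\le\max s_{ij}<\min_{k\ne i,j}\lev(z_{ik})$, and since a strict gap between levels always corresponds to a strict gap in order (odd levels are precise powers of $\epsilon$, even levels are the open intervals between consecutive odd ones), this yields $z_{ij}\prec z_{ik}$ for every $k\ne i,j$. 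Next, the hypothesis $\bigcup_{k\ne i}t_{ik}=\{\ell\}$ with $\ell\in\{1,3,5\}$ says that every $w_{ik}$---including $w_{ij}$, since $j\ne i$---has its order in a single \emph{precise} level, hence $w_{ik}\approx\epsilon^{\ell'}$ for a common exponent $\ell'$; in particular $w_{ij}\approx w_{ik}$ for all $k\ne i,j$. Thus both hypotheses of Proposition~\ref{prop:AK1} are met.

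Applying Proposition~\ref{prop:AK1} then gives $z_i,z_j\preceq z_{ij}$, that is, $\lev(z_i)\le\lev(z_{ij})$ and $\lev(z_j)\le\lev(z_{ij})$. Because $(S,T)\Supset(A|B)$ guarantees $\lev(z_{ij})\in s_{ij}$, we obtain $\lev(z_i)\le\lev(z_{ij})\le\max s_{ij}$, so $\lev(z_i)\in\{\sigma\in s_{ii}:\sigma\le\max s_{ij}\}=s'_{ii}$, and likewise $\lev(z_j)\in s'_{jj}$. For the refinement of $s_{ij}$ I would then chain $\lev(z_{ij})\ge\lev(z_i)\ge\min s'_{ii}\ge\min(s'_{ii}\cup s'_{jj})$, which places $\lev(z_{ij})\in\{\sigma\in s_{ij}:\sigma\ge\min(s'_{ii}\cup s'_{jj})\}=s'_{ij}$. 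All entries outside these three coincide with those of $(S,T)$ and hence still cover the corresponding orders, so $(S',T')\Supset(A|B)$.

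The step requiring the most care is the verification that the two discretized hypotheses really are equivalent to the analytic hypotheses of Proposition~\ref{prop:AK1}: one must check that a strict \emph{level} inequality produces a strict order relation $\prec$ (so the level structure---precise odd levels separating open even levels---is used essentially, and adjacent even levels are never directly compared), and that the single-\emph{precise}-level condition on the $w$-separations delivers the genuine asymptotic equality $w_{ij}\approx w_{ik}$ rather than a mere shared range of orders. Once these two translations are secured, the remainder is a routine propagation of the clustering inequalities $z_i,z_j\preceq z_{ij}$ through the definitions of $s'_{ii}$, $s'_{jj}$, and $s'_{ij}$.
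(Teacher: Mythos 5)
Your proof is correct and follows essentially the same route as the paper's: both translate the hypothesis $\bigcup_{k\neq i}t_{ik}=\{\ell\}$, $\ell\in\{1,3,5\}$, into $w_{ij}\approx w_{ik}$ and the gap condition $\max s_{ij}<\min\bigcup_{k\neq i,j}s_{ik}$ into $z_{ij}\prec z_{ik}$, then invoke Proposition~\ref{prop:AK1} to get $z_i,z_j\preceq z_{ij}$ and read off the coverings $s'_{ii}$, $s'_{jj}$, $s'_{ij}$, with the $T$-case by the $z\leftrightarrow w$ symmetry. The only difference is that you spell out the level-to-order translations and the chain $\lev(z_{ij})\ge\lev(z_i)\ge\min(s'_{ii}\cup s'_{jj})$ explicitly, which the paper leaves implicit.
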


\begin{proof}
The condition $\bigcup_{k\neq i}t_{ik}= \{\ell\}$, $\ell\in \{1,3,5\}$, implies $w_{ij}\approx w_{ik}$ for any $k\ne i, j$. 
The next condition  $\max s_{ij}< \min \bigcup_{k\neq i,j} s_{ik}$ implies $z_{ij}\prec z_{ik}$ for any $k\ne i,j$. 
It follows from Proposition~\ref{prop:AK1} that $z_i, z_j \preceq z_{ij}$, and so $s'_{ii}$ covers the order of $z_i$, 
$s'_{jj}$ covers the order of $z_j$, and $s'_{ij}$ covers the order of $z_{ij}$. 
Discussions for $t'_{ii}$, $t'_{jj}$, $t'_{ij}$ are similar. 
\end{proof}

\begin{proposition}   \label{prop:rom8} % Prop. 2 in [AK]
Suppose $(S,T)  \Supset  (A|B)$. Let $D$ be the $r$-order matrix produced by $(S,T)$. 
Assume  $\bigcup_{j\neq i}d_{ik}\subset\{4,5\}$. 
Then the subset $(S',T')$ of $(S,T)$ given below also satisfies $(S',T')  \Supset  (A|B)$:
\beq
 \max \bigcup_{j\neq i}s_{jj} \in \{0,2,4\}  &\Rightarrow & 
 s'_{ii} = \left\{ \sigma\in s_{ii}|  \sigma \le   \max \bigcup_{j\neq i}s_{jj}  \right\}, \\
  \max \bigcup_{j\neq i}s_{jj} \in \{1,3,5\} &\Rightarrow & 
 s'_{ii} = \left\{ \sigma\in s_{ii}|  \sigma <   \max \bigcup_{j\neq i}s_{jj}  \right\}, \\
   \max \bigcup_{j\neq i}t_{jj} \in \{0,2,4\}  &\Rightarrow & 
 t'_{ii} = \left\{ \tau\in t_{ii}|  \tau \le   \max \bigcup_{j\neq i}t_{jj}  \right\}, \\
  \max \bigcup_{j\neq i}t_{jj} \in \{1,3,5\}  &\Rightarrow & 
 t'_{ii} = \left\{ \tau\in t_{ii}|  \tau <   \max \bigcup_{j\neq i}t_{jj}  \right\}, 
\eeq
Let $s'_{ij}=s_{ij}$, $t'_{ij}=t_{ij}$ for other entries of $S'$, $T'$. 
\end{proposition}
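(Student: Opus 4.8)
The plan is to invoke Proposition~\ref{prop:AK2} to produce bodies whose positions and $w$-coordinates strictly dominate those of body $i$, and then to convert the resulting order inequalities into level inequalities, paying close attention to the difference between the ``exact'' levels $\{1,3,5\}$ and the ``range'' levels $\{0,2,4\}$.

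First I would unpack the hypothesis. Since $D$ is the $r$-order matrix produced by $(S,T)$, Corollary~\ref{cor:produceD} guarantees that $D$ covers $r$-orders of the singular sequence, so $\lev(r_{ik})\in d_{ik}$ for every $k\ne i$. The assumption $\bigcup_{k\ne i}d_{ik}\subset\{4,5\}$ then forces $\lev(r_{ik})\in\{4,5\}$ for all $k\ne i$; by the level convention for distances, level $4$ means $1\prec r_{ik}\prec\epsilon^{-2}$ and level $5$ means $r_{ik}\approx\epsilon^{-2}$, so in either case $r_{ik}\succ 1$. This is exactly the hypothesis of Proposition~\ref{prop:AK2} with $k_0=i$, which yields indices $k_1,k_2$ (necessarily distinct from $i$) such that $z_i\prec z_{k_1}$ and $w_i\prec w_{k_2}$.

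Next I would bound $\lev(z_i)$. Because $(S,T)\Supset(A|B)$, the dominating position satisfies $\lev(z_{k_1})\in s_{k_1k_1}\subset\bigcup_{j\ne i}s_{jj}$, hence $\lev(z_{k_1})\le M$, where $M:=\max\bigcup_{j\ne i}s_{jj}$. The relation $z_i\prec z_{k_1}$ says the order of $z_i$ is strictly smaller than that of $z_{k_1}$, and the order of $z_{k_1}$ does not exceed the top of level $M$. The decisive case split is on the type of $M$. If $M\in\{1,3,5\}$, then level $M$ consists of the single precise order $\epsilon^2$, $\epsilon$, or $\epsilon^{-2}$, and the order of $z_{k_1}$ is at most this exact order; since $z_i$ lies strictly below it, $z_i$ cannot attain level $M$, so $\lev(z_i)<M$. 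If instead $M\in\{0,2,4\}$, level $M$ is an open range, and an order strictly below the top of that range may still fall within it, so we can conclude only $\lev(z_i)\le M$. In both cases $\lev(z_i)\in s'_{ii}$. The argument for $w_i$ using $k_2$ and $T$ is verbatim, giving $\lev(w_i)\in t'_{ii}$.

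Finally, since $(S',T')$ coincides with $(S,T)$ on every off-diagonal entry and on every diagonal entry except the $i$-th, and we have just verified that the shrunken entries $s'_{ii}$, $t'_{ii}$ still cover the orders of $z_i$ and $w_i$, it follows that $(S',T')\Supset(A|B)$. The only delicate point is the level analysis in the third paragraph; everything else is bookkeeping, so I expect the main obstacle to be organizing the exact-versus-range case distinction cleanly rather than any genuine mathematical difficulty.
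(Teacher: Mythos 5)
Your proof is correct and follows exactly the route the paper intends: its proof of this proposition is the one-line remark that it is an immediate corollary of Proposition~\ref{prop:AK2}, and your argument simply fills in the details (distances in levels $\{4,5\}$ give $r_{ik}\succ 1$, Proposition~\ref{prop:AK2} yields dominating bodies $k_1,k_2$, and the exact-level versus range-level case split converts $z_i\prec z_{k_1}$, $w_i\prec w_{k_2}$ into the stated strict or non-strict level bounds). No gaps; the level bookkeeping in your third paragraph is exactly the content the paper leaves implicit.
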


\begin{proof}
This is an immediate corollary of Proposition~\ref{prop:AK2}.
\end{proof}

\subsection{Criteria for feasible covering order matrices} \label{subsec:deletebyom}
Given a singular sequence associated to one $zw$-matrix $(A|B)$. 
After finding a sufficiently small $zw$-order matrix $(S,T)$ so that $(S,T)\Supset (A|B)$, 
we can divide $(S,T)$ into several nonempty $zw$-order matrices that are properly contained in $(S,T)$, 
such that the collection $\coll$ satisfies $\coll\Supset (A|B)$. 

As shown in Example~\ref{exam:chooseom}, we can eliminate members in $\coll$ 
which cannot possibly cover orders of positions and separations for any subsequence of the singular sequence.
Here we present five criteria for this purpose.  
If applications of these criteria lead to contradiction, then the $zw$-diagram can be eliminated. 

\begin{proposition} \label{prop:romc1}
Suppose $\coll  \Supset  (A|B)$ and $(S,T)\in\coll$. Let $D$ be the $r$-order matrix produced by $(S,T)$. 
If $d_{ij}\ne \{1\}$ for any $1\le i,j\le n$, then the number of pairs $(i,j)$ with $i<j$ such that $d_{ij}=\{2\}$ can't be 1.
\end{proposition}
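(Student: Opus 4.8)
The plan is to reach a contradiction through Rule~2h, by showing that the hypotheses would force the globally smallest distance to tend to $0$ while being recorded as ``certainly level~$2$'' at only one pair.

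\textbf{Translating the hypothesis.} First I would read off what $d_{ij}=\{1\}$ means. Among the entries of the matrices $D_1$, $D_2$ of Lemma~\ref{lem:genD}, the only one equal to $\{1\}$ is $d^{(1)}_{33}$, which records $\lev(z_{ij})=\lev(w_{ij})=3$; by Theorem~\ref{thm:est2}(4) this is exactly the signature of a $zw$-edge, and $D_2$ has no entry containing $1$. Hence, for the refined order matrix $(S,T)$ produced by Proposition~\ref{prop:rom1}(b), the entry $d_{ij}$ equals $\{1\}$ precisely when $i,j$ are joined by a $zw$-edge, i.e. when $r_{ij}\approx\epsilon$ by Theorem~\ref{thm:est1}(3). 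Thus the assumption that no entry equals $\{1\}$ says that no pair has $r_{ij}\approx\epsilon$, so every distance obeys $\epsilon\prec r_{kl}$, that is $\lev(r_{kl})\ge 2$.

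\textbf{Main argument.} Suppose, for contradiction, that exactly one pair $(i_0,j_0)$ satisfies $d_{i_0j_0}=\{2\}$. Because $D$ covers $r$-orders and this entry is the singleton $\{2\}$, the distance $r_{i_0j_0}$ is at level~$2$ on every compatible subsequence, so $r_{i_0j_0}\to 0$. Therefore the minimal distance of the configuration also tends to $0$, and by the previous paragraph it is not $\approx\epsilon$; hence it sits at level~$2$. Passing to a subsequence on which some pair $(k_0,l_0)$ realizes this minimum, so that $r_{k_0l_0}\preceq r_{kl}$ for all $k,l$ and $r_{k_0l_0}\to 0$, Rule~2h furnishes a second pair $(k_1,l_1)$ with $r_{k_1l_1}\approx r_{k_0l_0}$. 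Both minimal pairs then have their distance at level~$2$, and the goal is to argue that each of them forces a $\{2\}$-entry of $D$, producing at least two such entries and contradicting uniqueness.

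\textbf{Main obstacle.} The last step is the delicate one. Reading Lemma~\ref{lem:genD}, an entry $d_{ij}=\{2\}$ arises exactly for a non-maximal $z$- or $w$-edge, whereas an \emph{edgeless} pair whose distance happens to be at level~$2$ always produces $d_{ij}\supseteq\{2,3,4\}$ (through $d^{(2)}_{44}$) and so is never a $\{2\}$-entry. Consequently the argument above only closes if the globally minimal distance is attained by $z$- or $w$-edges rather than by edgeless close pairs. I expect this to be the crux: one must rule out an edgeless minimal pair, i.e. show that a level-$2$ distance achieving the global minimum must carry a stroke, which should follow from the sharper order relations among $r_{kl}$, $z_{kl}$, $w_{kl}$, $Z_{kl}$, $W_{kl}$ in Theorems~\ref{thm:est1} and~\ref{thm:est2} (together, if needed, with the clustering structure near the minimal pair); once the minimal pairs are known to be $z$- or $w$-edges, Proposition~\ref{prop:rom1}(b) pins the corresponding entries to $\{2\}$ and the contradiction is complete.
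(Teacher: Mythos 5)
Your strategy coincides with the paper's: the entire published proof of this proposition is the single sentence ``This is part of Rule~2h,'' and your first two paragraphs --- the translation of the hypothesis in the singleton-entry setting where this criterion is actually applied (after the refinement of Proposition~\ref{prop:rom1}(b), an entry $\{1\}$ occurs exactly at $zw$-edges, so the hypothesis means every distance satisfies $\epsilon\prec r_{kl}$), and the application of Rule~2h to the minimal distance --- are a correct, and in fact more detailed, account of what that citation is pointing at.

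However, the step you flag as the ``main obstacle'' is a genuine gap, and it cannot be closed the way you hope, namely from the order relations in Theorems~\ref{thm:est1} and~\ref{thm:est2} alone. The difficulty is exactly the one you isolate: by Lemma~\ref{lem:genD} only a stroke-carrying pair can produce the entry $\{2\}$, while an edgeless pair whose distance lies at level~2 produces $\{2,3,4\}$, so Rule~2h increases the count of $\{2\}$-entries only if the pairs realizing the vanishing minimal distance carry strokes. Nothing in the estimates forbids an edgeless pair from being that realizer: taking $|z_{kl}|\approx 1$ and $|w_{kl}|\approx\epsilon$ gives $r_{kl}\approx\epsilon^{1/2}$ (level~2) while $Z_{kl}\approx\epsilon^{-3/2}\prec\epsilon^{-2}$ and $W_{kl}\approx\epsilon^{-1/2}\prec\epsilon^{-2}$, i.e.\ no stroke; these data are fully consistent with Theorems~\ref{thm:est1}--\ref{thm:est2} and with the refinement principles of Section~\ref{sec:OrderingRules}. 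Ruling such a pair out is a dynamical statement, not an order-theoretic one: it requires the dominant-term/cluster analysis that underlies Albouy--Kaloshin's proof of Rule~2h --- the two-body balance $z_{kl}\sim-(m_k+m_l)Z_{kl}$, $w_{kl}\sim-(m_k+m_l)W_{kl}$ forcing $r_{kl}\approx 1$, as in the proof of Proposition~\ref{prop:romc3}, together with control of the far-field differences $Z_{jk}-Z_{jl}$, which for complex configurations is delicate precisely because of skew clustering (one can have $|w_{kl}|\succ|w_{jk}|$ even though $r_{kl}\prec r_{jk}$). So your proposal is incomplete at exactly the point you identify; in fairness, the paper's one-line proof elides the same point, but a complete argument must actually supply this lemma rather than defer it to the stated form of Rule~2h.
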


\begin{proof}
This is part of Rule~2h. 
\end{proof}

In Proposition~\ref{prop:rom4}(b) we made use of the condition on center of mass for the whole system.
Using similar ideas, we obtain another criterion for feasible covering order matrices.

\begin{proposition}  \label{prop:romc4} % Center of mass for connected components; Rule 1c
Suppose $(S,T)  \Supset  (A|B)$. Given a nonempty $I\subset \{1,\cdots,n\}$.
\begin{enumerate}
\item[(a)] Suppose $I$ is a connected component of $A$ and $J=\{i\in I| a_{ii}=1\}$. Then
$$
5\in s_{ij}
$$
for some $i,j\in J$, $i\neq j$.
\item[(b)] Suppose $I$ is a connected component of $B$ and $J=\{i\in I| b_{ii}=1\}$.  Then
$$
5\in t_{ij}
$$
for some $i,j\in J$, $i\neq j$.
\end{enumerate}
\end{proposition}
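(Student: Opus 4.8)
The plan is to adapt the center-of-mass argument of Proposition~\ref{prop:rom4}(b), but to apply it to the isolated component $I$ and then to restrict it to the $z$-circled bodies $J$, and finally to invoke the covering relation $(S,T)\Supset(A|B)$ to read off level~$5$. I will only treat part~(a); part~(b) is identical after interchanging the roles of $z$ and $w$ (hence of $A,S$ and $B,T$).

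First I would fix the setting. If $J=\emptyset$ there is nothing to prove, so assume $J\neq\emptyset$. Since the conclusion is a statement about levels along the singular sequence, I pass to a subsequence on which every position and separation has a well-defined level; the $zw$-diagram is determined by order-$\epsilon^{-2}$ conditions and is therefore unchanged by this extraction, so $I$ and $J$ are unchanged as well. Because $I$ is a connected component of $A$, it is an isolated component of the $z$-diagram, and Rule~1c says that the center of mass of the bodies in $I$ is $z$-close to the origin. As the total mass $\sum_{k\in I}m_k$ is nonzero by the weak mass hypothesis, this reads $\sum_{k\in I}m_kz_k\prec\epsilon^{-2}$.

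Next I would peel off the non-circled bodies. For each $k\in I\setminus J$ we have $a_{kk}=0$, hence $z_k\prec\epsilon^{-2}$; since $I\setminus J$ is finite this gives $\sum_{k\in I\setminus J}m_kz_k\prec\epsilon^{-2}$, and subtracting from the previous estimate yields $\sum_{k\in J}m_kz_k\prec\epsilon^{-2}$. Now fix any $i\in J$ and use $z_{ik}=z_i-z_k$ to write
$$\sum_{k\in J,\,k\neq i}m_kz_{ik} \;=\; \Big(\sum_{k\in J}m_k\Big)z_i \;-\; \sum_{k\in J}m_kz_k.$$
By the weak mass hypothesis $\sum_{k\in J}m_k\neq0$, and since $i\in J$ is $z$-circled we have $z_i\approx\epsilon^{-2}$, so the first term on the right has order exactly $\epsilon^{-2}$; the second term is $\prec\epsilon^{-2}$ by the previous step. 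Hence the left-hand side satisfies $\sum_{k\in J,\,k\neq i}m_kz_{ik}\approx\epsilon^{-2}$. In particular the index set is nonempty (so $|J|\ge2$), and since the sum is finite, at least one summand is not $\prec\epsilon^{-2}$: there is $j\in J$, $j\neq i$, with $z_{ij}\not\prec\epsilon^{-2}$, i.e. $z_{ij}\approx\epsilon^{-2}$ by Estimate~1. Thus $\lev(z_{ij})=5$ on this subsequence, and because $(S,T)\Supset(A|B)$ covers its orders we conclude $5\in s_{ij}$ with $i,j\in J$, as desired.

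The main obstacle is the third step, namely forcing \emph{both} endpoints of the dominant separation into $J$. Applying the center-of-mass identity to the whole of $I$ would only produce some $z_{ij}\approx\epsilon^{-2}$ with $j$ an arbitrary body of $I$, and for a non-circled $j$ this is automatic (since $z_i\approx\epsilon^{-2}\succ z_j$) and hence useless. Discarding the non-circled contributions first and then writing the identity over $J$ alone is exactly what guarantees that the maximal separation can be taken inside $J$; this is also where $\sum_{k\in J}m_k\neq0$ is indispensable, for otherwise $\big(\sum_{k\in J}m_k\big)z_i$ need not reach order $\epsilon^{-2}$ and the domination argument fails. A minor bookkeeping point, settled by the initial extraction, is that the level of $z_{ij}$ must be read on a subsequence with all orders fixed so that the covering relation $(S,T)\Supset(A|B)$ applies verbatim.
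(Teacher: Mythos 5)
Your proof is correct and follows essentially the same route as the paper's: Rule~1c applied to the isolated component, discarding the non-circled bodies so that $\sum_{k\in J}m_kz_k\prec\epsilon^{-2}$, the center-of-mass identity over $J$ together with the nonzero-subsystem-mass hypothesis to get $\sum_{k\in J,\,k\neq i}m_kz_{ik}\approx\epsilon^{-2}$, and Estimate~1 to force some $z_{ij}\approx\epsilon^{-2}$ with both $i,j\in J$. The initial subsequence extraction and the $J=\emptyset$ remark are harmless refinements of points the paper leaves implicit.
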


\begin{proof}
Assume $I$ is a connected component of $A$, then $I$ is an isolated component in the $z$-diagram.
By Rule~1c we have $\sum_{j\in I}m_jz_j\prec \epsilon^{-2}$. 
Since $z_{i}\prec \epsilon^{-2}$ for $i\in I\setminus J$, $\sum_{j\in J}m_jz_j\prec \epsilon^{-2}$.  For each $i\in J$, from the identity
$$
\left( \sum_{j\in J} m_j\right)z_i \;=\; \sum_{j\in J \atop j\neq i}m_jz_{ij} + \sum_{j\in J} m_j z_j
$$
and the assumption that the total mass of any subsystem is nonzero, we obtain
$$
\sum_{j\in J\atop j\neq i}m_jz_{ij}\approx \epsilon^{-2}.
$$
Therefore $z_{ij}\approx \epsilon^{-2}$ for some $i,j\in J$, $i\neq j$, and the proof of (a) is completed. The proof of (b) is similar.
\end{proof}

For the next three criteria, we need to determine possible maximal order terms of $Z_{ij}$'s and $W_{ij}$'s. 
To do so, we introduce the following \textit{order-comparing process}, through which 
ineligible candidates will be ruled out. 
This process consists of nine simple or self-evident comparison rules for which we omit proofs.  

\subsubsection*{\bf Order-comparing process} 
\begin{enumerate}
\item[(1)] Comparison rules of $z$-separations:
\begin{enumerate}
\item If $\min(s_{ij})>\max(s_{kl})$, then $z_{ij}\succ z_{kl}$.
\item If $s_{ij}=s_{kl}\subset\{1,3,5\}$ and $|s_{ij}|=1$, then $z_{ij}\approx z_{kl}$.
\item If $\min (s_{ij}\cup s_{ik})\geq \max(s_{jk})$ and $\max(s_{jk})\in\{1,3,5\}$, then $z_{ij}\approx z_{ik}$.
\item If $\min (s_{ij}\cup s_{ik})\geq \max(s_{jk})$ and at least one of $\min(s_{ij})$ and $\min(s_{ik})$ is strictly greater 
than $\max(s_{jk})$, then $z_{ij}\approx z_{ik}$.
\end{enumerate}
\item[(2)] Comparison rules of $w$-separations:
\begin{enumerate}
\item If $\min(t_{ij})>\max(t_{kl})$, then $w_{ij}\succ w_{kl}$.
\item If $t_{ij}=t_{kl}\subset\{1,3,5\}$ and $|t_{ij}|=1$, then $w_{ij}\approx w_{kl}$.
\item If $\min (t_{ij}\cup t_{ik})\geq \max(t_{jk})$ and $\max(t_{jk})\in\{1,3,5\}$, then $w_{ij}\approx w_{ik}$.
\item If $\min (t_{ij}\cup t_{ik})\geq \max(t_{jk})$ and at least one of $\min(t_{ij})$ and $\min(t_{ij})$ is strictly greater 
than $\max(t_{jk})$, then $w_{ij}\approx w_{ik}$.
\end{enumerate}
\item[(3)] Comparison rules of $z_{ij}$ and $z_i$, $i\neq j$
\begin{enumerate}
\item If $\min(s_{ij})>\max(s_{ii})$, then $z_{ij}\succ z_{i}$.
\item If $s_{ij}=s_{ii}\subset\{1,3,5\}$ and $|s_{ij}|=1$, then $z_{ij}\approx z_{i}$.
\item If $\min (s_{ii}\cup s_{ij})\geq \max(s_{jj})$ and $\max(s_{jj})\in\{1,3,5\}$, then $z_{i}\approx z_{ij}$.
\item If $\min (s_{ii}\cup s_{ij})\geq \max(s_{jj})$ and at least one of $\min(s_{ii})$ and $\min(s_{ij})$ is strictly greater 
than $\max(s_{jj})$, then $z_{ii}\approx z_{ij}$.
\end{enumerate}
\item[(4)] Comparison rules of $w_{ij}$ and $w_i$, $i\neq j$
\begin{enumerate}
\item If $\min(t_{ij})>\max(t_{ii})$, then $w_{ij}\succ w_{i}$.
\item If $t_{ij}=t_{ii}\subset\{1,3,5\}$ and $|t_{ij}|=1$, then $w_{ij}\approx w_{i}$.
\item If $\min (t_{ii}\cup t_{ij})\geq \max(t_{jj})$ and $\max(t_{jj})\in\{1,3,5\}$, then $t_{i}\approx t_{ij}$.
\item If $\min (t_{ii}\cup t_{ij})\geq \max(t_{jj})$ and at least one of $\min(t_{ii})$ and $\min(t_{ij})$ is strictly greater 
than $\max(t_{jj})$, then $w_{ii}\approx w_{ij}$.
\end{enumerate}
\item[(5)] Comparison rules of distances:
\begin{enumerate}
\item If $\min(d_{ij})>\max(d_{kl})$, then $r_{ij}\succ r_{kl}$.
\item If $d_{ij}=d_{kl}\subset\{1,3,5\}$ and $|d_{ij}|=1$, then $r_{ij}\approx r_{kl}$.
\end{enumerate}
\end{enumerate}

Next, we compare orders of $Z_{ij}$'s and $W_{ij}$'s according to the relations
\begin{align*}
&& Z_{ij}&=z_{ij}^{-1/2}w_{ij}^{-3/2}=z_{ij}r_{ij}^{-3}=w_{ij}^{-1}r_{ij}^{-1}=W_{ij}^{1/3}w_{ij}^{-4/3} &\\
&& W_{ij}&=w_{ij}^{-1/2}z_{ij}^{-3/2}=w_{ij}r_{ij}^{-3}=z_{ij}^{-1}r_{ij}^{-1}=Z_{ij}^{1/3}z_{ij}^{-4/3}. &
\end{align*}
and results of comparison rules for positions, separations, and distances:
\begin{enumerate}
\item[(6)] Comparison rules of $Z_{ij}$ and $Z_{ik}$, $i,j,k$ are distinct
\begin{enumerate}
\item If $a_{ij}=a_{ik}=1$, then $Z_{ij}\approx Z_{ik}$.
\item If $a_{ij}>a_{ik}$, then $Z_{ij}\succ Z_{ik}$.
\item If $z_{ij}\succeq z_{ik}$, $w_{ij}\succeq w_{ik}$, and at least one of the inequalities is strict, then $Z_{ij}\prec Z_{ik}$.
\item If $z_{ij}\approx z_{ik}$ and $w_{ij}\approx w_{ik}$, then $Z_{ij}\prec Z_{ik}$.
\item If $z_{ij}\preceq z_{ik}$, $r_{ij}\succeq r_{ik}$, and at least one of the inequalities is strict, then $Z_{ij}\prec Z_{ik}$.
\item If $z_{ij}\approx z_{ik}$ and $r_{ij}\approx r_{ik}$, then $Z_{ij}\prec Z_{ik}$.
\item If $w_{ij}\preceq w_{ik}$, $r_{ij}\preceq r_{ik}$, and at least one of the inequalities is strict, then $Z_{ij}\succ Z_{ik}$.
\item If $w_{ij}\approx w_{ik}$ and $r_{ij}\approx r_{ik}$, then $Z_{ij}\prec Z_{ik}$.
\item If $w_{ij}\succ w_{ik}$ and $b_{ik}=1$, then $Z_{ij}\prec Z_{ik}$.
\item If $w_{ij}\approx w_{ik}$ and $b_{ij}=b_{ik}=1$, then $Z_{ij}\approx Z_{ik}$.
\end{enumerate}
\item[(7)] Comparison rules of $W_{ij}$ and $W_{ik}$, $i,j,k$ are distinct
\begin{enumerate}
\item If $b_{ij}=b_{ik}=1$, then $W_{ij}\approx W_{ik}$.
\item If $b_{ij}>b_{ik}$, then $W_{ij}\succ W_{ik}$.
\item If $w_{ij}\succeq w_{ik}$, $z_{ij}\succeq z_{ik}$, and at least one of the inequalities is strict, then $W_{ij}\prec W_{ik}$.
\item If $w_{ij}\approx w_{ik}$ and $z_{ij}\approx z_{ik}$, then $W_{ij}\prec W_{ik}$.
\item If $w_{ij}\preceq w_{ik}$, $r_{ij}\succeq r_{ik}$, and at least one of the inequalities is strict, then $W_{ij}\prec W_{ik}$.
\item If $w_{ij}\approx w_{ik}$ and $r_{ij}\approx r_{ik}$, then $W_{ij}\prec W_{ik}$.
\item If $z_{ij}\preceq z_{ik}$, $r_{ij}\preceq r_{ik}$, and at least one of the inequalities is strict, then $W_{ij}\succ W_{ik}$.
\item If $z_{ij}\approx z_{ik}$ and $r_{ij}\approx r_{ik}$, then $W_{ij}\prec W_{ik}$.
\item If $z_{ij}\succ z_{ik}$ and $a_{ik}=1$, then $W_{ij}\prec W_{ik}$.
\item If $z_{ij}\approx z_{ik}$ and $a_{ij}=a_{ik}=1$, then $W_{ij}\approx W_{ik}$.
\end{enumerate}
\item[(8)] Comparison rules of $Z_{ij}$ and $z_{i}$, $i\neq j$
\begin{enumerate}
\item If $a_{ij}=a_{ii}=1$, then $Z_{ij}\approx z_{i}$.
\item If $a_{ij}>a_{ii}$, then $Z_{ij}\succ z_{i}$.
\item If $z_{ij}\succeq z_{i}$, $r_{ij}\preceq 1$, and at least one of the inequalities is strict, then $Z_{ij}\succ z_{i}$.
\item If $z_{ij}\approx z_{i}$ and $r_{ij}\approx 1$, then $Z_{ij}\approx z_{i}$.
\end{enumerate}
\item[(9)] Comparison rules of $W_{ij}$ and $w_{i}$, $i\neq j$
\begin{enumerate}
\item If $b_{ij}=b_{ii}=1$, then $W_{ij}\approx w_{i}$.
\item If $b_{ij}>b_{ii}$, then $W_{ij}\succ w_{i}$.
\item If $w_{ij}\succeq w_{i}$, $r_{ij}\preceq 1$, and at least one of the inequalities is strict, then $W_{ij}\succ w_{i}$.
\item If $w_{ij}\approx w_{i}$ and $r_{ij}\approx 1$, then $W_{ij}\approx w_{i}$.
\end{enumerate}
\end{enumerate}

\begin{definition} \label{def:maxorderset}
Suppose $(S,T)  \Supset  (A|B)$. Let $D$ be the $r$-order matrix produced by $(S,T)$.
For convenience, let $Z_{ii}=-z_i/m_i$.
Given $i\in \{1,\cdots,n\}$.
A subset $M_{Z,i}$ of $\{1,\ldots, n\}$ is called an \textit{$i$-th maximal $Z$-order set determined by $(A|B)$ and $(S,T)$}  
if for every $j\in M_{Z,i}$ and every $k\in\{1,\ldots, n\}$, the relation $Z_{ij}\prec Z_{ik}$ 
contradicts the order-comparing process. 
We similarly define the \textit{$i$-th maximal $W$-order set determined by $(A|B)$ and $(S,T)$} and denote it $M_{W,i}$.
\end{definition}

It is possible that the $i$-th maximal $Z$-order set $M_{Z,i}$ contains some subindex $j$ such that $Z_{ij}\prec Z_{ik}$ for some $k$. 
The key point is that it can only happen when all rules of order-comparing process have been complied with. 

\begin{proposition} \label{prop:romc2}
Suppose $(S,T)  \Supset  (A|B)$. 
%For each $i$, let $M_{Z,i}$ and $M_{W,i}$ be respectively an $i$-th maximal $Z$-order and $W$-order sets 
%determined by $(A|B)$ and $(S,T)$. 
Then for every $i$, $|M_{Z,i}|\geq 2$ and $|M_{W,i}|\geq 2$.
\end{proposition}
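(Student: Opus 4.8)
\section*{Proof proposal}

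The plan is to read the required lower bound directly off the center-of-mass relation already built into system~(\ref{eqn:cc4}). First I would take the $i$-th equation of its $z$-part, $z_i=\sum_{l\neq i}m_lZ_{li}$, and absorb the position term into the sum by means of the convention $Z_{ii}=-z_i/m_i$ introduced in Definition~\ref{def:maxorderset}. Since $m_iZ_{ii}=-z_i$, this converts the relation into the homogeneous identity
$$
  \sum_{l=1}^{n} m_l Z_{li} \;=\; 0,
$$
which holds at every configuration of the singular sequence. I would also record the elementary fact that $|Z_{li}|=|z_{il}|^{-1/2}|w_{il}|^{-3/2}=|Z_{il}|$, so that $Z_{li}\approx Z_{il}$; consequently the orders occurring among $\{Z_{li}\}_{l=1}^{n}$ coincide with those occurring among $\{Z_{ik}\}_{k=1}^{n}$, the diagonal entry carrying order $\lev(z_i)$.

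The core of the argument is that a finite sum of terms which vanishes along the entire sequence cannot be dominated by a single term of strictly largest order. Passing to a subsequence on which the finitely many pairwise ratios $|Z_{li}|/|Z_{l'i}|$ all converge in $[0,\infty]$, let $L\subseteq\{1,\dots,n\}$ collect the indices $l$ for which $m_lZ_{li}$ attains the maximal order (all terms with $l\neq i$ are nonzero, giving at least two of them when $n\geq 3$; when $n=2$ the diagonal term $-z_i$ is forced to be nonzero, again leaving two nonzero terms). Dividing the identity by a sequence realizing this maximal order and passing to the limit, every non-maximal term drops out and one is left with $\sum_{l\in L}m_l\zeta_l=0$, where each limit $\zeta_l\neq 0$ and each $m_l\neq 0$ by the standing mass hypothesis. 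Were $L$ a singleton $\{l_0\}$ this would read $m_{l_0}\zeta_{l_0}=0$, a contradiction; hence $|L|\geq 2$.

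It remains to place $L$ inside $M_{Z,i}$. For $l\in L$ the term $Z_{il}\approx Z_{li}$ has maximal order among $\{Z_{ik}\}_{k=1}^{n}$, so $Z_{il}\prec Z_{ik}$ fails for every $k$. Because the order-comparing process certifies a strict domination $Z_{il}\prec Z_{ik}$ only when it genuinely holds, it can never certify such a domination here; by the defining property of $M_{Z,i}$ (as clarified in the discussion following Definition~\ref{def:maxorderset}, $M_{Z,i}$ retains every index not exhibited as strictly dominated) we conclude $l\in M_{Z,i}$. Thus $L\subseteq M_{Z,i}$ and $|M_{Z,i}|\geq 2$. The inequality $|M_{W,i}|\geq 2$ follows by the identical argument applied to the $w$-part of (\ref{eqn:cc4}), using $W_{ii}=-w_i/m_i$ and $W_{li}\approx W_{il}$.

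The main obstacle is conceptual rather than computational: one must make the ``at least two maximal terms'' step rigorous while allowing the diagonal term $-z_i$ to have lower (or even vanishing) order, and one must resist the temptation to demand a positive certificate of maximality for membership in $M_{Z,i}$. Membership only requires non-domination under the sound order-comparing process, and it is precisely the one-sidedness of that process---soundness without completeness---that guarantees every genuinely maximal index survives into $M_{Z,i}$, which is exactly what the final step exploits.
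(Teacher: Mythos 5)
Your proof is correct and takes essentially the same route as the paper's: both rest on the identity $\sum_{j=1}^{n} m_j Z_{ji}=0$ (via the convention $Z_{ii}=-z_i/m_i$) together with the observation that a vanishing finite sum cannot be dominated by a single term of strictly maximal order, so at least two of the $Z_{ji}$ attain the maximal order and therefore survive into $M_{Z,i}$. You simply make explicit two points the paper leaves implicit, namely the subsequence-and-limit argument behind ``at least two maximal terms'' and the soundness (without completeness) of the order-comparing process that places genuinely maximal indices in $M_{Z,i}$.
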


\begin{proof}
Since there must be at least one maximal order term in $Z_{i1},\ldots, Z_{in}$, we have $|M_{Z,i}|\geq 1$. 
An equation in (\ref{eqn:cc4}) is
$$
\sum_{j=1}^n m_j Z_{ji}=0,
$$
there are at least two maximal order terms in $Z_{i1},\ldots, Z_{in}$. Therefore $|M_{Z,i}|\geq 2$. 
The proof for $|M_{W,i}|$ is similar.
\end{proof}  

The next proposition follows from arguments in  \cite[\S7.5.2.2]{AK}.

\begin{proposition} \label{prop:romc3}
Suppose $(S,T)  \Supset  (A|B)$. Let $D$ be the $r$-order matrix produced by $(S,T)$. 
If $3\notin d_{ij}$ for some $i\neq j$ and $m_i+m_j\neq 0$, then $M_{Z,i}$ and $M_{Z,j}$ can not be both $\{i,j\}$, 
$M_{W,i}$ and $M_{W,j}$ can not be both $\{i,j\}$.
\end{proposition}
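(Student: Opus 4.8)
The plan is to argue by contradiction: assuming $m_i+m_j\ne0$, $3\notin d_{ij}$, and yet $M_{Z,i}=M_{Z,j}=\{i,j\}$, I will show these force $\lev(r_{ij})=3$, i.e. $3\in d_{ij}$, which is absurd. The statement for $M_{W,i},M_{W,j}$ will follow verbatim after exchanging $z\leftrightarrow w$ and $Z\leftrightarrow W$, so I treat only the $Z$-case. The first step is to convert the combinatorial hypothesis $M_{Z,i}=\{i,j\}$ into a genuine asymptotic statement. Since every rule of the order-comparing process is sound (each records a relation that actually holds), for any singular sequence whose orders are covered by $(S,T)$ the true set of indices at which $Z_{i1},\dots,Z_{in}$ attains maximal order is contained in $M_{Z,i}=\{i,j\}$; combined with the argument of Proposition~\ref{prop:romc2} (the equation $\sum_l m_lZ_{li}=0$ must carry at least two terms of maximal order), this true maximal set is \emph{exactly} $\{i,j\}$, and likewise at index $j$.

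Next I extract the leading-order balances. Recalling $Z_{ii}=-z_i/m_i$, the $i$-equation of (\ref{eqn:cc4}) reads $-z_i+m_jZ_{ji}+\sum_{l\ne i,j}m_lZ_{li}=0$. As the maximal order is attained only at $l=i,j$, the finite tail $\sum_{l\ne i,j}m_lZ_{li}$ is of strictly smaller order than $Z_{ij}$, so $z_i=m_jZ_{ji}+o(Z_{ij})$; symmetrically the $j$-equation gives $z_j=m_iZ_{ij}+o(Z_{ij})$. After passing to a further subsequence I may assume the relevant ratios converge, so that $\sim$ is meaningful. Using $Z_{ji}=-Z_{ij}$ and $Z_{ij}=z_{ij}r_{ij}^{-3}$, subtraction yields
\[
  z_{ij}=z_i-z_j=-(m_i+m_j)\,Z_{ij}+o(Z_{ij})=-(m_i+m_j)\,z_{ij}\,r_{ij}^{-3}+o\!\left(z_{ij}r_{ij}^{-3}\right).
\]
Because $m_i+m_j\ne0$ the leading term survives, so dividing by $z_{ij}$ gives $r_{ij}^{3}\to-(m_i+m_j)\ne0$; hence $r_{ij}\approx1$, i.e. $\lev(r_{ij})=3$ along this subsequence. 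Since $D$ is produced by $(S,T)$ and $(S,T)\Supset(A|B)$, Corollary~\ref{cor:produceD} guarantees that $D$ covers $r$-orders, whence $3=\lev(r_{ij})\in d_{ij}$, contradicting $3\notin d_{ij}$.

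The step I expect to be most delicate is the first one: upgrading the purely combinatorial equality $M_{Z,i}=\{i,j\}$ to the analytic fact that the leading order of the force equation at body $i$ is carried \emph{solely} by the self term and the $i$--$j$ interaction. This rests on the soundness of every comparison rule together with the two-dominant-terms argument of Proposition~\ref{prop:romc2}, and one must also watch the branch/sign conventions of the half-integer powers in $Z_{ij}$ so that the surviving mass factor is $m_i+m_j$ rather than $m_i-m_j$; reassuringly, both natural sign conventions yield $m_i+m_j$, which is exactly the hypothesis imposed. The remaining manipulations—reading off the balance, subtracting, substituting $Z_{ij}=z_{ij}r_{ij}^{-3}$, and invoking Corollary~\ref{cor:produceD} to locate $\lev(r_{ij})$ inside $d_{ij}$—are routine.
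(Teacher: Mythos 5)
Your proof is correct and follows essentially the same route as the paper's: the paper likewise observes that the vanishing sum $\sum_k m_k Z_{ki}$ must carry at least two maximal-order terms, so that $M_{Z,i}=M_{Z,j}=\{i,j\}$ forces $z_i\sim m_jZ_{ji}$ and $z_j\sim m_iZ_{ij}$, then subtracts to get $z_{ij}\sim -(m_i+m_j)r_{ij}^{-3}z_{ij}$ and concludes $r_{ij}\approx 1$, contradicting $3\notin d_{ij}$ (your only addition is the more explicit justification that the combinatorial sets $M_{Z,i}$ upgrade to the true dominant terms). One harmless but incorrect aside: your claim that \emph{both} natural sign conventions yield the factor $m_i+m_j$ is false --- the symmetric choice $Z_{ji}=Z_{ij}$ would produce $m_j-m_i$ instead --- but this does not affect your argument, since the antisymmetric convention $Z_{ji}=-Z_{ij}$ that you actually invoke is the one forced by the system (\ref{eqn:cc4}).
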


\begin{proof}
Suppose $M_{Z,i}=M_{Z,j}=\{i,j\}$. 
Since the number of maximal order terms in this equation can't be 0 or 1, the maximal order terms in equation
$$
\sum_{k=1}^n m_kZ_{ki}=0
$$
are $m_iZ_{ii}$ and $m_jZ_{ji}$, and the maximal order terms in equation
$$
\sum_{k=1}^n m_kZ_{kj}=0
$$
are $m_jZ_{jj}$ and $m_iZ_{ij}$. Therefore we have
$$
z_i\sim m_jZ_{ji} \quad \text{and} \quad z_j\sim m_iZ_{ij}.
$$
Subtract the second equation from the first, we obtain 
$$
z_{ij}\sim (m_i+m_j)Z_{ji}=-(m_i+m_j)r_{ij}^{-3}z_{ij},
$$
and it follows that $r_{ij}\approx 1$ since $m_i+m_j\neq 0$. This contradicts to $3\notin d_{ij}$.
\end{proof}

\begin{proposition} \label{prop:romc5}
Suppose $(S,T)  \Supset  (A|B)$. 
\begin{enumerate}
\item[(a)] If $a_{ij}a_{jk}>0$ and $a_{ki}=0$ for distinct $i,j,k$, then $M_{W,j}\neq \{i,k\}$ 
and $M_{W,p}\nsubseteq \{i,j,k\}$ for some $p\in\{i,j,k\}$.
\item[(b)] If $b_{ij}b_{jk}>0$ and $b_{ki}=0$ for distinct $i,j,k$, then $M_{Z,j}\neq \{i,k\}$ 
and $M_{W,p}\nsubseteq \{i,j,k\}$ for some $p\in\{i,j,k\}$.
\end{enumerate}
\end{proposition}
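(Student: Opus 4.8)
The plan is to extract the rigid geometry that the hypothesis of (a) forces through Rule~2c, and then play it against the algebraic identities tying $Z_{ij}$, $W_{ij}$ to $z_{ij}$, $w_{ij}$, $r_{ij}$. First I would record the geometry. Since $a_{ij}=a_{jk}=1$ there are $z$-strokes on $ij$ and $jk$; since $a_{ki}=0$ there is no $z$-stroke on $ik$, and Rule~2e forces $b_{ik}=0$ as well, for otherwise $i,j,k$ would bound a triangle whose three edges are not all of the same type. Hence $ij$ and $jk$ are two consecutive edges that lie in no triangle, so Rule~2c applies. After possibly interchanging $i$ and $k$ (which preserves every assertion to be proved) I may assume the skew clustering
$$
z_{ij}\prec z_{jk}\approx z_{ik}, \qquad w_{jk}\prec w_{ij}\approx w_{ik}.
$$
Two standing facts will be used: by Proposition~\ref{prop:romc2} the $w$-equation $\sum_{\ell}m_\ell W_{\ell p}=0$ forces the set of genuinely maximal-order terms of row $p$ to contain at least two indices; and because the comparison rules only record relations that actually hold, this true set of maximal indices is contained in $M_{W,p}$.

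For the first assertion, I would argue by contradiction. If $M_{W,j}=\{i,k\}$, then the true maximal set of row $j$ is a size-$\ge 2$ subset of $\{i,k\}$, hence equals $\{i,k\}$, so $W_{ij}\approx W_{jk}$. Meanwhile $a_{ij}=a_{jk}=1$ gives $Z_{ij}\approx Z_{jk}\approx\epsilon^{-2}$. Using $Z_{ij}W_{ij}=r_{ij}^{-4}$ and $Z_{ij}/W_{ij}=z_{ij}/w_{ij}$, multiplying the two relations yields $r_{ij}\approx r_{jk}$ and dividing them yields $z_{ij}/w_{ij}\approx z_{jk}/w_{jk}$; combining gives $z_{ij}\approx z_{jk}$ (and $w_{ij}\approx w_{jk}$), which contradicts $z_{ij}\prec z_{jk}$. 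This forces $M_{W,j}\ne\{i,k\}$.

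For the second assertion I would first order the three cross terms. From $z_{ij}\prec z_{ik}$ and $w_{ij}\approx w_{ik}$ one gets $W_{ij}\succ W_{ik}$; from $z_{jk}\approx z_{ik}$ and $w_{jk}\prec w_{ik}$ one gets $W_{jk}\succ W_{ik}$; and from $Z_{ij}\approx Z_{jk}$ together with $W_{ij}=Z_{ij}^{1/3}z_{ij}^{-4/3}$ and $z_{ij}\prec z_{jk}$ one gets $W_{ij}\succ W_{jk}$. Now suppose $M_{W,p}\subseteq\{i,j,k\}$ for all $p\in\{i,j,k\}$; then each equation $\sum_\ell m_\ell W_{\ell p}=0$ balances at leading order using only indices in $\{i,j,k\}$. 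In each of the three rows the dominant cross term is \emph{strictly} larger than the other, so the only way to produce a second maximal term is for the diagonal term $-w_p=m_pW_{pp}$ to match it; this forces $w_i\approx W_{ij}$, $w_j\approx W_{ij}$ and $w_k\approx W_{jk}\prec W_{ij}$. Consequently $w_{ik}=w_i-w_k\approx W_{ij}$ and $w_{jk}=w_j-w_k\approx W_{ij}$, so $w_{ik}\approx w_{jk}$, contradicting $w_{jk}\prec w_{ik}$ from the clustering. Hence $M_{W,p}\not\subseteq\{i,j,k\}$ for some $p$.

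Part (b) is obtained verbatim from (a) by interchanging the roles of $z$ and $w$ throughout, which turns $w$-strokes into $z$-strokes and $M_{W,\cdot}$ into $M_{Z,\cdot}$, so no new argument is needed. The step I expect to be the real obstacle is the second assertion: one must justify carefully that ``$M_{W,p}\subseteq\{i,j,k\}$ for all $p$'' genuinely pins the diagonal of each row to that row's dominant cross term — this relies both on the inclusion of the true maximal set in the over-approximation $M_{W,p}$ and on the strictness of the dominance — and then one must track the cancellations in $w_{ik}$ and $w_{jk}$ precisely enough to contradict the clustering. Among the three ordering inequalities, the delicate one is $W_{ij}\succ W_{jk}$, since it cannot be read off from a single separation comparison and instead requires the edge identity $Z_{ij}\approx Z_{jk}$.
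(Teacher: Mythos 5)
Your proof is correct, and up through the skew clustering it follows the paper's own setup: Rule~2e excludes any edge between $i$ and $k$, Rule~2c gives the clustering, and your first assertion is the paper's computation in contrapositive form (the paper derives $W_{ij}\succ W_{jk}$ from $Z_{ij}\approx Z_{jk}$ and $z_{ij}\prec z_{jk}$ and reads off $k\notin M_{W,j}$; you assume $W_{ij}\approx W_{jk}$ and contradict the clustering --- the same identity rearranged). The genuine divergence is in the second assertion. The paper derives only $W_{ij}\succ W_{jk},\,W_{ik}$, concludes $k\notin M_{W,i}$ and $k\notin M_{W,j}$, pins $M_{W,i}=M_{W,j}=\{i,j\}$ via $|M_{W,\cdot}|\ge 2$, and then closes by citing Proposition~\ref{prop:romc3}: since $z_{ij}\prec\epsilon^{-2}$ forces $r_{ij}\not\approx 1$, the two sets cannot both equal $\{i,j\}$. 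You never invoke Proposition~\ref{prop:romc3}; instead you use the $w$-half of the skew clustering (which the paper's proof does not touch) to order all three cross terms, $W_{ij}\succ W_{jk}\succ W_{ik}$, pin each diagonal term $w_p$ to its row's dominant cross term, and contradict $w_{jk}\prec w_{ik}$ directly. What each buys: the paper's route is shorter because Proposition~\ref{prop:romc3} already packages the $\sim$-cancellation argument, but it thereby imports that proposition's hypotheses (the distance condition $3\notin d_{ij}$ and $m_i+m_j\ne 0$ --- harmless under the standing mass assumption, but a dependency nonetheless), while your route is self-contained, needs no mass-sum condition, and works throughout with the weaker relation $\approx$ rather than $\sim$. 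A further merit of your version is that it uses the sets $M_{W,p}$ only through the soundness containment (the genuinely maximal indices of each row lie in $M_{W,p}$, and there are at least two of them), which is the same reading of the definition that underlies Propositions~\ref{prop:romc2} and~\ref{prop:romc3}; the paper's step ``it follows that $k\notin M_{W,i}$'' instead tacitly treats the derived true inequalities as certified by the comparison process. Both arguments are sound, but yours is the more uniformly justified one given the informal definition of the maximal order sets.
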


\begin{proof}
To prove (a), we suppose $a_{ij}a_{jk}>0$ and $a_{ki}=0$ for distinct $i,j,k$. Then $Z_{ij}\approx Z_{jk}\approx \epsilon^{-2}$. 
Since $a_{ki}=0$, by Rule~2e there is no edge between bodies $i,k$, so the edges between $i,j$ and between $j,k$ are 
consecutive but not part of a triangle. By Rule~2c, either $z_{ij}\prec z_{jk}$ or $z_{ij}\succ z_{jk}$. 
Without loss of generality, we assume $z_{ij}\prec z_{jk}$. Then 
$$
z_{ij}\prec z_{jk}\approx z_{ik}\preceq \epsilon^{-2}.
$$
Since
$$
Z_{ij}\approx Z_{jk}\succ Z_{ik},
$$
we have 
$$
W_{ij}\succ W_{jk}, W_{ik},
$$
and it follows that $k\notin M_{W,i}$ and $k\notin M_{W,j}$.  If $M_{W,i},M_{W,j}\subset \{i,j,k\}$, 
then $M_{W,i}=M_{W,j}=\{i,j\}$. Since $z_{ij}\prec \epsilon^{-2}$, $r_{ij}$ is not of order $1$, 
this contradicts Proposition~\ref{prop:romc3}. Therefore $M_{W,p}\nsubseteq \{i,j,k\}$ for some $p\in\{i,j,k\}$. 
The conclusion $k\notin M_{W,j}$ above implies $M_{W,j}\neq \{i,k\}$. 
The other case, $z_{ij}\succ z_{jk}$, implies $i\notin M_{W,j}$, so again we have $M_{W,j}\neq \{i,k\}$.
The proof for (b) is similar.
\end{proof}

As observed in Example~\ref{exam:chooseom}, the number of possible ways of dividing $(S,T)$ is often enormous,
even more so when $n\ge 6$.  
To be more realistic, in practice we will make divisions for off-diagonal entries of $S$, $T$ first, 
eliminate as many cases as possible at this stage, then take care of their diagonal terms.  
Fortunately, in most cases this procedure yields satisfactory collection $\coll$ of $zw$-order matrices to 
cover orders of positions and separations. 
Detailed procedure will be presented in section~\ref{sec:algorithm II}.

% ----------------------------------------------------------------- %
\section{Collect polynomial equations} \label{sec:poly}

Given a singular sequence of normalized central configurations, consider a diagram with $zw$-matrix $(A|B)$. 
After finding a suitable collection $\coll$ of $zw$-order matrices such that $\coll\Supset (A|B)$, 
we can collect some polynomial equations based on (\ref{eqn:cc4}) by picking out leading order terms. 
Comparing with the BKK approach in~\cite{HM06}, this procedure is in essence the same as 
taking one face of the Minkowski sum polytope and considering its corresponding reduced system. 
This section shows some details of how such polynomial equations are to be collected. 

In the process of picking out leading order terms for $Z_{ij}$'s and $W_{ij}$'s, the maximal $Z$- and $W$-order sets
defined in Definition~\ref{def:maxorderset} are not enough. They are candidates of maximal order terms 
obtained through the order-comparing process, 
but in general we do not rule out the possibility that $Z_{ij}$ is not really maximal for some element $j$ in $M_{Z,i}$. 
We need a stronger assumption on these maximal $Z$- and $W$-order sets. 

\begin{definition}
An $i$-th maximal $Z$-order set $M_{Z,i}$ is said to be \textit{certainly maximal} if for every 
$k_1,k_2\in M_{Z,i}$ there is a finite sequence 
$\{j_0,\ldots,j_{N}\}\subset M_{Z,i}$ such that $j_0=k_1$, $j_N=k_2$, and the relation $Z_{ij_{l-1}}\approx Z_{ij_l}$ can be obtained by 
order-comparing process for every $l=1,\ldots, N$.
\end{definition}

\subsection{Five kinds of polynomial equations}

The first kind of equations are obtained from the first two equations in the system (\ref{eqn:cc4}). 
We pick up the maximal order terms of $z_k$'s, $w_k$'s, $Z_{kl}$'s, and $W_{kl}$'s, 
then we have several equations. We may decide the maximal order terms more roughly or more precisely. 
For example, if we decide them according to the $zw$-matrix, then we only collect those terms of order $\epsilon^{-2}$; 
if we decide them according to $zw$-order matrices, then there might be some equations whose maximal order terms 
are of order less than $\epsilon^{-2}$. We call these equations \textit{main equations}, 
and simply call the $i$-th $z$-equation the \textit{$z_i$-equation} and $i$-th $w$-equation the \textit{$w_i$-equation}. 

In the process of picking up maximal order terms, we make use of the maximal $Z$--order sets $M_{Z_i}$ obtained
through the order-comparing process {\em only when} it is certainly maximal. Same for $W_{kl}$'s. 

The second kind of equations are the \textit{conditions of mass center}. 
According to Rule~1c, 
$$\sum_{i=1}^n m_iz_i\prec \epsilon^{-2}, \quad \sum_{i=1}^nm_iw_i\prec \epsilon^{-2}.$$ 
By taking terms of order $\epsilon^{-2}$,  if $I$ is the set of all $z$-circled bodies and $J$ is the set of all $w$-circled bodies,  then 
$$\sum_{i\in I}m_iz_i\; =\; 0, \quad  \sum_{i\in J}m_iw_i=0. $$ 
Although conditions of mass center follow from the main equations, 
sometimes we collect them directly, instead of through main equations, to decrease computational complexity.

The third kind of equations are obtained from clustering schemes. 
Recall that two bodies $k$ and $l$ cluster together in the $z$-diagram if $z_{kl}\prec z_k$ and $z_{kl}\prec z_l$. 
In this case, we have $z_k\sim z_l$, which can be regarded as $z_k=z_l$ after omitting lower order terms. 
We call these equations \textit{cluster equations}.  

The fourth kind of equations are \textit{relations between separations}:
\begin{eqnarray*}
z_{ij}+z_{jk}+z_{kl}&=&0\\
w_{ij}+w_{jk}+w_{kl}&=&0
\end{eqnarray*}
for distinct $i,j,k\in\{1,2,\ldots,n\}$. In order to ensure that all terms in this equation are of the same order, we only pick bodies $i,j,k$ that are vertices of a triangle in the diagram.

The fifth kind of equations are obtained from wedge product and we call them \textit{wedge equations}. 
The idea has been elaborated in \cite[\S 5.0,5.1,8.6-8.8]{AK}. 
Since 
$$q_k=\binom{x_k}{y_k}=\frac{1}{2}(z_k+w_k)\binom{1}{0}+\frac{1}{2}(z_k-w_k)\binom{0}{-i}
=\frac{z_k}{2}\binom{1}{-i}+\frac{w_k}{2}\binom{1}{i},$$ 
we have
$$
q_k\wedge q_l= C(z_kw_l-z_lw_k),
$$
where $C=\frac{1}{4}\left(\binom{1}{-i}\wedge\binom{1}{i}\right)$. 
Then, letting $f_k=\sum_{j\neq k}m_jr_{ij}^{-3}(q_j-q_k)$, from (\ref{eqn:cc2}) we have
\begin{equation}\label{eqn:wedge1}
 0 \;=\;  q_k\wedge f_k \; =\; \sum_{j\neq k}m_jr_{jk}^{-3}(q_k\wedge q_j)\; =\; C\sum_{j\neq k}m_jr_{jk}^{-3}(z_kw_j-w_kz_j),
\end{equation}
On the other hand, from relations
\beq
Z_{jk}\;=\;z_{jk}r_{jk}^{-3}, &&
W_{jk}\;=\;w_{jk}r_{jk}^{-3},
\eeq
we have
$$
r_{jk}^{-3}(q_j- q_k)=\frac{Z_{jk}}{2}\binom{1}{-i}+\frac{W_{jk}}{2}\binom{1}{i},
$$
and hence
\begin{equation}\label{eqn:wedge2}
0\;=\; q_k\wedge f_k \;=\; C\sum_{j\neq k}m_j(z_kW_{jk}-w_kZ_{jk}).
\end{equation}

\subsection{Wedge equations via $zw$-orders}
Now we show how $zw$-order matrices yield useful wedge equations. 

Consider two nonempty disjoint subsets $I_1$ and $I_2$ of $\{1,2,\ldots,n\}$.    
Let $J=\{1,\ldots, n\}\setminus(I_1\cup I_2)$. From system (\ref{eqn:cc4}) we have
\begin{align}
\label{eqn:maxorderterms1}&& \sum_{k\in I_1}m_kz_k&= \sum_{k\in I_1,\ j\in J} m_km_jZ_{jk}+\sum_{k\in I_1,\ j\in I_2} m_km_jZ_{jk},&\\
\label{eqn:maxorderterms2}&& \sum_{k\in I_1}m_kw_k&= \sum_{k\in I_1,\ j\in J} m_km_jW_{jk}+\sum_{k\in I_1,\ j\in I_2} m_km_jW_{jk},&\\
\label{eqn:maxorderterms3}&& \sum_{k\in I_2}m_kz_k&= \sum_{k\in I_2,\ j\in J} m_km_jZ_{jk}+\sum_{k\in I_2,\ j\in I_1} m_km_jZ_{jk},&\\
\label{eqn:maxorderterms4}&& \sum_{k\in I_2}m_kw_k&= \sum_{k\in I_2,\ j\in J} m_km_jW_{jk}+\sum_{k\in I_2,\ j\in I_1} m_km_jW_{jk},&\\
\label{eqn:maxorderterms5}&& \sum_{j\in J}m_jz_j&= \sum_{k\in I_1,\ j\in J} m_km_jZ_{jk}+\sum_{k\in I_2,\ j\in J} m_km_jZ_{jk},&\\
\label{eqn:maxorderterms6}&& \sum_{j\in J}m_jw_j&= \sum_{k\in I_1,\ j\in J} m_km_jW_{jk}+\sum_{k\in I_2,\ j\in J} m_km_jW_{jk}.&
\end{align}
From equation (\ref{eqn:wedge2}) above, we obtain
\begin{eqnarray}\label{eqn:69}
0&=&\sum_{j\in I_1\cup J}m_j(q_j\wedge f_j)\\
\notag &=&C\sum_{j\in I_1,\ k\in I_2}m_jm_k(z_jW_{kj}-w_jZ_{kj})+C\sum_{j\in J,\ k\in I_2}m_jm_k(z_jW_{kj}-w_jZ_{kj}), \\
\notag
0&=&\sum_{j\in I_2\cup J}m_j(q_j\wedge f_j)\\
\notag &=&C\sum_{j\in I_2,\ k\in I_1}m_jm_k(z_jW_{kj}-w_jZ_{kj})+C\sum_{j\in J,\ k\in I_1}m_jm_k(z_jW_{kj}-w_jZ_{kj}).
\end{eqnarray}

Suppose, according to the $zw$-order matrix, $I_1$ and $I_2$ satisfy the following conditions:
{\begin{enumerate}
\item[(1)] for every $k \in I_1$, $z_k\approx \epsilon^{-2}$ and $w_k\prec \epsilon^{-2}$
\item[(2)] for every $k \in I_2$, $z_k\prec \epsilon^{-2}$ and $w_k\approx \epsilon^{-2}$
\item[(3)] for $k,l\in I_1$, $k\ne l$, $z_{kl}\approx \epsilon^{-2}$ and $w_{kl}\approx \epsilon^2$
\item[(4)] for $k,l\in I_2$, $k\ne l$, $z_{kl}\approx \epsilon^{2}$ and $w_{kl}\approx \epsilon^{-2}$
\item[(5)] for $k\in I_1$ and $l\in I_2$, $z_{kl}\approx w_{kl}\approx \epsilon^{-2}$.
\end{enumerate}}
Then we have 
\beq
\sum_{k\in I_2}m_kz_k\preceq  \epsilon^2, &&
\sum_{k\in I_1,\ j\in I_2} m_km_jZ_{jk}\preceq  \epsilon^4, \\
\sum_{k\in I_1}m_kw_k \preceq  \epsilon^2, &&
\sum_{k\in I_1,\ j\in I_2} m_km_jW_{jk}\preceq \epsilon^4.
\eeq
It follows from (\ref{eqn:maxorderterms2}) and (\ref{eqn:maxorderterms3}) that
\beq
\sum_{k\in I_1,\ j\in J} m_km_jW_{jk}\preceq \epsilon^2, \quad \sum_{k\in I_2,\ j\in J} m_km_jZ_{jk}\preceq \epsilon^2.
\eeq

We also assume that $J=\emptyset$ or $J$ has only one element $k_0$ and $z_{k_0},w_{k_0}\preceq \epsilon^2$. For the former, (\ref{eqn:69}) gives that
$$
\sum_{j\in I_1,\ k\in I_2}m_jm_k(z_jW_{kj}-w_jZ_{kj})=0\prec\epsilon^2.
$$
For the latter, (\ref{eqn:maxorderterms5}) and 
(\ref{eqn:maxorderterms6}) yield
\beq
\sum_{k\in I_1,\ j\in J} m_km_jZ_{jk}\preceq \epsilon^2, \quad \sum_{k\in I_2,\ j\in J} m_km_jW_{jk}\preceq \epsilon^2.
\eeq
From (\ref{eqn:69}), since $J$ has only one element, we obtain
\beq
\notag\sum_{j\in I_1,\ k\in I_2}m_jm_k(z_jW_{kj}-w_jZ_{kj})&=& - \sum_{j\in J,\ k\in I_2}m_jm_k(z_jW_{kj}-w_jZ_{kj})\\
\notag &\approx & z_{k_0}\sum_{k\in I_2,\ j\in J} m_km_jW_{jk}+w_{k_0}\sum_{k\in I_1,\ j\in J} m_km_jZ_{jk}\\
\notag &\preceq & \epsilon^4 \prec \epsilon^2.
\eeq
Since
$$
z_jW_{kj}\approx z_jz_{kj}^{-3/2}w_{kj}^{-1/2}\approx z_j^{-1/2} w_{k}^{-1/2}\approx \epsilon^{2}
$$
and
$$
w_jZ_{kj}\approx w_jz_{kj}^{-1/2}w_{kj}^{-3/2}\approx w_j^{-1/2}z_{j}^{-1/2} w_{k}^{-3/2}\approx \epsilon^{3} \prec \epsilon^2
$$
 for every $j\in I_1$ and $k\in I_2$,  we obtain
$$
\sum_{j\in I_1,\ k\in I_2}m_jm_k z_jW_{kj}\prec \epsilon^2,
$$
which gives one of the equations
\begin{equation}\label{eqn:615}
\sum_{j\in I_1,\ k\in I_2}c_{jk}m_jm_k (z_jw_k)^{-1/2}=0,\qquad\ c_{jk}\in\{1,-1\}
\end{equation}

% ----------------------------------------------------------------- %
\section{Algorithm~ I - Determine $zw$-diagrams} \label{sec:algorithm I}

Our program contains three algorithms: the first one is to find possible diagrams; 
the second one is to  search for possible orders of various variables associated to a diagram; 
the last one is to find mass relations for diagrams.
All of them use symbolic computations, thus all outputs are exact and free from  round-off errors.

This section is devoted to the first algorithm. The following is the outline of the algorithm.    
See Appendix~I for  codes with Mathematica.  

\smallskip
\begin{enumerate}
\item[(I.1)]  Initial selection of possible $z$-matrices 
\begin{enumerate}
\item  Sort symmetric binary matrices according to column sums and diagonals
\item  Eliminate matrices with trace 1  
\item  Classify them according to trace
\end{enumerate}
\item[(I.2)] Apply monocolored matrix rules
\begin{enumerate}
\item  Rule of Column Sums
\item  First Rule of Trace-2 Matrices 
\item  Second Rule of Triangles 
\item  First Rule of Quadrilaterals
\item  First Rule of Triangles
\item  Rule of Trace-3 Matrices
\item  Rule of Trace-0 Principal Minors
\end{enumerate}
\item[(I.3)]  Produce possible $zw$-matrices
\begin{enumerate}
\item  Produce possible $w$-matrices 
\item  Classify $z$-matrices according to connectedness
\item  Match $z$- and $w$-matrices based on Rule of Fully Connected Companions
\end{enumerate}
\item[(I.4)]  Apply bicolored matrix rules
\begin{enumerate}
\item  Rule of Circling 
\item  Second Rule of Trace-2 Matrices
\item  Third Rule of Triangles 
\item  Rule of Two Column Sums
\item  Fourth Rule of Triangles 
\item  Rule of Connected Components
\item  Second Rule of Quadrilaterals
\item  Rule of Pentagons
\end{enumerate}
\item[(I.5)]  Generate $zw$-diagrams associated to remaining $zw$-matrices 
\end{enumerate}
\smallskip

The whole procedure  involves large amount of computations. 
Elimination of $zw$-matrices by brute force may result in crashes of the computer program when $n\ge 6$, 
so effectively reducing the amount of computations is a critical part of our program. 
In below we discuss some details of the program, except the easy step (I.5). 

\subsubsection*{Step (I.1)}

In the initial selection of possible $z$-matrices,  
by the First Rule of Trace, we don't produce matrices with trace 1. 
To delete some equivalent cases at the beginning, without loss of generality we may assume $z$-circled bodies have larger indices; 
that is, we may assume $z$-matrices satisfy the following condition:
\begin{equation} 
a_{ii}\leq a_{jj},\qquad \forall\  i<j.  \label{eqn:41}
\end{equation} 
The number of such $z$-matrices is $n 2^{n(n-1)/2}$. 
We divide these matrices according to their traces into $n$ classes $S_1,\ldots, S_n$, 
where $S_1$ consists of such matrices with trace 0 and $S_k$ consists of such matrices with trace $k$ for $k>1$. 
Such division significantly reduces the amount of computations  involved in generating $zw$-matrices.  

Before applying matrix rules, to delete more equivalent cases, we permute vertices so that column sums are in increasing order, 
then delete duplicates, and then 
transform them to meet property (\ref{eqn:41})  again. 
This step quickly deletes more than $90\%$ of candidates for possible $z$-matrices  for the case $n=6$.

\subsubsection*{Step (I.2)}

The First Rule of Triangles fails only when $a_{ii}+a_{jj}+a_{kk}=2$ for some $1\leq i<j<k\leq n$, 
therefore the rule holds for matrices in  $S_1\cup S_n$, and so we only need to check whether members in 
$S_2,\ldots ,S_{n-1}$ satisfy this rule. Moreover, since for $A\in S_K$, $2\leq K\leq n-1$, 
we have $a_{ii}=1$ if and only if $i\geq n-K$, this rule on $S_K$ is then reduced to 
$$
(a_{ij}+a_{ik})(1+a_{ij}+a_{jk})(1+a_{ik}+a_{jk})\;\neq\; 8\qquad \forall\ 1\leq i<K\leq j<k\leq n.
$$

From the proof of the Second Rule of Triangles, we know that it fails only when 
$a_{ii}+a_{jj}+a_{kk}=1$ for some $1\leq i<j<k\leq n$, therefore we only need to check whether 
members in $S_2,\ldots ,S_{n-2}$ satisfy this rule. 
For $A\in S_K$, $2\leq K\leq n-2$, we have $a_{ii}=1$ if and only if $i\geq n-K$, this rule on $S_K$ is reduced to
$$
a_{ik}+a_{jk}\;\neq\; 2\qquad \forall\ 1\leq i<j<n-K\leq k\leq n.
$$

The First Rule of Trace-2 Matrices applies to matrices in $S_2$. 
The property (\ref{eqn:41})  implies  $a_{nn}=a_{(n-1)(n-1)}=1$ for every $A\in S_2$. 
Therefore the discriminant is reduced to $$a_{n(n-1)}\;=\;1.$$  

\subsubsection*{Step (I.3)}

After running through monocolored matrix rules for $z$-matrices, we obtain not only possible $z$-matrices but also possible $w$-matrices. 
We cannot eliminate $w$-matrices that are related by conjugation of permutation matrices, since after matching with
a given $z$-matrix they are generally non-equivalent. 
To decrease the initial number of $zw$-matrices, we match each $z$-matrix only with appropriate classes of $w$-matrices. 

We divide each $S_k$ into two parts $S_{k,c}$ and $S_{k,d}$,  where $S_{k,c}$ consists of fully connected matrices 
and $S_{k,d}=S_k\setminus S_{k,c}$.   In addition, we assume each $zw$-matrix $(A|B)$ satisfies
\begin{equation}
\tr(A) \;\leq\; \tr(B).   \label{eqn:42}
\end{equation}
Let $T_1$ be the set of all possible $w$-matrices with trace $0$ and $T_k$ the set of all possible $w$-matrices with 
trace $k$ for $2\leq k\leq n$. 
By the Rule of Fully Connected Companions, we match $z$-matrices in $S_{k,c}$ only with $w$-matrices in $T_1$. 
With assumption (\ref{eqn:42}), we match matrices in $S_{1,c}$ with  matrices in $T_1$ and match matrices in 
$S_{k,d}$ with matrices in $T_1\cup\cdots\cup T_k$ for $1\leq k\leq n$. 
The sets of those $zw$-matrices are  denoted by 
$U_c$ and $U_k$ respectively. 

\subsubsection*{Step (I.4)}

The number of remaining  $zw$-matrices is still huge, about  $2.85\times 10^6$ for the case $n=6$, 
and the order of checking  bicolored matrix rules is critical to the efficiency of the algorithm. 
Some rules involve less computations on each matrix, but exclude more of them, such as the Rule of Circling. 
We implement this kind of rules first. 

For better efficiency, we divide 
the Rule of Circling  into two parts, the first one is $b_{ij}(a_{ii}+a_{jj})\neq 1$; the other is $a_{ij}(b_{ii}+b_{jj})\neq 1$. 
Since the first part $b_{ij}(a_{ii}+a_{jj})\neq 1$ is false only when $1\leq\tr(A)\leq n-1$, 
we only need to check this part for matrices in $U_K$ with $2\leq K\leq n-1$. 
Since each $zw$-matrix $(A|B)$ satisfies  (\ref{eqn:41}), in $U_K$, $2\leq K\leq n-1$,  the discriminant $b_{ij}(a_{ii}+a_{jj})\neq 1$ 
becomes $b_{ij}\neq 1$ for $i< n-K$ and $j\geq n-K$. 
The second part $a_{ij}(b_{ii}+b_{jj})\neq 1$ is false only when $\tr(B)>0$, therefore the rule holds true for matrices in $U_c\cup U_1$.

Together with 
%After applying 
the Second Rule of Trace-2 Matrices, more than $88\%$ of $zw$-matrices were excluded for the case $n=6$,
about $3.35\times 10^5$ cases left. 
Then we combine $U_c$ and all $U_k$ into a collection $U$,  implement remaining bicolor matrix rules in order,  
and then remove all duplicated matrices up to permutations of bodies and the exchange of $z$ and $w$. 
This finishes the search for all possible $zw$-matrices, and that  allows us to generate all possible $zw$-diagrams.
When $n=6$, there are 117 diagrams remaining. 
The next step is to apply our second algorithm to each diagram.

% ----------------------------------------------------------------- %
\section{Algorithm~II - Determine orders of variables} \label{sec:algorithm II}

Given a singular sequence associated with the $zw$-matrix $(A|B)$. 
Our second algorithm aims to find a suitable collection of order matrices to cover orders of positions, separations, and distances. 

Before showing steps of our algorithm, we classify $zw$-orders according to sizes of off-diagonal entries and 
diagonal entries. 

\begin{definition}
Given a $zw$-matrix $(A|B)$. 
\begin{enumerate}
\item[(a)] Any $zw$-order matrix $(S,T)$ such that $(S,T)\Supset(A|B)$ is called a $zw$-order matrix of {\em Type~1
for the $zw$-matrix $(A|B)$}. 
\item[(b)] We say a nonempty $zw$-order matrix $(S,T)$ is of {\em Type 2\,}
if off-diagonal entries of $S$, $T$ are singletons; that is,  $$\prod_{i<j}|s_{ij}|\ |t_{ij}|\;=\;1.$$
\item[(c)] We say a nonempty $zw$-order matrix  $(S,T)$  is of {\em Type 3\,} if all entries of $S$, $T$
are singletons; that is, $$\prod_{i\le j}|s_{ij}|\ |t_{ij}|\;=\;1.$$
\end{enumerate}
\end{definition} 

Our strategy is to begin with a proper selection of Type~1 $zw$-order matrix $(S,T)$ for $(A|B)$, then choose
a collection $\coll_2$ of Type~2 subsets of $(S,T)$ such that $\coll_2\Supset (A|B)$, 
and finally choose another collection $\coll_3$ of Type~3 subsets of $(S,T)$ such that $\coll_3\Supset (A|B)$. 

\begin{notations}
Given a Type~1 $zw$-order matrix $(S,T)$ for $(A|B)$. Let
\beq
 \tilde{\coll}_1(S,T) &=& \left\{(S',T')\subset (S,T)|\, \text{$(S',T')$ is nonempty} \right\}, \\
 \tilde{\coll}_2(S,T) &=& \left\{(S',T')\in  \tilde{\coll}_1(S,T)|\, \text{$(S',T')$ is of Type~2} \right\}, \\
 \tilde{\coll}_3(S,T) &=& \left\{(S',T')\in  \tilde{\coll}_1(S,T)|\, \text{$(S',T')$ is of Type~3} \right\}. 
\eeq
\end{notations}
Note that $\tilde{\coll}_1(S,T) \supset \tilde{\coll}_2(S,T) \supset \tilde{\coll}_3(S,T)\Supset (A|B)$. 

For convenience, 
Propositions~\ref{prop:rom2},~\ref{prop:rom3},~\ref{prop:rom4},~\ref{prop:rom7},~\ref{prop:rom8} 
will be called {\em principles for updating orders}. 
The other two propositions in \S\ref{subsec:generateom} will be applied for the initial selection of Type~1
$zw$-order matrix. 
Propositions~\ref{prop:romc1},~\ref{prop:romc4},~\ref{prop:romc2},~\ref{prop:romc3}, \ref{prop:romc5} will be called 
{\em criteria for feasible order matrices}. 

The following shows steps of the algorithm. See Appendix~II for  codes in Mathematica.

\begin{enumerate}
\item[(II.1)]  Choose a Type 1 $zw$-order matrix $(S,T)$ for $(A|B)$ 
\begin{enumerate}
\item  Select diagonal entries based on circles (apply Proposition~\ref{prop:rom1}(a)) 
\item  Select off-diagonal entries for maximal edges (apply Proposition~\ref{prop:rom6})  
\item  Select other off-diagonal entries based on edges  (apply Proposition~\ref{prop:rom1}(b))  
\item  Repeatedly refine $(S,T)$ by principles for updating orders
\end{enumerate}
\item[(II.2)]  Choose a collection $\coll_2$ of Type 2 $zw$-order matrices such that $\coll_2\Supset (A,B)$
\begin{enumerate}
\item  Choose elements in $\tilde{\coll}_2(S,T)$ whose diagonal entries are the same as $(S,T)$
\item  Refine these elements by principles for updating orders
\item  Apply criteria for feasible order matrices, let $\coll_2$ be remaining elements
\item  Eliminate $zw$-diagram if $\coll_2=\emptyset$, otherwise proceed to next step
\end{enumerate}
\item[(II.3)]  Choose a collection $\coll_3$ of Type 3 $zw$-order matrices such that $\coll_3\Supset (A,B)$
\begin{enumerate}
\item  Choose elements in $\tilde{\coll}_3(S,T)$ that are contained in elements of $\coll_2$
\item  Refine these elements by principles for updating orders
\item  Apply criteria for feasible order matrices, let $\coll_3$ be remaining elements  
\item  Generate $r$-order matrices $\mathcal{R}$ from $\coll_3$
\item  Eliminate $zw$-diagram if $\coll_3=\emptyset$ 
\end{enumerate}
\end{enumerate}

In below we discuss some technical details. 

\subsubsection*{Step (II.1)}

This step is rather straightforward. 
Part (d) will terminate at some Type~1 $zw$-order matrix $(S,T)$ for $(A|B)$, we say this $(S,T)$ is {\em optimal}. 
Strictly speaking, this $zw$-order matrix is better called ``optimal with respect to principles of updating orders'', because
one may explore other principles to further refine it. See \S\ref{subsec:6-2etc} for such an example. 
With a bit abuse of terminology, we simply call it optimal for brevity.

\subsubsection*{Step (II.2)}

We use the optimal $(S,T)$ to find more precise orders for separations. 
The process of choosing elements in $\tilde{\coll}_2(S,T)$ cannot be implemented through brute force. 
We begin with separating a few off-diagonal entries for elements in $\tilde{\coll}_1(S,T)$, and 
progressively repeat the process to other off-diagonal entries. 

To be more precise, consider a pair $(i,j)$, $i\neq j$, with $|s_{ij}|>1$ or $| t_{ij}|>1$. 
If $|s_{ij}|>1$, we separate $s_{ij}$ into singletons $\{\sigma_1\},\{\sigma_2\},\ldots \{\sigma_k\}$,
but do not separate $t_{ij}$ at the same time to prevent from taking up too much memory space. 
Let $S_l$ be the matrix obtained by substituting $\{\sigma_l\}$ for $s_{ij}$ and $s_{ji}$ in $S$. 
For example, if
$$
S=\begin{pmatrix}
\{1,2,3\} & \{1,2,3\} & \{1\} \\
\{1,2,3\} & \{1,2,3\} & \{1,2\} \\
\{1\} & \{1,2\} & \{1,2\} 
\end{pmatrix}
$$
and $(i,j)=(1,2)$, let $\sigma_k=k$, then we obtain 
$$S_k=\begin{pmatrix}
\{1,2,3\} & \{k\} & \{1\} \\
\{k\} & \{1,2,3\} & \{1,2\} \\
\{1\} & \{1,2\} & \{1,2\} 
\end{pmatrix}\qquad\textrm{for }k=1,2,3.
$$
Whatever been covered by $(S,T)$ is now covered by $\{(S_1,T), (S_2,T), (S_3,T)\}$. 
Next, for each $(S_l,T)$ we apply principles for updating orders, then the resulting $(S',T')$ satisfies  
one of the followings: 
$$\prod_{i< j} |s'_{ij}|\  |t'_{ij}| \;=\; 0,\;1,\;\text{or greater than 1}.$$ 
If it is 0, we delete this $(S',T')$; 
if it is 1, we collect this $(S',T')$; if it is great than 1, we repeat the above process to other off-diagonal entries of $(S',T')$. 
Eventually, we have a collection $\coll$ that contains $zw$-order matrices of Type 2 such that $\coll\Supset (A|B)$. 

Let $P_{(A|B)}$ be the set of all permutations on $\{1,2,\ldots,n\}$ that doesn't change $(A|B)$; 
i.e. $q\in P_{(A|B)}$ means the permutation matrix $Q$ corresponding to $q$ satisfies $Q^{-1}AQ=A$ and $Q^{-1}BQ=B$. 
Conjugations by permutations correspond relabeling vertices, so it makes sense to apply them on order matrices. 
Two $zw$-order matrices in $\coll$ are equivalent if one can be obtained from the other by a permutation $q\in P_{(A|B)}$. 
In Algorithm~I we have eliminated duplicated $zw$-matrices up to permutations of bodies.  
Now for elements in $\coll$ we do the same thing, then apply criteria for feasible order matrices to delete more. 

A subroutine in this step calculates $r$-order matrices produced by $\coll$, 
another subroutine in this step carries out the order-comparing process.  
Eventually, we either obtain a collection $\coll_2$ of $zw$-order matrices of Type~2 such that $\coll_2\Supset (A|B)$, 
or end up with an empty collection, for which case the $zw$-diagram can be eliminated. 

\subsubsection*{Step (II.3)}

In this step we apply the same process as Step (II.2) on diagonal entries of  $(S,T)\in \mathcal{C}_2$. 
Again, we separate diagonal entries one by one.
Eventually we either obtain a collection $\mathcal{C}_3$ of $zw$-order matrices of Type~3 such that $\coll_3\Supset (A|B)$, 
or end up with an empty collection, for which case we eliminate the $zw$-diagram.   \medskip

Both collections $\coll_2$, $\coll_3$ record possible orders for positions and separations. 
We have seen in section~\ref{sec:poly} how  $\coll_2$ or $\coll_3$ can be used to generate polynomial equations. 
The next step is to apply our last algorithm to find mass relations.

% ----------------------------------------------------------------- %
\section{Algorithm~ III - Eliminations and mass relations} \label{sec:algorithm III}

For each $zw$-matrix $(A|B)$, we collect a system of polynomial equations as outlined in section~\ref{sec:poly}. 
This system of polynomial equations are necessary conditions for the existence of singular sequence associated to $(A|B)$. 
Our hope is to apply elimination theory to obtain polynomial equations with only mass variables, 
we call these equations {\em mass relations associated to $(A|B)$}. 
By doing so, we conclude that masses  outside the zero loci of these mass relations cannot possibly result in 
a singular sequence associated to $(A|B)$. 
If we are able to do the same for every $zw$-matrix, then we conclude finiteness of central configurations
for masses outside the zero loci of a family of mass relations. 
Since central configurations are invariant by scaling masses, such zero loci should be a co-dimension~2
algebraic variety in the mass space. 

Given $zw$-matrix $(A|B)$.  
After implementation of Algorithm~II, we obtain a collection $\coll$ of $zw$-order matrices. 
It can be either the $\coll_2$ or $\coll_3$ obtained in the previous section. 
Our third algorithm, see Appendix~III for codes in Mathematica, has two simple steps: 

\smallskip
\begin{enumerate}
\item[(III.1)]  Collect polynomial equations 
\item[(III.2)]  Generate mass relations by eliminations
\end{enumerate}

\subsubsection*{Step (III.1)}
As stated in section~\ref{sec:poly}, we collect five classes of polynomial equations. 
We do not demonstrate details here as this step is rather straightforward. 
One little twist in the collection of main equations is introducing variables
\beq
  u_{ij} \;=\; z_{ij}^{-1/2}, \quad v_{ij} \;=\; w_{ij}^{-1/2}. 
\eeq
Complex square roots are multi-valued, but our equations and eliminations are unambiguous because
terms of the form $u_{ij}^kv_{ij}^l$ with $k,l\in \Z$  satisfy  $k+l\in 2\Z$ throughout the process.

\subsubsection*{Step (III.2)}
There are several parallel subroutines designed for this step. 
There are similarities and differences in these subroutines, in terms of polynomial equations called on, 
and strategies of eliminations. We will demonstrate three of them, which are sufficient for our analysis for $n=4, 5, 6$. 
The following three subsections show some technical details.

\subsection{Elimination process}  \label{subsec:elimination}
Suppose we have several polynomial equations in several variables. 
Let $\mathcal{P}$ be a set of the polynomials corresponding to these equations and $V$ the family of variables. 
Our eliminating process contains the following four kinds of operations.

\subsubsection{Delete secluded members}
If there is some $v\in V$ that appears only at one polynomial $f\in\mathcal{P}$, 
then we delete $f$ from $\mathcal{P}$ and delete $v$ from $V$.

\subsubsection{Single out factors} If there is some $f\in \mathcal{P}$ such that $f=g_1^{n_1}\cdot\cdots\cdot g_k^{n_k}$ 
for some irreducible polynomials $g_1,\ldots, g_k$, then we replace $f$ by $g_1\cdot\cdots\cdot g_k$ 
since it has the same zeros as $f$.  

\subsubsection{Simple substitutions} If there is a polynomial $f\in\mathcal{P}$ and a variable $v\in V$ such that $f=v g+h$ where $g,h$ are polynomials consisting of a single term and independent of $v$, then we apply the substitution $v=-h/g$ for each member of $\mathcal{P}$. After multiplying each member in $\mathcal{P}$ by $g^k$ for some $k\in\N$, $\mathcal{P}$ will be a collection of polynomials independent of $v$.  

\subsubsection{Elimination} 
For each $f\in\mathcal{P}$ and $v\in V$, let $\deg(f,v)$ be the maximal degree of terms 
in $f$ with respect to $v$, and $c_f$ be the coefficient of the term $v^{\deg(f,v)}$ in $f$. 
For example, if $f=3x^2z+2xy+4zy^3$ and $v=x$, then $\deg(f,v)=2$ and $c_f=3z$; if $f=3xy^2+2xz-z^2y^2$ and $v=y$, 
then $\deg(f,v)=2$ and $c_f=3x-z^2$.

Let $f_0$ be a member in $\mathcal{P}$ such that $\deg(f_0,v)>0$ and $\deg(f_0,v) \leq \deg(f,v)$ 
for every $f\in\mathcal{P}$ with $\deg(f,v)>0$.
Then for each $f\in \mathcal{P}$ with $\deg(f,v)>0$, let $$g\;=\;c_{f_0}f-c_{f}f_0v^{\deg(f,v)-\deg(f_0,v)},$$ 
and substitute $g$ for $f$. Since the coefficient of the term $v^{\deg(f,v)}$ in $g$ is $0$, we have $\deg(g,v)<\deg(f,v)$.

Repeat this process until there is only one $f\in\mathcal{P}$ with $\deg(f,v)>0$.

\subsection{Generate mass relations}  \label{subsec:genmassrelation}
For different $zw$-matrices, we have different ways of finding mass relations. 

\subsubsection{} For some $zw$-matrices, we just collect the main equations determined by the $zw$-matrix $(A|B)$ and 
make substitutions according to the cluster equations determined by some $zw$-order matrices $(S,T)$. 
Then use the function ``GroebnerBasis'' in Mathematica directly to find out mass relations.

\subsubsection{}  For some $zw$-matrices, we check whether each $zw$-order matrix in $\coll_3$ satisfies the 
hypothesis that the system contains some wedge equation as (\ref{eqn:615}). Then for each $(S,T)\in \coll_3$ and each 
choice $f$ of (\ref{eqn:615}), we collect $f$ together with the conditions of mass center as a collection $\mathcal{P}$, and 
make substitutions according to the cluster equations determined by $(S,T)$. 

Since the power of some $z_k$ or $w_k$ in $f$ may not be an integer, we replace $z_k,w_k$ by $\zeta_k^2,\omega_k^2$ respectively. 
Then we apply the above eliminating process until there is a polynomial in $\mathcal{P}$ that depends only on masses, 
or there is no polynomial in $\mathcal{P}$.

\subsubsection{} For some $zw$-matrices, we collect main equations determined by $(A|B)$ or some $(S,T)$ and the relations 
between separations, and make substitutions according to the cluster equations determined by $(S,T)$.
Let $\mathcal{P}$ be the collection of polynomials corresponding to these equations. 
We replace $Z_{kl}$'s and $W_{kl}$'s by $z_{kl}^{-1/2}w_{kl}^{-3/2}$ and $w_{kl}^{-1/2}z_{kl}^{-3/2}$ respectively, 
then we substitute $u_{kl}^{-2}$ and $v_{kl}^{-2}$ for $z_{kl}$ and $w_{kl}$ respectively. 
After multiplying each member in $\mathcal{P}$ by some $u^{n_1}v^{n_2}$, $n_1+n_2\in 2\N$, 
$\mathcal{P}$ is a new collection of polynomials. 
Then we apply the above eliminating process until there is a polynomial in $\mathcal{P}$ that depends only on masses, 
or there is no polynomial in $\mathcal{P}$.  

\subsubsection{} 
In details of the process, we have some variations. 
By choosing $\coll$ such that  $\coll\Supset (A|B)$,  we may use $zw$-order matrices in $\coll$ to determine the main equations 
and the cluster equations. 
If for each $(S,T)\in\coll$ we have a mass relation, then their common multiple is a mass relation for this $zw$-matrix. 
In addition, we may add other equations that can be obtained by some discussion but are not collected automatically in the algorithm, 
and sometimes we only pick a part (not all) of main equations because too many equations would be computationally expensive.

\subsection{Summary of subroutines}\label{subsec:subroutines}

The first subroutine collects the main equations determined by the $zw$-matrix $(A|B)$ and 
make substitutions according to the cluster equations determined by some order matrices. 
Then use the function ``GroebnerBasis'' in Mathematica to generate mass relations. 

For the second subroutine, we check whether it satisfy the hypothesis that the system contains some 
wedge equation. Then for each choice $f$ of these equations, we collect $f$ together with the conditions 
of mass center as a collection $\mathcal{P}$, and make substitutions according to the cluster equations 
determined by some order matrices $(S,T)$.  Replace $z_k$, $w_k$ by $\zeta_k^2$, $\omega_k^2$ respectively.  
 Then apply the above eliminating process until there is a polynomial in  $\mathcal{P}$ that depends only on masses, 
 or there is no polynomial in  $\mathcal{P}$. 

In the third subroutine, we collect all or part of main equations determined by some $zw$-order matrix $(S,T)$ and 
some additional equations, if necessary, into a 
set $V_1$ and replace $Z_{kl}$, $W_{kl}$ by $u_{kl}v_{kl}^{3}$, $u_{kl}^{3}v_{kl}$ respectively.  
Next, we collect relations of 
separations whose involving bodies are vertices of a triangle in the diagram, and replace $z_{kl}$, $w_{kl}$ by $u_{kl}^{-2}$, $v_{kl}^{-2}$ 
respectively. Then we make substitutions according to cluster equations determined by the $zw$-order matrix $(S,T)$, and 
apply the elimination process for the set $V_1$ (instead of appealing to Gr\"obner basis).

%\newpage

% ----------------------------------------------------------------- %

\section{The case $n=4$} \label{sec:4bd}

For the four-body problem, we obtain five diagrams for singular sequences in Figure~\ref{diagrams of 4BD} by Algorithm~I.
These diagrams and mass relations obtained from Algorithms~II,~III  are the same as those in \cite{AK}, 
except the appearance of some additional factors for the third diagram.
This is no coincidence since our algorithms were designed to embody their ideas in a mechanical process.  
This section demonstrates applications of our algorithms to these diagrams. Similar cases appear
multiple times for $n=5,\, 6$. 

\begin{figure}[H]
\centering
\includegraphics[width=400pt]{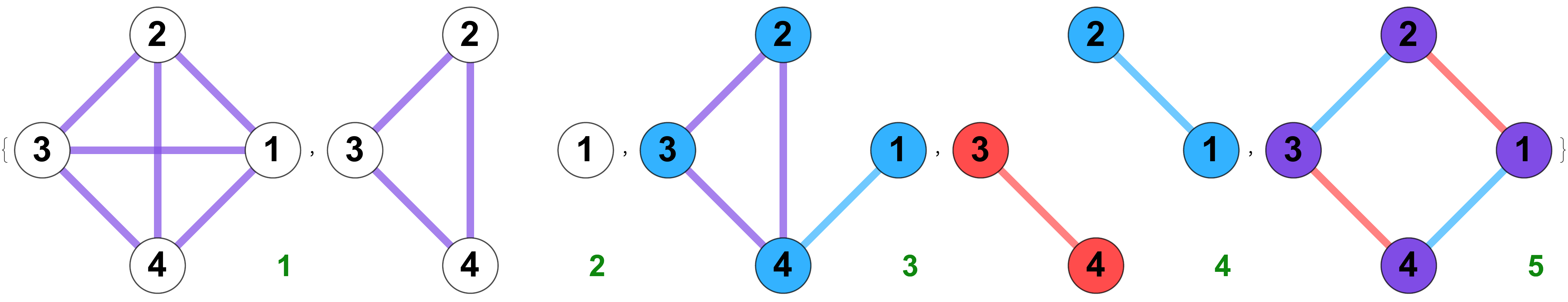}
\caption{Diagrams for singular sequences of the 4-body problem.\label{diagrams of 4BD}  }  
\end{figure}

\subsection{Diagram 1} \label{subsec:4-1}
This is the fully-edged diagram. By Algorithm~II, 
we obtain one $zw$-order matrix of Type~2 to cover orders of the $zw$-matrix.  
\begin{align*}
\small{
\left( 
\begin{pmatrix}
\{0\sim3\} & \{3\} & \{3\} & \{3\} \\
\{3\} & \!\{0\sim3\}\! & \{3\} & \{3\} \\
\{3\} & \{3\} & \!\{0\sim3\}\! & \{3\} \\
\{3\} & \{3\} & \{3\} & \{0\sim3\}
\end{pmatrix}, 
\begin{pmatrix}
\{0\sim3\} & \{3\} & \{3\} & \{3\} \\
\{3\} & \!\{0\sim3\}\! & \{3\} & \{3\} \\
\{3\} & \{3\} & \!\{0\sim3\}\! & \{3\} \\
\{3\} & \{3\} & \{3\} & \{0\sim3\} 
\end{pmatrix} \right)\!.    }
\end{align*}
This is an obvious corollary of Estimate~2 since every separation has order $\epsilon$. 
Step~(II.3) of Algorithm~II results in 25 $zw$-order matrices of Type~3 to cover orders of the $zw$-matrix. 
However, with too many leading order terms, 
in this case we are not able to find mass relation by our algorithms. 

In fact, as shown in \cite[\S5.6]{AK},  finiteness can be achieved without appealing to any mass relation for this particular diagram. 
The idea is to introduce a dominant polynomial in distances which is bounded on singular sequences
approaching this particular diagram. Using a basic property of dominant polynomials, % that their zero loci are co-finite, 
it ensures the existence of a singular subsequence approaching one of the other four diagrams. Therefore, to conclude finiteness, 
it is sufficient to exclude the other four diagrams.  

\subsection{Diagram 2}  \label{subsec:4-2}
This diagram consists of a fully-edged isolated triangle. By Algorithm~II, we find 5 $zw$-order matrices of 
Type~2 to cover orders of the $zw$-matrix. They are 
\beq
\small{
\left(\left(\!\!
\begin{array}{cccc}
 \{0\sim3\} & \{2\} & \{3\} & \{3\} \\
 \{2\} & \!\{0\sim3\}\! & \{3\} & \{3\} \\
 \{3\} & \{3\} & \!\{0\sim3\}\! & \{3\} \\
 \{3\} & \{3\} & \{3\} & \{0\sim3\} \\
\end{array}
\!\!\right), 
 \left(\!\!
\begin{array}{cccc}
 \{0\sim4\} & \{4\} & \{4\} & \{4\} \\
 \{4\} & \!\{0\sim4\}\! & \{3\} & \{3\} \\
 \{4\} & \{3\} & \!\{0\sim4\}\! & \{3\} \\
 \{4\} & \{3\} & \{3\} & \{0\sim4\} \\
\end{array}
\!\!\right)\right)    }
\eeq   \vspace{-2mm}
%\small{
% \left. \left(\!\!
%\begin{array}{cccc}
% \{\} & \{2\sim4\} & \{2\sim4\} & \{2\sim4\} \\
% \{2\sim4\} & \{\} & \{1\} & \{1\} \\
% \{2\sim4\} & \{1\} & \{\} & \{1\} \\
% \{2\sim4\} & \{1\} & \{1\} & \{\} \\
%\end{array}
%\!\!\right)\right\}, } \\
%%
\beq
\small{
\left( \left(\!\!
\begin{array}{cccc}
 \{0\sim4\} & \{4\} & \{4\} & \{4\} \\
 \{4\} & \!\{0\sim4\}\! & \{3\} & \{3\} \\
 \{4\} & \{3\} & \!\{0\sim4\}\! & \{3\} \\
 \{4\} & \{3\} & \{3\} & \{0\sim4\} \\
\end{array}
\!\!\right),\left( \!\!
\begin{array}{cccc}
 \{0\sim3\} & \{2\} & \{3\} & \{3\} \\
 \{2\} & \!\{0\sim3\}\! & \{3\} & \{3\} \\
 \{3\} & \{3\} & \!\{0\sim3\}\! & \{3\} \\
 \{3\} & \{3\} & \{3\} & \{0\sim3\} \\
\end{array}
\!\! \right) \right)   }
\eeq  \vspace{-2mm}
\beq
\small{
\left( \left( \!\!
\begin{array}{cccc}
 \{0\sim4\} & \{4\} & \{4\} & \{4\} \\
 \{4\} & \!\{0\sim4\}\! & \{3\} & \{3\} \\
 \{4\} & \{3\} & \!\{0\sim4\}\! & \{3\} \\
 \{4\} & \{3\} & \{3\} & \{0\sim4\} \\
\end{array}
\!\! \right),\left( \!\!
\begin{array}{cccc}
 \{0\sim4\} & \{4\} & \{4\} & \{4\} \\
 \{4\} & \!\{0\sim4\}\! & \{3\} & \{3\} \\
 \{4\} & \{3\} & \!\{0\sim4\}\! & \{3\} \\
 \{4\} & \{3\} & \{3\} & \{0\sim4\} \\
\end{array}
\!\! \right) \right)    }
\eeq \vspace{-2mm}
\beq
\small{
\left( \left( \!\!
\begin{array}{cccc}
 \{0\sim3\} & \{3\} & \{3\} & \{3\} \\
 \{3\} & \!\{0\sim3\}\! & \{3\} & \{3\} \\
 \{3\} & \{3\} & \!\{0\sim3\}\! & \{3\} \\
 \{3\} & \{3\} & \{3\} & \{0\sim3\} \\
\end{array}
\!\! \right),\left( \!\!
\begin{array}{cccc} 
 \{0\sim4\} & \{4\} & \{4\} & \{4\} \\
 \{4\} & \!\{0\sim4\}\! & \{3\} & \{3\} \\
 \{4\} & \{3\} & \!\{0\sim4\}\! & \{3\} \\
 \{4\} & \{3\} & \{3\} & \{0\sim4\} \\
\end{array}
\!\! \right) \right)   }
\eeq \vspace{-2mm}
\beq
\small{
\left( \left( \!\!
\begin{array}{cccc}
 \{0\sim4\} & \{4\} & \{4\} & \{4\} \\
 \{4\} & \!\{0\sim4\}\! & \{3\} & \{3\} \\
 \{4\} & \{3\} & \!\{0\sim4\}\! & \{3\} \\
 \{4\} & \{3\} & \{3\} & \{0\sim4\} \\
\end{array}
\!\! \right),\left( \!\!
\begin{array}{cccc}
 \{0\sim3\} & \{3\} & \{3\} & \{3\} \\
 \{3\} & \!\{0\sim3\}\! & \{3\} & \{3\} \\
 \{3\} & \{3\} & \!\{0\sim3\}\! & \{3\} \\
 \{3\} & \{3\} & \{3\} & \{0\sim3\} \\
\end{array}
\!\! \right) \right)  }
\eeq

They all produce the same $r$-order matrix
\beq
\small{
\left(\!\!
\begin{array}{cccc}
 \{\} & \{2,3,4\} & \{2,3,4\} & \{2,3,4\} \\
 \{2,3,4\} & \{\} & \{1\} & \{1\} \\
 \{2,3,4\} & \{1\} & \{\} & \{1\} \\
 \{2,3,4\} & \{1\} & \{1\} & \{\} \\
\end{array}
\!\! \right).  }
\eeq
Note that the $(1,2)$, $(1,3)$, and $(2,3)$-th entries of left and right half of these $zw$-order matrices are exactly what we 
saw in Example~\ref{exam:chooseom}.  Step~(II.3) of Algorithm~II results in 468 $zw$-order matrices of Type~3
to cover orders of the $zw$-matrix. 

One may apply the third subroutine in \S\ref{subsec:elimination} with all $z_i$-, $w_i$-equations directly, or pick a smaller set of 
polynomial equations to reduce the amount of computations. For instance,  
after determining main equations and cluster equations by the $zw$-matrix,  
we may collect the $z_2$-, $z_3$-, $w_2$-, $w_3$-equations and relations between separations, 
make substitutions according to the cluster equations determined by the $zw$-order matrices of Type~1,  
and apply eliminating process. 
This results in a mass relation $f_{4,2}=0$ where $f_{4,2}$ is a polynomial of 
$\sqrt{m_1},\sqrt{m_2},\sqrt{m_3},\sqrt{m_4}$ with 6 factors:
\begin{flalign}\label{massrel:4-2.1}
\notag & \sqrt{m_2}\sqrt{m_3}-\sqrt{m_2}\sqrt{m_4}-\sqrt{m_3}\sqrt{m_4},\\
\notag & \sqrt{m_2}\sqrt{m_3}+\sqrt{m_2}\sqrt{m_4}-\sqrt{m_3}\sqrt{m_4},\\
           & \sqrt{m_2}\sqrt{m_3}-\sqrt{m_2}\sqrt{m_4}+\sqrt{m_3}\sqrt{m_4},\\
\notag & m_2m_3+m_2m_4+m_3m_4-2m_2\sqrt{m_3}\sqrt{m_4},\\
\notag & m_2m_3+m_2m_4+m_3m_4-2\sqrt{m_2}m_3\sqrt{m_4},\\
\notag & m_2m_3+m_2m_4+m_3m_4-2\sqrt{m_2}\sqrt{m_3}m_4.
\end{flalign}
The first three factors give equations
\begin{equation}\label{massrel:4-2.2}
\frac{1}{\sqrt{m_i}}=\frac{1}{\sqrt{m_j}}+\frac{1}{\sqrt{m_k}} \qquad \{i,j,k\}=\{2,3,4\}.
\end{equation}
The last three factors can be written  
\begin{equation*}\label{massrel:4-2.3}
m_i(\sqrt{m_j}-\sqrt{m_k})^2+m_jm_k\qquad  \{i,j,k\}=\{2,3,4\},
\end{equation*}
which are never zero for positive masses.

\subsection{Diagram 3}   \label{subsec:4-3}
This is the ``{\em kite diagram}'', with a $w$-edge connected to a fully-edged triangle. 
By Algorithm~II,  we obtain two $zw$-order matrices of Type~3 to cover orders of the $zw$-matrix: 
\beq
\left( \begin{pmatrix}
\{1\} & \{3\} & \{3\} & \{1\} \\
\{3\} & \{3\} & \{3\} & \{3\} \\
\{3\} & \{3\} & \{3\} & \{3\} \\
\{1\} & \{3\} & \{3\} & \{1\} 
\end{pmatrix}, \begin{pmatrix}
\{5\} & \{5\} & \{5\} & \{5\} \\
\{5\} & \{5\} & \{3\} & \{3\} \\
\{5\} & \{3\} & \{5\} & \{3\} \\
\{5\} & \{3\} & \{3\} & \{5\} 
\end{pmatrix} \right),  \\
\left( \begin{pmatrix}
\{1\} & \{3\} & \{3\} & \{1\} \\
\{3\} & \{3\} & \{3\} & \{3\} \\
\{3\} & \{3\} & \{3\} & \{3\} \\
\{1\} & \{3\} & \{3\} & \{0\} 
\end{pmatrix}, \begin{pmatrix}
\{5\} & \{5\} & \{5\} & \{5\} \\
\{5\} & \{5\} & \{3\} & \{3\} \\
\{5\} & \{3\} & \{5\} & \{3\} \\
\{5\} & \{3\} & \{3\} & \{5\} 
\end{pmatrix} \right).
\eeq
Both of them generate the $r$-order matrix 
\beq
\left(\!\!
\begin{array}{cccc}
 \{\} & \{4\} & \{4\} & \{3\} \\
 \{4\} & \{\} & \{1\} & \{1\} \\
 \{4\} & \{1\} & \{\} & \{1\} \\
 \{3\} & \{1\} & \{1\} & \{\} \\
\end{array}
\!\! \right). 
\eeq

Again, we can either apply the third subroutine in \S\ref{subsec:elimination} directly, or 
collect only the $z_2$-, $z_3$-, $w_2$-, $w_3$-equations, cluster equations, and relations between separations. 
Then, by making substitutions according to cluster equations determined by the $zw$-order matrices and applying the eliminating process,  
Algorithm~III outputs the mass relation $f_{4,3}=0$, where the polynomial $f_{4,3}$ has 11 factors. 
Six of them are exactly the same as (\ref{massrel:4-2.1}). Two other short factors are 
\begin{flalign*}%\label{massrel:4-3}
        & -m_2^{3/4} m_3^{3/4}+m_2 \sqrt{m_4}+m_3 \sqrt{m_4}, \\
\notag & m_2^3- (m_2+ m_3) m_4^2.
\end{flalign*}
The other three factors $\mu_{15}(m_2,m_3,m_4)$, $\mu_{21}(m_2,m_3,m_4)$, $\mu_{33}(m_2,m_3,m_4)$ are longer. 
See Appendix~IV for details. The first factor above  is equivalent to 
\begin{equation}\label{massrel:4-3.2}
(m_2m_3)^3=m_4^2(m_2+m_3)^4.
\end{equation}

\subsection{Diagram 4}  \label{subsec:4-4}
The fourth diagram in Figure~\ref{diagrams of 4BD} is the disconnected diagram consisting of a $z$-edge and a $w$-edge. 
Applying Algorithm~II,  one $zw$-order matrix of Type~3 covers orders of the $zw$-matrix:
$$
\left( \begin{pmatrix}
\{1\} & \{1\} & \{5\} & \{5\} \\
\{1\} & \{1\} & \{5\} & \{5\} \\
\{5\} & \{5\} & \{5\} & \{5\} \\
\{5\} & \{5\} & \{5\} & \{5\} 
\end{pmatrix}, \begin{pmatrix}
\{5\} & \{5\} & \{5\} & \{5\} \\
\{5\} & \{5\} & \{5\} & \{5\} \\
\{5\} & \{5\} & \{1\} & \{1\} \\
\{5\} & \{5\} & \{1\} & \{1\} 
\end{pmatrix} \right).
$$

We have wedge equations
\begin{equation*}\label{massrel:4-4.1}
m_1m_3(z_1w_3)^{-1/2}\pm m_1m_4(z_1w_4)^{-1/2}\pm m_2m_3(z_2w_3)^{-1/2}\pm m_2m_4(z_2w_4)^{-1/2}.
\end{equation*}
For each wedge equation, we make substitutions according to the cluster equations and apply the elimination process on the 
collection consisting of this wedge equation and conditions of mass center. 
Two mass relations can be obtained: 
for 4 of the wedge equations, there is a mass relation
\begin{equation*}\label{massrel:4-4.2}
(m_1^2-m_1m_2+m_2^2)(m_3^2-m_3m_4+m_4^2)=0,
\end{equation*}
which doesn't hold for positive masses; for 4 other wedge equations, we find the mass relation
\begin{equation*}\label{massrel:4-4.3}
m_1^3m_3^3+m_2^3m_3^3-4m_1^{3/2}m_2^{3/2}m_3^{3/2}m_4^{3/2}+m_1^3m_4^3+m_2^3m_4^3=0.
\end{equation*}
These can be implemented automatically by the second subroutine in \S\ref{subsec:subroutines}

The last equation can be rewritten 
\begin{equation*}\label{massrel:4-4.4}
[(m_1m_3)^{3/2}-(m_2m_4)^{3/2}]^2+[(m_1m_4)^{3/2}-(m_2m_3)^{3/2}]^2=0.
\end{equation*}
For positive masses, the above equation holds if and only if
$$
m_1m_3=m_2m_4\quad\textrm{and}\quad m_1m_4=m_2m_3,
$$ which is equivalent to the condition
\begin{equation}\label{massrel:4-4.5}
m_1=m_2\quad\textrm{and}\quad m_3=m_4.
\end{equation}

\subsection{Diagram 5}   \label{subsec:4-5}
The last diagram in Figure~\ref{diagrams of 4BD} is the square diagram. 
By Algorithm~II,  again one $zw$-order matrix of Type~3 covers orders of the $zw$-matrix: 
$$
\left( \begin{pmatrix}
\{5\} & \{5\} & \{5\} & \{1\} \\
\{5\} & \{5\} & \{1\} & \{5\} \\
\{5\} & \{1\} & \{5\} & \{5\} \\
\{1\} & \{5\} & \{5\} & \{5\} 
\end{pmatrix},  
\begin{pmatrix}
\{5\} & \{1\} & \{5\} & \{5\} \\
\{1\} & \{5\} & \{5\} & \{5\} \\
\{5\} & \{5\} & \{5\} & \{1\} \\
\{5\} & \{5\} & \{1\} & \{5\} 
\end{pmatrix} \right).
$$
As in the previous diagram, we know precise orders for all separations, and they are either maximal or minimal. 

We collect main equations determined by the $zw$-matrix and make substitutions according to cluster equations 
determined by some $zw$-order matrices. 
By either applying the elimination process (e.g. third subroutine in \S\ref{subsec:elimination}), 
or by using the function ``GroebnerBasis'' in Mathematica directly (first subroutine in \S\ref{subsec:elimination}), 
the mass relation below can be easily obtained:
\begin{equation}\label{massrel:4-5.1}
m_1m_3=m_2m_4.
\end{equation}

Mass relations (\ref{massrel:4-2.2}), (\ref{massrel:4-3.2}), (\ref{massrel:4-4.5}), (\ref{massrel:4-5.1}) 
above coincide those in \cite[\S5.1-5.4]{AK}.

%%%%%%%%%%%%%%%%%%%%%
\section{The case $n=5$} \label{sec:5bd}

For the case $n=5$, we obtain 20 diagrams by Algorithm~I. See Figure~\ref{diagrams of 5BD}.  
Diagrams~9,11,13,17 were excluded by Algorithm~II, and the remaining 16 diagram are precisely the 
diagrams in \cite[Figure 11]{AK}.

\begin{figure}[h]
\centering
\includegraphics[width=400pt]{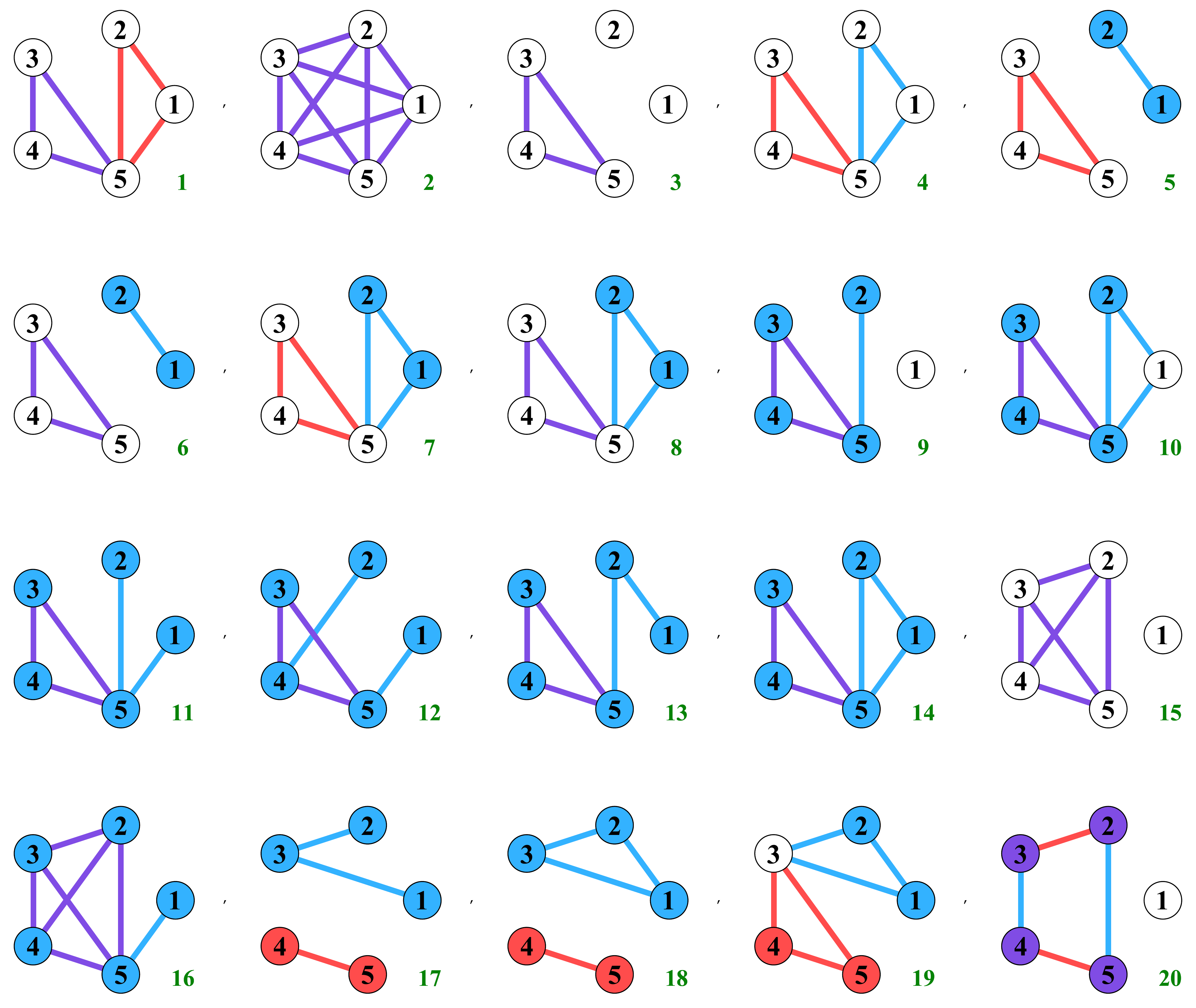}
\caption{Diagrams for singular sequences of the 5-body problem. \label{diagrams of 5BD}}
\end{figure}

\subsection{Diagrams 1, 3, 6, 8}
They are similar to the case in \S\ref{subsec:4-2}. 
For Diagram~1, one $zw$-order matrix of Type~2 is sufficient to cover orders of the $zw$-matrix:
\beq
\small{
\left(\!\! \begin{pmatrix}
\{0\sim4\} & \{4\} & \{4\} & \{4\} & \{4\} \\
\{4\} & \!\!\{0\sim4\}\!\! & \{4\} & \{4\} & \{4\} \\
\{4\} & \{4\} & \!\!\{0\sim4\}\!\! & \{3\} & \{3\} \\
\{4\} & \{4\} & \{4\} & \!\!\{0\sim4\}\!\! & \{3\} \\ 
\{4\} & \{4\} & \{3\} & \{3\} & \{0\sim4\}  
\end{pmatrix}\!,\!  
\begin{pmatrix}
\{0\sim3\} & \{2\} & \{3\} & \{3\} & \{2\} \\
\{2\} & \!\!\{0\sim3\}\!\! & \{3\} & \{3\} & \{2\} \\
\{3\} & \{3\} & \!\!\{0\sim3\}\!\! & \{3\} & \{3\} \\
\{3\} & \{3\} & \{3\} & \!\!\{0\sim3\}\!\! & \{3\} \\ 
\{2\} & \{2\} & \{3\} & \{3\} & \{0\sim3\}  
\end{pmatrix} \!\! \right)\!.
}
\eeq
Step~(II.3) of Algorithm~II results in 125 $zw$-order matrices of Type~3. 

Similarly, there is also one  $zw$-order matrix of Type~2 that covers orders of the $zw$-matrix for Diagram~8:
\beq
\small{
\left(\!\! \begin{pmatrix}
\{0\sim3\} & \{1\} & \{3\} & \{3\} & \{1\} \\
\{1\} & \!\{0\sim3\}\! & \{3\} & \{3\} & \{1\} \\
\{3\} & \{3\} & \!\{0\sim3\}\! & \{3\} & \{3\} \\
\{3\} & \{3\} & \{3\} & \!\{0\sim3\}\! & \{3\} \\ 
\{1\} & \{1\} & \{3\} & \{3\} & \{0\sim3\}  
\end{pmatrix}\!,\!  
\begin{pmatrix}
\{5\} & \{5\} & \{5\} & \{5\} & \{5\} \\
\{5\} & \{5\} & \{5\} & \{5\} & \{5\} \\
\{5\} & \{5\} & \!\{0\sim4\}\! & \{3\} & \{3\} \\
\{5\} & \{5\} & \{3\} & \!\{0\sim4\}\! & \{3\} \\ 
\{5\} & \{5\} & \{3\} & \{3\} & \{0\sim4\}  
\end{pmatrix} \!\! \right)\!.
}
\eeq
Step~(II.3) of Algorithm~II results in 24 $zw$-order matrices of Type~3. 

Diagram~6 requires three $zw$-order matrices of Type~2 to cover orders of its $zw$-matrix:
\beq
\small{
\left(\!\! \begin{pmatrix}
\{0,1\} & \{1\} & \{2\} & \{3\} & \{3\} \\
\{1\} & \{0,1\} & \{2\} & \{3\} & \{3\} \\
\{2\} & \{2\} & \{2\} & \{3\} & \{3\} \\
\{3\} & \{3\} & \{3\} & \{3\} & \{3\} \\ 
\{3\} & \{3\} & \{3\} & \{3\} & \{3\}  
\end{pmatrix}\!,\!  
\begin{pmatrix}
\{5\} & \{5\} & \{5\} & \{5\} & \{5\} \\
\{5\} & \{5\} & \{5\} & \{5\} & \{5\} \\
\{5\} & \{5\} & \{0\sim4\} & \{3\} & \{3\} \\
\{5\} & \{5\} & \{3\} & \{0\sim4\} & \{3\} \\ 
\{5\} & \{5\} & \{3\} & \{3\} & \{0\sim4\}  
\end{pmatrix} \!\! \right)\!,
}  \vspace{-3mm}
\eeq
\beq
\small{
\left(\!\! \begin{pmatrix}
\{0,1\} & \{1\} & \{4\} & \{4\} & \{4\} \\
\{1\} & \{0,1\} & \{4\} & \{4\} & \{4\} \\
\{4\} & \{4\} & \{4\} & \{3\} & \{3\} \\
\{4\} & \{4\} & \{3\} & \{4\} & \{3\} \\ 
\{4\} & \{4\} & \{3\} & \{3\} & \{4\}  
\end{pmatrix}\!,\!  
\begin{pmatrix}
\{5\} & \{5\} & \{5\} & \{5\} & \{5\} \\
\{5\} & \{5\} & \{5\} & \{5\} & \{5\} \\
\{5\} & \{5\} & \{0\sim4\} & \{3\} & \{3\} \\
\{5\} & \{5\} & \{3\} & \{0\sim4\} & \{3\} \\ 
\{5\} & \{5\} & \{3\} & \{3\} & \{0\sim4\}  
\end{pmatrix} \!\! \right)\!,
}  \vspace{-3mm}
\eeq
\beq
\small{
\left(\!\! \begin{pmatrix}
\{0,1\} & \{1\} & \{3\} & \{3\} & \{3\} \\
\{1\} & \{0,1\} & \{3\} & \{3\} & \{3\} \\
\{3\} & \{3\} & \{3\} & \{3\} & \{3\} \\
\{3\} & \{3\} & \{3\} & \{3\} & \{3\} \\ 
\{3\} & \{3\} & \{3\} & \{3\} & \{3\}  
\end{pmatrix}\!,\!  
\begin{pmatrix}
\{5\} & \{5\} & \{5\} & \{5\} & \{5\} \\
\{5\} & \{5\} & \{5\} & \{5\} & \{5\} \\
\{5\} & \{5\} & \{0\sim4\} & \{3\} & \{3\} \\
\{5\} & \{5\} & \{3\} & \{0\sim4\} & \{3\} \\ 
\{5\} & \{5\} & \{3\} & \{3\} & \{0\sim4\}  
\end{pmatrix} \!\! \right)\!.
} 
\eeq
Step~(II.3) results in 18 $zw$-order matrices of Type~3. 

Diagram~3 is more complicated. The optimal $zw$-order matrix of Type~1 which covers orders of the corresponding $zw$-matrix is:
\beq
\small{
\left(\!\! \begin{pmatrix}
\{0\sim4\}\! & \!\!\{2\sim4\}\!\! & \!\!\{2\sim4\}\!\! & \!\!\{2\sim4\}\!\! & \!\{2\sim4\} \\
\{2\sim4\}\! & \!\!\{0\sim4\}\!\! & \!\!\{2\sim4\}\!\! & \!\!\{2\sim4\}\!\! & \!\{2\sim4\} \\
\{2\sim4\}\! & \!\!\{2\sim4\}\!\! & \!\!\{0\sim4\}\!\! & \{3\} & \{3\} \\
\{2\sim4\}\! & \!\!\{2\sim4\}\!\! & \{3\} & \!\!\{0\sim4\}\!\! & \{3\} \\ 
\{2\sim4\}\! & \!\!\{2\sim4\}\!\! & \{3\} & \{3\} & \!\{0\sim4\}
\end{pmatrix}\!,\!  
\begin{pmatrix}
\{0\sim4\}\! & \!\!\{2\sim4\}\!\! & \!\!\{2\sim4\}\!\! & \!\!\{2\sim4\}\!\! & \!\{2\sim4\} \\
\{2\sim4\}\! & \!\!\{0\sim4\}\!\! & \!\!\{2\sim4\}\!\! & \!\!\{2\sim4\}\!\! & \!\{2\sim4\} \\
\{2\sim4\}\! & \!\!\{2\sim4\}\!\! & \!\!\{0\sim4\}\!\! & \{3\} & \{3\} \\
\{2\sim4\}\! & \!\!\{2\sim4\}\!\! & \{3\} & \!\!\{0\sim4\}\!\! & \{3\} \\ 
\{2\sim4\}\! & \!\!\{2\sim4\}\!\! & \{3\} & \{3\} & \!\{0\sim4\}
\end{pmatrix} \!\! \right)\!.
}
\eeq
It contains more than $4.78\times 10^6$ $zw$-order matrices of Type~2, more than 
$4.67\times 10^{13}$  $zw$-order matrices of Type~3. 
Step~(II.2) collects 32 $zw$-order matrices of Type~2 to cover orders of its $zw$-matrix.  

By applying the third subroutine in \S\ref{subsec:subroutines} with all $z_i$-, $w_i$-equations, 
diagrams~1,~3,~6,~8 all result in the same mass relation $f_5=0$, where $f_5$ is
the same as (\ref{massrel:4-2.1}) except subindices are shifted by 1. 
In particular, we also have (\ref{massrel:4-2.2}) except subindices are shifted by 1:
\begin{equation*}\label{massrel:5-1.1}
\frac{1}{\sqrt{m_i}}=\frac{1}{\sqrt{m_j}}+\frac{1}{\sqrt{m_k}} \qquad \{i,j,k\}=\{3,4,5\}.
\end{equation*}

We may also collect partial list of main equations, for instance only the $z_3$-, $z_4$-, $w_3$-, $w_4$-equations 
determined by the $zw$-matrix. 
Together with relations between separations, by making substitutions according to the cluster equations determined by 
some $zw$-order matrices, and applying eliminating process, we end up with a more restrictive mass relation: 
$$(m_3^{1/4} - m_5^{1/4})f_5 \;=\; 0. $$

\subsection{Diagrams 4, 7}\label{subsec:5-4}
In either diagram, there is a triangle formed by $z$-edges between bodies $3,4,5$, 
and for $i\in\{3,4\}$ and $j\in\{1,2\}$, there is no edge joining body $i$ and body $j$.

For diagram 4, there is one $zw$-order matrix of Type~2 that covers orders of the $zw$-matrix:
\beq
\small{
\left(\!\! \begin{pmatrix}
\{0\sim4\}\! & \!\!\{2\}\!\! & \!\!\{4\}\!\! & \!\!\{4\}\!\! & \!\{2\} \\
\{2\}\! & \!\!\{0\sim4\}\!\! & \!\!\{4\}\!\! & \!\!\{4\}\!\! & \!\{2\} \\
\{4\}\! & \!\!\{4\}\!\! & \!\!\{0\sim4\}\!\! & \{4\} & \{4\} \\
\{4\}\! & \!\!\{4\}\!\! & \{4\} & \!\!\{0\sim4\}\!\! & \{4\} \\ 
\{2\}\! & \!\!\{2\}\!\! & \{4\} & \{4\} & \!\{0\sim4\}
\end{pmatrix}\!,\!  
\begin{pmatrix}
\{0\sim4\}\! & \!\!\{4\}\!\! & \!\!\{4\}\!\! & \!\!\{4\}\!\! & \!\{4\} \\
\{4\}\! & \!\!\{0\sim4\}\!\! & \!\!\{4\}\!\! & \!\!\{4\}\!\! & \!\{4\} \\
\{4\}\! & \!\!\{4\}\!\! & \!\!\{0\sim4\}\!\! & \{2\} & \{2\} \\
\{4\}\! & \!\!\{4\}\!\! & \{2\} & \!\!\{0\sim4\}\!\! & \{2\} \\ 
\{4\}\! & \!\!\{4\}\!\! & \{2\} & \{2\} & \!\{0\sim4\}
\end{pmatrix} \!\! \right)\!.
}
\eeq
This $zw$-order matrix generates the $r$-order matrix
\beq
\left(\!\!
\begin{array}{ccccc}
 \{\} & \{2\} & \{2\sim 4\} & \{2\sim 4\} & \{2\} \\
 \{2\} & \{\} & \{2\sim 4\} & \{2\sim 4\} & \{2\} \\
 \{2\sim 4\} & \{2\sim 4\} & \{\} & \{2\} & \{2\} \\
 \{2\sim 4\} & \{2\sim 4\} & \{2\} & \{\} & \{2\} \\
 \{2\} & \{2\} & \{2\} & \{2\} & \{\} 
\end{array}
\!\! \right). 
\eeq

For diagram 7, there is one $zw$-order matrix of Type~2 that covers orders of the $zw$-matrix:
\beq
\small{
\left(\!\! \begin{pmatrix}
\{0\sim4\}\! & \!\!\{1\}\!\! & \!\!\{4\}\!\! & \!\!\{4\}\!\! & \!\{1\} \\
\{1\}\! & \!\!\{0\sim4\}\!\! & \!\!\{4\}\!\! & \!\!\{4\}\!\! & \!\{1\} \\
\{4\}\! & \!\!\{4\}\!\! & \!\!\{0\sim4\}\!\! & \{4\} & \{4\} \\
\{4\}\! & \!\!\{4\}\!\! & \{4\} & \!\!\{0\sim4\}\!\! & \{4\} \\ 
\{1\}\! & \!\!\{1\}\!\! & \{4\} & \{4\} & \!\{0\sim4\}
\end{pmatrix}\!,\!  
\begin{pmatrix}
\{5\}\! & \!\!\{5\}\!\! & \!\!\{5\}\!\! & \!\!\{5\}\!\! & \!\{5\} \\
\{5\}\! & \!\!\{5\}\!\! & \!\!\{5\}\!\! & \!\!\{5\}\!\! & \!\{5\} \\
\{5\}\! & \!\!\{5\}\!\! & \!\!\{0\sim4\}\!\! & \{2\} & \{2\} \\
\{5\}\! & \!\!\{5\}\!\! & \{2\} & \!\!\{0\sim4\}\!\! & \{2\} \\ 
\{5\}\! & \!\!\{5\}\!\! & \{2\} & \{2\} & \!\{0\sim4\}
\end{pmatrix} \!\! \right)\!.
}
\eeq
It produces the $r$-order matrix
\beq
\left(\!\!
\begin{array}{ccccc}
 \{\} & \{3\} & \{4\} & \{4\} & \{3\} \\
 \{3\} & \{\} & \{4\} & \{4\} & \{3\} \\
 \{4\} & \{4\} & \{\} & \{2\} & \{2\} \\
 \{4\} & \{4\} & \{2\} & \{\} & \{2\} \\
 \{3\} & \{3\} & \{2\} & \{2\} & \{\} 
\end{array}
\!\! \right). 
\eeq

For these two diagrams, with any $zw$-order matrix, if we only collect equations that are automatically generated in our algorithm, 
then we don't obtain any mass relation because too many maximal order sets are not certainly maximal. 
If we analyze leading order terms in some $z_i$- or $w_i$-equations, and for each possible case, add the corresponding 
$z_i$- or $w_i$-equations, then we may obtain mass relations.

A notation is introduced for convenience:
\begin{notation}
For $i\in\{1,\ldots,n\}$, let $M_{Z,i}^*$ denote the bodies of maximal order terms in equation $z_i=\sum_{j\neq i}m_jZ_{ji}$ 
and $M_{W,i}^*$ denote the bodies of maximal order terms in equation $w_i=\sum_{j\neq i}m_jW_{ji}$.
\end{notation}

It's obvious that $M_{Z,i}^*\subset M_{Z,i}$, $M_{W,i}^*\subset M_{W,i}$, $|M_{Z,i}^*|\geq 2$, $|M_{W,i}^*|\geq 2$.

For diagram 4 with the only $zw$-order matrix of Type 2, the maximal order sets and whether they are certainly maximal are 
shown in the following table:
\begin{center}
\begin{tabular}{c|c|c|c|c}
 & $M_{Z,i}$ & certainly max? & $M_{W,i}$ & certainly max?  \\
\hline
$i=1$ & $\{1,2,5\}$ & & $\{2,5\}$ & $\surd$ \\
\hline
$i=2$ & $\{1,2,5\}$ & & $\{1,5\}$ & $\surd$ \\
\hline
$i=3$ & $\{4,5\}$ & $\surd$ & $\{3,4,5\}$ &  \\
\hline
$i=4$ & $\{3,5\}$ & $\surd$ & $\{3,4,5\}$ & \\
\hline
$i=5$ & $\{3,4\}$ & $\surd$ & $\{1,2\}$ & $\surd$ \\
\end{tabular}
\end{center}

Since there is a triangle formed by $z$-edges between bodies 3,4,5, from Rule~2c  
$z_{34}\approx z_{35}\approx z_{45}$ and $w_{34}\approx w_{35}\approx w_{45}$, and then 
$Z_{34}\approx Z_{35}\approx Z_{45}$, $W_{34}\approx W_{35}\approx W_{45}$. 
Therefore there are four cases for $M_{W,3}^*$ and $M_{W,4}^*$:
\begin{enumerate}
\item[1.] $M_{W,3}^*=\{4,5\}$ and $M_{W,4}^*=\{3,5\}$,
\item[2.] $M_{W,3}^*=\{3,4,5\}$ and $M_{W,4}^*=\{3,5\}$,
\item[3.] $M_{W,3}^*=\{4,5\}$ and $M_{W,4}^*=\{3,4,5\}$,
\item[4.] $M_{W,3}^*=\{3,4,5\}$ and $M_{W,4}^*=\{3,4,5\}$.
\end{enumerate}

We first show that cases 2 and 3 are impossible. 
If $w_3\approx W_{43}=r_{43}^{-3}w_{43}$, then $w_3\succ w_{43}$ since $r_{43}\prec 1$. 
It follows that $w_4\approx w_3\approx W_{43}$ which means $M_{W,4}^*=\{3,4,5\}$, and therefore 
case 2 is impossible. Similarly, case 3 is impossible.

In case 4, since $r_{34}\prec 1$, we have $w_3\approx W_{34} =r_{34}^{-3} w_{34}\succ w_{34}$, and it follows that $w_3\sim w_4$. Therefore we also have equation $w_3=w_4$.

For case 1, we collect $z_3$- and $z_4$-equations, relations between separations, and additional equations
\beq
0&=&m_4W_{43}+m_5W_{53},\\
0&=&m_3W_{34}+m_5W_{54}.
\eeq
Then we make substitutions according to the cluster equations, and apply eliminating process. 
The result is in a mass relation $f_{5,4,1,1}=0$, where $f_{5,4,1,1}$ is the same as (\ref{massrel:4-2.1}) except subindices are shifted by 1.

For case 4, we collect $z_3$- and $z_4$-equations, relations between separations, and additional equations
\beq
w_3&=&m_4W_{43}+m_5W_{53},\\
w_4&=&m_3W_{34}+m_5W_{54},\\
w_3&=&w_4.
\eeq
Then we make substitutions according to the cluster equations, and apply eliminating process. 
The result is in a mass relation $f_{5,4,1,2}=0$, where $f_{5,4,1,2}$ is the same as $f_{4,3}$  in \S\ref{subsec:4-3}, 
except subindices are shifted by 1. 

On the other hand, by similar discussions, we also have two possibilities for $M_{Z,1}^*$ and $M_{Z,2}^*$:
\begin{enumerate}
\item[1.] $M_{Z,1}^*=\{2,5\}$ and $M_{Z,2}^*=\{1,5\}$,
\item[2.] $M_{Z,1}^*=\{1,2,5\}$ and $M_{Z,2}^*=\{1,2,5\}$.
\end{enumerate}

In case 1, we obtain a mass relation $f_{5,4,2,1}=0$ where $f_{5,4,2,1}$ is the same as (\ref{massrel:4-2.1}) 
except subindices change from $2,3,4$ to $1,2,5$ respectively. 
In case 2, we obtain a mass relation $f_{5,4,2,2}=0$ where $f_{5,4,2,2}$ is the same as $f_{4,3}$  in \S\ref{subsec:4-3}, 
except subindices change from $2,3,4$ to $1,2,5$ respectively.

Therefore for diagram 4, we have a mass relation $f_{5,4,1}=f_{5,4,2}=0$ where $f_{5,4,i}$ is the least 
common multiple of $f_{5,4,i,1}$ and $f_{5,4,i,2}$ for $i=1,2$.

For diagram 7 with the only $zw$-order matrix of Type 2, the maximal order sets and whether they are certainly 
maximal are shown in the following table:
\begin{center}
\begin{tabular}{c|c|c|c|c}
 & $M_{Z,i}$ & certainly max? & $M_{W,i}$ & certainly max?  \\
\hline
$i=1$ & $\{1,2,5\}$ & & $\{1,2,5\}$ & $\surd$ \\
\hline
$i=2$ & $\{1,2,5\}$ & & $\{1,2,5\}$ & $\surd$ \\
\hline
$i=3$ & $\{4,5\}$ & $\surd$ & $\{3,4,5\}$ &  \\
\hline
$i=4$ & $\{3,5\}$ & $\surd$ & $\{3,4,5\}$ & \\
\hline
$i=5$ & $\{3,4\}$ & $\surd$ & $\{1,2\}$ & $\surd$ \\
\end{tabular}
\end{center}

By the same discussions as in diagram 4, there are two cases for $M_{W,3}^*$ and $M_{W,4}^*$, 
and we obtain mass relations $f_{5,4,1,1}=0$ and $f_{5,4,1,2}=0$ in these cases respectively. 
Therefore, for diagram 7, we have the same mass relation as that for diagram 4.

\subsection{Diagram 5}\label{subsec:5-5}
For diagram 5, there are three $zw$-order matrices $(S_k,T)$, $k=2,3,4$ of Type~2 that cover orders of the $zw$-matrix, where
$$
S_k= \begin{pmatrix}
\{0\sim 1\}\! & \!\!\{1\}\!\! & \!\!\{k\}\!\! & \!\!\{4\}\!\! & \!\{4\} \\
\{1\}\! & \!\!\{0\sim1\}\!\! & \!\!\{k\}\!\! & \!\!\{4\}\!\! & \!\{4\} \\
\{k\}\! & \!\!\{k\}\!\! & \!\!\{k\}\!\! & \{4\} & \{4\} \\
\{4\}\! & \!\!\{4\}\!\! & \{4\} & \!\!\{4\}\!\! & \{4\} \\ 
\{4\}\! & \!\!\{4\}\!\! & \{4\} & \{4\} & \!\{4\}
\end{pmatrix},
%T=  
%\begin{pmatrix}
%\{5\}\! & \!\!\{5\}\!\! & \!\!\{5\}\!\! & \!\!\{5\}\!\! & \!\{5\} \\
%\{5\}\! & \!\!\{5\}\!\! & \!\!\{5\}\!\! & \!\!\{5\}\!\! & \!\{5\} \\
%\{5\}\! & \!\!\{5\}\!\! & \!\!\{0\sim4\}\!\! & \{2\} & \{2\} \\
%\{5\}\! & \!\!\{5\}\!\! & \{2\} & \!\!\{0\sim4\}\!\! & \{2\} \\ 
%\{5\}\! & \!\!\{5\}\!\! & \{2\} & \{2\} & \!\{0\sim4\}
%\end{pmatrix}\!.
$$
and
$$
T=  
\begin{pmatrix}
\{5\}\! & \!\!\{5\}\!\! & \!\!\{5\}\!\! & \!\!\{5\}\!\! & \!\{5\} \\
\{5\}\! & \!\!\{5\}\!\! & \!\!\{5\}\!\! & \!\!\{5\}\!\! & \!\{5\} \\
\{5\}\! & \!\!\{5\}\!\! & \!\!\{0\sim4\}\!\! & \{2\} & \{2\} \\
\{5\}\! & \!\!\{5\}\!\! & \{2\} & \!\!\{0\sim4\}\!\! & \{2\} \\ 
\{5\}\! & \!\!\{5\}\!\! & \{2\} & \{2\} & \!\{0\sim4\}
\end{pmatrix}.
$$
They produce the same $r$-order matrix
\beq
\left(\!\!
\begin{array}{ccccc}
 \{\} & \{3\} & \{4\} & \{4\} & \{3\} \\
 \{3\} & \{\} & \{4\} & \{4\} & \{3\} \\
 \{4\} & \{4\} & \{\} & \{2\} & \{2\} \\
 \{4\} & \{4\} & \{2\} & \{\} & \{2\} \\
 \{3\} & \{3\} & \{2\} & \{2\} & \{\} 
\end{array}
\!\! \right). 
\eeq

For $(S_2,T)$ and $(S_3,T)$, the maximal order sets and whether they are certainly maximal are shown in the following table:
\begin{center}
\begin{tabular}{c|c|c|c|c}
 & $M_{Z,i}$ & certainly max? & $M_{W,i}$ & certainly max?  \\
\hline
$i=1$ & $\{1,2\}$ & $\surd$ & $\{1,2\}$ & $\surd$ \\
\hline
$i=2$ & $\{1,2\}$ & $\surd$ & $\{1,2\}$ & $\surd$ \\
\hline
$i=3$ & $\{4,5\}$ & $\surd$ & $\{1,2,3,4,5\}$ &  \\
\hline
$i=4$ & $\{3,5\}$ & $\surd$ & $\{3,4,5\}$ & \\
\hline
$i=5$ & $\{3,4\}$ & $\surd$ & $\{3,4,5\}$ &  \\
\end{tabular}
\end{center}

Same as discussions for diagram 4, there are two cases for $M_{W,4}^*$ and $M_{W,5}^*$,  
we obtain mass relations $f_{5,5,1}=0$ and $f_{5,5,2}=0$ in these cases respectively, 
where $f_{5,5,i}$ is the same as $f_{5,4,1,i}$ in \S\ref{subsec:5-4}, 
except subindices were changed from $3,4,5$ to $4,5,3$ respectively for $i=1,2$. 

For $(S_4,T)$, the maximal order sets and whether they are certainly maximal are shown in the following table:
\begin{center}
\begin{tabular}{c|c|c|c|c}
 & $M_{Z,i}$ & certainly max? & $M_{W,i}$ & certainly max?  \\
\hline
$i=1$ & $\{1,2\}$ & $\surd$ & $\{1,2\}$ & $\surd$ \\
\hline
$i=2$ & $\{1,2\}$ & $\surd$ & $\{1,2\}$ & $\surd$ \\
\hline
$i=3$ & $\{4,5\}$ & $\surd$ & $\{1,2,3,4,5\}$ &  \\
\hline
$i=4$ & $\{3,5\}$ & $\surd$ & $\{1,2,3,4,5\}$ & \\
\hline
$i=5$ & $\{3,4\}$ & $\surd$ & $\{1,2,3,4,5\}$ &  \\
\end{tabular}
\end{center}

We first show that there are distinct $i,j\in\{3,4,5\}$ such that $M_{W,i}^*, M_{W,j}^* \subset\{3,4,5\}$. 
Then it's similar to diagram 4, we will obtain mass relation $f_{5,5,3}=0$ where $f_{5,5,3,k}$ is the same as 
$f_{5,4,1}$ in \S\ref{subsec:5-4}, except subindices were changed from $3,4,5$ to $i,j,k$ respectively where $\{i,j,k\}=\{3,4,5\}$.

For $i\in\{3,4,5\}$, suppose there is some $p\in\{1,2\}\cap M_{W,i}^*$. 
From the $w$-order matrix, by triangle inequality, we have $w_{ip}\approx w_{jp}$ for $j=3,4,5$. 
Since there is a triangle between bodies $3,4,5$ formed by $z$-strokes, we have $j,k\in M_{W,i}^*$ where $\{i,j,k\}=\{3,4,5\}$. 
From the relation $Z_{kl}=r_{kl}^{-3}z_{kl}$ we obtain $z_{pi}\prec z_{ij}\approx z_{34}$ for $j\in\{3,4,5\}$ different from $i$, 
and it follows that $z_{pj}\approx z_{ij}\approx z_{34}$. Therefore $Z_{pj}\prec Z_{ij}\approx Z_{34}$, 
which means $p\notin M_{W,j}^*$. On the other hand, since $z_{12}\prec z_{1k}$ for $k=3,4,5$, we have $z_{1k}\approx z_{2k}$. 
From the $w$-order matrix, we see that $w_{1k}\approx w_{2k}\approx \epsilon^{-2}$, and it follows that $Z_{1k}\approx Z_{2k}$. 
Therefore $\{1,2\}\subset M_{W,i}^*$ and $\{1,2\}\cap M_{W,j}^*=\emptyset$ for $j\in\{3,4,5\}$ different from $i$, and there will be 
distinct $j,k\in \{3,4,5\}$ such that $M_{W,j}^*,M_{W,k}^*\subset\{3,4,5\}$.

For diagram 5, we have mass relation $f_{5,5}=0$ where $f_{5,5}$ is the least common multiple of 
$f_{5,5,1}$, $f_{5,5,2}$, $f_{5,5,3,3}$, $f_{5,5,3,4}$, and $f_{5,5,3,5}$.

\subsection{Diagrams 10, 14} 
These two cases are similar to the kite diagram in \S\ref{subsec:4-3}. 
One $zw$-order matrix of Type~2  is sufficient to cover orders of the $zw$-matrix for Diagram~10:
\beq
\small{
\left(\!\! \begin{pmatrix}
\{0\sim3\} & \{1\} & \{3\} & \{3\} & \{1\} \\
\{1\} & \!\{0\sim3\}\! & \{3\} & \{3\} & \{1\} \\
\{3\} & \{3\} & \!\{0\sim3\}\! & \{3\} & \{3\} \\
\{3\} & \{3\} & \{3\} & \!\{0\sim3\}\! & \{3\} \\ 
\{1\} & \{1\} & \{3\} & \{3\} & \{0\sim3\}  
\end{pmatrix}\!,\!  
\begin{pmatrix}
\{0\sim4\} & \{5\} & \{5\} & \{5\} & \{5\} \\
\{5\} & \{5\} & \{5\} & \{5\} & \{5\} \\
\{5\} & \{5\} & \{5\} & \{3\} & \{3\} \\
\{5\} & \{5\} & \{3\} & \{5\} & \{3\} \\ 
\{5\} & \{5\} & \{3\} & \{3\} & \{5\}  
\end{pmatrix} \!\! \right)\!.
}
\eeq
Comparing with Diagram~8, the difference between their $zw$-matrices are at
the first, third, fourth, and fifth diagonal entries of the $w$-matrices. 
This fact has reflected in their differences between the covering $zw$-order matrices 
of Type~2. 

Diagram~14 also has its $zw$-diagram covered by one Type~2 $zw$-order matrix: 
\beq
\small{
\left(\!\! \begin{pmatrix}
\{0\sim3\} & \{1\} & \{3\} & \{3\} & \{1\} \\
\{1\} & \!\{0\sim3\}\! & \{3\} & \{3\} & \{1\} \\
\{3\} & \{3\} & \!\{0\sim3\}\! & \{3\} & \{3\} \\
\{3\} & \{3\} & \{3\} & \!\{0\sim3\}\! & \{3\} \\ 
\{1\} & \{1\} & \{3\} & \{3\} & \{0\sim3\}  
\end{pmatrix}\!,\!  
\begin{pmatrix}
\{5\} & \{5\} & \{5\} & \{5\} & \{5\} \\
\{5\} & \{5\} & \{5\} & \{5\} & \{5\} \\
\{5\} & \{5\} & \{5\} & \{3\} & \{3\} \\
\{5\} & \{5\} & \{3\} & \{5\} & \{3\} \\ 
\{5\} & \{5\} & \{3\} & \{3\} & \{5\}  
\end{pmatrix} \!\! \right)\!.
}
\eeq

By collecting the $z_3$-, $z_4$-, $w_3$-, $w_4$-equations determined by the $zw$-matrix, and relations between separations, 
then make substitutions according to the cluster equations determined by some $zw$-order matrices, 
the eliminating process (the third subroutine in \S\ref{subsec:subroutines}) yield mass relation
$f_5=0$, where $f_5$ is the same as the $f_{4,3}$  in \S\ref{subsec:4-3}, except subindices are shifted by 1. 

%%%

\subsection{Diagram 12}  \label{subsec:5-12}
There are 2 $zw$-order matrices of Type 2:
\beq
&\small{
\left(\!
\begin{pmatrix}
\{0,1\} & \{3\} & \{3\} & \{3\} & \{1\} \\
\{3\} & \{3\} & \{3\} & \{2\} & \{3\} \\
\{3\} & \{3\} & \{3\} & \{3\} & \{3\} \\
\{3\} & \{2\} & \{3\} & \{3\} & \{3\} \\
\{1\} & \{3\} & \{3\} & \{3\} & \{0,1\} 
\end{pmatrix}, 
\begin{pmatrix}
\{5\} & \{5\} & \{5\} & \{5\} & \{5\} \\
\{5\} & \{5\} & \{4\} & \{4\} & \{4\} \\
\{5\} & \{4\} & \{5\} & \{3\} & \{3\} \\
\{5\} & \{4\} & \{3\} & \{5\} & \{3\} \\
\{5\} & \{4\} & \{3\} & \{3\} & \{5\} 
\end{pmatrix} \! \right), } \\
&\small{
\left( \! \begin{pmatrix}
\{0\sim3\} & \{3\} & \{3\} & \{3\} & \{1\} \\
\{3\} & \!\{0\sim3\}\! & \{3\} & \{1\} & \{3\} \\
\{3\} & \{3\} & \!\{0\sim3\}\! & \{3\} & \{3\} \\
\{3\} & \{1\} & \{3\} & \!\{0\sim3\}\! & \{3\} \\
\{1\} & \{3\} & \{3\} & \{3\} & \{0\sim3\} 
\end{pmatrix}, 
\begin{pmatrix}
\{5\} & \{4\} & \{5\} & \{5\} & \{5\} \\
\{4\} & \{5\} & \{5\} & \{5\} & \{5\} \\
\{5\} & \{5\} & \{5\} & \{3\} & \{3\} \\
\{5\} & \{5\} & \{3\} & \{5\} & \{3\} \\
\{5\} & \{5\} & \{3\} & \{3\} & \{5\} 
\end{pmatrix} \! \right).
}
\eeq
For each one, we collect all main equations determined by the $zw$-order matrices and relations between separations. 
Then we make substitutions according to the cluster equations and apply the eliminating process,  
and eventually obtain two polynomial equations. 
Altogether, their factors include 
\begin{flalign*}\label{massrel:5-12}
\notag &\sqrt{m_3}\sqrt{m_4}-\sqrt{m_3}\sqrt{m_5}-\sqrt{m_4}\sqrt{m_5},\\
\notag &\sqrt{m_3}\sqrt{m_4}+\sqrt{m_3}\sqrt{m_5}-\sqrt{m_4}\sqrt{m_5},\\
\notag &\sqrt{m_3}\sqrt{m_4}-\sqrt{m_3}\sqrt{m_5}+\sqrt{m_4}\sqrt{m_5},\\
\notag &m_3m_4+m_3m_5+m_4m_5-2m_3\sqrt{m_4}\sqrt{m_5},\\
\notag &m_3m_4+m_3m_5+m_4m_5-2\sqrt{m_3}m_4\sqrt{m_5},\\
\notag &m_3m_4+m_3m_5+m_4m_5-2\sqrt{m_3}\sqrt{m_4}m_5,\\
           &m_3^3-(m_2+m_3+m_4)m_5^2, \\
\notag & m_1 m_3^3+m_2 m_3^3-m_1 m_3 m_5^2-m_1 m_4 m_5^2+m_2 m_5^3. 
\end{flalign*}
Six other longer factors are  (see Appendix~IV for details)
\begin{align*}
&\mu_{45}(m_2,m_3,m_4,m_5),\; \mu_{84}(m_1,m_2,m_3,m_4,m_5), \; \mu_{88}(m_2,m_3,m_4,m_5),\;  \\
&\mu_{111}(m_2,m_3,m_4,m_5),\; \mu_{221}(m_1,m_2,m_3,m_4,m_5), \; \mu_{374}(m_1,m_2,m_3,m_4,m_5). 
\end{align*}

\subsection{Diagram 19} 
This case is similar to the disconnected diagram in \S\ref{subsec:4-4}. 
The $zw$-diagram is covered by 9 $zw$-order matrices of Type 3: 
\beq
%% 1
\small{
\left(\! \left(\!\!
\begin{array}{ccccc}
\{0\}  & \{1\} & \{1\} & \{5\} & \{5\} \\
\{1\} & \{1\} & \{1\} & \{5\} & \{5\} \\
\{1\} & \{1\} & \{1\} & \{5\} & \{5\} \\
\{5\} &\{5\} & \{5\} & \{5\} & \{5\} \\
\{5\} &\{5\} & \{5\} & \{5\} & \{5\} \\
\end{array}
\!\!\right), 
 \left(\!\!
\begin{array}{ccccc}
\{5\} &\{5\} & \{5\} & \{5\} & \{5\} \\
\{5\} &\{5\} & \{5\} & \{5\} & \{5\} \\
\{5\} &\{5\} & \{0\} & \{1\} & \{1\} \\
\{5\} &\{5\} & \{1\} & \{1\} & \{1\} \\
\{5\} &\{5\} & \{1\} & \{1\} & \{1\} 
\end{array}
\!\!\right) \! \right),  } 
\eeq \vspace{-3mm}
%% 2
\beq
\small{
\left(\! \left(\!\!
\begin{array}{ccccc}
\{0\}  & \{1\} & \{1\} & \{5\} & \{5\} \\
\{1\} & \{1\} & \{1\} & \{5\} & \{5\} \\
\{1\} & \{1\} & \{1\} & \{5\} & \{5\} \\
\{5\} &\{5\} & \{5\} & \{5\} & \{5\} \\
\{5\} &\{5\} & \{5\} & \{5\} & \{5\} \\
\end{array}
\!\!\right),\left( \!\!
\begin{array}{ccccc}
\{5\} &\{5\} & \{5\} & \{5\} & \{5\} \\
\{5\} &\{5\} & \{5\} & \{5\} & \{5\} \\
\{5\} &\{5\} & \{1\} & \{1\} & \{1\} \\
\{5\} &\{5\} & \{1\} & \{0\} & \{1\} \\
\{5\} &\{5\} & \{1\} & \{1\} & \{1\} 
\end{array}
\!\! \right) \! \right), } 
\eeq  \vspace{-3mm}
%% 3
\beq
\small{
\left(\! \left(\!\!
\begin{array}{ccccc}
\{0\}  & \{1\} & \{1\} & \{5\} & \{5\} \\
\{1\} & \{1\} & \{1\} & \{5\} & \{5\} \\
\{1\} & \{1\} & \{1\} & \{5\} & \{5\} \\
\{5\} &\{5\} & \{5\} & \{5\} & \{5\} \\
\{5\} &\{5\} & \{5\} & \{5\} & \{5\} \\
\end{array}
\!\!\right), 
 \left(\!\!
\begin{array}{ccccc}
\{5\} &\{5\} & \{5\} & \{5\} & \{5\} \\
\{5\} &\{5\} & \{5\} & \{5\} & \{5\} \\
\{5\} &\{5\} & \{1\} & \{1\} & \{1\} \\
\{5\} &\{5\} & \{1\} & \{1\} & \{1\} \\
\{5\} &\{5\} & \{1\} & \{1\} & \{1\} 
\end{array}
\!\!\right) \! \right),  } 
\eeq  \vspace{-3mm}
%% 4
\beq
\small{
\left( \! \left(\!\!
\begin{array}{ccccc}
\{1\}  & \{1\} & \{1\} & \{5\} & \{5\} \\
\{1\} & \{1\} & \{1\} & \{5\} & \{5\} \\
\{1\} & \{1\} & \{0\} & \{5\} & \{5\} \\
\{5\} &\{5\} & \{5\} & \{5\} & \{5\} \\
\{5\} &\{5\} & \{5\} & \{5\} & \{5\} \\
\end{array}
\!\!\right), 
 \left(\!\!
\begin{array}{ccccc}
\{5\} &\{5\} & \{5\} & \{5\} & \{5\} \\
\{5\} &\{5\} & \{5\} & \{5\} & \{5\} \\
\{5\} &\{5\} & \{0\} & \{1\} & \{1\} \\
\{5\} &\{5\} & \{1\} & \{1\} & \{1\} \\
\{5\} &\{5\} & \{1\} & \{1\} & \{1\} 
\end{array}
\!\!\right) \! \right),  } 
\eeq  \vspace{-3mm}
%% 5
\beq
\small{
\left( \! \left(\!\!
\begin{array}{ccccc}
\{1\}  & \{1\} & \{1\} & \{5\} & \{5\} \\
\{1\} & \{1\} & \{1\} & \{5\} & \{5\} \\
\{1\} & \{1\} & \{1\} & \{5\} & \{5\} \\
\{5\} &\{5\} & \{5\} & \{5\} & \{5\} \\
\{5\} &\{5\} & \{5\} & \{5\} & \{5\} \\
\end{array}
\!\!\right), 
 \left(\!\!
\begin{array}{ccccc}
\{5\} &\{5\} & \{5\} & \{5\} & \{5\} \\
\{5\} &\{5\} & \{5\} & \{5\} & \{5\} \\
\{5\} &\{5\} & \{0\} & \{1\} & \{1\} \\
\{5\} &\{5\} & \{1\} & \{1\} & \{1\} \\
\{5\} &\{5\} & \{1\} & \{1\} & \{1\} 
\end{array}
\!\!\right) \! \right),  }  
\eeq  \vspace{-3mm}
%% 6
\beq
\small{
\left( \! \left(\!\!
\begin{array}{ccccc}
\{1\}  & \{1\} & \{1\} & \{5\} & \{5\} \\
\{1\} & \{1\} & \{1\} & \{5\} & \{5\} \\
\{1\} & \{1\} & \{0\} & \{5\} & \{5\} \\
\{5\} &\{5\} & \{5\} & \{5\} & \{5\} \\
\{5\} &\{5\} & \{5\} & \{5\} & \{5\} \\
\end{array}
\!\!\right), 
 \left(\!\!
\begin{array}{ccccc}
\{5\} &\{5\} & \{5\} & \{5\} & \{5\} \\
\{5\} &\{5\} & \{5\} & \{5\} & \{5\} \\
\{5\} &\{5\} & \{1\} & \{1\} & \{1\} \\
\{5\} &\{5\} & \{1\} & \{0\} & \{1\} \\
\{5\} &\{5\} & \{1\} & \{1\} & \{1\} 
\end{array}
\!\!\right) \! \right),   } 
\eeq  \vspace{-3mm}
%% 7
\beq
\small{
\left( \! \left(\!\!
\begin{array}{ccccc}
\{1\}  & \{1\} & \{1\} & \{5\} & \{5\} \\
\{1\} & \{1\} & \{1\} & \{5\} & \{5\} \\
\{1\} & \{1\} & \{1\} & \{5\} & \{5\} \\
\{5\} &\{5\} & \{5\} & \{5\} & \{5\} \\
\{5\} &\{5\} & \{5\} & \{5\} & \{5\} \\
\end{array}
\!\!\right), 
 \left(\!\!
\begin{array}{ccccc}
\{5\} &\{5\} & \{5\} & \{5\} & \{5\} \\
\{5\} &\{5\} & \{5\} & \{5\} & \{5\} \\
\{5\} &\{5\} & \{1\} & \{1\} & \{1\} \\
\{5\} &\{5\} & \{1\} & \{0\} & \{1\} \\
\{5\} &\{5\} & \{1\} & \{1\} & \{1\} 
\end{array}
\!\!\right) \! \right),   } 
\eeq  \vspace{-3mm}
%% 8
\beq
\small{
\left( \! \left(\!\!
\begin{array}{ccccc}
\{1\}  & \{1\} & \{1\} & \{5\} & \{5\} \\
\{1\} & \{1\} & \{1\} & \{5\} & \{5\} \\
\{1\} & \{1\} & \{0\} & \{5\} & \{5\} \\
\{5\} &\{5\} & \{5\} & \{5\} & \{5\} \\
\{5\} &\{5\} & \{5\} & \{5\} & \{5\} \\
\end{array}
\!\!\right), 
 \left(\!\!
\begin{array}{ccccc}
\{5\} &\{5\} & \{5\} & \{5\} & \{5\} \\
\{5\} &\{5\} & \{5\} & \{5\} & \{5\} \\
\{5\} &\{5\} & \{1\} & \{1\} & \{1\} \\
\{5\} &\{5\} & \{1\} & \{1\} & \{1\} \\
\{5\} &\{5\} & \{1\} & \{1\} & \{1\} 
\end{array}
\!\!\right) \!  \right),  } 
\eeq  \vspace{-3mm}
%% 9
\beq
\small{
\left( \! \left(\!\!
\begin{array}{ccccc}
\{1\}  & \{1\} & \{1\} & \{5\} & \{5\} \\
\{1\} & \{1\} & \{1\} & \{5\} & \{5\} \\
\{1\} & \{1\} & \{1\} & \{5\} & \{5\} \\
\{5\} &\{5\} & \{5\} & \{5\} & \{5\} \\
\{5\} &\{5\} & \{5\} & \{5\} & \{5\} \\
\end{array}
\!\!\right), 
 \left(\!\!
\begin{array}{ccccc}
\{5\} &\{5\} & \{5\} & \{5\} & \{5\} \\
\{5\} &\{5\} & \{5\} & \{5\} & \{5\} \\
\{5\} &\{5\} & \{1\} & \{1\} & \{1\} \\
\{5\} &\{5\} & \{1\} & \{1\} & \{1\} \\
\{5\} &\{5\} & \{1\} & \{1\} & \{1\} 
\end{array}
\!\!\right) \! \right).  }
\eeq
%They all produce the same $r$-order matrix
%\beq
%\small{
%\left(\!\!
%\begin{array}{ccccc}
%\{\} &\{3\} & \{3\} & \{5\} & \{5\} \\
%\{3\} &\{\} & \{3\} & \{5\} & \{5\} \\
%\{3\} &\{3\} & \{\} & \{3\} & \{3\} \\
%\{5\} &\{5\} & \{3\} & \{\} & \{3\} \\
%\{5\} &\{5\} & \{3\} & \{3\} & \{\} 
%\end{array}
%\!\! \right).  }
%\eeq

For each of these $zw$-order matrices, 
we collect a wedge equation and conditions of mass center, then make substitutions according to the cluster equations.  
Similar to discussions in \S\ref{subsec:4-4},  using the second subroutine in \S\ref{subsec:subroutines},  
for part of wedge equations we obtain a mass relation which never holds for positive masses:
\begin{equation*}\label{massrel:5-19.1}
(m_1^2-m_1m_2+m_2^2)(m_4^2-m_4m_5+m_5^2)=0; 
\end{equation*}
for some other wedge equations we obtain a mass relation which is equivalent to
\begin{equation*}\label{massrel:5-19.2}
m_1=m_2\quad\textrm{and}\quad m_4=m_5
\end{equation*}
for positive masses.

\subsection{Diagram 20}
This case is similar to the square diagram analyzed in \S\ref{subsec:4-5}. 
Again, we have just one Type~2 $zw$-order matrix to cover orders of the $zw$-diagram, 
\beq
\small{
\left(\! \begin{pmatrix}
\{0\sim4\}  & \{5\} & \{5\} & \{5\} & \{5\} \\
\{5\} & \{5\} & \{5\} & \{5\} & \{1\} \\
\{5\} & \{5\} & \{5\} & \{1\} & \{5\} \\
\{5\} &\{5\} & \{1\} & \{5\} & \{5\} \\
\{5\} &\{1\} & \{5\} & \{5\} & \{5\} 
\end{pmatrix},  
\begin{pmatrix}
\{0\sim4\}  & \{5\} & \{5\} & \{5\} & \{5\} \\
\{5\} & \{5\} & \{1\} & \{5\} & \{5\} \\
\{5\} & \{1\} & \{5\} & \{5\} & \{5\} \\
\{5\} & \{5\} & \{5\} & \{5\} & \{1\} \\
\{5\} & \{5\} & \{5\} & \{1\} & \{5\} 
\end{pmatrix} \! \right).
}
\eeq
The $zw$-order matrix obtained in \S\ref{subsec:4-5} consists of $4\times4$ blocks of the matrix above. 
Following exactly the same procedure as \S\ref{subsec:4-5}, we obtain the  mass relation $$m_2m_4=m_3m_5.$$ 

\subsection{Other diagrams}  \label{subsec:5-others}
For other diagrams, number 2,15,16,18, our program does not output any mass relation. 
In \cite{AK}, some of them were treated on a case-by-case basis, and remaining diagrams were ruled out by 
arguments that are similar to the one described in \S\ref{subsec:4-1}. 
A nice common feature of their remaining diagrams is that some simple dominant polynomial in distances, which they 
call {\em 4-product}, stays bounded for singular sequences approaching them. 
This ensures the existence of singular subsequence approaching one of other diagrams. 

%\newpage

% ----------------------------------------------------------------- %
\section{The case $n=6$} \label{sec:6bd}

Our main result for the six-body problem is

\begin{theorem}
There are at most 85 possible $zw$-diagrams for the planar six-body problem. 
Among them, at least 61 of them are impossible except for positive masses in a co-dimensional 2 variety of the mass space. 
\end{theorem}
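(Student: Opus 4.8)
\emph{Proof proposal.} The plan is to establish both assertions by executing the three algorithms of this paper for $n=6$ and certifying the cardinalities of their symbolic outputs. The argument is essentially algorithmic, so the real work is to arrange the computation so that it terminates within resources, to invoke the matrix rules of Section~\ref{sec:poly}'s predecessors as necessary conditions, and to audit the surviving count after each stage.

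First, to bound the number of diagrams I would execute Algorithm~I. Beginning with the $n\,2^{n(n-1)/2}$ symmetric binary matrices normalized by \eqref{eqn:41} and stratified by trace, I would strike out every $z$-matrix violating a monocolored matrix rule---the Rule of Column Sums, the First and Second Rules of Triangles, the First Rule of Quadrilaterals, the trace rules, and, crucially for $n=6$, the Rule of Trace-0 Principal Minors---then match the survivors against $w$-matrices through the Rule of Fully Connected Companions under normalization \eqref{eqn:42}, and finally thin the resulting $zw$-matrices with the bicolored rules, applying the inexpensive Rule of Circling first for efficiency. After deleting duplicates up to relabeling of bodies and the $z\leftrightarrow w$ exchange, this produces $117$ $zw$-diagrams. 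I would then feed each of these into Algorithm~II: select a Type~1 $zw$-order matrix from the circles and edges (Propositions~\ref{prop:rom1},~\ref{prop:rom6}), refine it to an optimal matrix by the principles for updating orders (Propositions~\ref{prop:rom2},~\ref{prop:rom3},~\ref{prop:rom4},~\ref{prop:rom7},~\ref{prop:rom8}), split it into Type~2 and then Type~3 collections, and at every split discard members failing a criterion for feasible order matrices (Propositions~\ref{prop:romc1},~\ref{prop:romc2},~\ref{prop:romc3},~\ref{prop:romc4},~\ref{prop:romc5}). A diagram is eliminated exactly when its feasible collection becomes empty; confirming that this removes all but $85$ diagrams gives the first assertion.

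Second, for each of the surviving $85$ diagrams I would run Algorithm~III to produce mass relations. For a fixed diagram I would collect the five families of equations of Section~\ref{sec:poly}---main equations (using a maximal $Z$- or $W$-order set only when it is \emph{certainly maximal}), mass-center conditions, cluster equations, relations between separations, and wedge equations such as \eqref{eqn:615}---substitute the cluster equations, clear the half-integer exponents through the auxiliary variables $u_{ij},v_{ij}$, and eliminate all position and separation variables by the elimination process of \S\ref{subsec:elimination} or by a Gr\"obner basis until a polynomial in the masses alone survives. By the chain of implications recalled in the Remarks on proof for finiteness, a singular sequence attached to such a diagram must satisfy this relation; after factoring, some factors have no positive zeros (the diagram is then excluded for all positive masses), while the remaining homogeneous factors, combined with the relations forced by the further singular sequences the first one produces, confine the admissible positive masses to a subvariety of codimension~2, following the combination of mass relations in \cite{AK} (exactly as in \S\ref{subsec:4-4}, where the wedge relation already reduces on positive masses to the two conditions $m_1=m_2$ and $m_3=m_4$). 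Verifying that $61$ of the $85$ diagrams admit such a confinement yields the second assertion.

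The main obstacle is computational feasibility rather than any single conceptual step. The intermediate populations are enormous---on the order of $10^6$ $zw$-matrices before the bicolored rules, and vastly larger Type~2 and Type~3 collections for densely edged diagrams, as the estimate in Example~\ref{exam:chooseom} already shows for a single isolated $zw$-triangle---so the delicate part is ordering the rules and the splits so that each elimination stays within memory, and, in Algorithm~III, choosing for each diagram the smallest set of equations (often a proper subset of the main equations together with the separation relations) that still eliminates down to a nontrivial mass polynomial. A further difficulty is that for diagrams whose maximal order sets are never certainly maximal the automatic pipeline produces nothing, and one must insert the hand-guided case analysis on the sets $M^{*}_{Z,i}$ and $M^{*}_{W,i}$ illustrated in \S\ref{subsec:5-4} and \S\ref{subsec:5-5}; it is precisely the diagrams for which neither the automated elimination nor such an analysis closes that remain among the $24$ unresolved cases.
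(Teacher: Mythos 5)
Your pipeline is the paper's own (Algorithms I--III run for $n=6$), but the way you certify the first assertion contains a genuine gap. You claim that the count of $85$ is obtained by running Algorithm~II and eliminating a diagram exactly when its feasible collection of order matrices becomes empty, and then ``confirming that this removes all but $85$ diagrams.'' That confirmation would fail: the emptiness criterion eliminates only $27$ of the $117$ diagrams, leaving $90$. The five further eliminations needed to reach $85$ are not of this form. Diagrams $1$, $10$, $59$, $60$ have \emph{nonempty} collections $\coll_2$; they are excluded only by a hand argument (\S\ref{subsec:6-1etc}) that uses the order matrices to pin down the sets $M_{W,i}^{*}$, derives a relation of the shape $w_{ij}=-(m_i+m_j)r_{ij}^{-3}w_{ij}$, and concludes $r_{ij}\approx 1$ in contradiction with the $r$-order matrix --- a step that is not part of the automated feasibility test and that essentially uses positivity of the masses. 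Similarly, diagram $116$ survives Algorithm~II entirely and is killed only by Algorithm~III, whose output $m_1m_4m_5+m_2m_3m_6=0$ has no positive solutions (\S\ref{subsec:6-Grobner}). So to prove ``at most $85$'' you must supplement the emptiness criterion with these positive-mass arguments; as written, your first step proves only ``at most $90$.''

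The structure of your second assertion is essentially the paper's: collect the five families of equations, substitute clusters, eliminate via the subroutines or Gr\"obner bases, and keep the diagrams whose surviving polynomial is a nontrivial homogeneous mass relation; and you correctly anticipate that for diagrams whose maximal order sets are not certainly maximal one must interpolate the manual case analysis on $M_{Z,i}^{*}$, $M_{W,i}^{*}$ in the style of \S\ref{subsec:5-4}--\S\ref{subsec:5-5}. Two bookkeeping points to make this airtight: the count $61$ arises as the $62$ diagrams for which Algorithm~III produces relations \emph{minus} diagram $116$ (already removed in the first part), so your two tallies must not double-count it; and the codimension-$2$ statement is not a consequence of a single relation per diagram alone --- it rests on the scaling invariance of masses together with the homogeneity of the relation, as the paper stipulates at the start of Section~\ref{sec:algorithm III}, so you should cite that convention rather than the multi-diagram intersection argument of Albouy--Kaloshin, which is needed for finiteness but not for the per-diagram claim made here.
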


Indeed, we obtained 117 $zw$-diagrams by Algorithm~I. 
See Figures~\ref{fig:6i},~\ref{fig:6ii},~\ref{fig:6iii},~\ref{fig:6iv}. 
Among them, there are 27 diagrams that were eliminated by Algorithm~II directly: 
\begin{quote}
NO. 21, 23, 24, 25, 26, 33, 34, 35, 37, 38, 39, 41, 42, 53, 54, 56, 68, 70, 72, 83, 85, 87, 91, 92, 96, 97, 113  %\smallskip
\end{quote}
and diagrams 1, 10, 59, 60 were eliminated by Algorithm~II indirectly. Another diagram, NO. 116, 
can be eliminated by Algorithm~III. See~\S\ref{subsec:6-Grobner} below.

We will demonstrate how our algorithms eliminates diagrams 1, 10, 59, 60 and generate mass relations 
for NO. 116 and 61 other diagrams, and that leaves 24 diagrams unsolved.  
Ideally, the finiteness conjecture can be tackled by the arguments in \S\ref{subsec:4-1},~\S\ref{subsec:5-others};
namely, to find a dominant polynomial that stays bounded for singular sequences approaching 
all of the 24 remaining diagrams. 
We do not know whether such a dominant polynomial exists. 
A more practical approach is probably to rule out some of them on a case-by-case basis, then find a suitable dominant polynomial afterwards. 
We have not finish this task, thereby leaving an incomplete puzzle for future investigations. 

\subsection{Frequently appeared factors}
Many mass relations observed in the previous two sections are commonplace for the case $n=6$. 
We introduce some abbreviated notations for them:
\beq
\mu_1(m_i,m_j,m_k) &=& \sqrt{m_i} \sqrt{m_j}-\sqrt{m_j} \sqrt{m_k} +\sqrt{m_k} \sqrt{m_i}, \\
\mu_2(m_i,m_j,m_k) &=& m_i m_j +m_j m_k+m_k m_i - 2 m_i \sqrt{m_j} \sqrt{m_k}, \\
\mu_3(m_i,m_j,m_k) &=& \mu_1(m_i,m_j,m_k)\,\mu_1(m_j,m_k,m_i)\,\mu_1(m_k,m_i,m_j), \\
\mu_4(m_i,m_j,m_k) &=& \mu_2(m_i,m_j,m_k)\,\mu_2(m_j,m_k,m_i)\,\mu_2(m_k,m_i,m_j), \\
\mu_5(m_i,m_j,m_k) &=& \mu_3(m_i,m_j,m_k)\,\mu_4(m_j,m_k,m_i). 
\eeq
Although factors $\mu_3$ and $\mu_4$ appear concurrently after the elimination process, 
we separate them because $\mu_3=0$ imposes real constraint for positive masses, 
while $\mu_4>0$ for any selection of positive masses (as observed in \S\ref{subsec:4-2}). 

Another factor we saw in \S\ref{subsec:4-3} has the form
\beq
-m_i^{3/4} m_j^{3/4}+m_i \sqrt{m_k}+ m_j \sqrt{m_k}.
\eeq
It can be replaced by a new factor:
\beq
  \mu_6(m_i,m_j,m_k) &=& (m_i m_j)^3- m_k^2 (m_i+m_j)^4. 
\eeq

Several longer factors also appear frequently in mass relations. % for several diagrams. 
They are abbreviated as $\mu_{15}$, $\mu_{21}$, $\mu_{33}$, $\mu_{45}$, $\mu_{84}$, $\mu_{88}$,  
$\mu_{111}$, $\mu_{221}$, $\mu_{374}$,
where subscripts indicate the number of terms in the factor. 
See Appendix~IV for details.

\subsection{Diagrams 77, 99$\sim$112, 114$\sim$117} \label{subsec:6-Grobner}
For these 19 diagrams, we collect main equations determined by the $zw$-matrix,  
make substitutions according to the cluster equations determined by some $zw$-order matrices, and then  
and use the function ``GroebnerBasis'' in Mathematica directly. 
They all result in simple mass relations.

For example, in diagram 107, through Algorithm~II  
we obtain an optimal  $zw$-order matrix of Type~1 to cover orders of positions and separations:
$$
\small{
\left(\! \begin{pmatrix}
\{5\} & \{5\} & \{3\} & \{5\} & \{3\} & \{2,3\} \\
\{5\} & \{5\} & \{5\} & \{1\} & \{5\} & \{5\} \\
\{3\} & \{5\} & \{5\} & \{5\} & \{1\} & \{3\} \\
\{5\} & \{1\} & \{5\} & \{5\} & \{5\} & \{5\} \\
\{3\} & \{5\} & \{1\} & \{5\} & \{5\} & \{3\} \\
\{2,3\} & \{5\} & \{3\} & \{5\} & \{3\} & \{5\} 
\end{pmatrix}, 
\begin{pmatrix}
\{5\} & \{3\} & \{5\} & \{5\} & \{3\} & \{5\} \\
\{3\} & \{5\} & \{5\} & \{5\} & \{1\} & \{5\} \\
\{5\} & \{5\} & \{5\} & \{3\} & \{5\} & \{3\} \\
\{5\} & \{5\} & \{3\} & \{5\} & \{5\} & \{1\} \\
\{3\} & \{1\} & \{5\} & \{5\} & \{5\} & \{5\} \\
\{5\} & \{5\} & \{3\} & \{1\} & \{5\} & \{5\} 
\end{pmatrix} \!\right).
}
$$
The first subroutine of \S\ref{subsec:subroutines} automatically carries out the following steps. 
From those lower order separations, we determine cluster equations 
$$ z_1=z_3=z_5=z_6, \; z_2=z_4, \; w_1=w_2=w_5, \; w_3=w_4=w_6. $$
According to these cluster equations, we substitute $z_1$ for $z_3$, $z_5$, and $z_6$, substitute $z_2$ for $z_4$, 
substitute $w_1$ for $w_2$ and $w_5$, substitute $w_3$ for $w_4$ and $w_6$. 
Then the function ``GroebnerBasis'' in Mathematica was used to obtain the mass relation 
$$m_4(m_1+m_5) \;= \; m_2(m_3+m_6).$$
One can implement Algorithm~II and III in Mathematica as follows (see Appendices~II,~III).\vspace{-2mm}   \\

\noindent
{\tt
In:=  FindOrder[111]; FindMassRel1[\,]  
} \\

There are a few exceptional cases. For diagrams~77,~102,~109,~117, we may follow other procedures in 
\S\ref{subsec:genmassrelation} to generate less restrictive relations. 
For example, applying our subroutine with Gr\"obner basis to diagram 81 results in the mass relation
$$
   (m_2 m_4-m_3 m_5) (m_1 m_5-m_2 m_6) (m_1 m_4-m_3 m_6)  \;=\; 0.
$$
If one pick $z_1$-, $z_2$-, $w_1$-, $w_2$-, $w_3$-, $w_4$-, $w_5$-equations determined by the optimal Type~1 $zw$-order matrix
and apply the third subroutine in \S\ref{subsec:subroutines}, then we find the less restrictive mass relation
$$
   m_2 m_4-m_3 m_5  \;=\; 0.
$$
The collection $\coll_2$ is denoted {\tt zwotype2} in our Mathematica program. 
The command line for this elimination process is \vspace{-2mm} \\

{\tt
\noindent
In:=  FindOrder[77];  \\
\indent
\quad\ Do[FindMassRel3[zwotype2[\![k]\!], \!\{1, \!2\}, \!\{1, \!2,\! 3, \!4, \!5\}], \\
\indent
\quad\  \qquad\qquad\qquad\ \{k,1, Length@zwotype2\}]  
} \\

See table~\ref{table:simplemassrel6} for main equations selected for other exceptional cases.

Another exceptional case is diagram~116. This is a special case where the subroutine with Gr\"obner basis results in mass relation 
$$
  m_1m_4m_5+m_2m_3m_6\;=\;0.
$$
Since there is no positive mass satisfying this relation, we can eliminate this diagram. 
This can be also done by the third subroutine in  \S\ref{subsec:subroutines} by picking 
$z_1$-, $z_2$-, $w_1$-, $w_2$-, $w_3$-, $w_4$-, $w_5$, $w_6$--equations. 
The outcome would be $\{1\}$ because the subroutine eliminates a factor in masses if terms  in there have constant sign.

\subsection{Diagrams 4, 7, 9, 12, 15, 17, 19, 28, 30, 76} \label{subsec:6-4}
These 10 cases are similar to the fully-edged isolated triangle in the case $n=4$. 
We collect some $z_i$-, $w_i$-equations determined by the $zw$-order matrix and relations between separations, 
make substitutions according to the cluster equations determined by some $zw$-order matrices, and then   
apply the eliminating process through the third subroutine in \S\ref{subsec:subroutines}.

%\noindent - - - - - - - - - - - - - - - - - - - - -  -- - - - - - - - - - -  - - - - - - - - - - - - - - - - - - - - - 
{\small
\begin{tabular}[thp]{|p{5.5mm}| p{91mm} | p{5.5mm} |p{29mm}| } 
\hline
{\bf NO}     & {\bf Mass Relations} & $|\coll_2|$ &{\bf  Method}  \\
\hline \hline 
4   &  $(m_1-m_6)\mu_5(m_1,m_2,m_6)=0$ & 1 &  $(1,2;1,2)$  \\                             
\hline
7   &  $\mu_5(m_4,m_5,m_6)=0$ &  200 &  $(5,6;5,6)$  \\      % Better than 07.pdf. Take very long time.   Is  $|\coll_2|=1607$? Confirm it.                    
\hline
9   &  $\mu_5(m_4,m_5,m_6)=0$ & 9 & $(4,5,6;4,5,6)$   \\       % Better than 09.pdf                     
\hline
12   & $\mu_5(m_4,m_5,m_6)=0$ & 5 &  $(5,6;5,6)$  \\        % Better than 12.pdf                       
\hline
15   & $\mu_5(m_4,m_5,m_6)=0$ & 24 &  $(5,6;5,6)$  \\     % Better than 15.pdf                         
\hline
17   & $\mu_5(m_4,m_5,m_6)=0$ & 7 & $(4,5,6;4,5,6)$   \\   % Better than 17.pdf                           
\hline
19   & $\mu_5(m_4,m_5,m_6)=0$ & 5 &  $(5,6;5,6)$   \\  % Better than 19.pdf                           
\hline
28   & $\mu_5(m_4,m_5,m_6)=0$  & 3 & $(5,6;5,6)$    \\  % Better than 28.pdf. Using $(2,3;2,3)$  does not result in mass relation                      
\hline
30   &  $\mu_5(m_4,m_5,m_6)=0$  & 1 & $(4,5,6;4,5,6)$   \\  
\hline
76   & $\mu_5(m_1,m_2,m_3)=\mu_5(m_4,m_5,m_6)=0$ & 9  & \small{$(1,2;1,2),(5,6;5,6)$}   \\      % Better than 80.pdf                       
\hline
77   & $m_2m_4=m_3m_5$ &  1 & $(1,2;1,...,5)$  \\    
\hline
%\hline
99   & $m_3m_5=m_4m_6$ &  5 & Gr\"obner basis  \\                             
\hline
100   & $m_3m_5=m_4m_6$ &  10 & Gr\"obner basis \\                             
\hline
101   & $m_3m_5=m_4m_6$ &  4 & Gr\"obner basis  \\                             
\hline
102   & $m_2m_4=m_1m_5$ &  1 &  $(1,2;1,...,5)$  \\     
\hline
103   & $m_1m_4=(m_2+m_5)(m_3+m_6)$ &  1 & Gr\"obner basis  \\                             
\hline
104   & $m_1m_4=m_2m_6$ &  1 & Gr\"obner basis  \\                             
\hline
105   & $m_1m_4=(m_2+m_5)(m_3+m_6)$ &  1 & Gr\"obner basis  \\                             
\hline
106   & $m_4(m_1+m_5)=m_2(m_3+m_6)$ &  2 & Gr\"obner basis  \\                             
\hline
107   & $m_4(m_1+m_5)=m_2(m_3+m_6)$ &  2 & Gr\"obner basis  \\                             
\hline
108   & $m_4(m_1+m_5)=m_2(m_3+m_6)$ &  1 & Gr\"obner basis  \\                             
\hline
109   & $m_1m_3=m_2m_4$ &  1 & $(1,2;1,...,5)$   \\                                     
\hline
110   & $m_1m_3=(m_2+m_5)(m_4+m_6)$  &  1 & Gr\"obner basis  \\                             
\hline
111   & $m_4(m_2+m_5)=m_1(m_3+m_6)$  &  2 & Gr\"obner basis \\                             
\hline
112   & $m_4(m_2+m_5)=m_1(m_3+m_6)$ &  1 & Gr\"obner basis  \\                             
\hline
114   & $m_1m_3=m_2(m_4+m_5+m_6)$ &  1 & Gr\"obner basis  \\                             
\hline
115   & $m_1m_3=m_2(m_4+m_5+m_6)$ &  1 & Gr\"obner basis  \\                             
\hline
116   & $m_1m_4m_5+m_2m_3m_6=0$ &  1 & Gr\"obner basis  \\                             
\hline
117   & $m_1m_3=m_2m_4$  &  1 & $(1,...,4;1,...,6)$   \\                                             
\hline
\end{tabular}    % \smallskip
\captionof{table}{Some diagrams with simple mass relations.    %% 30 diagrams
The method is either by Gr\"obner basis or by manually choosing main 
equations. Notation $(i_1,...,i_k; j_1,...,j_l)$ means the selection $z_{i_1},...,z_{i_k}$, $w_{j_1},...,w_{j_l}$.
For instance,  $(2,3;4,5)$ means $z_2,z_3,w_4,w_5$. } \label{table:simplemassrel6}  
}

%\clearpage

For example, in diagram 4, there is a fully-edged triangle formed by bodies $1,2,6$, 
and bodies 1,2 are only adjacent to each other and body 6. 
Based on this feature, we collect $z_1$-, $z_2$-, $w_1$-, $w_2$-equations determined by the $zw$-matrix and relations between 
separations. Then we make substitutions according to the cluster equations determined by some $zw$-order matrices, apply the 
eliminating process, and obtain the mass relation 
$$
 (m_1-m_6)\mu_5(m_1,m_2,m_6)=0
$$
By collecting more equations, say $z_i$-, $w_i$-equations for $i\in\{2,3,4,5,6\}$, 
a less restrictive mass relation without the first factor can be obtained. 

Diagram~76 is a special case which contains two isolated fully-edged triangles. We obtain a finer mass relation by applying the 
same procedure twice, once for bodies of one triangle, and once for bodies of the other triangle. 

An exceptional case worth-mentioning is diagram~7. 
Algorithm~II quickly outputs the optimal $zw$-order matrix of Type~1 for its $zw$-matrix: 
$$
\footnotesize{
\left(\!\! \begin{pmatrix}
\{0\!\sim\!4\}\! & \!\!\{2\!\sim\!4\}\!\! & \!\!\{2\!\sim\!4\}\!\! & \!\!\{2\!\sim\!4\}\!\! & \!\!\{2\!\sim\!4\}\!\! & \!\{2\!\sim\!4\} \\
\{2\!\sim\!4\}\! & \!\!\{0\!\sim\!4\}\!\! & \!\!\{2\!\sim\!4\}\!\! & \!\!\{2\!\sim\!4\}\!\! & \!\!\{2\!\sim\!4\}\!\! & \!\{2\!\sim\!4\} \\
\{2\!\sim\!4\}\! & \!\!\{2\!\sim\!4\}\!\! & \!\!\{0\!\sim\!4\}\!\! & \!\!\{2\!\sim\!4\}\!\! & \!\!\{2\!\sim\!4\}\!\! & \!\{2\!\sim\!4\} \\
\{2\!\sim\!4\}\! & \!\!\{2\!\sim\!4\}\!\! & \!\!\{2\!\sim\!4\}\!\! & \!\!\{0\!\sim\!4\}\!\! & \{3\} & \{3\} \\
\{2\!\sim\!4\}\! & \!\!\{2\!\sim\!4\}\!\! & \!\!\{2\!\sim\!4\}\!\! & \{3\} & \!\!\{0\!\sim\!4\}\!\! & \{3\} \\
\{2\!\sim\!4\}\! & \!\!\{2\!\sim\!4\}\!\! & \!\!\{2\!\sim\!4\}\!\! & \{3\} & \{3\} & \{0\!\sim\!4\} 
\end{pmatrix}\!\!,\!\! 
\begin{pmatrix}
\{0\!\sim\!4\}\! & \!\!\{2\!\sim\!4\}\!\! & \!\!\{2\!\sim\!4\}\!\! & \!\!\{2\!\sim\!4\}\!\! & \!\!\{2\!\sim\!4\}\!\! & \!\{2\!\sim\!4\} \\
\{2\!\sim\!4\}\! & \!\!\{0\!\sim\!4\}\!\! & \!\!\{2\!\sim\!4\}\!\! & \!\!\{2\!\sim\!4\}\!\! & \!\!\{2\!\sim\!4\}\!\! & \!\{2\!\sim\!4\} \\
\{2\!\sim\!4\}\! & \!\!\{2\!\sim\!4\}\!\! & \!\!\{0\!\sim\!4\}\!\! & \!\!\{2\!\sim\!4\}\!\! & \!\!\{2\!\sim\!4\}\!\! & \!\{2\!\sim\!4\} \\
\{2\!\sim\!4\}\! & \!\!\{2\!\sim\!4\}\!\! & \!\!\{2\!\sim\!4\}\!\! & \!\!\{0\!\sim\!4\}\!\! & \{3\} & \{3\} \\
\{2\!\sim\!4\}\! & \!\!\{2\!\sim\!4\}\!\! & \!\!\{2\!\sim\!4\}\!\! & \{3\} & \!\!\{0\!\sim\!4\}\!\! & \{3\} \\
\{2\!\sim\!4\}\! & \!\!\{2\!\sim\!4\}\!\! & \!\!\{2\!\sim\!4\}\!\! & \{3\} & \{3\} & \{0\!\sim\!4\} 
\end{pmatrix} \!\!\right)\!.
}
$$
Before entering step~(II.2), there are too much uncertainties in orders of positions and separations;  
the total count of its Type~2 sub-matrices is about $2.82\times 10^{11}$. 
It cost much time and memory space for the implementation of step~(II.2) to select the 
collection $\coll_2$ of Type-2 $zw$-order matrices. 

Handling this diagram by main equations from the optimal $zw$-order matrix of Type~1, 
instead of from $\coll_2$ or $\coll_3$, would be much more efficient. 
Fortunately polynomial equations collected this way is sufficient to proceed the elimination process. 
The function ``{\tt FindOrder}''  in our program has an option on time constraint. 
Step~(II.1) is very fast, so one may restrict a short time, say 2 seconds, and give up searching
the collection $\coll_2$.   
For the implementation of Algorithm~III using only the optimal $zw$-order matrix of Type~1, denoted  
{\tt \{zo1,wo1\}} in our program, one can use the third subroutine of  \S\ref{subsec:subroutines} 
and the command line below to find the mass relation. \smallskip   

\noindent
{\tt
In:=  FindOrder[7,2]; FindMassRel3[\{zo1,wo1\}, \{5, 6\}, \{5, 6\}]  
}

\subsection{Diagrams 31, 32, 43, 44, 48, 49, 51, 55, 57, 58, 78, 95} 
These 12 cases are similar to the kite diagram in the case $n=4$. 
We collect $z_i$-, $z_j$-, $w_i$-, $w_j$-equations determined by the $zw$-matrix and relations between 
separations. Then we make substitutions according to the cluster equations determined by some $zw$-order matrices, 
apply the eliminating process, and obtain mass relations similar to those in \S\ref{subsec:4-3}.  

For example, in diagram 31, the optimal $zw$-order matrix of Type~1 is
$$
\small{
\left(\!\! \begin{pmatrix}
\{2\sim4\}\! & \!\!\{2\sim4\}\!\! & \!\!\{2\sim4\}\!\! & \!\!\{2\sim4\}\!\! & \!\!\{2\sim4\}\!\! & \!\{2\sim4\} \\
\{2\sim4\}\! & \!\!\{2\sim4\}\!\! & \!\!\{2\sim4\}\!\! & \!\!\{2\sim4\}\!\! & \!\!\{2\sim4\}\!\! & \!\{2\sim4\} \\
\{2\sim4\}\! & \!\!\{2\sim4\}\!\! & \{0,1\} & \{3\} & \{3\} & \{1\} \\
\{2\sim4\}\! & \!\!\{2\sim4\}\!\! & \{3\} & \{3\} & \{3\} & \{3\} \\
\{2\sim4\}\! & \!\!\{2\sim4\}\!\! & \{3\} & \{3\} & \{3\} & \{3\} \\
\{2\sim4\}\! & \!\!\{2\sim4\}\!\! & \{1\} & \{3\} & \{3\} & \{0,1\} 
\end{pmatrix}\!\!,\!\! 
\begin{pmatrix}
\{0\sim4\}\! & \!\!\{2\sim4\}\!\! & \!\{5\}\! & \!\{5\}\! & \!\{5\}\! & \!\{5\} \\
\{2\sim4\} & \!\!\{0\sim4\}\!\!   & \!\{5\}\! & \!\{5\}\! & \!\{5\}\! & \{5\} \\
\{5\} & \!\!\{2\sim4\}\!\!           & \!\{5\}\! & \!\{5\}\! & \!\{5\}\! & \{5\} \\
\{5\} & \!\!\{2\sim4\}\!\!           & \!\{5\}\! & \!\{5\}\! & \!\{3\}\! & \{3\} \\
\{5\} & \!\!\{2\sim4\}\!\!           & \!\{5\}\! & \!\{3\}\! & \!\{5\}\! & \{5\} \\
\{5\} & \!\!\{2\sim4\}\!\!           & \!\{5\}\! & \!\{3\}\! & \!\{5\}\! & \{5\} 
\end{pmatrix} \!\!\right)\!.
}
$$
Take out lower right $4\times4$ principal minors we obtain the Type~2 $zw$-order matrix for the kite diagram in \S\ref{subsec:4-3}. 
Step~(II.2) of Algorithm~II results in 22 $zw$-order matrices of Type~2. The $r$-order matrix they produced is 
\beq
\small{
\left(\!\!
\begin{array}{cccccc}
 \{\} & \!\{2\sim4\}\! & \{4\} & \{4\} & \{4\} & \{4\}  \\
 \{2\sim4\} & \{\} & \{4\} & \{4\} & \{4\} & \{4\} \\
 \{4\} & \{4\}     & \{\}   & \{4\} & \{4\} & \{3\} \\
 \{4\} & \{4\}     & \{4\} & \{\}   & \{1\} & \{1\} \\
 \{4\} & \{4\}     & \{4\} & \{1\} & \{\}   & \{1\} \\
 \{4\} & \{4\}     & \{3\} & \{1\} & \{1\} & \{\} \\
\end{array}
\!\! \right)\!. 
}
\eeq

There is a fully-edge triangle with vertices $4,5,6$, they are all $w$-circled, 
bodies 4,5 are only adjacent to each other and body 6. 
Based on these features, we collect $z_4$-, $z_5$-, $w_4$-, $w_5$-equations 
determined by the $zw$-matrix and relations between separations. 
Using the third subroutine in \S\ref{subsec:subroutines}, we find the mass relation 
\begin{eqnarray}
 (m_4^3-m_4 m_6^2-m_5 m_6^2)\, \mu_5(m_4,m_5,m_6)\, \mu_6(m_4,m_5,m_6) & &   \label{eqn:mr6-31} \\
 \mu_{15}(m_4,m_5,m_6)\, \mu_{21}(m_4,m_5,m_6)\mu_{33}(m_4,m_5,m_6)&=& 0 \nonumber  
\end{eqnarray}

Following this procedure for all of the 12 diagrams in this subsection, except diagram~95, we find exactly the same mass relation. 
The fully-edged triangle in diagram~95 has vertices $2,3,4$, by choosing $z_1$-, $z_2$-, $w_1$-, $w_2$-equations 
we acquire the same mass relation except subindices $(4,5,6)$ are replaced by $(2,3,4)$. 

A more restrictive mass relation can be obtained for diagram~78 because there are two fully-edge triangles in there. 
One can easily obtain an additional mass relation that is the same as (\ref{eqn:mr6-31}) except subindices $(4,5,6)$ 
are replaced by $(1,2,3)$.

\subsection{Diagrams 2, 20, 22, 36, 40, 45, 46, 50, 61, 62, 84, 86}  \label{subsec:6-2etc}
For these 12 diagrams, we apply the same procedure as  diagram 12 in the case $n=5$. 
For each Type~2 $zw$-order matrix in $\coll_2$, or Type~3  $zw$-order matrix in $\coll_3$, 
we determine main equations and cluster equations by 
the $zw$-order matrix, and collect part of main equations and relations between separations. 
Then we make substitutions according to cluster equations determined by the $zw$-order matrices and apply the 
eliminating process to find mass relations. 
Different members in $\coll_2$ may result in different mass relations. The union of their zero loci is the mass relation we acquire.

Table~\ref{table:moremassrel6} contains mass relations for most diagrams within this category, excluded cases will be discussed in below. 
Several diagrams in this family result in special mass relations that do not appear in other diagrams. 
For brevity we do not list details of some long factors. 
Readers may recover them by following our instructions on the selections of $z_i$-, $w_i$-equations. 
%instructions  in the right-most column of Table~\ref{table:moremassrel6}. 

%\noindent - - - - - - - - - - - - - - - - - - - - -  -- - - - - - - - - - -  - - - - - - - - - - - - - - - - - - - - - 
{\small
\begin{tabular}[th]{|p{5mm}| p{102.5mm} | p{4.6mm} |p{22mm}| } 
\hline
{\bf NO}     & {\bf Mass Relations} & $|\coll_2|$ &{\bf  Method}  \\
\hline \hline 
\multirow{2}{*}{2}    &  
\scriptsize{$\mu_5(m_1,m_3,m_4) (3 m_1^2 m_3^2-5 m_1^2 m_4^2-3 m_3^2 m_4^2)(m_1^2 m_3^2+m_1^2 m_4^2-m_3^2 m_4^2) 
= \mu_5(m_2,m_5,m_6) (3 m_2^2 m_5^2-5 m_2^2 m_6^2-3 m_5^2 m_6^2)(m_2^2 m_5^2+m_2^2 m_6^2-m_5^2 m_6^2) = 0$} 
& \multirow{2}{*}{2}  &  $(1,3,4;1,3,4)$,  $(2,5,6;2,5,6)$    \\       
\hline
%23?   &    & 1 &   \\      
%\hline
25   &  \scriptsize{$\mu_5(m_4,m_5,m_6)(5m_4^2 m_5^2- 3m_4^2 m_6^2+ 3m_5^2 m_6^2)=0$} & 1 & $(4,5;4,5)$   \\     % Done                      
\hline
\multirow{2}{*}{36}    &  
\scriptsize{$(m_2m_4^3+m_3m_4^3-m_2m_4m_6^2-m_2m_5m_6^2+m_3m_6^3) \mu_5(m_4,m_5,m_6)  
\mu_{84}(m_2,m_3,$  $m_4,m_5,m_6)  \mu_{221}(m_2,m_3,m_4,m_5,m_6) \mu_{374}(m_2,m_3,m_4,m_5,m_6)=0$ } 
& \multirow{2}{*}{4} &  \multirow{2}{*}{$(4,5;2,...,6)$}    \\     
\hline
\multirow{3}{*}{40}    &  
\scriptsize{$(m_4^3 -(m_2+m_4+m_5) m_6^2) \mu_5(m_4,m_5,m_6) \mu_{45}(m_2,m_4,m_5,m_6)\qquad\qquad\quad$ 
$\mu_{84}(m_3,m_2,m_4,m_5,m_6) \mu_{88}(m_2,m_4,m_5,m_6) \mu_{111}(m_2,m_4,m_5,m_6)$ 
$\mu_{221}(m_3,m_2,m_4,m_5,m_6) 
\mu_{374}(m_3,m_2,m_4,m_5,m_6)  = 0$} 
& \multirow{3}{*}{2}  &  \multirow{3}{*}{$(4,5;1,...,6)$}     \\       
\hline
46   & \scriptsize{$(m_2m_4-m_3m_5) (m_1m_5-m_2m_6) =0$} & 9 &  $(4,5;1,...,6)$  \\                             
\hline
61   &  \scriptsize{$m_1(m_3+m_5)=m_2(m_4+m_6)$} &  3 &  $(4,5;1,...,6)$   \\                             
\hline
62   &  \scriptsize{$m_1(m_3+m_5)=m_2(m_4+m_6)$} &  3 & $(4,5;1,...,6)$   \\                             
\hline
84   &  \footnotesize{$m_1m_3=m_2m_4$} &  8 & $(4,5;1,...,6)$  \\                                
\hline
\multirow{2}{*}{86}    &  \scriptsize{$(m_2^3-m_2 m_4^2-m_3 m_4^2)  \mu_5(m_2,m_3,m_4) \mu_6(m_2,m_3,m_4)
\mu_{15}(m_2,m_3,m_4)\qquad\qquad$ $\mu_{21}(m_2,m_3,m_4)\mu_{33}(m_2,m_3,m_4)=0 $} 
& \multirow{2}{*}{1} &  \multirow{2}{*}{$(1,...,6;1,...,6)$}    \\     
\hline
\end{tabular}    % \smallskip
\captionof{table}{More diagrams and their mass relations.     } \label{table:moremassrel6}  
}
\normalsize

Consider diagram~20. We obtain one Type~2 $zw$-order matrix to cover orders of positions and separations:
\beq &  
\small{
\left(\!\! \begin{pmatrix}
\{3\} & \{3\} & \{3\} & \{3\} & \{3\} & \{1\} \\
\{3\} & \{3\} & \{3\} & \{3\} & \{1\} & \{3\} \\
\{3\} & \{3\} & \!\{0,1\}\! & \{1\} & \{3\} & \{3\} \\
\{3\} & \{3\} & \{1\} & \!\{0,1\}\! & \{3\} & \{3\} \\
\{3\} & \{1\} & \{3\} & \{3\} & \{3\} & \{3\} \\
\{1\} & \{3\} & \{3\} & \{3\} & \{3\} & \{3\} 
\end{pmatrix}\!\!,\!\! 
\begin{pmatrix}
\{5\} & \{4\} & \{5\} & \{5\} & \{5\} & \{5\} \\
\{4\} & \{5\} & \{5\} & \{5\} & \{5\} & \{5\} \\
\{5\} & \{5\} & \{5\} & \{5\} & \{5\} & \{5\} \\
\{5\} & \{5\} &\{5\} & \!\{0\!\sim\!4\}\! & \{3\} & \{3\} \\
\{5\} & \{5\} & \{5\} & \{3\} & \!\{0\!\sim\!4\}\!& \{3\} \\
\{5\} & \{5\} & \{5\} & \{3\} & \{3\} & \{0\!\sim\!4\}
\end{pmatrix} \!\!\right)\!. 
} 
\eeq
This is not the ``optimal'' Type~1 $zw$-order matrix from step~(II.1), with which 
we do not find any mass relation from our program. 
It produces the $r$-order matrix 
\beq
\small{
\left(\!\!
\begin{array}{cccccc}
 \{\} & \!\{2\sim4\}\! & \{4\} & \{4\} & \{4\} & \{3\}  \\
 \{2\sim4\}\! & \{\} & \{4\} & \{4\} & \{3\} & \{4\} \\
\{4\} & \{4\}    & \{\}   & \{3\} & \{4\} & \{4\} \\
 \{4\} & \{4\}     & \{3\} & \{\}   & \{1\} & \{1\} \\
 \{4\} & \{3\}     & \{4\} & \{1\} & \{\}   & \{1\} \\
 \{3\} & \{4\}     & \{4\} & \{1\} & \{1\} & \{\} \\
\end{array}
\!\! \right)\!. 
}
\eeq

The fully-edged triangle with vertices 4, 5, 6 suggest choosing $z_i$-, $w_i$-equations with $i\in \{4,5,6\}$
to generate mass relations. However, our algorithm does not produce any mass relation with this collection of main equations. 
By taking all $z_i$- and $w_i$-equations into consideration,  
we are able to obtain the mass relation $f_{6,20}=0$, where $f_{6,20}$ has the following factors 
\beq
&& \mu_5(m_4,m_5,m_6), \\
&& m_1 m_4^2+m_2 m_4^2-m_1 m_6^2, \\
&& m_1^3 m_2^3 m_4^6 m_5^6+3 m_1^2 m_2^4 m_4^6 m_5^6+3 m_1 m_2^5 m_4^6 m_5^6+m_2^6 m_4^6 m_5^6+ \cdots \text{(20 other terms)}, \\
&& m_1^3 m_2^3 m_4^6 m_5^6+3 m_1^2 m_2^4 m_4^6 m_5^6+3 m_1 m_2^5 m_4^6 m_5^6+m_2^6 m_4^6 m_5^6+  \cdots \text{(26 other terms)}, \\
&& 3 m_1^4 m_4^8 m_5^4+3 m_1^3 m_2 m_4^8 m_5^4-9 m_1^2 m_2^2 m_4^8 m_5^4-15 m_1 m_2^3 m_4^8 m_5^4- \cdots \text{(38 other terms)}.
\eeq

The elimination process for diagrams~40, 45 and 50 are time consuming due to the presence of too many leading order terms. 
Diagram~45 and 50 results in several special mass factors. 

In diagram 45, we have $|\coll_2|=3$, $|\coll_3|=13$. Using members in $\coll_2$
and $z_4$-, $z_5$-equations and all $w_i$-equations,  we find mass relation $f_{6,45}=0$ with 
the following factors 

\small{
\beq
&& m_4^3-(m_3+m_4+m_5)m_6^2 \\ %1st
&& m_4^3+m_1 m_6^2 + m_2 m_6^2 + m_6^3 ,  \\ % 3rd \\
&& \mu_5(m_4,m_5,m_6) \\
&&  -m_1 m_4^3 - m_3 m_4^3 - m_2 m_3 m_6^2 + m_1 m_4 m_6^2 + m_1 m_5 m_6^2 - m_3 m_6^3,  \\ % 2nd
&& \mu_{45}(m_3,m_4,m_5,m_6), \\ % 1st
&& m_4^{11} m_5^2 - m_4^{11} m_6^2 + 3 m_1 m_4^8 m_5^2 m_6^2 + 3 m_2 m_4^8 m_5^2 m_6^2 - \cdots \text{(56 other terms)},  \\ % 3rd \\
&& \mu_{88}(m_3,m_4,m_5,m_6), \\ %1st
&& \mu_{111}(m_3,m_4,m_5,m_6), \\ %1st
&& -m_1^{3} m_4^{11}- m_5^{2} - 3 m_1^2 m_3 m_4^{11} m_5^2- 3 m_1 m_3^2 m_4^{11} m_5^2 - m_3^3 m_4^{11}m_5^2 + \cdots \text{(135 other terms)}, \\ %2nd
&& 9 m_1 m_4^{11} m_5^4 + 9 m_2 m_4^{11} m_5^4 + 6 m_4^{12} m_5^4 + 9 m_4^{11} m_5^4 m_6 - \cdots \text{(158 other terms)},  \\ % 3rd\\
&&  m_1^3 m_4^9 m_5^6+3 m_1^2 m_2 m_4^9 m_5^6+3 m_1 m_2^2 m_4^9 m_5^6+m_2^3 m_4^9 m_5^6 + \cdots \text{(304 other terms)},  \\ % 3rd\\
&& 9 m_1^3 m_2 m_3  m_4^{11} m_5^4 + 27 m_1^{2} m_2 m_3^2 m_4^{11} m_5^4 + 27 m_1 m_2 m_3^3 m_4^{11} m_5^4 + 9 m_2 m_3^{4} m_4^{11} m_5^4  \\ 
&& \qquad - \cdots \text{(420 other terms)},  \\ % 2nd
&& m_1^3 m_2^3 m_3^3 m_4^9 m_5^6+3 m_1^2 m_2^3 m_3^4 m_4^9 m_5^6+3 m_1 m_2^3 m_3^5 m_4^9 m_5^6+m_2^3 m_3^6 m_4^9 m_5^6 + \cdots \text{(967 other terms)}.  \\ % 2nd 
\eeq
}
\normalsize

In diagram 50, we have $|\coll_2|=4$, $|\coll_3|=27$. Using members in $\coll_2$
and $z_4$-, $z_5$-equations and all $w_i$-equations,  we find mass relation $f_{6,50}=0$ with 
the following factors 

\small{
\beq
&& \mu_5(m_4,m_5,m_6), \\
&& m_4^3 - m_2 m_6^2 - m_3 m_6^2 - m_4 m_6^2 - m_5 m_6^2 \\ % From third Type 2 order matrix\\
&& m_1m_4^3+m_2m_4^3-m_1m_3m_6^2-m_1m_4m_6^2 -m_1m_5m_6^2 + m_2m_6^3, \\ % From second Type 2 order matrix
&&  m_1 m_4^3 + m_2 m_4^3 + m_3 m_4^3 - m_1 m_4 m_6^2 - m_1 m_5 m_6^2 + m_2 m_6^3 + m_3 m_6^3,  \\ % From fourth Type 2 order matrix\\
&& m_4^{11} m_5^2- m_4^{11} m_6^2 + 3m_1m_4^8 m_5^2 m_6^2- m_4^9 m_5^2 m_6^2 + \cdots \text{(26 other terms)}, \\ %From 1st
&& 9 m_1 m_4^{11} m_5^4 + 6 m_4^{12} m_5^4 + 9 m_4^{11} m_5^4 m_6 -  30 m_1 m_4^{11} m_5^2 m_6^2 - \cdots \text{(68 other terms)},  \\%From 1st
&& m_4^{11} m_5^2 - m_4^{11} m_6^2 - 3 m_2 m_4^8 m_5^2 m_6^2 - 3 m_3 m_4^8 m_5^2 m_6^2 + \cdots \text{(79 other terms)},  \\ % From third Type 2 order matrix\\
&&  \mu_{111}(m_1,m_4,m_6,m_5),  \\ % From first Type 2 order matrix\\
&& m_1^3 m_4^{11} m_5^2 + 3 m_1^2 m_2 m_4^{11} m_5^2 + 3 m_1 m_2^2 m_4^{11} m_5^2 +  m_2^3 m_4^{11} m_5^2 + \cdots \text{(135 other terms)},  \\ % From second Type 2 order matrix\\
&& 9 m_2 m_4^{11} m_5^{4} + 9 m_3 m_4 m_5^4 + 3 m_4^{12} m_5^4 + 9 m_4^{11} m_5^5 + \cdots \text{(183 other terms)},  \\ % From third Type 2 order matrix\\
&&  m_1^3 m_4^{11} m_5^2 + 3 m_1^2 m_2 m_4^{11} m_5^2 + 3 m_1 m_2^2 m_4^{11} m_5^2 + m_2 m_4^{11} m_5^2 + \cdots \text{(191 other terms)},  \\ % From fourth Type 2 order matrix\\
&& - m_2^3 m_4^9 m_5^{6} - 3 m_2^2 m_3 m_4^{9} m_5^6 - 3 m_2 m_3^2 m_4^{9} m_5^6 - m_3^3 m_4^{9} m_5^6 + \cdots \text{(304 other terms)},  \\ % From third Type 2 order matrix\\
&& 9 m_1^4 m_3 m_4^{11} m_5^4 + 27 m_1^3 m_2 m_3 m_4^{11} m_5^4 + 27 m_1^2 m_2^2 m_3 m_4^{11} m_5^4 + 9 m_1 m_2^3 m_3 m_4^{11} m_5^4 \\
&& \qquad + \cdots \text{(420 other terms)},  \\ % From second Type 2 order matrix\\
&&  3 m_1^4 m_4^{12} m_5^4 - 3 m_1^3 m_2 m_4^{12} m_5^4 - 9 m_1^2 m_2^2 m_4^{12} m_5^4 - 15 m_1 m_2^{3} m_4^{12} m_5^4 + \cdots \text{(633 other terms)},  \\ % From fourth Type 2 order matrix\\
&& - m_1^6 m_3^3 m_4^9 m_5^6 - 3 m_1^5 m_2 m_3^3 m_4^9 m_5^6 - 3 m_1^4 m_2^2 m_3^3 m_4^9 m_5^6 - m_1^3 m_2^3 m_3^3 m_4^9 m_5^6 \\
&& \qquad + \cdots \text{(967 other terms)},  \\ % From second Type 2 order matrix\\
&&  m_1^3 m_2^3 m_4^{12} m_5^6 + 3 m_1^2 m_2^4 m_4^{12} m_5^6 + 3 m_1 m_2^5 m_4^{12} m_5^6 + m_2^6 m_4^{12} m_5^6  + \cdots \text{(1492 other terms)}.  \\ % From fourth Type 2 order matrix
\eeq
}
\normalsize

\subsection{Diagram 47}  \label{subsec:6-47}

In diagram 47, we have $|\coll_2|=4$. For the first three members, we apply the same procedure as diagram 12 in case $n=5$. 
We collect $z_4$-, $z_5$-equations, all $w_i$-equations and relations between separations. 
Then we make substitutions according to cluster equations determined by the $zw$-order matrices and apply the 
eliminating process to find mass relations.

The last member in $\coll_2$ is
\beq &  
\small{
\left(\!\! \begin{pmatrix}
\{3\} & \{3\} & \{3\} & \{3\} & \{3\} & \{1\} \\
\{3\} & \{3\} & \{3\} & \{3\} & \{1\} & \{3\} \\
\{3\} & \{3\} & \!\{0 \!\sim\! 1\}\! & \{1\} & \{3\} & \{3\} \\
\{3\} & \{3\} & \{1\} & \!\{0 \!\sim\! 1\}\! & \{3\} & \{3\} \\
\{3\} & \{1\} & \{3\} & \{3\} & \{3\} & \{3\} \\
\{1\} & \{3\} & \{3\} & \{3\} & \{3\} & \{3\} 
\end{pmatrix}\!\!,\!\! 
\begin{pmatrix}
\{5\} & \{4\} & \{5\} & \{5\} & \{5\} & \{5\} \\
\{4\} & \{5\} & \{5\} & \{5\} & \{5\} & \{5\} \\
\{5\} & \{5\} & \{5\} & \{5\} & \{5\} & \{5\} \\
\{5\} & \{5\} & \{5\} & \{5\} & \{3\} & \{3\} \\
\{5\} & \{5\} & \{5\} & \{3\} & \{5\} & \{3\} \\
\{5\} & \{5\} & \{5\} & \{3\} & \{3\} & \{5\} 
\end{pmatrix}\!\!\right)\!. 
} 
\eeq

We obtain $\epsilon\approx z_1\approx z_2\approx z_5 \approx z_6 \succ z_3, z_4$ from this order matrix. 
Since the center of mass is at the origin, after taking maximal order terms, we have the equation
$$
m_1z_1+m_2z_2+m_5z_5+m_6z_6=0.
$$
We collect all main equations and relations between separations together with $m_1z_1+m_2z_2+m_5z_5+m_6z_6=0$. 
Then we make substitutions according to cluster equations determined by $zw$-order matrices and apply the 
eliminating process to obtain %the mass relation 
$$
m_1m_5=m_2m_6.
$$ 

For diagram 47, we find mass relation $f_{6,47}=0$ with 
the following factors 

\small{
\beq
&& m_1m_5 - m_2m_6, \\ % 4th
&& \mu_5(m_4,m_5,m_6) \\ 
&& -m_3 m_4^2-m_4^3+m_2 m_6^2+m_3 m_6^2+m_4 m_6^2+m_5 m_6^2, \\ % 1st
&&  m_1 m_3 m_4^2+m_2 m_3 m_4^2+m_1 m_4^3+m_2 m_4^3-m_1 m_3 m_6^2-m_1 m_4 m_6^2-m_1 m_5 m_6^2+m_2 m_6^3 ,  \\ % 2nd
&& -m_1 m_4^3-m_2 m_4^3+m_3 m_4^2 m_5+m_3 m_4^2 m_6+m_1 m_4 m_6^2+m_1 m_5 m_6^2-m_2 m_6^3-m_3 m_6^3 ,  \\ % 3rd
&& m_3^3 m_4^8 m_5^2+3 m_3^2 m_4^9 m_5^2+3 m_3 m_4^{10} m_5^2+m_4^{11} m_5^2 - \cdots \text{(108 other terms)}, \\ % 1st
&& m_1^3 m_3^3 m_4^8 m_5^2+3 m_1^2 m_2 m_3^3 m_4^8 m_5^2+3 m_1 m_2^2 m_3^3 m_4^8 m_5^2+m_2^3 m_3^3 m_4^8 m_5^2+ \cdots \text{(232 other terms)},  \\ % 2nd\\
&& -m_1^3 m_4^{11} m_5^2-3 m_1^2 m_2 m_4^{11} m_5^2-3 m_1 m_2^2 m_4^{11} m_5^2-m_2^3 m_4^{11} m_5^2 + \cdots \text{(244 other terms)},  \\ % 3rd
&& 9 m_2 m_3^3 m_4^8 m_5^4+3 m_3^4 m_4^8 m_5^4+27 m_2 m_3^2 m_4^9 m_5^4+12 m_3^3 m_4^9 m_5^4 + \cdots \text{(266 other terms)},  \\ % 1st
&& -m_2^3 m_3^3 m_4^6 m_5^6-3 m_2^3 m_3^2 m_4^7 m_5^6-3 m_2^3 m_3 m_4^8 m_5^6-m_2^3 m_4^9 m_5^6 - \cdots \text{(397 other terms)},  \\%From 1st
&&  3 m_1^4 m_3^4 m_4^8 m_5^4+3 m_1^3 m_2 m_3^4 m_4^8 m_5^4-9 m_1^2 m_2^2 m_3^4 m_4^8 m_5^4-15 m_1 m_2^3 m_3^4 m_4^8 m_5^4 \\
&& \qquad - \cdots \text{(763 other terms)},  \\ % 2nd\\
&& -3 m_1^4 m_4^{12} m_5^4 - 3 m_1^3 m_2 m_4^{12} m_5^4 + 9 m_1^2 m_2^2 m_4^{12} m_5^4 +  15 m_1 m_2^3 m_4^{12} m_5^4  + \cdots \text{(801 other terms)},  \\ % 3rd
&& m_1^3 m_2^3 m_3^6 m_4^6 m_5^6+3 m_1^2 m_2^4 m_3^6 m_4^6 m_5^6+3 m_1 m_2^5 m_3^6 m_4^6 m_5^6+m_2^6 m_3^6 m_4^6 m_5^6 \\
&& \qquad + \cdots \text{(1732 other terms)},  \\ % 2nd
&& m_1^3 m_2^3 m_4^{12} m_5^6 + 3 m_1^2 m_2^4 m_4^{12} m_5^6 + 3 m_1 m_2^5 m_4^{12} m_5^6 +  m_2^6 m_4^{12} m_5^6  + \cdots \text{(1800 other terms)}.  \\ % 3rd
\eeq
}
\normalsize

\subsection{Diagrams 1, 10, 59, 60}  \label{subsec:6-1etc}

In each of these diagrams, there is a quadrilateral without diagonal formed by $z$-edges (or $w$-edges).

For diagram 1, we obtain one $zw$-order matrix in $\coll_2$:
\beq &  
\small{
\left(\!\!  \begin{pmatrix}
\!\{0\!\sim\!4\}\!\! & \{3\} & \{4\} & \{4\} & \{4\} & \{3\} \\
\{3\} & \!\!\{0\!\sim\! 4\}\!\! & \{4\} & \{4\} & \{4\} & \{3\} \\
\{4\} & \{4\} & \!\!\{0\!\sim\! 4\}\!\! & \{4\} & \{4\} & \{4\} \\
\{4\} & \{4\} & \{4\} & \!\!\{0\!\sim\! 4\}\!\! & \{4\} & \{4\} \\
\{4\} & \{4\} & \{4\} & \{4\} & \!\!\{0\!\sim\! 4\}\!\! & \{4\} \\
\{3\} & \{3\} & \{4\} & \{4\} & \{4\} & \!\!\{0\!\sim\! 4\}\! 
\end{pmatrix}\!\!,\!\! 
\begin{pmatrix}
\!\{0\!\sim\! 3\}\!\! & \{3\} & \{3\} & \{3\} & \{3\} & \{3\} \\
\{3\} & \!\!\{0\!\sim\! 3\}\!\! & \{3\} & \{3\} & \{3\} & \{3\} \\
\{3\} & \{3\} & \!\!\{0\!\sim\! 3\}\!\! & \{2\} & \{2\} & \{2\} \\
\{3\} & \{3\} & \{2\} & \!\!\{0\!\sim\! 3\}\!\! & \{2\} & \{2\} \\
\{3\} & \{3\} & \{2\} & \{2\} & \!\!\{0\!\sim\! 3\}\!\! & \{2\} \\
\{3\} & \{3\} & \{2\} & \{2\} & \{2\} & \!\!\{0\!\sim\! 3\}\!
\end{pmatrix} \!\!\right)\!. 
} 
\eeq
and the produced $r$-order matrix
$$
\begin{pmatrix}
\{\} & \{1\} & \{2\!\sim\! 4\} & \{2\!\sim\! 4\} & \{2\!\sim\! 4\} & \{1\} \\
\{1\} & \{\} & \{2\!\sim\! 4\} & \{2\!\sim\! 4\} & \{2\!\sim\! 4\} & \{1\} \\
\{2\!\sim\! 4\} & \{2\!\sim\! 4\} & \{\} & \{2\!\sim\! 4\} & \{2\} & \{2\} \\
\{2\!\sim\! 4\} & \{2\!\sim\! 4\} & \{2\!\sim\! 4\} & \{\} & \{2\} & \{2\} \\
\{2\!\sim\! 4\} & \{2\!\sim\! 4\} & \{2\} & \{2\} & \{\} & \{2\!\sim\! 4\} \\
\{1\} & \{1\} & \{2\!\sim\! 4\} & \{2\} & \{2\!\sim\! 4\} & \{\} 
\end{pmatrix}
$$

The following table shows all maximal order sets and whether they are certainly maximal.
\begin{center}
\begin{tabular}{c|c|c|c|c}
 & $M_{Z,i}$ & certainly max? & $M_{W,i}$ & certainly max?  \\
\hline
$i=1$ & $\{2,6\}$ & $\surd$ & $\{2,6\}$ & $\surd$ \\
\hline
$i=2$ & $\{1,6\}$ & $\surd$ & $\{1,6\}$ & $\surd$ \\
\hline
$i=3$ & $\{5,6\}$ & $\surd$ & $\{3,4,5,6\}$ &  \\
\hline
$i=4$ & $\{5,6\}$ & $\surd$ & $\{3,4,5,6\}$ & \\
\hline
$i=5$ & $\{3,4\}$ & $\surd$ & $\{3,4,5,6\}$ & \\
\hline
$i=6$ & $\{1,2,3,4\}$ & $\surd$ & $\{1,2\}$ & $\surd$ 
\end{tabular}
\end{center}

Since $z$-edges between bodies $3,5$ and between bodies $3,6$ are consecutive but not a part of a triangle, by Rule~2c, 
either $z_{35}\prec z_{36}$ or $z_{35}\succ z_{36}$. 
Without loss of generality, we may assume $z_{35}\prec z_{36}$, and then $z_{36}\approx z_{56}\succ z_{35}$. 
Since bodies $3,5,4,6$ form a quadrilateral, by Rule~2g, $z_{35}\approx z_{46}$ and $z_{36}\approx z_{45}$. Therefore we have
$$
z_{36}\approx z_{45} \approx z_{34} \approx z_{56}\succ z_{35} \approx z_{46}
$$

Since $Z_{35}\approx Z_{36}\succ Z_{34}$, from $W_{ij}=Z_{ij}^{1/3}z_{ij}^{-4/3}$, we obtain
$$
W_{35}\succ W_{36}, Z_{34}.
$$

Therefore $M_{W,3}^*=\{3,5\}$, and $M_{W,5}^*=\{3,5\}$ similarly, and we have equations
\beq
w_3&=&m_5 W_{53}=m_5 r_{53}^{-3} w_{53}\\
w_5&=&m_3 W_{35}=m_3 r_{35}^{-3} w_{35},
\eeq
and it follows that $w_{35}=w_3-w_5= -(m_3+m_5)r_{35}^{-3}w_{35}$. If all masses are positive, it gives $r\approx 1$ 
which contradicts that $3\notin d_{35}$. Therefore diagram~1 is impossible.

For diagram 59, we obtain 3 $zw$-order matrix in $\coll_2$. The first two of them give the following table for maximal order sets:
\begin{center}
\begin{tabular}{c|c|c|c|c}
 & $M_{Z,i}$ & certainly max? & $M_{W,i}$ & certainly max?  \\
\hline
$i=1$ & $\{1,2\}$ & $\surd$ & $\{1,2\}$ & $\surd$ \\
\hline
$i=2$ & $\{1,2\}$ & $\surd$ & $\{1,2\}$ & $\surd$ \\
\hline
$i=3$ & $\{4,5\}$ & $\surd$ & $\{1,2,3,4,5,6\}$ &  \\
\hline
$i=4$ & $\{3,6\}$ & $\surd$ & $\{3,4,5,6\}$ & \\
\hline
$i=5$ & $\{3,6\}$ & $\surd$ & $\{3,4,5,6\}$ & \\
\hline
$i=6$ & $\{4,5\}$ & $\surd$ & $\{3,4,5,6\}$ & 
\end{tabular}
\end{center}

After the same discussion as in diagram 1, these two cases are impossible for positive masses.

The last member in $\coll_2$ is
\beq &  
\small{
\left(\!\!  \begin{pmatrix}
\!\{0\!\sim\! 1\}\! & \{1\} & \{4\} & \{4\} & \{4\} & \{4\} \\
\{1\} & \!\{0\!\sim\! 1\}\! & \{4\} & \{4\} & \{4\} & \{4\} \\
\{4\} & \{4\} & \{4\} & \{4\} & \{4\} & \{4\} \\
\{4\} & \{4\} & \{4\} & \{4\} & \{4\} & \{4\}  \\
\{4\} & \{4\} & \{4\} & \{4\} & \{4\} & \{4\}  \\
\{4\} & \{4\} & \{4\} & \{4\} & \{4\} & \{4\} 
\end{pmatrix}\!\!,\!\! 
\begin{pmatrix}
\{5\} & \{5\} & \{5\} & \{5\} & \{5\} & \{5\} \\
\{5\} & \{5\} & \{5\} & \{5\} & \{5\} & \{5\} \\
\{5\} & \{5\} & \!\{0 \!\sim\! 4\}\! & \{2\} & \{2\} & \{2\} \\
\{5\} & \{5\} & \{2\} & \!\{0 \!\sim\! 4\}\! & \{2\} & \{2\} \\
\{5\} & \{5\} & \{2\} & \{2\} & \!\{0 \!\sim\! 4\}\! & \{2\} \\
\{5\} & \{5\} & \{2\} & \{2\} & \{2\} & \!\{0 \!\sim\! 4\}\! 
\end{pmatrix}\!\!\right)\!. 
} 
\eeq
which gives the following table of maximal order sets
\begin{center}
\begin{tabular}{c|c|c|c|c}
 & $M_{Z,i}$ & certainly max? & $M_{W,i}$ & certainly max?  \\
\hline
$i=1$ & $\{1,2\}$ & $\surd$ & $\{1,2\}$ & $\surd$ \\
\hline
$i=2$ & $\{1,2\}$ & $\surd$ & $\{1,2\}$ & $\surd$ \\
\hline
$i=3$ & $\{4,5\}$ & $\surd$ & $\{1,2,3,4,5,6\}$ &  \\
\hline
$i=4$ & $\{3,6\}$ & $\surd$ & $\{1,2,3,4,5,6\}$ & \\
\hline
$i=5$ & $\{3,6\}$ & $\surd$ & $\{1,2,3,4,5,6\}$ & \\
\hline
$i=6$ & $\{4,5\}$ & $\surd$ & $\{1,2,3,4,5,6\}$ & 
\end{tabular}
\end{center}

We will show that for there is at most one $i\in\{3,4,5,6\}$ such that $\{1,2\}\cap M_{W,i}^*\neq \emptyset$. 
Then by the same discussions as other members in $\coll_2$, we also conclude that this case is impossible for positive masses.

If $p\in\{1,2\}\cap M_{W,i}^*$ for some $i\in \{3,4,5,6\}$, then for each $j\in\{3,4,5,6\}$ different from $i$, 
we have $W_{pi}\succeq W_{ij}$. It follows from $w_{ip}\succ w_{ij}$ that $z_{ip}\prec z_{ij}$, and 
therefore $z_{pj}\approx z_{ij}$ which implies that $W_{pj}\prec W_{ij}$ since $w_{pj}\succ w_{ij}$. 
It means that $p\notin M_{W,j}^*$. On the other hand, since $z_{pq}\prec z_{pi}$ where $\{p,q\}=\{1,2\}$, 
we have $z_{pi}\approx z_{qi}$. From the $w$-separations, we obtain $W_{pi}\approx W_{qi}$. 
Therefore $\{1,2\}\subset M_{W,i}^*$ and $\{1,2\}\cap M_{W,j}^*=\emptyset$ for $j\in\{3,4,5,6\}$ different from $i$.

For diagram 60, we obtain one $zw$-order matrix in $\coll_2$. By the same discussions as the first two cases in 
diagram 59, this diagram is impossible for positive masses.

\subsection{Diagrams 3, 13, 29}  \label{subsec:6-3etc}

These three diagrams are similar to diagram 4 in case $n=5$. For diagram 3, we obtain one $zw$-order matrix of Type 2 which 
gives $M_{Z,1}=\{2,6\}$, $M_{Z,2}=\{1,6\}$, and $M_{W,1}=M_{W,2}=\{1,2,6\}$. By discussions similar to \S\ref{subsec:5-4}, 
we find mass relation $f_{6,3}=0$ where $f_{6,3}$ is the same as $f_{5,4,1}$, expect subindices were changed 
from $3,4,5$ to $1,2,6$ respectively.

For diagrams 13 and 29, each member in $\coll_2$ gives $M_{Z,4}=\{5,6\}$, $M_{Z,5}=\{4,6\}$, 
and $M_{W,4}=M_{W,5}=\{4,5,6\}$. By the same discussions as in \S\ref{subsec:5-4}, we find mass relation 
$f_{6,13}=0$ where $f_{6,13}$ is the same as $f_{5,4,1}$, except subindices are shifted by 1.

\subsection{Diagram 27}\label{subsec:6-27}

We obtain three Type 2 $zw$-order matrices $(S_k, T)$, $k=2,3,4$:
$$
\left(\! \begin{pmatrix}
\!\{0\!\sim\! 4\}\!\! & \!\!\{1\}\!\! & \!\!\{1\}\!\! & \!\!\{k\}\!\! & \!\!\{2\!\sim\! 4\}\!\! & \!\!\{2\!\sim\! 4\}\!\! \\
\!\!\{1\}\!\! & \!\!\{0\!\sim\! 4\}\!\! & \!\!\{1\}\!\! & \!\!\{k\}\!\! & \!\!\{2\!\sim\! 4\}\!\! & \!\!\{2\!\sim\! 4\}\!\! \\
\!\!\{1\}\!\! & \!\!\{1\}\!\! & \!\!\{0 \!\sim\! 4\}\!\! & \!\!\{k\}\!\! & \!\!\{2\!\sim\! 4\}\!\! & \!\!\{2\!\sim\! 4\}\!\! \\
\!\!\{k\}\!\! & \!\!\{k\}\!\! & \!\!\{k\}\!\! & \!\!\{0 \!\sim\! 4\}\!\! & \!\!\{4\}\!\! & \!\!\{4\}\!\! \\
\!\!\{2\!\sim\! 4\}\!\! & \!\!\{2\!\sim\! 4\}\!\! & \!\!\{2\!\sim\! 4\}\!\! & \!\!\{4\}\!\! & \!\!\{0 \!\sim\! 4\}\!\! & \!\!\{4\}\!\! \\
\!\!\{2\!\sim\! 4\}\!\! & \!\!\{2\!\sim\! 4\}\!\! & \!\!\{2\!\sim\! 4\}\!\! & \!\!\{4\}\!\! & \!\!\{4\}\!\! & \!\!\{0 \!\sim\! 4\}\! 
\end{pmatrix}\!\!,\!\!
\begin{pmatrix}
\!\{5\}\!\! & \!\!\{5\}\!\! & \!\!\{5\}\!\! & \!\!\{5\}\!\! & \!\!\{5\}\!\! & \!\!\{5\}\!\! \\
\!\!\{5\}\!\! & \!\!\{5\}\!\! & \!\!\{5\}\!\! & \!\!\{5\}\!\! & \!\!\{5\}\!\! & \!\!\{5\}\!\! \\
\!\!\{5\}\!\! & \!\!\{5\}\!\! & \!\!\{5\}\!\! & \!\!\{5\}\!\! & \!\!\{5\}\!\! & \!\!\{5\}\!\! \\
\!\!\{5\}\!\! & \!\!\{5\}\!\! & \!\!\{5\}\!\! & \!\!\{0 \!\sim\! 4\}\!\! & \!\!\{2\}\!\! & \!\!\{2\}\!\! \\
\!\!\{5\}\!\! & \!\!\{5\}\!\! & \!\!\{5\}\!\! & \!\!\{2\}\!\! & \!\!\{0 \!\sim\! 4\}\!\! & \!\!\{2\}\!\! \\
\!\!\{5\}\!\! & \!\!\{5\}\!\! & \!\!\{5\}\!\! & \!\!\{2\}\!\! & \!\!\{2\}\!\! & \!\!\{0 \!\sim\! 4\}\!
\end{pmatrix} \!\right),
$$

For $(S_2,T)$ and $(S_3,T)$, $M_{Z,5}=\{4,6\}$, $M_{Z,6}=\{5,6\}$, $M_{W,5}^*, M_{W,6}^*\subset\{4,5,6\}$. 
It's similar to diagram 4 in case $n=5$, we find mass relation $f_{6,27,1}=0$ where $f_{6,27,1}$ is the same as $f_{5,4,1}$, 
except subindices were changed from $3,4,5$ to $5,6,4$ respectively.

For $(S_4,T)$, we have $M_{Z,4}=\{5,6\}$, $M_{Z,5}=\{4,6\}$, $M_{Z,6}=\{4,5\}$, $M_{W,j}=\{1,2,3,4,5,6\}$ for $j= 4,5,6$. 
By the same discussions as in \S\ref{subsec:5-5}, there are at most one $i\in\{4,5,6\}$ such that $M_{W,i}^*\cap\{1,2,3\}\neq \emptyset$, 
and hence $M_{W,j}^*\subset\{4,5,6\}$ for other $j$ in $\{4,5,6\}$. In this case, we obtain mass relation $f_{6,27,2,i}=0$ 
where $f_{6,27,2,i}$ is the same as $f_{5,4,1}$ in \S\ref{subsec:5-4} except subindices were changed from 
$3,4,5$ to $j,k,i$ with $\{i,j,k\}=\{4,5,6\}$.

Therefore we find mass relation $f_{6,27}=0$ where $f_{6,27}$ is the same as $f_{5,5}$ in \S\ref{subsec:5-5}, 
except subindices are shifted by 1.

\subsection{Diagram 18}\label{subsec:6-18}
The optimal Type 1 $zw$-order matrix is
\beq
\left(\! \begin{pmatrix}
\!\!\{0\!\sim\! 4\}\!\! & \!\!\{1\}\!\! & \!\!\{1\}\!\! & \!\!\{2\!\sim\! 4\}\!\! & \!\!\{2\!\sim\! 4\}\!\! & \!\!\{2\!\sim\! 4\}\!\! \\
\!\!\{1\}\!\! & \!\!\{0\!\sim\! 4\}\!\! & \!\!\{1\}\!\! & \!\!\{2\!\sim\! 4\}\!\! & \!\!\{2\!\sim\! 4\}\!\! & \!\!\{2\!\sim\! 4\}\!\! \\
\!\!\{1\}\!\! & \!\!\{1\}\!\! & \!\!\{0 \!\sim\! 4\}\!\! & \!\!\{2\!\sim\! 4\}\!\! & \!\!\{2\!\sim\! 4\}\!\! & \!\!\{2\!\sim\! 4\}\!\! \\
\!\!\{2\!\sim\! 4\}\!\! & \!\!\{2\!\sim\! 4\}\!\! & \!\!\{2\!\sim\! 4\}\!\! & \!\!\{0 \!\sim\! 4\}\!\! & \!\!\{4\}\!\! & \!\!\{4\}\!\! \\
\!\!\{2\!\sim\! 4\}\!\! & \!\!\{2\!\sim\! 4\}\!\! & \!\!\{2\!\sim\! 4\}\!\! & \!\!\{4\}\!\! & \!\!\{0 \!\sim\! 4\}\!\! & \!\!\{4\}\!\! \\
\!\!\{2\!\sim\! 4\}\!\! & \!\!\{2\!\sim\! 4\}\!\! & \!\!\{2\!\sim\! 4\}\!\! & \!\!\{4\}\!\! & \!\!\{4\}\!\! & \!\!\{0 \!\sim\! 4\}\!\! 
\end{pmatrix}\!\!, \!\!
\begin{pmatrix}
\!\!\{0\!\sim\! 4\}\!\! & \!\!\{5\}\!\! & \!\!\{5\}\!\! & \!\!\{2\!\sim\! 4\}\!\! & \!\!\{2\!\sim\! 4\}\!\! & \!\!\{2\!\sim\! 4\}\!\! \\
\!\!\{5\}\!\! & \!\!\{5\}\!\! & \!\!\{5\}\!\! & \!\!\{5\}\!\! & \!\!\{5\}\!\! & \!\!\{5\}\!\! \\
\!\!\{5\}\!\! & \!\!\{5\}\!\! & \!\!\{5\}\!\! & \!\!\{5\}\!\! & \!\!\{5\}\!\! & \!\!\{5\}\!\! \\
\!\!\{2\!\sim\! 4\}\!\! & \!\!\{5\}\!\! & \!\!\{5\}\!\! & \!\!\{0 \!\sim\! 4\}\!\! & \!\!\{2\}\!\! & \!\!\{2\}\!\! \\
\!\!\{2\!\sim\! 4\}\!\! & \!\!\{5\}\!\! & \!\!\{5\}\!\! & \!\!\{2\}\!\! & \!\!\{0 \!\sim\! 4\}\!\! & \!\!\{2\}\!\! \\
\!\!\{2\!\sim\! 4\}\!\! & \!\!\{5\}\!\! & \!\!\{5\}\!\! & \!\!\{2\}\!\! & \!\!\{2\}\!\! & \!\!\{0 \!\sim\! 4\}\!\! 
\end{pmatrix} \!\right)
\eeq
which give the following table of maximal order sets:
\begin{center}
\begin{tabular}{c|c|c|c|c}
 & $M_{Z,i}$ & certainly max? & $M_{W,i}$ & certainly max?  \\
\hline
$i=1$ & $\{1,2,3,4,5,6\}$ &  & $\{2,3\}$ & $\surd$ \\
\hline
$i=2$ & $\{1,2,3\}$ &  & $\{1,2,3\}$ & $\surd$ \\
\hline
$i=3$ & $\{1,2,3\}$ &  & $\{1,2,3\}$ & $\surd$ \\
\hline
$i=4$ & $\{5,6\}$ & $\surd$ & $\{1,4,5,6\}$ & \\
\hline
$i=5$ & $\{4,6\}$ & $\surd$ & $\{1,4,5,6\}$ & \\
\hline
$i=6$ & $\{4,5\}$ & $\surd$ & $\{1,4,5,6\}$ & 
\end{tabular}
\end{center}

By the same discussions as in \S\ref{subsec:5-5}, there are at most one $i\in\{4,5,6\}$ such that $1\in M_{W,i}^*$, 
and hence $M_{W,j}^*\subset\{4,5,6\}$ for other $j$ in $\{4,5,6\}$. In this case, we obtain mass relation $f_{6,18,i}=0$ 
where $f_{6,18,i}$ is the same as $f_{5,4,1}$ in \S\ref{subsec:5-4} except subindices were changed from $3,4,5$ to $j,k,i$ 
with $\{i,j,k\}=\{4,5,6\}$.  Therefore we find mass relation $f_{6,18}=0$ where $f_{6,18}$ is the same as $f_{5,5}$ 
in \S\ref{subsec:5-5}, except subindices are shifted by 1.

\subsection{Diagram 11}\label{subsec:6-11}
We obtain five members $(S_2, T_4), (S_3, T_4), (S_4, T_4), (S_4, T_3), (S_4, T_2)$ in $\coll_2$ where 
$$
S_k=\begin{pmatrix}
\!\{0\!\sim\! 4\}\! & \{2\} & \{2\} & \{k\} & \{4\} & \{4\} \\
\{2\} & \!\{0\!\sim\! 4\}\! & \{2\} & \{k\} & \{4\} & \{4\} \\
\{2\} & \{2\} & \!\{0\!\sim\! 4\}\! & \{k\} & \{4\} & \{4\} \\
\{k\} & \{k\} & \{k\} & \!\{0\!\sim\! 4\}\! & \{4\} & \{4\} \\
\{4\} & \{4\} & \{4\} & \{4\} & \!\{0\!\sim\! 4\}\! & \{4\} \\
\{4\} & \{4\} & \{4\} & \{4\} & \{4\} & \!\{0\!\sim\! 4\}\! 
\end{pmatrix}$$
and
$$ 
T_k=\begin{pmatrix}
\!\{0\!\sim\! 4\}\! & \{4\} & \{4\} & \{k\} & \{k\} & \{k\} \\
\{4\} & \!\{0\!\sim\! 4\}\! & \{4\} & \{4\} & \{4\} & \{4\} \\
\{4\} & \{4\} & \!\{0\!\sim\! 4\}\! & \{4\} & \{4\} & \{4\} \\
\{k\} & \{4\} & \{4\} & \!\{0\!\sim\! 4\}\! & \{2\} & \{2\} \\
\{k\} & \{4\} & \{4\} & \{2\} & \!\{0\!\sim\! 4\}\! & \{2\} \\
\{k\} & \{4\} & \{4\} & \{2\} & \{2\} & \!\{0\!\sim\! 4\}\! 
\end{pmatrix}
$$
for $k=2,3,4$.

$(S_2, T_4)$ and $(S_3, T_4)$ give $M_{Z,5}=\{4,6\}$, $M_{Z,6}=\{4,5\}$, and $M_{W,5}=M_{W,6}=\{4,5,6\}$. 
It's similar to diagram 4 in case $n=5$, we have mass relations $f_{6,11,1}=0$ where $f_{6,11,1}$ is the same as $f_{5,4,1}$, 
except subindices were changed from $3,4,5$ to $5,6,4$ respectively.

On the other hand, $(S_2, T_4)$ and $(S_3, T_4)$ also give $M_{Z,1}=M_{Z,2}=M_{Z,3}=\{1,2,3,4\}$ 
and $M_{W,i}=\{j,k\}$ for $\{i,j,k\}=\{1,2,3\}$. It's similar to diagram 18 in case $n=6$. 
By similar discussions, we have mass relation $f_{6,11,2}=0$ where $f_{6,11,2}$ is the same as $f_{6,18}$, 
expect subindices were changed from $4,5,6$ to $1,2,3$ respectively.

The cases $(S_4, T_2)$ and $(S_4, T_3)$ are similar, we have the same mass relation.

For $(S_4,T_4)$, the maximal order sets are shown as the following table
\begin{center}
\begin{tabular}{c|c|c|c|c}
 & $M_{Z,i}$ & certainly max? & $M_{W,i}$ & certainly max?  \\
\hline
$i=1$ & $\{1,2,3,4,5,6\}$ &  & $\{2,3\}$ & $\surd$ \\
\hline
$i=2$ & $\{1,2,3,4,5,6\}$ &  & $\{1,3\}$ & $\surd$ \\
\hline
$i=3$ & $\{1,2,3,4,5,6\}$ &  & $\{1,2\}$ & $\surd$ \\
\hline
$i=4$ & $\{5,6\}$ & $\surd$ & $\{1,2,3,4,5,6\}$ & \\
\hline
$i=5$ & $\{4,6\}$ & $\surd$ & $\{1,2,3,4,5,6\}$ & \\
\hline
$i=6$ & $\{4,5\}$ & $\surd$ & $\{1,2,3,4,5,6\}$ & 
\end{tabular}
\end{center}

Suppose $p\in \{1,2,3\}\cap M_{W,i}^*$ for some $i\in\{4,5,6\}$. Let $j\in\{4,5,6\}$ different from $i$. Then $z_{pi}\prec z_{ij}\approx z_{45}$ 
and $w_{pi}\succ w_{ij}\approx w_{45}$ since $Z_{pi}\prec Z_{ij}$ and $W_{pi}\approx W_{ij}$, 
and it follows that $z_{pj}\approx z_{ij}\approx z_{45}$ and $w_{pj}\approx w_{pi} \succ w_{ij}\approx w_{45}$. 
Therefore $W_{pj}\prec W_{pi}\approx W_{ij}$ which means $p\notin M_{W,j}^*$. 

Let $q\in\{1,2,3\}$ different from $p$. Since $z_{pi}\succ z_{pq}$, we have $z_{qi}\approx z_{pi}\prec z_{ij}$. 
By the same discussions as in the last paragraph, we obtain $z_{qj}\approx z_{ij}$, and it follows that $W_{qj}\prec W_{ij}$ 
which means $q\notin M_{W,j}^*$. Therefore there is at most one $i\in\{4,5,6\}$ such that $M_{W,i}^*\cap\{1,2,3\}\neq \emptyset$.

It's similar to diagram 4 in case $n=5$, we will obtain mass relation $f_{6,11,3,i}=0$ where $f_{6,11,3,i}$ is the same 
as $f_{5,4,1}$ in \S\ref{subsec:5-4}, except subindices were changed from $3,4,5$ to $j,k,i$ respectively where $\{i,j,k\}=\{4,5,6\}$.

On the other hand, since $(S_4,T_4)$ is invariant under exchanging $1,2,3,z$ and $4,5,6,w$ respectively, 
we also have mass relation $f_{6,11,4,i}=0$ where $f_{6,11,3,i}$ is the same as $f_{5,4,1}$ in \S\ref{subsec:5-4}, 
except subindices were changed from $3,4,5$ to $j,k,i$ respectively where $\{i,j,k\}=\{1,2,3\}$.

Therefore we find mass relation $f_{6,11}=0$ where $f_{6,11}$ is the least common multiple of 
$f_{6,11,1}$, $f_{6,11,2}$, $f_{6,11,3,i}$, $f_{6,11,4,j}$, $i\in\{4,5,6\}$ and $j\in\{1,2,3\}$.

\subsection{Diagram 16}\label{subsec:6-16}
For diagram 16, we obtain the optimal Type 1 $zw$-order matrix:
\beq
\left(\! \begin{pmatrix}
\!\!\{0 \!\sim\! 4\}\!\! & \!\!\{2 \!\sim\! 4\}\!\! & \!\!\{2 \!\sim\! 4\}\!\! & \!\!\{2 \!\sim\! 4\}\!\! & \!\!\{2 \!\sim\! 4\}\!\! & \!\!\{2 \!\sim\! 4\}\!\! \\
\!\!\{2 \!\sim\! 4\}\!\! & \!\!\{0 \!\sim\! 4\}\!\! & \!\{1\}\! & \!\{4\}\! & \!\{4\}\! & \!\{1\}\! \\
\!\!\{2 \!\sim\! 4\}\!\! & \!\{1\}\! & \!\!\{0 \!\sim\! 4\}\!\! & \!\{4\}\! & \!\{4\}\! & \!\{4\}\! \\
\!\!\{2 \!\sim\! 4\}\!\! & \!\{4\}\! & \!\{4\}\! & \!\!\{0 \!\sim\! 4\}\!\! & \!\{4\}\! & \!\{4\}\! \\
\!\!\{2 \!\sim\! 4\}\!\! & \!\{4\}\! & \!\{4\}\! & \!\{4\}\! & \!\!\{0 \!\sim\! 4\}\!\! & \!\{4\}\! \\
\!\!\{2 \!\sim\! 4\}\!\! & \!\{1\}\! & \!\{4\}\! & \!\{4\}\! & \!\{4\}\! & \!\!\{0 \!\sim\! 4\}\!\! 
\end{pmatrix}\!\!, \!\!
\begin{pmatrix}
\!\!\{0 \!\sim\! 4\}\!\! & \!\{5\}\! & \!\{5\}\! & \!\!\{2 \!\sim\! 4\}\!\! & \!\!\{2 \!\sim\! 4\}\!\! & \!\!\{2 \!\sim\! 4\}\!\! \\
\!\{5\}\! & \!\{5\}\! & \!\{5\}\! & \!\{5\}\! & \!\{5\}\! & \!\{5\}\! \\
\!\{5\}\! & \!\{5\}\! & \!\{5\}\! & \!\{2\}\! & \!\{5\}\! & \!\{5\}\! \\
\!\!\{2 \!\sim\! 4\}\!\! & \!\{5\}\! & \!\{2\}\! & \!\!\{0 \!\sim\! 4\}\!\! & \!\{2\}\! & \!\{2\}\! \\
\!\!\{2 \!\sim\! 4\}\!\! & \!\{5\}\! & \!\{5\}\! & \!\{2\}\! & \!\!\{0 \!\sim\! 4\}\!\! & \!\{2\}\! \\
\!\!\{2 \!\sim\! 4\}\!\! & \!\{5\}\! & \!\{5\}\! & \!\{2\}\! & \!\{2\}\! & \!\!\{0 \!\sim\! 4\}\!\! 
\end{pmatrix} \!\right)
\eeq
and the produced $r$-order matrix
$$
\begin{pmatrix}
\{\} & \{4\} & \{4\} & \!\{2 \!\sim\! 4\}\! & \!\{2 \!\sim\! 4\}\! & \!\{2 \!\sim\! 4\}\! \\
\{4\} & \{\} & \{3\} & \{4\} & \{4\} & \{3\} \\
\{4\} & \{3\} & \{\} & \{4\} & \{4\} & \{3\} \\
\!\{2 \!\sim\! 4\}\! & \{4\} & \{4\} & \{\} & \{2\} & \{2\} \\
\!\{2 \!\sim\! 4\}\! & \{4\} & \{4\} & \{2\} & \{\} & \{2\} \\
\!\{2 \!\sim\! 4\}\! & \{3\} & \{3\} & \{2\} & \{2\} & \{\} 
\end{pmatrix}
$$

The maximal order sets are shown as the following table:
\begin{center}
\begin{tabular}{c|c|c|c|c}
 & $M_{Z,i}$ & certainly max? & $M_{W,i}$ & certainly max?  \\
\hline
$i=1$ & $\{1,4,5,6\}$ &  & $\{1,4,5,6\}$ & \\
\hline
$i=2$ & $\{2,3,6\}$ &  & $\{2,3,6\}$ & $\surd$ \\
\hline
$i=3$ & $\{2,3,6\}$ &  & $\{2,3,6\}$ & $\surd$ \\
\hline
$i=4$ & $\{5,6\}$ & $\surd$ & $\{1,4,5,6\}$ & \\
\hline
$i=5$ & $\{4,6\}$ & $\surd$ & $\{1,4,5,6\}$ & \\
\hline
$i=6$ & $\{4,5\}$ & $\surd$ & $\{2,3\}$ & 
\end{tabular}
\end{center}

From the same reason as in \S\ref{subsec:5-5}, $1\notin M_{W,4}^*$ and $1\notin M_{W,5}^*$. 
In addition, by the same discussions as in \S\ref{subsec:5-4}, we also have that $4\in M_{W,4}^*$ if and only if $5\in M_{W,5}^*$. Therefore there are 5 cases for $M_{W,4}^*$ and $M_{W,5}^*$:
\begin{enumerate}
\item[1.] $M_{W,4}^*\subset\{4,5,6\}$ and $M_{W,5}^*\subset \{4,5,6\}$,
\item[2.] $M_{W,4}=\{1,5,6\}$ and $M_{W,5}^*\subset \{4,6\}$,
\item[3.] $M_{W,4}=\{5,6\}$ and $M_{W,5}^*\subset \{1,4,6\}$,
\item[4.] $M_{W,4}=\{1,4,5,6\}$ and $M_{W,5}^*\subset \{4,5,6\}$,
\item[5.] $M_{W,4}=\{4,5,6\}$ and $M_{W,5}^*\subset \{1,4,5,6\}$.
\end{enumerate}

The case 1 is similar to diagram 4 in case $n=5$. We have mass relation $f_{6,16,1}=0$ where $f_{6,16,1}$ is the same as $f_{5,4,1}$ in \S\ref{subsec:5-4}, except subindices are shifted by 1.

For case 2, we collect $z_4$- and $z_5$- equations, relations between separations, and the equation $m_4 W_{45}+ m_6 W_{65}=0$. Then we make substitution according to cluster equations and apply elimination process. We obtain mass relation $f_{6,16,2}=0$ with the following factors 
\small{
\beq
&& m_4-m_6, \\ % 
&& \mu_5(m_4,m_5,m_6), \\ 
&& m_4^8 m_5^2-m_4^8 m_6^2-4 m_4^6 m_5^2 m_6^2-3 m_4^6 m_6^4+6 m_4^4 m_5^2 m_6^4-4 m_4^2 m_5^2 m_6^6+m_5^2 m_6^8, \\ % 
&&  4 m_4^8 m_5^2-4 m_4^8 m_6^2-m_4^6 m_5^2 m_6^2-12 m_4^6 m_6^4-9 m_4^4 m_5^2 m_6^4+5 m_4^2 m_5^2 m_6^6+m_5^2 m_6^8 ,  \\ %
&&m_4^8 m_5^2-m_4^8 m_6^2-4 m_4^6 m_5^2 m_6^2-31 m_4^6 m_6^4+6 m_4^4 m_5^2 m_6^4-31 m_4^4 m_6^6-4 m_4^2 m_5^2 m_6^6-m_4^2 m_6^8+m_5^2 m_6^8 ,  \\ % 
&& 3 m_4^8 m_5^4-43 m_4^8 m_5^2 m_6^2-12 m_4^6 m_5^4 m_6^2+40 m_4^8 m_6^4 - \cdots \text{(8 other terms)}, \\ % 
&& 45 m_4^8 m_5^4+955 m_4^8 m_5^2 m_6^2-180 m_4^6 m_5^4 m_6^2-1000 m_4^8 m_6^4+ \cdots \text{(8 other terms)},  \\ %
&& 3 m_4^8 m_5^4-24 m_4^8 m_5^2 m_6^2-12 m_4^6 m_5^4 m_6^2+21 m_4^8 m_6^4 + \cdots \text{(8 other terms)},  \\ %
&& 216 m_4^8 m_5^6-861 m_4^8 m_5^4 m_6^2-864 m_4^6 m_5^6 m_6^2+2145 m_4^8 m_5^2 m_6^4 + \cdots \text{(12 other terms)}.  \\ % 
\eeq
}
\normalsize

For case 3, we collect $z_4$- and $z_5$-equations, relations between separations, and the equation $m_5 W_{54}+ m_6 W_{64}=0$. 
Then we make substitution according to cluster equations and apply elimination process. 
We obtain mass relation $f_{6,16,2}=0$ where $f_{6,16,3}=\mu_5(m_4,m_5,m_6)\,(5 m_4^2 m_5^2-3 m_4^2 m_6^2+3 m_5^2 m_6^2)$.

In case 4, we have $z_{14}\prec z_{45}\approx z_{46}$, and therefore $z_{15}=z_{14}+z_{45}\approx z_{45}$ 
and $z_{16}=z_{14}+z_{46}\approx z_{46}$. 
From the relation $W_{ij}=r_{ij}^{-1}z_{ij}^{-1}$, $z_{14}\prec z_{45}$, and $W_{14}\prec W_{45}$, we obtain $r_{14}\succ r_{45}$. 
Since $w_{ij}=W_{ij}r_{ij}^3$, we have $w_{14}\succ w_{45}$, and it follows that $w_{14}\approx w_{15}\approx w_{16}$. 
Therefore 
$$
Z_{14}\succ Z_{15}\approx Z_{16}, \quad
W_{14}\succ W_{15}\approx W_{16},
$$
which yields $M_{Z,1}^*=\{1,4\}=M_{W,1}^*$.

On the other hand, since $r_{23}\approx r_{26}\approx r_{36} \approx 1$ and $z_{23}\approx z_{26}\approx z_{36}\approx \epsilon^2$, 
we have $Z_{23}\approx Z_{26}\approx Z_{36}$. From above table, we know that $M_{Z,2}^*,M_{Z,3}^*\subset \{2,3,6\}$. 
Therefore $z_2,z_3\preceq Z_{23}\approx z_{23}\epsilon^2$, and it follows that $z_6=z_2-z_{26}\preceq \epsilon^2$.

Since $z_{46}\succ \epsilon^2\approx z_6$, we have $z_4\approx z_{46}\succ z_{14}$, and it follows that $z_1\sim z_4$. 
From the relation $z_1\sim Z_{14}=z_{14}r_{14}^{-3}$, we obtain $r_{14}\prec 1$. 
It follows that $w_1\approx W_{14}=w_{14}r_{14}^{-3}\succ w_{14}$, and hence $w_1\sim w_4$.

We collect $z_4$-, $z_5$-equations, relations between separations, and the following additional equations
\beq
w_4&=&m_1W_{14}+m_5W_{54}+m_6W_{64},\\
w_5&=&m_4W_{45}+m_6W_{65},\\
w_1&=&m_4 W_{41},\\
w_1&=&w_4,\\
w_4&=&w_5.
\eeq
Then we make substitution according to cluster equations, apply elimination process, and obtain mass relation $f_{6,16,4}=0$ 
with the following factors 
\small{
\beq
&& -m_1 m_4^2-m_4^3+m_1 m_6^2+m_4 m_6^2+m_5 m_6^2, \\ % 
&& \mu_5(m_4,m_5,m_6), \\ 
&& m_1^3 m_4^8 m_5^2+3 m_1^2 m_4^9 m_5^2+3 m_1 m_4^10 m_5^2+m_4^11 m_5^2 - \cdots \text{(60 other terms)}, \\ % 
&& \mu_{111}(m_1,m_4,m_5,m_6),  \\ %
&& -3 m_1^4 m_4^8 m_5^4 - 12 m_1^3 m_4^9 m_5^4 - 18 m_1^2 m_4^{10} m_5^4 - 
 12 m_1 m_4^{11} m_5^4- \cdots \text{(124 other terms)}.   %
\eeq
}
\normalsize

The case 5 is similar to case 4. We We collect $z_4$-, $z_5$-equations, relations between separations, 
and the following additional equations
\beq
w_4&=&m_5W_{54}+m_6W_{64},\\
w_5&=&m_1 W_{15}+ m_4W_{45}+m_6W_{65},\\
w_1&=&m_5 W_{51},\\
w_1&=&w_5,\\
w_4&=&w_5.
\eeq
Then we make substitution according to cluster equations, apply elimination process, and obtain mass relation $f_{6,16,5}=0$ where 
\beq
f_{6,16,5}&=&(-m_4^3+m_1 m_6^2+m_4 m_6^2+m_5 m_6^2)\mu_5(m_4,m_5,m_6)\mu_{45}(m_1,m_4,m_5,m_6)\\
& &\mu_{88}(m_1,m_4,m_5,m_6)\mu_{111}(m_1,m_4,m_5,m_6).
\eeq

Notice that the collected equations in case 2 and case 3 are the same, expect the subindices 4 and 5 are exchanged. 
Therefore in case 2, we also have the mass relation that obtained by exchanging $m_4$ and $m_5$ from $f_{6,16,3}=0$, 
and hence we have $\mu_5(m_4,m_5,m_6)=0$ in cases 2,3. Similarly for cases 4 and 5, we have mass relation 
$$\mu_5(m_4,m_5,m_6)\mu_{111}(m_1,m_4,m_5,m_6)=0.$$ 
Therefore we obtain mass relation $f_{6,16}=0$ for diagram 16 where 
$$f_{6,16}=f_{6,16,1}\,\mu_{111}(m_1,m_4,m_5,m_6).$$

\subsection{Diagram 14}\label{subsec:6-14}
For diagram 14, we obtain the optimal Type 1 $zw$-order matrix:
\beq &  
\small{
\left(\!\!  \begin{pmatrix}
\!\!\{2\!\sim\! 4\}\!\! & \!\!\{2\!\sim\! 4\}\!\! & \!\!\{2\!\sim\! 4\}\!\! & \!\!\{2\!\sim\! 4\}\!\! & \!\!\{2\!\sim\! 4\}\!\! & \!\!\{2\!\sim\! 4\}\!\! \\
\!\!\{2\!\sim\! 4\}\!\! & \!\!\{0,1\}\!\! & \!\{1\}\! & \!\!\{2\!\sim\! 4\}\!\! & \!\!\{2\!\sim\! 4\}\!\! & \!\!\{2\!\sim\! 4\}\!\! \\
\!\!\{2\!\sim\! 4\}\!\! & \!\{1\}\! & \!\{0,1\}\! & \!\!\{2\!\sim\! 4\}\!\! & \!\!\{2\!\sim\! 4\}\!\! & \!\!\{2\!\sim\! 4\}\!\! \\
\!\!\{2\!\sim\! 4\}\!\! & \!\!\{2\!\sim\! 4\}\!\! & \!\{4\}\! & \!\!\{2\!\sim\! 4\}\!\! & \!\{4\}\! & \!\{4\}\!  \\
\!\!\{2\!\sim\! 4\}\!\! & \!\!\{2\!\sim\! 4\}\!\! & \!\{4\}\! & \!\{4\}\! & \!\!\{2\!\sim\! 4\}\!\! & \!\{4\}\!  \\
\!\!\{2\!\sim\! 4\}\!\! & \!\!\{2\!\sim\! 4\}\!\! & \!\{4\}\! & \!\{4\}\! & \!\{4\}\! & \!\!\{2\!\sim\! 4\}\!\! 
\end{pmatrix}\!\!,\!\! 
\begin{pmatrix}
\!\!\{0\!\sim\! 4\}\!\! & \!\{5\}\! & \!\{5\}\! & \!\!\{2\!\sim\! 4\}\!\! & \!\!\{2\!\sim\! 4\}\!\! & \!\!\{2\!\sim\! 4\}\!\! \\
\!\{5\}\! & \!\{5\}\! & \!\{5\}\! & \!\{5\}\! & \!\{5\}\! & \!\{5\}\! \\
\!\{5\}\! & \!\{5\}\! & \!\!\{0 \!\sim\! 4\}\!\! & \!\{5\}\! & \!\{5\}\! & \!\{5\}\! \\
\!\!\{2\!\sim\! 4\}\!\! & \!\{5\}\! & \!\{5\}\! & \!\!\{0 \!\sim\! 4\}\!\! & \!\{2\}\! & \!\{2\}\! \\
\!\!\{2\!\sim\! 4\}\!\! & \!\{5\}\! & \!\{5\}\! & \!\{5\}\! & \!\!\{0 \!\sim\! 4\}\!\! & \!\{2\}\! \\
\!\!\{2\!\sim\! 4\}\!\! & \!\{5\}\! & \!\{5\}\! & \!\{5\}\! & \!\{2\}\! & \!\!\{0 \!\sim\! 4\}\!\!
\end{pmatrix}\!\!\right)\!. 
} 
\eeq
and the produced $r$-order matrix
$$
\begin{pmatrix}
\{\} & \{4\} & \{4\} & \!\{2 \!\sim\! 4\}\! & \!\{2 \!\sim\! 4\}\! & \!\{2 \!\sim\! 4\}\! \\
\{4\} & \{\} & \{3\} & \{4\} & \{4\} & \{3\} \\
\{4\} & \{3\} & \{\} & \{4\} & \{4\} & \{3\} \\
\!\{2 \!\sim\! 4\}\! & \{4\} & \{4\} & \{\} & \{2\} & \{2\} \\
\!\{2 \!\sim\! 4\}\! & \{4\} & \{4\} & \{2\} & \{\} & \{2\} \\
\!\{2 \!\sim\! 4\}\! & \{3\} & \{3\} & \{2\} & \{2\} & \{\} 
\end{pmatrix}
$$

The maximal order sets are shown as the following table:
\begin{center}
\begin{tabular}{c|c|c|c|c}
 & $M_{Z,i}$ & certainly max? & $M_{W,i}$ & certainly max?  \\
\hline
$i=1$ & $\{1,4,5,6\}$ &  & $\{1,2,3,4,5,6\}$ & \\
\hline
$i=2$ & $\{2,3\}$ & $\surd$  & $\{2,3\}$ & $\surd$ \\
\hline
$i=3$ & $\{2,3\}$ & $\surd$  & $\{2,3\}$ & $\surd$ \\
\hline
$i=4$ & $\{5,6\}$ & $\surd$ & $\{1,2,3,4,5,6\}$ & \\
\hline
$i=5$ & $\{4,6\}$ & $\surd$ & $\{1,2,3,4,5,6\}$ & \\
\hline
$i=6$ & $\{4,5\}$ & $\surd$ & $\{1,2,3,4,5,6\}$ & 
\end{tabular}
\end{center}

By the same discussions for the last case in \S\ref{subsec:5-5}, there is at most one $i\in\{4,5,6\}$ 
such that $1\in M_{W,i}^*$ and at most one $j\in\{4,5,6\}$ such that $\{2,3\}\cap M_{W,j}^*\neq \emptyset$.

Therefore $M_{W,i}^*\subset \{1,4,5,6\}$ and $M_{W,j}^*\subset \{4,5,6\}$ for some distinct $i,j\in\{4,5,6\}$. Let $\{i,j,k\}=\{4,5,6\}$

If $M_{W,p}^*\subset \{4,5,6\}$ for two of $i,j,k$, then it's the same as \S\ref{subsec:5-4}, we have mass relation $f_{6,14,1}=0$ where $f_{6,14,1}$ is the same as $f_{5,4,1}$ in \S\ref{subsec:5-4}, expect subindices are shifted by 1.

If $M_{W,i}^*,M_{W,k}^*\nsubseteq\{4,5,6\}$. Then $2,3\in M_{W,k}^*$ and there are two subcases:
\begin{enumerate} 
\item[1.] $M_{W,i}^*= \{1,5,6\}$ and $M_{W,j}^*\subset \{4,6\}$
\item[2.] $M_{W,i}^*= \{1,4,5,6\}$ and $M_{W,j}^*= \{4,5,6\}$
\end{enumerate}

Case 1 is similar to the case 2 in \S\ref{subsec:6-16}, and we obtain mass relation $f_{6,14,2}=0$ where $f_{6,14,2}=f_{6,16,2}$.
In case 2, by the same discussions for the last case in \S\ref{subsec:5-5}, we have
\beq
z_{1i}&\prec &z_{1j}\approx z_{1k}\approx z_{ij}\\
w_{1i}&\approx &w_{1j} \approx w_{ik} \\
z_{pk}&\prec z_{pi}&\approx z_{pj}\approx z_{ij}
\eeq
for $p=2,3$.
It follows that $z_{12}\approx z_{13}\approx z_{1j}\approx z_{ij}$. Since $z_1\succ z_2,z_3$, we obtain $z_1\approx z_{12}\approx z_{13}\succ z_{1i}$. It follows that
\beq
z_{12}\approx z_{13}&\approx &z_{1j} \approx z_{1k}\succ w_{1i}, \\
w_{12}\approx w_{13}&\succ &w_{1i} \approx w_{1j}\approx w_{1k}, 
\eeq
we have $Z_{1i}\succ Z_{1p}$ and $W_{1i}\succ W_{1p}$ for $p=2,3,j,k$, which means that $$M_{Z,1}^*=M_{W,1}^*=\{1,i\}.$$

On the other hand, since $z_1\succ z_{1i}$, we have $z_1\sim z_i$. 
Since $z_1\approx Z_{1i}$, we have $r_{1i}\prec 1$, and it turns out that $w_{1i}\prec w_{1}$. Therefore $w_{1}\sim w_{i}$ and we have the following equations
\beq
w_j&=&m_iW_{ij}+m_kW_{kj},\\
w_i&=&m_1 W_{1i}+ m_jW_{ji}+m_kW_{ki},\\
w_1&=&m_i W_{i1},\\
w_1&=&w_i,\\
w_i&=&w_j,
\eeq
which is similar to case 4 in \S\ref{subsec:6-16}, we obtain mass relation $f_{6,14,3}=0$ where $f_{6,14,3}$ is the same as $f_{6,16,3}$.

\newpage
\begin{figure}[H]
\centering
\includegraphics[scale=0.33]{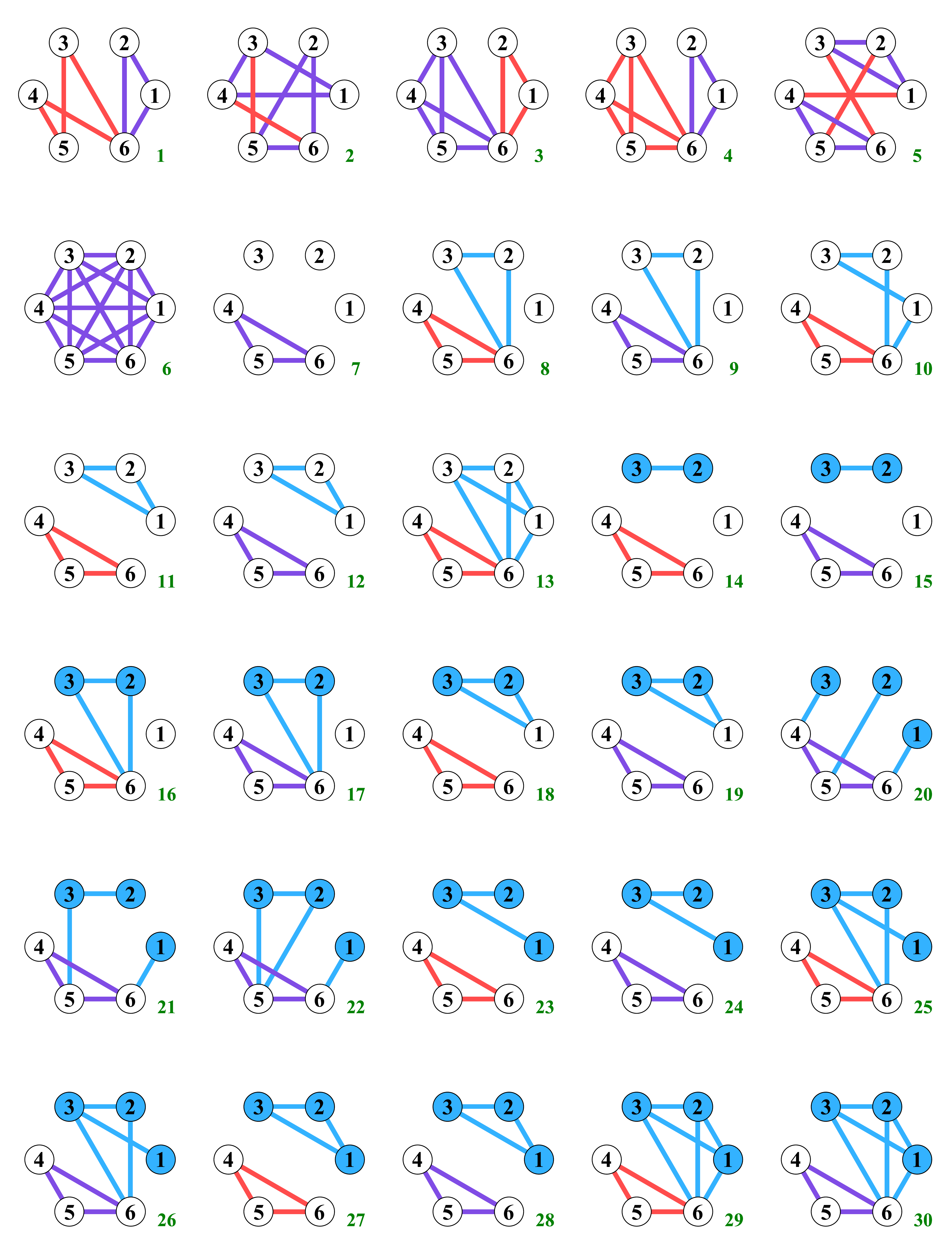}
\caption{Diagrams NO. 1--30 for $n=6$. }   \label{fig:6i}
\end{figure}

\newpage
\begin{figure}[H]
\centering
\includegraphics[scale=0.33]{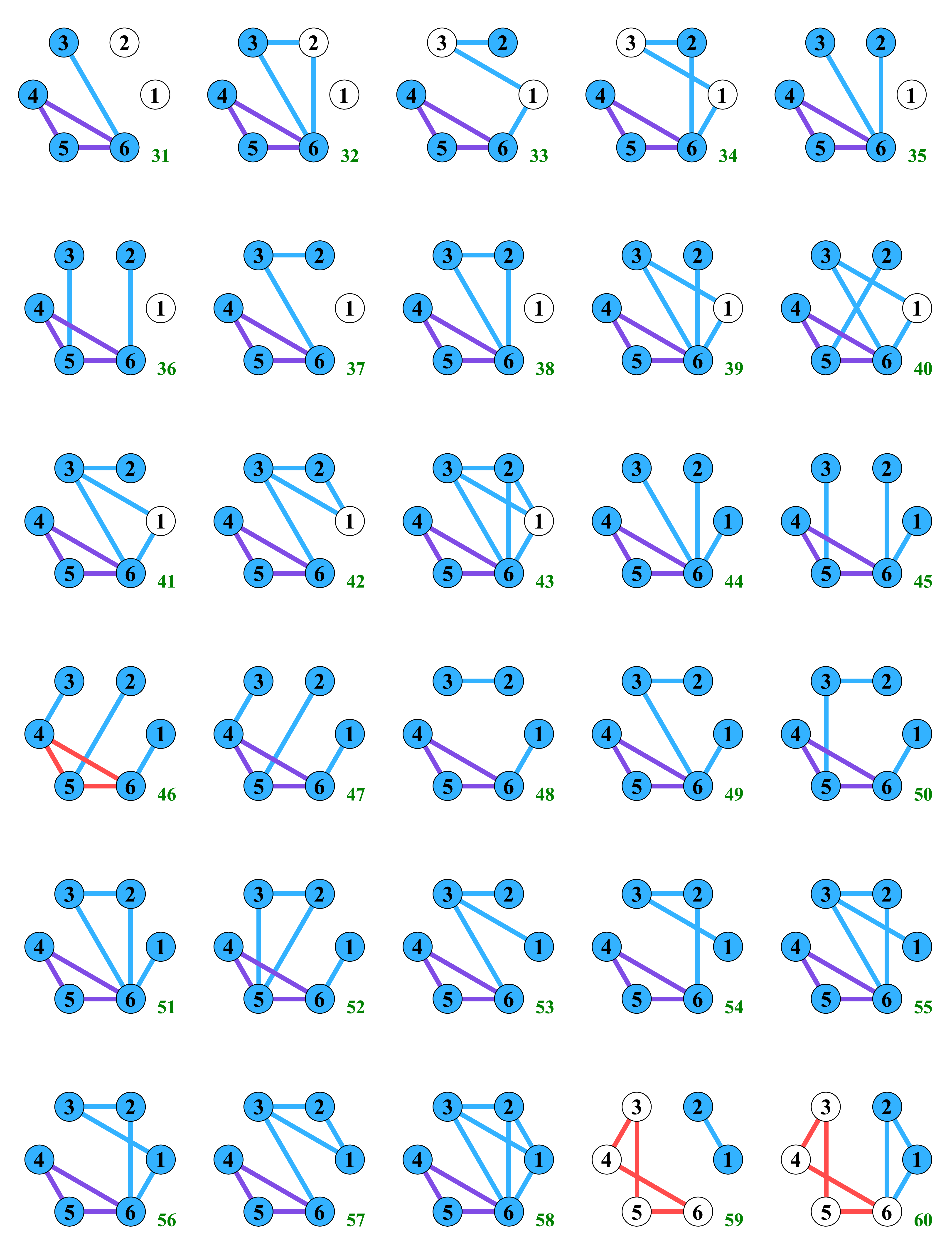}
\caption{Diagrams NO. 31--60 for $n=6$. }   \label{fig:6ii}
\end{figure}

\newpage
\begin{figure}[H]
\centering
\includegraphics[scale=0.33]{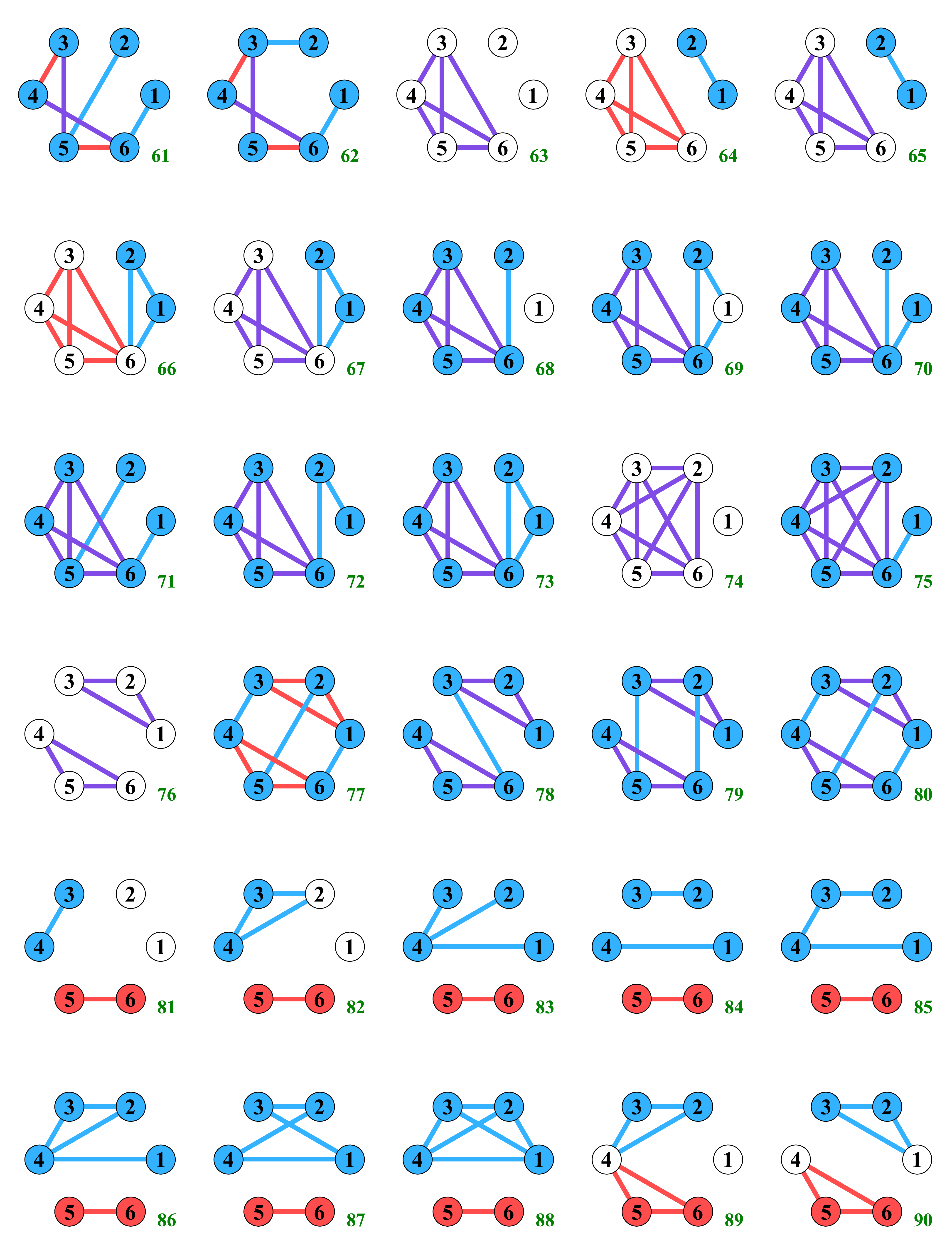}
\caption{Diagrams NO. 61--90 for $n=6$. }   \label{fig:6iii}
\end{figure}

\newpage
\begin{figure}[H]
\centering
\includegraphics[scale=0.33]{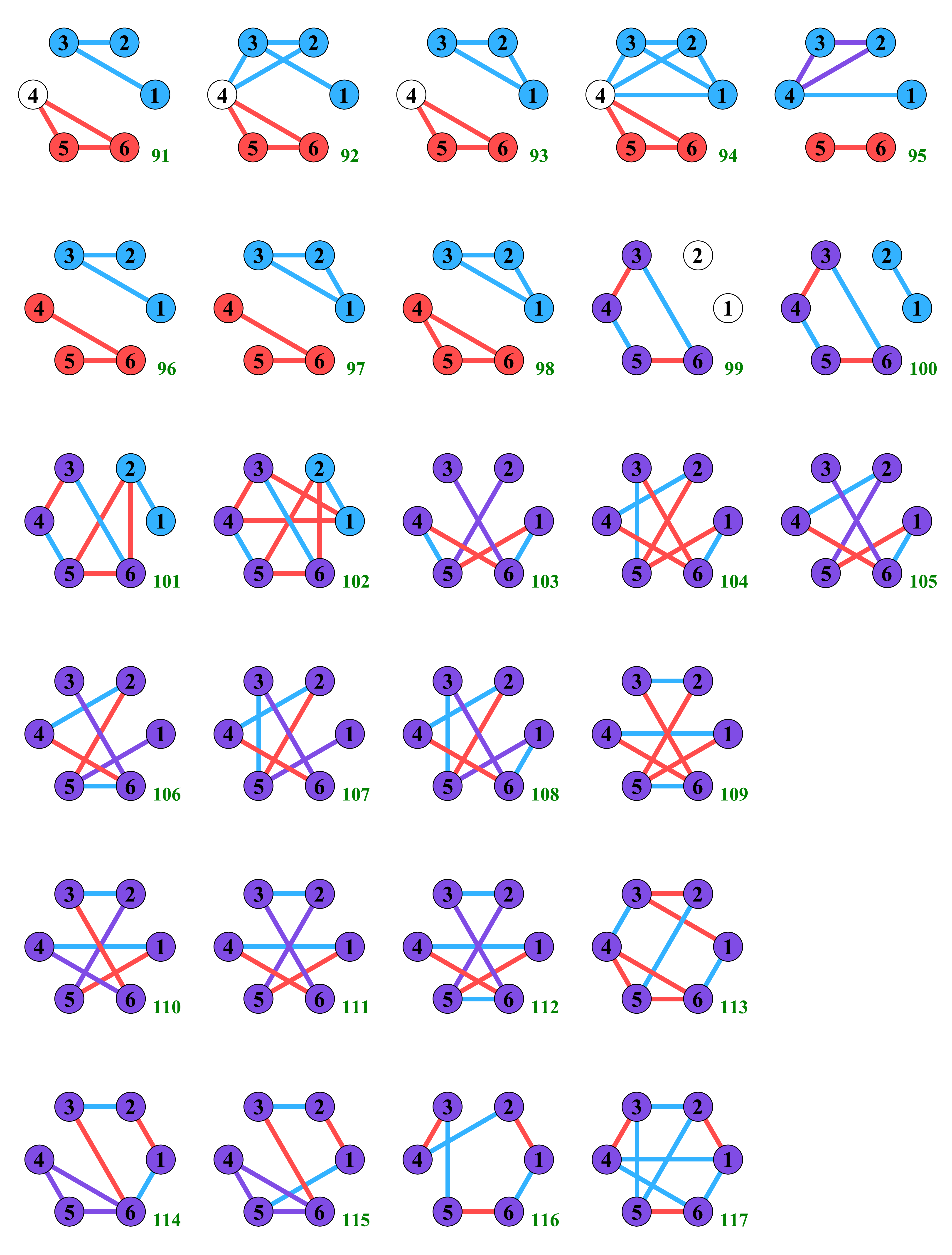}
\caption{Diagrams NO. 91--117 for $n=6$. }   \label{fig:6iv}
\end{figure}

% ----------------------------------------------------------------- %
\section*{Appendix I: Program for Algorithm I by Mathematica} \label{sec:program1}

After compiling the program, use function ``{\tt zwmatrices}'' and input the number of bodies to find out $zw$-matrices.  
To monitor performance, one can easily output the number of $zw$-matrices remaining and cumulative time used 
after each subroutine. 
We remove these parts for brevity.  Some remarks were added using ``(* ... *)'' for clarifications. \\

\small
{\tt

\noindent\(
\text{zwmatrices}[\text{nn$\_$}]\text{:=} ( \\
n=\text{nn};  % \text{initialtime}=\text{AbsoluteTime}[];\text{  } \\
%{}   \\
\text{pm}=\text{Permutations}@\text{Table}[k,\{k,n\}]; \\ %\text{Fstyle}=\{\text{Bold},\text{RGBColor}[0,0.0,0.5],12\}; \\
\text{ConnectedMatrixQ}[\text{x$\_$}]\text{:=} \\
{} \quad \text{Count}[\text{Flatten}[\text{IdentityMatrix}[n]
+\text{Sum}[\text{MatrixPower}[x,k],\{k,1,n-1\}]],0]==0; \\
%{}   \\
\text{zpart}[\{\text{z$\_$},\text{w$\_$}\}]\text{:=}z; \text{wpart}[\{\text{z$\_$},\text{w$\_$}\}]\text{:=}w; \\
\text{trianglepart}[\text{x$\_$}]\text{:=}\text{Table}[\text{If}[i\leq j,x[[i,j]],0],\{i,1,n\},\{j,1,n\}]; \\
{}   \\
\text{(* BEGIN SELECTION OF $z$-MATRICES *)} \\
%{}   \\
%Initial selection of $n\times n$ symmetric matrices *)} \\
\text{(* Collect all symmetric binary matrices, sort them according to column } \\
{}\quad\;\; \text{sums and diagonals, then delete duplicates  *) }  \\
\text{mxsort}[\text{x$\_$}?\text{ArrayQ}]\text{:=}x[[\#\#]] \& \text{@@} (\text{Ordering}[x\sim\text{Total}\sim\{\#\}] \&\text{/@} \{1,1\}); \\
\text{mxreorder}[\text{x$\_$}?\text{ArrayQ}]\text{:=}x[[\#\#]] \& \text{@@} (\text{Ordering}[\text{Diagonal}[x]] \&\text{/@} \{1,1\});  \\
%\text{(* Reorder them according to circled or not *)} \\
%{}   \\
A=\text{Tuples}[\{0,1\},n(n-1)/2];a=\text{Table}[k(k-1)/2,\{k,1,n+1\}]; \\
S=\{0\}\sim \!\text{Join}\!\sim \text{Table}[k,\{k,2,n\}]; \\
V=\text{Array}[\text{If}[\text{$\#$1}>\text{$\#$2},0,1]\&,\{n+1,n\}]; \\
F[\text{x$\_$}]\text{:=}\text{Array}[\text{If}[\text{$\#$1}\neq \text{$\#$2},x[[\text{    }a[[n-\text{Abs}[\text{$\#$1}-\text{$\#$2}]\text{
  }]]+\text{Min}[\text{$\#$1},\text{$\#$2}]]],0]\&,\{n,n\}]; \\
\text{fmx0}=\text{Table}[F[A[[k]]],\{k,1,\text{Length}[A]\}]; \\
\text{fmx}=\text{Table}[\text{Table}[\text{DiagonalMatrix}[V[[n-j+1]]]+\text{fmx0}[[k]],\{k,1,\text{Length}[\text{fmx0}]\}],\{j,S\}]; \\
{}   \\
%\text{Str1}=\text{Style}[\text{{``} building all n*n ordered symmetric matrices{''}},\text{Fstyle}]; \\
%\text{Str2}=\text{Style}[\text{ToString}@\text{Table}[\text{Length}[\text{fmx}[[k]]],\{k,1,n\}],\text{Fstyle}]; \\
%\text{Str3}=\text{Style}[\text{ToString}[\text{AbsoluteTime}[]-\text{initialtime}],\text{Fstyle}]; \\
%\text{Print}[\text{{``}After{''}} ,\text{Str1},\text{{``}, number of possible ordered z-matrices in each class: {''}}, \text{Str2}, \\ \text{{``},
%cumulative time: {''}}\text{Str3}];\text{Print}[\,]; \\
%{}   \\
\text{Do}[\text{fmxs}=\text{Table}[\text{mxsort}@\text{fmx}[[j]][[k]],\{k,1,\text{Length}[\text{fmx}[[j]]]\}]; \\
\text{fmx}[[j]]=\text{DeleteDuplicates}[\text{fmxs}],\{j,1,n\}]; \\
\text{Do}[\text{fmx}[[j]]=\text{Table}[\text{mxreorder}@\text{fmx}[[j]][[k]],\{k,1,\text{Length}[\text{fmx}[[j]]]\}],\{j,1,n\}]; \\
\text{Do}[\text{fmx}[[j]]=\text{DeleteDuplicates}[\text{fmx}[[j]]],\{j,1,n\}]; \\
%\text{Str1}=\text{Style}[\text{{``} the first sorting{''}},\text{Fstyle}]; \\
%\text{Str2}=\text{Style}[\text{ToString}@\text{Table}[\text{Length}[\text{fmx}[[k]]],\{k,1,n\}],\text{Fstyle}]; \\
%\text{Str3}=\text{Style}[\text{ToString}[\text{AbsoluteTime}[]-\text{initialtime}],\text{Fstyle}]; \\
%\text{Print}[\text{{``}After{''}} ,\text{Str1},\text{{``}, number of possible ordered z-matrices in each class: {''}}, \text{Str2}, \\ \text{{``},
%cumulative time: {''}}\text{Str3}];\text{Print}[\,]; \\
{}   \\
\text{(* Rule of Column Sums *)} \\
\text{RuleCSQ}[\text{x$\_$}]\text{:=}\text{Count}[\text{Total}@x,1]==0 \&\&\text{Count}[\text{Total}@x,0]<n;  \\
\text{Do}[\text{fmx}[[k]]=\text{Select}[\text{fmx}[[k]],\text{RuleCSQ}],\{k,1,n\}]; \\
%\text{Str1}=\text{Style}[\text{{``} Rule of Column Sums{''}},\text{Fstyle}]; \\
%\text{Str2}=\text{Style}[\text{ToString}@\text{Table}[\text{Length}[\text{fmx}[[k]]],\{k,1,n\}],\text{Fstyle}]; \\
%\text{Str3}=\text{Style}[\text{ToString}[\text{AbsoluteTime}[]-\text{initialtime}],\text{Fstyle}]; \\
%\text{Print}[\text{{``}After{''}} ,\text{Str1},\text{{``}, number of possible ordered z-matrices in each class: {''}}, \text{Str2}, \\ \text{{``},
%cumulative time: {''}}\text{Str3}];\text{Print}[\,]; \\
{}   \\
\text{(* First Rule of Trace-2 Matrices *)} \\ %, part 1:}\; \mbox{tr} A=2, a_{\text{ii}}=a_{\text{jj}}=1 \Rightarrow a_{ij}=1. \text{*)} \\
\text{RuleTr2Part1Q}[\text{x$\_$}]\text{:=}x[[n,n-1]]==1; \\
\text{fmx}[[2]]=\text{Select}[\text{fmx}[[2]],\text{RuleTr2Part1Q}]; \\
%\text{(* part 2:}\;  \mbox{tr} A=2,  a_{\text{ii}}=a_{\text{jj}}=1  \Rightarrow a_{\text{ik}}=a_{\text{jk}}.\;  \text{Here $(i,j)=(n,n-1)$.*)}  \\ 
%\text{  By Rule 2c, suppose there are exactly 2 $z$-circles, around $n$, $n-1$}
%\text{ in our setting, if they join a non $z$-circled vertex then they form a triangle. *)} \\
\text{twolinesSP}[\text{x$\_$}]\text{:=}\text{Flatten}[\text{Table}[(x[[i,n]]+x[[i,n-1]]+x[[n-1,n]]),\{i,1,n-2\}]]; \\
\text{RuleTr2MxPart2Q}[\text{x$\_$}]\text{:=} ( \text{Count}[\text{twolinesSP}[x],2]==0);  \\
\text{fmx}[[2]]=\text{Select}[\text{fmx}[[2]],\text{RuleTr2MxPart2Q}];   \\
%\text{Str1}=\text{Style}[\text{{``} First Rule of Trace-2 Matrices{''}},\text{Fstyle}];  \\
%\text{Str2}=\text{Style}[\text{ToString}@\text{Table}[\text{Length}[\text{fmx}[[k]]],\{k,1,n\}],\text{Fstyle}];  \\
%\text{Str3}=\text{Style}[\text{ToString}[\text{AbsoluteTime}[]-\text{initialtime}],\text{Fstyle}];  \\
%\text{Print}[\text{{``}After{''}} ,\text{Str1},\text{{``}, number of possible ordered z-matrices in each class: {''}}, \text{Str2}, \\ \text{{``},
%cumulative time: {''}}\text{Str3}];\text{Print}[\,];  \\
{}   \\
\text{(* Second Rule of Triangles *)}  \\
\text{prtriangle3}[\text{x$\_$}]\text{:=}\text{Flatten}[\text{Table}[ \\ 
{}\quad x[[i,k]]+x[[j,k]]+x[[k,k]],\{k,n-K+1,n\},\{j,1,n-K\},\{i,j+1,n-K\}],2];  \\
\text{RuleTriangle2Q}[\text{x$\_$}]\text{:=}\text{Count}[\text{prtriangle3}@x,3]==0;  \\
\text{Do}[K=k;\text{fmx}[[k]]=\text{Select}[\text{fmx}[[k]],\text{RuleTriangle2Q}],\{k,2,n-2\}];  \\
%\text{Str1}=\text{Style}[\text{{``} Second Rule of Triangles{''}},\text{Fstyle}];  \\
%\text{Str2}=\text{Style}[\text{ToString}@\text{Table}[\text{Length}[\text{fmx}[[k]]],\{k,1,n\}],\text{Fstyle}];  \\
%\text{Str3}=\text{Style}[\text{ToString}[\text{AbsoluteTime}[]-\text{initialtime}],\text{Fstyle}];  \\
%\text{Print}[\text{{``}After{''}} ,\text{Str1},\text{``, number of possible ordered z-matrices in each class: ''}, \text{Str2}, \\ \text{{``},
%cumulative time: {''}}\text{Str3}];\text{Print}[\,];  \\
{}   \\
\text{(* First Rule of Quadrilaterals *)}  \\
\text{foursum}[\text{x$\_$}]\text{:=}\text{Flatten}[\text{Table}[ 
x[[i,j]]+x[[i,k]]+x[[i,l]]+x[[j,k]]+x[[j,l]]+x[[k,l]], \\ 
{} \quad \{l,4,n\},\{k,3,l-1\},\{j,2,k-1\},\{i,1,j-1\}]];  \\
\text{Rule2fCorQ}[\text{x$\_$}]\text{:= Count}[\text{foursum}[x],5]==0; \\
%\text{(* Rule 2f implies that the number of $z$-strokes between any given four vertices can't be 5 *)}  \\
\text{Do}[\text{fmx}[[k]]=\text{Select}[\text{fmx}[[k]],\text{Rule2fCorQ}],\{k,1,n\}];  \\
%\text{Str1}=\text{Style}[\text{{``} First Rule of Quadrilaterals{''}},\text{Fstyle}];  \\
%\text{Str2}=\text{Style}[\text{ToString}@\text{Table}[\text{Length}[\text{fmx}[[k]]],\{k,1,n\}],\text{Fstyle}];  \\
%\text{Str3}=\text{Style}[\text{ToString}[\text{AbsoluteTime}[]-\text{initialtime}],\text{Fstyle}];  \\
%\text{Print}[\text{{``}After{''}} ,\text{Str1},\text{{``}, number of possible ordered z-matrices in each class: {''}}, \text{Str2}, \\ \text{{``},
%cumulative time: {''}}\text{Str3}];\text{Print}[\,];  \\
{}   \\
\text{(* First Rule of Triangles *)}  \\
\text{threelinesSP}[\text{x$\_$}]\text{:=}\text{Flatten}[\text{Table}[ (x[[i,i]]+x[[i,j]]+x[[i,k]]) \\
{}\quad (x[[j,i]]+x[[j,j]]+x[[j,k]]) (x[[k,i]]+x[[k,j]]+x[[k,k]])(x[[i,i]]+x[[j,j]]+x[[k,k]]),  \\
{}\quad \{k,n-K+2,n\},\{j,n-K+1,k-1\},\{i,1,n-K\}]];  \\
\text{RuleTriangle1Q}[\text{x$\_$}]\text{:= Count}[\text{threelinesSP}[x],16]==0;  \\
\text{Do}[K=k;  
\text{fmx}[[k]]=\text{Select}[\text{fmx}[[k]],\text{RuleTriangle1Q}],\{k,2,n-1\}];   \\
%\text{Str1}=\text{Style}[\text{{``} First Rule of Triangles{''}},\text{Fstyle}];  \\
%\text{Str2}=\text{Style}[\text{ToString}@\text{Table}[\text{Length}[\text{fmx}[[k]]],\{k,1,n\}],\text{Fstyle}];  \\
%\text{Str3}=\text{Style}[\text{ToString}[\text{AbsoluteTime}[]-\text{initialtime}],\text{Fstyle}]; \\
%\text{Print}[\text{{``}After{''}} ,\text{Str1},\text{{``}, number of possible ordered z-matrices in each class: {''}}, \text{Str2}, \\ \text{{``},
%cumulative time: {''}}\text{Str3}];\text{Print}[\,]; \\
{}   \\
\text{(* Rule of Trace-3 Matrices, only needed for  $n\geq 6$. *)} \\
\text{If}[n>5,( \\
\text{RuleTr3MxQ}[\text{x$\_$}]\text{:=} (Z=x; 
i=n-3; \\
\text{While} [i>0,\text{If}[(Z[[i,n-2]]+Z[[i,n-1]]+Z[[i,n]])*\text{Max}[ \\
{}\quad Z[[i,n]]+Z[[n-2,n]]+Z[[n-1,n]], Z[[i,n-1]]+Z[[n-2,n-1]]+Z[[n-1,n]], \\
{}\quad Z[[i,n-2]]+Z[[n-2,n-1]]+Z[[n-2,n]]] \\
{}\quad ==3, 
\text{Return}@\text{False}]; 
i--]; 
\text{Return}@\text{True}); \\
\text{fmx}[[3]]=\text{Select}[\text{fmx}[[3]],\text{RuleTr3MxQ}] ; \\ \\
%\text{Str1}=\text{Style}[\text{{``} Rule of Trace-3 Matrices{''}},\text{Fstyle}[[1]],\text{Fstyle}[[2]],\text{Fstyle}[[3]]]; \\
%\text{Str2}=\text{Style}[\text{ToString}@\text{Table}[\text{Length}[\text{fmx}[[k]]],\{k,1,n\}],\text{Fstyle}]; \\
%\text{Str3}=\text{Style}[\text{ToString}[\text{AbsoluteTime}[]-\text{initialtime}],\text{Fstyle}]; \\
%\text{Print}[\text{{``}After{''}} ,\text{Str1},\text{{``}, number of possible ordered z-matrices in each class: {''}}, \text{Str2}, \\ \text{{``},
%cumulative time: {''}}\text{Str3}];\text{Print}[\,];)]; \\
\text{(* Rule of Trace-0 Principal Minors *)} \\
\text{Rule0DiagSubmxQ}[ \text{x2}\_]:=( \\
 {}\quad S2= \text{Table}[ j2,\{ j2,1, n- K1\}]; \\
 {}\quad \text{If}[ K1==1, S2= S2\sim \text{Join}\sim \{ n\}]; \\
 {}\quad P2= \text{Subsets}[ S2]; P2= \text{Select}[ P2, \text{Length}@\#>2\&];  kk2=1; \\
 {}\quad \text{While}[ kk2 \leq \text{Length}@ P2, \\
 {}\quad \quad \text{s2a}= P2[[ kk2]];  \text{s2b}= \text{Complement}[ \text{allind}, \text{s2a}]; \\
 {}\quad \quad \text{If}[ \text{Count}[ \text{Flatten}@( \text{x2}[[ 
\text{s2a}, \text{s2b}]]),1]==1, \text{Return}[ \text{False}]];  kk2++]; \text{Return}[ \text{True}]) \\
{} \text{Do}[ K1= k1;  \text{fmx}[[ k1]]= \text{Select}[ \text{fmx}[[ k1]], 
\text{Rule0DiagSubmxQ}], \{ k1,1, n-1\}];\\
{}   \\
\text{(* Delete duplicates *)} \\
\text{Do}[ 
\text{dupind}=\{\}; \\
\text{Do}[\text{If}[\text{Length}@\text{Intersection}[\text{Table}[ 
\text{fmx}[[i]][[k]] [[\text{pm}[[j]],\text{pm}[[j]]]],\{j,1,\text{Length}[\text{pm}]\}], \\
{}\quad \text{fmx}[[i]][[1\text{;;}k-1]]]>0,
\text{dupind}=\text{Append}[\text{dupind},\{k\}]],\{k,2,\text{Length}[\text{fmx}[[i]]]\}]; \\
\text{fmx}[[i]]=\text{Delete}[\text{fmx}[[i]],\text{dupind}],\{i,1,n\}];  \\
%\text{Str1}=\text{Style}[\text{{``} deleting duplicates{''}},\text{Fstyle}]; \\
%\text{Str2}=\text{Style}[\text{ToString}@\text{Table}[\text{Length}[\text{fmx}[[k]]],\{k,1,n\}],\text{Fstyle}]; \\
%\text{Str3}=\text{Style}[\text{ToString}[\text{AbsoluteTime}[]-\text{initialtime}],\text{Fstyle}]; \\
%\text{Print}[\text{{``}After{''}} ,\text{Str1},\text{{``}, number of possible ordered z-matrices in each class: {''}}, \text{Str2}, \\ \text{{``},
%cumulative time: {''}}\text{Str3}];\text{Print}[\,]; \\
%{}   \\
\text{(* END SELECTION OF $z$-MATRICES *)} \\
{}   \\
\text{(* BEGIN SELECTION OF $zw$-MATRICES *)} \\
%{}   \\
\text{(* Produce all possible $w$-matrices, classify them according to traces.*)} \\
\text{smx}=\text{Table}[\{\},\{k,1,n\}]; \\
\text{Do}[\text{Do}[\text{smx}[[k]]=\text{Union}[\text{smx}[[k]],\text{Table} [
   \text{fmx}[[k]][[i]][[\text{pm}[[j]],\text{pm}[[j]]]],\{j,1,\text{Length}[\text{pm}]\}]], \\ 
{}\quad   \{i,1,\text{Length}[\text{fmx}[[k]]]\}],\{k,1,n\}]; \\
\text{smxcnn}=\text{smx}[[1]]; \text{(* collect those with $w$-trace 0 *)} \\
\text{Do}[\text{smx}[[n-k]]=\text{smx}[[n-k]]\sim \!\text{Join}\!\sim \text{smx}[[n-k+1]],\{k,1,n-1\}]; \\
%\text{Str1}=\text{Style}[\text{{``} deleting duplicates{''}},\text{Fstyle}]; \\
L=\text{Table}[\text{Length}[\text{smx}[[k]]]-\text{Length}[\text{smx}[[k+1]]],\{k,1,n-1\}]; \\
L=\text{Append}[L,\text{Length}@\text{smx}[[n]]]; \\
%\text{Str1}=\text{Style}[\text{{``} producing w-matrices{''}},\text{Fstyle}]; \\
%\text{Str2}=\text{Style}[\text{ToString}@L,\text{Fstyle}]; \\
%\text{Str3}=\text{Style}[\text{ToString}[\text{AbsoluteTime}[]-\text{initialtime}],\text{Fstyle}]; \\
%\text{Print}[\text{{``}After{''}} ,\text{Str1},\text{{``}, number of possible w-matrices in each class: {''}}, \text{Str2}, \\ \text{{``}, cumulative
%time: {''}}\text{Str3}];\text{Print}[\,]; \\
{}   \\
\text{fmxcnn}=\text{Table}[\{\},\{k,1,n\}];\text{fmxdnn}=\text{fmxcnn}; \\
\text{Do}[\text{fmxcnn}[[k]]=\text{Select}[\text{fmx}[[k]],\text{ConnectedMatrixQ}],\{k,1,n\}]; \\
\text{(* Split fmx into different categories. } \\ {}\quad \text{ Begin with those having connected graphs.*)} \\
\text{Do}[\text{fmxdnn}[[k]]=\text{Complement}[\text{fmx}[[k]],\text{fmxcnn}[[k]]],\{k,1,n\}]; \\
%\text{Str1}=\text{Style}[\text{{``} classifying z-matrices according to connectedness{''}},\text{Fstyle}]; \\
%\text{Str2}=\text{Style}[\text{ToString}@\text{Table}[\text{Length}[\text{fmxcnn}[[k]]],\{k,1,n\}],\text{Fstyle}]; \\
%\text{Str3}=\text{Style}[\text{ToString}@\text{Table}[\text{Length}[\text{fmxdnn}[[k]]],\{k,1,n\}],\text{Fstyle}]; \\
%\text{Str4}=\text{Style}[\text{ToString}[\text{AbsoluteTime}[]-\text{initialtime}],\text{Fstyle}]; \\
%\text{Print}[\text{{``}After{''}} ,\text{Str1},\text{{``}, number of possible connected ordered z-matrices in each class: {''}}, \text{Str2}, \\
%\text{{``}, number of possible disconnected ordered z-matrices in each class: {''}}, \text{Str3}, \text{{``}, cumulative time: 
%{''}}\text{Str4}];\text{Print}[\,]; \\
{}   \\
\text{(* Produce all possible $zw$-matrices whose $z$-trace is less than or equal }\\ {}\quad \text{ to $w$-trace. *)} \\
\text{FMXCNN}=\text{Flatten}[\text{Table}[ \\
{}\quad \{\text{fmxcnn}[[1]][[i]],\text{smxcnn}[[j]]\},
\{i,\text{Length}[\text{fmxcnn}[[1]]]\},\{j,\text{Length}[\text{smxcnn}]\}],1]; \\
\text{(* Fully connected $z$-matrix are to be matched with trace-0 $w$-matrices} \\ 
{}\quad \text{ according to the Rule of Fully Connected Companions  *)} \\ 
\text{FMXDNN}=\text{Table}[\{\},\{k,1,n\}]; \\ 
\text{Do}[ 
\text{FMXDNN}[[k]]=\text{Flatten}[\text{Table}[\{\text{fmxdnn}[[k]][[i]],\text{smx}[[k]][[j]]\},
\{i,\text{Length}[\text{fmxdnn}[[k]]]\},\\ 
{}\quad \{j,\text{Length}[\text{smx}[[k]]]\}],1],\{k,1,n\}]; \\ 
%\text{Str1}=\text{Style}[\text{{``} producing zw-matrices (Rule of Fully Connected Companions applied){''}},\text{Fstyle}]; \\ 
%\text{Str2}=\text{Style}[\text{ToString}@\text{Length}[\text{FMXCNN}],\text{Fstyle}]; \\ 
%\text{Str3}=\text{Style}[\text{ToString}@\text{Table}[\text{Length}[\text{FMXDNN}[[k]]],\{k,1,n\}],\text{Fstyle}]; \\ 
%\text{Str4}=\text{Style}[\text{ToString}[\text{AbsoluteTime}[]-\text{initialtime}],\text{Fstyle}]; \\ 
%\text{Print}[\text{{``}After{''}} ,\text{Str1},\text{{``}, number of possible zw-matrices with connected z-part: {''}}, \text{Str2}, \\ 
%\text{$\texttt{"}$,} 
%\text{number of possible zw-matrices with disconnected z-part in each class: $\texttt{"}$}, \text{Str3}, \\ 
%% \text{{``}, cumulative time: {''}}\text{Str4}];\text{Print}[\,]; \\ 
%{}   \\
%\text{zwmxtime}=\text{AbsoluteTime}[];  \\ 
{}   \\
\text{(* Rule of Circling *)} \\ 
%\text{(* %The rule is simple but time consuming, so split it into two parts.  
%First delete most cases by checking $w$-matrices, then  deal with $z$-matrices. *)} \\ 
K=1; \\ 
\text{RuleCirclingWQ}[\text{x$\_$}]\text{:=}(W=\text{wpart}[x]; \\
{}\quad s=\text{Total}[\text{Flatten}@W[[1\text{;;}n-K,n-K+1\text{;;}n]]]; \\
{}\quad \text{If}[s\neq 0,\text{Return}@\text{False},\text{Return}@\text{True}]); \\ 
\text{Do}[K=k;\text{FMXDNN}[[k]]=\text{Select}[\text{FMXDNN}[[k]],\text{RuleCirclingWQ}],\{k,2,n-1\}];  \\ 
%\text{(* Consider} a_{\text{ii}}=1 \text{for} i=n-K+1,\text{...},n, a_{\text{ii}}=0 \text{for} i=1,\text{...},n-K.  \\ 
%\text{Return {``}False{''} if there exist$w$-stroke linking} a z-\text{circled vertex to}
%a \text{vertex without$z$-circle}. \text{*)} \\ 
%\text{Str1}=\text{Style}[\text{{``} Rule of Circling (Part 1){''}},\text{Fstyle}]; \\ 
%\text{Str2}=\text{Style}[\text{ToString}@\text{Table}[\text{Length}[\text{FMXDNN}[[k]]],\{k,1,n\}],\text{Fstyle}]; \\ 
%\text{Str3}=\text{Style}[\text{ToString}[\text{AbsoluteTime}[]-\text{initialtime}],\text{Fstyle}]; \\ 
%\text{Print}[\text{{``}After{''}} ,\text{Str1},\text{{``}, number of possible zw-matrices with disconnected z-part in each class: {''}}, \text{Str2},
%\text{{``}, cumulative time: {''}}\text{Str3}];\text{Print}[\,]; \\ 
%{}   \\
\text{RuleCirclingZQ}[\text{x$\_$}]\text{:=}(Z=\text{zpart}[x];  W=\text{wpart}[x]; \\ 
\text{dW}=\text{Diagonal}[W]; \\ 
\text{p0}=\text{Flatten}@\text{Position}[\text{dW},0]; 
\text{p1}=\text{Flatten}@\text{Position}[\text{dW},1]; \\ 
s=\text{Total}[\text{Flatten}@Z[[\text{p0},\text{p1}]]]; \\ 
\text{If}[s\neq 0,\text{Return}@\text{False},\text{Return}@\text{True}]); \\ 
\text{Do}[\text{FMXDNN}[[k]]=\text{Select}[\text{FMXDNN}[[k]],\text{RuleCirclingZQ}],\{k,1,n-1\}];  \\ 
%\text{Str1}=\text{Style}[\text{{``} Rule of Circling (Part 2){''}},\text{Fstyle}]; \\ 
%\text{Str2}=\text{Style}[\text{ToString}@\text{Table}[\text{Length}[\text{FMXDNN}[[k]]],\{k,1,n\}],\text{Fstyle}]; \\ 
%\text{Str3}=\text{Style}[\text{ToString}[\text{AbsoluteTime}[]-\text{initialtime}],\text{Fstyle}]; \\ 
%\text{Print}[\text{{``}After{''}} ,\text{Str1},\text{{``}, number of possible zw-matrices with disconnected z-part in each class: {''}}, \text{Str2},
%\text{{``}, cumulative time: {''}}\text{Str3}];\text{Print}[\,]; \\ 
{}   \\
\text{(* Second Rule of Trace-2 Matrices *)} \\ 
\text{Rule2ndTr2MxQ}[\text{x$\_$}]\text{:=}(Z=\text{zpart}[x]; \\ 
\text{If}[\text{Total}[Z[[n]]]==2\&\& \text{Tr}[\text{wpart}[x]]==n-3,\text{Return}@\text{False}]; \text{If}[n==5, W=\text{wpart}[x]; \\ 
\text{If}[\text{Tr}[W]==2,W=\text{mxreorder}[W]; \\ 
\text{If}[W[[5,4]]==1\&\&\text{Total}[W[[5]]]==2\&\& \text{Tr}[Z]==2,\text{Return}@\text{False}]]]; \text{Return}@\text{True}); \\ 
\text{FMXDNN}[[2]]=\text{Select}[\text{FMXDNN}[[2]],\text{Rule2ndTr2MxQ}]; \\ 
%\text{Str1}=\text{Style}[\text{{``} Second Rule of Trace-2 Matrices{''}},\text{Fstyle}]; \\ 
%\text{Str2}=\text{Style}[\text{ToString}@\text{Table}[\text{Length}[\text{FMXDNN}[[k]]],\{k,1,n\}],\text{Fstyle}]; \\ 
%\text{Str3}=\text{Style}[\text{ToString}[\text{AbsoluteTime}[]-\text{initialtime}],\text{Fstyle}]; \\ 
%\text{Print}[\text{{``}After{''}} ,\text{Str1},\text{{``}, number of possible zw-matrices with disconnected z-part in each class: {''}}, \text{Str2},
%\text{{``}, cumulative time: {''}}\text{Str3}];\text{Print}[\,]; \\ 
{}   \\
\text{(* Combine all classes *)} \\ 
\text{FMX}=\text{FMXCNN};\text{Do}[\text{FMX}=\text{FMX}\sim \!\text{Join}\!\sim \text{FMXDNN}[[k]],\{k,1,n\}]; \\ 
%\text{Str2}=\text{Style}[\text{ToString}@\text{Length}[\text{FMX}],\text{Fstyle}]; \\ 
%\text{Print}[\text{{``}The total number of possible zw-matrices up to now: {''}}, \text{Str2}];\text{Print}[\,]; \\ 
{}   \\
\text{(* Third Rule of Triangles *)} \\ 
\text{RuleTriangle3Q}[\text{x$\_$}]\text{:=}(B=\text{zpart}[x]+\text{wpart}[x];k=n;\\
{}\quad \text{While}[k\geq 3, j=2; \\
{}\qquad \text{While}[j<k, i=1; \\ 
{}\qquad\quad \text{While}[i<j,b=B[[i,j]]+B[[j,k]]+B[[k,i]];\text{If}[b==4\|b==5,\text{Return}@\text{False}]; \\
{}\qquad\qquad i\text{++}];j\text{++}];k\text{--}]; \text{Return}@\text{True}); \\
\text{FMX}=\text{Select}[\text{FMX},\text{RuleTriangle3Q}]; \\
%\text{Str1}=\text{Style}[\text{{``} Third Rule of Triangles{''}},\text{Fstyle}]; \\
%\text{Str2}=\text{Style}[\text{ToString}@\text{Length}[\text{FMX}],\text{Fstyle}]; \\
%\text{Str3}=\text{Style}[\text{ToString}[\text{AbsoluteTime}[]-\text{initialtime}],\text{Fstyle}]; \\
%\text{Print}[\text{{``}After{''}} ,\text{Str1},\text{{``}, number of possible zw-matrices: {''}}, \text{Str2}, \\ \text{{``}, cumulative time:
%{''}}\text{Str3}];\text{Print}[\,]; \\
{}   \\
\text{(* Rule of Two Column Sums *)} \\
\text{Rule2CSQ}[\text{x$\_$}]\text{:=}(Z=\text{zpart}[x]; 
Z=Z-\text{DiagonalMatrix}[\text{Diagonal}@Z]; \\
W=\text{wpart}[x]; 
W=W-\text{DiagonalMatrix}[\text{Diagonal}@W]; 
X=Z+W; \\
P=\text{Position}[X,2]; 
\text{If}[\text{Length}[P]==2,\text{Return}@\text{False}]; \\
k=1; \\
\text{While}[k\leq \text{Length}[P],\text{If}[ 
(\text{Total}[Z[[P[[k,1]] ]]]+\text{Total}[Z[[P[[k,2]] ]]]-2)* \\
{}\quad \!\!(\text{Total}[W[[P[[k,1]] ]]]+\text{Total}[W[[P[[k,2]] ]]]-2)\!==\!0,
\text{Return}@\text{False}]; 
k\text{++}]; 
\text{Return}@\text{True}); \\
\text{FMX}=\text{Select}[\text{FMX},\text{Rule2CSQ}]; \\
%\text{Table}[\text{FMX}[[k]]\text{//}\text{MatrixForm},\{k,\text{Length}@\text{FMX}\}]; \\
%\text{Str1}=\text{Style}[\text{{``} Rule of Two Column Sums{''}},\text{Fstyle}]; \\
%\text{Str2}=\text{Style}[\text{ToString}@\text{Length}[\text{FMX}],\text{Fstyle}]; \\
%\text{Str3}=\text{Style}[\text{ToString}[\text{AbsoluteTime}[]-\text{initialtime}],\text{Fstyle}]; \\
%\text{Print}[\text{{``}After{''}} ,\text{Str1},\text{{``}, number of possible zw-matrices: {''}}, \text{Str2}, \\ \text{{``}, cumulative time:
%{''}}\text{Str3}];\text{Print}[\,]; \\
{}   \\
\text{(* Fourth Rule of Triangles *)} \\
\text{RuleTriangle4Q}[\text{x$\_$}]\text{:=}(Z=\text{zpart}[x];W=\text{wpart}[x];B=Z+W;k=n; \\
\text{While}[k\geq 3, j=2; \\
{}\quad \text{While}[j<k, i=1; \\
{}\qquad \text{While}[i<j,s=B[[i,j]]*B[[j,k]]*B[[k,i]]* \\
{}\qquad\quad \text{Max}[(Z[[i,j]]+Z[[k,j]]+Z[[i,k]]),(W[[i,j]]+W[[k,j]]+W[[i,k]])]; \\
{}\qquad\quad \text{If}[ s\text{!=}0 \&\&s\text{!=}3 \&\& s\text{!=}24 ,\text{Return}@\text{False}]; 
i\text{++}]; 
j\text{++}]; 
k\text{--}]; 
\text{Return}@\text{True}); \\
\text{FMX}=\text{Select}[\text{FMX},\text{RuleTriangle4Q}]; \\
%\text{Table}[\text{FMX}[[k]]\text{//}\text{MatrixForm},\{k,\text{Length}[\text{FMX}]\}]; \\
%\text{Str1}=\text{Style}[\text{{``} Fourth Rule of Triangles{''}},\text{Fstyle}]; \\
%\text{Str2}=\text{Style}[\text{ToString}@\text{Length}[\text{FMX}],\text{Fstyle}]; \\
%\text{Str3}=\text{Style}[\text{ToString}[\text{AbsoluteTime}[]-\text{initialtime}],\text{Fstyle}]; \\
%\text{Print}[\text{{``}After{''}} ,\text{Str1},\text{{``}, number of possible zw-matrices: {''}}, \text{Str2}, \\ \text{{``, cumulative time:
%''}}\text{Str3}];\text{Print}[\,]; \\
{}   \\
\text{(* Rule of Connected Components *)} \\
\text{componentsZ}[\text{x$\_$}]\text{:=} (\text{ZZ}=\text{zpart}[x]; 
\text{ZC}=\text{MatrixPower}[\text{ZZ},n-1]; \\
\text{zc}=\text{Flatten}[\text{Tuples}[\{\{0\}\},n-1],1]; 
\text{Pz}=\text{Table}[k,\{k,1,n\}]; \\
k=1; \\
\text{While}[\text{Length}[\text{Pz}]>0, \text{zc}[[k]]=\text{Union}[\{\text{Pz}[[1]]\},\text{Select}[\text{Pz},\text{ZC}[[\text{Pz}[[1]],\#]]>0\&]]; \\
\text{Pz}=\text{Complement}[\text{Pz},\text{zc}[[k]]]; 
k\text{++}];  \\
\text{zc}=\text{Select}[\text{zc},\text{Length}[\#]>1\&]; 
\text{Return}@\text{zc}); \\
{}   \\
\text{componentsW}[\text{x$\_$}]\text{:=} (\text{WW}=\text{wpart}[x]; 
\text{WC}=\text{MatrixPower}[\text{WW},n-1]; \\
\text{wc}=\text{Flatten}[\text{Tuples}[\{\{0\}\},n-1],1]; 
\text{Pw}=\text{Table}[k,\{k,1,n\}]; \\ 
k=1; \\
\text{While}[\text{Length}[\text{Pw}]>0, \text{wc}[[k]]=\text{Union}[\{\text{Pw}[[1]]\},\text{Select}[\text{Pw},\text{WC}[[\text{Pw}[[1]],\#]]>0\&]]; \\
\text{Pw}=\text{Complement}[\text{Pw},\text{wc}[[k]]] ; 
k\text{++}]; \\
\text{wc}=\text{Select}[\text{wc},\text{Length}[\#]>1\&]; 
\text{Return}@\text{wc}); \\
{}   \\
\text{RuleConnCompZQ}[\text{x$\_$}]\text{:=}(Z=\text{zpart}[x]; 
W=\text{wpart}[x]; 
X=Z+W; \\
X=X-\text{DiagonalMatrix}[\text{Diagonal}[X]]; \\
\text{If}[\text{Tr}[Z]<3,\text{Return}@\text{True}];  \\
\text{zc}=\text{Select}[\text{componentsZ}[x],\text{Length}[\#]\geq 3\&]; \\
k=1; \\
\text{While}[k\leq \text{Length}[\text{zc}],l=\text{Length}[\text{zc}[[k]]]; \\
\text{sX}=X[[\text{zc}[[k]],\text{zc}[[k]]]]; 
\text{sZ}=Z[[\text{zc}[[k]],\text{zc}[[k]]]]; \\
\text{If}[\text{Tr}[\text{sZ}]==1,\text{Return}@\text{False}]; \\
\text{If}[\text{Tr}[\text{sZ}]==l\&\&\text{Count}[\text{Flatten}[\text{sX}],2]==l(l-1),\text{Return}@\text{False}]; 
k\text{++}]; \\
\text{Return}@\text{True}); \\
{}   \\
\text{RuleConnCompWQ}[\text{x$\_$}]\text{:=}(Z=\text{zpart}[x]; 
W=\text{wpart}[x]; \\
X=Z+W; 
X=X-\text{DiagonalMatrix}[\text{Diagonal}@X]; \\
\text{If}[\text{Tr}[W]<3,\text{Return}@\text{True}]; \\
\text{wc}=\text{Select}[\text{componentsW}[x],\text{Length}[\#]\geq 3\&]; \\
k=1; \\
\text{While}[k\leq \text{Length}[\text{wc}],l=\text{Length}[\text{wc}[[k]]]; \\
{}\quad \text{sX}=X[[\text{wc}[[k]],\text{wc}[[k]]]]; 
\text{sW}=W[[\text{wc}[[k]],\text{wc}[[k]]]]; \\
{}\quad \text{If}[\text{Tr}[\text{sW}]==1,\text{Return}@\text{False}]; \\
{}\quad \text{If}[\text{Tr}[\text{sW}]==l\&\&\text{Count}[\text{Flatten}[\text{sX}],2]==l(l-1),\text{Return}@\text{False}]; 
k\text{++}];  \\
\text{Return}@\text{True}); \\
\text{FMX}=\text{Select}[\text{FMX},\text{RuleConnCompZQ}]; \\
\text{FMX}=\text{Select}[\text{FMX},\text{RuleConnCompWQ}]; \\
%\text{Str1}=\text{Style}[\text{{``} Rule of Connected Components{''}},\text{Fstyle}]; \\
%\text{Str2}=\text{Style}[\text{ToString}@\text{Length}[\text{FMX}],\text{Fstyle}]; \\
%\text{Str3}=\text{Style}[\text{ToString}[\text{AbsoluteTime}[]-\text{initialtime}],\text{Fstyle}]; \\
%\text{Print}[\text{{``}After{''}} ,\text{Str1},\text{{``}, number of possible zw-matrices: {''}}, \text{Str2}, \\ \text{{``}, cumulative time:
%{''}}\text{Str3}];\text{Print}[\,]; \\
{}   \\
\text{(* Second Rule of Quadrilaterals *)} \\
\text{odminortest}[\text{x$\_$}]\text{:=} 
\text{Flatten}@ 
\text{Table}[ \\
{}\quad \{\text{If}[l\text{==}j\|l\text{==}k\|(\text{zpart}[x][[i,j]]+\text{wpart}[x][[i,j]])
(\text{zpart}[x][[i,k]]+\text{wpart}[x][[i,k]]) \\
{}\quad (\text{zpart}[x][[l,j]]+\text{wpart}[x][[l,j]]) (\text{zpart}[x][[l,k]]+\text{wpart}[x][[l,k]])\text{==}0, \{\}, \\
{}\quad \{\text{zpart}[x][[i,j]]+\text{zpart}[x][[l,k]],\text{zpart}[x][[i,k]]+\text{zpart}[x][[l,j]], \\
{}\quad\;\, \text{wpart}[x][[i,j]]+\text{wpart}[x][[l,k]],\text{wpart}[x][[i,k]]+\text{wpart}[x][[l,j]]\}]\}, \\
{}\quad \{k,3,n\},\{i,1,k-1\},\{l,i+1,n\},\{j,i+1,k-1\}]; \\
\text{RuleQuad2Q}[\text{x$\_$}]\text{:=}\text{Count}[\text{odminortest}[x],1]\text{==}0; \\
\text{FMX}=\text{Select}[\text{FMX},\text{RuleQuad2Q}]; \\
%\text{Str1}=\text{Style}[\text{{``} Second Rule of Quadrilaterals{''}},\text{Fstyle}]; \\
%\text{Str2}=\text{Style}[\text{ToString}@\text{Length}[\text{FMX}],\text{Fstyle}]; \\
%\text{Str3}=\text{Style}[\text{ToString}[\text{AbsoluteTime}[]-\text{initialtime}],\text{Fstyle}]; \\
%\text{Print}[\text{{``}After{''}} ,\text{Str1},\text{{``}, number of possible zw-matrices: {''}}, \text{Str2}, \\ \text{{``}, cumulative time:
%{''}}\text{Str3}];\text{Print}[\,]; \\
{}   \\
\text{(* Rule of Pentagons *)} \\
\text{RulePentagonQ}[\text{x$\_$}]\text{:=}(Z=\text{zpart}[x];W=\text{wpart}[x];X=Z+2W;i=1; \\
\text{While}[i<n-3,j=i+1; \\
{}\quad \text{While}[j\leq n,k=i+1; \\
{}\qquad \text{While}[k\leq n,l=i+1; \\
{}\qquad\quad \text{While}[l\leq n,m=j+1; \\
{}\qquad\qquad \text{While}[m\leq n, \text{If}[(j-k)(k-l)(l-m)(j-l)(k-m)(j-m)\neq 0, \\
{}\qquad\qquad \text{side}=\{X[[i,j]],X[[j,k]],X[[k,l]],X[[l,m]],X[[m,i]]\}; \\
{}\qquad\qquad v=\{\text{Count}[\text{side},0],\text{Count}[\text{side},1],\text{Count}[\text{side},2],\text{Count}[\text{side},3]\}; \\
{}\qquad\qquad \text{If}[v[[1]]\text{==}0\&\&(v[[2]]==1\|v[[3]]==1\|v==\{0,4,0,1\}\|v==\{0,0,4,1\}), \\
{}\qquad\qquad \text{Return}@\text{False}]]; 
m\text{++};]; 
l\text{++};]; 
k\text{++};]; 
j\text{++};]; 
i\text{++};];\text{Return}@\text{True}); \\
\text{FMX}=\text{Select}[\text{FMX},\text{RulePentagonQ}]; \\
\text{(* } v[[1]]\text{==}0 \;\text{means these edges form a pentagon, } \\
{}\quad\; X[[i,j]]=1\Leftrightarrow \text{$z$-edge}, 
2\Leftrightarrow \text{$w$-edge}, 3\Leftrightarrow  \text{$zw$-edge}, \text{similar for } X[[j,k]], \text{etc}.  \\
{}\quad\; \text{Eliminate pentagons with exactly one $z$-edge } (v[[2]]\text{==}1),\; \text{exactly one} \\
{}\quad\; \text{$w$-edge } (v[[3]]\text{==}1), 4\; \text{$z$-edges plus  1 $zw$-edge }  (v=\{0,4,0,1\}), \; \text{and 4} \\
{}\quad\; \text{$w$-edges plus  1 $zw$-edge }  (v=\{0,0,4,1\}). \text{ *)}  \\
%\text{Str1}=\text{Style}[\text{{``} Rule of Pentagons{''}},\text{Fstyle}]; \\
%\text{Str2}=\text{Style}[\text{ToString}@\text{Length}[\text{FMX}],\text{Fstyle}]; \\
%\text{Str3}=\text{Style}[\text{ToString}[\text{AbsoluteTime}[]-\text{initialtime}],\text{Fstyle}]; \\
%\text{Print}[\text{{``}After{''}} ,\text{Str1},\text{{``}, number of possible zw-matrices: {''}}, \text{Str2},  \text{{``, cumulative time:
%''}}\text{Str3}];\text{Print}[\,]; \\
%{}   \\
%\text{mxruletime}=\text{AbsoluteTime}[];  \\
{}   \\
\text{(* Delete duplicates *)} \\
\text{duplicateindices}=\{\}; \\
\text{Do}[A=\text{FMX}[[k]]; \\
\text{AA}=\text{Flatten}[\text{Table}[\{\{A[[1]][[\text{pm}[[j]],\text{pm}[[j]]]],A[[2]][[\text{pm}[[j]],\text{pm}[[j]]]]\}, \\
{}\qquad\qquad\qquad \{A[[2]][[\text{pm}[[j]],\text{pm}[[j]]]],A[[1]][[\text{pm}[[j]],\text{pm}[[j]]]]\}\},\{j,1,\text{Length}[\text{pm}]\}],1]; \\
\text{If}[\text{Length}@\text{Intersection}[\text{AA},\text{FMX}[[1\text{;;}k-1]]]>0, \\
{}\quad \text{duplicateindices}=\text{Append}[\text{duplicateindices},\{k\}]],\{k,2,\text{Length}@\text{FMX}\}]; \\
\text{FMX}=\text{Delete}[\text{FMX},\text{duplicateindices}]; \\
%{}   \\
%\text{Str1}=\text{Style}[\text{{``} deleting duplicates{''}},\text{Fstyle}]; \\
%\text{Str2}=\text{Style}[\text{ToString}@\text{Length}[\text{FMX}],\text{Fstyle}]; \\
%\text{Str3}=\text{Style}[\text{ToString}[\text{AbsoluteTime}[]-\text{initialtime}],\text{Fstyle}]; \\
%\text{Print}[\text{{``}After{''}} ,\text{Str1},\text{{``}, number of possible zw-matrices: {''}}, \text{Str2}, \\ \text{{``}, cumulative time:
%{''}}\text{Str3}];\text{Print}[\,]; \\
%{}   \\
%\text{zwmxtime2}=\text{AbsoluteTime}[];  \\
%{}   \\
\text{(* END SELECTION OF zw}-\text{MATRICES *)} \\
{}   \\
\text{(* BEGIN DRAWING DIAGRAMS *)} \\
{}   \\
\text{FMXR}=\text{Table}[\{\text{trianglepart}[\text{FMX}[[k,1]]],\text{trianglepart}[\text{FMX}[[k,2]]]\},\{k,\text{Length}[\text{FMX}]\}];
 \\
\text{(* Remove the lower left triangular parts for square matrices *)} \\
%\text{(* Remove the lower left triangular parts for square matrices on the left and on the right *)} \\
{}   \\
%\text{Table}[\text{FMXR}[[k]]\text{//}\text{MatrixForm},\{k,\text{Length}[\text{FMXR}]\}] ; \\
%\text{(* Show all possible reduced matrices at current stage. *)} \\
\text{rededges}[\text{k$\_$}]\text{:=}\text{Rule}\text{@@@}\text{Position}[\text{zpart}@\text{FMXR}[[k]],1]; 
\text{(* Collect $z$-strokes  *)} \\ % , mark them red *)} \\ %
\text{bluedges}[\text{k$\_$}]\text{:=}\text{Rule}\text{@@@}\text{Position}[\text{wpart}@\text{FMXR}[[k]],1];
\text{(* Collect $w$-strokes *)} \\ %, mark them blue *)} \\
\text{alledges}[\text{k$\_$}]\text{:=}\text{Flatten}[\text{Append}[\text{rededges}[k],\text{bluedges}[k]]]; \\
%\text{(* Collect all edges *)} \\
\text{(* Delete duplicate edges, then determine color of edges.*)} \\
\text{Print}@\text{Table}[\text{GraphPlot}[ \\
{}\quad \text{DeleteDuplicates}[\text{Table}[ 1\to j,\{j,2,n\}]\sim \!\text{Join}\!\sim \text{alledges}[k]], \\
{}\quad \text{VertexLabeling}\to \text{False},  \text{EdgeRenderingFunction}\to  (\text{Which}[  \\
{}\qquad \text{MemberQ}[\text{rededges}[k],\text{First}[\text{$\#$2}]\to \text{Last}[\text{$\#$2}]]== \text{True}\&\& \\
{} \qquad \text{MemberQ}[\text{bluedges}[k],\text{First}[\text{$\#$2}]\to \text{Last}[\text{$\#$2}]]==\text{False},\\
{} \qquad \{\text{RGBColor}[1,.3,.3],\text{Thickness}[.025],\text{Line}[\text{$\#$1}]\}, \\
{} \qquad \text{MemberQ}[\text{rededges}[k],\text{First}[\text{$\#$2}]\to \text{Last}[\text{$\#$2}]]==\text{False}\&\& \\
{} \qquad \text{MemberQ}[\text{bluedges}[k],\text{First}[\text{$\#$2}]\to \text{Last}[\text{$\#$2}]]==\text{True},\\
{} \qquad \{\text{RGBColor}[.2,.7,1],\text{Thickness}[.025],\text{Line}[\text{$\#$1}]\}, \\
{} \qquad \text{MemberQ}[\text{rededges}[k],\text{First}[\text{$\#$2}]\to \text{Last}[\text{$\#$2}]]==\text{True}\&\&\\
{} \qquad \text{MemberQ}[\text{bluedges}[k],\text{First}[\text{$\#$2}]\to \text{Last}[\text{$\#$2}]]==\text{True}, \\
{} \qquad \{\text{Thickness}[.025],\text{RGBColor}[.5,.3,.9],\text{Line}[\text{$\#$1}]\}, \\
{} \qquad \text{MemberQ}[\text{rededges}[k],\text{First}[\text{$\#$2}]\to \text{Last}[\text{$\#$2}]]==\text{False}\&\& \\
{} \qquad \text{MemberQ}[\text{bluedges}[k],\text{First}[\text{$\#$2}]\to \text{Last}[\text{$\#$2}]]==\text{False}, 
\{\text{Thickness}[.025]\}]\&), \\
%\text{(* When the edge comes from the left side, not from the right side, then it is red}; \text{if from the right side, not from}
%\text{the left side,  then it is blue}; \text{if from both sides, then it is purple} (\text{zw}-\text{edge}).
%\text{*)}\text{VertexCoordinateRules}\to \text{Flatten}[\text{Table}[\{k\to \{\text{Cos}[2(k-1)\pi /n],\text{Sin}[2(k-1)\pi /n]\}\},\{k,n\}],1], \\
%\text{(* Place vertices at} n-\text{th roots of unity, in the counterclockwise order}. \text{*)} \\
{}\quad \text{VertexRenderingFunction}\to  (\{\text{$\#$2}\text{/.}\text{Table}[ \\
{}\qquad \text{Which}[\text{FMXR}[[k,1]][[i]][[i]]\text{==}1\&\&\text{FMXR}[[k,2]][[i]][[i]]\text{==}0,i\to \text{RGBColor}[1,.3,.3],  \\
{}\qquad \text{FMXR}[[k,1]][[i]][[i]]\text{==}0\&\&\text{FMXR}[[k,2]][[i]][[i]]\text{==}1, i\to \text{RGBColor}[.2,.7,1], \\
{}\qquad \text{FMXR}[[k,1]][[i]][[i]]\text{==}1\&\&\text{FMXR}[[k,2]][[i]][[i]]\text{==}1,i\to \text{RGBColor}[.5,.3,.9],\\
{}\qquad \text{FMXR}[[k,1]][[i]][[i]]\text{==}0\&\&\text{FMXR}[[k,2]][[i]][[i]]\text{==}0,i\to \text{White}],  \{i,n\}], \\
{}\qquad \text{EdgeForm}[\text{Black}], \text{Disk}[\text{$\#$1},.24],\text{Black},\text{FontSize}\to 28,\text{FontWeight}\to \text{Bold}, \\
{}\qquad \text{Text}[\text{$\#$2},\text{$\#$1}],\text{Opacity}[.5],\text{RGBColor}[0,.5,0],\text{FontSize}\to 24, \text{Text}[k,\{1.1,-1\}]\}\&),\\
{}\quad \text{SelfLoopStyle}\to \text{None},\text{ImageSize}\to \{240,240\}],\{k,1,\text{Length}[\text{FMXR}]\}]; \\
\text{(* Color vertices:\;$z$-circles red, $w$-circles blue, $zw$-circles purple  *)} \\
%\text{(* Color of vertices are determined by diagonals of left and right sides. $z$-circles are red, $w$-circles are blue, 
%$zw$-circles are purple. Remove self-loops because they are identified by their own colors. *)} \\
%{}   \\
%\text{graphingtime}=\text{AbsoluteTime}[];  \\
{}   \\
\text{(* END DRAWING DIAGRAMS *)} \\
%{}   \\
%\text{Print}[\text{{``}Number of possibilities: {''}},\text{Length}[\text{FMXR}],\text{{``}. 
%Total computation time} \\
%{}\quad \text{ in seconds: {''}},  \text{AbsoluteTime}[]-\text{initialtime}, 
%\text{$\texttt{"}$. Time used for initial } \\
%{}\quad \text{search of zw-matrices, applications of matrix rules, eliminations of } \\
%{}\quad \text{duplicate zw-matrices for the final list of }  \text{zw-matrices, and graphing:$\texttt{"}$}, \\
%{}\quad \{\text{zwmxtime}-\text{initialtime},\text{mxruletime}-\text{zwmxtime}, \text{zwmxtime2}-\text{mxruletime}, \\
%{}\quad \text{graphingtime}-\text{zwmxtime2}\}]  \\
) 
\)

}

\normalsize

%\newpage

% ----------------------------------------------------------------- %
\section*{Appendix II: Program for Algorithm II by Mathematica} \label{sec:program2}

%\href{https://drive.google.com/drive/folders/1ud3jHpuvDyhsWZBSV9Kjmq6xTKGw6edL?usp=sharing}{A 
%link to the fold containing the programs.}\vspace*{10pt}

This algorithm must be compiled after Algorithm~I. 
One may use the command 
{\tt FindOrder[$n$]} or {\tt FindOrder[$n,t$]} to 
generate the optimal $zw$-order matrices of Type~1 and the collections $\coll_2$, $\coll_3$ for diagram $n$.  
A default time constraint is set to prevent from enumerating all Type~2 or Type~3 $zw$-order matrices when there are too many possibilities. 
The optional argument $t$ is to change this time constraint. 

The optimal $zw$-order matrices of Type~1 is denoted {\tt \{zo,wo\}}, and the collection $\coll_2$  is denoted  
{\tt zwotype3} in the program. Algorithm~III may generate mass relations by calling either of them. \\

\small
{\tt
\noindent\(
 (* \text{ Generate} $ r$- \text{ order} \text{ matrices} \text{ from} $ 
\text{ zw}$- \text{ order} \text{ matrices}\ *) \\
\text{Dis}=\{ \text{  }  \{ \{ \} ,  \{ \} ,  \{ \} , \{\} {  },\{\}  {   } \}, \
%{}\qquad\quad\;\  
\{  \{\} , \{\} , \{ \} ,\{2,3,4\},\{4\}  \}, \
%{}\qquad\quad\;\  
\{  \{\} , \{\} , \{\} ,\{2,3,4\} ,\{4\} \},  \\
{}\qquad\quad\;\  
\{  \{\}  ,\{2,3,4\},\{2,3,4\}  ,\{2,3,4\}  ,\{4\}  \}, \
%{}\qquad\quad\;\  
\{  \{\} ,  \{4\} , \{4\} , \{4\} ,\{5\}  \} \text{  } \}; \\
\text{DisEdge}=  \{\text{   }\{ \{\} , \{\} , \{\} , \{\} ,\{3\}\}, \ 
%{}\qquad\;\ \qquad\quad 
\{  \{\} , \{\} , \{\} , \{2\} ,\{\}  \}, \ 
%{}\qquad\;\ \qquad\quad 
\{  \{\}  , \{\} , \{1\} , \{\} ,\{\}  \}, \\
{}\qquad\;\ \qquad\quad \{  \{\}  , \{2\} , \{\} , \{\},\{\}  \}, \
%{}\qquad\;\ \qquad\quad  
\{    \{3\} , \{\} , \{\} , \{\} ,\{\}  \} \text{ } \}; \\
 {} \text{Dis2}[ \text{zorderset6}$\_$, \text{worderset6}$\_$]:= \\
{} \quad \text{DeleteDuplicates}[ \text{Flatten}[ \text{Table}[ \text{Dis}[[ 
\text{zorderset6}[[ i6]], \text{worderset6}[[ j6]]]],\\
{} \quad  \{ i6,1, \text{Length}[ \text{zorderset6}]\},\{ j6,1, \text{Length}[ \text{worderset6}
]\}]]]; \\
 {} \text{Dis2Edge}[ \text{zorderset6}$\_$, \text{worderset6}$\_$]:= 
\text{DeleteDuplicates}[\\
{}\quad \text{Flatten}[ \text{Table}[ \text{DisEdge}[[ \text{zorderset6}[[ i6]], \text{worderset6}[[ j6]]]],\{ i6,1, 
\text{Length}[ \text{zorderset6}]\},\\ {}\quad \{ j6,1, \text{Length}
 \text{ngth}[ \text{worderset6}]\}]]]; \\
 {} \text{Dis3}[\{ \text{zomatrix5}$\_$, \text{womatrix5}$\_$\}]:=( 
\text{dx5}= \text{Table}[\{\},\{ i5,1, n\},\{ j
5,1, n\}]; \\
 {}\quad \text{Do}[ \text{Do}[ \text{If}[ XX1[[ i5, j5]]==0, \\
 {}\quad \quad \text{dx5}[[ i5, j5]]= \text{Dis2}[ \text{zomatrix5}[[ 
i5, j5]], \text{womatrix5}[[ i5, j5]]], \\
 {}\quad \quad \text{dx5}[[ i5, j5]]= \text{Dis2Edge}[ 
\text{zomatrix5}[[ i5, j5]], \text{womatrix5}[[ i5, j5]]]]; \\
 {}\quad \text{dx5}[[ j5, i5]]= \text{dx5}[[ i5, j5]],\{ i5, j5+1, n\}],\{ 
j5,1, n-1\}]; \\
 {}\quad \text{Return}[ \text{dx5}]) \\ \\
 {}(* \text{ PRINCIPLES} \text{ FOR} \text{ UPDATING} \text{ ORDERS}\ *) 
\\ \\
 (* \text{ Proposition}\ 4.5: \text{ triangle} \text{ inequalities}\ *) \\
\text{TriIneqQ}[ \text{order5a}$\_$, \text{order5b}$\_$, 
\text{order5c}$\_$]:= (\\
{} \quad \text{If}[ \text{Count}[\{ \text{order5a}, \text{order5b}, \text{order5c}\}, \text{Max}[ \text{order5a}, 
\text{order5b}, \text{order5c}]]==1, \\
 {}\quad \quad \text{Return}[ \text{False}], \text{Return}[ \text{True}]]) \\
 {} \text{TriIneqQ2}[ \text{order4}$\_$, \text{orderset4A}$\_$, 
\text{orderset4B}$\_$]:=( ii4=1; \\
 {}\quad \text{While}[ ii4<= \text{Length}[ \text{orderset4A}],  jj4=1; \\
 {}\quad \quad \text{While}[ jj4<= \text{Length}[ \text{orderset4B}], \\
 {}\quad \quad \quad \text{If}[ \text{TriIneqQ}[ \text{order4}, 
\text{orderset4A}[[ ii4]], \text{orderset4B}[[ jj4]]]== \text{True}, \\
 {}\quad \quad \quad \quad \text{Return}[ \text{True}]]; jj4++]; ii4++];\text{Return}[ \text{False}]) \\
 {} \text{TriIneq}[\{ \text{zomatrix3}$\_$, 
\text{womatrix3}$\_$\}]:=( \text{Clear}[ \text{zo3}, \text{wo3}]; \\
 {}\quad \text{zo3}= \text{zomatrix3}; \\
 {}\quad \text{wo3}= \text{womatrix3}; \\
 {}\quad \text{Do}[ \text{Do}[ \text{zo3}[[ i3, i3]]= \text{Select}[ 
\text{zo3}[[ i3, i3]], \text{TriIneqQ2}[\#, \text{zo3}[[ j3, j3]], 
\text{zo3}[[ i3, j3]]]\&]; \\
 {}\quad\quad \quad  \text{zo3}[[ i3, j3]]= \text{Select}[ \text{zo3}[[ i3, j3]], 
\text{TriIneqQ2}[\#, \text{zo3}[[ j3, j3]], \text{zo3}[[ i3, i3]]]\&]; \\
 {}\quad\quad \quad  \text{zo3}[[ j3, j3]]= \text{Select}[ \text{zo3}[[ j3, j3]], 
\text{TriIneqQ2}[\#, \text{zo3}[[ i3, i3]], \text{zo3}[[ i3, j3]]]\&]; \\
 {}\quad\quad \quad  \text{zo3}[[ j3, i3]]= \text{zo3}[[ i3, j3]]; \\
 {}\quad\quad \quad  \text{wo3}[[ i3, i3]]= \text{Select}[ \text{wo3}[[ i3, i3]], 
\text{TriIneqQ2}[\#, \text{wo3}[[ j3, j3]], \text{wo3}[[ i3, j3]]]\&]; \\
 {}\quad\quad \quad  \text{wo3}[[ i3, j3]]= \text{Select}[ \text{wo3}[[ i3, j3]], 
\text{TriIneqQ2}[\#, \text{wo3}[[ j3, j3]], \text{wo3}[[ i3, i3]]]\&]; \\
 {}\quad\quad \quad  \text{wo3}[[ j3, j3]]= \text{Select}[ \text{wo3}[[ j3, j3]], 
\text{TriIneqQ2}[\#, \text{wo3}[[ i3, i3]], \text{wo3}[[ i3, j3]]]\&]; \\
 {}\quad\quad \quad  \text{wo3}[[ j3, i3]]= \text{wo3}[[ i3, j3]]; \\
 {}\quad\quad \quad  ,\{ i3, j3+1, n\}],\{ j3,1, n-1\}]; \\
 {}\quad \text{Do}[ \text{Do}[ \text{Do}[\\
 {}\quad\quad \quad\quad   \text{zo3}[[ i3, j3]]= 
\text{Select}[ \text{zo3}[[ i3, j3]], \text{TriIneqQ2}[\#, \text{zo3}[[ j3, 
k3]], \text{zo3}[[ i3, k3]]]\&]; \\
 {}\quad\quad \quad\quad  \text{zo3}[[ j3, k3]]= \text{Select}[ \text{zo3}[[ j3, k3]], 
\text{TriIneqQ2}[\#, \text{zo3}[[ i3, j3]], \text{zo3}[[ i3, k3]]]\&]; \\
 {}\quad\quad \quad\quad  \text{zo3}[[ i3, k3]]= \text{Select}[ \text{zo3}[[ i3, k3]], 
\text{TriIneqQ2}[\#, \text{zo3}[[ i3, j3]], \text{zo3}[[ j3, k3]]]\&]; \\
 {}\quad\quad \quad\quad  \text{zo3}[[ j3, i3]]= \text{zo3}[[ i3, j3]]; \\
 {}\quad\quad \quad\quad  \text{zo3}[[ k3, i3]]= \text{zo3}[[ i3, k3]]; \\
 {}\quad\quad \quad\quad  \text{zo3}[[ k3, j3]]= \text{zo3}[[ j3, k3]]; \\
 {}\quad\quad \quad\quad  \text{wo3}[[ i3, j3]]= \text{Select}[ \text{wo3}[[ i3, j3]], 
\text{TriIneqQ2}[\#, \text{wo3}[[ j3, k3]], \text{wo3}[[ i3, k3]]]\&]; \\
 {}\quad\quad \quad\quad  \text{wo3}[[ j3, k3]]= \text{Select}[ \text{wo3}[[ j3, k3]], 
\text{TriIneqQ2}[\#, \text{wo3}[[ i3, j3]], \text{wo3}[[ i3, k3]]]\&]; \\
 {}\quad\quad \quad\quad  \text{wo3}[[ i3, k3]]= \text{Select}[ \text{wo3}[[ i3, k3]], 
\text{TriIneqQ2}[\#, \text{wo3}[[ i3, j3]], \text{wo3}[[ j3, k3]]]\&]; \\
 {}\quad\quad \quad\quad  \text{wo3}[[ j3, i3]]= \text{wo3}[[ i3, j3]]; \\
 {}\quad\quad \quad\quad  \text{wo3}[[ k3, i3]]= \text{wo3}[[ i3, k3]]; \\
 {}\quad\quad \quad\quad  \text{wo3}[[ k3, j3]]= \text{wo3}[[ j3, k3]]; \\
 {}\quad ,\{ i3, j3+1, n\}],\{ j3, k3+1, n-1\}],\{ k3,1, n-2\}]; \\
 {}\quad \text{Return}[\{ \text{zo3}, \text{wo3}\}]; ) \\ \\
 (* \text{ Proposition}\ 4.6\ *) \\
 \text{Rule2e}[\{ \text{zomatrix3}\_, \text{womatrix3}\_\}] := ( 
\text{Clear}[ \text{zo3}, \text{wo3}]; \\
 {}\quad \text{zo3} = \text{zomatrix3}; \\
 {}\quad \text{wo3} = \text{womatrix3}; \\
 {}\quad \text{Do}[ \text{If}[ i3 != j3 \&\& i3 != k3 \&\& j3 
!= k3 \&\& ( \text{XX1}[[ i3, j3]]* \text{XX1}[[ j3, k3]]* 
\text{XX1}[[ k3, i3]] != 0), \\
 {}\quad \quad \text{zo3}[[ i3, j3]] = \text{Intersection}[ 
\text{zo3}[[ i3, j3]], \text{zo3}[[ j3, k3]], 
\text{zo3}[[ k3, i3]]]; \\
 {}\quad \quad \text{zo3}[[ j3, k3]] = \text{zo3}[[ i3, j3]]; \\
 {}\quad \quad \text{zo3}[[ k3, i3]] = \text{zo3}[[ i3, j3]]; \\
 {}\quad \quad \text{wo3}[[ i3, j3]] = \text{Intersection}[ 
\text{wo3}[[ i3, j3]], \text{wo3}[[ j3, k3]], 
\text{wo3}[[ k3, i3]]]; \\
 {}\quad \quad \text{wo3}[[ j3, k3]] = \text{wo3}[[ i3, j3]]; \\
 {}\quad \quad \text{wo3}[[ k3, i3]] = \text{wo3}[[ i3, j3]]]; \\
 {}\quad , \{ i3, 1, n\}, \{ j3, 1, n\}, \{ k3, 1, n\}]; \\
 {}\quad \text{Return}[\{ \text{zo3}, \text{wo3}\}]) \\ \\
(* \text{ Proposition}\ 4.7( \text{ a})\ *) \\
 \text{Estimate}[\{ \text{zomatrix3}\_, \text{womatrix3}\_\}] := ( 
\text{Clear}[ \text{zo3}, \text{wo3}]; \\
 {}\quad \text{zo3} = \text{zomatrix3}; \\
 {}\quad \text{wo3} = \text{womatrix3}; \\
 {}\quad \text{Do}[ \text{If}[ i3 != j3, \\
 {}\quad \quad \text{If}[ \text{XX1}[[ i3, j3]] > 0, \\
 {}\quad \quad \quad \text{zo3}[[ i3, j3]] = 
\text{Intersection}[ \text{zo3}[[ i3, j3]], \text{Table}[6 - 
\text{wo3}[[ i3, j3]][[ k3]],\\
 {}\quad \quad \quad \{ k3, 1, \text{Length}[ 
\text{wo3}[[ i3, j3]]]\}]]; \\
 {}\quad \quad \quad \text{wo3}[[ i3, j3]] = 
\text{Intersection}[ \text{wo3}[[ i3, j3]], \text{Table}[6 - 
\text{zo3}[[ i3, j3]][[ k3]],\\
 {}\quad \quad \quad \{ k3, 1, \text{Length}[ \text{zo3}[[ i3, 
j3]]]\}]] ]; \\
 {}\quad \quad \text{If}[ \text{XX1}[[ i3, j3]] == 0, \\
 {}\quad \quad \quad \text{zo3}[[ i3, j3]] = \text{Select}[ 
\text{zo3}[[ i3, j3]], \# >= 6 - 2* \text{Floor}[ \text{Max}[ 
\text{wo3}[[ i3, j3]]]/2] \&]; \\
 {}\quad \quad \quad \text{wo3}[[ i3, j3]] = \text{Select}[ 
\text{wo3}[[ i3, j3]], \# >= 6 - 2* \text{Floor}[ \text{Max}[ 
\text{zo3}[[ i3, j3]]]/2] \& ]]],\\
 {}\quad \quad \quad \{ i3, 1, n\}, \{ j3, 1, n\}]; \\
 {}\quad \text{Return}[\{ \text{zo3}, \text{wo3}\}]) \\ \\
 (* \text{ Proposition}\ 4.7( b)\ *) \\
 \text{CenterOfMass}[\{ \text{zomatrix3}\_, \text{womatrix3}\_\}] 
:= ( \text{Clear}[ \text{zo3}, \text{wo3}]; \\
 {}\quad \text{zo3} = \text{zomatrix3}; \\
 {}\quad \text{wo3} = \text{womatrix3}; \\
 {}\quad \text{MZ3} = \text{Max}@ \text{Flatten}[ \text{Table}[ 
 \text{Table}[ \text{zo3}[[ i3, j3]], \{ i3, j3 + 1, n\}], \{ j3, 1, 
 n - 1\}]]; \\
 {}\quad \text{MW3} = \text{Max}@ \text{Flatten}[ \text{Table}[ 
 \text{Table}[ \text{wo3}[[ i3, j3]], \{ i3, j3 + 1, n\}], \{ j3, 1, 
 n - 1\}]]; \\
 {}\quad \text{Do}[ \text{zo3}[[ i3, i3]] = \text{Select}[ \text{zo3}[[ 
i3, i3]], \# <= \text{MZ3} \&]; \\
 {}\quad\quad \text{wo3}[[ i3, i3]] = \text{Select}[ \text{wo3}[[ i3, 
i3]], \# <= \text{MW3} \&], \{ i3, 1, n\}]; \\
 {}\quad \text{Return}[\{ \text{zo3}, \text{wo3}\}]; ) \\ \\
 (* \text{ Determine} \text{ connected} \text{ components} \text{ of} $ 
\text{ z}$- \text{ matrix},$ \text{ w}$- \text{ matrix}\ *) \\
 \text{componentsZ}[ \text{x3}\_] := (\\
 {}\quad \text{MXZ3} = \text{zpart}[ \text{x3}]; \text{ZC3} = \text{MatrixPower}[ \text{MXZ3}, n - 1]; \\
 {}\quad \text{zc3} = \text{Flatten}[ \text{Tuples}[\{\{0\}\}, n - 1], 
1]; \text{Pz3} = \text{allind}; kk3 = 1; \\
 {}\quad \text{While}[ \text{Length}[ \text{Pz3}] > 0, \\
 {}\quad \quad \text{zc3}[[ kk3]] = \text{Union}[\{ 
\text{Pz3}[[1]]\}, \text{Select}[ \text{Pz3}, 
\text{ZC3}[[ \text{Pz3}[[1]], \#]] > 0 \&]]; \\
 {}\quad \quad \text{Pz3} = \text{Complement}[ \text{Pz3}, 
\text{zc3}[[ kk3]]] ;  kk3++]; \\
 {}\quad \text{zc3} = \text{Select}[ \text{zc3}, \text{Length}[\#] > 1 
\&]; \\
 {}\quad \text{Return}[ \text{zc3}]) \\ \\
 {} \text{componentsW}[ \text{x3}\_] := (\\
 {}\quad  \text{MXW3} = \text{wpart}[ \text{x3}];  \text{WC3} = \text{MatrixPower}[ \text{MXW3}, n - 1]; \\
 {}\quad \text{wc3} = \text{Flatten}[ \text{Tuples}[\{\{0\}\}, n - 1], 
1]; \text{Pw3} = \text{allind}; kk3 = 1; \\
 {}\quad \text{While}[ \text{Length}[ \text{Pw3}] > 0, \\
 {}\quad \quad \text{wc3}[[ kk3]] = \text{Union}[\{ 
\text{Pw3}[[1]]\}, \text{Select}[ \text{Pw3}, 
\text{WC3}[[ \text{Pw3}[[1]], \#]] > 0 \&]]; \\
 {}\quad \quad \text{Pw3} = \text{Complement}[ \text{Pw3}, 
\text{wc3}[[ kk3]]] ; kk3++]; \\
 {}\quad \text{wc3} = \text{Select}[ \text{wc3}, \text{Length}[\#] > 1 
\&]; \\
 {}\quad \text{Return}[ \text{wc3}]) \\ \\
(* \text{ Proposition}\ 4.9 *) \\
 \text{Prop1}[\{ \text{zomatrix3}\_, \text{womatrix3}\_\}] := ( 
\text{zo3} = \text{zomatrix3}; \\
 {}\quad \text{wo3} = \text{womatrix3}; \\
 {}\quad \text{Do}[ \text{If}[ i3 != j3, kk3 = 1;  \text{Z3A} = \{\}; \text{W3A} = \{\}; \\
 {}\quad \quad \quad \text{While}[ kk3 <= n, \\
 {}\quad \quad \quad \quad \text{If}[ kk3 != i3 \&\& kk3 != j3, \\
 {}\quad \quad \quad \quad \quad \text{Z3A} = \text{Union}[ \text{Z3A}, 
\text{zo3}[[ i3, kk3]]]; \\
 {}\quad \quad \quad \quad \quad \text{W3A} = \text{Union}[ 
\text{W3A}, \text{wo3}[[ i3, kk3]]]]; \\
 {}\quad \quad \quad \quad kk3++]; \\
 {}\quad \quad \quad \text{Z3B} = \text{Union}[ \text{Z3A}, 
\text{zo3}[[ i3, j3]]]; \\
 {}\quad \quad \quad \text{W3B} = \text{Union}[ \text{W3A}, \text{wo3}[[ 
i3, j3]]]; \\
 {}\quad \quad \quad \text{If}[( \text{Z3B} == \{1\} || \text{Z3B} == 
\{3\} || \text{Z3B} == \{5\}) \&\& \text{Max}[ \text{wo3}[[ i3, j3]]] < 
\text{Min}[ \text{W3A}], \\
 {}\quad \quad \quad \quad \text{wo3}[[ i3, i3]] = \text{Select}[ 
\text{wo3}[[ i3, i3]], \# <= \text{Max}@ \text{wo3}[[ i3, j3]] \& ]; \\
 {}\quad \quad \quad \quad \text{wo3}[[ j3, j3]] = 
\text{Select}[ \text{wo3}[[ j3, j3]], \# <= \text{Max}@ \text{wo3}[[ 
i3, j3]] \& ]; \\
 {}\quad \quad \quad \quad \text{wo3}[[ i3, j3]] = 
\text{Select}[ \text{wo3}[[ i3, j3]], \# >= \text{Min}@( 
\text{wo3}[[ i3, i3]] \\
 {}\quad \quad \quad \quad \sim \text{Join}\sim \text{wo3}[[ j3, j3]]) \& ]; ]; \\
 {}\quad \quad \quad \text{If}[( \text{W3B} == \{1\} || \text{W3B} == 
\{3\} || \text{W3B} == \{5\}) \&\& \text{Max}[ \text{zo3}[[ i3, j3]]] < 
\text{Min}[ \text{Z3A}], \\
 {}\quad \quad \quad \quad \text{zo3}[[ i3, i3]] = \text{Select}[ 
\text{zo3}[[ i3, i3]], \# <= \text{Max}@ \text{zo3}[[ i3, j3]] \& ]; \\
 {}\quad \quad \quad \quad \text{zo3}[[ j3, j3]] = 
\text{Select}[ \text{zo3}[[ j3, j3]], \# <= \text{Max}@ \text{zo3}[[ 
i3, j3]] \& ]; \\
 {}\quad \quad \quad \quad \text{zo3}[[ i3, j3]] = 
\text{Select}[ \text{zo3}[[ i3, j3]], \# >= \text{Min}@( 
\text{zo3}[[ i3, i3]] \\
 {}\quad \quad \quad \quad \sim \text{Join}\sim \text{zo3}[[ j3, j3]]) \& ]; ]], \\
 {}\quad \quad \{ i3, 1, n\}, \{ j3, 1, n\}]; \\
 {}\quad \text{Return}[\{ \text{zo3}, \text{wo3}\}]) \\ \\
 (* \text{ Proposition}\ 4.10\ *) \\
 \text{Prop2}[\{ \text{zomatrix3}\_, \text{womatrix3}\_\}] := ( \\
 {}\quad \text{zo3} = \text{zomatrix3}; \text{wo3} = \text{womatrix3}; \\
 {}\quad \text{Do}[ \text{dis3} = \{\}; \\
 {}\quad \quad jj3 = 1; \\
 {}\quad \quad \text{While}[ jj3 <= n, \\
 {}\quad \quad \quad \text{If}[ jj3 != i3, \\
 {}\quad \quad \quad \quad \text{If}[ \text{XX1}[[ i3, jj3]] == 0, 
\\
 {}\quad \quad \quad \quad \quad \text{dis3} = \text{Union}[ 
\text{dis3}, \text{Dis2}[ \text{zo3}[[ i3, jj3]], \text{wo3}[[ i3, jj3]]]], 
\\
 {}\quad \quad \quad \quad \quad \text{dis3} = \text{Union}[ 
\text{dis3}, \text{Dis2Edge}[ \text{zo3}[[ i3, jj3]], \text{wo3}[[ i3, 
jj3]]]]]]; \\
 {}\quad \quad \quad jj3++]; \\
 {}\quad \quad \text{If}[ \text{Complement}[ \text{dis3}, \{4, 5\}] == 
\{\}, \\
 {}\quad \quad \quad \text{ZO3} = \{\};  \text{WO3} = \{\};  jj3 = 1; \\
 {}\quad \quad \quad \text{While}[ jj3 <= n, \\
 {}\quad \quad \quad \quad \text{If}[ jj3 != i3, \\
 {}\quad \quad \quad \quad \quad \text{ZO3} = \text{ZO3}\sim 
\text{Join}\sim \text{zo3}[[ jj3, jj3]]; \\
 {}\quad \quad \quad \quad \quad \text{WO3} = \text{WO3}\sim 
\text{Join}\sim \text{wo3}[[ jj3, jj3]]]; \\
 {}\quad \quad \quad \quad jj3++]; \\
 {}\quad \quad \quad \text{If}[ \text{Max}[ \text{ZO3}] == 2 || 
\text{Max}[ \text{ZO3}] == 4, \\
 {}\quad \quad \quad \quad \text{zo3}[[ i3, i3]] = \text{Select}[ 
\text{zo3}[[ i3, i3]], \# <= \text{Max}[ \text{ZO3}] \&], \\
 {}\quad \quad \quad \quad \text{zo3}[[ i3, i3]] = \text{Select}[ 
\text{zo3}[[ i3, i3]], \# < \text{Max}[ \text{ZO3}] \&]]; \\
 {}\quad \quad \quad \text{If}[ \text{Max}[ \text{WO3}] == 2 || 
\text{Max}[ \text{WO3}] == 4, \\
 {}\quad \quad \quad \quad \text{wo3}[[ i3, i3]] = \text{Select}[ 
\text{wo3}[[ i3, i3]], \# <= \text{Max}[ \text{WO3}] \& ], \\
 {}\quad \quad \quad \quad \text{wo3}[[ i3, i3]] = \text{Select}[ 
\text{wo3}[[ i3, i3]], \# < \text{Max}[ \text{WO3}] \& ]]], \\
 {}\quad \quad \{ i3, 1, n\}]; \\
 {}\quad \text{Return}[\{ \text{zo3}, \text{wo3}\}]) \\
 \\
 (* \text{ Compute} \text{ the} \text{ number} \text{ of} \text{ 
possible} \text{ levels} \text{ of} \text{ orders} (1) \text{ for} 
\text{ all} \text{ off}-\\ \text{ diagonal} \text{ entries}; {}(2) \text{ for} \text{ all} \text{ entries}\ *) \\
 \text{NumberofType2}[\{ \text{zomatrix3}\_, \text{womatrix3}\_\}] 
:= ( kk3 = 1; \\
 {}\quad \text{Do}[ \text{Do}[ kk3 = kk3* \text{Length}[ 
\text{zomatrix3}[[ i3, j3]]]* \text{Length}[ 
\text{womatrix3}[[ i3, j3]]], \\
 {}\quad \quad \quad \{ i3, j3 + 1, n\}], \{ j3, 1, n - 1\}]; \\
 {}\quad \text{Return}[ kk3]) \\
 \\
 {} \text{NumberofType3}[\{ \text{zomatrix3}\_, 
\text{womatrix3}\_\}] := ( kk3 = 1; \\
 {}\quad \text{Do}[ \text{Do}[ kk3 = kk3* \text{Length}[ 
\text{zomatrix3}[[ i3, j3]]]* \text{Length}[ 
\text{womatrix3}[[ i3, j3]]], \\
 {}\quad \quad \quad \{ i3, j3, n\}], \{ j3, 1, n\}]; \\
 {}\quad \text{Return}[ kk3]) \\ \\
 (* \text{ Repeat} \text{ above} \text{ processes} \text{ until} \text{ 
possible} \text{ levels} \text{ becomes} \text{ invariant}\ *) \\
 \text{UpdateOrder}[\{ \text{zomatrix2}\_, \text{womatrix2}\_\}]:=( \\
 {}\quad \text{zo2}= \text{zomatrix2}; \text{wo2}= \text{womatrix2}; \\
 {}\quad l2= \text{NumberofType3}[\{ \text{zo2}, \text{wo2}\}]; L2= l2+1; \\
 {}\quad \text{While}[ l2!= L2, \\
 {}\quad \quad l2= \text{NumberofType3}[\{ \text{zo2}, \text{wo2}\}]; \\
 {}\quad \quad \{ \text{zo2}, \text{wo2}\}= \text{TriIneq}[\{ \text{zo2}, 
\text{wo2}\}]; \\
 {}\quad \quad \{ \text{zo2}, \text{wo2}\}= \text{Rule2e}[\{ \text{zo2}, 
\text{wo2}\}]; \\
 {}\quad \quad \{ \text{zo2}, \text{wo2}\}= \text{Estimate}[\{ \text{zo2}, 
\text{wo2}\}]; \\
 {}\quad \quad \{ \text{zo2}, \text{wo2}\}= \text{CenterOfMass}[\{ 
\text{zo2}, \text{wo2}\}]; \\
 {}\quad \quad \{ \text{zo2}, \text{wo2}\}= \text{Prop1}[\{ \text{zo2}, 
\text{wo2}\}]; \\
 {}\quad \quad \{ \text{zo2}, \text{wo2}\}= \text{Prop2}[\{ \text{zo2}, 
\text{wo2}\}]; \\
 {}\quad \quad L2= \text{NumberofType3}[\{ \text{zo2}, \text{wo2}\}];  ]; \\
 {}\quad \text{Return}[\{ \text{zo2}, \text{wo2}\}]) \\
 \\
 {} \text{UpdateOrderPart}[\{ \text{zomatrix2}\_, 
\text{womatrix2}\_\}]:=( \\
 {}\quad \text{zo2}= \text{zomatrix2}; \text{wo2}= \text{womatrix2}; \\
 {}\quad l2= \text{NumberofType3}[\{ \text{zo2}, \text{wo2}\}]; L2= l2+1; \\
 {}\quad \text{While}[ l2!= L2, \\
 {}\quad \quad l2= \text{NumberofType3}[\{ \text{zo2}, \text{wo2}\}]; \\
 {}\quad \quad \{ \text{zo2}, \text{wo2}\}= \text{TriIneq}[\{ \text{zo2}, 
\text{wo2}\}]; \\
 {}\quad \quad L2= \text{NumberofType3}[\{ \text{zo2}, \text{wo2}\}]; ]; \\
 {}\quad \text{Return}[\{ \text{zo2}, \text{wo2}\}]) \\
 \\
 (* \text{ PosOffDiag} \text{ is} \text{ to} \text{ find} \text{ a} 
\text{ position} \text{ of} \text{ an} \text{ off}- \text{ diagonal} 
\text{ entry} \text{ that} \text{ contains}\\ \text{ more} \text{ than} 
\text{ one} \text{ element}; \\
 {} \text{ PosDiag} \text{ is} \text{ similar}, \text{ but} \text{ for} 
\text{ diagonal} \text{ entries}\ *) \\
 \text{PosOffDiag}[ \text{x2}\_]:=( \\
 {}\quad ii2=1; \\
 {}\quad \text{While}[ ii2<= n-1, \\
 {}\quad \quad jj2= ii2+1; \\
 {}\quad \quad \text{While}[ jj2<= n, \\
 {}\quad \quad \quad L2= \text{Length}[ \text{x2}[[ ii2, jj2]]]; \\
 {}\quad \quad \quad \text{If}[ L2>1, \text{Return}[\{ ii2, jj2, L2\}]];  jj2++]; ii2++]; \\
 {}\quad \text{Return}[\{1,1,0\}]) \\
 \\
 \quad \text{PosDiag}[ \text{x2}\_]:=( \\
 {}\quad ii2=1; \\
 {}\quad \text{While}[ ii2<= n, \\
 {}\quad \quad L2= \text{Length}[ \text{x2}[[ ii2, ii2]]]; \\
 {}\quad \quad \text{If}[ L2>1,\text{Return}[\{ ii2, L2\}]];  ii2++];\\
 {}\quad \text{Return}[\{1,0\}]) \\
 \\
 \quad (* \text{ CRITERIA} \text{ FOR} \text{ FEASIBLE} \text{ ORDER} 
\text{ MATRICES}\ *) \\
 (* \text{ Proposition} 4.11\ *) \\
 \text{Rule2hQ}[\{ \text{zomatrix2}\_, \text{womatrix2}\_\}]:=( \\
 {}\quad \text{zo2}= \text{zomatrix2}; \text{wo2}= \text{womatrix2}; \text{dx2}= \text{Flatten}[ \text{Dis3}[\{ \text{zo2}, 
\text{wo2}\}],1] ; \\
 {}\quad \text{If}[ \text{Count}[ \text{dx2},\{2\}]==2\&\& \text{Count}[ 
\text{dx2},\{1\}]==0, \text{Return}[ \text{False}],\text{Return}[ \text{True}]]) \\
 \\
 (* \text{ Compare } \text{orders}, \text{ find } \text{candidates } \text{for } 
\text{max } \text{order } \text{terms } \text{among } Z_{ij}\text{'s}, W_{ij}\text{'s}\ *) \\ \\
(* \text{ Compare} z_{ ij}\text{'s} \text{ or } w_{ ij}\text{'s}\ *) \\
\text{CompareOrderzw}[ \text{ordermatrix5}\_,\{ ii5\_, jj5\_\},\{ kk5\_, ll5\_\}]:=(
\\
 {}\quad o5= \text{ordermatrix5}; \\
 {}\quad \text{If}[ \text{Max}@ o5[[ ii5, jj5]]< \text{Min}@ o5[[ kk5, 
ll5]],text{Return}[\{-1\}]]; \\
 {}\quad \text{If}[ \text{Min}@ o5[[ ii5, jj5]]> \text{Max}@ o5[[ kk5, 
ll5]], \text{Return}[\{1\}]]; \\
 {}\quad \text{If}[ \text{Length}[ o5[[ ii5, jj5]]]==1\&\& o5[[ ii5, jj5]]== o5[[ kk5, ll5]]\&\& 
\\
 {}\quad \quad \text{Complement}[ o5[[ ii5, jj5]],\{1,3,5\}]==0,  \text{Return}[\{0\}]]; \\
 {}\quad \text{If}[ ii5!= jj5\&\&  kk5!= ll5, \\
 {}\quad \quad \text{If}[ ii5== kk5\&\&  \text{Min}@ o5[[ ii5, 
jj5]]>= \text{Max}@ o5[[ jj5, ll5]]\&\& \\
 {}\quad \quad \quad \text{Min}@ o5[[ kk5, ll5]]>= \text{Max}@ o5[[ jj5, 
ll5]], \\
 {}\quad \quad \quad \text{If}[ \text{Min}@ o5[[ ii5, jj5]]> \text{Max}@ 
o5[[ jj5, ll5]]\ || \\
 {}\quad \quad \quad \quad \text{Min}@ o5[[ kk5, ll5]]> \text{Max}@ o5[[ jj5,
ll5]]\ || \\
 {}\quad \quad \quad \quad \text{MemberQ}[\{1,3,5\}, \text{Max}@ o5[[ jj5, ll5]]]== 
\text{True}, \text{Return}[\{0\}]]]; \\
 {}\quad \quad \text{If}[ jj5== ll5\&\&  \text{Min}@ o5[[ ii5, 
jj5]]>= \text{Max}@ o5[[ ii5, kk5]]\&\& \\
 {}\quad \quad \quad \text{Min}@ o5[[ kk5, ll5]]>= \text{Max}@ o5[[ ii5, 
kk5]], \\
 {}\quad \quad \quad \text{If}[ \text{Min}@ o5[[ ii5, jj5]]> \text{Max}@ 
o5[[ ii5, kk5]]\ || \\
 {}\quad \quad \quad \quad \text{Min}@ o5[[ kk5, ll5]]> \text{Max}@ o5[[ ii5,
kk5]]\ || \\
{}\quad \quad \quad \quad \text{MemberQ}[\{1,3,5\}, \text{Max}@ o5[[ ii5, kk5]]]== 
\text{True}, \text{Return}[\{0\}]]]; \\
 {}\quad \quad \text{If}[ ii5== ll5\&\& \text{Min}@ o5[[ ii5, 
jj5]]>= \text{Max}@ o5[[ jj5, kk5]]\&\& \\
 {}\quad \quad \quad \text{Min}@ o5[[ kk5, ll5]]>= \text{Max}@ o5[[ jj5, 
kk5]], \\
 {}\quad \quad \quad \text{If}[ \text{Min}@ o5[[ ii5, jj5]]> \text{Max}@ 
o5[[ jj5, kk5]]\ || \\
 {}\quad \quad \quad \quad \text{Min}@ o5[[ kk5, ll5]]> \text{Max}@ o5[[ jj5,
kk5]]\ || \\
 {}\quad \quad \quad \quad \text{MemberQ}[\{1,3,5\}, \text{Max}@ o5[[ jj5, kk5]]]== 
\text{True}, \text{Return}[\{0\}]]]; \\
 {}\quad \quad \text{If}[ kk5== jj5\&\& \text{Min}@ o5[[ ii5, 
jj5]]>= \text{Max}@ o5[[ ll5, ii5]]\&\& \\
 {}\quad \quad \quad \text{Min}@ o5[[ kk5, ll5]]>= \text{Max}@ o5[[ ll5, 
ii5]], \\
 {}\quad \quad \quad \text{If}[ \text{Min}@ o5[[ ii5, jj5]]> \text{Max}@ 
o5[[ ll5, ii5]]\ || \\
 {}\quad \quad \quad \quad \text{Min}@ o5[[ kk5, ll5]]> \text{Max}@ o5[[ ll5,
ii5]]\ || \\
 {}\quad \quad \quad \quad \text{MemberQ}[\{1,3,5\}, \text{Max}@ o5[[ ll5, ii5]]]== 
\text{True}, \text{Return}[\{0\}]]]]; \\
 {}\quad \text{If}[ ii5== jj5\&\&  kk5!= ll5, \\
 {}\quad \quad \text{If}[ ii5== kk5\&\&  \text{Min}@ o5[[ ii5, 
jj5]]>= \text{Max}@ o5[[ ll5, ll5]]\&\& \\
 {}\quad \quad \quad \text{Min}@ o5[[ kk5, ll5]]>= \text{Max}@ o5[[ ll5, 
ll5]], \\
 {}\quad \quad \quad \text{If}[ \text{Min}@ o5[[ ii5, jj5]]> \text{Max}@ 
o5[[ ll5, ll5]]\ || \\
 {}\quad \quad \quad \quad \text{Min}@ o5[[ kk5, ll5]]> \text{Max}@ o5[[ ll5,
ll5]]\ || \\
 {}\quad \quad \quad \quad \text{MemberQ}[\{1,3,5\}, \text{Max}@ o5[[ ll5, ll5]]]== 
\text{True}, \text{Return}[\{0\}]]]; \\
 {}\quad \quad \text{If}[ ii5== ll5\&\&  \text{Min}@ o5[[ ii5, 
jj5]]>= \text{Max}@ o5[[ kk5, kk5]]\&\& \\
 {}\quad \quad \quad \text{Min}@ o5[[ kk5, ll5]]>= \text{Max}@ o5[[ kk5, 
kk5]], \\
 {}\quad \quad \quad \text{If}[ \text{Min}@ o5[[ ii5, jj5]]> \text{Max}@ 
o5[[ kk5, kk5]]\ || \\
 {}\quad \quad \quad \quad \text{Min}@ o5[[ kk5, ll5]]> \text{Max}@ o5[[ kk5,
kk5]]\ || \\
 {}\quad \quad \quad \quad \text{MemberQ}[\{1,3,5\}, \text{Max}@ o5[[ kk5, kk5]]]== 
\text{True}, \text{Return}[\{0\}]]]]; \\
 {}\quad \text{Return}[\{1,0,-1\}]; \\
 {}\quad ) \\
 \\
 (* \text{ Compare} r_{ ij}\text{'s} \text{ or compare }  r_{ ij}\text{ with } 1\ *) \\
 \text{CompareOrderr}[ \text{rordermatrix5}\_, \{ ii5\_, jj5\_\}, \{ kk5\_, 
ll5\_\}] := ( \\
 {}\quad \text{ro5} = \text{rordermatrix5}; \\
 {}\quad \text{If}[ ii5 != jj5 \&\& kk5 != ll5, \\
 {}\quad \quad \text{If}[ \text{Max}@ \text{ro5}[[ ii5, jj5]] < 
\text{Min}@ \text{ro5}[[ kk5, ll5]],  \text{Return}[\{-1\}]]; \\
 {}\quad \quad \text{If}[ \text{Min}@ \text{ro5}[[ ii5, jj5]] > 
\text{Max}@ \text{ro5}[[ kk5, ll5]], \text{Return}[\{1\}]]; \\
 {}\quad \quad \text{If}[ \text{Length}[ \text{ro5}[[ ii5, jj5]]] == 1 
\&\&  \text{ro5}[[ ii5, jj5]] == \text{ro5}[[ kk5, ll5]] 
\&\& \\
 {}\quad \quad \quad \text{Complement}[ \text{ro5}[[ ii5, jj5]], \{1, 3, 
5\}] == 0, \text{Return}[\{0\}]]]; \\
 {}\quad \text{If}[ ii5 == jj5 \&\&  kk5 != ll5, \\
 {}\quad \quad \text{If}[ \text{Min}@ \text{ro5}[[ kk5, ll5]] >= 4, \text{Return}[\{-1\}]]; \\
 {}\quad \quad \text{If}[ \text{Max}@ \text{ro5}[[ kk5, ll5]] <= 2,  \text{Return}[\{1\}]]; \\
 {}\quad \quad \text{If}[ \text{ro5}[[ kk5, ll5]] == \{3\}, \text{Return}[\{0\}] ]]; \\
 {}\quad \text{Return}[\{1, 0, -1\}];  ) \\
 \\ 
 (* \text{ Compare} Z_{ ij}\text{'s} \text{ according} 
\text{ to}\ Z_{ij}= z_{ij}^{-1/2} w{ ij}^{-3/2}= z_{
ij} r_{ ij}^{-3}=w_{
ij}^{-1} r_{ ij}^{-1}= W_{ ij}^
{1/3} {w}_{ij}^{-4/3}\ *) \\
\text{CompareOrderZ1}[\{ \text{zomatrix4}\_, 
\text{womatrix4}\_\},\{ ii4\_, jj4\_\},\{ kk4\_, ll
4\_\}]:=( \\
 {}\quad \text{zo4}= \text{zomatrix4}; \text{wo4}= \text{womatrix4};  \text{dis}= \text{Dis3}[\{ \text{zo4}, \text{wo4}\}]; \\
 {}\quad \text{If}[ \text{ZX1}[[ ii4, jj4]]==1\&\&  \text{ZX1}[[ kk4, ll4]]==0,  \text{Return}[\{1\}]]; \\
 {}\quad \text{If}[ \text{ZX1}[[ ii4, jj4]]==1\&\&  \text{ZX1}[[ kk4, ll4]]==1,  \text{Return}[\{0\}]]; \\
 {}\quad \text{If}[ \text{ZX1}[[ ii4, jj4]]==0\&\& \text{ZX1}[[ kk4, ll4]]==1,  \text{Return}[\{-1\}]]; \\ \\
 {}\quad \text{Comparez} = \text{CompareOrderzw}[ \text{zo4}, \{ ii4, jj4\}, \{ kk4, ll4\}]; \\
 {} \quad \text{Comparew} = \text{CompareOrderzw}[ \text{wo4}, \{ ii4, jj4\}, \{ kk4, ll4\}]; \\
 {}\quad \text{Comparer} = \text{CompareOrderr}[ \text{dis}, \{ ii4, jj4\}, \{ kk4, ll4\}]; \\
\\
 {}\quad \text{If}[ \text{Comparez}==\{1\}\&\& \text{Comparew}==\{1\},  \text{Return}[\{-1\}]]; \\
 {}\quad \text{If}[ \text{Comparez}==\{1\}\&\& \text{Comparew}==\{0\},  \text{Return}[\{-1\}]]; \\
 {}\quad \text{If}[ \text{Comparez}==\{0\}\&\& \text{Comparew}==\{1\},  \text{Return}[\{-1\}]]; \\
 {}\quad \text{If}[ \text{Comparez}==\{-1\}\&\& \text{Comparer}==\{1\},  \text{Return}[\{-1\}]]; \\
 {}\quad \text{If}[ \text{Comparez}==\{0\}\&\& \text{Comparer}==\{1\}, \text{Return}[\{-1\}]]; \\
 {}\quad \text{If}[ \text{Comparez}==\{-1\}\&\& \text{Comparer}==\{0\}, \text{Return}[\{-1\}]]; \\
 {}\quad \text{If}[ \text{Comparew}==\{1\}\&\& \text{WX1}[[ kk4, ll4]]==1, 
\text{Return}[\{-1\}]]; \\
 {}\quad \text{If}[ \text{Comparez}==\{0\}\&\& \text{Comparew}==\{0\},  \text{Return}[\{0\}]]; \\
 {}\quad \text{If}[ \text{Comparez}==\{0\}\&\& \text{Comparer}==\{0\},  \text{Return}[\{0\}]]; \\
 {}\quad \text{If}[ \text{Comparew}==\{0\}\&\& \text{WX1}[[ ii4, 
jj4]]==1\&\& \text{WX1}[[ kk4, ll4]]==1, \text{Return}[\{0\}]]; \\
 {}\quad \text{If}[ \text{Comparez}==\{-1\}\&\& \text{Comparew}==\{-1\},  \text{Return}[\{1\}]]; \\
 {}\quad \text{If}[ \text{Comparez}==\{-1\}\&\& \text{Comparew}==\{0\},  \text{Return}[\{1\}]]; \\
 {}\quad \text{If}[ \text{Comparez}==\{0\}\&\& \text{Comparew}==\{-1\},  \text{Return}[\{1\}]]; \\
 {}\quad \text{If}[ \text{Comparez}==\{1\}\&\& \text{Comparer}==\{-1\}, \text{Return}[\{1\}]]; \\
 {}\quad \text{If}[ \text{Comparez}==\{0\}\&\& \text{Comparer}==\{-1\}, \text{Return}[\{1\}]]; \\
 {}\quad \text{If}[ \text{Comparez}==\{1\}\&\& \text{Comparer}==\{0\}, \text{Return}[\{1\}]]; \\
 {}\quad \text{If}[ \text{Comparew}==\{-1\}\&\& \text{WX1}[[ ii4, jj4]]==1, 
 \text{Return}[\{1\}]]; \\
 {}\quad \text{Return}[\{1,0,-1\}];) \\ 
 \\
 (* \text{ Compare} W_{ ij}\text{'s} \text{ according} 
\text{ to}\ W_{ij}= w_{ij}^{-1/2} z{ ij}^{-3/2}= w_{
ij} r_{ ij}^{-3}=z_{ij}^{-1}r_{ij}^{-1}= Z_{ ij}^
{1/3} {z}_{ij}^{-4/3}\ *) \\
\text{CompareOrderW1}[\{ \text{zomatrix4}\_, 
\text{womatrix4}\_\},\{ ii4\_, jj4\_\},\{ kk4\_, ll
4\_\}]:=( \\
 {}\quad \text{zo4}= \text{zomatrix4}; \text{wo4}= \text{womatrix4};  \text{dis}= \text{Dis3}[\{ \text{zo4}, \text{wo4}\}]; \\
 {}\quad \text{If}[ \text{WX1}[[ ii4, jj4]]==1\&\&  \text{WX1}[[ kk4, ll4]]==0,  \text{Return}[\{1\}]]; \\
 {}\quad \text{If}[ \text{WX1}[[ ii4, jj4]]==1\&\&  \text{WX1}[[ kk4, ll4]]==1,  \text{Return}[\{0\}]]; \\
 {}\quad \text{If}[ \text{WX1}[[ ii4, jj4]]==0\&\& \text{WX1}[[ kk4, ll4]]==1,  \text{Return}[\{-1\}]]; \\ \\
 {}\quad \text{Comparez} = \text{CompareOrderzw}[ \text{zo4}, \{ ii4, jj4\}, \{ kk4, ll4\}]; \\
 {} \quad \text{Comparew} = \text{CompareOrderzw}[ \text{wo4}, \{ ii4, jj4\}, \{ kk4, ll4\}]; \\
 {}\quad \text{Comparer} = \text{CompareOrderr}[ \text{dis}, \{ ii4, jj4\}, \{ kk4, ll4\}]; \\
\\
 {}\quad \text{If}[ \text{Comparew}==\{1\}\&\& \text{Comparez}==\{1\},  \text{Return}[\{-1\}]]; \\
 {}\quad \text{If}[ \text{Comparew}==\{1\}\&\& \text{Comparez}==\{0\},  \text{Return}[\{-1\}]]; \\
 {}\quad \text{If}[ \text{Comparew}==\{0\}\&\& \text{Comparez}==\{1\},  \text{Return}[\{-1\}]]; \\
 {}\quad \text{If}[ \text{Comparew}==\{-1\}\&\& \text{Comparer}==\{1\},  \text{Return}[\{-1\}]]; \\
 {}\quad \text{If}[ \text{Comparew}==\{0\}\&\& \text{Comparer}==\{1\}, \text{Return}[\{-1\}]]; \\
 {}\quad \text{If}[ \text{Comparew}==\{-1\}\&\& \text{Comparer}==\{0\}, \text{Return}[\{-1\}]]; \\
 {}\quad \text{If}[ \text{Comparez}==\{1\}\&\& \text{ZX1}[[ kk4, ll4]]==1, 
\text{Return}[\{-1\}]]; \\
 {}\quad \text{If}[ \text{Comparew}==\{0\}\&\& \text{Comparez}==\{0\},  \text{Return}[\{0\}]]; \\
 {}\quad \text{If}[ \text{Comparew}==\{0\}\&\& \text{Comparer}==\{0\},  \text{Return}[\{0\}]]; \\
 {}\quad \text{If}[ \text{Comparez}==\{0\}\&\& \text{ZX1}[[ ii4, 
jj4]]==1\&\& \text{ZX1}[[ kk4, ll4]]==1, \text{Return}[\{0\}]]; \\
 {}\quad \text{If}[ \text{Comparew}==\{-1\}\&\& \text{Comparez}==\{-1\},  \text{Return}[\{1\}]]; \\
 {}\quad \text{If}[ \text{Comparew}==\{-1\}\&\& \text{Comparez}==\{0\},  \text{Return}[\{1\}]]; \\
 {}\quad \text{If}[ \text{Comparew}==\{0\}\&\& \text{Comparez}==\{-1\},  \text{Return}[\{1\}]]; \\
 {}\quad \text{If}[ \text{Comparew}==\{1\}\&\& \text{Comparer}==\{-1\}, \text{Return}[\{1\}]]; \\
 {}\quad \text{If}[ \text{Comparew}==\{0\}\&\& \text{Comparer}==\{-1\}, \text{Return}[\{1\}]]; \\
 {}\quad \text{If}[ \text{Comparew}==\{1\}\&\& \text{Comparer}==\{0\}, \text{Return}[\{1\}]]; \\
 {}\quad \text{If}[ \text{Comparez}==\{-1\}\&\& \text{ZX1}[[ ii4, jj4]]==1, 
 \text{Return}[\{1\}]]; \\
 {}\quad \text{Return}[\{1,0,-1\}];) \\  \\
(* \text{ Compare}\ Z{ ij}\ \text{ and} z_k \
*) \\
 \text{CompareOrderZ2}[\{ \text{zomatrix4}\_, 
\text{womatrix4}\_\},\{ ii4\_, jj4\_\},\{ kk4\_, ll
4\_\}]:=( \\
 {}\quad \text{zo4}= \text{zomatrix4}; \text{wo4}= \text{womatrix4};  \text{dis}= \text{Dis3}[\{ \text{zo4}, \text{wo4}\}]; \\
 {}\quad \text{If}[ \text{ZX1}[[ ii4, jj4]]==1\&\&  \text{ZX1}[[ kk4, ll4]]==0,  \text{Return}[\{1\}]]; \\
 {}\quad \text{If}[ \text{ZX1}[[ ii4, jj4]]==1\&\&  \text{ZX1}[[ kk4, ll4]]==1,  \text{Return}[\{0\}]]; \\
 {}\quad \text{If}[ \text{ZX1}[[ ii4, jj4]]==0\&\&  \text{ZX1}[[ kk4, ll4]]==1,  \text{Return}[\{-1\}]]; \\ \\
 {}\quad \text{Comparez} = \text{CompareOrderzw}[ \text{zo4}, \{ ii4, jj4\}, \{ kk4, ll4\}]; \\
 {} \quad \text{Comparew} = \text{CompareOrderzw}[ \text{wo4}, \{ ii4, jj4\}, \{ kk4, ll4\}]; \\
 {}\quad \text{Comparer} = \text{CompareOrderr}[ \text{dis}, \{ ii4, jj4\}, \{ kk4, ll4\}]; \\
\\
 {}\quad \text{If}[ \text{Comparez}==\{-1\}\&\& \text{Comparer}==\{1\}, \text{Return}[\{-1\}]]; \\
 {}\quad \text{If}[ \text{Comparez}==\{0\}\&\&  \text{Comparer}==\{1\},  \text{Return}[\{-1\}]]; \\
 {}\quad \text{If}[ \text{Comparez}==\{-1\}\&\& \text{Comparer}==\{0\}, \text{Return}[\{-1\}]]; \\
 {}\quad \text{If}[ \text{Comparez}==\{0\}\&\&  \text{Comparer}==\{0\},  \text{Return}[\{0\}]]; \\
 {}\quad \text{If}[ \text{Comparez}==\{1\}\&\&  \text{Comparer}==\{-1\},  \text{Return}[\{1\}]]; \\
 {}\quad \text{If}[ \text{Comparez}==\{0\}\&\&  \text{Comparer}==\{-1\},  \text{Return}[\{1\}]]; \\
 {}\quad \text{If}[ \text{Comparez}==\{1\}\&\&  \text{Comparer}==\{0\},  \text{Return}[\{1\}]]; \\
 {}\quad \text{Return}[\{-1,0,1\}];  ) \\
 \\
 (* \text{ Compare}\ W{ ij}\ \text{ and} w_k \
*) \\
 \text{CompareOrderW2}[\{ \text{zomatrix4}\_, 
\text{womatrix4}\_\},\{ ii4\_, jj4\_\},\{ kk4\_, ll
4\_\}]:=( \\
 {}\quad \text{zo4}= \text{zomatrix4}; \text{wo4}= \text{womatrix4};  \text{dis}= \text{Dis3}[\{ \text{zo4}, \text{wo4}\}]; \\
 {}\quad \text{If}[ \text{WX1}[[ ii4, jj4]]==1\&\&  \text{WX1}[[ kk4, ll4]]==0,  \text{Return}[\{1\}]]; \\
 {}\quad \text{If}[ \text{WX1}[[ ii4, jj4]]==1\&\&  \text{WX1}[[ kk4, ll4]]==1,  \text{Return}[\{0\}]]; \\
 {}\quad \text{If}[ \text{WX1}[[ ii4, jj4]]==0\&\&  \text{WX1}[[ kk4, ll4]]==1,  \text{Return}[\{-1\}]]; \\ \\
 {}\quad \text{Comparez} = \text{CompareOrderzw}[ \text{zo4}, \{ ii4, jj4\}, \{ kk4, ll4\}]; \\
 {} \quad \text{Comparew} = \text{CompareOrderzw}[ \text{wo4}, \{ ii4, jj4\}, \{ kk4, ll4\}]; \\
 {}\quad \text{Comparer} = \text{CompareOrderr}[ \text{dis}, \{ ii4, jj4\}, \{ kk4, ll4\}]; \\
\\
 {}\quad \text{If}[ \text{Comparew}==\{-1\}\&\& \text{Comparer}==\{1\}, \text{Return}[\{-1\}]]; \\
 {}\quad \text{If}[ \text{Comparew}==\{0\}\&\&  \text{Comparer}==\{1\},  \text{Return}[\{-1\}]]; \\
 {}\quad \text{If}[ \text{Comparew}==\{-1\}\&\& \text{Comparer}==\{0\}, \text{Return}[\{-1\}]]; \\
 {}\quad \text{If}[ \text{Comparew}==\{0\}\&\&  \text{Comparer}==\{0\},  \text{Return}[\{0\}]]; \\
 {}\quad \text{If}[ \text{Comparew}==\{1\}\&\&  \text{Comparer}==\{-1\},  \text{Return}[\{1\}]]; \\
 {}\quad \text{If}[ \text{Comparew}==\{0\}\&\&  \text{Comparer}==\{-1\},  \text{Return}[\{1\}]]; \\
 {}\quad \text{If}[ \text{Comparew}==\{1\}\&\&  \text{Comparer}==\{0\},  \text{Return}[\{1\}]]; \\
 {}\quad \text{Return}[\{-1,0,1\}];  ) \\ 
 \\
 (* \text{ Determine} \text{ possible} \text{ maximal} \text{ terms} 
\text{ in}\ z_i-\text{equation}\ *) \\
\text{MaxOrderZ}[\{ \text{zomatrix3}\_, \text{womatrix3}\_\}, kk3\_] := ( \\
 {}\quad \text{zo3} = \text{zomatrix3}; \text{wo3} = \text{womatrix3};  \text{max3} = \text{allind}; \\
 {}\quad s3 = \text{Complement}[ \text{allind}, \{ kk3\}];  ii3 = 1; \\
 {}\quad \text{While}[ ii3 <= n - 1, jj3 = ii3 + 1; \\
 {}\quad \quad \text{While}[ jj3 <= n - 1, \\
 {}\quad \quad \quad \text{If}[ \text{CompareOrderZ1}[\{ \text{zo3}, 
\text{wo3}\}, \{ kk3, s3[[ ii3]]\}, \{ kk3, s3[[ jj3]]\}] == \{1\}, \\
 {}\quad \quad \quad \quad \text{max3} = \text{Complement}[ \text{max3}, 
\{ s3[[ jj3]]\}]; ]; \\
 {}\quad \quad \quad \text{If}[ \text{CompareOrderZ1}[\{ \text{zo3}, 
\text{wo3}\}, \{ kk3, s3[[ ii3]]\}, \{ kk3, s3[[ jj3]]\}] == \{-1\}, \\
 {}\quad \quad \quad \quad \text{max3} = \text{Complement}[ \text{max3}, 
\{ s3[[ ii3]]\}];  ];  jj3++ ]; \\
 {}\quad \quad \text{If}[ \text{CompareOrderZ2}[\{ \text{zo3}, 
\text{wo3}\}, \{ kk3, kk3\}, \{ kk3, s3[[ ii3]]\}] == \{1\}, \\
 {}\quad \quad \quad \text{max3} = \text{Complement}[ \text{max3}, \{ 
s3[[ ii3]]\}]]; \\
 {}\quad \quad \text{If}[ \text{CompareOrderZ2}[\{ \text{zo3}, 
\text{wo3}\}, \{ kk3, kk3\}, \{ kk3, s3[[ ii3]]\}] == \{-1\}, \\
 {}\quad \quad \quad \text{max3} = \text{Complement}[ \text{max3}, \{ 
kk3\}];  ];  ii3++];  \text{Return}[ \text{max3}]) \\
 \\
(* \text{ Determine} \text{ possible} \text{ maximal} \text{ terms} 
\text{ in}\ w_i-\text{equation}\ *) \\
 \text{MaxOrderW}[\{ \text{zomatrix3}\_, \text{womatrix3}\_\}, kk3\_] := ( \\
 {}\quad \text{zo3} = \text{zomatrix3}; \text{wo3} = \text{womatrix3};  \text{max3} = \text{allind}; \\
 {}\quad s3 = \text{Complement}[ \text{allind}, \{ kk3\}];  ii3 = 1; \\
 {}\quad \text{While}[ ii3 <= n - 1, jj3 = ii3 + 1; \\
 {}\quad \quad \text{While}[ jj3 <= n - 1, \\
 {}\quad \quad \quad \text{If}[ \text{CompareOrderW1}[\{ \text{zo3}, 
\text{wo3}\}, \{ kk3, s3[[ ii3]]\}, \{ kk3, s3[[ jj3]]\}] == \{1\}, \\
 {}\quad \quad \quad \quad \text{max3} = \text{Complement}[ \text{max3}, 
\{ s3[[ jj3]]\}]; ]; \\
 {}\quad \quad \quad \text{If}[ \text{CompareOrderW1}[\{ \text{zo3}, 
\text{wo3}\}, \{ kk3, s3[[ ii3]]\}, \{ kk3, s3[[ jj3]]\}] == \{-1\}, \\
 {}\quad \quad \quad \quad \text{max3} = \text{Complement}[ \text{max3}, 
\{ s3[[ ii3]]\}];  ];  jj3++ ]; \\
 {}\quad \quad \text{If}[ \text{CompareOrderW2}[\{ \text{zo3}, 
\text{wo3}\}, \{ kk3, kk3\}, \{ kk3, s3[[ ii3]]\}] == \{1\}, \\
 {}\quad \quad \quad \text{max3} = \text{Complement}[ \text{max3}, \{ 
s3[[ ii3]]\}]]; \\
 {}\quad \quad \text{If}[ \text{CompareOrderW2}[\{ \text{zo3}, 
\text{wo3}\}, \{ kk3, kk3\}, \{ kk3, s3[[ ii3]]\}] == \{-1\}, \\
 {}\quad \quad \quad \text{max3} = \text{Complement}[ \text{max3}, \{ 
kk3\}];  ];  ii3++];  \text{Return}[ \text{max3}]) \\
 \\
(* \text{ Proposition}\ 4.12\ *) \\
 \text{Rule1cQ}[\{ \text{zomatrix2}\_, \text{womatrix2}\_\}]:=( \\
 {}\quad ii2=1; \\
 {}\quad \text{While}[ ii2<= \text{Length}[ \text{zc1}], \\
 {}\quad \quad \text{zc2}= \text{Intersection}[ \text{zc1}[[ ii2]], 
\text{Flatten}@ \text{Position}[ \text{Diagonal}[ \text{ZX1}],1]]; 
\\
 {}\quad \quad \text{If}[ \text{Length}[ \text{zc2}]>0\&\& \text{Count}[ \text{Flatten}@ \text{zomatrix2}[[ 
\text{zc2}, \text{zc2}]],5]== \text{Length}[ \text{zc2}], \\
 {}\quad \quad \quad \text{Return}[ \text{False}]]; \quad ii2++]; \\
 {}\quad ii2=1; \\
 {}\quad \text{While}[ ii2<= \text{Length}[ \text{wc1}], \\
 {}\quad \quad \text{wc2}= \text{Intersection}[ \text{wc1}[[ ii2]], 
\text{Flatten}@ \text{Position}[ \text{Diagonal}[ \text{WX1}],1]]; 
\\
 {}\quad \quad \text{If}[ \text{Length}[ \text{wc2}]>0\&\& \text{Count}[ \text{Flatten}@ \text{womatrix2}[[ 
\text{wc2}, \text{wc2}]],5]== \text{Length}[ \text{wc2}], \\
 {}\quad \quad \quad \text{Return}[ \text{False}]]; \quad ii2++]; \\
 {}\quad \text{Return}[ \text{True}]) \\
 \\
 (* \text{ Proposition}\ 4.13\ *) \\
 \text{OrderQ1}[\{ \text{zomatrix2}\_, \text{womatrix2}\_\}]:=( \\
 {}\quad kk2=1; \\
 {}\quad \text{While}[ kk2<= n, \\
 {}\quad \quad \text{If}[ \text{Length}[ \text{MaxOrderZ}[\{ 
\text{zomatrix2}, \text{womatrix2}\}, kk2]]==1|| \\
 {}\quad \quad \quad \text{Length}[ \text{MaxOrderW}[\{ 
\text{zomatrix2}, \text{womatrix2}\}, kk2]]==1, \\
 {}\quad \quad \quad \text{Return}[ \text{False}]]; kk2++;  ];  \text{Return}[ \text{True}]) \\
 \\
 \quad (* \text{ Proposition}\ 4.14\ *) \\
 \text{OrderQ2}[\{ \text{zomatrix2}\_, \text{womatrix2}\_\}]:=( \\
 {}\quad \text{dis2}= \text{Dis3}[\{ \text{zomatrix2}, 
\text{womatrix2}\}]; ii2=1; \\
 {}\quad \text{While}[ ii2<= n-1,\ jj2= ii2+1; \\
 {}\quad \quad \text{While}[ jj2<= n, \\
 {}\quad \quad \quad \text{If}[ \text{MaxOrderZ}[\{ \text{zomatrix2}, 
\text{womatrix2}\}, ii2]==\{ ii2, jj2\}\&\& \\
 {}\quad \quad \quad \quad \text{MaxOrderZ}[\{ \text{zomatrix2}, 
\text{womatrix2}\}, jj2]==\{ ii2, jj2\}\&\& \\
 {}\quad \quad \quad \quad \text{Intersection}[ \text{dis2}[[ ii2, 
jj2]],\{3\}]==\{\}, \\
 {}\quad \quad \quad \quad \text{Return}[ \text{False}]]; \\
 {}\quad \quad \quad \text{If}[ \text{MaxOrderW}[\{ \text{zomatrix2}, 
\text{womatrix2}\}, ii2]==\{ ii2, jj2\}\&\& \\
 {}\quad \quad \quad \quad \text{MaxOrderW}[\{ \text{zomatrix2}, 
\text{womatrix2}\}, jj2]==\{ ii2, jj2\}\&\& \\
 {}\quad \quad \quad \quad \text{Intersection}[ \text{dis2}[[ ii2, 
jj2]],\{3\}]==\{\}, \\
 {}\quad \quad \quad \quad \text{Return}[ \text{False}]];  jj2++;  ];  ii2++];  \text{Return}[ \text{True}]) \\
 \\
 \text{(* proposition 4.15 *)}
\text{OrderQ3}[\{ \text{zomatrix2}\_, \text{womatrix2}\_\}]:=( \\
 {}\quad \text{maxZ}= \text{Table}[ \text{MaxOrderZ}[\{ \text{zomatrix2}, 
\text{womatrix2}\}, i2],\{ i2, n\}]; \\
 {}\quad \text{maxW}= \text{Table}[ \text{MaxOrderW}[\{ \text{zomatrix2}, 
\text{womatrix2}\}, i2],\{ i2, n\}]; \\
 {}\quad ii2=1; \\
 {}\quad \text{While}[ ii2<= n-2,  jj2= ii2+1; \\
 {}\quad \quad \text{While}[ jj2<= n-1,  kk2= jj2+1; \\
 {}\quad \quad \quad \text{While}[ kk2<= n, \\
 {}\quad \quad \quad \quad \text{If}[ \text{ZX1}[[ ii2, jj2]] \text{ZX1}[[ 
jj2, kk2]]!=0 \&\&  \text{ZX1}[[ kk2, ii2]]==0, \\
 {}\quad \quad \quad \quad \quad \text{If}[ \text{maxW}[[ jj2]]==\{ ii2, 
kk2\},  \text{Return}[ \text{False}]]; \\
 {}\quad \quad \quad \quad \quad \text{If}[ \text{Complement}[ \text{maxW}[[ 
ii2]]\cup \text{maxW}[[ jj2]]\cup \text{maxW}[[ kk2]],\{ ii2, jj2, 
kk2\}]==\{\}, \\
 {}\quad \quad \quad \quad \quad \quad \text{Return}[ \text{False}]]; ]; \\
 {}\quad \quad \quad \quad \text{If}[ \text{WX1}[[ ii2, jj2]] \text{WX1}[[ 
jj2, kk2]]!=0 \&\&  \text{WX1}[[ kk2, ii2]]==0, \\
 {}\quad \quad \quad \quad \quad \text{If}[ \text{maxZ}[[ jj2]]==\{ ii2, 
kk2\},  \text{Return}[ \text{False}]]; \\
 {}\quad \quad \quad \quad \quad \text{If}[ \text{Complement}[ \text{maxZ}[[ 
ii2]]\cup \text{maxZ}[[ jj2]]\cup \text{maxZ}[[ kk2]],\{ ii2, jj2, 
kk2\}]==\{\}, \\
 {}\quad \quad \quad \quad \quad \quad \text{Return}[ \text{False}]]; ]; \\
 {}\quad \quad \quad \quad \text{If}[ \text{ZX1}[[ jj2, kk2]] \text{ZX1}[[ 
kk2, ii2]]!=0 \&\&  \text{ZX1}[[ ii2, jj2]]==0, \\
 {}\quad \quad \quad \quad \quad \text{If}[ \text{maxW}[[ kk2]]==\{ ii2, 
jj2\}, \text{Return}[ \text{False}]]; \\
 {}\quad \quad \quad \quad \quad \text{If}[ \text{Complement}[ \text{maxW}[[ 
ii2]]\cup \text{maxW}[[ jj2]]\cup \text{maxW}[[ kk2]],\{ ii2, jj2, 
kk2\}]==\{\}, \\
 {}\quad \quad \quad \quad \quad \quad \text{Return}[ \text{False}]];  ]; \\
 {}\quad \quad \quad \quad \text{If}[ \text{WX1}[[ jj2, kk2]] \text{WX1}[[ 
kk2, ii2]]!=0 \&\&  \text{WX1}[[ ii2, jj2]]==0, \\
 {}\quad \quad \quad \quad \quad \text{If}[ \text{maxZ}[[ kk2]]==\{ ii2, 
jj2\},  \text{Return}[ \text{False}]]; \\
 {}\quad \quad \quad \quad \quad \text{If}[ \text{Complement}[ \text{maxZ}[[ 
ii2]]\cup \text{maxZ}[[ jj2]]\cup \text{maxZ}[[ kk2]],\{ ii2, jj2, 
kk2\}]==\{\}, \\
 {}\quad \quad \quad \quad \quad \quad \text{Return}[ \text{False}]]; ]; \\
 {}\quad \quad \quad \quad \text{If}[ \text{ZX1}[[ kk2, ii2]] \text{ZX1}[[ 
ii2, jj2]]!=0 \&\&  \text{ZX1}[[ jj2, kk2]]==0, \\
 {}\quad \quad \quad \quad \quad \text{If}[ \text{maxW}[[ ii2]]==\{ jj2, 
kk2\},  \text{Return}[ \text{False}]]; \\
 {}\quad \quad \quad \quad \quad \text{If}[ \text{Complement}[ \text{maxW}[[ 
ii2]]\cup \text{maxW}[[ jj2]]\cup \text{maxW}[[ kk2]],\{ ii2, jj2, 
kk2\}]==\{\}, \\
 {}\quad \quad \quad \quad \quad \quad \text{Return}[ \text{False}]]; ]; \\
 {}\quad \quad \quad \quad \text{If}[ \text{WX1}[[ kk2, ii2]] \text{WX1}[[ 
ii2, jj2]]!=0 \&\&  \text{WX1}[[ jj2, kk2]]==0, \\
 {}\quad \quad \quad \quad \quad \text{If}[ \text{maxZ}[[ ii2]]==\{ jj2, 
kk2\},  \text{Return}[ \text{False}]]; \\
 {}\quad \quad \quad \quad \quad \text{If}[ \text{Complement}[ \text{maxZ}[[ 
ii2]]\cup \text{maxZ}[[ jj2]]\cup \text{maxZ}[[ kk2]],\{ ii2, jj2, 
kk2\}]==\{\}, \\
 {}\quad \quad \quad \quad \quad \quad \text{Return}[ \text{False}]];  ]; \\
 {}\quad \quad \quad \quad kk2++]; jj2++];  ii2++];  \text{Return}[ \text{True}]) \\
 \\
\text{outputform1}[ \text{ordermatrix2}\_] := ( \\
 {}\quad \text{OMA} = \text{ordermatrix2}; \\
 {}\quad \text{OMB} = \text{Table}[ \text{If}[ \text{Length}@ 
\text{OMA}[[ i2, j2]] > 1, \\
 {}\quad \quad \{ \text{ToString}[ \text{Min}@ \text{OMA}[[ i2, j2]]] <> 
``\sim '' <> \text{ToString}[ \text{Max}@ \text{OMA}[[ i2, j2]]]\}, 
\\
 {}\quad \quad \text{OMA}[[ i2, j2]]], \{ i2, 1, n\}, \{ j2, 1, 
n\}]; \text{Return}[ \text{OMB}]) \\
 \\
\text{(* MAIN PROGRAM FOR FINDING ORDERS *)} \\
\text{indOrder}[ \text{x}\_]:= \text{FindOrder}[ \text{x},60]; \\
 {} \text{FindOrder}[ \text{x}\_, \text{time1}\_]:=( \\
 {}\quad t1= \text{AbsoluteTime}[]; N1= \text{x}; \text{X1}= \text{FMX}[[ \text{x}]]; \\
 {}\quad \text{Clear}[ \text{zwofamily}, \text{zwotype2}, 
\text{zwotype3}]; \\
 {}\quad \text{constrainttime}= \text{time1}; \text{TF1}=0; \\
 {}\quad \text{Print}[ \text{Style}[`` \text{finding } \text{zw}- 
\text{order } \text{matrices } \text{covering } '', \text{Bold},16], \text{Style}[`` \text{FMX}[['' <> \\
 {}\quad \quad \text{ToString}[ N1]<>``]]'', \text{Bold},16, \text{RGBColor}[0.8,0.4,0]], \text{Style}[``...'', \text{Bold},16]]; \\
 {}\quad \text{zo1}= \text{Table}[\{1,2,3,4,5\},\{ i1,1, n\},\{ j1,1, n\}]; \\
 {}\quad \text{wo1}= \text{Table}[\{1,2,3,4,5\},\{ i1,1, n\},\{ j1,1, n\}]; 
\\
 {}\quad \text{ZX1}= \text{zpart}[ \text{X1}];  \text{WX1}= \text{wpart}[ \text{X1}];  \text{XX1}= \text{ZX1}+2* \text{WX1}; \\
 {}\quad \text{zc1}= \text{componentsZ}[ \text{X1}]; \text{wc1}= \text{componentsW}[ \text{X1}]; \\
 {}\quad \text{pmzw}=\{\}; \\
 {}\quad \text{Do}[ \text{If}[ \text{ZX1}[[ \text{pm}[[ i1]], \text{pm}[[ 
i1]]]]== \text{ZX1}\&\& \text{WX1}[[ \text{pm}[[ i1]], \text{pm}[[ i1]]]]== 
\text{WX1}, \\
 {}\quad \quad \quad \text{pmzw}= \text{pmzw}\sim 
\text{Join}\sim \{ \text{pm}[[ i1]]\}], \{ i1,1, \text{Length}[ \text{pm}]\}]; \\ \\
 (* \text{ Proposition}\ 4.4( \text{ a})\ *) \\
 \text{Do}[ \text{If}[ \text{ZX1}[[ i1, i1]]==1, \text{zo1}[[ i1, i1]]=\{5\},  \text{zo1}[[ i1, i1]]=\{0,1,2,3,4\}]; \\
 {}\quad \quad \text{If}[ \text{WX1}[[ i1, i1]]==1, \text{wo1}[[ i1, i1]]=\{5\},  \text{wo1}[[ i1, i1]]=\{0,1,2,3,4\}],  \{ i1,1, n\}]; \\ \\
(* \text{ Proposition}\ 4.8\ *) \\
 \text{Do}[ L1= \text{Length}[ \text{zc1}[[ i1]]]; \\
 {}\quad \quad \text{Pos1}= \text{Position}[ \text{XX1}[[ \text{zc1}[[ i1]], 
\text{zc1}[[ i1]]]],1]; \\
 {}\quad \quad \text{Pos1}= \text{Complement}[ \text{Pos1}, \text{Table}[\{ 
j1, j1\},\{ j1,1, n\}]]; \\
 {}\quad \quad \text{If}[ \text{Length}[ \text{Pos1}]==2\&\& \\
 {}\quad \quad \quad \text{Total}[ \text{Diagonal}[ \text{ZX1}[[ 
\text{zc1}[[ i1]], \text{zc1}[[ i1]]]]]]>0, \\
 {}\quad \quad \quad \text{Do}[ \text{zo1}[[ \text{zc1}[[ i1]][[ 
\text{Pos1}[[ j1]][[1]]]], \text{zc1}[[ i1]][[ \text{Pos1}[[ 
j1]][[2]]]]]]=\{5\}; \\
 {}\quad \quad \quad \quad \text{wo1}[[ \text{zc1}[[ i1]][[ \text{Pos1}[[ 
j1]][[1]]]], \text{zc1}[[ i1]][[ \text{Pos1}[[ j1]][[2]]]]]]=\{1\}, \\
 {}\quad \quad \quad \quad \{ j1,1,2\}]], \{ i1,1, \text{Length}[ \text{zc1}]\}]; \\
 {}\quad \text{Do}[ L1= \text{Length}[ \text{wc1}[[ i1]]]; \\
 {}\quad \quad \text{Pos1}= \text{Position}[ \text{XX1}[[ \text{wc1}[[ i1]], 
\text{wc1}[[ i1]]]],2]; \\
 {}\quad \quad \text{Pos1}= \text{Complement}[ \text{Pos1}, \text{Table}[\{ 
j1, j1\},\{ j1,1, n\}]]; \\
 {}\quad \quad \text{If}[ \text{Length}[ \text{Pos1}]==2\&\& \\
 {}\quad \quad \quad \text{Total}[ \text{Diagonal}[ \text{WX1}[[ 
\text{wc1}[[ i1]], \text{wc1}[[ i1]]]]]]>0, \\
 {}\quad \quad \quad \text{Do}[ \text{wo1}[[ \text{wc1}[[ i1]][[ 
\text{Pos1}[[ j1]][[1]]]], \text{wc1}[[ i1]][[ \text{Pos1}[[ 
j1]][[2]]]]]]=\{5\}; \\
 {}\quad \quad \quad \quad \text{zo1}[[ \text{wc1}[[ i1]][[ \text{Pos1}[[ 
j1]][[1]]]], \text{wc1}[[ i1]][[ \text{Pos1}[[ j1]][[2]]]]]]=\{1\}, \\
 {}\quad \quad \quad \quad \{ j1,1,2\}]], \{ i1,1, \text{Length}[ \text{wc1}]\}]; \\ \\
 (* \text{ Proposition}\ 4.4( b)\ *) \\
 \text{Do}[ \text{If}[ i1!= j1, \\
 {}\quad \quad \quad \text{If}[ \text{XX1}[[ i1, j1]]==1, \\
 {}\quad \quad \quad \quad \text{zo1}[[ i1, j1]]= \text{Intersection}[ 
\text{zo1}[[ i1, j1]],\{4,5\}]; \\
 {}\quad \quad \quad \quad \text{wo1}[[ i1, j1]]= \text{Intersection}[ 
\text{wo1}[[ i1, j1]],\{1,2\}]]], \\
 {}\quad \quad \{ i1,1, n\}, \{ j1,1, n\}]; \\
 {}\quad \text{Do}[ \text{If}[ i1!= j1, \\
 {}\quad \quad \quad \text{If}[ \text{XX1}[[ i1, j1]]==2, \\
 {}\quad \quad \quad \quad \text{wo1}[[ i1, j1]]= \text{Intersection}[ 
\text{wo1}[[ i1, j1]],\{4,5\}]; \\
 {}\quad \quad \quad \quad \text{zo1}[[ i1, j1]]= \text{Intersection}[ 
\text{zo1}[[ i1, j1]],\{1,2\}]]], \\
 {}\quad \quad \{ i1,1, n\},  \{ j1,1, n\}]; \\
 {}\quad \text{Do}[ \text{If}[ i1!= j1, \\
 {}\quad \quad \quad \text{If}[ \text{XX1}[[ i1, j1]]==3, \\
 {}\quad \quad \quad \quad \text{wo1}[[ i1, j1]]= \text{Intersection}[ 
\text{wo1}[[ i1, j1]],\{3\}]; \\
 {}\quad \quad \quad \quad \text{zo1}[[ i1, j1]]= \text{Intersection}[ 
\text{zo1}[[ i1, j1]],\{3\}]]], \\
 {}\quad \quad \{ i1,1, n\}, \{ j1,1, n\}]; \\ \\
 (* \text{ Determined} \text{ the} \text{ order} \text{ of} 
\text{ z}\_\{ \text{ ij}\} \text{ and} \text{ w}\_\{ \text{ ij}\} \text{ if} 
\text{ there} \text{ is} \text{ a}\\
{} \quad \text{ maximal}\ z-\text{ or } w- \text{stroke}\  *) \\
 \text{Do}[ \text{If}[ i1!= j1, \\
 {}\quad \quad \quad \text{If}[ \text{XX1}[[ i1, j1]]==1\&\& \\
 {}\quad \quad \quad \quad ( \text{Total}[ \text{ZX1}[[ i1]]]+ \text{Total}[ 
\text{ZX1}[[ j1]]]- \text{ZX1}[[ i1, i1]]- \text{ZX1}[[ j1, j1]])==2, \\
 {}\quad \quad \quad \quad \text{zo1}[[ i1, j1]]= \text{Intersection}[ 
\text{zo1}[[ i1, j1]],\{5\}]; \\
 {}\quad \quad \quad \quad \text{wo1}[[ i1, j1]]= \text{Intersection}[ 
\text{wo1}[[ i1, j1]],\{1\}]]], \\
 {}\quad \quad \{ i1,1, n\}, \{ j1,1, n\}]; \\
 {}\quad \text{Do}[ \text{If}[ i1!= j1, \\
 {}\quad \quad \quad \text{If}[ \text{XX1}[[ i1, j1]]==2\&\& \\
 {}\quad \quad \quad \quad ( \text{Total}[ \text{WX1}[[ i1]]]+ \text{Total}[ 
\text{WX1}[[ j1]]]- \text{WX1}[[ i1, i1]]- \text{WX1}[[ j1, j1]])==2, \\
 {}\quad \quad \quad \quad \text{zo1}[[ i1, j1]]= \text{Intersection}[ 
\text{zo1}[[ i1, j1]],\{1\}]; \\
 {}\quad \quad \quad \quad \text{wo1}[[ i1, j1]]= \text{Intersection}[ 
\text{wo1}[[ i1, j1]],\{5\}]]], \\
 {}\quad \quad \{ i1,1, n\}, \{ j1,1, n\}]; \\ \\
(* \text{ Proposition}\ 4.4( b)\ *) \\
 \text{Do}[ \text{If}[ i1!= j1, \\
 {}\quad \quad \quad \text{If}[ \text{ZX1}[[ i1, j1]]==0, \\
 {}\quad \quad \quad \quad \text{wo1}[[ i1, j1]]= \text{Intersection}[ 
\text{wo1}[[ i1, j1]],\{2,3,4,5\}]]], \\
 {}\quad \quad \{ i1,1, n\}, \{ j1,1, n\}]; \\
 {}\quad \text{Do}[ \text{If}[ i1!= j1, \\
 {}\quad \quad \quad \text{If}[ \text{WX1}[[ i1, j1]]==0, \\
 {}\quad \quad \quad \quad \text{zo1}[[ i1, j1]]= \text{Intersection}[ 
\text{zo1}[[ i1, j1]],\{2,3,4,5\}]]], \\
 {}\quad \quad \{ i1,1, n\}, \{ j1,1, n\}]; \\ \\
 {}\quad (* \text{ Refine } zw- \text{order} \text{ matrices} 
\text{ by} \text{ principles} \text{ for} \text{ updating} \text{ orders}\ *) 
\\
 \{ \text{zo1}, \text{wo1}\}= \text{UpdateOrder}[\{ \text{zo1}, 
\text{wo1}\}]; \\
 {}\quad \text{Print}[ \text{Style}[`` \text{result} \text{of} \text{setp} 
1:'', \text{Bold},15]]; \\
 {}\quad \text{Print}[`` \text{cumulative} \text{time}: '', 
\text{AbsoluteTime}[]- t1, `` \setminus \text{n}'' ]; \\
 {}\quad \text{Print}[``\{ \text{z}- \text{order}, \text{w}- \text{order}, 
r- \text{order}\} \text{matrices} :'']; \\
 {}\quad \text{Print}[\{ \text{Style}[ \text{MatrixForm}[ 
\text{outputform1}@ \text{zo1}], \text{Blue}],\\
 {}\quad\quad \text{Style}[\text{MatrixForm}[ \text{outputform1}@ \text{wo1}], \text{Blue}], \\
 {}\quad\quad \text{Style}[ \text{MatrixForm}[ \text{outputform1}@ \text{Dis3}[\{
\text{zo1}, \text{wo1}\}]], \text{RGBColor}[0,0.5,0]]\}]; \\
 {}\quad \text{Str}=`` \text{There} \text{are} ''<> \text{ToString}[ 
\text{NumberofType2}[\{ \text{zo1}, \text{wo1}\}]]<> \\
 {}\quad\quad \quad\ `` \text{possible} 
\text{zw}- \text{order} \text{matrices} \text{of} \text{Type} 2.''; \text{Print}[ \text{Str}, `` \setminus \text{n}\setminus \text{n}'']; \\
 {}\quad \text{Str}=`` \text{There} \text{are} ''<> \text{ToString}[ 
\text{NumberofType3}[\{ \text{zo1}, \text{wo1}\}]]<> \\
 {}\quad\quad \quad\ `` \text{possible} 
\text{zw}- \text{order} \text{matrices} \text{of} \text{Type} 3.''; \text{Print}[ \text{Str}, `` \setminus \text{n}\setminus \text{n}'']; \\ \\
\text{(* Step 2: Type 2 order matrices *)} \\
\text{zwotype2} = \{\};  \text{zwotype3} = \{\}; \\
 {} \text{TimeConstrained}[ \text{zwofamily} = \{\{ \text{zo1}, 
\text{wo1}\}\}; \\
 {} \text{While}[ \text{Length}[ \text{zwofamily}] > 0, \\
 {}\quad L1 = \text{Length}[ \text{zwofamily}]; \text{Lz1} = 0; \text{Lw1} = 0; \text{Del} = \{\}; \\
 {}\quad \text{Do}[ \text{If}[ \text{NumberofType2}[ \text{zwofamily}[[ 
i1]]] > 1, \\
 {}\quad \quad \quad \text{Del} = \text{Del}\sim \text{Join}\sim 
\{\{ i1\}\}; \\
 {}\quad \quad \quad \{ \text{Pzi}, \text{Pzj}, \text{Lz1}\} = 
\text{PosOffDiag}[ \text{zwofamily}[[ i1]][[1]]]; \\
 {}\quad \quad \quad \text{If}[ \text{Lz1} == 0, \\
 {}\quad \quad \quad \quad \{ \text{Pwi}, \text{Pwj}, \text{Lw1}\} = 
 \text{PosOffDiag}[ \text{zwofamily}[[ i1]][[2]]]; \\
 {}\quad \quad \quad \quad \text{Do}[ \text{zwofamily} = 
\text{zwofamily}\sim \text{Join}\sim \{ \text{zwofamily}[[ i1]]\}; \\
 {}\quad \quad \quad \quad \quad \text{zwofamily}[[-1]][[2]][[ 
\text{Pwi}, \text{Pwj}]] = \{ \text{zwofamily}[[ i1]][[2]][[ 
\text{Pwi}, \text{Pwj}]][[ j1]]\}; \\
 {}\quad \quad \quad \quad \quad \text{zwofamily}[[-1]][[2]][[ 
\text{Pwj}, \text{Pwi}]] = \{ \text{zwofamily}[[ i1]][[2]][[ 
\text{Pwi}, \text{Pwj}]][[ j1]]\}; \\
 {}\quad \quad \quad \quad \quad \text{zwofamily}[[-1]] = 
\text{UniqueZeroOrder}[ \text{zwofamily}[[-1]]]; \\
 {}\quad \quad \quad \quad \quad \text{zwofamily}[[-1]] = 
\text{UpdateOrder}[ \text{zwofamily}[[-1]]];  \\
 {}\quad \quad \quad \quad \quad \{ j1, 1, \text{Lw1}\}] , \\
 {}\quad \quad \quad \quad \text{Do}[ \text{zwofamily} = 
\text{zwofamily}\sim \text{Join}\sim \{ \text{zwofamily}[[ i1]]\}; \\
 {}\quad \quad \quad \quad \quad \text{zwofamily}[[-1]][[1]][[ 
\text{Pzi}, \text{Pzj}]] = \{ \text{zwofamily}[[ i1]][[1]][[ 
\text{Pzi}, \text{Pzj}]][[ j1]]\}; \\
 {}\quad \quad \quad \quad \quad \text{zwofamily}[[-1]][[1]][[ 
\text{Pzj}, \text{Pzi}]] = \{ \text{zwofamily}[[ i1]][[1]][[ 
\text{Pzi}, \text{Pzj}]][[ j1]]\}; \\
 {}\quad \quad \quad \quad \quad \text{zwofamily}[[-1]] = 
\text{UniqueZeroOrder}[ \text{zwofamily}[[-1]]]; \\
 {}\quad \quad \quad \quad \quad \text{zwofamily}[[-1]] = 
\text{UpdateOrder}[ \text{zwofamily}[[-1]]]; \\
 {}\quad \quad \quad \quad \quad \{ j1, 1, \text{Lz1}\}] ] ],  \{ i1, 1, L1\}]; \\
 {}\quad \text{zwofamily} = \text{Delete}[ \text{zwofamily}, 
\text{Del}]; \\
 {}\quad \text{zwotype2} = \text{zwotype2}\sim 
\text{Join}\sim \text{Select}[ \text{zwofamily}, \text{NumberofType2}[\#] == 
1 \&]; \\
 {}\quad \text{zwofamily} = \text{Select}[ \text{zwofamily}, 
\text{NumberofType2}[\#] > 1 \& ];  ]; \\
 {} (* \text{ End } \text{of } \text{While } \text{loop } *) \\ \\
 \text{zwotype2} = \text{Select}[ \text{zwotype2}, 
\text{Rule2hQ}]; \\
 {} \text{zwotype2} = \text{Select}[ \text{zwotype2}, 
\text{OrderQ1}]; \\
 {} \text{zwotype2} = \text{Select}[ \text{zwotype2}, 
\text{OrderQ2}]; \\
 {} \text{zwotype2} = \text{Select}[ \text{zwotype2}, 
\text{OrderQ3}]; \\
 {} \text{zwotype2} = \text{Select}[ \text{zwotype2}, 
\text{Rule1cQ}]; \\ \\
 {} \text{del} = \{\}; ii1 = 2; \\
 {} \text{While}[ ii1 <= \text{Length}[ \text{zwotype2}], jj1 = 1;  \text{tf1} = 0; \\
 {}\quad \text{While}[ jj1 < ii1 \&\&  \text{tf1} == 0,  kk1 = 1; \\
 {}\quad \quad \text{While}[ kk1 <= \text{Length}[ \text{pmzw}], \\
 {}\quad \quad \quad \text{If}[ \text{zwotype2}[[ ii1]][[1]] == 
 \text{zwotype2}[[ jj1]][[1]][[ \text{pmzw}[[ kk1]], 
\text{pmzw}[[ kk1]]]] \&\& \\
 {}\quad \quad \quad \quad \text{zwotype2}[[ ii1]][[2]] == 
\text{zwotype2}[[ jj1]][[2]][[ \text{pmzw}[[ kk1]], 
\text{pmzw}[[ kk1]]]], \\
 {}\quad \quad \quad \quad \text{del} = \text{del}\sim 
\text{Join}\sim \{\{ ii1\}\}; \\
 {}\quad \quad \quad \quad \text{tf1} = 1];  kk1++]; jj1++]; ii1++]; \\
 {} \text{zwotype2} = \text{Delete}[ \text{zwotype2}, 
\text{del}]; \\
 {} \text{Print}[ ``\setminus \text{n} '' , \text{Style}[`` \text{result} \text{of} \text{setp} 
2:'', \text{Bold}, 15]]; \\
 {} \text{Print}[`` \text{cumulative} \text{time}: '', 
\text{AbsoluteTime}[] - t1, \ ``\setminus \text{n} '' ]; \\
 {} \text{Print}[`` \text{There } \text{are } '', \text{Style}[ 
\text{Length}[ \text{zwotype2}], \text{Bold}, 
\text{RGBColor}[0, 0, 0.7], 14],\\
 {} \quad `` zw-\text{order } \text{matrices } \text{of } \text{Type } 2.'']; \\
 {} \text{Print}[``\{ z-\text{order},\ w-\text{order}, r- 
\text{order }\}\ \text{matrices} :'']; \\
 {} \text{Print}[ \text{Table}[\{ \text{Style}[ \text{MatrixForm}[ 
\text{outputform1}@ \text{zwotype2}[[ j1, 1]]], \text{Blue}], \\
 {} \quad 
\text{Style}[ \text{MatrixForm}[ \text{outputform1}@ 
\text{zwotype2}[[ j1, 2]]], \text{Blue}], \\
 {} \quad \text{Style}[ \text{MatrixForm}[ \text{outputform1}@ \text{Dis3}[ 
\text{zwotype2}[[ j1]]]],\\
 {} \quad  \text{RGBColor}[0, 0.5, 0]]\}, \{ j1, 
1, \text{Length}[ \text{zwotype2}]\}]]; \\
 {} \text{Str} = `` \text{There } \text{are } '' <> \text{ToString}[ 
 \text{Sum}[ \text{NumberofType3}[ \text{zwotype2}[[ i1]]],\\
 {}\quad\quad \{ i1, 1, \text{Length}[ \text{zwotype2}]\}]] <> `` 
\text{ possible } \text{zw }- \text{order } \text{matrices } \text{of } 
 \text{Type } 3.''; \\
 {} \text{Print}[ \text{Str},\ ``\setminus \text{n} '']; \\
 \\ 
 \text{(* Step 3: Type 3 order matrices *)} \\
 {} \text{zwofamily}= \text{zwotype2}; \\
 {} \text{While}[ \text{Length}[ \text{zwofamily}]>0, \\
 {}\quad L1= \text{Length}[ \text{zwofamily}]; \text{Lz1}=0; \text{Lw1}=0;  \text{Del}=\{\}; \\
 {}\quad \text{Do}[ \text{If}[ \text{NumberofType3}[ \text{zwofamily}[[ 
i1]]]>1, \\
 {}\quad \quad \quad \text{Del}= \text{Del}\sim \text{Join}\sim \{\{ 
i1\}\}; \\
 {}\quad \quad \quad \{ \text{Pzd}, \text{Lz1}\}= \text{PosDiag}[ 
\text{zwofamily}[[ i1]][[1]]]; \\
 {}\quad \quad \quad \text{If}[ \text{Lz1}==0, \\
 {}\quad \quad \quad \quad \{ \text{Pwd}, \text{Lw1}\}= \text{PosDiag}[ 
\text{zwofamily}[[ i1]][[2]]]; \\
 {}\quad \quad \quad \quad \text{Do}[ \text{zwofamily}= \text{zwofamily}\sim 
 \text{Join}\sim \{ \text{zwofamily}[[ i1]]\}; \\
 {}\quad \quad \quad \quad \quad \text{zwofamily}[[-1]][[2]][[ \text{Pwd}, 
\text{Pwd}]]=\{ \text{zwofamily}[[ i1]][[2]][[ \text{Pwd}, \text{Pwd}]][[ 
j1]]\}; \\
 {}\quad \quad \quad \quad \quad \text{zwofamily}[[-1]]= 
\text{UpdateOrderPart}[ \text{zwofamily}[[-1]]]; , \\
 {}\quad \quad \quad \quad \quad \{ j1,1, \text{Lw1}\}], \\
 {}\quad \quad \quad \quad \text{Do}[ \text{zwofamily}= \text{zwofamily}\sim 
 \text{Join}\sim \{ \text{zwofamily}[[ i1]]\}; \\
 {}\quad \quad \quad \quad \quad \text{zwofamily}[[-1]][[1]][[ \text{Pzd}, 
\text{Pzd}]]=\{ \text{zwofamily}[[ i1]][[1]][[ \text{Pzd}, \text{Pzd}]][[ 
j1]]\}; \\
 {}\quad \quad \quad \quad \quad \text{zwofamily}[[-1]]= 
\text{UpdateOrderPart}[ \text{zwofamily}[[-1]]]; , \\
 {}\quad \quad \quad \quad \quad \{ j1,1, \text{Lz1}\}]]], \\
 {}\quad \quad \{ i1,1, L1\}]; \\
 {}\quad \text{zwofamily}= \text{Delete}[ \text{zwofamily}, \text{Del}]; \\
 {}\quad \text{zwotype3}= \text{zwotype3}\sim \text{Join}\sim 
\text{Select}[ \text{zwofamily}, \text{NumberofType3}[\#]==1\&]; \\
 {}\quad \text{zwofamily}= \text{Select}[ \text{zwofamily}, 
\text{NumberofType3}[\#]>1\&]; \\
 {}\quad ]; \text{del}=\{\}; ii1=2; \\
 {} \text{While}[ ii1<= \text{Length}[ \text{zwotype3}],  jj1=1;  \text{tf1}=0; \\
 {}\quad \text{While}[ jj1< ii1\&\&  \text{tf1}==0,  kk1=1; \\
 {}\quad \quad \text{While}[ kk1<= \text{Length}[ \text{pmzw}], \\
 {}\quad \quad \quad \text{If}[ \text{zwotype3}[[ ii1]][[1]]== 
\text{zwotype3}[[ jj1]][[1]][[ \text{pmzw}[[ kk1]], \text{pmzw}[[ kk1]]]]\&\&
\\
 {}\quad \quad \quad \quad \text{zwotype3}[[ ii1]][[2]]== \text{zwotype3}[[ 
jj1]][[2]][[ \text{pmzw}[[ kk1]], \text{pmzw}[[ kk1]]]], \\
 {}\quad \quad \quad \quad \text{del}= \text{del}\sim \text{Join}\sim \{\{ 
ii1\}\}; \text{tf1}=1];  kk1++];  jj1++]; ii1++]; \\ 
\\
 {} \text{zwotype3}= \text{Delete}[ \text{zwotype3}, \text{del}]; \\
 {} \text{zwotype3}= \text{Select}[ \text{zwotype3}, \text{OrderQ1}]; \\
 {} \text{zwotype3}= \text{Select}[ \text{zwotype3}, \text{OrderQ2}]; \\
 {} \text{zwotype3}= \text{Select}[ \text{zwotype3}, \text{Rule1cQ}]; \\ \\
 {} l1= \text{Length}[ \text{zwotype3}]; \\
 {} \text{Print}[ \text{Style}[`` \text{result} \text{of} \text{setp} 3:'', 
\text{Bold},15]]; \\
 {} \text{Print}[`` \text{cumulative} \text{time}: '', 
\text{AbsoluteTime}[]- t1 , \ ``\setminus \text{n} '']; \\
 {} \text{Print}[`` \text{There } \text{are } '', \text{Style}[ l1, 
\text{Bold}, \text{RGBColor}[0,0,0.7],14],  `` \text{zw}- \text{order } \text{matrices } \text{of } \text{Type } 3.'']; \\ 
\\
\text{(* Output results of FindOrder *)} \\
{} \quad \text{If}[ l1<=10,  \text{Print}[``\{ \text{z}- \text{order}, \text{w}- \text{order}, r- \text{order}\} \text{ matrices} :'']; \\
 {}\quad \text{Print}[ \text{Table}[\{ \text{Style}[ \text{MatrixForm}[ 
\text{outputform1}@ \text{zwotype3}[[ j1,1]]], \text{Blue}], \\
 {} \quad \quad \text{Style}[ \text{MatrixForm}[ \text{outputform1}@ \text{zwotype3}[[ j1,2]]], \text{Blue}],\\
 {}\quad \quad \text{Style}[ \text{MatrixForm}[ \text{outputform1}@ 
 \text{Dis3}[ \text{zwotype3}[[ j1]]]],\\
 {}\quad \quad\quad\  \text{RGBColor}[0,0.5,0]]\},\{ j1,1, 
\text{Length}[ \text{zwotype3}]\}]]]; \\
 {} \text{If}[ \text{Length}[ \text{zwotype3}]==0,  \text{Str}=`` \text{matrix} \text{FMX}[[''<> \text{ToString}[ N1]<>``]] \text{is} \text{impossible}''; \\
 {}\quad \text{Print}[\ ``\setminus \text{n} '', \text{Str} ,\ ``\setminus \text{n} '']];  \text{TF1}=1; , \text{constrainttime}]; \\
 {} \text{If}[ \text{TF1}==0,  \text{Str}=`` \text{stop} \text{running} \text{FMX}[[''<> \text{ToString}[ N1]<>``]]''; \\
 {}\quad \text{Print}[ \text{Str},\ ``\setminus \text{n} '']; ]) \\
\text{(* End of FindOrder *)}
  \)
}

\normalsize
%\newpage

% ----------------------------------------------------------------- %
\section*{Appendix III: Program for Algorithm III by Mathematica} \label{sec:program3}

\small
{\tt
\noindent\(
\text{(* Set variables *)}  \\
\text{Z}= \text{Table}[ \text{ToExpression}[`` \text{Z}''<> 
\text{ToString}[ j1]<> \text{ToString}[ k1]],\{ j1,1, n\},\{ k1,1, n\}]; \\
 {} \text{W}= \text{Table}[ \text{ToExpression}[`` \text{W}''<> 
\text{ToString}[ j1]<> \text{ToString}[ k1]],\{ j1,1, n\},\{ k1,1, n\}]; \\
 {} u= \text{Table}[ \text{ToExpression}[`` u''<> \text{ToString}[ j1]<> 
\text{ToString}[ k1]],\{ j1,1, n\},\{ k1,1, n\}]; \\
 {} v= \text{Table}[ \text{ToExpression}[`` v''<> \text{ToString}[ j1]<> 
\text{ToString}[ k1]],\{ j1,1, n\},\{ k1,1, n\}]; \\
 {} \text{zz}= \text{Table}[ \text{ToExpression}[`` \text{z}''<> 
\text{ToString}[ j1]<> \text{ToString}[ k1]],\{ j1,1, n\},\{ k1,1, n
\}]; \\
 {} \text{ww}= \text{Table}[ \text{ToExpression}[`` \text{w}''<> 
\text{ToString}[ j1]<> \text{ToString}[ k1]],\{ j1,1, n\},\{ k1,1, n
\}]; \\
 {} rr= \text{Table}[ \text{ToExpression}[`` r''<> \text{ToString}[ j1]<> 
\text{ToString}[ k1]],\{ j1,1, n\},\{ k1,1, n
\}]; \\
 {} \text{delta}= \text{Table}[ \text{ToExpression}[``\delta ''<> 
\text{ToString}[ j1]<> \text{ToString}[ k1]],\{ j1,1, n\},\{ k1,1, n\}]; 
\\
 {} \text{z}= \text{Table}[ \text{ToExpression}[`` \text{z}''<> 
\text{ToString}[ j1]],\{ j1,1, n\}]; \\
 {} \text{w}= \text{Table}[ \text{ToExpression}[`` \text{w}''<> 
\text{ToString}[ j1]],\{ j1,1, n\}]; \\
 {} \text{zeta}= \text{Table}[ \text{ToExpression}[``\zeta ''<> 
\text{ToString}[ j1]],\{ j1,1, n\}]; \\
 {} \text{omega}= \text{Table}[ \text{ToExpression}[``\omega ''<> 
\text{ToString}[ j1]],\{ j1,1, n\}]; \\
 {} M= \text{Table}[ \text{ToExpression}[`` m''<> \text{ToString}[ j1]],\{ 
j1,1, n\}]; \\
 {} \text{mu}= \text{Table}[ \text{ToExpression}[``\mu ''<> \text{ToString}[ 
j1]],\{ j1,1, n\}]; \\
 {} \text{Vardis}= \text{Flatten}[ \text{Join}[ \text{zz}, \text{ww}, 
\text{delta}]]; \\
 {} \text{Var}= \text{Flatten}[ \text{Join}[ \text{zz}, \text{ww}, 
\text{delta}, \text{z}, \text{w}, M]]; \\
 {} \text{zpart}[\{ \text{zmx4}\_, \text{wmx4}\_\}]:= \text{zmx4}; \\
 {} \text{wpart}[\{ \text{zmx4}\_, \text{wmx4}\_\}]:= \text{wmx4}; \\ 
 \\
 \text{(* Determine whether the possible max order terms in a $z_i$- or $w_i$-equation }\\
\text{are indeed maximal *)}  \\
\text{CertainMaxOrderZQ}[\{ \text{zomatrix3}\_,  
\text{womatrix3}\_\}, k3\_]:=( \\
 {}\quad \text{zo3}= \text{zomatrix3}; \text{wo3}= \text{womatrix3};  \text{max3}= \text{MaxOrderZ}[\{ \text{zo3}, \text{wo3}\}, k3]; \\
 {}\quad \text{If}[ \text{Length}[ \text{max3}]==2, \text{Return}[ \text{True}]]; \\
 {}\quad \text{If}[ \text{Length}[ \text{max3}]>2,  \text{Max3}=\{\}; \\
 {}\quad \quad \text{Do}[ \text{Do}[ \text{If}[ \text{max3}[[ i3]]== k3\&\&  \text{max3}[[ i3]]!= \text{max3}[[ j3]], \\
 {}\quad \quad \quad \quad \quad \text{If}[ \text{CompareOrderZ2}[\{ 
\text{zo3}, \text{wo3}\},\{ k3, \text{max3}[[ i3]]\},\{ k3, \text{max3}[[ 
j3]]\}]==\{0\}, \\
 {}\quad \quad \quad \quad \quad \quad \text{Max3}= \text{Max3}\sim 
\text{Join}\sim \{\{ \text{max3}[[ i3]], 
\text{max3}[[ j3]]\}\}]; ]; \\
 {}\quad \quad \quad \quad \text{If}[ \text{max3}[[ j3]]== k3\&\& \text{max3}[[ i3]]!= \text{max3}[[ j3]], \\
 {}\quad \quad \quad \quad \quad \text{If}[ \text{CompareOrderZ2}[\{ 
\text{zo3}, \text{wo3}\},\{ k3, \text{max3}[[ j3]]\},\{ k3, \text{max3}[[ 
i3]]\}]==\{0\}, \\
 {}\quad \quad \quad \quad \quad \quad \text{Max3}= \text{Max3}\sim 
\text{Join}\sim \{\{ \text{max3}[[ i3]], 
\text{max3}[[ j3]]\}\}];  ]; \\
 {}\quad \quad \quad \quad \text{If}[ \text{max3}[[ i3]]!= k3\&\& \text{max3}[[ j3]]!= k3\&\& \text{max3}[[ i3]]!= \text{max3}[[ j3]], \\
 {}\quad \quad \quad \quad \quad \text{If}[ \text{CompareOrderZ1}[\{ 
\text{zo3}, \text{wo3}\},\{ k3, \text{max3}[[ j3]]\},\{ k3, \text{max3}[[ 
i3]]\}]==\{0\}, \\
 {}\quad \quad \quad \quad \quad \quad \text{Max3}= \text{Max3}\sim 
\text{Join}\sim \{\{ \text{max3}[[ i3]], 
\text{max3}[[ j3]]\}\}];  ];  , \\
 {}\quad \quad \quad \quad \{ j3, i3+1, \text{Length}[ \text{max3}]\}]; , \{ i3,1, \text{Length}[ \text{max3}]-1\}]; \\
 {}\quad \quad \text{If}[ \text{Length}[ \text{Max3}]>1, \\
 {}\quad \quad \quad \text{Do}[ \text{Do}[ \text{Do}[ \text{If}[ 
\text{Intersection}[ \text{Max3}[[ i3]], \text{Max3}[[ j3]]]!=\{\}, \\
 {}\quad \quad \quad \quad \quad \quad \quad \text{Max3}[[ i3]]= 
\text{Max3}[[ i3]]\cup \text{Max3}[[ j3]];  \text{Max3}[[ j3]]=\{\}; ];  , \\
 {}\quad \quad \quad \quad \quad \quad \{ j3, i3+1, \text{Length}[ 
\text{Max3}]\}], \{ i3,1, \text{Length}[ \text{Max3}]\}],  \{ ii3,1, \text{Length}[ \text{Max3}]\}]; ]; \\
 {}\quad \quad \text{Max3}= \text{Complement}[ \text{Max3},\{\{\}\}]; \\
 {}\quad \quad \text{If}[ \text{Length}[ \text{Max3}]==1\&\& \text{Flatten}[ \text{Max3}]== \text{max3},  \text{Return}[ \text{True}],  \text{Return}[ \text{False}]];  ]) \\ 
 \\
 \text{CertainMaxOrderWQ}[\{ \text{zomatrix3}\_,  
\text{womatrix3}\_\}, k3\_]:=( \\
 {}\quad \text{zo3}= \text{zomatrix3}; \text{wo3}= \text{womatrix3};  \text{max3}= \text{MaxOrderW}[\{ \text{zo3}, \text{wo3}\}, k3]; \\
 {}\quad \text{If}[ \text{Length}[ \text{max3}]==2, \text{Return}[ \text{True}]]; \\
 {}\quad \text{If}[ \text{Length}[ \text{max3}]>2,  \text{Max3}=\{\}; \\
 {}\quad \quad \text{Do}[ \text{Do}[ \text{If}[ \text{max3}[[ i3]]== k3\&\&  \text{max3}[[ i3]]!= \text{max3}[[ j3]], \\
 {}\quad \quad \quad \quad \quad \text{If}[ \text{CompareOrderW2}[\{ 
\text{zo3}, \text{wo3}\},\{ k3, \text{max3}[[ i3]]\},\{ k3, \text{max3}[[ 
j3]]\}]==\{0\}, \\
 {}\quad \quad \quad \quad \quad \quad \text{Max3}= \text{Max3}\sim 
\text{Join}\sim \{\{ \text{max3}[[ i3]], 
\text{max3}[[ j3]]\}\}]; ]; \\
 {}\quad \quad \quad \quad \text{If}[ \text{max3}[[ j3]]== k3\&\& \text{max3}[[ i3]]!= \text{max3}[[ j3]], \\
 {}\quad \quad \quad \quad \quad \text{If}[ \text{CompareOrderW2}[\{ 
\text{zo3}, \text{wo3}\},\{ k3, \text{max3}[[ j3]]\},\{ k3, \text{max3}[[ 
i3]]\}]==\{0\}, \\
 {}\quad \quad \quad \quad \quad \quad \text{Max3}= \text{Max3}\sim 
\text{Join}\sim \{\{ \text{max3}[[ i3]], 
\text{max3}[[ j3]]\}\}];  ]; \\
 {}\quad \quad \quad \quad \text{If}[ \text{max3}[[ i3]]!= k3\&\& \text{max3}[[ j3]]!= k3\&\& \text{max3}[[ i3]]!= \text{max3}[[ j3]], \\
 {}\quad \quad \quad \quad \quad \text{If}[ \text{CompareOrderW1}[\{ 
\text{zo3}, \text{wo3}\},\{ k3, \text{max3}[[ j3]]\},\{ k3, \text{max3}[[ 
i3]]\}]==\{0\}, \\
 {}\quad \quad \quad \quad \quad \quad \text{Max3}= \text{Max3}\sim 
\text{Join}\sim \{\{ \text{max3}[[ i3]], 
\text{max3}[[ j3]]\}\}];  ];  , \\
 {}\quad \quad \quad \quad \{ j3, i3+1, \text{Length}[ \text{max3}]\}]; , \{ i3,1, \text{Length}[ \text{max3}]-1\}]; \\
 {}\quad \quad \text{If}[ \text{Length}[ \text{Max3}]>1, \\
 {}\quad \quad \quad \text{Do}[ \text{Do}[ \text{Do}[ \text{If}[ 
\text{Intersection}[ \text{Max3}[[ i3]], \text{Max3}[[ j3]]]!=\{\}, \\
 {}\quad \quad \quad \quad \quad \quad \quad \text{Max3}[[ i3]]= 
\text{Max3}[[ i3]]\cup \text{Max3}[[ j3]];\text{Max3}[[ j3]]=\{\}; ];  , \\
 {}\quad \quad \quad \quad \quad \quad \{ j3, i3+1, \text{Length}[ 
\text{Max3}]\}], \{ i3,1, \text{Length}[ \text{Max3}]\}],  \{ ii3,1, \text{Length}[ \text{Max3}]\}]; ]; \\
 {}\quad \quad \text{Max3}= \text{Complement}[ \text{Max3},\{\{\}\}]; \\
 {}\quad \quad \text{If}[ \text{Length}[ \text{Max3}]==1\&\& \text{Flatten}[ \text{Max3}]== \text{max3},  \text{Return}[ \text{True}],  \text{Return}[ \text{False}]];  ]) \\ \\
\text{(* Determine the $z_i$-equations corresponding to $zw$-order matrices for each $i$} \\
\text{in the given set of bodies *)}  \\
\text{EqnMZO}[\{ \text{zomatrix2}\_, \text{womatrix2}\_\}, 
\text{VertexSet2}\_] := ( \\
 {}\quad \text{zo2} = \text{zomatrix2}; \text{wo2} = \text{womatrix2}; \text{eqnset2} = \{\}; \\
 {}\quad \text{Do}[ \text{If}[ \text{CertainMaxOrderZQ}[\{ \text{zo2}, 
\text{wo2}\}, i2] == \text{True}, \\
 {}\quad \quad \quad \text{eqn2} = 0; \text{max2} = \text{MaxOrderZ}[\{ \text{zo2}, 
\text{wo2}\}, i2]; \\
 {}\quad \quad \quad \text{Do}[ \text{If}[ \text{MemberQ}[ \text{max2}, 
 j2] == \text{True}, \text{eqn2} = \text{eqn2} + M[[ 
j2]]* \text{Z}[[ i2, j2]]], \{ j2, 1, n\}]; \\
 {}\quad \quad \quad \text{eqnset2} = \text{eqnset2} \cup \{ 
\text{eqn2}\}; ] , \{ i2, \text{VertexSet2}\}]; \\
 {}\quad \text{Return}[ \text{eqnset2}]; ) \\
 \\
 \text{(* Determine the $w_i$-equations corresponding to $zw$-order matrices for each $i$} \\
\text{in the given set of bodies *)}  \\
 \text{EqnMWO}[\{ \text{zomatrix2}\_, \text{womatrix2}\_\}, 
\text{VertexSet2}\_] := ( \\
 {}\quad \text{zo2} = \text{zomatrix2}; \text{wo2} = \text{womatrix2}; \text{eqnset2} = \{\}; \\
 {}\quad \text{Do}[ \text{If}[ \text{CertainMaxOrderWQ}[\{ \text{zo2}, 
\text{wo2}\}, i2] == \text{True}, \\
 {}\quad \quad \quad \text{eqn2} = 0; \text{max2} = \text{MaxOrderW}[\{ \text{zo2}, 
\text{wo2}\}, i2]; \\
 {}\quad \quad \quad \text{Do}[ \text{If}[ \text{MemberQ}[ \text{max2}, 
 j2] == \text{True}, \text{eqn2} = \text{eqn2} + M[[ 
j2]]* \text{W}[[ i2, j2]]], \{ j2, 1, n\}]; \\
 {}\quad \quad \quad \text{eqnset2} = \text{eqnset2} \cup \{ 
\text{eqn2}\}; ] , \{ i2, \text{VertexSet2}\}]; \\
 {}\quad \text{Return}[ \text{eqnset2}]; ) \\
 \\
 \text{(* define functions for searching mass relations *)}  \\
  \\
\text{(* Replace } m_iZ_{{ii}}\text{'s} ,\ m_iW_{{ii}}\text{'s} \text{ by } z_i, w_i \text{ resp}.\text{  }\text{*)}  \\
\text{Relation1}[ \text{x2}\_] := ( \\
 {}\quad \text{y2} = \text{x2}; \\
 {}\quad \text{Do}[ \text{Do}[ \text{y2}[[ j2]] = \text{y2}[[ j2]] 
/. \{ M[[ i2]]* \text{Z}[[ i2, i2]] \to \text{z}[[ 
i2]], M[[ i2]]* \text{W}[[ i2, i2]] \to \text{w}[[ i2]]\}, \\
 {}\quad \quad \quad \{ i2, 1, n\}], \{ j2, 1, \text{Length}[ \text{y2}]\}]; \text{Return}[ \text{y2}]) \\
 \\
 \text{(* For each pair of } (i,j) \text{ with } i>j, \text{ replace } Z_{ij} , W_{ij} \text{ by } -Z_{{ji}} ,\ -W_{{ji}} \text{ resp}.\text{  }\text{*)}  \\
 \text{Relation2}[ \text{x2}\_]:=( \\
 {}\quad \text{y2}= \text{x2}; \\
 {}\quad \text{Do}[ \text{Do}[ \text{Do}[ \text{y2}[[ k2]]= \text{y2}[[ 
k2]]/.\{ \text{Z}[[ i2, j2]]\to- \text{Z}[[ j2, i2]], \text{W}[[ i2, j2]]\to-
\text{W}[[ j2, i2]]\}, \\
 {}\quad \quad \quad \quad \{ i2, j2+1, n\}], \{ j2,1, n-1\}],  \{ k2,1, \text{Length}[ \text{y2}]\}]; \text{Return}[ \text{y2}]) \\
 \\
  \text{(* For each pair of } (i,j) , \text{ replace } Z_{ij} , W_{ij} \text{ by } -z_{{ji}}^{-1/2}w_{ij}^{-3/2} ,\ w_{{ji}}^{-1/2}z_{ij}^{-3/2} \text{ resp}.\text{  }\text{*)}  \\
 \text{Relation3ver1}[ \text{x2}\_]:=( \\
 {}\quad \text{y2}= \text{x2}; \\
 {}\quad \text{Do}[ \text{Do}[ \text{Do}[ \text{y2}[[ k2]]= \text{y2}[[ 
k2]]/.\{ \text{Z}[[ i2, j2]]\to \text{zz}[[ i2, j2]]\wedge (-1/2)* 
\text{ww}[[ i2, j2]]\wedge (-3/2),\\
 {}\quad\quad\quad\quad \text{W}[[ i2, j2]]\to \text{zz}[[ i2, j2]]\wedge (-1/2)* \text{ww}[[ i2, j2]]\wedge (-3/2)\}, \\
 {}\quad \quad \quad \quad \{ j2, i2+1, n\}],  \{ i2,1, n-1\}],  \{ k2,1,\text{Length}[ \text{y2}]\}]; \text{Return}[ \text{y2}]) \\
 \\
 \text{(* For every  $i,j$, replace  $Z_{ij}$, $W_{ij}$ by $u_{ij}v_{ij}^3$, $v_{ij}u_{ij}^3$ resp. *)}  \\
 \text{Relation3ver2}[ \text{x2}\_] := ( \\
 {}\quad \text{y2} = \text{x2}; \\
 {}\quad \text{Do}[ \text{Do}[ \text{Do}[ \text{y2}[[ k2]] = 
\text{y2}[[ k2]] /. \{ \text{Z}[[ i2, j2]] \to u[[ i2, j2]]\wedge 
(1)* v[[ i2, j2]]\wedge (3),\\
 {}\quad\quad\quad\quad \text{W}[[ i2, j2]] \to v[[ i2, j2]]\wedge (1)* u[[ i2, j2]]\wedge (3)\}, \\
 {}\quad \quad \quad \quad \{ j2, i2 + 1, n\}],  \{ i2, 1, n - 1\}], \{ k2, 1, \text{Length}[ \text{y2}]\}];  \text{Return}[ \text{y2}]) \\
 \\
\text{(* change variable according to the setting  } z_i=\zeta _i^2, w_i=\omega _i^2\text{  }\text{*)}  \\
\text{ChangeVar1}[ \text{x2}\_]:=( \\
 {}\quad \text{y2}= \text{x2}; \\
 {}\quad \text{Do}[ \text{Do}[ \text{y2}[[ j2]]= \text{y2}[[ j2]]/.\{ 
\text{z}[[ i2]]\to \text{zeta}[[ i2]]\wedge (2),\\
 {}\quad\quad\quad\quad \text{w}[[ i2]]\to \text{omega}[[ i2]]\wedge (2)\}, \\
 {}\quad \quad \quad \{ i2,1, n\}], \{ j2,1, \text{Length}[ \text{y2}]\}];  \text{Return}[ \text{y2}]) \\
 \\
 \quad \text{ChangeVar1Inv}[ \text{x2}\_]:=( \\
 {}\quad \text{y2}= \text{x2}; \\
 {}\quad \text{Do}[ \text{Do}[ \text{y2}[[ j2]]= \text{y2}[[ j2]]/.\{ 
\text{zeta}[[ i2]]\to \text{z}[[ i2]]\wedge (1/2),\\
 {}\quad\quad\quad\quad \text{omega}[[ i2]]\to \text{w}[[ i2]]\wedge (1/2)\}, \\
 {}\quad \quad \quad \{ i2,1, n\}], \{ j2,1, \text{Length}[ \text{y2}]\}];  \text{Return}[ \text{y2}]) \\
 \\
 \text{(* change variable according to the setting } m_i=\mu _i^4\text{  }\text{*)}  \\
 \text{ChangeVar2}[ \text{x2}\_] := ( \\
 {}\quad \text{y2} = \text{x2}; \\
 {}\quad \text{Do}[ \text{Do}[ \text{y2}[[ j2]] = \text{y2}[[ j2]] /. \{ 
M[[ i2]] \to \text{mu}[[ i2]]\wedge 4\}, \\
 {}\quad \quad \quad \{ i2, 1, n\}],  \{ j2, 1, \text{Length}[ \text{y2}]\}];  \text{Return}[ \text{y2}]) \\
 \\
\text{ChangeVar2Inv}[ \text{x2}\_] := ( \\
 {}\quad \text{y2} = \text{x2}; \\
 {}\quad \text{Do}[ \text{Do}[ \text{y2}[[ j2]] = \text{y2}[[ j2]] /. \{ 
\text{mu}[[ i2]] \to M[[ i2]]\wedge (1/4)\}, \\
 {}\quad \quad \quad \{ i2, 1, n\}], \{ j2, 1, \text{Length}[ \text{y2}]\}];  \text{Return}[ \text{y2}]) \\
 \\
\text{(* change variable according to the setting } u_{ij}=z_{ij}^{-1/2},\ v_{ij}=w_{ij}^{-1/2}
\text{*)}  \\
\text{ChangeVar3}[ \text{x2}\_]:=( \\
 {}\quad \text{y2}= \text{x2}; \\
 {}\quad \text{Do}[ \text{Do}[ \text{Do}[ \text{y2}[[ k2]]= \text{Simplify}@ 
\text{Expand}[ \text{y2}[[ k2]]/.\{ \text{zz}[[ i2, j2]]\to u[[ i2, 
j2]]\wedge (-2), \\
 {}\quad\quad\quad\quad \text{ww}[[ i2, j2]]\to v[[ i2, j2]]\wedge (-2)\}], \\
 {}\quad \quad \quad \quad \{ j2, i2+1, n\}], \{ i2,1, n-1\}],  \{ k2,1, \text{Length}[ \text{y2}]\}]; \text{Return}[ \text{y2}]) \\
 \\
 \quad \text{ChangeVar3Inv}[ \text{x2}\_]:=( \\
 {}\quad \text{y2}= \text{x2}; \\
 {}\quad \text{Do}[ \text{Do}[ \text{Do}[ \text{y2}[[ k2]]= \text{Simplify}@ 
\text{Expand}[ \text{y2}[[ k2]]/.\{ u[[ i2, j2]]\to \text{zz}[[ i2, 
j2]]\wedge (-1/2), \\
 {}\quad\quad\quad\quad v[[ i2, j2]]\to \text{ww}[[ i2, j2]]\wedge (-1/2)\}], \\
 {}\quad \quad \quad \quad \{ j2, i2+1, n\}],  \{ i2,1, n-1\}],  \{ k2,1, \text{Length}[ \text{y2}]\}];  \text{Return}[ \text{y2}]) \\
 \\
\text{(* If } z_i\succ z_{ij}\text{  }\text{and}\text{  }z_j\succ z_{ij}, \text{ then } z_i\sim z_j, 
\text{ so replace } z_j \text{ by } z_i, \text{ similar for } w_i\text{'s } \text{*)}  \\
\text{ClusterReplace}[\{ \text{zomatrix2}\_, \text{womatrix2}\_\}, 
\text{x2}\_]:=( \\
 {}\quad \text{y2}= \text{x2}; \\
 {}\quad \text{Do}[ \text{Do}[ \text{Do}[ \text{If}[ \text{Min}[ 
\text{zomatrix2}[[ i2, i2]]\sim \text{Join}\sim 
\text{zomatrix2}[[ j2, j2]]] \\ 
 {}\quad \quad \quad \quad \quad > \text{Max}[ \text{zomatrix2}[[ i2
, j2]]], \\
 {}\quad \quad \quad \quad \quad \text{y2}[[ k2]]= \text{Simplify}[ 
\text{y2}[[ k2]]/.\{ \text{z}[[ i2]]\to \text{z}[[ j2]]\}]]; \\
 {}\quad \quad \quad \quad \text{If}[ \text{Min}[ \text{womatrix2}[[ i2, 
i2]]\sim \text{Join}\sim \text{womatrix2}[[ j2, j2]]]\\
  {}\quad \quad \quad \quad \quad > \text{Max}[ 
\text{womatrix2}[[ i2, j2]]], \\
 {}\quad \quad \quad \quad \quad \text{y2}[[ k2]]= \text{y2}[[ k2]]/.\{ 
\text{w}[[ i2]]\to \text{w}[[ j2]]\}], \\
 {}\quad \quad \quad \quad \{ i2, j2+1, n\}], \{ j2,1, n-1\}], \{ k2,1, \text{Length}[ \text{y2}]\}];  \text{Return}[ \text{y2}]) \\
 \\
 \text{(* If a polynomial } p \text{ has factor var}_i^k \text{ for some variable} \text{var}_i ,
\text{then we change}  \\
{}\quad p \text{ to } p\left/\text{var}_i^k\right. \text{*)}  \\
\text{CancelFactor}[ \text{x4}\_] := ( \\
 {}\quad \text{y4} = \text{x4}; \\
 {}\quad \text{Do}[ \text{varset4} = \text{Variables}[ \text{y4}[[ k4]]]; 
\\
 {}\quad \quad \text{Do}[ \text{Ex4} = \text{Exponent}[ \text{y4}[[ 
k4]], \text{varset4}[[ j4]]\wedge (-1)]; \\
 {}\quad \quad \quad \text{If}[ \text{Ex4} < 0,  \text{y4}[[ k4]] = \text{Simplify}[ 
\text{Expand}[ \text{y4}[[ k4]]* \text{varset4}[[ j4]]\wedge ( \text{Ex4})]] 
 ], \\
 {}\quad \quad \quad \{ j4, 1, \text{Length}[ \text{varset4}]\}],  \{ k4, 1, \text{Length}[ \text{y4}]\}]; \text{Return}[ \text{y4}]) \\
 \\
\text{(* If a rational function $f=q/p$ and $p$ has a factor var}_i^k \text{ for some } \text{variable var}_i ,\\ 
{}\quad \text{then we change } g \text{ to} g*\text{var}_i^k \text{*)}  \\
\text{CancelDenominator}[ \text{x3}\_] := ( \\
 {}\quad \text{y3} = \text{x3}; \\
 {}\quad \text{Do}[ \text{varset3} = \text{Variables}[ \text{y3}[[ k3]]]; 
\\
 {}\quad \quad \text{Do}[ \text{Ex3} = \text{Exponent}[ \text{y3}[[ 
k3]], \text{varset3}[[ j3]]\wedge (-1)]; \\
 {}\quad \quad \quad \text{If}[ \text{Ex3} > 0,  {}\quad \quad \quad \quad \text{y3}[[ k3]] = \text{Simplify}[ \text{Expand}[ \text{y3}[[ k3]]* \text{varset3}[[ j3]]\wedge ( \text{Ex3})]] ], \\
 {}\quad \quad \quad \{ j3, 1, \text{Length}[ \text{varset3}]\}],  \{ k3, 1, \text{Length}[ \text{y3}]\}];  \text{Return}[ \text{y3}]) \\
 \\
 \text{(* Determine whether the polynomial gives a mass relation. If it does, then} \\
{}\quad \text{then we collect this mass relation}. \text{*)}  \\ 
\text{MassRelationQ}[ \text{x2}\_]:=( \\
 {}\quad \text{If}[ \text{Count}[ \text{Table}[ \text{Complement}[ 
\text{Variables}[ \text{x2}[[ k2]]], M\sim \text{Join}\sim \text{mu}],\{
k2,1, \text{Length}[ \text{x2}]\}],\{\}] \\
 {}\quad \quad >0, \text{Return}[ \text{True}], \text{Return}[ \text{False}]]) \\
 \\
 \quad \text{TakeMassRela}[ \text{x2}\_]:=(  \text{y2}= \text{x2}; \\
 {}\quad \text{y2}= \text{Select}[ \text{y2}, \text{Complement}[ 
\text{Variables}[\#], M\sim \text{Join}\sim \text{mu}]==\{\}\&];  \text{Return}[ \text{y2}]) \\
 \\
 \text{(* Determine whether the $zw$-order matrix gives any wedge equations}. \\
{} \quad \text{If it does then we collect  these wedge equations *)}  \\
\\
\text{FindWedgeV}[\{ I2\_, J2\_\}]:=( \\
 {}\quad SS2= \text{Select}[ \text{Table}[ k,\{ k,1, n\}], \text{Max}[ 
\text{Diagonal}[ I2][[\#]], \text{Diagonal}[ J2][[\#]]]==5\&]; \\
 {}\quad S2= \text{Select}[ \text{Subsets}[ SS2], \text{Length}[\#]>0\&\&  \text{Length}[\#]< n\&];  jj2=1; \\
 {}\quad \text{While}[ jj2<= \text{Length}[ S2],  \text{V2}= S2[[ jj2]];  \text{\text{V2}C}= \text{Complement}[ SS2, \text{V2}];\\
 {} \quad U2= \text{Complement}[ \text{allind}, \text{V2}\sim \text{Join}\sim 
 \text{\text{V2}C}]; \\
 {}\quad \quad \text{If}[ \text{Complement}[ \text{Flatten}[ I2[[ \text{V2}, 
\text{V2}]]],\{5\}]==\{\}\&\& \\
 {}\quad \quad \quad \text{Max}[ \text{Flatten}[ I2[[ \text{\text{V2}C}, 
\text{\text{V2}C}]]]]<5\&\& \\
 {}\quad \quad \quad \text{Complement}[ \text{Flatten}[ I2[[ \text{V2}, 
\text{\text{V2}C}]]],\{5\}]==\{\}\&\& \\
 {}\quad \quad \quad \text{Max}[ \text{Flatten}[ J2[[ \text{V2}, \text{V2}]]]]<5\&\& \\
 {}\quad \quad \quad \text{Complement}[ \text{Flatten}[ J2[[ \text{\text{V2}C}, 
\text{\text{V2}C}]]],\{5\}]==\{\}\&\& \\
 {}\quad \quad \quad \text{Complement}[ \text{Flatten}[ J2[[ \text{V2}, 
\text{\text{V2}C}]]],\{5\}]==\{\}\&\& \\
 {}\quad \quad \quad \text{Length}@ U2<=1\&\& \\
 {}\quad \quad \quad \text{Max}@( \text{Flatten}@( I2[[ U2, U2]])\sim 
\text{Join}\sim \text{Flatten}@( J2[[ U2, U2]]))<=1, \\
 {}\quad \quad \quad \text{Return}[\{ \text{V2}, \text{\text{V2}C}\}]];  jj2++]; \\
 {}\quad \text{Return}[\{\{\},\{\}\}]) \\
 \\
 \text{(* Determine how many terms are there in the polynomial}.\ \text{*)}  \\
  \text{NumberofInv}[ S4\_, \text{var4}\_]:=( \\
 {}\quad ii4=0; \\
 {}\quad \text{Do}[ \text{If}[ \text{Intersection}[ \text{Variables}[ S4[[ 
j4]]],\{ \text{var4}\}]!=\{\}, \\
 {}\quad \quad \quad ii4++],  \{ j4,1, \text{Length}[ S4]\}]; \text{Return}[ ii4])\\
 \\
 \text{(* For collection } \left\{f_i\right\} \text{ of polynomials, if there is some } f_i=g*\text{var}, \text{where } \\
{}\quad \text{var is variable and } g \text{ is polynomial independent of var},   
\text{then we make} \\
{}\quad \text{substitution }  f/g \text{ for var to each } f_j \text {in the collection}.\text{  }\text{*)}  \\
\text{simpleSubstitution}[ S3\_, \text{varset3a}\_]:=( \\
 {}\quad s3= S3; \text{varset3b}= \text{Select}[ \text{varset3a}, \text{NumberofInv}[ s3,\#]>1\& ]; \\
 {}\quad ii3=1; \text{tf2}=0; \\
 {}\quad \text{While}[ ii3<= \text{Length}[ s3]\&\& \text{tf2}==0, \\
 {}\quad \quad \text{If}[ \text{Length}[ \text{ExpandAll}@( s3[[ ii3]])]==2, 
\\
 {}\quad \quad \quad \text{varset3c}= \text{Intersection}[ \text{Variables}[ 
s3[[ ii3]]], \text{varset3b}];  jj3=1; \\
 {}\quad \quad \quad \text{While}[ jj3<= \text{Length}[ \text{varset3c}]\&\& 
\text{tf2}==0, \\
 {}\quad \quad \quad \quad \text{If}[ \text{Exponent}[ s3[[ ii3]], 
\text{varset3c}[[ jj3]]]==1, \\
 {}\quad \quad \quad \quad \quad \text{substi3}= \text{Solve}[ s3[[ 
ii3]]==0, \text{varset3c}[[ jj3]]][[1]]; \\
 {}\quad \quad \quad \quad \quad \text{Do}[ s3[[ k3]]= \text{Simplify}@ 
\text{Expand}[ s3[[ k3]]/. \text{substi3}], \\
 {}\quad \quad \quad \quad \quad \quad \{ k3,1, \text{Length}[ s3]\}]; \\
 {}\quad \quad \quad \quad \quad \text{tf2}=1];  jj3++]];  ii3++];  \text{Return}[ s3];  ) \\
 \\
 \text{(* Delete duplicates from a collection of polynomials of some
 variables *)}  \\
 \text{DelDupPoly}[ \text{polyset3a}\_, \text{varset3}\_] := ( \\
 {}\quad \text{polyset3b} = \text{polyset3a}; \text{del} = \{\}; \\
 {}\quad \text{Do}[ \text{If}[ \text{Intersection}[ \text{Variables}[ 
\text{polyset3b}[[ i3]]], \text{varset3}] != \{\}, \\
 {}\quad \quad \quad \text{var3} = \text{Intersection}[ 
\text{Variables}[ \text{polyset3b}[[ i3]]], \text{varset3}][[1]]; \\
 {}\quad \quad \quad \text{exp3} = \text{Exponent}[ \text{polyset3b}[[ 
i3]], \text{var3}]; \\
 {}\quad \quad \quad \text{Do}[ \text{If}[ \text{Simplify}[ 
 \text{polyset3b}[[ i3]]* \text{Coefficient}[ \text{polyset3b}[[ 
j3]], \text{var3}\wedge \text{exp3}] \\
 {}\quad \quad \quad \quad \quad - \text{polyset3b}[[ j3]]* \text{Coefficient}[ \text{polyset3b}[[ i3]], \text{var3}\wedge \text{exp3}]] 
== 0, \\
 {}\quad \quad \quad \quad \quad \text{del} = \text{del}\sim 
\text{Join}\sim \{\{ j3\}\}], \\
 {}\quad \quad \quad \quad \{ j3, i3 + 1, \text{Length}[ 
\text{polyset3b}]\}]],  \{ i3, 1, \text{Length}[ \text{polyset3b}] - 1\}]; \\
 {}\quad \text{polyset3b} = \text{Delete}[ \text{polyset3b}, 
\text{del}];  \text{Return}[ \text{polyset3b}];  ) \\
 \\
 \text{(* For collection $\left\{f_i\right\}$ of polynomials and set $V$ of variables, }  \\ \text{if } f_i=f_j*g,\ i<j, 
\text{where } g \text{ is independent of variables in } V,\\ \text{ then delete } f_j \text{ from the }
\text{collection}\ \text{*)}  \\
\text{DelDupPoly2}[ \text{polyset3a}\_, \text{varset3}\_]:=( \\
 {}\quad \text{polyset3b}= \text{polyset3a}; \text{del}=\{\}; \\
 {}\quad \text{Do}[ \text{Do}[ \text{If}[ \text{PolynomialQ}[ 
\text{Simplify}@( \text{Expand}@( \text{polyset3b}[[ i3]]/ \text{polyset3b}[[
j3]])), \\ 
 {}\quad \quad \quad \quad \text{varset3}]== \text{True}\&\&  \text{PolynomialQ}[ \text{Simplify}@( \text{Expand}@( \text{polyset3b}[[ j3]]/ \text{polyset3b}[[ i3]])), \\
 {}\quad \quad \quad \quad \text{varset3}]== \text{True} , \\
 {}\quad \quad \quad \quad \text{del}= \text{del}\sim \text{Join}\sim \{\{ 
j3\}\}], \\
 {}\quad \quad \quad \{ j3, i3+1, \text{Length}[ \text{polyset3b}]\}],  \{ i3,1, \text{Length}[ \text{polyset3b}]-1\}]; \\
 {}\quad \text{polyset3b}= \text{Delete}[ \text{polyset3b}, \text{del}];  \text{Return}[ \text{polyset3b}]; ) \\
 \\
 \text{(* If $g$ is a multiple of $f$, then we delete $g$. *)} \\
 \text{DelDupPoly3}[ \text{polyset3a}\_, \text{varset3}\_] := ( \\
 {}\quad \text{polyset3b} = \text{polyset3a};  \text{del} = \{\}; \\
 {}\quad \text{Do}[ \text{Do}[ \text{tf3} = 0; \\
 {}\quad \quad \quad \text{If}[ \text{PolynomialQ}[ 
\text{Simplify}@( \text{Expand}@( \text{polyset3b}[[ i3]]/ 
\text{polyset3b}[[ j3]])),\\
 {}\quad \quad \quad \quad \text{varset3}] == \text{True}, \\
 {}\quad \quad \quad \quad \text{del} = \text{del}\sim 
\text{Join}\sim \{\{ i3\}\};  \text{tf3} = 1]; \\
 {}\quad \quad \quad \text{If}[ \text{tf3} == 0 \&\& \text{PolynomialQ}[ \text{Simplify}@( \text{Expand}@( \text{polyset3b}[[ j3]]/ \text{polyset3b}[[ i3]])),\\
 {}\quad \quad \quad \quad \text{varset3}] == \text{True}, \\
 {}\quad \quad \quad \quad \text{del} = \text{del}\sim \text{Join}\sim 
\{\{ j3\}\}], \\ 
 {}\quad \quad \quad  \{ j3, i3 + 1, \text{Length}[\text{polyset3b}]\}], \{ i3, 1, \text{Length}[ \text{polyset3b}] - 1\}]; \\
 {}\quad \text{polyset3b} = \text{Delete}[ \text{polyset3b},
\text{del}];  \text{Return}[ \text{polyset3b}]; ) \\
 \\
 \text{(*} \text{ apply above ``simplesubstitution'', change each member} \text{ to polynomial},
  \\
{}\quad \text{ and delete constant polynomials and duplicates}.\text{*)}  \\
\text{SimpleSubstitution}[ S2\_, \text{varset2}\_] := ( \\
 {}\quad s2 = S2; \text{tf2} = 1; \\
 {}\quad \text{While}[ \text{tf2} == 1, \\
 {}\quad \quad s2 = \text{simpleSubstitution}[ s2, \text{varset2}]; \\
 {}\quad \quad s2 = \text{CancelDenominator}[ s2]; \\
 {}\quad \quad s2 = \text{CancelFactor}[ s2]; \\
 {}\quad \quad s2 = \text{Complement}[ s2, \{1, 0, -1\}]; \\
 {}\quad \quad s2 = \text{DelDupPoly}[ s2, \text{varset2}];  ]; \\
 {}\quad \text{Return}[ s2]) \\
 \\
 \text{(*} \text{if } f=g_1^{k_1}g_2^{k_2}\cdot \cdot \cdot g_n^{k_n} , \text{ then replace } f \text{ by } g_1g_2\cdot \cdot \cdot g_n\text{*)}  \\
 \text{MinimizeExpofFac}[ S4\_]:=( \\
 {}\quad s4= S4;  L4= \text{Length}[ s4]; \\
 {}\quad \text{If}[ L4>0, \\
 {}\quad \quad \text{Do}[ \text{list4}= \text{FactorList}[ s4[[ k4]]]; \\
 {}\quad \quad \quad \text{If}[ \text{Length}[ \text{list4}]>1,  s4[[ k4]]=1; \\
 {}\quad \quad \quad \quad \text{Do}[ \text{poly4}= \text{list4}[[ j4,1]]; 
\\
 {}\quad \quad \quad \quad \quad \text{If}[ \text{Length}@ \text{poly4}>1, 
\\
 {}\quad \quad \quad \quad \quad \quad s4[[ k4]]= \text{ExpandAll}@( s4[[ 
k4]]* \text{poly4})], \\
 {}\quad \quad \quad \quad \quad \{ j4,2, \text{Length}[ \text{list4}]\}]], 
 \{ k4,1, L4\}]];  \text{Return}[ s4]; ) \\
 \\
 \text{(* if } f=g_1^{k_1}g_2^{k_2}\cdot \cdot \cdot g_n^{k_n}h_1^{l_1}h_2^{l_2}\cdot \cdot \cdot h_m^{l_m} \text{ and all }
\text{terms } \text{in } h_i's \text{ are } \text{of} \text{ the } \text{same} \text{ sign}, \text{then } \\
{}\quad \text{replace } f \text{ by } 
g_1g_2\cdots g_n\text{*)} \\
 \text{MinimizeExpofFacP}[ S3\_]:=( \\
 {}\quad s3= S3;  L3= \text{Length}[ s3]; \\
 {}\quad \text{If}[ L3>0,  \text{Do}[ \text{list3}= \text{FactorList}[ s3[[ k3]]]; \\
 {}\quad \quad \quad \text{If}[ \text{Length}[ \text{list3}]>1,  s3[[ k3]]=1; \\
 {}\quad \quad \quad \quad \text{Do}[ \text{If}[ \text{Length}@ 
\text{StringPosition}[ \text{ToString}@( 
\text{list3}[[ j3,1]]),``-'']>0\&\& \\
 {}\quad \quad \quad \quad \quad \quad \text{Length}@ \text{StringPosition}[ 
\text{ToString}@(- \text{list3}[[ j3,1]]),``-'']>0, \\
 {}\quad \quad \quad \quad \quad \quad s3[[ k3]]= s3[[ k3]]* \text{list3}[[ 
j3,1]]], \\
 {}\quad \quad \quad \quad \quad \{ j3,2, \text{Length}[ \text{list3}]\}]], 
 \{ k3,1, L3\}]]; \text{Return}[ s3]; ) \\
 \\
 \text{(*} \text{Elimination process: apply some} \text{ algorithms} \text{ on collection} \text{ of} \text{ polynomials} \\
{}\quad \text{ to} \text{ eliminate given} \text{ variable}.\text{*)}  \\
\text{Elimi}[ S2\_, \text{var2}\_]:=( \\
 {}\quad s2= \text{Select}[ S2, \text{Intersection}[ \text{Variables}[\#],\{ 
\text{var2}\}]!=\{\}\& ]; \\
 {}\quad \text{s2b}= \text{Complement}[ S2, s2]; \\
 {}\quad \text{exp2c}=2* \text{Max}[ \text{Table}[ \text{Exponent}[ s2[[ 
k2]], \text{var2}],\{ k2,1, \text{Length}[ s2]\}]]; \\
 {}\quad kk2=0; \text{possfac2}=1; \\
 {}\quad \text{While}[( \text{Length}@ s2>1)\&\& ( kk2< \text{exp2c}+2), \\
 {}\quad \quad s2= \text{SortBy}[ s2, \text{Length}[\#]\& ]; \\
 {}\quad \quad s2= \text{SortBy}[ s2, \text{Exponent}[\#, \text{var2}]\& ]; 
\\
 {}\quad \quad \text{exp2a}= \text{Exponent}[ s2[[1]], \text{var2}]; \text{TF2}=1; \\
 {}\quad \quad \text{CoeList2}= \text{CoefficientList}[ s2[[1]], 
\text{var2}];  \\
 {}\quad \quad \text{CoeFactor2}= \text{Replace}[ \text{CoeList2}, 
\text{List}\to \text{PolynomialGCD},1, \text{Heads}\to \text{True}];  \\
 {}\quad \quad \text{Do}[ \text{If}[ \text{Exponent}[ \text{CoeFactor2}, 
\text{var2b}]>0, \text{TF2}=0];  ,  \{ \text{var2b}, \text{VarSet1b}\}]; \\
 {}\quad \quad \text{If}[ \text{TF2}==1, \\
 {}\quad \quad \quad s2[[1]]= \text{ExpandAll}@ \text{Simplify}[ s2[[1]]/ 
\text{CoeFactor2}]; \\
 {}\quad \quad \quad \text{possfac2}= \text{possfac2}* \text{CoeFactor2};]; \\
 {}\quad \quad \text{Do}[ \text{exp2b}= \text{Exponent}[ s2[[ j2]], 
\text{var2}]; \\
 {}\quad \quad \quad \text{If}[ \text{exp2b}>= \text{exp2a}, \\
 {}\quad \quad \quad \quad \text{poly2}= \text{Expand}[ s2[[1]]* 
\text{var2}\wedge ( \text{exp2b}- \text{exp2a})]; \\
 {}\quad \quad \quad \quad \text{Coe2a}= \text{Coefficient}[ \text{poly2}, 
\text{var2}\wedge \text{exp2b}]; \\
 {}\quad \quad \quad \quad \text{Coe2b}= \text{Coefficient}[ s2[[ j2]], 
\text{var2}\wedge \text{exp2b}]; \\
 {}\quad \quad \quad \quad s2[[ j2]]= \text{ExpandAll}@( s2[[ j2]]* 
\text{Coe2a}- \text{poly2}* \text{Coe2b});  ], \{ j2,2, \text{Length}[ s2]\}]; \\
 {}\quad \quad s2= \text{CancelFactor}[ s2]; \\
 {}\quad \quad s2= \text{MinimizeExpofFac}[ s2]; \\
 {}\quad \quad \text{s2b}= \text{s2b}\sim \text{Join}\sim \text{Select}[ 
s2, \text{Intersection}[ \text{Variables}[\#],\{ \text{var2}\}]==\{\}\&]; \\
 {}\quad \quad s2= \text{Select}[ s2, \text{Intersection}[ 
\text{Variables}[\#],\{ \text{var2}\}]!=\{\}\&];  kk2++;  ]; \\
 {}\quad s2= \text{SortBy}[ s2, \text{Exponent}[\#, \text{var2}]\& ]; \text{Return}[\{ \text{s2b}, \text{possfac2}\}]; ) \\
 \\
\text{(* for each variable $v$ in the varset, delete the polynomial (if it exists)} \\ {} \quad \text{ in $S5$ that is the unique polynomial involving variable $v$ *)}\\
\text{DeleteVar}[ S2\_, \text{varset2}\_] := ( \\
 {}\quad s2 = S2; \\
 {}\quad \text{If}[ \text{Length}[ s2] > 1, \\
 {}\quad \quad \text{Do}[ \text{If}[ \text{Length}[ \text{Select}[ 
s2, ( \text{Variables}[\#] \cap \{ \text{varset2}[[ k2]]\}) != 
\{\} \& ]] == 1, \text{del} = \{\}; \\
 {}\quad \quad \quad \quad \text{Do}[ \text{If}[( \text{Variables}[ 
s2[[ j2]]] \cap \{ \text{varset2}[[ k2]]\}) != \{\}, \\
 {}\quad \quad \quad \quad \quad \quad \text{del} = \text{del}\sim 
\text{Join}\sim \{\{ j2\}\}], \\
 {}\quad \quad \quad \quad \quad \{ j2, 1, \text{Length}[ s2]\}]; s2 = \text{Delete}[ s2, \text{del}]],  \{ k2, 1, \text{Length}[ \text{varset2}]\}]]; \\
 {}\quad \text{Return}[ s2];  ) \\
 \\
 \text{(* First algorithm for finding mass relations *)}  \\
 \text{FindMassRel1}[]:=( \\
 {}\quad t1= \text{AbsoluteTime}[]; \\
 {}\quad \text{Str}=`` \text{finding} \text{the} \text{mass} 
\text{relation} \text{of} \text{FMX}[[''<> \text{ToString}[ N1]<>``]]''; \\
 {}\quad \text{Print}[ \text{Str}];  \text{Print}[];  \text{TF1}=0; \\
 {}\quad \text{EqnZ}=( \text{ZX1}* \text{Z}). M;  \text{EqnZ}= \text{Relation1}@ \text{Complement}[ 
\text{EqnZ},\{0\}]; \\
 {}\quad \text{EqnW}=( \text{WX1}* \text{W}). M;  \text{EqnW}= \text{Relation1}@ \text{Complement}[ 
\text{EqnW},\{0\}]; \\
 {}\quad \text{Polyset1}= \text{Union}[ \text{EqnW}, \text{EqnZ}]; \\
 {}\quad \text{Print}[ \text{Style}[`` \text{Collected} \text{equations}: 
'', \text{Bold},15, \text{Blue}]]; \\
 {}\quad \text{If}[ \text{Length}@ \text{EqnZ}+ \text{Length}@ 
\text{EqnW}>0, \\
 {}\quad \quad \text{Print}[ \text{Style}[`` \text{Main} \text{equations}: 
'', \text{Bold},13, \text{RGBColor}[0.8,0.4,0]]]; \\
 {}\quad \quad \text{Do}[ \text{Print}[0,`` = '', \text{EqnZ}[[ k1]]], \{ k1,1, \text{Length}@ \text{EqnZ}\}]; \\
 {}\quad \quad \text{Do}[ \text{Print}[0,`` = '', \text{EqnW}[[ k1]]], \{ k1,1, \text{Length}@ \text{EqnW}\}];  ]; \\
 {}\quad \text{If}[ \text{Length}@ \text{ClusterZ}[\{ \text{zo1}, 
\text{wo1}\}]+ \text{Length}@ \text{ClusterW}[\{ \text{zo1}, \text{wo1}\}]>0,
\\
 {}\quad \quad \text{Print}[ \text{Style}[`` \text{Cluster} 
\text{equations}: '', \text{Bold},13, \text{RGBColor}[0.8,0.4,0]]]; \\
 {}\quad \quad \text{Do}[ \text{Print}[0,`` = '', \text{ClusterZ}[\{ 
\text{zo1}, \text{wo1}\}][[ k1]]], \{ k1,1, \text{Length}@ \text{ClusterZ}[\{ \text{zo1}, 
\text{wo1}\}]\}]; \\
 {}\quad \quad \text{Do}[ \text{Print}[0,`` = '', \text{ClusterW}[\{ 
\text{zo1}, \text{wo1}\}][[ k1]]], \{ k1,1, \text{Length}@ \text{ClusterW}[\{ \text{zo1}, 
\text{wo1}\}]\}]; ]; \\
 {}\quad \text{Polyset1}= \text{Relation1}[ \text{Polyset1}]; \\
 {}\quad \text{Polyset1}= \text{Relation2}[ \text{Polyset1}]; \\
 {}\quad \text{Polyset1}= \text{ClusterReplace}[\{ \text{zo1}, \text{wo1}\}, 
\text{Polyset1}]; \\
 {}\quad \text{Polyset1}= \text{Complement}[ \text{Polyset1},\{1,0,-1\}]; \\
 {}\quad \text{Polyset1b}= \text{GroebnerBasis}[ \text{Polyset1}, M\sim 
\text{Join}\sim \text{mu}]; \\
 {}\quad \text{Polyset1}= \text{Polyset1b}; \\
 {}\quad \text{Polyset1}= \text{CancelFactor}[ \text{Polyset1}]; \\
 {}\quad \text{Polyset1}= \text{Complement}[ \text{Polyset1},\{1,0,-1\}]; \\
 {}\quad \text{If}[ \text{MassRelationQ}[ \text{Polyset1}]== \text{True}, \\
 {}\quad \quad \text{TF1}=1;  \text{Polyset1}= \text{TakeMassRela}[ \text{Polyset1}]; \\
 {}\quad \quad \text{Print}[`` \text{Finished }, \text{we } \text{have } 
\text{mass } \text{relation } '', \text{Style}[ \text{Polyset1}, 
\text{Blue}],\\
 {}\quad `` \text{for } \text{this } \text{diagram}.'']]; \\
 {}\quad \text{If}[ \text{TF1}==0,  \text{Print}[`` \text{We } \text{don't } \text{find } \text{mass } \text{relation. } \text{We } \text{have } \text{the } \text{relations}:'']; \\
 {}\quad \quad \text{Print}[ \text{Polyset1},\ ``\setminus \text{n} '' ]]; \\
 {}\quad \text{Print}[`` \text{spending} \text{time}: '',
\text{AbsoluteTime}[]- t1]; ) \\
 \\
\text{(* Second algorithm for finding mass relations *)}  \\ 
 \text{indMassRel2}[]:=( \\
 {}\quad \text{TF1}=0; t1= \text{AbsoluteTime}[]; \\
 {}\quad \text{Str}=`` \text{finding } \text{mass } 
\text{relation } \text{of } \text{FMX}[[''<> \text{ToString}[ N1]<>``]]...''; 
\\
 {}\quad \text{Print}[ \text{Str}, ``\setminus \text{n} '']; \\
 {}\quad \text{If}[ \text{Length}[ \text{zwotype3}]>0, \text{TF1}=1;  jj1=1; \\
 {}\quad \quad \text{While}[ jj1<= \text{Length}[ \text{zwotype3}]\&\& \text{TF1}==1, \\
 {}\quad \quad \quad \{ \text{S1a}, \text{S1b}\}= \text{FindWedgeV}[ 
\text{zwotype3}[[ jj1]]]; \\
 {}\quad \quad \quad \text{If}[ \text{S1a}==\{\}, \text{TF1}=0;  ]; jj1++]; \\
 {}\quad \quad \text{If}[ \text{TF1}==1, \\
 {}\quad \quad \quad \text{massrellcm}=1; \text{PossFaclcm}=1; jj1=1; \\
 {}\quad \quad \quad \text{While}[ jj1<= \text{Length}[ \text{zwotype3}],  \{ \text{S1a}, \text{S1b}\}= \text{FindWedgeV}[ 
\text{zwotype3}[[ jj1]]]; \\
 {}\quad \quad \quad \quad \text{WedgeV1a}= \text{Flatten}@ \text{Table}[ 
M[[ j1]] M[[ i1]]( \text{zeta}[[ i1]] \text{omega}[[ j1]])\wedge (-1),\\
 {}\quad \quad \quad \quad \quad \quad \quad \quad \quad \{ i1, 
\text{S1a}\},\{ j1, \text{S1b}\}]; \\
 {}\quad \quad \quad \quad \text{sign1}= \text{Tuples}[\{1,-1\},\{ 
\text{Length}[ \text{S1a}], \text{Length}[ \text{S1b}]\}]; 
\\
 {}\quad \quad \quad \quad \text{WedgeV1b}= \text{Table}[ \text{Dot}[ 
\text{WedgeV1a}, \text{Flatten}[ \text{sign1}[[ j1]],1]],\\ 
 {}\quad \quad \quad \quad \quad \quad \quad \quad \quad \{ j1,1, \text{Length}[ \text{sign1}]\}]; \\
 {}\quad \quad \quad \quad \text{VarSet1a}= \text{Variables}[ 
\text{WedgeV1a}]; \\
 {}\quad \quad \quad \quad \text{WedgeV1b}= \text{WedgeV1b}[[1; ;  2\wedge ( \text{Length}[ \text{S1b}]* 
\text{Length}[ \text{S1a}]-1)]]; \\
 {}\quad \quad \quad \quad \text{Do}[ \text{WedgeV1b}[[ j1]]= 
\text{ExpandAll}@ \text{WedgeV1b}[[ j1]], 
\\
 {}\quad \quad \quad \quad \quad \{ j1,1, \text{Length}@ \text{WedgeV1b}\}]; 
\\
 {}\quad \quad \quad \quad \text{index1}=1; \\
 {}\quad \quad \quad \quad \text{Circledzc}= \text{Table}[ 
\text{Intersection}[ \text{zc1}[[ i1]], \text{Flatten}[ \text{Position}[ \\
  {}\quad \quad \quad \quad \quad \quad \quad \quad \quad \quad
\text{Diagonal}[ \text{ZX1}],1]]],\{ i1,1, 
\text{Length}[ \text{zc1}]\}]; \\
 {}\quad \quad \quad \quad \text{Circledwc}= \text{Table}[ 
\text{Intersection}[ \text{wc1}[[ i1]], \text{Flatten}[ \text{Position}[ \\
  {}\quad \quad \quad \quad \quad \quad \quad \quad \quad \quad
\text{Diagonal}[ \text{WX1}],1]]],\{ i1,1, 
\text{Length}[ \text{wc1}]\}]; \\
 {}\quad \quad \quad \quad \text{MassCenRela}= \text{Flatten}[ \text{Table}[ 
 \text{Sum}[ M[[ \text{Circledzc}[[ i1, j1]] ]]*\\
  {}\quad \quad \quad \quad \quad \quad \quad \quad \quad \quad  \text{z}[[ 
\text{Circledzc}[[ i1, j1]] ]], \{ j1,1, \text{Length}[ \text{Circledzc}[[
i1]]]\} ] , \\
  {}\quad \quad \quad \quad \quad \quad \quad \quad \quad \quad \{ i1,1, \text{Length}[ \text{Circledzc}]\}]]\sim \text{Join}\sim 
\text{Flatten}[ \text{Table}[ \text{Sum}[\\
  {}\quad \quad \quad \quad \quad \quad \quad \quad \quad \quad  M[[ \text{Circledwc}[[ i1, j1]] 
 ]]*  \text{w}[[ \text{Circledwc}[[ i1, j1]] ]],\\
  {}\quad \quad \quad \quad \quad \quad \quad \quad \quad \quad  \{ j1,1, \text{Length}[ \text{Circledwc}[[ i1]]]\} ] ,\{ i1,1, \text{Length}[ \text{Circledwc}]\}]]; \\
 {}\quad \quad \quad \quad \text{Str}=`` \text{Finding} \text{mass} 
\text{relation} \text{with} \text{zwotype3}[[''<> \text{ToString}[ jj1]<>``]]
''; \\
 {}\quad \quad \quad \quad \text{Print}[ \text{Str}, ``\setminus \text{n} '']; \\
 {}\quad \quad \quad \quad \text{Print}[`` \text{The } \text{order } 
\text{matrices } \text{are } '']; \\
 {}\quad \quad \quad \quad \text{Print}[\{ \text{MatrixForm}[ 
\text{zwotype3}[[ jj1,1]]], \text{MatrixForm}[ \text{zwotype3}[[ jj1,2]]]\}];
\\
 {}\quad \quad \quad \quad \text{Print}[ \text{Style}[`` \text{Wedge } 
\text{equations }: '', \text{Bold},13, \text{RGBColor}[0.8,0.4,0]]]; \\
 {}\quad \quad \quad \quad \text{Replace}[ \text{Prepend}[ \text{Insert}[ 
\text{WedgeV1a},`` \pm '', \text{Table}[\{ k1\},\{ k1,2, \text{Length}@ 
\text{WedgeV1a}\}]],\\ 
 {}\quad \quad \quad \quad \quad \quad \quad `` \text{ one } \text{of } 0 = ''], \text{List}\to \text{Print},1, \text{Heads}\to \text{True}]; \\
 {}\quad \quad \quad \quad \text{While}[ \text{index1}<= \text{Length}[ 
\text{WedgeV1b}], \\
 {}\quad \quad \quad \quad \quad \text{Polyset1}= \text{MassCenRela}; \\
 {}\quad \quad \quad \quad \quad \text{Polyset1}= \text{Complement}[ 
\text{Polyset1},\{1,0,-1\}]; \\
 {}\quad \quad \quad \quad \quad \text{Polyset1}= \text{Polyset1}\sim 
\text{Join}\sim \{ \text{WedgeV1b}[[ \text{index1}]]\}; \\
 {}\quad \quad \quad \quad \quad \text{Str}=`` \text{running } \text{wedge } 
\text{equation } ''<> \text{ToString}[ \text{index1}]<>``...''; \\
 {}\quad \quad \quad \quad \quad \text{Print}[ \text{Str}, ``\setminus \text{n} '',  ``\setminus \text{n} '',`` \text{polynomials}: '',  ``\setminus \text{n} '',\text{Polyset1}]; \\
 {}\quad \quad \quad \quad \quad \text{PossFac}=1; \\
 {}\quad \quad \quad \quad \quad \text{Polyset1}= \text{Relation1}[ 
\text{Polyset1}]; \\
 {}\quad \quad \quad \quad \quad \text{Polyset1}= \text{Relation2}[ 
\text{Polyset1}]; \\
 {}\quad \quad \quad \quad \quad \text{Polyset1}= \text{ChangeVar1}[ 
\text{Polyset1}]; \\
 {}\quad \quad \quad \quad \quad \text{Polyset1}= \text{ChangeVar2}[ 
\text{Polyset1}]; \\
 {}\quad \quad \quad \quad \quad \text{Polyset1}= \text{CancelDenominator}[ 
\text{Polyset1}]; \\
 {}\quad \quad \quad \quad \quad \text{Polyset1}= \text{ClusterReplace}[ 
\text{zwotype3}[[ jj1]], \text{Polyset1}]; \\
 {}\quad \quad \quad \quad \quad \text{Polyset1}= \text{Table}[ 
\text{Simplify}[ \text{Polyset1}[[ j1]]],\{ 
j1,1, \text{Length}[ \text{Polyset1}]\}]; \\
 {}\quad \quad \quad \quad \quad \text{Polyset1}= \text{Complement}[ 
\text{Polyset1},\{1,0,-1\}]; \\
 {}\quad \quad \quad \quad \quad \text{VarSet1a}=\{\}; \\
 {}\quad \quad \quad \quad \quad \text{VarSet1a}= \text{Variables}[ 
\text{Polyset1}]; \\
 {}\quad \quad \quad \quad \quad \text{VarSet1b}= \text{Complement}[ 
\text{VarSet1a}, \text{Union}[ M, \text{mu}]]; \\
 {}\quad \quad \quad \quad \quad \text{VarSet1b}= \text{SortBy}[ 
\text{VarSet1b}, \text{Max}[ \text{Table}[ 
\text{Exponent}[ \text{Polyset1}[[ k1]],\#],\\ 
 {}\quad \quad \quad \quad \quad \quad \quad \quad \quad \quad \{ k1,1, \text{Length}[ P
 \text{olyset1}]\}]]\&]; \\
 {}\quad \quad \quad \quad \quad \text{Polyset1}= \text{MinimizeExpofFac}[ 
\text{Polyset1}]; \\
 {}\quad \quad \quad \quad \quad \text{While}[ \text{Length}[ 
\text{VarSet1b}]>0\&\& \text{Length}[ \text{Polyset1}]>1\&\& 
\\
 {}\quad \quad \quad \quad \quad \quad \text{MassRelationQ}[ 
\text{Polyset1}]== \text{False}, \\
 {}\quad \quad \quad \quad \quad \quad \text{VA1}= \text{Elimi}[ 
\text{Polyset1}, \text{VarSet1b}[[1]]]; \\
 {}\quad \quad \quad \quad \quad \quad \text{Polyset1}= \text{VA1}[[1]]; \\
 {}\quad \quad \quad \quad \quad \quad \text{PossFac}= \text{PossFac}* 
\text{VA1}[[2]]; \\
 {}\quad \quad \quad \quad \quad \quad \text{Polyset1}= \text{Complement}[ 
\text{Polyset1},\{0\}]; \\
 {}\quad \quad \quad \quad \quad \quad \text{Polyset1}= \text{CancelFactor}[ 
\text{Polyset1}]; \\
 {}\quad \quad \quad \quad \quad \quad \text{If}[ \text{Length}[ 
\text{Polyset1}]>1, \\
 {}\quad \quad \quad \quad \quad \quad \quad \text{Polyset1}= 
\text{DeleteVar}[ \text{Polyset1}, \text{VarSet1b}]; \\
 {}\quad \quad \quad \quad \quad \quad \quad \text{Polyset1}= 
\text{DelDupPoly2}[ \text{Polyset1}, \text{VarSet1b}];  ]; \\
 {}\quad \quad \quad \quad \quad \quad \text{VarSet1a}=\{\}; \\
 {}\quad \quad \quad \quad \quad \quad \text{Do}[ \text{VarSet1a}= 
\text{VarSet1a}\text{cup} \text{Variables}[ \text{Polyset1}[[ k1]]], \\
 {}\quad \quad \quad \quad \quad \quad \quad \{ k1,1, \text{Length}[ 
\text{Polyset1}]\}]; \\
 {}\quad \quad \quad \quad \quad \quad \text{VarSet1b}= \text{Complement}[ 
\text{VarSet1a}, \text{Union}[ M, \text{mu}]]; \\
 {}\quad \quad \quad \quad \quad \quad \text{VarSet1b}= \text{SortBy}[ 
\text{VarSet1b}, \text{Max}[ \text{Table}[ 
\text{Exponent}[ \text{Polyset1}[[ k1]],\#], \\ 
{}\quad \quad \quad \quad \quad \quad \quad \quad \quad \quad \quad \{ k1,1, \text{Length}[  \text{Polyset1}]\}]]\&]; ]; \\
 {}\quad \quad \quad \quad \quad \text{Polyset1}= \text{CancelFactor}[ 
\text{Polyset1}]; \\
 {}\quad \quad \quad \quad \quad \text{Polyset1}= \text{Complement}[ 
\text{Polyset1},\{1,0,-1\}]; \\
 {}\quad \quad \quad \quad \quad \text{If}[ \text{MassRelationQ}[ 
\text{Polyset1}]== \text{True}, \\
 {}\quad \quad \quad \quad \quad \quad \text{Polyset1}= \text{Simplify}[ 
\text{TakeMassRela}[ \text{Polyset1}]]; \\
 {}\quad \quad \quad \quad \quad \quad \text{Polyset1}= 
\text{MinimizeExpofFacP}[ \text{Polyset1}]; \\
 {}\quad \quad \quad \quad \quad \quad \text{Polyset1}= 
\text{ChangeVar2Inv}[ \text{Polyset1}]; \\
 {}\quad \quad \quad \quad \quad \quad \text{PossFac}= 
\text{ChangeVar2Inv}[\{ \text{PossFac}\}][[1]]; \\
 {}\quad \quad \quad \quad \quad \quad \text{PossFac}= 
\text{MinimizeExpofFacP}[\{ \text{PossFac}\}][[1]]; \\
 {}\quad \quad \quad \quad \quad \quad \text{Str}=`` \text{for } 
\text{zwotype3 }[['' <> \text{ToString}[ jj1]<>``]] \text{and} \text{WedgeV}[[''  <> \\ 
 {}\quad \quad \quad \quad \quad \quad \quad \quad \quad  \text{ToString}[ \text{index1}]<>``]], \text{we } \text{have } \text{mass } \text{relations }''; \\
 {}\quad \quad \quad \quad \quad \quad \text{Print}[ \text{Str}]; \text{Print}[ \text{Style}[ 
\text{Polyset1}, \text{Blue}]]; \\
 {}\quad \quad \quad \quad \quad \quad \text{Do}[ \text{massrellcm}= 
\text{PolynomialLCM}[ \text{massrellcm}, 
\text{poly1}]; , \\
 {}\quad \quad \quad \quad \quad \quad \quad \{ \text{poly1}, 
\text{Polyset1}\}]; \\
 {}\quad \quad \quad \quad \quad \quad \text{PossFaclcm}= 
\text{PolynomialLCM}[ \text{PossFaclcm}, \text{PossFac}]; 
\\
 {}\quad \quad \quad \quad \quad \quad \text{Print}[`` \text{or } '', 
\text{Style}[ \text{PossFac}, \text{RGBColor}[0,0.5,0]],`` ( \text{the } 
\text{part } \text{obtained }\\
 {}\quad \quad \quad \quad \quad \quad \quad\quad\quad \text{by } \text{factors } \text{of }
\text{coefficients } )'', ``\setminus \text{n}\setminus \text{n} ''];  ]; \\
 {}\quad \quad \quad \quad \quad \text{If}[ \text{MassRelationQ}[ 
\text{Polyset1}]== \text{False}, \\
 {}\quad \quad \quad \quad \quad \quad \text{TF1}=0; \\
 {}\quad \quad \quad \quad \quad \quad \text{Str}=`` \text{We } \text{don't } \text{obtain } \text{any } \text{mass } \text{relation } \text{for } 
\text{zwotype3 }[['' \\
 {}\quad \quad \quad \quad \quad \quad \quad \quad <> \text{ToString}[ jj1] <>``]] \text{ and } 
\text{WedgeV}[[''<> \text{ToString}[ \text{index1}] \\
 {}\quad \quad \quad \quad \quad \quad \quad \quad <>``]], \text{we } \text{have } \text{relations}''; \\
 {}\quad \quad \quad \quad \quad \quad \text{Print}[ \text{Str}]; \text{Print}[ \text{Polyset1}];  ]; \\
 {}\quad \quad \quad \quad \quad \text{index1}++;  ];  jj1++];  ]; \\
 {}\quad \quad \text{If}[ \text{TF1}==1, \\
 {}\quad \quad \quad \text{Print}[`` \text{Finished}, \text{for } 
\text{this } \text{diagram }, \text{we } \text{have } \text{mass } 
\text{relations }'', \\
 {}\quad \quad \quad \quad \quad \quad \text{Style}[ \text{massrellcm}, \text{Blue}], `` 
\text{ or } '', \text{Style}[ \text{PossFaclcm}, \text{RGBColor}[0,0.5,0]], \\
 {}\quad \quad \quad \quad \quad \quad`` (\text{the } \text{part } \text{obtained } \text{by } \text{factors } \text{of } \text{coefficients} )'' ], \\
 {}\quad \quad \quad \text{Print}[`` \text{Finished}, \text{ we } 
\text{don't } \text{obtain } \text{any } \text{mass } \text{relations } 
\text{for } \text{this } \text{diagram}.'']]; ]; \\
 {}\quad \text{Print}[`` \text{spending} \text{time}: '', 
\text{AbsoluteTime}[]- t1,\ ``\setminus \text{n}\setminus \text{n}\setminus \text{n} '']; \\
\\
\text{(* Third algorithm for finding mass relations *)}  \\
 {} \text{FindMassRel3}[\{ \text{zomatrix1}\_, \text{womatrix1}\_\}, 
 \text{VertexSet1a}\_, \text{VertexSet1b}\_] := \\
\text{FindMassRel3}[\{ \text{zomatrix1}, \text{womatrix1}\}, 
\text{VertexSet1a}, \text{VertexSet1b}, \text{False}, \{\}]; \\
 {} \text{FindMassRel3}[\{ \text{zomatrix1}\_, \text{womatrix1}\_\}, 
 \text{VertexSet1a}\_, \text{VertexSet1b}\_,\\
 {}\quad\quad\quad\quad\quad\quad\quad \text{RandomTF}\_, 
\text{additionaleqnset}\_] := ( \\
 {}\quad t1 = \text{AbsoluteTime}[]; \\
 {}\quad \text{ODind1} = \text{Flatten}[ \text{Table}[ 
\text{Table}[\{ j1, i1\}, \{ i1, j1 + 1, 
n\}], \{ j1, 1, n - 1\}], 1]; \\
 {}\quad \text{EdgePos1} = \text{Select}[ \text{ODind1}, 
\text{XX1}[[\#[[1]], \#[[2]]]] > 0 \&]; \\
 {}\quad \text{TriangleSet} = \{\}; \\
 {}\quad \text{Do}[ \text{Do}[ \text{Do}[ \text{If}[ 
\text{Complement}[\{ \{ i1, j1\}, \{ j1, k1\}, \{ i1, 
k1\}\}, \text{EdgePos1}] == \{\}, \\
 {}\quad \quad \quad \quad \quad \text{AppendTo}[ 
\text{TriangleSet}, \{ i1, j1, k1\}];  ]; , \\
 {}\quad \quad \quad \quad \{ i1, 1, j1 - 1\}]; , \{ j1, 2, k1 - 1\}];  ,  \{ k1, 3, n\}]; \\
{}\quad  \text{Idenu} = \text{Table}[ u[[\ s1[[1]], s1[[2]]\ ]]\wedge (-2) + 
u[[\ s1[[2]], s1[[3]]\ ]]\wedge (-2)\\
 {}\quad\quad\quad\quad - u[[\ s1[[1]], s1[[3]]\ ]]\wedge (-2), \{ s1, \text{TriangleSet}\}]; \\
{}\quad  {} \text{Idenv} = \text{Table}[ v[[\ s1[[1]], s1[[2]]\ ]]\wedge 
(-2) + v[[\ s1[[2]], s1[[3]]\ ]]\wedge (-2) \\
 {}\quad\quad\quad\quad - v[[\ s1[[1]], s1[[3]]\ ]]\wedge (-2), \{ s1, \text{TriangleSet}\}]; \\
 {}\quad \text{Print}[ `` \text{finding }  \text{mass } 
\text{relation } \text{of } \text{FMX}[['' <> \text{ToString}[ N1] <> ``]] 
\text{with} '']; \\
 {}\quad \text{Print}[\{ \text{MatrixForm}[ \text{outputform1}@ 
\text{zomatrix1}], \text{MatrixForm}[ \text{outputform1}@ 
\text{womatrix1}]\}]; \\
 {}\quad \text{TF1} = 0; \text{PossFac} = 1; \\
 {}\quad \text{EqnZ} = \text{Relation1}@ \text{EqnMZO}[\{ 
\text{zomatrix1}, \text{womatrix1}\}, 
\text{VertexSet1a}]; \\
 {}\quad \text{EqnW} = \text{Relation1}@ \text{EqnMWO}[\{ 
\text{zomatrix1}, \text{womatrix1}\}, 
\text{VertexSet1b}]; \\
 {}\quad \text{Polyset1} = \text{EqnZ} \cup \text{EqnW} \cup 
\text{additionaleqnset}; \text{Polyset1b} = \{\}; \\
 {}\quad \text{Polyset1} = \text{Relation1}[ \text{Polyset1}]; \\
 {}\quad \text{Polyset1} = \text{Relation2}[ \text{Polyset1}]; \\
 {}\quad \text{Polyset1} = \text{Relation3ver2}[ \text{Polyset1}]; \\
 {}\quad \text{VarSet1a} = \text{Variables}[ \text{Polyset1}]; \\
 {}\quad \text{VarSet1b} = \text{Complement}[ \text{VarSet1a}, 
\text{Union}[ M, \text{mu}]]; \\
 {}\quad \text{Do}[ \text{If}[ \text{Complement}[ \text{Variables}[ 
\text{Idenu}[[ k1]]], \text{VarSet1a}] == \{\}, \\
 {}\quad \quad \quad \text{Polyset1} = \text{Polyset1}\sim 
\text{Join}\sim \{ \text{Idenu}[[ k1]]\}],  \{ k1, 1, \text{Length}[ \text{Idenu}]\}]; \\
 {}\quad \text{Do}[ \text{If}[ \text{Complement}[ \text{Variables}[ 
\text{Idenv}[[ k1]]], \text{VarSet1a}] == \{\}, \\
 {}\quad \quad \quad \text{Polyset1} = \text{Polyset1}\sim 
\text{Join}\sim \{ \text{Idenv}[[ k1]]\}], \{ k1, 1, \text{Length}[ \text{Idenv}]\}]; \\
 {}\quad \text{Polyset1} = \text{ClusterReplace}[\{ \text{zomatrix1}, 
 \text{womatrix1}\}, \text{Polyset1}]; \\
 {}\quad \text{Polyset1} = \text{CancelDenominator}[ \text{Polyset1}]; \\
 {}\quad \text{Polyset1} = \text{CancelFactor}[ \text{Polyset1}]; \\
 {}\quad \text{Polyset1} = \text{Complement}[ \text{Polyset1}, \{1, 0, 
-1\}]; \\
 {}\quad \text{Polyset1} = \text{ChangeVar3}[ \text{Polyset1}]; \\
 {}\quad \text{Polyset1} = \text{CancelDenominator}[ \text{Polyset1}]; \\
 {}\quad \text{Polyset1} = \text{CancelFactor}[ \text{Polyset1}]; \\
 {}\quad \text{Polyset1} = \text{Complement}[ \text{Polyset1}, \{1, 0, 
-1\}]; \\
 {}\quad \text{VarSet1a} = \{\}; \\
 {}\quad \text{Do}[ \text{VarSet1a} = \text{VarSet1a} \cup 
\text{Variables}[ \text{Polyset1}[[ k1]]], \{ k1, 1, \text{Length}[ \text{Polyset1}]\}]; \\
 {}\quad \text{VarSet1b} = \text{Complement}[ \text{VarSet1a}, 
\text{Union}[ M, \text{mu}]]; \\
 {}\quad \text{VarSet1b} = \text{SortBy}[ \text{VarSet1b}, 
\text{Max}[ \text{Table}[ \text{Exponent}[ \text{Polyset1}[[ k1]], 
\#],\\
 {}\quad \quad\quad\quad\quad\quad \{ k1, 1, \text{Length}[ \text{Polyset1}]\}]] \&]; \\
 {}\quad \text{Print}[ `` \text{Change } \text{of } \text{variables } '' , ``\setminus \text{n} '' , \text{Polyset1}]; \\
 {}\quad \text{Print}[ \text{ToString}[ \text{Length}[ \text{Polyset1}]] 
<> `` \text{polynomials}, '' <> \text{ToString}[ \text{Length}[ 
\text{VarSet1b}]] <> \\
 {}\quad \quad \quad \quad `` \text{variables} '' <> \text{ToString}[ 
\text{VarSet1b}], `` \setminus \text{n}  \text{Simple} \text{Substitutions} '']; \\
 {}\quad \text{Polyset1} = \text{SimpleSubstitution}[ \text{Polyset1}, 
\text{VarSet1b}]; \\
 {}\quad \text{Polyset1} = \text{CancelFactor}[ \text{Polyset1}]; \\
 {}\quad \text{Polyset1} = \text{Complement}[ \text{Polyset1}, \{1, 0, 
-1\}]; \\
 {}\quad \text{Polyset1} = \text{DelDupPoly2}[ \text{Polyset1}, 
\text{VarSet1b}]; \\
{}\quad \text{Polyset1b}=\text{Polyset1b}\cup \text{TakeMassRela}(\text{Polyset1});\\
{}\quad \text{Polyset1}=\text{Complement}[\text{Polyset1},\text{Polyset1b}];
 {}\quad \text{VarSet1a} = \{\}; \\
 {}\quad \text{Do}[ \text{VarSet1a} = \text{VarSet1a} \cup 
\text{Variables}[ \text{Polyset1}[[ k1]]], \{ k1, 1, \text{Length}[ \text{Polyset1}]\}]; \\
 {}\quad \text{VarSet1b} = \text{Complement}[ \text{VarSet1a}, 
\text{Union}[ M, \text{mu}]]; \\
 {}\quad \text{VarSet1b} = \text{SortBy}[ \text{VarSet1b}, 
\text{Max}[ \text{Table}[ \text{Exponent}[ \text{Polyset1}[[ k1]], 
\#],\\
 {}\quad \quad \quad \quad \{ k1, 1, \text{Length}[ \text{Polyset1}]\}]] \&]; \\
 {}\quad \text{Print}[ \text{Polyset1}, ``\setminus \text{n} '', \text{ToString}[ \text{Length}[ \text{Polyset1}]] <> `` \text{polynomials}, '' <>  \\
 {}\quad \quad \quad \quad \text{ToString}[ \text{Length}[ 
\text{VarSet1b}]] <> `` \text{variables} '' <> \text{ToString}[ \text{VarSet1b}]];  \\
 {}\quad \text{Polyset1} = \text{MinimizeExpofFac}[ \text{Polyset1}]; \\
 {}\quad \text{While}[ \text{Length}[ \text{VarSet1b}] > 0 \&\&  \text{Length}[ \text{Polyset1}] > 1 \&\& \\
 {}\quad \quad \text{MassRelationQ}[ \text{Polyset1}] == \text{False}, 
\\
 {}\quad \quad \text{If}[ \text{RandomTF} == \text{True}, \\
 {}\quad \quad \quad \text{Var1} = \text{RandomChoice}[ \text{VarSet1b}], 
\\
 {}\quad \quad \quad \text{Var1} = \text{VarSet1b}[[1]]]; \\
 {}\quad \quad \text{str} = `` \text{Eliminating} \text{variable} '' <> 
\text{ToString}[ \text{Var1}] <> ``...''; \\
 {}\quad \quad \text{Print}[ \text{str}, ``\setminus \text{n} ''];
 {}\quad \quad \text{VA1} = \text{Elimi}[ \text{Polyset1}, 
\text{Var1}]; \\
 {}\quad \quad \text{Polyset1} = \text{VA1}[[1]]; \\
 {}\quad \quad \text{PossFac} = \text{PossFac}* \text{VA1}[[2]]; \\
 {}\quad \quad \text{Polyset1} = \text{Complement}[ \text{Polyset1}, 
\{0\}]; \\
 {}\quad \quad \text{Polyset1} = \text{CancelFactor}[ \text{Polyset1}]; 
\\
 {}\quad \quad \text{Polyset1b} = \text{Polyset1b} \cup 
\text{TakeMassRela}[ \text{Polyset1}]; \\
 {}\quad \quad \text{Polyset1} = \text{Complement}[ \text{Polyset1}, 
\text{Polyset1b}]; \\
 {}\quad \quad \text{If}[ \text{Length}[ \text{Polyset1}] > 1, \\
 {}\quad \quad \quad \text{Polyset1} = \text{DeleteVar}[ 
\text{Polyset1}, \text{VarSet1b}]; \\
 {}\quad \quad \quad \text{Polyset1} = \text{DelDupPoly2}[ 
\text{Polyset1}, \text{VarSet1b}]; ]; \\
 {}\quad \quad \text{VarSet1a} = \{\}; \\
 {}\quad \quad \text{Do}[ \text{VarSet1a} = \text{VarSet1a} \cup 
\text{Variables}[ \text{Polyset1}[[ k1]]], \{ k1, 1, \text{Length}[ \text{Polyset1}]\}]; \\
 {}\quad \quad \text{VarSet1b} = \text{Complement}[ \text{VarSet1a}, 
\text{Union}[ M, \text{mu}]]; \\
 {}\quad \quad \text{VarSet1b} = \text{SortBy}[ \text{VarSet1b}, 
 \text{Max}[ \text{Table}[ \text{Exponent}[ \text{Polyset1}[[ k1]], 
\#], \\
 {}\quad \quad \quad \quad \quad \quad \quad \{ k1, 1, \text{Length}[ \text{Polyset1}]\}]] \&]; \\
 {}\quad \quad \text{str} = `` \text{after } \text{eliminating } '' <> 
\text{ToString}[ \text{Var1}]; \text{Print}[ \text{str}]; \\
 {}\quad \quad \text{NV1} = \text{Table}[ \text{Length}[ f1], \{ f1, 
\text{Polyset1b}\}]\sim \text{Join}\sim 
\text{Table}[ \text{Length}[ f1], \{ f1, \text{Polyset1}\}]; \\
 {}\quad \quad \text{If}[ \text{Max}[ \text{NV1}] <= 100, \text{Print}[ \text{Polyset1b}\sim \text{Join}\sim 
\text{Polyset1}]]; \\
 {}\quad \quad \text{Print}[ \text{ToString}[ \text{Length}[ 
\text{Polyset1b}\sim \text{Join}\sim \text{Polyset1}]] <> `` 
\text{polynomials},\\
 {}\quad \quad\quad\quad \quad '' <> \text{ToString}[ \text{Length}[ \text{VarSet1b}]] <> `` \text{variables} '' <> \text{ToString}[ \text{VarSet1b}]]; \\
 {}\quad \quad \text{Print}[`` \text{numbers} \text{of} \text{terms} : 
'', \text{NV1},``\setminus \text{n} \setminus \text{n} '',]; \\
 {}\quad \text{If}[ \text{Polyset1b} != \{\}, \\
 {}\quad \quad \text{Polyset1b} = \text{DelDupPoly3}[ \text{Polyset1b}, 
\text{VarSet1b}]; \\
 {}\quad \quad \text{Polyset1b} = \text{Complement}[ \text{Polyset1b}, 
\{1, -1\}]; \\
 {}\quad \quad \text{Polyset1b} = \text{Simplify}[ \text{Polyset1b}]; \\
 {}\quad \quad \text{Polyset1b} = \text{MinimizeExpofFacP}[ 
\text{Polyset1b}]; \\
 {}\quad \quad \text{Str} = `` \text{we } \text{have } \text{mass } 
\text{relations }''; \\
 {}\quad \quad \text{Print}[ \text{Str}, ``\setminus \text{n} '', \text{Style}[ \text{Polyset1b}, \text{Blue}]]; \\
 {}\quad \quad \text{If}[ \text{PossFac} =!= 1,  \text{Print}[`` \text{or } '', \text{Style}[ \text{PossFac}, \text{RGBColor}[0, 0.5, 0]], `` ( \text{ the } \text{part } \\
 {}\quad \quad \quad \quad \quad \quad \text{obtained } \text{by } \text{factors } 
 \text{of } \text{coefficients} )'', ``\setminus \text{n} \setminus \text{n} '']]; \\
 {}\quad \quad \text{Polyset1b} = \text{ChangeVar2}[ 
\text{Polyset1b}]; \\
 {}\quad \quad \text{Polyset1b} = \text{MinimizeExpofFacP}[ 
\text{Polyset1b}]; \\
 {}\quad \quad \text{Polyset1b} = \text{ChangeVar2Inv}[ 
\text{Polyset1b}]; \\
 {}\quad \quad \text{PossFac} = \text{ChangeVar2}[\{ 
\text{PossFac}\}][[1]]; \\
 {}\quad \quad \text{PossFac} = \text{MinimizeExpofFacP}[\{ 
\text{PossFac}\}][[1]]; \\
 {}\quad \quad \text{PossFac} = \text{ChangeVar2Inv}[\{ 
\text{PossFac}\}][[1]]; \\
 {}\quad \quad \text{Print}[`` \text{another } \text{expression }:'', ``\setminus \text{n} '',  \text{Style}[ \text{Polyset1b}, \text{Blue}]]; \\
 {}\quad \quad \text{If}[ \text{PossFac} =!= 1, \text{Print}[`` \text{or } '', \text{Style}[ 
\text{PossFac}, \text{RGBColor}[0, 0.5, 0]], ``\ ( \text{the } \text{part } 
\\
 {}\quad \quad \quad \quad \quad \quad \text{obtained } \text{by } \text{factors } 
 \text{of } \text{coefficients} )'']];  , \\
 {}\quad \quad \text{If}[ \text{Length}[ \text{Polyset1}] > 1, \\
 {}\quad \quad \quad \text{FacFamilyV1} = \text{Table}[\{\}, \{ k1, 1, 
\text{Length}[ \text{Polyset1}]\}]; \\
 {}\quad \quad \quad \text{Print}[ \text{FacFamilyV1}, ``\setminus \text{n} '',]; \\
 {}\quad \quad \quad \text{Do}[ \text{FacFamilyV1}[[ k1]] = 
\text{Table}[ \text{FactorList}[ \text{Polyset1}[[ k1]]][[ j1, 1]],\\
 {}\quad \quad \quad \quad \{ j1, 1, \text{Length}@ \text{FactorList}[ \text{Polyset1}[[ k1]]]\}], 
\\
 {}\quad \quad \quad \quad \{ k1, 1, \text{Length}@ 
\text{FacFamilyV1}\}]; \\
 {}\quad \quad \quad \text{Print}[ \text{FacFamilyV1}]; \\
 {}\quad \quad \quad \text{Print}[]; \\
 {}\quad \quad \quad \text{Do}[ \text{Polyset1}[[ k1]] = 1; \\
 {}\quad \quad \quad \quad \text{Do}[ \text{Do}[ \text{TF1b} = 1; \\
 {}\quad \quad \quad \quad \quad \quad \text{If}[ 
\text{PolynomialQ}[ \text{Simplify}@( \text{Expand}[ 
\text{Polyset1}[[ j1]]/ \text{FacFamilyV1}[[ k1]][[ i1]]]), \\
 {}\quad \quad \quad \quad \quad \quad \quad \text{Variables}[ \text{Polyset1}[[ j1]]]] == \text{True}, \\
 {}\quad \quad \quad \quad \quad \quad \quad \text{TF1b} = \text{TF1b}, \text{TF1b} = \text{TF1b}*0]; , \{ j1, 1, \text{Length}[ 
\text{Polyset1}]\}]; \\
 {}\quad \quad \quad \quad \quad \text{If}[ \text{TF1b} == 1, \\
 {}\quad \quad \quad \quad \quad \quad \text{Polyset1}[[ k1]] = 
\text{Polyset1}[[ k1]]* \text{FacFamilyV1}[[ k1, i1]]];  , \\
 {}\quad \quad \quad \quad \quad \{ i1, 1, \text{Length}[ 
\text{FacFamilyV1}[[ k1]]]\}]; , \\
 {}\quad \quad \quad \quad \{ k1, 1, \text{Length}[ \text{Polyset1}]\}]; \\
 {}\quad \quad \quad \text{Polyset1} = \text{CancelDenominator}[ 
\text{Polyset1}]; \\
 {}\quad \quad \quad \text{Polyset1} = \text{CancelFactor}[ 
\text{Polyset1}]; \\
 {}\quad \quad \quad \text{Polyset1} = \text{DeleteVar}[ 
\text{Polyset1}, \text{VarSet1b}]; \\
 {}\quad \quad \quad \text{Polyset1} = \text{DelDupPoly2}[ 
\text{Polyset1}, \text{VarSet1b}];  ]; \\
 {}\quad \quad \text{If}[ \text{Length}[ \text{Polyset1}] == 1, \\
 {}\quad \quad \quad \text{Polyset1b} = \text{FactorList}[ 
\text{Polyset1}[[1]]]; \\
 {}\quad \quad \quad \text{Polyset1}[[1]] = 1; \\
 {}\quad \quad \quad \text{Do}[ \text{Polyset1}[[1]] = 
\text{Polyset1}[[1]]* \text{Polyset1b}[[ i1]][[1]], \{ i1, 1, \text{Length}[ 
\text{Polyset1b}]\}]]; \\
 {}\quad \quad \text{Print}[ \text{Polyset1}," ``\setminus \text{n} '' ", \text{ToString}[ \text{Length}[ 
\text{Polyset1}]] <> `` \text{polynomials}'', ``\setminus \text{n} '' ]; ]; \\
 {}\quad \text{Print}[`` \text{time} \text{spent}: '', 
\text{AbsoluteTime}[] - t1]; ) \\
 \\
\)
}

\normalsize

\newpage

% ----------------------------------------------------------------- %
\section*{Appendix IV: Some frequently appeared factors in mass relations} 

%\noindent\rule{151mm}{0.4pt}

Here we show details of some frequently appeared 
long factors in mass relations, ordered according to their lengths. 
%The following longer factors in \S\ref{subsec:4-3} also appear in mass relations for several diagrams. 
The subscript of $\mu$ is the number of terms it contains.  

\footnotesize{
\beq
&& \mu_{15}(m_i,m_j,m_k) \\
&\!=\!& 
m_i^6 m_j^6 -2 m_i^7 m_j^3 m_k^2 +m_i^6 m_j^4 m_k^2 +6 m_i^5 m_j^5 m_k^2 +m_i^4 m_j^6 m_k^2 
-2 m_i^3 m_j^7 m_k^2 +m_i^8 m_k^4   - m_i^7 m_j m_k^4 \\
&&   +m_i^6 m_j^2 m_k^4 +2 m_i^5 m_j^3 m_k^4 -2 m_i^4 m_j^4 m_k^4+2 m_i^3 m_j^5 m_k^4 
+m_i^2 m_j^6 m_k^4 
    -m_i m_j^7 m_k^4  +m_j^8 m_k^4, 
\eeq  } \vspace{-4mm}
%%
%\noindent\rule{150mm}{0.4pt}
\footnotesize{
\beq
&& \mu_{21}(m_i,m_j,m_k) \\
&\!\!=\!\!& -m_i^{11} m_j^2+m_i^{11} m_k^2+4 m_i^9 m_j^2 m_k^2+3 m_i^8 m_j^3 m_k^2+31 m_i^9 m_k^4 
+31 m_i^8 m_j m_k^4   - 6 m_i^7 m_j^2 m_k^4   \\
&&   -9 m_i^6 m_j^3 m_k^4 -3 m_i^5 m_j^4 m_k^4 +31 m_i^7 m_k^6 +62 m_i^6 m_j m_k^6 
+35 m_i^5 m_j^2 m_k^6   + 9 m_i^4 m_j^3 m_k^6 +6 m_i^3 m_j^4 m_k^6  \\  
&&   +m_i^2 m_j^5 m_k^6 +m_i^5 m_k^8 +3 m_i^4 m_j m_k^8 
+2 m_i^3 m_j^2 m_k^8      -2 m_i^2 m_j^3 m_k^8  -3 m_i m_j^4 m_k^8 -m_j^5 m_k^8,  
\eeq }  \vspace{-4mm}
%%
%\noindent\rule{150mm}{0.4pt}
\footnotesize{
\beq
&& \mu_{33}(m_i,m_j,m_k)\\
&\!\!=\!\!&  3 m_i^{12} m_j^4+9 m_i^{11} m_j^5-24 m_i^{12} m_j^2 m_k^2-30 m_i^{11} m_j^3  m_k^2 
-12 m_i^{10} m_j^4 m_k^2-30 m_i^9 m_j^5 m_k^2  -28 m_i^8 m_j^6 m_k^2 \\
&&  +21 m_i^{12} m_k^4  +21 m_i^{11} m_j m_k^4-360 m_i^{10} m_j^2 m_k^4
   -670 m_i^9 m_j^3 m_k^4  -292 m_i^8 m_j^4 m_k^4  +36 m_i^7 m_j^5 m_k^4  \\
&&   +47 m_i^6 m_j^6 m_k^4  +29 m_i^5 m_j^7 m_k^4-154 m_i^{10} m_k^6 -308 m_i^9 m_j m_k^6 
-514 m_i^8 m_j^2 m_k^6-770 m_i^7 m_j^3 m_k^6 \\
&& -472  m_i^6 m_j^4 m_k^6-68 m_i^5 m_j^5 m_k^6 
  -10 m_i^4 m_j^6 m_k^6-14 m_i^3 m_j^7 m_k^6- 10m_i^2 m_j^8 m_k^6+21 m_i^8 m_k^8 +63 m_i^7 m_j m_k^8  \\
&&   +39 m_i^6 m_j^2 m_k^8
  -45 m_i^5 m_j^3 m_k^8-51 m_i^4 m_j^4 m_k^8-3 m_i^3 m_j^5 m_k^8 -3 m_i^2 m_j^6 m_k^8  
   -15 m_i m_j^7 m_k^8-6 m_j^8 m_k^8.  
\eeq   }  \vspace{-4mm}
\footnotesize{
\beq
&& \mu_{45}(m_i,m_j,m_k,m_l)\\ 
&\!\!=\!\!&
-m_j^{11} m_k^2+m_j^{11} m_l^2+3 m_i m_j^8 m_k^2 m_l^2+4 m_j^9 m_k^2 m_l^2+3 m_j^8 m_k^3 m_l^2+31 m_i m_j^8 m_l^4 
+31 m_j^9 m_l^4  \\
&& +31 m_j^8 m_k m_l^4-3 m_i^2 m_j^5 m_k^2 m_l^4-9 m_i m_j^6 m_k^2 m_l^4-6 m_j^7 m_k^2 m_l^4-6 m_i m_j^5 m_k^3 m_l^4
-9 m_j^6 m_k^3 m_l^4 \\
&& -3 m_j^5 m_k^4 m_l^4+31 m_i^2 m_j^5 m_l^6+62 m_i m_j^6 m_l^6+31 m_j^7 m_l^6+62 m_i m_j^5 m_k m_l^6+62 m_j^6 m_k m_l^6 \\
&& +m_i^3 m_j^2 m_k^2 m_l^6+6 m_i^2 m_j^3 m_k^2 m_l^6+9 m_i m_j^4 m_k^2 m_l^6+35 m_j^5 m_k^2 m_l^6
+3 m_i^2 m_j^2 m_k^3 m_l^6  +12 m_i m_j^3 m_k^3 m_l^6 \\
&& +9 m_j^4 m_k^3 m_l^6+3 m_i m_j^2 m_k^4 m_l^6+6 m_j^3 m_k^4 m_l^6+m_j^2 m_k^5 m_l^6
+m_i^3 m_j^2 m_l^8   +3 m_i^2 m_j^3 m_l^8+3 m_i m_j^4 m_l^8 \\
&&+m_j^5 m_l^8+3 m_i^2 m_j^2 m_k m_l^8+6 m_i m_j^3 m_k m_l^8  
+3 m_j^4 m_k m_l^8 -m_i^3 m_k^2 m_l^8-3 m_i^2 m_j m_k^2 m_l^8  +2 m_j^3 m_k^2 m_l^8  \\
&& -3 m_i^2 m_k^3 m_l^8-6 m_i m_j m_k^3 m_l^8
-2 m_j^2 m_k^3 m_l^8 
-3 m_i m_k^4 m_l^8-3 m_j m_k^4 m_l^8-m_k^5 m_l^8.
\eeq   } \vspace{-4mm}
\footnotesize{
\beq
&& \mu_{84}(m_i,m_j,m_k,m_l,m_n)\\ 
&\!\!=\!\!&
m_i^3 m_k^{11} m_l^2 + 3 m_i^2 m_j m_k^{11} m_l^2 + 3 m_i m_j^2 m_k^{11} m_l^2 + 
 m_j^3 m_k^{11} m_l^2 - m_i^3 m_k^{11} m_n^2 - 3 m_i^2 m_j m_k^{11} m_n^2  \\
&&  -3 m_i m_j^2 m_k^{11} m_n^2 - m_j^3 m_k^{11} m_n^2 - 4 m_i^3 m_k^9 m_l^2 m_n^2 - 
 9 m_i^2 m_j m_k^9 m_l^2 m_n^2 - 6 m_i m_j^2 m_k^9 m_l^2 m_n^2   \\
&& - m_j^3 m_k^9 m_l^2 m_n^2 - 3 m_i^3 m_k^8 m_l^3 m_n^2 - 
 6 m_i^2 m_j m_k^8 m_l^3 m_n^2 - 3 m_i m_j^2 m_k^8 m_l^3 m_n^2 +  3 m_i^2 m_j m_k^8 m_l^2 m_n^3  \\
&& + 6 m_i m_j^2 m_k^8 m_l^2 m_n^3 + 
 3 m_j^3 m_k^8 m_l^2 m_n^3 - 31 m_i^3 m_k^9 m_n^4 - 62 m_i^2 m_j m_k^9 m_n^4  -  31 m_i m_j^2 m_k^9 m_n^4 \\
&& - 31 m_i^3 m_k^8 m_l m_n^4 -  62 m_i^2 m_j m_k^8 m_l m_n^4 - 31 m_i m_j^2 m_k^8 m_l m_n^4 +  6 m_i^3 m_k^7 m_l^2 m_n^4 
 + 9 m_i^2 m_j m_k^7 m_l^2 m_n^4  \\
&&  +  3 m_i m_j^2 m_k^7 m_l^2 m_n^4 + 9 m_i^3 m_k^6 m_l^3 m_n^4 + 
 12 m_i^2 m_j m_k^6 m_l^3 m_n^4 
 + 3 m_i m_j^2 m_k^6 m_l^3 m_n^4 +  3 m_i^3 m_k^5 m_l^4 m_n^4 \\
 && + 3 m_i^2 m_j m_k^5 m_l^4 m_n^4 + 
 31 m_i^2 m_j m_k^8 m_n^5 + 62 m_i m_j^2 m_k^8 m_n^5 
  + 31 m_j^3 m_k^8 m_n^5 -  9 m_i^2 m_j m_k^6 m_l^2 m_n^5 \\
  &&- 12 m_i m_j^2 m_k^6 m_l^2 m_n^5 - 
 3 m_j^3 m_k^6 m_l^2 m_n^5 - 6 m_i^2 m_j m_k^5 m_l^3 m_n^5 
 -  6 m_i m_j^2 m_k^5 m_l^3 m_n^5 - 31 m_i^3 m_k^7 m_n^6 \\
 && -  31 m_i^2 m_j m_k^7 m_n^6 - 62 m_i^3 m_k^6 m_l m_n^6 - 
 62 m_i^2 m_j m_k^6 m_l m_n^6  
  - 35 m_i^3 m_k^5 m_l^2 m_n^6 -  34 m_i^2 m_j m_k^5 m_l^2 m_n^6  \\
&&  + 3 m_i m_j^2 m_k^5 m_l^2 m_n^6 + 
 3 m_j^3 m_k^5 m_l^2 m_n^6 - 9 m_i^3 m_k^4 m_l^3 m_n^6  
 - 6 m_i^2 m_j m_k^4 m_l^3 m_n^6 - 6 m_i^3 m_k^3 m_l^4 m_n^6 \\
&& - 3 m_i^2 m_j m_k^3 m_l^4 m_n^6 - m_i^3 m_k^2 m_l^5 m_n^6 +  62 m_i^2 m_j m_k^6 m_n^7 
 + 62 m_i m_j^2 m_k^6 m_n^7 +  62 m_i^2 m_j m_k^5 m_l m_n^7 
   \eeq
 \beq
 && + 62 m_i m_j^2 m_k^5 m_l m_n^7 +  9 m_i^2 m_j m_k^4 m_l^2 m_n^7    
 + 6 m_i m_j^2 m_k^4 m_l^2 m_n^7 +  12 m_i^2 m_j m_k^3 m_l^3 m_n^7 \\
 && + 6 m_i m_j^2 m_k^3 m_l^3 m_n^7 +  3 m_i^2 m_j m_k^2 m_l^4 m_n^7 - m_i^3 m_k^5 m_n^8 
 - 31 m_i m_j^2 m_k^5 m_n^8 -  31 m_j^3 m_k^5 m_n^8 \\
 && - 3 m_i^3 m_k^4 m_l m_n^8 - 2 m_i^3 m_k^3 m_l^2 m_n^8 -  6 m_i m_j^2 m_k^3 m_l^2 m_n^8 
  - 3 m_j^3 m_k^3 m_l^2 m_n^8 +  2 m_i^3 m_k^2 m_l^3 m_n^8 \\
  && - 3 m_i m_j^2 m_k^2 m_l^3 m_n^8 + 
 3 m_i^3 m_k m_l^4 m_n^8 + m_i^3 m_l^5 m_n^8  
  + 3 m_i^2 m_j m_k^4 m_n^9 +  6 m_i^2 m_j m_k^3 m_l m_n^9 \\
  &&  + m_j^3 m_k^2 m_l^2 m_n^9 - 
 6 m_i^2 m_j m_k m_l^3 m_n^9 - 3 m_i^2 m_j m_l^4 m_n^9 
  -   3 m_i m_j^2 m_k^3 m_n^{10} - 3 m_i m_j^2 m_k^2 m_l m_n^{10}  \\
 && + 3 m_i m_j^2 m_k m_l^2 m_n^{10} + 3 m_i m_j^2 m_l^3 m_n^{10} + m_j^3 m_k^2 m_n^{11} 
  - m_j^3 m_l^2 m_n^{11}.
\eeq }  \vspace{-4mm}
\footnotesize{
\beq
&& \mu_{88}(m_i,m_j,m_k,m_l)\\ 
&\!=\!&
9 m_i m_j^{11} m_k^4+3 m_j^{12} m_k^4+9 m_j^{11} m_k^5-30 m_i m_j^{11} m_k^2 m_l^2-24 m_j^{12} m_k^2 m_l^2-30 m_j^{11} m_k^3 m_l^2
\\&&-28 m_i^2 m_j^8 m_k^4 m_l^2-30 m_i m_j^9 m_k^4 m_l^2-12 m_j^{10} m_k^4 m_l^2-56 m_i m_j^8 m_k^5 m_l^2-30 m_j^9 m_k^5 m_l^2
\\&&-28 m_j^8 m_k^6 m_l^2+21 m_i m_j^{11} m_l^4+21 m_j^{12} m_l^4+21 m_j^{11} m_k m_l^4-310 m_i^2 m_j^8 m_k^2 m_l^4
-670 m_i m_j^9 m_k^2 m_l^4
\\&&-360 m_j^{10} m_k^2 m_l^4-620 m_i m_j^8 m_k^3 m_l^4-670 m_j^9 m_k^3 m_l^4+29 m_i^3 m_j^5 m_k^4 m_l^4+47 m_i^2 m_j^6 m_k^4 m_l^4
\\&&+36 m_i m_j^7 m_k^4 m_l^4-292 m_j^8 m_k^4 m_l^4+87 m_i^2 m_j^5 m_k^5 m_l^4+94 m_i m_j^6 m_k^5 m_l^4+36 m_j^7 m_k^5 m_l^4
\\&&+87 m_i m_j^5 m_k^6 m_l^4+47 m_j^6 m_k^6 m_l^4+29 m_j^5 m_k^7 m_l^4-154 m_i^2 m_j^8 m_l^6-308 m_i m_j^9 m_l^6-154 m_j^{10} m_l^6
\\&&-308 m_i m_j^8 m_k m_l^6-308 m_j^9 m_k m_l^6-50 m_i^3 m_j^5 m_k^2 m_l^6-460 m_i^2 m_j^6 m_k^2 m_l^6-770 m_i m_j^7 m_k^2 m_l^6
\\&&-514 m_j^8 m_k^2 m_l^6-150 m_i^2 m_j^5 m_k^3 m_l^6-920 m_i m_j^6 m_k^3 m_l^6-770 m_j^7 m_k^3 m_l^6-10 m_i^4 m_j^2 m_k^4 m_l^6
\\&&-14 m_i^3 m_j^3 m_k^4 m_l^6-10 m_i^2 m_j^4 m_k^4 m_l^6-168 m_i m_j^5 m_k^4 m_l^6-472 m_j^6 m_k^4 m_l^6-40 m_i^3 m_j^2 m_k^5 m_l^6
\\&&-42 m_i^2 m_j^3 m_k^5 m_l^6-20 m_i m_j^4 m_k^5 m_l^6-68 m_j^5 m_k^5 m_l^6-60 m_i^2 m_j^2 m_k^6 m_l^6-42 m_i m_j^3 m_k^6 m_l^6
\\&&-10 m_j^4 m_k^6 m_l^6-40 m_i m_j^2 m_k^7 m_l^6-14 m_j^3 m_k^7 m_l^6-10 m_j^2 m_k^8 m_l^6+21 m_i^3 m_j^5 m_l^8+63 m_i^2 m_j^6 m_l^8
\\&&+63 m_i m_j^7 m_l^8+21 m_j^8 m_l^8+63 m_i^2 m_j^5 m_k m_l^8+126 m_i m_j^6 m_k m_l^8+63 m_j^7 m_k m_l^8+6 m_i^4 m_j^2 m_k^2 m_l^8
\\&&-6 m_i^3 m_j^3 m_k^2 m_l^8-54 m_i^2 m_j^4 m_k^2 m_l^8-3 m_i m_j^5 m_k^2 m_l^8+39 m_j^6 m_k^2 m_l^8+24 m_i^3 m_j^2 m_k^3 m_l^8
\\&&-18 m_i^2 m_j^3 m_k^3 m_l^8-108 m_i m_j^4 m_k^3 m_l^8-45 m_j^5 m_k^3 m_l^8-6 m_i^4 m_k^4 m_l^8-15 m_i^3 m_j m_k^4 m_l^8
\\&&+27 m_i^2 m_j^2 m_k^4 m_l^8-15 m_i m_j^3 m_k^4 m_l^8-51 m_j^4 m_k^4 m_l^8-24 m_i^3 m_k^5 m_l^8-45 m_i^2 m_j m_k^5 m_l^8
\\&&+6 m_i m_j^2 m_k^5 m_l^8-3 m_j^3 m_k^5 m_l^8-36 m_i^2 m_k^6 m_l^8-45 m_i m_j m_k^6 m_l^8-3 m_j^2 m_k^6 m_l^8-24 m_i m_k^7 m_l^8
\\&&-15 m_j m_k^7 m_l^8-6 m_k^8 m_l^8.
\eeq }  \vspace{-2mm}
\footnotesize{
\beq
&& \mu_{111}(m_i,m_j,m_k,m_l)\\ 
&\!=\!&
-m_i^3 m_j^9 m_k^6-3 m_i^2 m_j^9 m_k^7-3 m_i m_j^9 m_k^8-m_j^9 m_k^9+3 m_i^3 m_j^9 m_k^4 m_l^2+3 m_i^2 m_j^{10} m_k^4 m_l^2
\\&&+9 m_i^2 m_j^9 m_k^5 m_l^2+6 m_i m_j^{10} m_k^5 m_l^2+3 m_i^4 m_j^6 m_k^6 m_l^2+3 m_i^3 m_j^7 m_k^6 m_l^2+9 m_i m_j^9 m_k^6 m_l^2
\\&&+3 m_j^{10} m_k^6 m_l^2+12 m_i^3 m_j^6 m_k^7 m_l^2+9 m_i^2 m_j^7 m_k^7 m_l^2+3 m_j^9 m_k^7 m_l^2+18 m_i^2 m_j^6 m_k^8 m_l^2
\\&&+9 m_i m_j^7 m_k^8 m_l^2+12 m_i m_j^6 m_k^9 m_l^2+3 m_j^7 m_k^9 m_l^2+3 m_j^6 m_k^{10} m_l^2-3 m_i^3 m_j^9 m_k^2 m_l^4
\\&&-6 m_i^2 m_j^{10} m_k^2 m_l^4-3 m_i m_j^{11} m_k^2 m_l^4-9 m_i^2 m_j^9 m_k^3 m_l^4-12 m_i m_j^{10} m_k^3 m_l^4-3 m_j^{11} m_k^3 m_l^4
\\&&+21 m_i^4 m_j^6 m_k^4 m_l^4+42 m_i^3 m_j^7 m_k^4 m_l^4+21 m_i^2 m_j^8 m_k^4 m_l^4-9 m_i m_j^9 m_k^4 m_l^4-6 m_j^{10} m_k^4 m_l^4
\\&&+84 m_i^3 m_j^6 m_k^5 m_l^4+126 m_i^2 m_j^7 m_k^5 m_l^4+42 m_i m_j^8 m_k^5 m_l^4-3 m_j^9 m_k^5 m_l^4-3 m_i^5 m_j^3 m_k^6 m_l^4
\\&&-6 m_i^4 m_j^4 m_k^6 m_l^4-3 m_i^3 m_j^5 m_k^6 m_l^4+126 m_i^2 m_j^6 m_k^6 m_l^4+126 m_i m_j^7 m_k^6 m_l^4+21 m_j^8 m_k^6 m_l^4
\\&&-15 m_i^4 m_j^3 m_k^7 m_l^4-24 m_i^3 m_j^4 m_k^7 m_l^4-9 m_i^2 m_j^5 m_k^7 m_l^4+84 m_i m_j^6 m_k^7 m_l^4+42 m_j^7 m_k^7 m_l^4
\\&&-30 m_i^3 m_j^3 m_k^8 m_l^4-36 m_i^2 m_j^4 m_k^8 m_l^4-9 m_i m_j^5 m_k^8 m_l^4+21 m_j^6 m_k^8 m_l^4-30 m_i^2 m_j^3 m_k^9 m_l^4 \\
&&-24 m_i m_j^4 m_k^9 m_l^4-3 m_j^5 m_k^9 m_l^4-15 m_i m_j^3 m_k^{10} m_l^4-6 m_j^4 m_k^{10} m_l^4-3 m_j^3 m_k^{11} m_l^4
+m_i^3 m_j^9 m_l^6
\\&&+3 m_i^2 m_j^{10} m_l^6+3 m_i m_j^{11} m_l^6+m_j^{12} m_l^6+3 m_i^2 m_j^9 m_k m_l^6+6 m_i m_j^{10} m_k m_l^6+3 m_j^{11} m_k m_l^6
\eeq
\beq
&&+3 m_i^4 m_j^6 m_k^2 m_l^6+9 m_i^3 m_j^7 m_k^2 m_l^6+9 m_i^2 m_j^8 m_k^2 m_l^6+6 m_i m_j^9 m_k^2 m_l^6
+3 m_j^{10} m_k^2 m_l^6
\\&&+12 m_i^3 m_j^6 m_k^3 m_l^6+27 m_i^2 m_j^7 m_k^3 m_l^6+18 m_i m_j^8 m_k^3 m_l^6+4 m_j^9 m_k^3 m_l^6
+3 m_i^5 m_j^3 m_k^4 m_l^6 \\
&&+9 m_i^4 m_j^4 m_k^4 m_l^6+9 m_i^3 m_j^5 m_k^4 m_l^6+21 m_i^2 m_j^6 m_k^4 m_l^6+27 m_i m_j^7 m_k^4 m_l^6
+9 m_j^8 m_k^4 m_l^6
\\&&+15 m_i^4 m_j^3 m_k^5 m_l^6+36 m_i^3 m_j^4 m_k^5 m_l^6+27 m_i^2 m_j^5 m_k^5 m_l^6
+18 m_i m_j^6 m_k^5 m_l^6+9 m_j^7 m_k^5 m_l^6
\\&&+m_i^6 m_k^6 m_l^6+3 m_i^5 m_j m_k^6 m_l^6+3 m_i^4 m_j^2 m_k^6 m_l^6+31 m_i^3 m_j^3 m_k^6 m_l^6 
+54 m_i^2 m_j^4 m_k^6 m_l^6  
\\&&+27 m_i m_j^5 m_k^6 m_l^6 +6 m_j^6 m_k^6 m_l^6+6 m_i^5 m_k^7 m_l^6+15 m_i^4 m_j m_k^7 m_l^6+12 m_i^3 m_j^2 m_k^7 m_l^6
\\&&+33 m_i^2 m_j^3 m_k^7 m_l^6+36 m_i m_j^4 m_k^7 m_l^6+9 m_j^5 m_k^7 m_l^6+15 m_i^4 m_k^8 m_l^6+30 m_i^3 m_j m_k^8 m_l^6
\\&&+18 m_i^2 m_j^2 m_k^8 m_l^6+18 m_i m_j^3 m_k^8 m_l^6+9 m_j^4 m_k^8 m_l^6+20 m_i^3 m_k^9 m_l^6+30 m_i^2 m_j m_k^9 m_l^6
\\&&+12 m_i m_j^2 m_k^9 m_l^6+4 m_j^3 m_k^9 m_l^6+15 m_i^2 m_k^{10} m_l^6+15 m_i m_j m_k^{10} m_l^6+3 m_j^2 m_k^{10} m_l^6
\\&&+6 m_i m_k^{11} m_l^6+3 m_j m_k^{11} m_l^6+m_k^{12} m_l^6.
\eeq  } \vspace{-2mm}
%%
%\noindent\rule{151mm}{0.4pt}
\footnotesize{
\beq
&& \mu_{221}(m_i,m_j,m_k,m_l,m_n)\\ 
&\!=\!&
3 m_i^4 m_k^{12} m_l^4 + 3 m_i^3 m_j m_k^{12} m_l^4 - 9 m_i^2 m_j^2 m_k^{12} m_l^4 -  15 m_i m_j^3 m_k^{12} m_l^4 - 6 m_j^4 m_k^{12} m_l^4 
+ 9 m_i^4 m_k^{11} m_l^5 
\\&&+  27 m_i^3 m_j m_k^{11} m_l^5 + 27 m_i^2 m_j^2 m_k^{11} m_l^5 +  9 m_i m_j^3 m_k^{11} m_l^5 - 9 m_i^3 m_j m_k^{11} m_l^4 m_n 
\! -  27 m_i^2 m_j^2 m_k^{11} m_l^4 m_n 
 \\&& - 27 m_i m_j^3 m_k^{11} m_l^4 m_n  -  9 m_j^4 m_k^{11} m_l^4 m_n - 24 m_i^4 m_k^{12} m_l^2 m_n^2 -  66 m_i^3 m_j m_k^{12} m_l^2 m_n^2 
\\&& - 54 m_i^2 m_j^2 m_k^{12} m_l^2 m_n^2  -  6 m_i m_j^3 m_k^{12} m_l^2 m_n^2 + 6 m_j^4 m_k^{12} m_l^2 m_n^2 -  30 m_i^4 m_k^{11} m_l^3 m_n^2
\\&& - 90 m_i^3 m_j m_k^{11} m_l^3 m_n^2 -  90 m_i^2 m_j^2 m_k^{11} m_l^3 m_n^2 - 30 m_i m_j^3 m_k^{11} m_l^3 m_n^2 
-  12 m_i^4 m_k^{10} m_l^4 m_n^2 
\\&&- 18 m_i^3 m_j m_k^{10} m_l^4 m_n^2 -  10 m_i^2 m_j^2 m_k^{10} m_l^4 m_n^2 - 14 m_i m_j^3 m_k^{10} m_l^4 m_n^2 
-  10 m_j^4 m_k^{10} m_l^4 m_n^2 
\\&& - 30 m_i^4 m_k^9 m_l^5 m_n^2 - 34 m_i^3 m_j m_k^9 m_l^5 m_n^2 + 22 m_i^2 m_j^2 m_k^9 m_l^5 m_n^2 +  26 m_i m_j^3 m_k^9 m_l^5 m_n^2 
\\&&- 28 m_i^4 m_k^8 m_l^6 m_n^2 -  56 m_i^3 m_j m_k^8 m_l^6 m_n^2 - 28 m_i^2 m_j^2 m_k^8 m_l^6 m_n^2 +  30 m_i^3 m_j m_k^{11} m_l^2 m_n^3 
\\&&+ 90 m_i^2 m_j^2 m_k^{11} m_l^2 m_n^3 +  90 m_i m_j^3 m_k^{11} m_l^2 m_n^3 + 30 m_j^4 m_k^{11} m_l^2 m_n^3 
+  30 m_i^3 m_j m_k^9 m_l^4 m_n^3 
\\ 
&&+ 34 m_i^2 m_j^2 m_k^9 m_l^4 m_n^3 -  22 m_i m_j^3 m_k^9 m_l^4 m_n^3 - 26 m_j^4 m_k^9 m_l^4 m_n^3 +  56 m_i^3 m_j m_k^8 m_l^5 m_n^3 
\\&&+ 112 m_i^2 m_j^2 m_k^8 m_l^5 m_n^3 +  56 m_i m_j^3 m_k^8 m_l^5 m_n^3 + 21 m_i^4 m_k^{12} m_n^4 +  63 m_i^3 m_j m_k^{12} m_n^4 
+ 63 m_i^2 m_j^2 m_k^{12} m_n^4 
\\&&+  21 m_i m_j^3 m_k^{12} m_n^4 + 21 m_i^4 m_k^{11} m_l m_n^4 +  63 m_i^3 m_j m_k^{11} m_l m_n^4 + 63 m_i^2 m_j^2 m_k^{11} m_l m_n^4 
\\&&+  21 m_i m_j^3 m_k^{11} m_l m_n^4 - 360 m_i^4 m_k^{10} m_l^2 m_n^4 -  770 m_i^3 m_j m_k^{10} m_l^2 m_n^4 
- 460 m_i^2 m_j^2 m_k^{10} m_l^2 m_n^4 
\\&&-  50 m_i m_j^3 m_k^{10} m_l^2 m_n^4 - 670 m_i^4 m_k^9 m_l^3 m_n^4 -  1390 m_i^3 m_j m_k^9 m_l^3 m_n^4 
- 770 m_i^2 m_j^2 m_k^9 m_l^3 m_n^4 
\\&&-  50 m_i m_j^3 m_k^9 m_l^3 m_n^4 - 292 m_i^4 m_k^8 m_l^4 m_n^4 -  584 m_i^3 m_j m_k^8 m_l^4 m_n^4 - 291 m_i^2 m_j^2 m_k^8 m_l^4 m_n^4 
\\&&-  27 m_i m_j^3 m_k^8 m_l^4 m_n^4 \! - 28 m_j^4 m_k^8 m_l^4 m_n^4 +  36 m_i^4 m_k^7 m_l^5 m_n^4 +\! 14 m_i^3 m_j m_k^7 m_l^5 m_n^4 
+\!  7 m_i^2 m_j^2 m_k^7 m_l^5 m_n^4 
\\&&+ 29 m_i m_j^3 m_k^7 m_l^5 m_n^4 +  47 m_i^4 m_k^6 m_l^6 m_n^4 + 7 m_i^3 m_j m_k^6 m_l^6 m_n^4 -  40 m_i^2 m_j^2 m_k^6 m_l^6 m_n^4 
\\&&+ 29 m_i^4 m_k^5 m_l^7 m_n^4 +  29 m_i^3 m_j m_k^5 m_l^7 m_n^4 - 21 m_i^3 m_j m_k^{11} m_n^5 -  63 m_i^2 m_j^2 m_k^{11} m_n^5 
- 63 m_i m_j^3 m_k^{11} m_n^5 
\\&&-  21 m_j^4 m_k^{11} m_n^5 + 670 m_i^3 m_j m_k^9 m_l^2 m_n^5 +  1390 m_i^2 m_j^2 m_k^9 m_l^2 m_n^5 + 770 m_i m_j^3 m_k^9 m_l^2 m_n^5 
\\&&+  50 m_j^4 m_k^9 m_l^2 m_n^5 + 620 m_i^3 m_j m_k^8 m_l^3 m_n^5 +  1240 m_i^2 m_j^2 m_k^8 m_l^3 m_n^5 
+ 620 m_i m_j^3 m_k^8 m_l^3 m_n^5 
\\&& -  36 m_i^3 m_j m_k^7 m_l^4 m_n^5 - 14 m_i^2 m_j^2 m_k^7 m_l^4 m_n^5 -  7 m_i m_j^3 m_k^7 m_l^4 m_n^5 - 29 m_j^4 m_k^7 m_l^4 m_n^5 
\\&&-  94 m_i^3 m_j m_k^6 m_l^5 m_n^5 - 14 m_i^2 m_j^2 m_k^6 m_l^5 m_n^5 +  80 m_i m_j^3 m_k^6 m_l^5 m_n^5 - 87 m_i^3 m_j m_k^5 m_l^6 m_n^5 \\
&& -  87 m_i^2 m_j^2 m_k^5 m_l^6 m_n^5 - 154 m_i^4 m_k^{10} m_n^6 -  308 m_i^3 m_j m_k^{10} m_n^6 - 154 m_i^2 m_j^2 m_k^{10} m_n^6 
-  308 m_i^4 m_k^9 m_l m_n^6 \\
&&- 616 m_i^3 m_j m_k^9 m_l m_n^6 -  308 m_i^2 m_j^2 m_k^9 m_l m_n^6 - 514 m_i^4 m_k^8 m_l^2 m_n^6 -  978 m_i^3 m_j m_k^8 m_l^2 m_n^6
\\&& - 774 m_i^2 m_j^2 m_k^8 m_l^2 m_n^6 -  620 m_i m_j^3 m_k^8 m_l^2 m_n^6 - 310 m_j^4 m_k^8 m_l^2 m_n^6 -  770 m_i^4 m_k^7 m_l^3 m_n^6 \\
&& - 1390 m_i^3 m_j m_k^7 m_l^3 m_n^6 -  620 m_i^2 m_j^2 m_k^7 m_l^3 m_n^6 - 472 m_i^4 m_k^6 m_l^4 m_n^6 
-  800 m_i^3 m_j m_k^6 m_l^4 m_n^6 
\eeq
\beq
&&- 291 m_i^2 m_j^2 m_k^6 m_l^4 m_n^6 +  7 m_i m_j^3 m_k^6 m_l^4 m_n^6 - 40 m_j^4 m_k^6 m_l^4 m_n^6 -  68 m_i^4 m_k^5 m_l^5 m_n^6 \\
&&- 84 m_i^3 m_j m_k^5 m_l^5 m_n^6 +  31 m_i^2 m_j^2 m_k^5 m_l^5 m_n^6 + 87 m_i m_j^3 m_k^5 m_l^5 m_n^6 -  10 m_i^4 m_k^4 m_l^6 m_n^6 
\\&&+ 22 m_i^3 m_j m_k^4 m_l^6 m_n^6 -  28 m_i^2 m_j^2 m_k^4 m_l^6 m_n^6 - 14 m_i^4 m_k^3 m_l^7 m_n^6 +  26 m_i^3 m_j m_k^3 m_l^7 m_n^6\\
&&- 10 m_i^4 m_k^2 m_l^8 m_n^6 +  308 m_i^3 m_j m_k^9 m_n^7 + 616 m_i^2 m_j^2 m_k^9 m_n^7 +  308 m_i m_j^3 m_k^9 m_n^7 
+ 308 m_i^3 m_j m_k^8 m_l m_n^7 
\\&&+  616 m_i^2 m_j^2 m_k^8 m_l m_n^7 + 308 m_i m_j^3 m_k^8 m_l m_n^7 +  770 m_i^3 m_j m_k^7 m_l^2 m_n^7 
+ 1390 m_i^2 m_j^2 m_k^7 m_l^2 m_n^7 
\\&&+  620 m_i m_j^3 m_k^7 m_l^2 m_n^7 + 920 m_i^3 m_j m_k^6 m_l^3 m_n^7 +  1540 m_i^2 m_j^2 m_k^6 m_l^3 m_n^7 
+ 620 m_i m_j^3 m_k^6 m_l^3 m_n^7 
\\&& +  168 m_i^3 m_j m_k^5 m_l^4 m_n^7 + 184 m_i^2 m_j^2 m_k^5 m_l^4 m_n^7 +  27 m_i m_j^3 m_k^5 m_l^4 m_n^7
 - 29 m_j^4 m_k^5 m_l^4 m_n^7 
\\&& + 20 m_i^3 m_j m_k^4 m_l^5 m_n^7 - 44 m_i^2 m_j^2 m_k^4 m_l^5 m_n^7 +  56 m_i m_j^3 m_k^4 m_l^5 m_n^7 
+ 42 m_i^3 m_j m_k^3 m_l^6 m_n^7 
\\&& -  78 m_i^2 m_j^2 m_k^3 m_l^6 m_n^7 + 40 m_i^3 m_j m_k^2 m_l^7 m_n^7 +  21 m_i^4 m_k^8 m_n^8 + 21 m_i^3 m_j m_k^8 m_n^8 
- 154 m_i^2 m_j^2 m_k^8 m_n^8 
\\&& -  308 m_i m_j^3 m_k^8 m_n^8 - 154 m_j^4 m_k^8 m_n^8 + 63 m_i^4 m_k^7 m_l m_n^8 +  63 m_i^3 m_j m_k^7 m_l m_n^8 
+ 39 m_i^4 m_k^6 m_l^2 m_n^8 
\\&& +  33 m_i^3 m_j m_k^6 m_l^2 m_n^8 - 460 m_i^2 m_j^2 m_k^6 m_l^2 m_n^8 -  770 m_i m_j^3 m_k^6 m_l^2 m_n^8 - 310 m_j^4 m_k^6 m_l^2 m_n^8 
\\&&-  45 m_i^4 m_k^5 m_l^3 m_n^8 - 69 m_i^3 m_j m_k^5 m_l^3 m_n^8 -  150 m_i^2 m_j^2 m_k^5 m_l^3 m_n^8 - 150 m_i m_j^3 m_k^5 m_l^3 m_n^8 
\\&& -  51 m_i^4 m_k^4 m_l^4 m_n^8 - 81 m_i^3 m_j m_k^4 m_l^4 m_n^8 -  10 m_i^2 m_j^2 m_k^4 m_l^4 m_n^8 + 22 m_i m_j^3 m_k^4 m_l^4 m_n^8 
\\&&-  28 m_j^4 m_k^4 m_l^4 m_n^8 - 3 m_i^4 m_k^3 m_l^5 m_n^8 -  3 m_i^3 m_j m_k^3 m_l^5 m_n^8 - 42 m_i^2 m_j^2 m_k^3 m_l^5 m_n^8 + 
 78 m_i m_j^3 m_k^3 m_l^5 m_n^8
 \\&& - 3 m_i^4 m_k^2 m_l^6 m_n^8 +  27 m_i^3 m_j m_k^2 m_l^6 m_n^8 - 60 m_i^2 m_j^2 m_k^2 m_l^6 m_n^8 -  15 m_i^4 m_k m_l^7 m_n^8 
 + 9 m_i^3 m_j m_k m_l^7 m_n^8 
 \\&& - 6 m_i^4 m_l^8 m_n^8 -  63 m_i^3 m_j m_k^7 m_n^9 - 63 m_i^2 m_j^2 m_k^7 m_n^9 -  126 m_i^3 m_j m_k^6 m_l m_n^9
  - 126 m_i^2 m_j^2 m_k^6 m_l m_n^9 
  \\&& + 3 m_i^3 m_j m_k^5 m_l^2 m_n^9 + 27 m_i^2 m_j^2 m_k^5 m_l^2 m_n^9 +  50 m_i m_j^3 m_k^5 m_l^2 m_n^9
   + 50 m_j^4 m_k^5 m_l^2 m_n^9 
 \\&& +  108 m_i^3 m_j m_k^4 m_l^3 m_n^9 + 180 m_i^2 m_j^2 m_k^4 m_l^3 m_n^9 +  15 m_i^3 m_j m_k^3 m_l^4 m_n^9 
  + 63 m_i^2 m_j^2 m_k^3 m_l^4 m_n^9
\\ && +  14 m_i m_j^3 m_k^3 m_l^4 m_n^9 - 26 m_j^4 m_k^3 m_l^4 m_n^9 -  6 m_i^3 m_j m_k^2 m_l^5 m_n^9 
  - 54 m_i^2 m_j^2 m_k^2 m_l^5 m_n^9 
\\&& + 40 m_i m_j^3 m_k^2 m_l^5 m_n^9 + 45 m_i^3 m_j m_k m_l^6 m_n^9 \! -  27 m_i^2 m_j^2 m_k m_l^6 m_n^9 + 24 m_i^3 m_j m_l^7 m_n^9 + 
 63 m_i^2 m_j^2 m_k^6 m_n^{10} 
\\&& + 63 m_i m_j^3 m_k^6 m_n^{10} +  63 m_i^2 m_j^2 m_k^5 m_l m_n^{10} + 63 m_i m_j^3 m_k^5 m_l m_n^{10}
  -  54 m_i^2 m_j^2 m_k^4 m_l^2 m_n^{10} 
\\&&- 90 m_i m_j^3 m_k^4 m_l^2 m_n^{10} -  18 m_i^2 m_j^2 m_k^3 m_l^3 m_n^{10} - 90 m_i m_j^3 m_k^3 m_l^3 m_n^{10} 
  +  27 m_i^2 m_j^2 m_k^2 m_l^4 m_n^{10}  
\\&&+ 27 m_i m_j^3 m_k^2 m_l^4 m_n^{10} -  10 m_j^4 m_k^2 m_l^4 m_n^{10} - 45 m_i^2 m_j^2 m_k m_l^5 m_n^{10} 
  +  27 m_i m_j^3 m_k m_l^5 m_n^{10}    - 36 m_i^2 m_j^2 m_l^6 m_n^{10} 
\\&&  -  21 m_i m_j^3 m_k^5 m_n^{11} - 21 m_j^4 m_k^5 m_n^{11} +  6 m_i m_j^3 m_k^3 m_l^2 m_n^{11} 
  + 30 m_j^4 m_k^3 m_l^2 m_n^{11}    - 24 m_i m_j^3 m_k^2 m_l^3 m_n^{11} 
\\&& + 15 m_i m_j^3 m_k m_l^4 m_n^{11} -  9 m_j^4 m_k m_l^4 m_n^{11} + 24 m_i m_j^3 m_l^5 m_n^{11} 
 +  6 m_j^4 m_k^2 m_l^2 m_n^{12} - 6 m_j^4 m_l^4 m_l^{12}
\eeq  } \vspace{-2mm}
\footnotesize{
\beq
&& \mu_{374}(m_i,m_j,m_k,m_l,m_n)\\ 
&\!=\!&
m_i^3 m_j^3 m_k^{12} m_l^6 + 3 m_i^2 m_j^4 m_k^{12} m_l^6 + 3 m_i m_j^5 m_k^{12} m_l^6 +
  m_j^6 m_k^{12} m_l^6 - 3 m_i^4 m_j^2 m_k^{11} m_l^7 - 9 m_i^3 m_j^3 m_k^{11} m_l^7\\&& - 
 9 m_i^2 m_j^4 m_k^{11} m_l^7 - 3 m_i m_j^5 m_k^{11} m_l^7 + 
 3 m_i^5 m_j m_k^{10} m_l^8 + 9 m_i^4 m_j^2 m_k^{10} m_l^8 + 
 9 m_i^3 m_j^3 m_k^{10} m_l^8 \\
 &&+ 3 m_i^2 m_j^4 m_k^{10} m_l^8 \! - m_i^6 m_k^9 m_l^9 \! - 
 3 m_i^5 m_j m_k^9 m_l^9 \! - 3 m_i^4 m_j^2 m_k^9 m_l^9 \! - m_i^3 m_j^3 m_k^9 m_l^9 + 
 3 m_i^3 m_j^3 m_k^{11} m_l^6 m_n \\
 &&+ 9 m_i^2 m_j^4 m_k^{11} m_l^6 m_n + 
 9 m_i m_j^5 m_k^{11} m_l^6 m_n + 3 m_j^6 m_k^{11} m_l^6 m_n - 
 6 m_i^4 m_j^2 m_k^{10} m_l^7 m_n  \\
 &&- 18 m_i^3 m_j^3 m_k^{10} m_l^7 m_n- 
 18 m_i^2 m_j^4 m_k^{10} m_l^7 m_n - 6 m_i m_j^5 m_k^{10} m_l^7 m_n + 
 3 m_i^5 m_j m_k^9 m_l^8 m_n  \\
 &&+ 9 m_i^4 m_j^2 m_k^9 m_l^8 m_n + 
 9 m_i^3 m_j^3 m_k^9 m_l^8 m_n + 3 m_i^2 m_j^4 m_k^9 m_l^8 m_n + 
 3 m_i^4 m_j^2 m_k^{12} m_l^4 m_n^2  \\
 &&+ 9 m_i^3 m_j^3 m_k^{12} m_l^4 m_n^2 + 
 9 m_i^2 m_j^4 m_k^{12} m_l^4 m_n^2 + 3 m_i m_j^5 m_k^{12} m_l^4 m_n^2 - 
 6 m_i^5 m_j m_k^{11} m_l^5 m_n^2  \\
 &&- 15 m_i^4 m_j^2 m_k^{11} m_l^5 m_n^2 - 
 9 m_i^3 m_j^3 m_k^{11} m_l^5 m_n^2 + 3 m_i^2 m_j^4 m_k^{11} m_l^5 m_n^2 + 
 3 m_i m_j^5 m_k^{11} m_l^5 m_n^2  \\
 &&+ 3 m_i^6 m_k^{10} m_l^6 m_n^2 + 
 3 m_i^5 m_j m_k^{10} m_l^6 m_n^2 - 9 m_i^4 m_j^2 m_k^{10} m_l^6 m_n^2 - 
 15 m_i^3 m_j^3 m_k^{10} m_l^6 m_n^2  \\
 &&- 3 m_i^2 m_j^4 m_k^{10} m_l^6 m_n^2 + 
 6 m_i m_j^5 m_k^{10} m_l^6 m_n^2 + 3 m_j^6 m_k^{10} m_l^6 m_n^2 + 
 3 m_i^6 m_k^9 m_l^7 m_n^2 + 9 m_i^5 m_j m_k^9 m_l^7 m_n^2 \\
 &&+ 
 15 m_i^4 m_j^2 m_k^9 m_l^7 m_n^2 + 9 m_i^3 m_j^3 m_k^9 m_l^7 m_n^2 - 
 6 m_i^2 m_j^4 m_k^9 m_l^7 m_n^2 - 6 m_i m_j^5 m_k^9 m_l^7 m_n^2  
 \eeq
 \beq
&&- 
 9 m_i^5 m_j m_k^8 m_l^8 m_n^2 - 9 m_i^4 m_j^2 m_k^8 m_l^8 m_n^2 + 
 9 m_i^3 m_j^3 m_k^8 m_l^8 m_n^2 + 9 m_i^2 m_j^4 m_k^8 m_l^8 m_n^2  + 
 3 m_i^6 m_k^7 m_l^9 m_n^2 \\&&- 3 m_i^5 m_j m_k^7 m_l^9 m_n^2 - 
 15 m_i^4 m_j^2 m_k^7 m_l^9 m_n^2 - 9 m_i^3 m_j^3 m_k^7 m_l^9 m_n^2 + 
 3 m_i^6 m_k^6 m_l^{10} m_n^2 
\\ &&+ 6 m_i^5 m_j m_k^6 m_l^{10} m_n^2 + 
 3 m_i^4 m_j^2 m_k^6 m_l^{10} m_n^2 + 6 m_i^4 m_j^2 m_k^{11} m_l^4 m_n^3 + 
 15 m_i^3 m_j^3 m_k^{11} m_l^4 m_n^3 \\&&+ 9 m_i^2 m_j^4 m_k^{11} m_l^4 m_n^3 - 
 3 m_i m_j^5 m_k^{11} m_l^4 m_n^3 - 3 m_j^6 m_k^{11} m_l^4 m_n^3 - 
 6 m_i^5 m_j m_k^{10} m_l^5 m_n^3 \\&&- 6 m_i^4 m_j^2 m_k^{10} m_l^5 m_n^3 + 
 18 m_i^3 m_j^3 m_k^{10} m_l^5 m_n^3 + 30 m_i^2 m_j^4 m_k^{10} m_l^5 m_n^3 + 
 12 m_i m_j^5 m_k^{10} m_l^5 m_n^3 \\&&- 9 m_i^5 m_j m_k^9 m_l^6 m_n^3 - 
 27 m_i^4 m_j^2 m_k^9 m_l^6 m_n^3 - 35 m_i^3 m_j^3 m_k^9 m_l^6 m_n^3 - 
 21 m_i^2 m_j^4 m_k^9 m_l^6 m_n^3 \\&&+ 4 m_j^6 m_k^9 m_l^6 m_n^3 + 
 18 m_i^4 m_j^2 m_k^8 m_l^7 m_n^3 + 18 m_i^3 m_j^3 m_k^8 m_l^7 m_n^3 - 
 18 m_i^2 m_j^4 m_k^8 m_l^7 m_n^3 \\&&- 18 m_i m_j^5 m_k^8 m_l^7 m_n^3 - 
 9 m_i^5 m_j m_k^7 m_l^8 m_n^3 + 9 m_i^4 m_j^2 m_k^7 m_l^8 m_n^3 + 
 45 m_i^3 m_j^3 m_k^7 m_l^8 m_n^3 
\\ &&+ 27 m_i^2 m_j^4 m_k^7 m_l^8 m_n^3 - 
 12 m_i^5 m_j m_k^6 m_l^9 m_n^3 - 24 m_i^4 m_j^2 m_k^6 m_l^9 m_n^3 - 
 12 m_i^3 m_j^3 m_k^6 m_l^9 m_n^3 \\&&+ 3 m_i^5 m_j m_k^{12} m_l^2 m_n^4 + 
 9 m_i^4 m_j^2 m_k^{12} m_l^2 m_n^4 + 9 m_i^3 m_j^3 m_k^{12} m_l^2 m_n^4 + 
 3 m_i^2 m_j^4 m_k^{12} m_l^2 m_n^4 \\&&- 3 m_i^6 m_k^{11} m_l^3 m_n^4 - 
 3 m_i^5 m_j m_k^{11} m_l^3 m_n^4 + 9 m_i^4 m_j^2 m_k^{11} m_l^3 m_n^4 + 
 15 m_i^3 m_j^3 m_k^{11} m_l^3 m_n^4 \\&&+ 6 m_i^2 m_j^4 m_k^{11} m_l^3 m_n^4 - 
 6 m_i^6 m_k^{10} m_l^4 m_n^4 - 15 m_i^5 m_j m_k^{10} m_l^4 m_n^4 + 
 15 m_i^4 m_j^2 m_k^{10} m_l^4 m_n^4 \\&&+ 48 m_i^3 m_j^3 m_k^{10} m_l^4 m_n^4 + 
 15 m_i^2 m_j^4 m_k^{10} m_l^4 m_n^4 - 15 m_i m_j^5 m_k^{10} m_l^4 m_n^4 - 
 6 m_j^6 m_k^{10} m_l^4 m_n^4 \\
 &&- 3 m_i^6 m_k^9 m_l^5 m_n^4 - 
 51 m_i^5 m_j m_k^9 m_l^5 m_n^4 - 42 m_i^4 m_j^2 m_k^9 m_l^5 m_n^4 + 
 66 m_i^3 m_j^3 m_k^9 m_l^5 m_n^4 
\\ &&+ 69 m_i^2 m_j^4 m_k^9 m_l^5 m_n^4 + 
 9 m_i m_j^5 m_k^9 m_l^5 m_n^4 + 21 m_i^6 m_k^8 m_l^6 m_n^4 - 
 42 m_i^5 m_j m_k^8 m_l^6 m_n^4 
\\ &&- 126 m_i^4 m_j^2 m_k^8 m_l^6 m_n^4 - 
 48 m_i^3 m_j^3 m_k^8 m_l^6 m_n^4 + 15 m_i^2 m_j^4 m_k^8 m_l^6 m_n^4 + 
 9 m_i m_j^5 m_k^8 m_l^6 m_n^4 \\&&+ 9 m_j^6 m_k^8 m_l^6 m_n^4 + 
 42 m_i^6 m_k^7 m_l^7 m_n^4 + 42 m_i^5 m_j m_k^7 m_l^7 m_n^4 - 
 42 m_i^4 m_j^2 m_k^7 m_l^7 m_n^4 \\&&- 54 m_i^3 m_j^3 m_k^7 m_l^7 m_n^4 - 
 39 m_i^2 m_j^4 m_k^7 m_l^7 m_n^4 - 27 m_i m_j^5 m_k^7 m_l^7 m_n^4 + 
 21 m_i^6 m_k^6 m_l^8 m_n^4 \\&&+ 51 m_i^5 m_j m_k^6 m_l^8 m_n^4 + 
 30 m_i^4 m_j^2 m_k^6 m_l^8 m_n^4 + 21 m_i^3 m_j^3 m_k^6 m_l^8 m_n^4 + 
 21 m_i^2 m_j^4 m_k^6 m_l^8 m_n^4 \\&&- 3 m_i^6 m_k^5 m_l^9 m_n^4 + 
 15 m_i^5 m_j m_k^5 m_l^9 m_n^4 + 9 m_i^4 m_j^2 m_k^5 m_l^9 m_n^4 - 
 9 m_i^3 m_j^3 m_k^5 m_l^9 m_n^4 \\&&- 6 m_i^6 m_k^4 m_l^{10} m_n^4 + 
 3 m_i^5 m_j m_k^4 m_l^{10} m_n^4 + 9 m_i^4 m_j^2 m_k^4 m_l^{10} m_n^4 - 
 3 m_i^6 m_k^3 m_l^{11} m_n^4\\&& - 3 m_i^5 m_j m_k^3 m_l^{11} m_n^4 + 
 3 m_i^5 m_j m_k^{11} m_l^2 m_n^5 + 3 m_i^4 m_j^2 m_k^{11} m_l^2 m_n^5 - 
 9 m_i^3 m_j^3 m_k^{11} m_l^2 m_n^5 \\&&- 15 m_i^2 m_j^4 m_k^{11} m_l^2 m_n^5 - 
 6 m_i m_j^5 m_k^{11} m_l^2 m_n^5 + 12 m_i^5 m_j m_k^{10} m_l^3 m_n^5 + 
 30 m_i^4 m_j^2 m_k^{10} m_l^3 m_n^5 \\&&+ 18 m_i^3 m_j^3 m_k^{10} m_l^3 m_n^5 - 
 6 m_i^2 m_j^4 m_k^{10} m_l^3 m_n^5 - 6 m_i m_j^5 m_k^{10} m_l^3 m_n^5 + 
 9 m_i^5 m_j m_k^9 m_l^4 m_n^5 \\&&+ 69 m_i^4 m_j^2 m_k^9 m_l^4 m_n^5 + 
 66 m_i^3 m_j^3 m_k^9 m_l^4 m_n^5 - 42 m_i^2 m_j^4 m_k^9 m_l^4 m_n^5 - 
 51 m_i m_j^5 m_k^9 m_l^4 m_n^5 \\&&- 3 m_j^6 m_k^9 m_l^4 m_n^5 - 
 42 m_i^5 m_j m_k^8 m_l^5 m_n^5 + 84 m_i^4 m_j^2 m_k^8 m_l^5 m_n^5 + 
 252 m_i^3 m_j^3 m_k^8 m_l^5 m_n^5 \\&&+ 84 m_i^2 m_j^4 m_k^8 m_l^5 m_n^5 - 
 42 m_i m_j^5 m_k^8 m_l^5 m_n^5 - 126 m_i^5 m_j m_k^7 m_l^6 m_n^5 - 
 126 m_i^4 m_j^2 m_k^7 m_l^6 m_n^5 \\&&+ 132 m_i^3 m_j^3 m_k^7 m_l^6 m_n^5 + 
 132 m_i^2 m_j^4 m_k^7 m_l^6 m_n^5 + 9 m_i m_j^5 m_k^7 m_l^6 m_n^5 + 
 9 m_j^6 m_k^7 m_l^6 m_n^5 \\&&- 84 m_i^5 m_j m_k^6 m_l^7 m_n^5 - 
 186 m_i^4 m_j^2 m_k^6 m_l^7 m_n^5 - 78 m_i^3 m_j^3 m_k^6 m_l^7 m_n^5 + 
 6 m_i^2 m_j^4 m_k^6 m_l^7 m_n^5 \\&&- 18 m_i m_j^5 m_k^6 m_l^7 m_n^5 + 
 9 m_i^5 m_j m_k^5 m_l^8 m_n^5 - 45 m_i^4 m_j^2 m_k^5 m_l^8 m_n^5 - 
 27 m_i^3 m_j^3 m_k^5 m_l^8 m_n^5 \\&&+ 27 m_i^2 m_j^4 m_k^5 m_l^8 m_n^5 + 
 24 m_i^5 m_j m_k^4 m_l^9 m_n^5 - 12 m_i^4 m_j^2 m_k^4 m_l^9 m_n^5 - 
 36 m_i^3 m_j^3 m_k^4 m_l^9 m_n^5 \\&&+ 15 m_i^5 m_j m_k^3 m_l^{10} m_n^5 + 
 15 m_i^4 m_j^2 m_k^3 m_l^{10} m_n^5 + m_i^6 m_k^{12} m_n^6 + 
 3 m_i^5 m_j m_k^{12} m_n^6 \\&&+ 3 m_i^4 m_j^2 m_k^{12} m_n^6 + 
 m_i^3 m_j^3 m_k^{12} m_n^6 + 3 m_i^6 m_k^{11} m_l m_n^6 + 
 9 m_i^5 m_j m_k^{11} m_l m_n^6 \\&&+ 9 m_i^4 m_j^2 m_k^{11} m_l m_n^6 + 
 3 m_i^3 m_j^3 m_k^{11} m_l m_n^6 + 3 m_i^6 m_k^{10} m_l^2 m_n^6 + 
 6 m_i^5 m_j m_k^{10} m_l^2 m_n^6 \\&&- 3 m_i^4 m_j^2 m_k^{10} m_l^2 m_n^6 - 
 15 m_i^3 m_j^3 m_k^{10} m_l^2 m_n^6 - 9 m_i^2 m_j^4 m_k^{10} m_l^2 m_n^6 + 
 3 m_i m_j^5 m_k^{10} m_l^2 m_n^6 \\&&+ 3 m_j^6 m_k^{10} m_l^2 m_n^6 + 
 4 m_i^6 m_k^9 m_l^3 m_n^6 - 21 m_i^4 m_j^2 m_k^9 m_l^3 m_n^6 - 
 35 m_i^3 m_j^3 m_k^9 m_l^3 m_n^6 \\
 &&- 27 m_i^2 m_j^4 m_k^9 m_l^3 m_n^6 - 
 9 m_i m_j^5 m_k^9 m_l^3 m_n^6 + 9 m_i^6 m_k^8 m_l^4 m_n^6 + 
 9 m_i^5 m_j m_k^8 m_l^4 m_n^6 \\
 &&+ 15 m_i^4 m_j^2 m_k^8 m_l^4 m_n^6 - 
 48 m_i^3 m_j^3 m_k^8 m_l^4 m_n^6 - 126 m_i^2 m_j^4 m_k^8 m_l^4 m_n^6 - 
 42 m_i m_j^5 m_k^8 m_l^4 m_n^6 \\
 &&+ 21 m_j^6 m_k^8 m_l^4 m_n^6 + 
 9 m_i^6 m_k^7 m_l^5 m_n^6 + 9 m_i^5 m_j m_k^7 m_l^5 m_n^6 + 
 132 m_i^4 m_j^2 m_k^7 m_l^5 m_n^6 \\
 &&+ 132 m_i^3 m_j^3 m_k^7 m_l^5 m_n^6 - 
 126 m_i^2 m_j^4 m_k^7 m_l^5 m_n^6 - 126 m_i m_j^5 m_k^7 m_l^5 m_n^6 + 
 6 m_i^6 m_k^6 m_l^6 m_n^6 
  \eeq
 \beq
&&- 9 m_i^5 m_j m_k^6 m_l^6 m_n^6 + 
 120 m_i^4 m_j^2 m_k^6 m_l^6 m_n^6 + 269 m_i^3 m_j^3 m_k^6 m_l^6 m_n^6 + 
 120 m_i^2 m_j^4 m_k^6 m_l^6 m_n^6 \\&&- 9 m_i m_j^5 m_k^6 m_l^6 m_n^6 + 
 6 m_j^6 m_k^6 m_l^6 m_n^6 + 9 m_i^6 m_k^5 m_l^7 m_n^6 - 
 9 m_i^5 m_j m_k^5 m_l^7 m_n^6 \\&&- 21 m_i^4 m_j^2 m_k^5 m_l^7 m_n^6 + 
 45 m_i^3 m_j^3 m_k^5 m_l^7 m_n^6 + 27 m_i^2 m_j^4 m_k^5 m_l^7 m_n^6 - 
 27 m_i m_j^5 m_k^5 m_l^7 m_n^6 \\&&+ 9 m_i^6 m_k^4 m_l^8 m_n^6 - 
 33 m_i^4 m_j^2 m_k^4 m_l^8 m_n^6 + 15 m_i^3 m_j^3 m_k^4 m_l^8 m_n^6 + 
 54 m_i^2 m_j^4 m_k^4 m_l^8 m_n^6 \\&&+ 4 m_i^6 m_k^3 m_l^9 m_n^6 - 
 6 m_i^5 m_j m_k^3 m_l^9 m_n^6 - 21 m_i^4 m_j^2 m_k^3 m_l^9 m_n^6 - 
 31 m_i^3 m_j^3 m_k^3 m_l^9 m_n^6 \\&&+ 3 m_i^6 m_k^2 m_l^{10} m_n^6 - 
 9 m_i^5 m_j m_k^2 m_l^{10} m_n^6 + 3 m_i^4 m_j^2 m_k^2 m_l^{10} m_n^6 + 
 3 m_i^6 m_k m_l^{11} m_n^6 \\&&- 3 m_i^5 m_j m_k m_l^{11} m_n^6 + m_i^6 m_l^{12} m_n^6 - 
 3 m_i^5 m_j m_k^{11} m_n^7 - 9 m_i^4 m_j^2 m_k^{11} m_n^7 - 
 9 m_i^3 m_j^3 m_k^{11} m_n^7 \\&&- 3 m_i^2 m_j^4 m_k^{11} m_n^7 - 
 6 m_i^5 m_j m_k^{10} m_l m_n^7 - 18 m_i^4 m_j^2 m_k^{10} m_l m_n^7 - 
 18 m_i^3 m_j^3 m_k^{10} m_l m_n^7 \\
 &&- 6 m_i^2 m_j^4 m_k^{10} m_l m_n^7 - 
 6 m_i^5 m_j m_k^9 m_l^2 m_n^7 - 6 m_i^4 m_j^2 m_k^9 m_l^2 m_n^7 + 
 9 m_i^3 m_j^3 m_k^9 m_l^2 m_n^7 \\&&+ 15 m_i^2 m_j^4 m_k^9 m_l^2 m_n^7 + 
 9 m_i m_j^5 m_k^9 m_l^2 m_n^7 + 3 m_j^6 m_k^9 m_l^2 m_n^7 - 
 18 m_i^5 m_j m_k^8 m_l^3 m_n^7 \\&&- 18 m_i^4 m_j^2 m_k^8 m_l^3 m_n^7 + 
 18 m_i^3 m_j^3 m_k^8 m_l^3 m_n^7 + 18 m_i^2 m_j^4 m_k^8 m_l^3 m_n^7 - 
 27 m_i^5 m_j m_k^7 m_l^4 m_n^7 
\\ &&- 39 m_i^4 m_j^2 m_k^7 m_l^4 m_n^7 - 
 54 m_i^3 m_j^3 m_k^7 m_l^4 m_n^7 - 42 m_i^2 m_j^4 m_k^7 m_l^4 m_n^7 + 
 42 m_i m_j^5 m_k^7 m_l^4 m_n^7 \\&&+ 42 m_j^6 m_k^7 m_l^4 m_n^7 - 
 18 m_i^5 m_j m_k^6 m_l^5 m_n^7 + 6 m_i^4 m_j^2 m_k^6 m_l^5 m_n^7 - 
 78 m_i^3 m_j^3 m_k^6 m_l^5 m_n^7 \\&&- 186 m_i^2 m_j^4 m_k^6 m_l^5 m_n^7 - 
 84 m_i m_j^5 m_k^6 m_l^5 m_n^7 - 27 m_i^5 m_j m_k^5 m_l^6 m_n^7 + 
 27 m_i^4 m_j^2 m_k^5 m_l^6 m_n^7 \\&&+ 45 m_i^3 m_j^3 m_k^5 m_l^6 m_n^7 - 
 21 m_i^2 m_j^4 m_k^5 m_l^6 m_n^7 - 9 m_i m_j^5 m_k^5 m_l^6 m_n^7 + 
 9 m_j^6 m_k^5 m_l^6 m_n^7 \\&&- 36 m_i^5 m_j m_k^4 m_l^7 m_n^7 - 
 6 m_i^4 m_j^2 m_k^4 m_l^7 m_n^7 + 30 m_i^3 m_j^3 m_k^4 m_l^7 m_n^7 - 
 6 m_i^2 m_j^4 m_k^4 m_l^7 m_n^7 \\&&- 36 m_i m_j^5 m_k^4 m_l^7 m_n^7 - 
 18 m_i^5 m_j m_k^3 m_l^8 m_n^7 + 12 m_i^4 m_j^2 m_k^3 m_l^8 m_n^7 + 
 3 m_i^3 m_j^3 m_k^3 m_l^8 m_n^7 \\&&+ 33 m_i^2 m_j^4 m_k^3 m_l^8 m_n^7 - 
 12 m_i^5 m_j m_k^2 m_l^9 m_n^7 + 36 m_i^4 m_j^2 m_k^2 m_l^9 m_n^7 - 
 12 m_i^3 m_j^3 m_k^2 m_l^9 m_n^7 \\&&- 15 m_i^5 m_j m_k m_l^{10} m_n^7 + 
 15 m_i^4 m_j^2 m_k m_l^{10} m_n^7 - 6 m_i^5 m_j m_l^{11} m_n^7 + 
 3 m_i^4 m_j^2 m_k^{10} m_n^8 \\&&+ 9 m_i^3 m_j^3 m_k^{10} m_n^8 + 
 9 m_i^2 m_j^4 m_k^{10} m_n^8 + 3 m_i m_j^5 m_k^{10} m_n^8 + 
 3 m_i^4 m_j^2 m_k^9 m_l m_n^8 \\&&+ 9 m_i^3 m_j^3 m_k^9 m_l m_n^8 + 
 9 m_i^2 m_j^4 m_k^9 m_l m_n^8 + 3 m_i m_j^5 m_k^9 m_l m_n^8 + 
 9 m_i^4 m_j^2 m_k^8 m_l^2 m_n^8 \\&&+ 9 m_i^3 m_j^3 m_k^8 m_l^2 m_n^8 - 
 9 m_i^2 m_j^4 m_k^8 m_l^2 m_n^8 - 9 m_i m_j^5 m_k^8 m_l^2 m_n^8 + 
 27 m_i^4 m_j^2 m_k^7 m_l^3 m_n^8 \\&&+ 45 m_i^3 m_j^3 m_k^7 m_l^3 m_n^8 + 
 9 m_i^2 m_j^4 m_k^7 m_l^3 m_n^8 - 9 m_i m_j^5 m_k^7 m_l^3 m_n^8 + 
 21 m_i^4 m_j^2 m_k^6 m_l^4 m_n^8 \\&&+ 21 m_i^3 m_j^3 m_k^6 m_l^4 m_n^8 + 
 30 m_i^2 m_j^4 m_k^6 m_l^4 m_n^8 + 51 m_i m_j^5 m_k^6 m_l^4 m_n^8 + 
 21 m_j^6 m_k^6 m_l^4 m_n^8 \\&&+ 27 m_i^4 m_j^2 m_k^5 m_l^5 m_n^8 - 
 27 m_i^3 m_j^3 m_k^5 m_l^5 m_n^8 - 45 m_i^2 m_j^4 m_k^5 m_l^5 m_n^8 + 
 9 m_i m_j^5 m_k^5 m_l^5 m_n^8 \\&&+ 54 m_i^4 m_j^2 m_k^4 m_l^6 m_n^8 + 
 15 m_i^3 m_j^3 m_k^4 m_l^6 m_n^8 - 33 m_i^2 m_j^4 m_k^4 m_l^6 m_n^8 + 
 9 m_j^6 m_k^4 m_l^6 m_n^8 \\
 &&+ 33 m_i^4 m_j^2 m_k^3 m_l^7 m_n^8 + 
 3 m_i^3 m_j^3 m_k^3 m_l^7 m_n^8 + 12 m_i^2 m_j^4 m_k^3 m_l^7 m_n^8 - 
 18 m_i m_j^5 m_k^3 m_l^7 m_n^8 \\&&+ 18 m_i^4 m_j^2 m_k^2 m_l^8 m_n^8 - 
 54 m_i^3 m_j^3 m_k^2 m_l^8 m_n^8 + 18 m_i^2 m_j^4 m_k^2 m_l^8 m_n^8 + 
 30 m_i^4 m_j^2 m_k m_l^9 m_n^8 \\&&- 30 m_i^3 m_j^3 m_k m_l^9 m_n^8 + 
 15 m_i^4 m_j^2 m_l^{10} m_n^8 - m_i^3 m_j^3 m_k^9 m_n^9 - 
 3 m_i^2 m_j^4 m_k^9 m_n^9 - 3 m_i m_j^5 m_k^9 m_n^9 \\
 && - m_j^6 m_k^9 m_n^9 - 
 9 m_i^3 m_j^3 m_k^7 m_l^2 m_n^9 - 15 m_i^2 m_j^4 m_k^7 m_l^2 m_n^9 - 
 3 m_i m_j^5 m_k^7 m_l^2 m_n^9 + 3 m_j^6 m_k^7 m_l^2 m_n^9 \\&& -
 12 m_i^3 m_j^3 m_k^6 m_l^3 m_n^9 - 24 m_i^2 m_j^4 m_k^6 m_l^3 m_n^9 - 
 12 m_i m_j^5 m_k^6 m_l^3 m_n^9 - 9 m_i^3 m_j^3 m_k^5 m_l^4 m_n^9 \\&&+ 
 9 m_i^2 m_j^4 m_k^5 m_l^4 m_n^9 + 15 m_i m_j^5 m_k^5 m_l^4 m_n^9 - 
 3 m_j^6 m_k^5 m_l^4 m_n^9 - 36 m_i^3 m_j^3 m_k^4 m_l^5 m_n^9 \\&&- 
 12 m_i^2 m_j^4 m_k^4 m_l^5 m_n^9 + 24 m_i m_j^5 m_k^4 m_l^5 m_n^9 - 
 31 m_i^3 m_j^3 m_k^3 m_l^6 m_n^9 - 21 m_i^2 m_j^4 m_k^3 m_l^6 m_n^9 \\
 &&- 
 6 m_i m_j^5 m_k^3 m_l^6 m_n^9 + 4 m_j^6 m_k^3 m_l^6 m_n^9 - 
 12 m_i^3 m_j^3 m_k^2 m_l^7 m_n^9 + 36 m_i^2 m_j^4 m_k^2 m_l^7 m_n^9 \\&&- 
 12 m_i m_j^5 m_k^2 m_l^7 m_n^9 - 30 m_i^3 m_j^3 m_k m_l^8 m_n^9 + 
 30 m_i^2 m_j^4 m_k m_l^8 m_n^9 - 20 m_i^3 m_j^3 m_l^9 m_n^9 \\
 &&+  3 m_i^2 m_j^4 m_k^6 m_l^2 m_n^{10} + 6 m_i m_j^5 m_k^6 m_l^2 m_n^{10} + 
 3 m_j^6 m_k^6 m_l^2 m_n^{10} + 9 m_i^2 m_j^4 m_k^4 m_l^4 m_n^{10} \\&& + 
 3 m_i m_j^5 m_k^4 m_l^4 m_n^{10} - 6 m_j^6 m_k^4 m_l^4 m_n^{10} + 
 15 m_i^2 m_j^4 m_k^3 m_l^5 m_n^{10} + 15 m_i m_j^5 m_k^3 m_l^5 m_n^{10} \\&& + 
 3 m_i^2 m_j^4 m_k^2 m_l^6 m_n^{10} - 9 m_i m_j^5 m_k^2 m_l^6 m_n^{10} + 
 3 m_j^6 m_k^2 m_l^6 m_n^{10} + 15 m_i^2 m_j^4 m_k m_l^7 m_n^{10} \\&& - 
 15 m_i m_j^5 m_k m_l^7 m_n^{10} + 15 m_i^2 m_j^4 m_l^8 m_n^{10} - 
 3 m_i m_j^5 m_k^3 m_l^4 m_n^{11} - 3 m_j^6 m_k^3 m_l^4 m_n^{11} \\&& - 
 3 m_i m_j^5 m_k m_l^6 m_n^{11} + 3 m_j^6 m_k m_l^6 m_n^{11} - 
 6 m_i m_j^5 m_l^7 m_n^{11} + m_j^6 m_l^6 m_n^{12}
\eeq 
}

\normalsize

%\section*{Appendix V: Order and mass relations for cases $n=5,6$} 

%\href{https://drive.google.com/drive/folders/1DYQWlqB3JjQHyBnDbpnMfXqTBwgJ0Tra?usp=sharing}{a 
%link to the fold containing the order results.}\vspace*{10pt}

%\href{https://drive.google.com/drive/folders/1oFjJDRaJEAsUyahECDZFqx4J6Tm7LOGJ?usp=sharing}{a 
%link to the fold containing the mass result of the case $n=5$.}\vspace*{10pt}

%\href{https://drive.google.com/drive/folders/1ZVj4D50Bs9PpaQDntdIA3omz59hYnSek?usp=sharing}{a 
%link to the fold containing the mass result of the case $n=6$.}\vspace*{10pt}

%%%%%%%%%%%%%%%%

\bibliographystyle{amsplain}

\end{document}